\documentclass[a4paper,twoside,11pt,reqno]{amsart}
%%%%%%%%%%%%%%%%%%%%%%%%%%%%%%%%%%%%%%%%%%%%
%%%%%% PDF-Setup %%%%%%%%%%%%%%%%%%%%%%%%%%%
%%%%%%%%%%%%%%%%%%%%%%%%%%%%%%%%%%%%%%%%%%%%
\usepackage{mathrsfs,mathtools,amssymb,latexsym,eucal,extarrows,caption,dsfont} 
\usepackage[headings]{fullpage}
\pagestyle{headings}
\usepackage[dvips]{graphicx} 
\usepackage[usenames]{color}
\usepackage[T1]{fontenc} 
\usepackage[utf8]{inputenc}
\usepackage[shortlabels]{enumitem}
\usepackage{tikz-cd} % after color 
% \tikzcdset{arrow style=math font}

% SHOWLABELS: uncomment for thm labels
% \usepackage[inline]{showlabels}
% \renewcommand{\showlabelfont}{\tiny\tt\color{gray}}

\usepackage[pdfpagelabels, pdftex]{hyperref}
\definecolor{darkblue}{rgb}{0,0,.85} % not great, not terrible
\definecolor{darkred}{rgb}{0.84,0,0}
\hypersetup{
  pdftitle={},
  pdfauthor={},
  pdfsubject={},
  pdfkeywords={},
  colorlinks=true,    % false: boxed links; true: colored links
  linkcolor=darkred,     % color of internal links
  citecolor=darkblue,     % color of links to bibliography
  filecolor=darkblue,      % color of file links
  urlcolor=darkblue,       % color of external links
  breaklinks=true,
  bookmarksopen=true,
  bookmarksnumbered=true,
  pdfpagemode=UseOutlines,
  plainpages=false}
\usepackage[capitalise]{cleveref} % load after hyperref

% Alex says this improves the spacing:
\usepackage[kerning=true,spacing=true]{microtype}
    \microtypecontext{spacing=nonfrench}

\usetikzlibrary{arrows.meta, positioning, decorations.markings} %some help with tikz

\linespread{1.05}

%%%%%%%%%%%%%%%%%%%%%%%%%%%%%%%%%%%%%%%%%%%%
%%%%% MACROS      %%%%%%%%%%%%%%%%%%%%%%%%%%
%%%%%%%%%%%%%%%%%%%%%%%%%%%%%%%%%%%%%%%%%%%%

%% blackboard bold
    \renewcommand{\mathbb}{\mathds} % Jakob
    \newcommand{\bA}{{\mathbb A}}
    \newcommand{\bC}{{\mathbb C}}
    
    \newcommand{\bN}{{\mathbb N}}
    
    \newcommand{\bQ}{{\mathbb Q}}
    \newcommand{\bR}{{\mathbb R}}
    \newcommand{\bZ}{{\mathbb Z}}
    
    %% caligraphic
    \newcommand{\cA}{{\mathcal A}}
    \newcommand{\cC}{{\mathcal C}}
    \newcommand{\cF}{{\mathcal F}}
    
    \newcommand{\cM}{{\mathcal M}}
    \newcommand{\cO}{{\mathcal O}}
    \newcommand{\cP}{{\mathcal P}}
    \newcommand{\cX}{{\mathcal X}}
    
    %% fraktur
    \newcommand{\fp}{{\mathfrak p}}
    \newcommand{\fq}{{\mathfrak q}}

    %%%%%%%%%%%%%%%%%%%%%%%%%%%%%%%%%%%%%%%%%%%%
    %%%%%%    Math Symbols   %%%%%%%%%%%%%%%%%%%
    %%%%%%%%%%%%%%%%%%%%%%%%%%%%%%%%%%%%%%%%%%%%
    
    % arrows 
    \newcommand{\surj}{\twoheadrightarrow} 
    \newcommand{\inj}{\hookrightarrow}
    \newcommand{\isomto}{\xlongrightarrow{\,\smash{\raisebox{-0.65ex}{\ensuremath{\displaystyle\sim}}}\,}}
    \newcommand{\la}{\longrightarrow} % used in displayed equations
    \tikzset{ % open immersion in tikzcd
        open/.style = {decoration = {markings, mark = at position 0.5 with { \node[transform shape, scale = .7] {$\circ$}; } }, postaction = {decorate} }
    }

    % operators 
    \DeclareMathOperator{\id}{id}
    \DeclareMathOperator{\pr}{pr}
    \DeclareMathOperator{\Hom}{Hom}
    \DeclareMathOperator{\Aut}{Aut}
    
    \DeclareMathOperator{\coker}{coker}
    \DeclareMathOperator{\im}{im}    
       % field of fractions
    \DeclareMathOperator{\Gal}{Gal}
    \DeclareMathOperator{\Spec}{Spec}
    \DeclareMathOperator{\Bl}{Bl} % blow-up
            
    % subscripts, superscripts 

    \newcommand{\sep}{{\rm sep}} % sep closure
    \newcommand{\sh}{{\rm sh}}   % strict henselisation
    \newcommand{\gp}{{\rm gp}} 
    \newcommand{\qcqs}{{\rm qcqs}}
      % henselisation
    \newcommand{\ket}{\text{\rm k\'et}} 
    \newcommand{\sat}{{\rm sat}} 
    \newcommand{\inte}{{\rm int}}   % \int already occupied
    \newcommand{\rt}{{\rm t}}  % tame

     % categories     
    \newcommand{\cat}[1]{\mathbf{#1}}    

    \newcommand{\Gsets}[1]{{#1\text{-}\sets}}   % finite G-sets
    \newcommand{\KEtsite}[1]{#1_{\ket}} 
    
    \newcommand{\FEt}{\cat{F\acute{E}t}}   % Piotr
    \newcommand{\Mon}{\cat{Mon}}
    \newcommand{\Sat}{\cat{Sat}}
    \newcommand{\sets}{\cat{sets}} % category of finite sets

    % abbreviations
    % \newcommand{\ov}[1]{\mbox{${\overline{#1}}$}} 
    \newcommand{\ov}[1]{\overline{#1}}
    \newcommand{\vtheta}{\vartheta}

    % colors 

    % stacks project references
    \newcommand{\stacks}[2][Tag]{\cite[\href{https://stacks.math.columbia.edu/tag/#2}{#1~#2}]{StacksProject}}

    % paper specific macros
    \newcommand{\typeV}{(\mathrm{V})}
    \newcommand{\typeVd}{(\mathrm{V}_{\rm div})}

%%%%%%%%%%%%%%%%%%%%%%%%%%%%%%%%%%%%%%%%%%%%
    
    % subsubsection: redefining the font from italics to bold face
    \makeatletter
    \def\subsubsection{\@startsection{subsubsection}{3}%
      \z@{.5\linespacing\@plus.7\linespacing}{-.5em}%
      {\normalfont\bfseries}}
    \makeatother
    
    % cleverref setup
    \crefformat{subsection}{\S#2#1#3}
    
    % https://tex.stackexchange.com/questions/588480/use-cref-within-enumerate-and-theorem
    \newcommand{\crefpart}[2]{%
      \namecref{#1}~\hyperref[#2]{\labelcref*{#1}~\ref*{#2}}%
    }

%%%%%%%%%%%%%%%%%%%%%%%%%%%%%%%%%%%%%%%%%%%%
%%%%%%  Theoremstyles, Numbering, ... %%%%%%
%%%%%%%%%%%%%%%%%%%%%%%%%%%%%%%%%%%%%%%%%%%%

    \newtheorem{thm}{Theorem}[subsection]
    \newtheorem{prop}[thm]{Proposition}
    \newtheorem{lem}[thm]{Lemma}
    \newtheorem{cor}[thm]{Corollary}
    \newtheorem{thmABC}{Theorem}  % for intro
    
    \theoremstyle{definition}
    \newtheorem{defi}[thm]{Definition}
    \newtheorem{defiABC}[thmABC]{Definition} % for intro

    \newtheorem{rmk}[thm]{Remark}
    \newtheorem{rmks}[thm]{Remarks}
    \newtheorem{nota}[thm]{Notation}
    \newtheorem{ex}[thm]{Example}
    
    \numberwithin{equation}{subsection} % I don't want Theorem 5.37
    
%%%%%%%%%%%%%%%%%%%%%%%%%%%%%%%%%%%%%%%%%
%%%%%%%%% Comments     %%%%%%%%%%%%%%%%%%
%%%%%%%%%%%%%%%%%%%%%%%%%%%%%%%%%%%%%%%%%

    % for inline comments

%%%%%%%%%%%%%%%%%%%%%%%%%%%%%%%%%%%%%%%%%
%%%%%%%%%% Metadata       %%%%%%%%%%%%%%%
%%%%%%%%%%%%%%%%%%%%%%%%%%%%%%%%%%%%%%%%%

    \title{Logarithmic geometry beyond fs} 
    
    \author{Piotr Achinger}
    \address{Piotr Achinger, Instytut Matematyczny PAN, Śniadeckich 8, 00-656 Warsaw, Poland}
    \email{pachinger@impan.pl}
    
    \author{Katharina H{\"u}bner}
    \address{Katharina H{\"u}bner, Institut f\"ur Mathematik, Goethe--Universit\"at Frankfurt, Robert-Mayer-Stra\ss e~6--8, 60325~Frankfurt am Main, Germany}
    \email{huebner@math.uni-frankfurt.de}
    
    \author{Marcin Lara}
    \address{Marcin Lara, Instytut Matematyczny PAN, Śniadeckich 8, 00-656 Warsaw, Poland}
    \email{marcin.lara@impan.pl}
    
    \author{Jakob Stix}
    \address{Jakob Stix, Institut f\"ur Mathematik, Goethe--Universit\"at Frankfurt, Robert-Mayer-Str.~6--8, 60325~Frankfurt am Main, Germany}
    \email{stix@math.uni-frankfurt.de}

    \newcommand{\grants}{The first author (PA) was supported by the project KAPIBARA funded by the European Research Council (ERC) under the European Union's Horizon 2020 research and innovation programme (grant agreement No 802787). The second to fourth authors (KH, ML, and JS) acknowledge support by Deutsche Forschungsgemeinschaft (DFG) through the Collaborative Research Centre TRR 326 \emph{Geometry and Arithmetic of Uniformized Structures} project number 444845124. The third author (ML) was later supported by the National Science Centre, Poland, grant number 2023/51/D/ST1/02294. For the purpose of Open Access, the author has applied a CC-BY public copyright licence to any Author Accepted Manuscript (AAM) version arising from this submission.}
    
    % \thanks{\grants}
    
    %\subjclass[2000]{ }
    %\keywords{}
    \date{\today} 
    %\dedicatory{dedication: to whom it may concern?}

%%%%%%%%%%%%%%%%%%%%%%%%%%%%%%%%%%%%%%%%%
%%%%%%%%%% Frontmatter    %%%%%%%%%%%%%%%
%%%%%%%%%%%%%%%%%%%%%%%%%%%%%%%%%%%%%%%%%
\begin{document}

\hrule width\hsize
\vskip 1cm

\maketitle

\begin{quotation} 
    \noindent \small {\bf Abstract} --- We develop the foundations of logarithmic structures beyond the standard finiteness conditions. The motivation is the study of semistable models over general valuation rings. The key new notion is that of a morphism of finite presentation up to saturation (sfp), which is one that is qcqs and which is locally isomorphic to the saturated base change of a finitely presented morphism between fs log schemes. As in the case of schemes, sfp maps can (locally on the base) be approximated by maps between fs log schemes of finite type over~$\mathbb{Z}$. Based on sfp maps, we define smooth, \'etale, and Kummer \'etale maps. Importantly, the maps of schemes underlying such maps are no longer of finite type in general, though surprisingly they are if the base is the spectrum of a valuation ring with algebraically closed field of fractions. These foundations allow us to extend beyond the fs case the theory of the Kummer \'etale site and of the Kummer \'etale fundamental group.
\end{quotation}

%%%%%%%%%%%%%%%%%%%%%%%%%%%%%%%%%%%%%%%%%%%
%%%%%% Table of contents %%%%%%%%%%%%%%%%%%
%%%%%%%%%%%%%%%%%%%%%%%%%%%%%%%%%%%%%%%%%%%
\DeclareRobustCommand{\SkipTocEntry}[5]{} % https://lyx-users.lyx.narkive.com/BK511uAb/the-documentclass-book-ams-adds-unnumbered-sections-to-toc
\setcounter{tocdepth}{2} 
{\small \tableofcontents}

%%%%%%%%%%%%%%%%%%%%%%%%%%%%%%%%%%%%%%%%%%%%%%%%
%%%%%% Main matter     %%%%%%%%%%%%%%%%%%%%%%%%%
%%%%%%%%%%%%%%%%%%%%%%%%%%%%%%%%%%%%%%%%%%%%%%%%

%==========================================
\section{Introduction}
\label{s:intro}
%==========================================

Logarithmic geometry, as envisioned by Fontaine and Illusie and developed by 
K.~Kato (see \cite{Kato1989:LogarithmicStructures,HofM,Ogus}), is a~framework allowing one to deal more easily with compactifications and degenerations in algebraic geometry. For example, a~semistable scheme over a~discrete valuation ring, equipped with the natural logarithmic structure, becomes smooth in the sense of logarithmic geometry.

However, while the theory of schemes has been famously developed in complete generality, allowing spectra of arbitrary commutative rings as local models, logarithmic geometry has been mostly limited to log structures which locally admit a~chart by a~finitely generated (or \emph{fs}, for finitely generated and saturated) monoid. Indeed, the notion of a~log structure seems to be too general to produce meaningful universal results --- e.g., sheaves of differentials are not always quasi-coherent. Even if one restricts their attention to log structures which locally admit a~chart (by a~possibly not finitely generated monoid), called quasi-coherent, one quickly runs into foundational issues, related e.g.\ to the existence of charts for morphisms (see Remarks~\labelcref{rmk:chart-lifting} and \labelcref{rmk:refer to Tsuji exmaple in appendix}).

There are situations of geometric interest where the natural log structures do not satisfy the customary finiteness conditions. For example, the standard log structure on the spectrum of a~valuation ring $K^+$ with fraction field $K$, divided modulo units, is the non-negative part $\Gamma_K^+ = (K^+\cap K^\times)/(K^+)^\times$ of its value group $\Gamma_K = K^\times/(K^+)^\times$. This monoid will not be finitely generated unless $K^+$ is a field or a~discrete valuation ring. Therefore, in order to deal with semistable models over more general valuation rings such as $\cO_{\bC_p}$, one has to work with monoids which, while not being finitely generated, are (close to being) finitely generated over the ``base'' monoid $\Gamma^+_K$. 

The goal of this paper is to develop the necessary foundations of logarithmic structures beyond the category of fs log schemes, with a particular focus on the Kummer \'etale topology and on log schemes over (not necessarily discrete) valuation rings. In the context of semistable reduction and prismatic cohomology, similar though less general approaches have been suggested and/or developed by Koshikawa \cite{Koshikawa}, Adiprasito--Liu--Pak--Temkin \cite{ALPT}, Zavyalov \cite{Zavyalov}, and Diao--Yao \cite{DiaoYao}. Our work grew out of a~need for a~more general theory of the Kummer \'etale fundamental group, which arose in the proof of finite generation of the tame fundamental group of a rigid-analytic space in \cite{AHLS_Tame}.

The new basic notion, introduced in \cref{def:sfp}, is that of an sfp (finitely presented up to saturation) morphism of saturated\footnote{We will mostly deal with saturated monoids, due to our intended applications. One can develop a~similar story for integral monoids without putting in more effort.} monoids. 

\begin{defiABC}[\cref{def:sfp}]
    A morphism of saturated monoids $P\to Q$ is {\bf sfp} (finitely presented up to saturation) if it admits a factorisation
    \[ 
        P \la Q' \la Q
    \]
    where $P\to Q'$ is a morphism of finite presentation (where $Q'$ is not necessarily saturated or integral) and where $Q'\to Q$ induces an isomorphism $(Q')^\sat\isomto Q$.
\end{defiABC}

Equivalently, $P\to Q$ is sfp if $Q = (P\oplus_{P_0} Q_0)^\sat$ for a~homomorphism of fs monoids $P_0\to Q_0$ and a map $P_0\to P$. Sfp morphisms $P\to Q$ are precisely the compact objects of the category of saturated monoids over $P$. Based on this definition, we define smooth, \'etale, and Kummer \'etale homomorphisms between saturated monoids. 

\begin{defiABC}[\cref{def:smooth-etale-Ket-monoids}] 
    Let $\Sigma$ be a set of primes and let $\vtheta \colon P\to Q$ be a morphism of saturated monoids.
    \begin{enumerate}[(a)]
        \item We say that $\vtheta$ is {\bf smooth} if it is sfp, and if the kernel and the torsion part of the cokernel of $\vtheta^\gp\colon P^\gp\to Q^\gp$ are of order prime to all $p\in \Sigma$.
        \item The map $\vtheta$ is {\bf \'etale} if it is smooth and if moreover the cokernel of $\vtheta^\gp$ is torsion. 
        \item The map $\vtheta$ is {\bf Kummer \'etale} if it is injective, \'etale, and exact.
    \end{enumerate}
\end{defiABC}

The theory of sfp morphisms of monoids becomes particularly nice over a~base valuative monoid, and even better over a~divisible valuative monoid. Recall that an integral monoid $V$ is valuative if $V^\gp = V \cup (-V)$. It is divisible if $V^\gp$ is a divisible group. Notably, the monoid $K^+\cap K^\times$ giving the log structure on the spectrum of a valuation ring $K^+$ is valuative, and is divisible if $K$ is algebraically closed. Using the foundational results of F.~Kato \cite{FKato} and T.~Tsuji \cite{Tsuji}, we show in \cref{ss:typeV-typeVd} that if $V\to Q$ is an sfp morphism, with $V$ valuative, then $V\to Q$ is integral and after a Kummer \'etale extension $V\to W$ the saturated base change $W\to (Q\oplus_V W)^\sat$ is finitely presented and saturated, see \cref{cor:RFT} (in analogy with rigid-analytic geometry, we call this result the Reduced Fibre Theorem). In particular, if $V$ is also divisible, then every sfp morphism $V\to Q$ is finitely presented and saturated (an analogue of the Grauert--Remmert finiteness theorem in rigid-analytic geometry). Due to these useful features, and motivated by the geometric applications, we introduce the following notion: a~monoid is of type $\typeV$ (resp.\ of $\typeVd$) if it is sfp over a valuative (resp.\ divisible valuative) monoid. 

In \cref{s:log-schemes} we turn our attention to log schemes. The log schemes we consider are saturated in the sense that they admit local charts by saturated monoids (see \cref{rmk:warning-sat-int-nonstandard}). 

\begin{defiABC}[\cref{defi:sfp log map} and \cref{rmk:maps between fs are sfp}]
    A morphism of saturated log schemes $Y\to X$ is {\bf locally sfp} if \'etale locally on source and target it admits a chart by an sfp morphism of monoids $P\to Q$ such that the induced strict map 
    \[
        Y\la X\times_{\Spec(\bZ[P])}\Spec(\bZ[Q])
    \]
    is of finite presentation (as a morphism of schemes). We say that $Y\to X$ is {\bf sfp} if it is locally sfp and qcqs.
\end{defiABC}

\noindent If $X$ is an fs log scheme, a (locally) sfp map $Y\to X$ is just a morphism of fs log schemes whose underlying map of schemes is (locally) of finite presentation. Crucially, we show in \cref{cor:sfp-chart-lifting} that (locally) sfp morphisms enjoy a ``chart lifting property'', which allows us to show e.g.\ that they are stable under composition and saturated base change. One of our main results concerning sfp morphisms of log schemes is that a~qcqs map $Y\to X$ is sfp if and only if  \'etale locally \emph{on $X$} it is the saturated base change of a map of fs log schemes of finite type over $\bZ$. More generally, we obtain the following approximation result in the style of \cite[Th\'eor\`eme 8.5.2]{EGAIV3}.

\begin{thmABC}[\cref{thm:sfp-approx}]
    Let $\{X_i\}$ be an affine charted inverse system (\cref{def:affine-charted-system}) of qcqs saturated log schemes and let $X = \varprojlim X_i$ be its inverse limit. Denote by $\cat{Sfp}_X$ (resp.\ $\cat{Sfp}_{X_i}$) the category of sfp morphisms with target $X$ (resp.\ $X_i$). Then, saturated base change induces an equivalence of categories
    \[ 
        \varinjlim_{i\in I} \cat{Sfp}_{X_i} \isomto \cat{Sfp}_X.
    \]
\end{thmABC}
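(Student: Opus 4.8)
The strategy is to run the proof of \cite[Th\'eor\`eme~8.5.2]{EGAIV3} in the logarithmic setting, feeding in the results already established: the chart lifting property for (locally) sfp maps (\cref{cor:sfp-chart-lifting}), the stability of sfp maps under composition and saturated base change, the characterisation of sfp maps as those which \'etale-locally on the base are saturated base changes of finitely presented maps of fs log schemes of finite type over~$\bZ$, and --- on the combinatorial side --- the fact that the sfp homomorphisms out of a saturated monoid $P$ are exactly the compact objects of the category of saturated monoids over $P$, together with the compatibility of saturation and of saturated pushout with filtered colimits. Write $X_i = \Spec(A_i, P_i)$ for the affine charted inverse system (\cref{def:affine-charted-system}), so that $A = \varinjlim A_i$, $P = \varinjlim P_i$, and $X = \Spec(A, P)$; the transition functors in $\varinjlim_i \cat{Sfp}_{X_i}$ and the comparison functor are saturated base change, written $(-)_{X_i}$ and $(-)_X$.

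\emph{Full faithfulness.} Fix $i_0$ and $Y, Y' \in \cat{Sfp}_{X_{i_0}}$; we must show that $\varinjlim_{i \ge i_0}\Hom_{X_i}(Y_{X_i}, Y'_{X_i}) \to \Hom_X(Y_X, Y'_X)$ is bijective. Since $Y$ and $Y'$ are qcqs, chart lifting together with an \'etale-local-to-global reduction (replacing $X_{i_0}$ by an affine charted \'etale cover and the whole system by its base change) allows us to assume that $Y$ and $Y'$ are charted over $X_{i_0}$ by sfp homomorphisms $P_{i_0} \to Q$ and $P_{i_0} \to Q'$, with the associated strict maps to $X_{i_0}\times_{\Spec(\bZ[P_{i_0}])}\Spec(\bZ[Q])$, resp.\ $Q'$, of finite presentation. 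An $X$-morphism $Y_X \to Y'_X$ then consists of a homomorphism $Q' \to Q$ over $P$ plus a compatible morphism of the underlying strict $X$-schemes. The first is detected at a finite level because the saturated base change of $Q$ to $P=\varinjlim P_i$ is a compact object of the category of saturated monoids over $P$ (concretely: $Q = (P\oplus_{P_0}Q_0)^\sat$ for a finitely generated $P_0$, the map $P_0\to P$ factors through some $P_i$, and likewise for $Q'$ and for the homomorphism). The second is detected at a finite level by \cite[\S8]{EGAIV3}, since those strict schemes are of finite presentation over the base. Their compatibility is a finitary condition, hence holds after enlarging $i$. This gives surjectivity; injectivity is the same argument applied to two morphisms becoming equal over~$X$.

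\emph{Essential surjectivity.} Let $f\colon Y \to X$ be sfp. As $X$ is qcqs, there are a finite \'etale cover $\{X_\alpha \to X\}$ (with the $X_\alpha$ affine after refinement), maps $X_\alpha \to \mathcal{X}_\alpha$ to fs log schemes of finite type over $\bZ$, and finitely presented maps $\mathcal{Y}_\alpha \to \mathcal{X}_\alpha$ of fs log schemes, with isomorphisms $Y\times_X X_\alpha \cong (\mathcal{Y}_\alpha\times_{\mathcal{X}_\alpha}X_\alpha)^\sat$ over $X_\alpha$. Each $\mathcal{Y}_\alpha \to \mathcal{X}_\alpha$ is already defined over $\bZ$, so it remains to descend to a finite level: the cover $\{X_\alpha \to X\}$ and the maps $X_\alpha \to \mathcal{X}_\alpha$ (by scheme- and monoid-level limit theory as above, applied to the underlying schemes and to the charts), the gluing isomorphisms, and the descent datum on $Y$ relative to $\{X_\alpha\to X\}$ (by the full faithfulness just proved), with the cocycle identity --- again finitary --- holding after a further enlargement of $i$. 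One then takes $Y_i$ to be the \'etale descent of the sfp log schemes $(\mathcal{Y}_\alpha\times_{\mathcal{X}_\alpha}X_{\alpha,i})^\sat \to X_i$ along $\{X_{\alpha,i}\to X_i\}$, which exists because \'etale descent is effective for sfp log schemes (it is for schemes, and charts descend); its saturated base change to $X$ recovers $f$.

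\emph{Main obstacle.} The substantive difficulty lies entirely in essential surjectivity, and specifically in descending \emph{simultaneously}, at a single finite level, the \'etale-local chart/presentation data for $Y$ and the \'etale gluing data. Since the underlying map of schemes of an sfp log morphism need not be of finite presentation over the base, one cannot apply \cite[\S8]{EGAIV3} directly: everything must be routed through charts --- which is exactly what chart lifting (\cref{cor:sfp-chart-lifting}) is designed to allow --- and the monoid-theoretic bookkeeping (compactness of sfp monoid maps, compatibility of saturation with filtered colimits) has to be interleaved with the scheme-theoretic one at every step. The other load-bearing ingredient, which must be in place, is the effectivity of \'etale descent for sfp log schemes.
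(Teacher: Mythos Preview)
Your overall architecture---reduce to charts, handle the affine charted case via compactness of sfp prelog-ring maps, and glue by \'etale descent---is the same as the paper's, but two of the steps you sketch would not go through as written.

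\textbf{Full faithfulness.} Your claim that ``an $X$-morphism $Y_X \to Y'_X$ consists of a homomorphism $Q' \to Q$ over $P$ plus a compatible morphism of the underlying strict $X$-schemes'' is incorrect: a chart is auxiliary data, and morphisms of log schemes need not respect chosen charts. The paper instead uses the adjunction \cref{lem:maps to affine charted}, $\Hom_X(Z,\Spec(Q'\to B'))=\Hom_{(P\to A)}\big((Q'\to B'),(\cM_Z(Z)\to\cO_Z(Z))\big)$, so what must be shown to commute with the limit is $\cM_Z(Z)$ and $\cO_Z(Z)$, not the chart monoid. This is the content of the separate \cref{lem:M-of-lim}. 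With this in hand, full faithfulness for ``special'' (globally charted) targets follows; the general case then needs a further argument (the paper covers $Y_0$ by special objects and works with the resulting coequalizer diagram). Also note that an \emph{affine charted inverse system} (\cref{def:affine-charted-system}) does not mean that each $X_i$ is globally of the form $\Spec(P_i\to A_i)$; the paper first reduces to that case by a stack argument on the strict \'etale site of $X$.

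\textbf{Essential surjectivity.} Your final step, ``\'etale descent is effective for sfp log schemes (it is for schemes, and charts descend)'', hides the main difficulty. \'Etale descent of qcqs morphisms of schemes yields only an algebraic space; there is no reason a priori that the descended object $Y_i$ is a scheme. The paper handles this by covering $Y$ (not $X$) by special objects $U$, setting $R=U\times_Y U$, descending the \'etale groupoid $(U_i,R_i,\ldots)$ via the already-proved full faithfulness, and then invoking Rydh's approximation theorem \cite[Theorem~C(iii)]{RydhApproximation}: since the inverse limit of the quotient stacks $Y_i=U_i/R_i$ is the scheme $Y$, the $Y_i$ are schemes for $i\gg 0$. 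Only then does one equip $Y_i$ with its log structure by descent along the strict \'etale cover $U_i\to Y_i$. Your sketch also tacitly assumes that $Y\times_X X_\alpha$ is globally a saturated base change from an fs model, but characterization \labelcref{propitem:sfp-bc-over-Z} of \cref{prop:sfp morphisms} is \'etale local on $X$ \emph{and} $Y$, which is another reason the paper's cover is of $Y$ rather than of $X$.
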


\noindent 
This theorem allows us to reduce many questions regarding saturated log schemes to the case of fs log structures studied in the literature. One might hope that it could be supplemented with an ``absolute approximation'' result in the style of Thomason--Trobaugh \cite[Theorem~C.9]{ThomasonTrobaugh}, to the effect that every qcqs saturated log scheme can be expressed as the inverse limit $X = \varprojlim X_i$ of an affine charted inverse system where $X_i$ are fs and of finite type over $\bZ$. However, we show in \cref{ex:no-absolute-approx} that this hope is too optimistic.

Again, the notion of an sfp morphism is the base for the definitions of smooth, \'etale, and Kummer \'etale morphisms. 

\begin{defiABC}[\cref{defi:smooth etale ket}] 
    A morphism of saturated log schemes $Y\to X$ is {\bf smooth} (resp.\ {\bf \'etale}, resp.\ {\bf Kummer \'etale}) if \'etale locally on source and target it admits a chart by a smooth (resp.\ \'etale, resp.\ Kummer \'etale) morphism of monoids $P\to Q$ (with $\Sigma$ the set of primes non-invertible on $X$) such that the induced strict map 
    \[
        Y\la X\times_{\Spec(\bZ[P])}\Spec(\bZ[Q])
    \]
    is smooth (resp.\ \'etale, resp. \'etale). In particular, such a morphism is locally sfp.
\end{defiABC}

It is important to note that the morphisms of schemes underlying smooth maps might not be of finite type. For example, we prove in \cref{ss:kummer-val-rings} that if $(K, K^+)$ is a henselian valued field and $L/K$ is a~tamely ramified finite separable extension, then with the standard log structures the morphism $\Spec(L^+)\to\Spec(K^+)$ is Kummer \'etale. However, we show in \cref{lem:ext-of-val-mon-fg} that it very rarely happens that $L^+$ is finite over $K^+$ if $K^+$ is not a discrete valuation ring (see \cref{ss:kummer-val-rings} for explicit examples). 

With these foundations in place, we study the Kummer \'etale topology $X_\ket$ and the Kummer \'etale fundamental group $\pi_1(X)$ in \cref{s:kummer-pi1}. The theory largely parallels the fs case.
Note, however, that \textit{finite} Kummer \'etale covers are Kummer \'etale maps $Y\to X$ whose underlying map of schemes is integral (but not necessarily finite). It therefore takes some effort to descend such maps to finite Kummer \'etale morphisms between fs log schemes (see \cref{prop:fKet-approx}). The following theorem summarizes the results.

\begin{thmABC}
    Let $X$ be a saturated log scheme.
    \begin{enumerate}[(a)]
        \item (Subcanonicality, \cref{prop:Ket-subcanonical}) The site $X_\ket$ is subcanonical (representable pre\-sheaves are sheaves).
        \item (Topological invariance, \cref{prop:top invariance ket site}) If $X_0\to X$ is a strict universal homeomorphism, then $X_\ket\simeq X_{0,\ket}$.
        \item (Coherence, \crefpart{prop:limit-Ket-topos}{propitem:limit-Ket-coherent}) If $X$ is qcqs, then $\cat{Sh}(X_\ket)$ is a coherent topos.
        \item (Approximation,
        \crefpart{prop:limit-Ket-topos}{propitem:limit-Ket-limit}) If $X = \varprojlim X_i$ is the inverse limit of an affine charted inverse system of qcqs saturated log schemes, then $\cat{Sh}(X_\ket)\simeq \varprojlim \cat{Sh}(X_{i,\ket})$.

        \item \label{thmitem:descent} (Descent, \cref{prop:FEt-equals-lcc}) 
        The category $\FEt_X$ of finite Kummer \'etale maps $Y\to X$ is equivalent to the category $\cat{lcc}(X_\ket)$ of locally constant Kummer \'etale sheaves of finite sets on $X$. 
    \end{enumerate}
\end{thmABC}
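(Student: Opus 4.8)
The plan is to deduce the five assertions from the foundational results already in place --- principally the chart lifting property of \cref{cor:sfp-chart-lifting} and the sfp approximation theorem \cref{thm:sfp-approx} --- together with the general machinery of coherent topoi, the recurring idea being to reduce each question to the fs case already understood by Kato, Illusie and Nakayama. Concretely, (a)--(e) will be \cref{prop:Ket-subcanonical}, \cref{prop:top invariance ket site}, \crefpart{prop:limit-Ket-topos}{propitem:limit-Ket-coherent}, \crefpart{prop:limit-Ket-topos}{propitem:limit-Ket-limit} and \cref{prop:FEt-equals-lcc}, proved in the body of \cref{s:kummer-pi1}; I sketch here how I would attack each one.

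For (a) I would argue that a Kummer \'etale covering family $\{U_i\to U\}$ has underlying morphism $\coprod U_i\to U$ faithfully flat and quasi-compact --- Kummer \'etale maps are flat on underlying schemes, being \'etale-locally base changes of Kummer covers $\Spec(\bZ[Q])\to\Spec(\bZ[P])$ --- and that the saturated log structures, being given by charts, descend along it; then subcanonicality follows from fpqc descent for morphisms of schemes plus descent for quasi-coherent log structures. For (b), a strict universal homeomorphism $X_0\to X$ induces an equivalence of small \'etale sites by the classical result, and since it is strict all ambient log structures are pulled back; as Kummer \'etale maps are \'etale-locally assembled from strict \'etale maps and Kummer covers, and Kummer covers depend only on the (unchanged) chart monoid, base change gives $X_\ket\simeq X_{0,\ket}$ compatibly with coverings. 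For (c), when $X$ is qcqs the sfp objects of $X_\ket$ form a generating family closed under fiber products by \cref{cor:sfp-chart-lifting}, consisting of quasi-compact quasi-separated objects, and every Kummer \'etale covering of such an object can be refined to a finite one; the recognition criterion for coherent topoi (SGA 4, Exp.\ VI) then applies.

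For (d) the point is that by \cref{thm:sfp-approx} every sfp morphism to $X=\varprojlim X_i$ --- in particular every Kummer \'etale morphism and every Kummer \'etale covering --- descends to some $X_i$, uniquely up to refining the index, so that $X_\ket$ is the $2$-filtered colimit of the sites $X_{i,\ket}$; combined with the coherence from (c), the standard limit theorem for coherent topoi yields $\cat{Sh}(X_\ket)\simeq\varprojlim\cat{Sh}(X_{i,\ket})$.

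Assertion (e) is the substantive one, and where I expect the real work. One direction is comparatively soft: a finite Kummer \'etale $Y\to X$ is representable in $X_\ket$ by (a), and it is Kummer \'etale locally trivial --- reducing Kummer \'etale locally to the Galois case $G\curvearrowright Y$, where $Y\times_X Y\cong\coprod_{g\in G}Y$ --- so $\Hom_X(-,Y)$ is locally constant with finite stalks. Conversely, given $\cF\in\cat{lcc}(X_\ket)$ trivialized by a Kummer \'etale covering $X'\to X$, the restriction $\cF|_{X'}$ is the constant sheaf on a finite set $S$ and hence represented by the strict (in particular finite) Kummer \'etale $X$-scheme $\coprod_S X'$, and the gluing isomorphism of $\cF$ over $X'\times_X X'$ equips $\coprod_S X'$ with a descent datum relative to $X'\to X$. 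The crux --- and the main obstacle --- is the effectivity of such descent data within $\FEt_X$: I would prove this by approximating $X'\to X$ and the descent datum, via \cref{thm:sfp-approx} and the descent statement \cref{prop:fKet-approx}, by analogous data over an fs log scheme of finite type over $\bZ$, where effectivity of Kummer \'etale descent for finite Kummer \'etale morphisms is known; the technical subtlety, flagged before the statement, is that a finite Kummer \'etale morphism has underlying map of schemes that is integral but usually not finite, so it cannot be descended naively and \cref{prop:fKet-approx} is precisely what is needed to replace it by a genuinely finite Kummer \'etale morphism of fs log schemes. Granting effectivity, one finally checks that $Y\mapsto\Hom_X(-,Y)$ and the assignment $\cF\mapsto(\text{descended cover})$ are mutually quasi-inverse by comparing them after pullback along $X'\to X$, where everything becomes strict and the statement collapses to the equivalence between finite sets and constant sheaves.
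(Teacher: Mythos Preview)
Your plan for parts (c), (d), and (e) is sound and tracks the paper's arguments closely; (b) is a reasonable sketch, though the paper is more careful about essential surjectivity (it needs a gluing argument and a lemma about algebraic spaces to show the descended object is a scheme).

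The genuine gap is in (a). Kummer \'etale maps are \emph{not} flat on underlying schemes: the standard Kummer cover $\bA_Q\to\bA_P$ for a Kummer \'etale map $P\to Q$ of fs monoids is the normalisation in a finite extension and is flat only in dimension $\leq 1$. The paper itself notes this (see the remark preceding \cref{lem:kummer-of-valuative}): the quotient map $\bA^2_\bQ\to\bA^2_\bQ/\mu_2$ is Kummer \'etale but not flat, since the fibre over the singular point has length $3$ while the generic degree is $2$. So your fpqc-descent argument for subcanonicality collapses at its first step. The paper proves subcanonicality instead by reducing, via the approximation results you already invoke elsewhere, to the fs case (where it is a theorem of Kato) and then passing to filtered colimits; a separate step handles the case of non-qcqs $X$ and $Y$ by a careful Zariski-local patching. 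You should rewrite your approach to (a) along these lines rather than through flatness.
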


As a rather formal consequence of \labelcref{thmitem:descent}, if $X$ is connected and $\ov{x}\to X$ is a log geometric point, then $\FEt_X$ is a Galois category on which $\ov{x}$ induces a fibre functor. We denote by $\pi_1(X,\ov{x})$ the corresponding fundamental group and call it the {\bf Kummer \'etale fundamental group} of $X$. We exemplify it in the case of spectra of valuation rings with their standard log structure by showing that the Kummer \'etale fundamental group agrees with the tame Galois group of the fraction field, which is a~non-noetherian variant of Abhyankar's lemma. 

\begin{thmABC}[Abhyankar's Lemma, \cref{cor:pi1-vs-Galois}]
    Let $K^+$ be a henselian valuation ring with fraction field $K$. We let $S_K = \Spec(K^+ \cap K^\times \to K^+)$ be the scheme $\Spec(K^+)$ endowed  with the log structure charted by the valuative monoid $K^+\cap K^\times$. Then, there exists a canonical isomorphism of profinite groups
    \[ 
        \Gal^\rt(K^\sep/K) \isomto \pi_1(S_K, \star)
    \]
    between the the tame quotient $\Gal^\rt(K^\sep/K)$ of the absolute Galois group $\Gal(K^\sep/K)$ and the Kummer \'etale fundamental group of $S_K$.
\end{thmABC}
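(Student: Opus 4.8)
The plan is to identify, compatibly with fibre functors, the Galois category $\FEt_{S_K}$ with the category $\FEt^{\rt}_K$ of finite \'etale $K$-algebras that are tamely ramified with respect to the valuation $K^+$; since $\FEt^{\rt}_K$ is the category of finite continuous $\Gal^{\rt}(K^\sep/K)$-sets and $\pi_1(S_K,\star)$ is by construction the automorphism group of a fibre functor on the Galois category $\FEt_{S_K}$ (see \cref{prop:FEt-equals-lcc} and the discussion following it), this gives the desired isomorphism of profinite groups. As every nonzero element of $K$ is a unit, the log structure of $S_K$ becomes trivial on the generic point $\Spec(K)\hookrightarrow\Spec(K^+)$, so a finite Kummer \'etale cover $Y\to S_K$ restricts there to a finite \'etale $K$-algebra $\cO(Y_K)$, where $Y_K := Y\times_{S_K}\Spec(K)$; this defines an exact functor $r\colon\FEt_{S_K}\to\FEt_{\Spec(K)}$, compatible with finite disjoint unions. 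Moreover a log geometric point $\star\to S_K$ restricts to a geometric point of $\Spec(K)$ valued in a separable closure $K^\sep$, so $r$ intertwines the relevant fibre functors. It therefore suffices to show that $r$ is fully faithful with essential image $\FEt^{\rt}_K$.

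I would first determine the essential image of $r$. If $L/K$ is a finite tamely ramified separable extension, then by the results of \cref{ss:kummer-val-rings} the morphism $\Spec(L^+)\to\Spec(K^+)$ with standard log structures is Kummer \'etale, and its underlying map is integral because $L^+$ is integral over $K^+$; since $\Spec(L^+)_K=\Spec(L)$ and $\FEt_{S_K}$ is closed under finite disjoint unions, the image of $r$ contains all of $\FEt^{\rt}_K$. Conversely, let $Y\to S_K$ be a finite Kummer \'etale cover. \'Etale-locally it is strict \'etale over $S_K\times_{\Spec(\bZ[P])}\Spec(\bZ[Q])$ for a Kummer \'etale morphism of monoids $P\to Q$, where $P=K^+\cap K^\times$ is the chart monoid of $S_K$; as $P$ is valuative, so is $Q$, with $Q^\gp\supseteq P^\gp$ of finite index prime to the residue characteristic $p$, while the residue extension of the cover is cut out by the strict \'etale direction and is hence separable. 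Unwinding this over the (unique, as $K^+$ is henselian) extension of the valuation shows that $\cO(Y_K)/K$ has ramification index prime to $p$ and separable residue extension, i.e.\ is tame. Thus the essential image of $r$ is exactly $\FEt^{\rt}_K$.

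It remains to prove that $r$ is fully faithful, which is the heart of the argument. Faithfulness reduces to the claim that $\cO(Y)$ is $K^+$-torsion-free for every $Y\in\FEt_{S_K}$ (so that $\cO(Y)\hookrightarrow\cO(Y_K)$, whence a morphism is determined by its restriction to generic fibres): in the local description above this holds because $\bZ[P]\to\bZ[Q]$ is flat --- Kummer morphisms of monoids are flat --- and strict \'etale maps are flat. Fullness reduces to the claim that $\cO(Y)$ is integrally closed in $\cO(Y_K)$, i.e.\ that $Y$ is the normalization of $S_K$ in $\cO(Y_K)$: any $K$-morphism $Y_K\to Y'_K$ carries $\cO(Y')$, which is integral over $K^+$, into the integral closure of $K^+$ in $\cO(Y_K)$, and one needs this closure to be $\cO(Y)$ itself. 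For fs log regular schemes this ``normality of Kummer \'etale covers'' is classical; but since $K^+$ is in general neither noetherian nor essentially of finite type, I would instead deduce it from the structure theory of sfp morphisms over valuative monoids developed in \cref{ss:typeV-typeVd} --- the Reduced Fibre Theorem \cref{cor:RFT} together with the approximation of finite Kummer \'etale covers by finite Kummer \'etale maps between fs log schemes (\cref{prop:fKet-approx}) --- thereby reducing the statement to the fs case. Granting this, $r$ is an equivalence $\FEt_{S_K}\isomto\FEt^{\rt}_K$ compatible with fibre functors, and passing to automorphism groups of the fibre functors gives the canonical isomorphism $\Gal^{\rt}(K^\sep/K)\isomto\pi_1(S_K,\star)$: the tautological $\Gal(K^\sep/K)$-action on the embedding sets $\Hom_K(\cO(Y_K),K^\sep)$ computing the fibres factors through $\Gal^{\rt}(K^\sep/K)$, and is detected by it, precisely because all objects in the image of $r$ are tame. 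The decisive obstacle is the normality step, which is exactly where the non-noetherian valuation-ring foundations of the earlier sections are indispensable.
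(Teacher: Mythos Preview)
Your overall strategy—identifying $\FEt_{S_K}$ with the category of tame finite \'etale $K$-algebras via the generic-fibre functor $r$, then reading off the isomorphism on automorphism groups of fibre functors—is correct and is essentially what the paper's terse ``restatement of \cref{prop:ket extension of valuation rings}'' must unpack to. The essential-image argument and the faithfulness argument are fine in outline, though your aside that ``Kummer morphisms of monoids are flat'' is false in general (e.g.\ $\bN^2\hookrightarrow\tfrac12\bN^2$); what you actually need, and what holds here because the base monoid $P=K^+\cap K^\times$ is valuative, is \cref{lem:kummer-of-valuative}.

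The genuine gap is in the fullness/normality step. Your plan is to reduce normality of $\cO(Y)$ to the fs log regular case via \cref{prop:fKet-approx} and the Reduced Fibre Theorem \cref{cor:RFT}. But \cref{prop:fKet-approx} only descends $Y\to S_K$ to a finite Kummer \'etale map $Y_i\to S_i$ with $S_i=\Spec(P_i\to A_i)$ where $P_i$ is fs and $A_i$ is an arbitrary finite-type $\bZ$-algebra; such $S_i$ have no reason to be log regular, so the classical ``Kummer covers of log regular schemes are normal'' is unavailable there. And \cref{cor:RFT} concerns making a map saturated after a Kummer base change, which is orthogonal to normality. So the reduction you sketch does not go through. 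A correct argument stays closer to the paper's ingredients: pass to the strict henselisation $K^{\sh}$ (the morphism $S_{K^{\sh}}\to S_K$ is strict pro-\'etale and faithfully flat, so normality descends), apply \cref{lem:fKet-local-structure} to write $Y_{K^{\sh}}$ as a finite disjoint union of standard covers $S_{K^{\sh}}\times_{\bA_P}\bA_{Q_j}$, and then use the explicit computation in the proof of \cref{prop:ket extension of valuation rings}—iterated along a decomposition of $Q_j^{\gp}/P^{\gp}$ into cyclic factors—to identify each $K^{\sh,+}\otimes_{\bZ[P]}\bZ[Q_j]$ with a product of valuation rings $L^+$ for tame $L/K^{\sh}$, hence normal. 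Equivalently, one can bypass normality altogether and invoke \cref{prop:local log pt} to compute $\pi_1(S_{K^{\sh}})\simeq\Hom(\Gamma_K,\widehat\bZ'(1))$ directly, match it with the tame inertia of $K^{\sh}$, and then descend along the unramified tower; this is arguably the route the paper has in mind.
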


The final \cref{s:semistable} is devoted to smooth log schemes over the spectrum of a valuation ring $K^+$ endowed with the standard log structure. We first introduce, in analogy with non-archimedean geometry, the following notion: a saturated log scheme is of type $\typeV$ (resp.\ $\typeVd$) if it locally admits a chart by a monoid of $\typeV$ (resp.\ $\typeVd$). Thus, if $X$ a log scheme which is locally sfp over $\Spec(K^+)$, then $X$ is of type $\typeV$, and of type $\typeVd$ if the fraction field $K$ of $K^+$ is algebraically closed. The key result, proved using Zavyalov's approximation lemma \cite[Lemma~A.2]{Zavyalov}, is the following.

\begin{thmABC}[\cref{prop:vertical-compactifying}]
    Let $K^+$ be a microbial valuation ring with fraction field $K$. Suppose that $K^+$ is discretely valued or of residue characteristic zero or that $K$ is algebraically closed. Endow $S = \Spec(K^+)$ with the log structure charted by the valuative monoid $K^+\cap K^\times$. Let $X\to S$ be a smooth and vertical morphism of saturated log schemes. Then the log structure on $X$ agrees with the one induced by the open subset $X\times_S \Spec(K)$.
\end{thmABC}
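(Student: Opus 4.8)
The statement asserts that the log structure $\cM_X$, viewed through $\alpha_X$ as a subsheaf of $\cO_X$, coincides with the \emph{compactifying} log structure $\cO_X\cap j_*\cO_U^\times$ of the open immersion $j\colon U := X\times_S\Spec(K)\hookrightarrow X$; the plan is to reduce this equality to a toric model and then to a noetherian situation via approximation. Everything in sight ($\cM_X$, the open $U$, and $j_*\cO_U^\times$) is compatible with \'etale localization, so we may assume $X$ affine with a chart $\vtheta\colon V\to Q$ by a smooth morphism of saturated monoids, $V = K^+\cap K^\times$, such that $g\colon X\to T := S\times_{\Spec\bZ[V]}\Spec\bZ[Q]$ is strict smooth. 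First, $\alpha_X$ is injective: the base's structure map $\alpha_S\colon V = K^+\cap K^\times\hookrightarrow K^+$ is an inclusion, $\cO_T = K^+\otimes_{\bZ[V]}\bZ[Q]$ is torsion-free over the domain $K^+$ with $\cO_T\otimes_{K^+}K$ \'etale-locally a localized Laurent algebra over $K$ (see below), and $g$ is flat. Second, $\cM_X\subseteq\cO_X\cap j_*\cO_U^\times$: this is where verticality enters, via the fact that it amounts to inverting the image of $V$ in $Q$ yielding all of $Q^\gp$ (the monoid-theoretic content of $\overline{\cM}_{X/S}$ being a sheaf of groups); hence the pulled-back chart is trivial on $U$, so $\cM_X|_U = \cO_U^\times$ and sections of $\cM_X$ restrict to units on $U$. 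It remains to prove the reverse inclusion.

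Because $g$ is strict smooth, the comparison map $\cM_X\hookrightarrow\cO_X\cap j_*\cO_U^\times$ is an isomorphism as soon as its counterpart over $T$ is: it suffices to check this on characteristic monoids, and there $\overline{\cM}_X = g^{-1}\overline{\cM}_T$ while the toric boundary $T\setminus T_K$ pulls back compatibly under the smooth $g$, where $T_K := T\times_S\Spec(K)$. So we may treat $T = \Spec A$, $A = K^+\otimes_{\bZ[V]}\bZ[Q]$, with its chart log structure and the open $T_K$. Here the results of \cref{ss:typeV-typeVd}, in particular \cref{cor:RFT}, give that the sfp morphism $V\to Q$ over the valuative $V$ is integral, so $A$ is flat over $K^+$; together with torsion-freeness of the value group $V^\gp$ and verticality one finds $A\otimes_{K^+}K\cong K\otimes_{\bZ[V^\gp]}\bZ[Q^\gp]$, a normal ring that is \'etale-locally a localized Laurent algebra over $K$ on which all monomials $x^q$, $q\in Q$, are invertible, so the chart log structure is trivial on $T_K$. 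The claim becomes: \emph{every $f\in A$ which becomes a unit in $A\otimes_{K^+}K$ is, \'etale-locally on $\Spec A$, a unit times a monomial $x^{q_0}$ with $q_0\in Q$.}

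A unit of the localized Laurent algebra $A\otimes_{K^+}K$ is, \'etale-locally, a monomial, so $f = c\cdot x^{q_1}$ in $A\otimes_{K^+}K$ with $c\in K^\times$ and, using verticality to lift the exponent, $q_1\in Q$. If $c\in K^+$, then $c\in V$, so $c = x^{\vtheta(c)}$ inside $A$ and $f = x^{\vtheta(c)+q_1}$ is an honest monomial with exponent in $Q$. The content is in the opposite case $c\notin K^+$: one wants to replace $x^{q_1}$ by $x^{q_1}/x^{\vtheta(v_0)}$ with $v_0\in V$ chosen so that $cv_0\in(K^+)^\times$, arranging simultaneously that $q_1-\vtheta^\gp(v_0)$ still lies in $Q$ after localizing $A$ at the relevant prime; equivalently, one must show that the orders of vanishing of $f$ along the boundary $T\setminus T_K$ are those of a monomial with exponent in $Q$ --- a log-regularity statement for $T$ over the (in general non-noetherian) valuation ring $K^+$, where $T\setminus T_K$ need not be a Cartier divisor. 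This is the point at which we invoke Zavyalov's approximation lemma \cite[Lemma~A.2]{Zavyalov}. When $K^+$ is discretely valued, $V=\bN$, hence $Q$ is fs and $T$ is of finite type over a DVR, where the assertion is Kato's theory of log regular schemes (equivalently: $\cO\cap j_*\cO^\times$ of a toric scheme over a DVR is charted by the toric monoid). When $K$ is algebraically closed, $\Gamma_K$ is divisible, so by the analogue of the Grauert--Remmert finiteness theorem $V\to Q$ is finitely presented and saturated, and \cite[Lemma~A.2]{Zavyalov} presents the datum $(K^+,V\to Q,f,T_K)$ as the base change of an analogous datum over a discretely valued subring, reducing to the previous case. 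When $K^+$ has residue characteristic zero, one first passes to a divisible-valued (tamely ramified, hence in this case unrestricted) extension, applies the algebraically closed case, and descends. Compatibility of the approximation with smoothness, verticality and the open $T_K$ is governed by \cref{thm:sfp-approx} and \cref{cor:RFT}.

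The heart of the matter is this core computation --- controlling $A\cap j_*\cO_{T_K}^\times$ when $A$ and $Q$ are not noetherian or finitely generated and the boundary is not cut out by one equation --- and, for the approximation strategy, doing so by approximating the ring $A$, the function $f$, the chart $Q$ and the open $T_K$ \emph{simultaneously}, in a way compatible with smoothness and verticality, so that the classical toric / log-regular statement over a DVR can be transported back. I expect the residue characteristic zero case to demand the extra descent step, since $Q$ there need not become fs even after innocuous extensions.
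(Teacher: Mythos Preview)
Your core idea --- reduce via approximation to a noetherian log-regular situation and invoke Kato --- is exactly the paper's, but your execution takes detours and has one real gap.

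You misread Zavyalov's lemma: it does not produce discretely valued subrings. It writes $K^+=\varinjlim\Lambda_i$ with each $\Lambda_i\subseteq K^+$ an excellent \emph{regular local} ring, typically of dimension $>1$, in which $V(\pi)_{\rm red}$ has strict normal crossings; the log scheme $S_i=\Spec(\Lambda_i)$ with this snc boundary is fs log regular. Crucially, the lemma already covers all three hypotheses on $K$ uniformly, handling residue characteristic zero via resolution of singularities over $\bQ$. Your separate strategy for that case --- pass to a divisible-valued extension, apply the algebraically-closed case, and descend --- is therefore unnecessary, and it is also unjustified: you would need formation of $\cO_X\cap j_*\cO_U^\times$ to commute with that (non-\'etale, non-finite) base change along an extension of valuation rings, which you do not address.

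The reduction to the toric model $T$ and the monomial computation are also detours the paper avoids; moreover your reduction step (``it suffices to check on characteristic monoids'') asserts without proof that the compactifying log structure pulls back along strict smooth maps. The paper works directly with $X$: once $S=\varprojlim S_i$ with each $S_i$ fs log regular, the approximation theorem (\cref{cor:SmEtKet-approx}) descends the smooth vertical $X\to S$ to a smooth vertical $X_i\to S_i$, and $X_i$ is then log regular by Kato, so $\cM_{X_i}=\cO_{X_i}\cap j_{i*}\cO_{X_{i,\eta}}^\times$. A local section $f$ of $\cM_X^{\rm std}$ satisfies $fg=\pi^n$; this equation lives in $\cO(X)=\varinjlim\cO(X_i)$ and hence descends to $f_ig_i=\pi^n$ on some $X_i$, whence $f_i\in\cM_{X_i}$ and $f\in\cM_X$. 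Injectivity of $\alpha_X$ is the filtered colimit of the injections $\alpha_{X_i}$.
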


This above theorem (or rather its variant for formal schemes) will play an important role in our subsequent paper \cite{AHLS_Tame}, allowing us to relate the Kummer \'etale fundamental group of the special fibre to the (suitably defined) tame fundamental group of the rigid-analytic generic fibre. We expect the theory developed here to be useful in other contexts as well.  

%---------------------------------------
\subsection{Notation and conventions}
%---------------------------------------

\begin{itemize}
    \item 
    We denote the category of finite sets by $\sets$.
    \item 
    The symbols $\varinjlim$ and $\varprojlim$ denote filtered colimits and cofiltered limits, respectively. We use the symbols $\lim$ and $\operatorname{colim}$ to denote more general limits and colimits.
    \item 
    For an object $X$ of a category $\cC$, we denote by $\cC_{/X}$ the category of objects over $X$ (morphisms $Y\to X$) and by $\cC_{X/}$ the category of objects under $X$ (morphisms $X\to Z$).
    \item 
    $\bN = \{0, 1, 2, \ldots\}$ considered as a monoid with $+$.
    \item 
    We denote log schemes by single symbols e.g.\ $X$ (not $(X, \cM_X)$) and the underlying scheme is denoted by $\underline{X}$. For log schemes and maps between them, we use adjectives like regular, smooth, \'etale to mean the respective notions in log geometry (log regular, log smooth, log \'etale). In particular, if $X$ is a saturated log scheme, we denote by $\FEt_X$ the category of finite Kummer \'etale maps to $X$, see \cref{defi:fet}.

    \item The underlying topological space of a (log) scheme $X$ is denoted by $|X|$.

    \item The monoid structure on $\cM_X$ and on abstract monoids is written additively.
    
    \item 
    Fibre products of integral (resp.\ saturated) log schemes within the category of integral (resp.\ saturated) log schemes will be denoted by 
    \[
        X \times_S^{\inte} Y \qquad \text{(resp.\ } 
        X \times_S^{\sat} Y \text{ )}.
    \]
    \item Unless otherwise specified, `locally' or `\'etale locally' means `strict \'etale locally,' or equivalently `\'etale locally on the underlying scheme.'
    \item The valuation ring of a valued field $K$ is denoted $K^+$. Its value group is denoted by $\Gamma_K$, and its set of non-negative elements by $\Gamma^+_K$. The monoid structure on $\Gamma^+_K$ is typically written additively, for compatibility with the convention in log geometry.
\end{itemize}

%---------------------------------------
\subsection*{Acknowledgments}
%---------------------------------------

First and foremost we thank IMPAN at Sopot for excellent working conditions during our stay in September 2023 when this project started. We also thank the University of Heidelberg for hosting us during an intense few weeks in February 2024. 

We thank Jarek Buczyński for help with the proof of \cref{prop:valuative-ind-free} and Jakub Byszewski for help with \crefpart{ex:typeV-not-fg}{exitem:typeV-not-fg2b}. We thank Veronika Ertl-Bleimhofer for her comments on the manuscript and Alberto Vezzani and Michael Temkin for helpful discussions. We thank Takeshi Tsuji and Arthur Ogus for providing useful counterexamples. 

%---------------------------------------
\addtocontents{toc}{\protect\SkipTocEntry}
\subsection*{Funding}
%\addtocontents{toc}{\protect\SkipTocEntry}
%---------------------------------------

\grants % here or first page footnote

%===========================================
\section{Monoids beyond fs}
\label{s:monoids}
%===========================================

In this section, we develop the theory of commutative monoids needed for the extension of logarithmic geometry beyond the setting of fs log schemes in \cref{s:log-schemes}.  The natural relative version of an fs monoid is that of a map of saturated monoids $P\to Q$ which is finitely presented up to saturation (sfp for short), meaning that $Q$ is the saturation of a finitely presented monoid over $P$ (\cref{def:sfp}). The reason for not working simply with maps of finite presentation is that they are not preserved under pushouts in the category of saturated monoids, while of course we want natural classes of morphisms of saturated log schemes (log smooth, Kummer \'etale etc.) to be preserved under pullback. The map $P\to Q$ being sfp means precisely that $Q$ is a compact object in the category of saturated monoids over $P$, or that it arises via saturated pushout from a~map of fs monoids $P_0\to Q_0$ (\cref{prop:sfp-conditions}). This allows one to reduce questions about sfp maps to the case of fs monoids, similarly to how one reduces questions about finitely presented maps of qcqs schemes to the case of schemes of finite type over $\bZ$.

The geometric situation in need of such an extension is that of a~semistable scheme over a~general valuation ring $K^+$. In this case, the `base' monoid is the non-negative part $\Gamma_K^+$ of the value group $\Gamma_K$ of $K$, which will not be finitely generated unless $K$ is trivially or discretely valued. However, the monoid $\Gamma_K^+$ is \emph{valuative}: for every element $x\in \Gamma_K = (\Gamma_K^+)^\gp$, either $x\in \Gamma_K^+$ or $-x\in \Gamma_K^+$. As we shall see, the theory of monoids which are sfp over a valuative monoid has numerous favourable properties. We call them monoids of type $\typeV$, and in the case when the valuative monoid is divisible, of type $\typeVd$, see \cref{ss:typeV-typeVd}. 

\begin{rmk}[Analogies with non-archimedean geometry]
Our approach is guided by analogy with non-archimedean geometry, and our terminology reflects this. A formal scheme of type $\typeV$ is one locally of the form $\operatorname{Spf}(\cA)$ for an algebra $\cA$ topologically of finite presentation (tfp) over a~complete microbial valuation ring $K^+$ \cite[\S 7.3]{Bosch}. A theorem of Nagata \cite[Theorem~3]{Nagata}, generalized by Raynaud and Gruson \cite[Corollaire I 3.4.7]{RaynaudGruson}, implies that a topologically finitely generated (tft) $K^+$-algebra which is torsion-free is automatically finitely presented. Suppose $K^+$ is of rank one, let $A$ be a~geometrically reduced affinoid $K$-algebra, and let $A^\circ$ denote its subring of powerbounded elements. By the Grauert--Remmert finiteness theorem \cite[Corollary~6.4.1/5]{BGR}, if $K$ is either discretely valued or stable \cite[\S 3.6]{BGR} and its value group $\Gamma$ is divisible (e.g.\ $K=\overline K$), then $A^\circ$ is tfp over $K^\circ=K^+$. In general, this may fail even if $A$ is a finite separable extension of $K$ \cite[3.6.1, Example]{BGR}. However, one can find a tfp $K^\circ$-algebra $\cA$ with $A = \cA_K$, and then $A^\circ$ equals the integral closure of $\cA$ in $A$ \cite[Theorem 3.1/17]{Bosch}. Thus $A^\circ$ is ``finitely presented up to integral closure.''  Moreover, by the Reduced Fibre Theorem \cite[Theorem~1.3]{BLR-IV}, after passing to a finite extension $L$ of $K$ we achieve that $A^\circ$ is~tfp.

Similarly, let $V$ be a valuative monoid, let $P'$ be a finitely presented monoid over $V$, and let $P$ be its saturation. While $V\to P$ is sfp by definition, it may happen that $P$ is no longer finitely presented (or even finitely generated) over $V$, see \cref{ex:typeV-not-fg}. However, using results of F.~Kato and T.~Tsuji we show that this is true if $V$ is valuative and divisible (\cref{cor:GR-finiteness}). Moreover, one can always make $P$ finitely presented by passing to a ``finite'' extension of $V$ (\cref{cor:RFT}). One also has a version of the finite presentation result of Nagata (\cref{cor:monoid-raynaud-gruson}), as well as a~version of ``N\'eron desingularisation'' (\cref{prop:valuative-ind-free}).
\end{rmk}

%--------------------------------------------------
\subsection{Preliminaries on monoids}
\label{ss:monoid-prelims}
%---------------------------------------

All monoids are commutative and the operations are typically written additively. We recall some basic notions \cite[Chapter I]{Ogus}.

%---------------------------------------
\subsubsection*{Integral and saturated monoids}

For a monoid $P$, we denote by $P\to P^\gp$ its initial homomorphism into a group, and call $P$ {\bf integral} if this map is injective. We call $P$ {\bf fine} if it is integral and finitely generated. A monoid $P$ is {\bf saturated} if it is integral and for every $p\in P^\gp$ and $n\geq 1$ such that $np\in P$, we have $p\in P$. A monoid $P$ is {\bf fs} (fine and saturated) if it is finitely generated and saturated. The subgroup of invertible elements of $P$ is denoted by $P^\times$, and we say that $P$ is {\bf sharp} if $P^\times = 0$. 

For an integral monoid $M$ and a subset $S$, the {\bf saturation of $S$ in $M$} consists of all $p\in M$ for which there exists an $n\geq 1$ such that $np$ belongs to the submonoid of $M$ generated by $S$. The inclusion of integral (resp.\ saturated) monoids into all monoids admits a left adjoint, the {\bf integralisation} $P\mapsto P^\inte = \im(P\to P^\gp)$ (resp.\ the {\bf saturation} $P\mapsto P^\sat$ defined to be the saturation of $P^\inte$ as a subset of $P^\gp$).

\smallskip

\begin{lem}
\label{lem:injective-implies-gp-injective}
    Let $\vtheta \colon P \to Q$ be an injective map of integral monoids. Then $\vtheta^\gp \colon P^\gp \to Q^\gp$ is injective.
\end{lem}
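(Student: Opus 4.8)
The plan is to unwind the explicit construction of groupification for integral monoids. Recall that for any monoid $M$ the canonical map $M\to M^\gp$ can be described as follows: $M^\gp$ is the quotient of $M\times M$ by the congruence $(a,b)\sim(a',b')\iff a+b'+c=a'+b+c$ for some $c\in M$, with $M\to M^\gp$ sending $m\mapsto\overline{(m,0)}$. When $M$ is integral the element $c$ may be cancelled, so $(a,b)\sim(a',b')$ iff $a+b'=a'+b$; in particular $M\to M^\gp$ is injective, and every element of $M^\gp$ is of the form $\overline{(a,0)}-\overline{(b,0)}$ with $a,b\in M$. I will identify $M$ with its image in $M^\gp$ and write elements of $M^\gp$ as $a-b$ accordingly.

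First I would take an arbitrary $x\in\ker(\vtheta^\gp)$ and, using that $P$ is integral, write $x=a-b$ with $a,b\in P$. The homomorphism $\vtheta^\gp$ is characterised by the commutativity of the square relating $P\to P^\gp$, $Q\to Q^\gp$ and $\vtheta$, so $\vtheta^\gp(x)=\vtheta(a)-\vtheta(b)$ computed in $Q^\gp$, and the assumption $\vtheta^\gp(x)=0$ reads $\vtheta(a)=\vtheta(b)$ in $Q^\gp$.

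Next I would use that $Q$ is integral: the map $Q\to Q^\gp$ is injective, so $\vtheta(a)=\vtheta(b)$ already holds in $Q$. Since $\vtheta$ is injective by hypothesis, this forces $a=b$ in $P$, whence $x=a-b=0$ in $P^\gp$. Thus $\ker(\vtheta^\gp)=0$, i.e.\ $\vtheta^\gp$ is injective.

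There is essentially no obstacle here; the statement is a direct consequence of the explicit description of $(-)^\gp$ on integral monoids. The only points requiring care are to invoke integrality twice — of $P$, to represent an arbitrary element of $\ker(\vtheta^\gp)$ as a difference of two elements of $P$, and of $Q$, to transport the resulting equality from $Q^\gp$ back to $Q$ — and to keep track of the defining commutative square for $\vtheta^\gp$. Equivalently and more abstractly: $P$ is a submonoid of $P^\gp$ generating it as a group, $\vtheta^\gp$ is injective on $P$ because $\vtheta$ is injective and $Q\hookrightarrow Q^\gp$, and any group homomorphism that is injective on a submonoid generating the whole group is injective, since every group element is a difference of two elements of that submonoid.
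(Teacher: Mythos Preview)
Your proof is correct and follows essentially the same approach as the paper: both recall the explicit Grothendieck-group construction, use integrality of $Q$ to cancel the auxiliary element and pull the equality $\vtheta(a)=\vtheta(b)$ back from $Q^\gp$ to $Q$, and then use injectivity of $\vtheta$ to conclude $a=b$. The only cosmetic difference is notation ($a-b$ versus equivalence classes $[(x,y)]$).
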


\begin{proof}
The groupification $M^\gp$ of a monoid $M$ can be constructed explicitly as $M\times M /\sim$, where 
\[
    (x, y) \sim (x',y') \iff \exists {z \in M}: \quad x + y' + z = x' + y + z.
\]
If $M$ is integral, then the cancellation law holds for $M$ and so $z$ can be omitted.

If $\vtheta^\gp([(x,y)]) = 0$ in $Q^\gp$, then $(\vtheta(x),\vtheta(y)) \sim (0,0)$ holds. In other words $\vtheta(x)$ equals $\vtheta(y)$, since $Q$ is integral. As $\vtheta$ is injective, $x$ equals $y$, and thus 
$[(x,y)] = 0$ and $\vtheta^\gp$ is injective. 
\end{proof}

%---------------------------------------
\subsubsection*{Limits and colimits}

The categories of monoids, integral monoids, and saturated monoids admit all limits and colimits, see \cite[Chapter I \S1.1]{Ogus}. The inclusion functors preserve limits (being right adjoints) and importantly also filtered colimits \cite[Proposition I 1.3.6]{Ogus}. To avoid confusion, we will denote by $P\oplus_{P_0} Q_0$ the pushout of $P\leftarrow P_0\to Q_0$ in the category of monoids. In analogy with the theory of commutative rings, such a pushout will often be referred to as the \emph{base change} of $Q_0$ to $P$. If $P$, $P_0$, and $Q_0$ are saturated, the corresponding pushout $Q$ in the category of saturated monoids admits the following description (compare \cite[\S 2.4]{DiaoYao})
\[ 
    Q = (P\oplus_{P_0} Q_0)^\sat \quad \subseteq \quad Q^\gp = P^\gp\oplus_{P_0^\gp} Q_0^\gp
\]
is the saturation of the union of the images of $P$ and $Q_0$ in $Q^\gp$. We will also refer to it as the \emph{saturated base change} of $Q_0$ to $P$.

\begin{lem} 
\label{lem:ker and coker under pushout}
    Let $P \to Q$ be the pushout (resp.\ integral or saturated pushout) of a morphism of monoids (resp.\ integral or saturated monoids) $P_0 \to Q_0$ along a map $P_0 \to P$. Then the natural map
    \[
        Q_0^\gp/P_0^\gp \la Q^\gp/P^\gp
    \]
    is an isomorphism. The natural map 
    \[
        \ker(P_0^\gp \to Q_0^\gp) \la \ker(P^\gp \to Q^\gp)
    \]
    is surjective, and it is even an isomorphism if $P_0 \to P$ is an injective map of integral monoids.
\end{lem}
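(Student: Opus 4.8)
The plan is to handle the three cases (plain pushout, integral pushout, saturated pushout) more or less uniformly by exploiting the explicit description of each kind of pushout at the level of groupifications. The key structural input is that groupification is a left adjoint, hence commutes with all colimits; in particular, in each of the three cases the groupification of the pushout $Q$ is $Q^\gp = P^\gp \oplus_{P_0^\gp} Q_0^\gp$, the pushout taken in abelian groups. Indeed, for the plain pushout this is immediate from $(-)^\gp$ being left adjoint to the inclusion of groups into monoids; for the integral and saturated pushouts one notes that $Q = (P\oplus_{P_0}Q_0)^\inte$ (resp.\ $(P\oplus_{P_0}Q_0)^\sat$) and that both integralisation and saturation of an integral monoid are subobjects of the same groupification, so applying $(-)^\gp$ again yields $P^\gp\oplus_{P_0^\gp}Q_0^\gp$. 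So in all three cases the relevant square of \emph{abelian groups} is the same cocartesian square.

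First I would reduce the entire statement to pure homological algebra of abelian groups. Write $A = P_0^\gp$, $B = Q_0^\gp$, $C = P^\gp$, and $D = Q^\gp = C\oplus_A B$, with the given maps $A\to B$ and $A\to C$. Cokernels: since $D = (C\oplus B)/\langle (a,0)-(0,a)\rangle$, the map $B\to D$ induces $B/\mathrm{im}(A) \to D/\mathrm{im}(C)$, and one checks directly from the presentation of $D$ that this is an isomorphism (every class in $D/C$ is represented by an element of $B$, and two elements of $B$ become equal in $D/C$ iff their difference lies in $\mathrm{im}(A) + \mathrm{im}(C)$ modulo $\mathrm{im}(C)$, i.e.\ iff they are congruent mod $\mathrm{im}(A)$). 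This is the first displayed isomorphism. Kernels: apply the snake lemma, or argue directly. Consider the commutative diagram with exact rows
\[
    \begin{CD}
        \ker(A\to B) @>>> A @>>> B\\
        @VVV @VVV @VVV\\
        \ker(C\to D) @>>> C @>>> D
    \end{CD}
\]
Surjectivity of the left vertical arrow: given $c\in C$ mapping to $0$ in $D$, by the presentation of $D$ this means $(c,0) = \sum \pm((a_i,0)-(0,a_i))$ in $C\oplus B$ for some $a_i\in A$; comparing the $B$-components gives $\sum \pm a_i \in \ker(A\to B)$, and comparing the $C$-components shows this element maps to $c$. Hence $\ker(A\to B)\surj\ker(C\to D)$.

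Finally, the injectivity clause when $P_0\to P$ is an injective map of integral monoids: by \cref{lem:injective-implies-gp-injective} the map $A = P_0^\gp\to C = P^\gp$ is injective, so $A$ is a subgroup of $C$ and $D = C\oplus_A B = (C\oplus B)/A$ where $A$ embeds antidiagonally. Then one computes $\ker(C\to D)$: an element $c\in C$ maps to $0$ iff $(c,0)$ lies in the antidiagonal copy of $A$, i.e.\ iff $c\in A$ and the corresponding $B$-component $-c$ is $0$ in $B$, i.e.\ iff $c\in\ker(A\to B)\subseteq A\subseteq C$. This shows the surjection $\ker(A\to B)\to\ker(C\to D)$ is also injective, hence an isomorphism. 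I expect the only mildly delicate point to be bookkeeping with the antidiagonal embedding and making sure the ``integral'' and ``saturated'' variants genuinely reduce to the same group-level square — but that follows formally from left-adjointness of $(-)^\gp$, so there is no real obstacle; the argument is entirely diagram-chasing in $\mathbf{Ab}$.
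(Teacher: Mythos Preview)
Your proposal is correct and follows essentially the same approach as the paper: both reduce everything to a statement about the pushout square in abelian groups via left-adjointness of $(-)^\gp$, and both invoke \cref{lem:injective-implies-gp-injective} for the final injectivity clause. The only cosmetic difference is that the paper packages the kernel and cokernel claims into a single application of the snake lemma to the diagram
\[
\begin{tikzcd}
0 \ar[r] & P_0^\gp \arrow[r,"{x\mapsto(x,-x)}"] \ar[d] & P_0^\gp\oplus P^\gp \ar[r] \ar[d] & P^\gp \ar[r] \ar[d] & 0 \\
0 \ar[r] & \im(P_0^\gp\to Q_0^\gp\oplus P^\gp) \ar[r] & Q_0^\gp\oplus P^\gp \ar[r] & Q^\gp \ar[r] & 0,
\end{tikzcd}
\]
whereas you carry out the equivalent element-chase by hand; your ``$\sum\pm$'' bookkeeping is slightly redundant since the antidiagonal image of $A$ is already a subgroup, but the argument is fine.
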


\begin{proof}
Since groupification is left adjoint, it commutes with pushouts. So the lemma actually contains a claim about the pushout in the category of abelian groups. It follows by the snake lemma applied to the diagram (and \cref{lem:injective-implies-gp-injective} for the last assertion)
\[
    % USES TRICK DESCRIBED IN
    % https://tex.stackexchange.com/questions/173074/qed-symbol-not-correctly-aligned-when-ending-a-proof-with-a-diagram
    \begin{gathered}[b]
        \begin{tikzcd}
        0 \ar[r] & P_0^\gp \arrow[r,"{x \mapsto (x,-x)}"] \ar[d] & P_0^\gp \oplus P^\gp \ar[r] \ar[d] & P^\gp \ar[r] \ar[d] & 0 \\ 
        0 \ar[r] & \im(P_0^\gp \to Q_0^\gp \oplus P^\gp)  \ar[r]  & Q_0^\gp \oplus P^\gp \ar[r] & Q^\gp \ar[r] & 0 \ . 
        \end{tikzcd}
        \\[-\dp\strutbox] 
    \end{gathered}
    \qedhere
\]
\end{proof}

%---------------------------------------
\subsubsection*{Short exact sequences of monoids}

By a {\bf short exact sequence} of monoids we mean a~sequence of integral monoids
\[
    0 \longrightarrow G \longrightarrow M \overset{\varphi}{\longrightarrow} N \longrightarrow 0,
\]
where~$G$ is a subgroup of $M$ and $\varphi\colon M \to N$ identifies $N$ with the set of $G$-orbits. This is equivalent to saying that the sequence of associated groups is exact and the map $\varphi\colon M\to N$ is \textbf{exact} (recalled in \crefpart{defi:exactintsatvert monoids}{defitem:exact monoid map} below). We shall also write $N=M/G$ in this situation. In the special case $G = M^\times$ being the largest subgroup of $M$, we denote $M/G = M/M^\times$ by $\overline{M}$.

An often useful fact about short exact sequences of monoids is that providing a section of the map $M\to N$ is equivalent to splitting the associated exact sequence of groups. Indeed, since $\varphi$ is exact, a section of $M^\gp \to N^\gp$ base changes along $N \to N^\gp$ to a map $N \to N \times_{N^\gp} M^\gp = M$ which is a section of $\varphi$.

%--------------------------------------------------
\subsection{Finite presentation up to saturation}
\label{ss:sfp-monoids}
%--------------------------------------------------

Let $f\colon P\to Q$ be a homomorphism of monoids. A {\bf generating set} for $Q$ over $P$ is a subset $S\subseteq Q$ for which the induced map of $P\oplus \bN^S\to Q$ is surjective. We say that $f$ is of {\bf finite type} (or that $Q$ is finitely generated over $P$) if $Q$ admits a~finite generating set over $P$.

A {\bf congruence} $E$ on a monoid $M$ is a submonoid $E\subseteq M\times M$ which is also an equivalence relation. In this case, the set of equivalence classes $M/E$ is a monoid and the projection $M\to M/E$ is a monoid homomorphism. For a morphism of monoids $\vtheta \colon M \to N$, the fibre product $E = M \times_N M$ is a congruence, called the {\bf kernel congruence} of $\vtheta$. If $M\to N$ is a~surjective monoid homomorphism, then $N \simeq M/E$ where $E$ is the kernel congruence. For a~subset $R\subseteq M\times M$ (called a set of {\bf relations}), the {\bf congruence generated by} $R$ is the smallest congruence $E\subseteq M\times M$ containing $R$. We write $M/R$ for $M/E$ and call it the {\bf quotient of $M$} by the set of relations $R$. The morphism $M\to M/R$ is initial among the maps $f\colon M\to N$ with the property that $f(a)=f(b)$ for every $(a,b)\in R$. 

For a monoid homomorphism $f\colon P\to Q$, a {\bf presentation} $(S, R)$ of $Q$ over $P$ consists of a~generating set $S\subseteq Q$ and a set of relations $R\subseteq (P\oplus \bN^S)^2$ such that the induced map $P\oplus \bN^S\to Q$ identifies $Q$ with the quotient $(P\oplus\bN^S)/R$. We say that $f$ is of {\bf finite presentation} (or that $Q$ is finitely presented over $P$) if $Q$ admits a presentation $(S, R)$ where both $S$ and $R$ are finite (see \cite[\S 3.10]{AdamekRosicky} for a general notion of a finitely presented object in any algebraic theory). 

A monoid $P$ is \textbf{finitely generated} (resp.\ \textbf{finitely presented}) if the map $0 \to P$ is of finite type (resp.\ of finite presentation).

\begin{nota}
    We denote by $\Sat_{P/}$ the category of saturated monoids over the monoid $P$, and by $\Mon_{P/}$ the category of monoids over $P$. Here a monoid over $P$ is a morphism $P \to Q$ of monoids, which is an object under $P$ in the categorical sense.
\end{nota}  

\begin{lem}
\label{lem:fp-AdamekRosicky}
    A morphism $P \to Q$ of monoids is of finite presentation if and only if $Q$ is a~compact object of $\Mon_{P/}$.
\end{lem}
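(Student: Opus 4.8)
The plan is to unwind the definition of a compact object in $\Mon_{P/}$ and match it with the explicit description of finite presentation in terms of a finite generating set and a finite set of relations. Recall that an object $Q$ of a category $\mathcal{C}$ is \emph{compact} (finitely presentable) if the corepresentable functor $\Hom_{\mathcal{C}}(Q,-)$ commutes with filtered colimits. So I must show: $P\to Q$ has a finite presentation $(S,R)$ if and only if for every filtered system $\{N_i\}$ in $\Mon_{P/}$ with colimit $N$, the natural map $\varinjlim_i \Hom_{P/}(Q,N_i)\to \Hom_{P/}(Q,N)$ is a bijection.

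First I would prove the ``only if'' direction. Assume $(S,R)$ is a finite presentation, so $Q = (P\oplus\bN^S)/R$. Given a filtered colimit $N=\varinjlim N_i$ in $\Mon_{P/}$ (which, crucially, is computed as the colimit in plain monoids since filtered colimits of monoids are filtered colimits of the underlying sets, and $\Mon_{P/}$ inherits filtered colimits from $\Mon$), a $P$-morphism $Q\to N$ is the same as a set map $\phi\colon S\to N$ whose induced map $P\oplus\bN^S\to N$ kills all relations in $R$. For surjectivity of the comparison map: since $S$ is finite and the colimit is filtered, any such $\phi$ factors through some $N_i$, i.e.\ $\phi = (\text{can})\circ\phi_i$ for some $\phi_i\colon S\to N_i$; since $R$ is finite and the colimit is filtered (so finitely many equalities that hold in the colimit already hold after advancing to a larger index), after increasing $i$ we may assume $\phi_i$ kills $R$, giving a map $Q\to N_i$ lifting our map. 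For injectivity: two maps $Q\to N_i$, $Q\to N_j$ that agree after mapping to $N$ are determined by finite data (values on $S$), so they agree after advancing to a common larger index. Both arguments are the standard ``finitely many generators, finitely many relations vs.\ filtered colimits'' bookkeeping.

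For the ``if'' direction, assume $Q$ is compact in $\Mon_{P/}$. I would express $Q$ itself as a filtered colimit of its finitely presented ``sub-presentations.'' Concretely: $Q$ certainly admits \emph{some} presentation, possibly infinite — e.g.\ take $S = Q$ (all elements) and $R$ the full kernel congruence of $P\oplus\bN^{Q}\to Q$. Then write $Q$ as the filtered colimit over the poset of pairs $(S_0, R_0)$ with $S_0\subseteq S$ finite and $R_0\subseteq R\cap (P\oplus\bN^{S_0})^2$ finite, of the finitely presented monoids $Q_{S_0,R_0} := (P\oplus\bN^{S_0})/R_0$; the transition maps are the evident ones and there are compatible maps $Q_{S_0,R_0}\to Q$. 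One checks this system is filtered and that $\varinjlim Q_{S_0,R_0} = Q$ in $\Mon_{P/}$ (every element of $Q$ is hit because every generator appears in some $S_0$; the relations match up because any relation in $Q$ among finitely many generators, being in the kernel congruence, is one of the relations in $R$ hence lies in some $R_0$ — here one uses that the kernel congruence is generated by $R=$ itself, trivially, and filteredness). Now compactness of $Q$ applied to this colimit makes $\id_Q\in\Hom_{P/}(Q,Q)$ factor through some $Q_{S_0,R_0}$, yielding a section $s\colon Q\to Q_{S_0,R_0}$ of the projection $\pi\colon Q_{S_0,R_0}\to Q$. Then $Q$ is a retract of the finitely presented monoid $Q_{S_0,R_0}$, and a standard argument (retracts of finitely presentable objects are finitely presentable, which one proves again by a direct filtered-colimit computation, or by citing \cite[\S 3.10 and Remark after Corollary 1.7.7]{AdamekRosicky}) shows $Q$ is finitely presented over $P$.

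The main obstacle, and the only genuinely nontrivial point, is verifying that the displayed filtered system $\{Q_{S_0,R_0}\}$ really has colimit $Q$ in $\Mon_{P/}$ — i.e.\ that passing from a possibly infinite presentation to its finite sub-presentations is ``continuous.'' This requires knowing that filtered colimits in $\Mon_{P/}$ are computed on underlying sets (true, since the forgetful functor to sets creates filtered colimits for any finitary algebraic theory, monoids included, and $\Mon_{P/}$ is itself the category of models of a finitary theory), and a small diagram chase that every element and every relation of $Q$ is ``visible'' at some finite stage. Everything else is routine. Alternatively, one may simply invoke the general fact that in a locally finitely presentable / finitary variety, the finitely presentable objects in the sense of finite generators-and-relations coincide with the compact objects \cite[\S 3.10]{AdamekRosicky}; the paper's phrasing suggests they intend exactly this, so a short proof citing \emph{loc.\ cit.}\ together with the remark that $\Mon_{P/}\simeq \Mon_{\Spec P/}$ is again a finitary variety would also be acceptable, and is perhaps what I would write in the paper itself.
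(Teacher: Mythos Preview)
Your proof is correct. The paper's own proof is simply a one-line citation of \cite[Corollary~3.13]{AdamekRosicky}, which is exactly the alternative you suggest in your final paragraph. Your detailed argument unwinds that general result in the specific case of monoids; it is sound, and the only point requiring care --- that the filtered system $\{Q_{S_0,R_0}\}$ has colimit $Q$ --- you handle correctly (any relation in the kernel congruence of $P\oplus\bN^{S_0}\to Q$ is visible as an element of $R$, hence eventually in some $R_0$). So the two approaches agree in substance; yours is self-contained, the paper's is a citation.
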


\begin{proof}
This is \cite[Corollary~3.13]{AdamekRosicky}.
\end{proof}

\begin{lem} 
\label{lem:base change fp ft}
    Base change (pushout in the category of monoids) preserves finite type (resp.\ finite presentation).
\end{lem}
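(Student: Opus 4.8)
The plan is to verify directly from the definitions that pushing out along an arbitrary map $P_0 \to P$ in the category of monoids sends a generating set (resp.\ presentation) for $Q_0$ over $P_0$ to one for the base change $Q := P \oplus_{P_0} Q_0$ over $P$. The key point is that the pushout in monoids behaves like the pushout of algebras: it is a quotient of a free construction, and ``free'' commutes with everything in sight.

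\textbf{Finite type.} Suppose $S \subseteq Q_0$ is a finite generating set for $Q_0$ over $P_0$, so the natural map $P_0 \oplus \bN^S \to Q_0$ is surjective. Let $S' \subseteq Q$ be the image of $S$ under the structure map $Q_0 \to Q$. I claim $S'$ generates $Q$ over $P$, i.e.\ $P \oplus \bN^{S'} \to Q$ is surjective. Indeed, applying the functor $P \oplus_{P_0} (-)$ to the surjection $P_0 \oplus \bN^S \surj Q_0$ yields a surjection $P \oplus_{P_0}(P_0 \oplus \bN^S) \surj P \oplus_{P_0} Q_0 = Q$, because pushout is a colimit and hence right exact (preserves surjections). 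But $P \oplus_{P_0}(P_0 \oplus \bN^S) = P \oplus \bN^S$ canonically, and under this identification the composite $P \oplus \bN^S \to Q$ factors through $P \oplus \bN^{S'}$; since the total map is surjective, so is $P \oplus \bN^{S'} \to Q$. Hence $Q$ is finitely generated over $P$.

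\textbf{Finite presentation.} Now suppose $(S,R)$ is a finite presentation of $Q_0$ over $P_0$, so $Q_0 = (P_0 \oplus \bN^S)/R$ with $S$ and $R$ finite. Write $M_0 := P_0 \oplus \bN^S$ and $M := P \oplus \bN^S = P \oplus_{P_0} M_0$. Let $R' \subseteq M \times M$ be the image of $R$ under $M_0 \times M_0 \to M \times M$ (a finite set). I claim $Q = M / R'$, which exhibits $(S', R')$ as a finite presentation of $Q$ over $P$. To see this, one checks that both monoids represent the same functor: a map $M \to N$ equalising all pairs in $R'$ is the same datum as a compatible pair consisting of a map $P \to N$ and a map $M_0 \to N$ equalising all pairs in $R$ that agree on $P_0$, which by the universal property of $Q_0 = M_0/R$ is the same as a pair $(P \to N, Q_0 \to N)$ agreeing on $P_0$, i.e.\ a map $Q = P \oplus_{P_0} Q_0 \to N$. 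Concretely: $P \oplus_{P_0} (-)$ applied to the coequaliser diagram $\bN^R \rightrightarrows M_0 \to Q_0$ gives the coequaliser $\bN^{R'} \rightrightarrows M \to Q$, again because pushout, being a colimit, commutes with the colimit defining the quotient by a set of relations. Thus $Q = M/R'$ is finitely presented over $P$.

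\textbf{Main obstacle.} The only subtlety is bookkeeping: making precise that $P \oplus_{P_0}(P_0 \oplus \bN^S) \cong P \oplus \bN^S$ and that $P \oplus_{P_0}(-)$ turns the presentation coequaliser into the base-changed coequaliser. This is formal --- it is the statement that in any category with pushouts, pushing out commutes with colimits in the other variable (here colimits over $P_0$ of the ``free'' diagrams), equivalently that $P \oplus_{P_0} (-) \colon \Mon_{P_0/} \to \Mon_{P/}$ is a left adjoint (to restriction of scalars along $P_0 \to P$) and hence cocontinuous. Once this is said, finite type and finite presentation are immediate. I would phrase the proof using this adjunction rather than manipulating elements, as it also makes transparent that the analogous statement for integral and for saturated monoids follows by composing with the (left adjoint) integralisation/saturation functors --- though for the saturated case one must remember that finite presentation is \emph{not} preserved (which is exactly why the notion of sfp is introduced), so the clean statement there is only about finite type, and finite presentation up to saturation is handled separately.
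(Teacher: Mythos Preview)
Your proof is correct. For finite type, your argument is essentially the paper's (the paper just asserts in one line that the image of $S$ generates $Q$ over $P$). For finite presentation, however, the paper takes a different route: rather than constructing an explicit presentation of $Q$ over $P$, it invokes the characterisation of finite presentation via compact objects (Lemma~\ref{lem:fp-AdamekRosicky}) and then uses the adjunction $\Hom_P(Q,T)=\Hom_{P_0}(Q_0,T)$ directly to show that $Q$ is compact in $\Mon_{P/}$ whenever $Q_0$ is compact in $\Mon_{P_0/}$. So both proofs ultimately rest on the base-change/restriction adjunction, but you use it via cocontinuity (left adjoints preserve the coequaliser defining $M_0/R$), whereas the paper uses it via preservation of compact objects. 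The paper's argument is shorter and avoids any bookkeeping with relations; your argument has the merit of producing the explicit presentation $(S',R')$ and not depending on the Ad\'amek--Rosick\'y characterisation. One small wrinkle in your coequaliser formulation: to view $\bN^R\rightrightarrows M_0$ as a diagram in $\Mon_{P_0/}$ you should really take $P_0\oplus\bN^R\rightrightarrows M_0$, but this is cosmetic.
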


\begin{proof}
Let $P_0 \to Q_0$ be a morphism of monoids and let $P \to Q$ be the base change along $P_0 \to P$. If $P_0 \to Q_0$ is of finite type generated by a finite set $S \subseteq Q_0$, then $Q$ is also finitely generated over $P$ by the image of $S$ under the map $Q_0\to Q$. 

Let now $P_0 \to Q_0$ be of finite presentation. By \cref{lem:fp-AdamekRosicky} we must show that the pushout along $P_0 \to P$ of a compact object in $\Mon_{P_0/}$ is a compact object in $\Mon_{P/}$. 
But this is formal since base change preserves compact objects: for any filtered colimit $T = \varinjlim_i T_i$ of monoids over $P$, we have
\[
    \Hom_P(Q, T) = \Hom_{P_0}(Q_0,T) = \varinjlim_i \Hom_{P_0}(Q_0, T_i) = \varinjlim_i \Hom_P(Q,T_i). \qedhere
\]
\end{proof}

\begin{lem}
\label{lem:maps between fp is fp}
    Any morphism $P \to Q$ between finitely presented monoids is of finite presentation.
\end{lem}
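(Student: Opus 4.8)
The plan is to deduce this from the characterisation of finitely presented morphisms as compact objects (\cref{lem:fp-AdamekRosicky}): it suffices to show that $Q$, regarded as an object of $\Mon_{P/}$, is compact, i.e.\ that $\Hom_{\Mon_{P/}}(Q,-)$ commutes with filtered colimits. The hypothesis that $P$ and $Q$ are finitely presented monoids means precisely that they are compact objects of $\Mon = \Mon_{0/}$, so we already know that $\Hom_{\Mon}(P,-)$ and $\Hom_{\Mon}(Q,-)$ commute with filtered colimits, and I will bootstrap from this. (This is the exact monoid analogue of the ring-theoretic fact that a morphism between finitely presented $\bZ$-algebras is automatically a finitely presented algebra map.)

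First I would record that the forgetful functor $U\colon \Mon_{P/}\to\Mon$ preserves filtered colimits; more generally, colimits over a connected index category (in particular a filtered one) in a coslice category $\cC_{P/}$ are created by the projection to $\cC$. Next, for a monoid $T$ over $P$ with structure map $s_T\colon P\to T$, a morphism $Q\to T$ in $\Mon_{P/}$ is the same thing as a morphism $g\colon Q\to T$ of monoids with $g\circ(P\to Q)=s_T$. In other words, there is a natural pullback square of functors $\Mon_{P/}\to\sets$
\[
    \Hom_{\Mon_{P/}}(Q,-) \;=\; \Hom_{\Mon}(Q,U(-))\ \times_{\Hom_{\Mon}(P,U(-))}\ \Hom_{\Mon_{P/}}(P,-),
\]
where the last factor is the terminal functor (as $P$ is initial in $\Mon_{P/}$) mapped into the middle term by $\ast\mapsto s_{(-)}$. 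Each of the three functors on the right preserves filtered colimits — the first two because $U$ does and $P$, $Q$ are compact in $\Mon$, the third trivially — so the right-hand side preserves filtered colimits because pullbacks are finite limits and finite limits commute with filtered colimits in $\sets$. Hence $Q$ is compact in $\Mon_{P/}$, and \cref{lem:fp-AdamekRosicky} concludes.

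I do not anticipate a serious obstacle; the only points needing care are the bookkeeping that filtered colimits in the coslice $\Mon_{P/}$ agree with those in $\Mon$, and that the coslice Hom-functor is genuinely the fibre over the \emph{varying} structure map, so that it is expressed as a finite limit rather than a plain product. As an alternative, more hands-on route one can argue directly: choose finite presentations $P=\bN^m/R_P$ and $Q=\bN^n/R_Q$, lift the composite $\bN^m\to P\to Q$ along the surjection $\bN^n\surj Q$ to a homomorphism $\varphi\colon\bN^m\to\bN^n$ (possible since $\bN^m$ is free), and then check that $Q$ is presented over $P$ by the $n$ generators coming from $\bN^n$, subject to the relations $R_Q$ together with the $m$ further relations $(\bar e_j,0)\sim(0,\varphi(e_j))$ indexed by the standard generators $e_j$ of $\bN^m$. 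This is a finite presentation of $Q$ over $P$, and the claim that the resulting quotient of $P\oplus\bN^n$ equals $Q$ is a routine diagram chase producing mutually inverse maps.
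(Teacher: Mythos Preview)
Your proof is correct and takes essentially the same approach as the paper: both reduce to \cref{lem:fp-AdamekRosicky} and the fact that a morphism between compact objects is compact in the coslice category. The paper's proof is even terser, simply declaring this last step ``formal'', whereas you spell out the pullback argument.
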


\begin{proof}
By \cref{lem:fp-AdamekRosicky} it is equivalent to show that a morphism $P \to Q$ of compact objects of the category of monoids is a compact object in the category $\Mon_{P/}$ of monoids over $P$. But this is again just formal. 
\end{proof}

\begin{prop}
\label{prop:fp-conditions}
    Let $P \to Q$ be a morphism of monoids. 
    The following are equivalent:
    \begin{enumerate}[(a)]
        \item 
        \label{propitem:fp}
        The morphism $P\to Q$ is of finite presentation.
        \item 
        \label{propitem:fp-cptobj}
        The monoid $Q$ is a compact object of $\Mon_{P/}$.
        \item 
        \label{propitem:fp-bc}
        There exists a pushout square in the category of monoids
        \[
            \begin{tikzcd}
                Q & Q_0 \ar[l] \\
                P\ar[u] & P_0 \ar[l] \ar[u] 
            \end{tikzcd}
        \]
         where $P_0$ and $Q_0$ are finitely presented monoids.
    \end{enumerate}
\end{prop}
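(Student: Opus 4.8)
The plan is first to observe that the equivalence $\labelcref{propitem:fp}\Leftrightarrow\labelcref{propitem:fp-cptobj}$ is precisely the content of \cref{lem:fp-AdamekRosicky}, so that only the cycle $\labelcref{propitem:fp}\Rightarrow\labelcref{propitem:fp-bc}\Rightarrow\labelcref{propitem:fp}$ remains to be established.

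For $\labelcref{propitem:fp-bc}\Rightarrow\labelcref{propitem:fp}$ I would argue that in a pushout square as in $\labelcref{propitem:fp-bc}$ the map $P_0\to Q_0$ is of finite presentation by \cref{lem:maps between fp is fp}, since $P_0$ and $Q_0$ are both finitely presented monoids; then $P\to Q$, being the base change of $P_0\to Q_0$ along $P_0\to P$, is of finite presentation by \cref{lem:base change fp ft}.

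For $\labelcref{propitem:fp}\Rightarrow\labelcref{propitem:fp-bc}$ the idea is to manufacture the pushout square out of a finite presentation. First I would choose a finite presentation $(S,R)$ of $Q$ over $P$, so that $Q=(P\oplus\bN^S)/R$ with $S$ and $R=\{(a_j,b_j)\}_{j=1}^m$ finite, and write $a_j=(p_j,u_j)$, $b_j=(q_j,v_j)$ with $p_j,q_j\in P$ and $u_j,v_j\in\bN^S$. Next I would pick a finite set $T$ together with a homomorphism $\bN^T\to P$ whose image contains the finitely many elements $p_j,q_j$, lift each $p_j,q_j$ to $\alpha_j,\beta_j\in\bN^T$, and set $P_0=\bN^T$ and $Q_0=(\bN^T\oplus\bN^S)/\widetilde R$ with $\widetilde R=\{((\alpha_j,u_j),(\beta_j,v_j))\}_{j=1}^m$. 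Both $P_0$ and $Q_0$ are then finitely presented monoids, carrying the evident maps $P_0\to Q_0$ and $P_0\to P$, and it remains to identify $P\oplus_{P_0}Q_0$ with $Q$. Since base change along $P_0\to P$ preserves colimits, $P\oplus_{P_0}Q_0$ is the quotient of $P\oplus_{\bN^T}(\bN^T\oplus\bN^S)$ by the images of $\widetilde R$; and since $\bN^T\to\bN^T\oplus\bN^S$ is the coproduct inclusion, pasting of pushout squares gives $P\oplus_{\bN^T}(\bN^T\oplus\bN^S)=P\oplus\bN^S$, under which the image of $\widetilde R$ is exactly $R$ because $\alpha_j\mapsto p_j$ and $\beta_j\mapsto q_j$. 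Hence $P\oplus_{P_0}Q_0=(P\oplus\bN^S)/R=Q$, as required.

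I do not expect a genuine obstacle here: once \cref{lem:fp-AdamekRosicky,lem:base change fp ft,lem:maps between fp is fp} are available the argument is essentially formal. The one point requiring care is the construction in $\labelcref{propitem:fp}\Rightarrow\labelcref{propitem:fp-bc}$: a presentation of $Q$ over $P$ has relations involving arbitrary elements of $P$, so to extract a bona fide pushout square one must interpose the auxiliary free monoid $\bN^T$ surjecting onto the relevant finite subset of $P$ and then verify that the resulting square really is a pushout, which is where the associativity of pushouts and the behaviour of coproduct inclusions under base change enter.
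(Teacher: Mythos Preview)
Your proposal is correct and follows essentially the same route as the paper: invoke \cref{lem:fp-AdamekRosicky} for \labelcref{propitem:fp}$\Leftrightarrow$\labelcref{propitem:fp-cptobj}, use \cref{lem:maps between fp is fp} and \cref{lem:base change fp ft} for \labelcref{propitem:fp-bc}$\Rightarrow$\labelcref{propitem:fp}, and for \labelcref{propitem:fp}$\Rightarrow$\labelcref{propitem:fp-bc} set $P_0=\bN^T$ with $T$ indexing the $P$-components of the relations and $Q_0=(P_0\oplus\bN^S)/\widetilde R$ for a lift $\widetilde R$ of $R$. The only cosmetic difference is that the paper takes $T$ to be exactly the finite set of $P$-components appearing in $R$ (so the lifts are unique generators), whereas you allow any finite $T$ mapping onto those elements; your extra sentence on why the square is a pushout via pasting is a welcome clarification of what the paper calls ``by construction.''
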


\begin{proof}
We already cited \cite[Corollary~3.13]{AdamekRosicky} for the equivalence \labelcref{propitem:fp} $\Leftrightarrow$ \labelcref{propitem:fp-cptobj}.  

\smallskip

\labelcref{propitem:fp-bc} $\Rightarrow$ \labelcref{propitem:fp}: 
the morphism $P_0 \to Q_0$ is finitely presented by \cref{lem:maps between fp is fp}. Then the base change is finitely presented by \cref{lem:base change fp ft}. 

\smallskip

\labelcref{propitem:fp} $\Rightarrow$ \labelcref{propitem:fp-bc}: 
Let $S \subseteq Q$ be a finite generating set of $Q$ over $P$ such that that the congruence $E \subseteq (P \oplus \bN^S)^2$ defining $Q$ as a quotient of $P \oplus \bN^S$
is generated by the finite set $R \subseteq E$. 
Let $\pi_1, \pi_2\colon (P\oplus \bN^S)^2\to P$ denote the two projections, and let $T = \pi_1(R)\cup \pi_2(R)$ be the set of elements that occur as $P$-components of elements in tuples in $R$ 

Define $P_0 = \bN^T$ and consider $T$ as a subset of $\bN^T$ as the set of generators. Each element of $R$ has a unique preimage in $(T \times \bN^S)^2$ under the map induced by $P_0 \oplus \bN^S \to P \oplus \bN^S$ that maps a~generator of $\bN^T = P_0$ to the respective element in $P$. Let $R_0$ be the union of these preimages, so that  $R_0$ tautologically lifts $R$. 
Define further the quotient 
\[
    Q_0 = (P_0 \oplus \bN^S)/R_0.
\]
Then there is a commutative square as in the claim \labelcref{propitem:fp-bc}
induced by the morphism $P_0 \to Q_0$ and the inclusions $T \to P$ and $S \to Q$. The square is a pushout square by construction and the definition of finite presentation. This shows \labelcref{propitem:fp-bc} since $P_0$ and $Q_0$ are finitely presented by construction. \end{proof}

\begin{defi} 
\label{def:sfp}
    Let $\vtheta\colon P\to Q$ be a morphism of saturated monoids.
    \begin{enumerate}[(1)]
        \item A {\bf sat-generating set} for $Q$ over $P$ is a subset $S\subseteq Q$ such that $Q$ equals the saturation in $Q^\gp$ of the sub-monoid generated by $S$ and the image of $P$. 
        \item 
        We say that $\vtheta\colon P\to Q$ is {\bf of finite type up to saturation (sft)} if it admits a finite sat-generating set, or equivalently if there exists a factorisation 
        \begin{equation} \label{eqn:sft-monoid-def}
                P\la Q'\la Q 
        \end{equation}
        where the map $P\to Q'$ is of finite type and the map $Q'\to Q$ induces an isomorphism $(Q')^\sat\isomto Q$. 
        \item A {\bf sat-presentation} for $Q$ over $P$ is a pair $(S, R)$ where $S\subseteq Q$ is a sat-generating set and $R\subseteq (P\oplus \bN^S)^2$ is a subset such that $Q' = P\oplus \bN^S/R$ fits into a factorization \labelcref{eqn:sft-monoid-def} where again $(Q')^\sat\isomto Q$ (we do not assume that $Q'\to Q$ is injective).
        \item We say that $\vtheta\colon P\to Q$ is {\bf of finite presentation up to saturation (sfp)} if it admits a~sat-presentation $(S, R)$ where both $S$ and $R$ are finite, or equivalently if there exists a~factorisation \labelcref{eqn:sft-monoid-def} where $P\to Q'$ is of finite presentation. 
    \end{enumerate}
\end{defi}

\begin{rmk}
It is not true that every sfp morphism is of finite type, see \crefpart{ex:typeV-not-fg}{exitem:typeV-not-fg2} and \crefpart{ex:typeV-not-fg}{exitem:typeV-not-fg2b}, where the morphisms are even Kummer \'etale (\cref{def:smooth-etale-Ket-monoids}).
\end{rmk}

\begin{lem}
\label{lem:sfp stable under bc}
    The saturated base change of an sft (resp.\ sfp) morphism is again sft (resp.\ sfp).
\end{lem}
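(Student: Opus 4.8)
\textit{Proof proposal.} The plan is to reduce to \cref{lem:base change fp ft} (base change preserves finite type and finite presentation) by computing the saturated base change in two steps: first the pushout in all monoids, and then saturation, the point being that this is insensitive to pre-saturating the leg that is being based changed.

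Concretely, let $\vtheta\colon P\to Q$ be sft, so by \cref{def:sfp} there is a factorisation $P\to Q'\to Q$ with $P\to Q'$ of finite type and $(Q')^\sat\isomto Q$, and let $P\to P'$ be a morphism of saturated monoids. I claim that
\[
    P'\la P'\oplus_P Q'\la (P'\oplus_P Q)^\sat,
\]
where $\oplus$ denotes the pushout in the category of monoids, is an sft-factorisation of the saturated base change $P'\to (P'\oplus_P Q)^\sat$. The first map is of finite type by \cref{lem:base change fp ft}, and of finite presentation if $P\to Q'$ was, which handles the sfp case. So it remains to check that the second map identifies $(P'\oplus_P Q)^\sat$ with the saturation of its source, i.e.\ that the natural map $(P'\oplus_P Q')^\sat\to(P'\oplus_P Q)^\sat$ is an isomorphism.

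For this I would use that the saturation functor $(-)^\sat\colon\Mon\to\Sat$ is left adjoint to the inclusion and hence preserves colimits. Applying it to the pushout square defining $P'\oplus_P Q'$ exhibits $(P'\oplus_P Q')^\sat$ as the pushout of $(P')^\sat\leftarrow P^\sat\to (Q')^\sat$, that is, of $P'\leftarrow P\to Q$, in the category $\Sat$; and the same reasoning (or the explicit description of saturated pushouts recalled before \cref{lem:ker and coker under pushout}) identifies that pushout with $(P'\oplus_P Q)^\sat$. Both identifications are over $P'$ and compatible with the evident map $P'\oplus_P Q'\to P'\oplus_P Q$, which yields the asserted factorisation and thus the sft (resp.\ sfp) property of the base change.

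I do not anticipate a genuine obstacle: the argument is formal once one has \cref{lem:base change fp ft} together with the observation that pre-saturating one leg of a monoid pushout does not change its saturated pushout. The only mild subtlety is keeping careful track of which pushouts are taken in $\Mon$ and which in $\Sat$. If one prefers to avoid adjunction language, one can instead verify the isomorphism $(P'\oplus_P Q')^\sat\isomto(P'\oplus_P Q)^\sat$ by testing against an arbitrary saturated monoid $T$: any map $Q'\to T$ factors uniquely through $(Q')^\sat=Q$, so the relevant $\Hom$-sets coincide, and Yoneda concludes.
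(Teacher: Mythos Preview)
Your argument is correct and is precisely the content of the paper's one-line proof, which simply says ``This follows from the definition of sft (resp.\ sfp) maps, \cref{lem:base change fp ft} and the fact that saturation commutes with (saturated) pushouts.'' You have just unpacked that sentence in full: the factorisation $P\to Q'\to Q$ gives, after ordinary pushout along $P\to P'$, the map $P'\to P'\oplus_P Q'$ which is of finite type (resp.\ presentation) by \cref{lem:base change fp ft}, and the identification $(P'\oplus_P Q')^\sat\simeq (P'\oplus_P Q)^\sat$ is exactly ``saturation commutes with pushouts''.
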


\begin{proof}
This follows from the definition of sft (resp.\ sfp) maps, \cref{lem:base change fp ft} and the fact that saturation commutes with (saturated) pushouts.  
\end{proof}

We first characterise sfp morphisms with source an fs monoid. 

\begin{prop}
    \label{prop:fs version of sorite for sfp}
    Let $P \to Q$ be a morphism of saturated monoids with $P$ an fs monoid. The following are equivalent:
    \begin{enumerate}[(a)]
        \item
        \label{propitem:fs version: fs}
        The monoid $Q$ is fs.
        \item
        \label{propitem:fs version: sft}
        The morphism $P \to Q$ is sft.
        \item
        \label{propitem:fs version: ft}
        The morphism $P \to Q$ is of finite type.
        \item
        \label{propitem:fs version: sfp}
        The morphism $P \to Q$ is sfp.
        \item
        \label{propitem:fs version: fp}
        The morphism $P \to Q$ is of finite presentation.
    \end{enumerate}    
\end{prop}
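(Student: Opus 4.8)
The plan is to prove the five conditions equivalent by a short cycle, feeding in two classical facts about finitely generated commutative monoids that are not in the excerpt: R\'edei's theorem, that every finitely generated commutative monoid is finitely presented, and Gordan's lemma, that the saturation of a fine monoid inside its groupification is again fine (both may be found in \cite{Ogus}).

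First I would dispose of the formal implications. A morphism of finite presentation is of finite type, and it is sfp by taking $Q' = Q$ in the factorisation \labelcref{eqn:sft-monoid-def}; hence \labelcref{propitem:fs version: fp} implies \labelcref{propitem:fs version: ft} and \labelcref{propitem:fs version: sfp}. Directly from the definitions, \labelcref{propitem:fs version: ft} $\Rightarrow$ \labelcref{propitem:fs version: sft} and \labelcref{propitem:fs version: sfp} $\Rightarrow$ \labelcref{propitem:fs version: sft}. For \labelcref{propitem:fs version: fs} $\Rightarrow$ \labelcref{propitem:fs version: fp}: both $P$ and $Q$ are fs, hence finitely generated, hence finitely presented as monoids by R\'edei's theorem, so $P \to Q$ is of finite presentation by \cref{lem:maps between fp is fp}.

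The only implication with genuine content is \labelcref{propitem:fs version: sft} $\Rightarrow$ \labelcref{propitem:fs version: fs}. Given an sft morphism, I would choose a factorisation $P \to Q' \to Q$ with $P \to Q'$ of finite type and $(Q')^{\sat} \isomto Q$. Since $P$ is finitely generated (being fs) and $Q'$ is finitely generated over $P$, the monoid $Q'$ is finitely generated, and therefore so is its integralisation $(Q')^{\inte} = \im(Q' \to (Q')^{\gp})$; thus $(Q')^{\inte}$ is a fine monoid. Its saturation inside $(Q')^{\gp}$ is, by the very definition of $(-)^{\sat}$, isomorphic to $Q$, and it is fine by Gordan's lemma; as $Q$ is saturated by hypothesis, $Q$ is fs.

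The main — essentially the only — obstacle is to have available the correct form of Gordan's lemma, for a fine monoid whose groupification need not be torsion-free; granting that, everything else is bookkeeping with the definitions of sft and sfp together with \cref{lem:maps between fp is fp} and R\'edei's theorem.
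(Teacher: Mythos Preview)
Your proof is correct and follows essentially the same route as the paper's: the trivial implications \labelcref{propitem:fs version: fp} $\Rightarrow$ \labelcref{propitem:fs version: sfp} $\Rightarrow$ \labelcref{propitem:fs version: sft} and \labelcref{propitem:fs version: fp} $\Rightarrow$ \labelcref{propitem:fs version: ft} $\Rightarrow$ \labelcref{propitem:fs version: sft}, then \labelcref{propitem:fs version: fs} $\Rightarrow$ \labelcref{propitem:fs version: fp} via finite presentation of fs monoids together with \cref{lem:maps between fp is fp}, and finally \labelcref{propitem:fs version: sft} $\Rightarrow$ \labelcref{propitem:fs version: fs} via Gordan's lemma. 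The only cosmetic difference is that the paper cites \cite[Theorem~I~2.1.7]{Ogus} (fs monoids are finitely presented) rather than invoking R\'edei's theorem directly, and cites \cite[Lemma~3.1.1]{Stix2002:Thesis} for the form of Gordan's lemma you flag.
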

\begin{proof}
\labelcref{propitem:fs version: fs} $\Rightarrow$ \labelcref{propitem:fs version: fp}: 
  By \cite[Theorem~I~2.1.7]{Ogus}, every fs monoid is finitely presented. Assertion \labelcref{propitem:fs version: fp} follows from \cref{lem:maps between fp is fp}. 

  The implications 
  \labelcref{propitem:fs version: fp} $\Rightarrow$ \labelcref{propitem:fs version: sfp}  $\Rightarrow$ \labelcref{propitem:fs version: sft} and \labelcref{propitem:fs version: fp} $\Rightarrow$  \labelcref{propitem:fs version: ft} $\Rightarrow$ \labelcref{propitem:fs version: sft} are trivial. So we are left to prove the following. 

  \smallskip
  
  \labelcref{propitem:fs version: sft} $\Rightarrow$ \labelcref{propitem:fs version: fs}: 
    Pick a factorisation $P \to Q' \to Q$ with $P \to Q'$ finitely generated and $Q' \to Q$ an isomorphism after saturation. As an fs monoid, $P$ is finitely generated, hence $Q'$ is finitely generated. Then $Q$ is fs, being the saturation of a finitely generated monoid by Gordan's Lemma (cf.~\cite[Lemma 3.1.1]{Stix2002:Thesis}). 
\end{proof}

For general sfp morphisms, we have the following important characterisations.

\begin{prop} 
\label{prop:sfp-conditions}
    Let $P \to Q$ be a morphism of saturated monoids. 
    The following are equivalent:
    \begin{enumerate}[(1)]
        \item 
        \label{propitem:sfp}
        The morphism $P\to Q$ is sfp.
        \item 
        \label{propitem:sfp-cptobj}
        The monoid $Q$ is a compact object of $\Sat_{P/}$.
        \item 
        \label{propitem:sfp-bc}
        There exists a pushout square in the category of saturated monoids
        \[
            \begin{tikzcd}
                Q & Q_0 \ar[l] \\
                P\ar[u] & P_0 \ar[l] \ar[u] 
            \end{tikzcd}
        \]
        (so $Q = (P\oplus_{P_0}Q_0)^\sat$) where $P_0\to Q_0$ is a morphism of fs monoids.
    \end{enumerate}
    The above conditions \labelcref{propitem:sfp}--\labelcref{propitem:sfp-bc} further imply and, if $P\to Q$ is injective, are equivalent to the following condition:
    \begin{enumerate}[(1),resume]
        \item
        \label{propitem:sfp-sft}
        The morphism $P\to Q$ is sft.
    \end{enumerate}
\end{prop}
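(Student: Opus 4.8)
The plan is to prove $\labelcref{propitem:sfp}\Leftrightarrow\labelcref{propitem:sfp-bc}$, then $\labelcref{propitem:sfp-bc}\Rightarrow\labelcref{propitem:sfp-cptobj}\Rightarrow\labelcref{propitem:sfp}$, the trivial implication $\labelcref{propitem:sfp}\Rightarrow\labelcref{propitem:sfp-sft}$, and finally $\labelcref{propitem:sfp-sft}\Rightarrow\labelcref{propitem:sfp}$ under the extra hypothesis that $\vtheta$ is injective.

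For $\labelcref{propitem:sfp-bc}\Rightarrow\labelcref{propitem:sfp}$ one observes that if $Q=(P\oplus_{P_0}Q_0)^\sat$ with $P_0\to Q_0$ a morphism of fs monoids, then $P_0\to Q_0$ is of finite presentation by \cref{prop:fs version of sorite for sfp}, hence so is its base change $P\to P\oplus_{P_0}Q_0$ by \cref{lem:base change fp ft}, and $Q$ is the saturation of the latter, so $\vtheta$ is sfp by definition. For $\labelcref{propitem:sfp}\Rightarrow\labelcref{propitem:sfp-bc}$, start from a factorisation $P\to Q'\to Q$ with $P\to Q'$ of finite presentation; \cref{prop:fp-conditions} produces a pushout square exhibiting $Q'=P\oplus_{P_0'}Q_0'$ with $P_0',Q_0'$ finitely presented monoids; set $P_0=(P_0')^\sat$ and $Q_0=(Q_0')^\sat$, which are fs because a finitely presented monoid is finitely generated with finitely generated groupification, so Gordan's Lemma (cf.\ \cite[Lemma~3.1.1]{Stix2002:Thesis}) applies. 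Since $(-)^\sat$ is a left adjoint it commutes with pushouts, and the saturated pushout of a diagram of saturated monoids is the saturation of its pushout of monoids; hence $Q=(Q')^\sat=(P\oplus_{P_0'}Q_0')^\sat=(P\oplus_{P_0}Q_0)^\sat$, which is $\labelcref{propitem:sfp-bc}$. For $\labelcref{propitem:sfp-bc}\Rightarrow\labelcref{propitem:sfp-cptobj}$: for a saturated monoid $T$ over $P$, adjunction gives $\Hom_{\Sat_{P/}}(Q,T)=\Hom_{\Mon_{P/}}(P\oplus_{P_0}Q_0,T)=\Hom_{\Mon_{P_0/}}(Q_0,T)$; since $Q_0$ is finitely presented over $P_0$ (again by \cref{prop:fs version of sorite for sfp}) it is a compact object of $\Mon_{P_0/}$, and filtered colimits in $\Sat_{P/}$ agree with those of the underlying monoids \cite[Proposition~I~1.3.6]{Ogus}, so these identifications turn $T=\varinjlim T_i$ into $\Hom_{\Sat_{P/}}(Q,\varinjlim T_i)=\varinjlim\Hom_{\Sat_{P/}}(Q,T_i)$. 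Finally $\labelcref{propitem:sfp}\Rightarrow\labelcref{propitem:sfp-sft}$ is immediate, a finite sat-presentation containing in particular a finite sat-generating set.

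The heart of the matter is $\labelcref{propitem:sfp-cptobj}\Rightarrow\labelcref{propitem:sfp}$. First one writes $Q$ as a filtered colimit of sfp monoids over $P$: index by the pairs $(S,R)$ with $S\subseteq Q$ finite and $R$ a finite subset of the kernel congruence of $P\oplus\bN^S\to Q$, ordered by inclusion, and put $Q_{S,R}:=(P\oplus\bN^S/\langle R\rangle)^\sat$; this system is filtered, and $\varinjlim_{(S,R)}Q_{S,R}=Q$ because the monoids $P\oplus\bN^S/\langle R\rangle$ have colimit $Q$ and $(-)^\sat$ preserves filtered colimits. Compactness of $Q$ then makes $\id_Q$ factor through some $Q_0=Q_{S_0,R_0}$, so $Q$ is a retract of the sfp monoid $Q_0$, with structure map $r\colon Q_0\surj Q$ (split, hence surjective) and section $s\colon Q\to Q_0$. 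It remains to show that sfp morphisms are stable under such retracts. Since $r$ is surjective, the images $f_i:=r(e_i)$ of finitely many elements $e_1,\dots,e_n\in Q_0$ sat-generating $Q_0$ over $P$ form a sat-generating set of $Q$ over $P$; writing $Q_0=(Q_0')^\sat$ with $Q_0'=P\oplus\bN^{S_0}/\langle R_0\rangle$ finitely presented over $P$, choose for each $i$ an integer $n_i\geq 1$ and $b_i\in P\oplus\bN^{S_0}$ with $n_is(f_i)$ equal to the image of $b_i$ in $Q_0$; applying $r$ shows $(n_i\varepsilon_i,b_i)$ lies in the kernel congruence of $P\oplus\bN^{S_0}\to Q$, where $\varepsilon_i$ is the $i$-th generator. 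The candidate finite sat-presentation of $Q$ is then $R_0$ together with the relations $(n_i\varepsilon_i,b_i)$: the induced map $\bar Q:=(P\oplus\bN^{S_0}/\langle R_0,(n_i\varepsilon_i,b_i)\rangle)^\sat\to Q$ is surjective, and composing $s$ with the projection $Q_0\surj\bar Q$ produces a section. I expect the main obstacle to be exactly the injectivity of this comparison map, i.e.\ that these finitely many relations suffice; the subtle point is controlling the torsion in the relevant groupifications, equivalently checking that two homomorphisms out of $Q_0$ agreeing on a sat-generating set coincide. A more formal alternative is to note that $\Sat_{P/}$ is locally finitely presentable, being a full reflective subcategory of the locally finitely presentable category $\Mon_{P/}$ whose inclusion preserves filtered colimits \cite[Proposition~I~1.3.6]{Ogus}; its compact objects are then the retracts of saturations $(C)^\sat$ of finitely presented monoids $C$ over $P$ — but one is then again reduced to checking that such retracts are sfp.

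Finally, $\labelcref{propitem:sfp-sft}\Rightarrow\labelcref{propitem:sfp}$ when $\vtheta$ is injective: take a finite sat-generating set $S$ of $Q$ over $P$, with $\alpha\colon P\oplus\bN^S\to Q$ the induced map, and put $K:=\ker(\alpha^\gp\colon P^\gp\oplus\bZ^S\to Q^\gp)$. By \cref{lem:injective-implies-gp-injective} the map $P^\gp\to Q^\gp$ is injective, so $K\cap P^\gp=0$, whence $K$ embeds into $\bZ^S$ and is finitely generated. Choose finitely many elements $(u_t,v_t)$ of the kernel congruence of $\alpha$ whose differences $u_t-v_t$ generate $K$, and set $Q'':=P\oplus\bN^S/\langle(u_t,v_t)\rangle_t$, a finitely presented monoid over $P$. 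Then $(Q'')^\gp=(P^\gp\oplus\bZ^S)/K=Q^\gp$ and $(Q'')^\inte=\im(\alpha)$ inside $Q^\gp$, whose saturation is $Q$ by the choice of $S$; hence $(Q'')^\sat=Q$, so $\vtheta$ is sfp.
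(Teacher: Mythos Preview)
Your proofs of $\labelcref{propitem:sfp}\Leftrightarrow\labelcref{propitem:sfp-bc}$, $\labelcref{propitem:sfp-bc}\Rightarrow\labelcref{propitem:sfp-cptobj}$, $\labelcref{propitem:sfp}\Rightarrow\labelcref{propitem:sfp-sft}$, and $\labelcref{propitem:sfp-sft}\Rightarrow\labelcref{propitem:sfp}$ under injectivity are correct and essentially coincide with the paper's arguments.

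The genuine gap is in $\labelcref{propitem:sfp-cptobj}\Rightarrow\labelcref{propitem:sfp}$, and you correctly locate it: you reduce to showing that a retract $Q$ of an sfp monoid $Q_0$ over $P$ is again sfp, and your attempt to force this by adjoining the relations $(n_i\varepsilon_i,b_i)$ does not close the gap --- you have only replaced the retract diagram $Q\rightleftarrows Q_0$ by another retract diagram $Q\rightleftarrows\bar Q$, and the injectivity of $\bar Q\to Q$ is no more obvious than before. As you note yourself (and as the paper remarks explicitly after its proof), purely formal arguments produce only ``retract of sfp'', and something specific to monoids must be used to eliminate the retract.

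The paper's idea is to separate the two filtered colimits. First use compactness exactly once to extract a finite sat-generating set $S$: writing $Q=\varinjlim_S Q(S)$ with $Q(S)$ the saturation of the submonoid generated by $S$ over $P$, the lift of $\id_Q$ through some $Q(S)$ forces $Q(S)=Q$ because $Q(S)\hookrightarrow Q$ is already injective. Now \emph{fix} this $S$, let $E$ be the kernel congruence of $P\oplus\bN^S\to Q$, and for finite $R\subseteq E$ set $Q_R=((P\oplus\bN^S)/R)^\sat$; compactness again lifts $\id_Q$ to some $\iota\colon Q\to Q_R$ splitting $r\colon Q_R\to Q$. The point is that with $S$ fixed the groups $Q_R^\gp/P^\gp$ are all quotients of $\bZ^S$ and the transition maps among them are surjective, so by noetherianity of $\bZ$ the system $\{Q_R^\gp/P^\gp\}_R$ stabilises at $Q^\gp/P^\gp$. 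Choosing $R$ large enough that both this stabilisation and the section $\iota$ hold, one has $Q_R^\gp/P^\gp\isomto Q^\gp/P^\gp$; since $r^\gp$ and $\iota^\gp$ are mutually inverse on the images of $P^\gp$, the $5$-lemma gives that $r^\gp\colon Q_R^\gp\to Q^\gp$ is an isomorphism. Hence $r$ is injective (as $Q_R$ is integral) and, being split, surjective --- so $Q=Q_R$ is sfp. The crucial ingredient your argument is missing is precisely this noetherian stabilisation of $Q_R^\gp/P^\gp$, which is what kills the possible torsion discrepancy between $Q_R$ and $Q$.
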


\begin{proof}
\labelcref{propitem:sfp} $\Rightarrow$ \labelcref{propitem:sfp-bc}:
Let $P \to Q' \to Q$ be a factorisation such that $P \to Q'$ is finitely presented and $(Q')^\sat \isomto Q$ is an isomorphism. By \cref{prop:fp-conditions} we find finitely presented monoids $P'_0 \to Q'_0$ and a map $P'_0 \to P$ such that $Q' = P \oplus_{P'_0} Q'_0$. The map $P_0 = (P'_0)^\sat \to Q_0 = (Q'_0)^\sat$ is a map of fs monoids by Gordan's Lemma. 
Since saturation commutes with pushout, we find $Q = (Q')^\sat = (P \oplus_{P_0} Q_0)^\sat$ as requested.

\smallskip

\labelcref{propitem:sfp-bc} 
$\Rightarrow$ \labelcref{propitem:sfp-cptobj}:  
By \cref{prop:fs version of sorite for sfp} the morphism $P_0 \to Q_0$ in the diagram in \labelcref{propitem:sfp-bc} is finitely presented. Hence $Q_0$ is a compact object in $\Mon_{P_0/}$ by \cref{lem:fp-AdamekRosicky}. Since the inclusion  $\Sat_{P_0/} \to \Mon_{P_0/}$ preserves filtered colimits, its left adjoint functor \textit{saturation} preserves compact objects. Therefore the fs monoid $Q_0$ is a compact object in $\Sat_{P_0/}$. Base change along $P_0 \to P$ preserves compact objects and so shows \labelcref{propitem:sfp-cptobj}. 

\smallskip

Although \labelcref{propitem:sfp} $\Rightarrow$ \labelcref{propitem:sfp-sft} is obvious, we include a proof of \labelcref{propitem:sfp-cptobj} $\Rightarrow$ \labelcref{propitem:sfp-sft} because this serves as the first part of our proof of \labelcref{propitem:sfp-cptobj} $\Rightarrow$ \labelcref{propitem:sfp}.

\labelcref{propitem:sfp-cptobj} $\Rightarrow$ \labelcref{propitem:sfp-sft}:
For any finite set $S \subseteq Q$ we denote by $Q(S)_0$ the submonoid of $Q$ generated by $S$ over $P$, and by $Q(S) = Q(S)_0^\sat$ its saturation. Note that $Q(S)_0$ is integral by definition and that $Q(S) \to Q$ is injective. Then clearly 
\[
    Q = \varinjlim_S Q(S)
\]
as a colimit of saturations of monoids finitely generated over $P$. As $P \to Q$ is a compact object, we have
\[
    \Hom_P(Q,\varinjlim_S Q(S)) = \varinjlim_S \Hom_P(Q,Q(S)).
\]
Therefore, the identity $\id_Q$ lifts  to a morphism $Q \to Q(S)$ for large enough $S$. It follows that the injective map $Q(S) \to Q$ is an isomorphism, i.e. $S$ sat-generates $Q$ over $P$. 

\smallskip

\labelcref{propitem:sfp-cptobj} $\Rightarrow$ \labelcref{propitem:sfp}:
We already know that $P \to Q$ is sft by the proof of \labelcref{propitem:sfp-cptobj} $\Rightarrow$ \labelcref{propitem:sfp-sft} above. 
Pick any finite sat-generating set $S$ of $Q$ over $P$, and consider the kernel congruence $E \subseteq (P \oplus \bN^S)^2$ of the morphism
\[
    P \oplus \bN^S \to Q, \qquad (p,n) \mapsto p + \sum_{s \in S} n_s s.
\]
with image $Q_0 = (P \oplus \bN^S)/E$. 
For any finite subset $R \subseteq E$ we denote by $Q_R$ the saturation of the quotient 
\[
Q_{R,0} = (P \oplus \bN^S)/R.
\]
It follows that
\[
    \varinjlim_R Q_R = \big(\varinjlim_R Q_{R,0}\big)^\sat \to Q_0^\sat = Q
\]
is an isomorphism over $P$. Again, by $Q$ being a compact object over $P$, we have
\[
    \Hom_P(Q,\varinjlim_R Q_R) = \varinjlim_R \Hom_P(Q,Q_R)
\]
so again the identity $\id_Q$ lifts to a map $\iota \colon Q \to Q_R$.  

As for $R \subseteq R'$ the map $Q_{R,0} \surj Q_{R',0}$ is surjective, also the map $Q_R^\gp \surj Q_{R'}^\gp$ is surjective. 
The filtered colimit of finitely generated abelian groups and surjective transition maps
\[
    \varinjlim_R (Q_R^\gp/P^\gp) = (\varinjlim_R Q_R)^\gp/P^\gp = Q^\gp/P^\gp
\]
becomes constant for $R \gg 0$, because $\bZ$ is a noetherian ring. 

We choose $R$ large enough so that 
both properties hold: $Q_R^\gp/P^\gp \isomto Q^\gp/P^\gp$ is an isomorphism and $\id_Q$ lifts to $\iota \colon Q \to Q_R$ over $P$, i.e.\  splitting the natural map $r\colon Q_R \to Q$. The restrictions of $r^\gp$ and $\iota^\gp$ to the 
images of $P^\gp \to Q^\gp$ and $P^\gp \to Q_R^\gp$ are mutually inverse isomorphisms (since $\iota^\gp$ is injective). By the $5$-lemma we conclude that $r^\gp \colon Q_R^\gp \isomto Q^\gp$ is an isomorphism and this shows that the map $r\colon Q_R \to Q$ is  injective. As a retract map, $r$ is also surjective. Now $P \to Q_R=Q$ is sfp by construction, and this concludes the proof of \labelcref{propitem:sfp}.

\smallskip

We now assume that $P \to Q$ is injective.

\labelcref{propitem:sfp-sft} $\Rightarrow$ \labelcref{propitem:sfp}: 
Let $S$ be a finite sat-generating set of $Q$ over $P$. The morphism $P \oplus \bN^S \to Q$ induces a surjection 
\[
    P^\gp \oplus \bZ^S \surj Q^\gp
\]
with kernel $K$. By assumption $P \to Q$ is injective, and, by \cref{lem:injective-implies-gp-injective}, also $P^\gp \inj Q^\gp$ is injective. Hence the second projection induces an injective map $K \inj \bZ^S$. Thus $K$ is a finitely generated abelian group.

Let $f_i - g_i \in K$, for $i=1, \ldots, n$ be generators of $K$ with $f_i,g_i \in P \oplus \bN^S$. Using 
\[
    R = \{(f_i,g_i) \ ; \ i= 1, \ldots,n\}
\]
we define the monoid
\[
    Q' = (P \oplus \bN^S)/R
\]
which is finitely presented over $P$ by construction. Since $Q$ is saturated, the relations $R$ hold in $Q$ and we obtain an induced morphism $Q' \to Q$. Then 
\[
    Q'^\gp = (P^\gp \oplus \bZ^S)/K \isomto Q^\gp.
\]
Therefore the natural map $(Q')^\sat \to Q$ is injective, and it is surjective because the image contains $S$. This shows that $P \to Q$ is sfp.
\end{proof}

\begin{rmk}
    The reader may have noticed that the description of compact objects in $\cat{Mon}_{/P}$ as the finitely presented morphisms (\cref{lem:fp-AdamekRosicky}) was much easier to obtain than the corresponding statement about compact objects in $\cat{Sat}_{/P}$ being precisely the sfp morphisms (\cref{prop:sfp-conditions}). The intuitive reason for this is that saturated monoids do not form an ``algebraic theory'' (more precisely, an \emph{equational class} or a \emph{variety} in the sense of universal algebra), as the saturation axioms contain an existential quantifier. Trying to deduce the statement about compact objects in $\cat{Sat}_{P/}$ from the one about compact objects in $\cat{Mon}_{P/}$ using the natural adjunction between the two categories, one can completely formally obtain the following statement: $Q$ is a compact object in $\cat{Sat}_{P/}$ if and only if $Q$ is a \emph{retract} of the saturation of a finitely presented monoid over $P$. Without arguments as in last step of the proof of \cref{prop:sfp-conditions} \labelcref{propitem:sfp-cptobj}$\Rightarrow$\labelcref{propitem:sfp}, employing crucially the noetherianity of $\bZ$, it is not obvious why we can get rid of the ``retract'' part. 
\end{rmk}

\begin{rmk}
\label{rmk:ALPT}
    The proof for \labelcref{propitem:sfp-sft} $\Rightarrow$ \labelcref{propitem:sfp} in 
    \cref{prop:sfp-conditions} shares similarities with the proof of  \cite[Lemma~2.1.7]{ALPT}: an injective homomorphism $P\to Q$ of integral monoids which is of finite type (or, equivalently, of finite type up to integralisation) is of finite presentation up to integralisation (ifp). This is an instance of the parallel story of notions based on integral monoids and integralisation rather than saturated monoids and saturation that we remarked in a footnote in the introduction. 
\end{rmk}

\begin{cor}
\label{cor:sfp stable composition}
    The composition of sfp morphisms is again sfp.  
\end{cor}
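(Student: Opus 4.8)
The plan is to deduce this immediately from the characterisation of sfp morphisms as compact objects in coslice categories, \cref{prop:sfp-conditions}. So suppose $P \to Q$ and $Q \to R$ are sfp, i.e.\ $Q$ is a compact object of $\Sat_{P/}$ and $R$ is a compact object of $\Sat_{Q/}$; I want to show that $R$, regarded as an object of $\Sat_{P/}$ via the composite $P \to Q \to R$, is again compact, which then says precisely that $P \to R$ is sfp.

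The key structural inputs are: (i) for any monoid $A$ the forgetful functor $\Sat_{A/} \to \Sat$ creates filtered colimits (as for any coslice category), so filtered colimits in $\Sat_{P/}$, in $\Sat_{Q/}$ and in $\Sat$ all agree; and (ii) for a saturated monoid $T$ under $P$ there is a natural decomposition
\[
    \Hom_{\Sat_{P/}}(R, T) \;=\; \coprod_{g \in \Hom_{\Sat_{P/}}(Q, T)} \Hom_{\Sat_{Q/}}(R, T_g),
\]
where $T_g$ is $T$ endowed with the $Q$-structure $g$; indeed a $P$-morphism $R \to T$ restricts along $Q \to R$ to some $g$, and for fixed $g$ the $P$-morphisms lifting $g$ are exactly the $Q$-morphisms $R \to T_g$. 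Given a filtered system $\{T_i\}$ in $\Sat_{P/}$ with colimit $T$, compactness of $Q$ over $P$ makes every $g\colon Q \to T$ factor through some $T_i$, after which the $T_j$ for $j \ge i$ carry compatible $Q$-structures with $T_g = \varinjlim_{j\ge i}(T_j)_{g_j}$ in $\Sat_{Q/}$; compactness of $R$ over $Q$ then expresses $\Hom_{\Sat_{Q/}}(R, T_g)$ as the corresponding filtered colimit. Plugging these into the decomposition and commuting the coproducts past the filtered colimit yields $\Hom_{\Sat_{P/}}(R, T) = \varinjlim_i \Hom_{\Sat_{P/}}(R, T_i)$.

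The only point requiring a little care is the last bookkeeping step — commuting the coproduct over the varying index sets $\Hom_{\Sat_{P/}}(Q, T_i)$ with the filtered colimit over $i$ — which is a routine diagram chase. Concretely one can also bypass the decomposition and argue directly on elements: for surjectivity, lift a given $P$-morphism $R \to T$ first along $Q$ (using that $Q$ is compact over $P$) and then along $R$ (using that $R$ is compact over $Q$); for injectivity, equalise two $P$-morphisms $R \to T_i$ first on $Q$, then on $R$. A more hands-on alternative — write $Q = (P\oplus_{P_0}Q_0)^\sat$ and $R = (Q\oplus_{Q_1}R_1)^\sat$ with fs data, use that saturation commutes with pushouts, and try to descend the map $Q_1 \to Q$ to a finitely generated submonoid of $P$, invoking \cref{prop:fs version of sorite for sfp} — is possible but more cumbersome, precisely because $P$ itself need not be finitely generated, so I would not pursue it.
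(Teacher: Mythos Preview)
Your proof is correct and follows the same approach as the paper: both invoke the characterisation of sfp morphisms as compact objects from \cref{prop:sfp-conditions} and then use that compactness is transitive along coslice categories. The paper simply says ``it is formal'' where you spell out the Hom-set decomposition and the surjectivity/injectivity argument.
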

\begin{proof}
    Let $P \to Q$ and $Q \to R$ be sfp morphisms. Then by \crefpart{prop:sfp-conditions}{propitem:sfp-cptobj} $R$ is a compact object in $\Sat_{Q/}$ and $Q$ is a compact object in $\Sat_{P/}$. It is formal that then $R$ is a compact object in $\Sat_{P/}$ and hence $P \to R$ is an sfp morphism by \cref{prop:sfp-conditions}.    
\end{proof}

\begin{cor} 
\label{cor:fs_compact}
    The fs monoids are the compact objects of the category of saturated monoids.
\end{cor}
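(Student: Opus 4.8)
The plan is to recognise the ambient category here, the category $\Sat$ of saturated monoids, as the slice category $\Sat_{0/}$ over the initial (trivial) monoid $0$, and then to read off the result from the two preceding propositions applied with base monoid $P = 0$. Since $0$ is initial in $\Sat$, the forgetful functor $\Sat_{0/} \to \Sat$ is an equivalence and preserves filtered colimits, so an object is compact in $\Sat$ if and only if the corresponding object $0 \to Q$ is compact in $\Sat_{0/}$. This is the only ``translation'' step, and it is formal.

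With that identification in hand, I would assemble the two required equivalences. First, the trivial monoid $0$ is fs: it is generated by the empty set and is vacuously saturated. Hence \cref{prop:fs version of sorite for sfp}, applied with $P = 0$, tells us that for a saturated monoid $Q$ the morphism $0 \to Q$ is sfp if and only if $Q$ is fs. Second, the structure map $0 \to Q$ is always injective, so the last equivalence in \cref{prop:sfp-conditions} applies and says that $0 \to Q$ is sfp if and only if $Q$ is a compact object of $\Sat_{0/}$. Chaining these together: $Q$ is compact in $\Sat \cong \Sat_{0/}$ $\iff$ $0 \to Q$ is sfp $\iff$ $Q$ is fs. This proves both inclusions at once — every fs monoid is compact and every compact saturated monoid is fs — which is exactly the corollary.

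I do not expect any genuine obstacle: the statement is a direct specialisation of \cref{prop:sfp-conditions} and \cref{prop:fs version of sorite for sfp} to $P = 0$. The only things to verify are bookkeeping trivialities — that $0$ is an fs monoid, that $0 \to Q$ is injective so that the ``injective'' hypothesis of \cref{prop:sfp-conditions} is met, and that compactness is insensitive to passing between $\Sat$ and $\Sat_{0/}$ — none of which require more than a line.
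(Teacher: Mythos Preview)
Your proof is correct and follows essentially the same route as the paper: specialise \cref{prop:sfp-conditions} to the base $P=0$ to identify compact objects of $\Sat$ with those $Q$ for which $0\to Q$ is sfp, and then invoke \cref{prop:fs version of sorite for sfp} (with the fs monoid $P=0$) to see these are exactly the fs monoids. One small remark: the equivalence \labelcref{propitem:sfp}$\Leftrightarrow$\labelcref{propitem:sfp-cptobj} in \cref{prop:sfp-conditions} holds unconditionally, so your appeal to injectivity of $0\to Q$ is harmless but unnecessary.
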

\begin{proof}
    By \cref{prop:sfp-conditions}, a compact object is the same as a monoid  $P$ with $0 \to P$ an sfp morphism. These are precisely the fs monoids. 
\end{proof}

\begin{cor} 
\label{cor:sfp_comes_from_finite_level}
    Let $P = \varinjlim_{i \in I} P_i$ be a filtered colimit of saturated monoids.
    \begin{enumerate}[(a)]
        \item \label{coritem:sfp-a} Let $0\in I$ and let $P_0\to Q_0$ and $P_0\to R_0$ be two sfp morphisms. Denote by $P_i\to Q_i$, $P_i\to R_i$ ($i\geq 0$), $P\to Q$, $P\to R$ the respective saturated base changes. Then the map 
        \[ 
            \varinjlim_{i\geq 0} \Hom_{P_i}(Q_i, R_i) \longrightarrow \Hom_P(Q, R)
        \]
        is bijective.
        \item \label{coritem:sfp-b} For every sfp morphism $P \to Q$, there exists an index $i \in I$ and an sfp morphism $P_i \to Q_i$ fitting into a pushout square
        \[
         \begin{tikzcd}
             Q          & Q_i   \ar[l]  \\
             P  \ar[u]  & P_i   \ar[l]  \ar[u]
         \end{tikzcd}
        \]
        in the category of saturated monoids. 
    \end{enumerate}
\end{cor}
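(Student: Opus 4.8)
The plan is to deduce both statements formally from the characterisation of sfp morphisms as the compact objects of $\Sat_{P/}$ (\cref{prop:sfp-conditions}), together with two soft facts: saturation, being a left adjoint, commutes with all colimits, so in particular saturated pushouts commute with filtered colimits; and the inclusion $\Sat\subseteq\Mon$ preserves filtered colimits. These are the monoid analogues of the finite-level descent results for finitely presented schemes.

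For \labelcref{coritem:sfp-a}, I would first record that $R=\varinjlim_{i\geq 0}R_i$ in $\Sat$: since $R_i=(P_i\oplus_{P_0}R_0)^\sat$ and the indices $i\geq 0$ are cofinal in $I$ (as $I$ is filtered), we get $\varinjlim_i R_i=\big((\varinjlim_i P_i)\oplus_{P_0}R_0\big)^\sat=(P\oplus_{P_0}R_0)^\sat=R$. Regarding each $R_i$ as an object of $\Sat_{P_0/}$ via $P_0\to P_i\to R_i$, the universal properties of the saturated pushouts $Q_i=(P_i\oplus_{P_0}Q_0)^\sat$ and $Q=(P\oplus_{P_0}Q_0)^\sat$ yield natural bijections
\[
    \Hom_{P_i}(Q_i,R_i)=\Hom_{P_0}(Q_0,R_i)\quad\text{and}\quad\Hom_P(Q,R)=\Hom_{P_0}(Q_0,R).
\]
Since $P_0\to Q_0$ is sfp, the monoid $Q_0$ is a compact object of $\Sat_{P_0/}$, whence $\varinjlim_i\Hom_{P_0}(Q_0,R_i)=\Hom_{P_0}(Q_0,R)$. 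Composing these three identifications produces the asserted bijection; it then remains to check that the composite coincides with the base-change map of the statement. That is a diagram chase: saturated pushouts compose, so $R=(P\oplus_{P_i}R_i)^\sat$ for every $i$, and under this identification the base change of $\varphi_i\colon Q_i\to R_i$ along $P_i\to P$ is precisely the map $(P\oplus_{P_i}Q_i)^\sat\to(P\oplus_{P_i}R_i)^\sat$ induced by $\varphi_i$, which matches the image of $\varphi_i$ under the three bijections.

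For \labelcref{coritem:sfp-b}, I would apply \crefpart{prop:sfp-conditions}{propitem:sfp-bc} to write $Q=(P\oplus_{P'_0}Q'_0)^\sat$ for a morphism of fs monoids $P'_0\to Q'_0$ and a structure map $P'_0\to P$. As fs monoids are finitely presented by \cite[Theorem~I~2.1.7]{Ogus}, hence compact objects of $\Mon$ by \cref{lem:fp-AdamekRosicky}, the map $P'_0\to P=\varinjlim_i P_i$ factors, compatibly with $P_i\to P$, through some map $P'_0\to P_i$ at a finite level $i\in I$. Set $Q_i=(P_i\oplus_{P'_0}Q'_0)^\sat$, which is sfp over $P_i$ again by \crefpart{prop:sfp-conditions}{propitem:sfp-bc}. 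Its saturated base change to $P$ is
\[
    (P\oplus_{P_i}Q_i)^\sat=\big(P\oplus_{P_i}(P_i\oplus_{P'_0}Q'_0)\big)^\sat=(P\oplus_{P'_0}Q'_0)^\sat=Q,
\]
using transitivity of pushouts and that saturation commutes with pushouts; this yields the required pushout square.

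I do not expect a genuine obstacle, since both parts are essentially formal once \cref{prop:sfp-conditions} is in hand. The points needing care are the bookkeeping in \labelcref{coritem:sfp-a} — keeping track of which base monoid ($P_0$, $P_i$, or $P$) each $\Hom$-set is taken over, and confirming that the bijection assembled from adjunctions and compactness really is the base-change map — and observing that \labelcref{coritem:sfp-a} in fact only uses the compactness of $Q_0$ over $P_0$; the sfp hypothesis on $P_0\to R_0$ is not needed there, though it is convenient and harmless for the applications.
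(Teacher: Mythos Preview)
Your proposal is correct and follows essentially the same approach as the paper: for \labelcref{coritem:sfp-a} you reduce to $\Hom_{P_0}(Q_0,-)$ via the pushout adjunction and invoke compactness of $Q_0$ in $\Sat_{P_0/}$, and for \labelcref{coritem:sfp-b} you descend the fs presentation from \crefpart{prop:sfp-conditions}{propitem:sfp-bc} through a finite stage using compactness of the fs monoid $P'_0$. The only cosmetic difference is that the paper cites \cref{cor:fs_compact} (compactness of fs monoids in $\Sat$) directly for the factorisation in \labelcref{coritem:sfp-b}, whereas you go via finite presentation and \cref{lem:fp-AdamekRosicky}; since filtered colimits in $\Sat$ are computed in $\Mon$, the two routes are equivalent. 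Your observation that the sfp hypothesis on $P_0\to R_0$ is unused in \labelcref{coritem:sfp-a} is also correct.
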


The above statement can be rephrased as follows. Let $\cat{Sfp}_P$ denote the category of sfp morphisms $P\to Q$. Then saturated base change induces an equivalence of categories
\[ 
    \varinjlim \cat{Sfp}_{P_i} \isomto \cat{Sfp}_P.
\]
For a variant of this result for integral monoids see \cite[Lemma~2.1.9]{ALPT}.

\begin{proof}
    \labelcref{coritem:sfp-a}:
    We compute
    \begin{align*} 
        \varinjlim_{i\geq 0} \Hom_{P_i}(Q_i, R_i) 
        & = \varinjlim_{i\geq 0} \Hom_{P_0}(Q_0,R_i) \\
        & = \Hom_{P_0}(Q_0, \varinjlim_{i\geq 0} R_i) = \Hom_{P_0}(Q_0, R) =  \Hom_P(Q, R)
    \end{align*}
    because $Q_0$ is a compact object 
    in $\cat{Sfp}_{P_0}$ by \crefpart{prop:sfp-conditions}{propitem:sfp-cptobj}.
        
    \labelcref{coritem:sfp-b}:
    By \cref{prop:sfp-conditions} we can find a pushout square in the category of saturated monoids 
    \[
     \begin{tikzcd}
         Q          & Q'   \ar[l]  \\
         P  \ar[u]  & P'   \ar[l]  \ar[u]
     \end{tikzcd}
    \]
    where $P' \to Q'$ is a morphism of fs monoids.
    Since~$P'$ is a compact object in the category of saturated monoids by \cref{cor:fs_compact}, there is an index $i \in I$ such that $P' \to P$ factors through $P' \to P_i$.
    We denote by $P_i \to Q_i$ the saturated base change of $P' \to Q'$ to~$P_i$. By \cref{prop:fs version of sorite for sfp} the map $P' \to Q'$ is sfp, and $P_i \to Q_i$ is sfp by \cref{lem:sfp stable under bc}. This gives the desired pushout square.
\end{proof}

%---------------------------------------
\subsection{Faces and prime ideals}
%---------------------------------------

An {\bf ideal} of a monoid $P$ is a subset $K\subseteq P$ such that $P+K = K$. It is {\bf prime} if $K\neq P$ and if $p+p'\in K$ implies that either $p\in K$ or $p'\in K$. The set of prime ideals of $P$ is denoted by $\Spec(P)$. The topology generated by the sets 
\[
    \{K \in \Spec(P) \ | \ x \notin K\} \qquad (x\in P)
\]
makes it into a $T_0$-space with generic point $\emptyset$ and unique closed point $P \setminus P^\times$.

A {\bf face} of $P$ is a submonoid $F\subseteq P$ such that $p+p'\in F$ implies that $p\in F$ and $p'\in F$. Faces of $P$ are precisely the complements of its prime ideals. We will often identify $\Spec(P)$ with the set of faces of $P$.

For a face $F\subseteq P$ of an integral monoid, we define the localization $P_F$ of $P$ at $F$ as 
\[
    P_F = P + F^\gp \quad \subseteq P^\gp,
\]
and the quotient $P/F$ as the quotient monoid 
\[
    P/F = P_F/F^\gp = \im(P_F \to P^\gp/F^\gp).
\]

We say that a face $F$ of $P$ is \textbf{generated as a face} by a subset $S\subseteq F$ if it is the intersection of all faces containing $S$. In case $S$ is finite, we may replace $S$ with the sum $s$ of all of its elements, and then the face generated by $S$ is
\[
    F = \{ x \in P \ | \ \exists {y\in P}, \ \exists {n\geq 1}: x+y=ns \}.
\]

\begin{rmk}
\label{rmk:spec of saturation map}
    For an integral monoid $P$, the saturation map $P \to P^\sat$ induces a bijection $\Spec(P^\sat) \to \Spec(P)$. The inverse assigns to a face $F$ of $P$ the face generated by $F$ in $P^\sat$.  
\end{rmk}

\begin{lem} 
\label{lem:compare-faces}
    Let $\vtheta\colon P\to Q$ be a homomorphism of saturated monoids, let $F$ be a face of~$Q$, and let $S\subseteq Q$ be a sat-generating set relative to $P$. Then, $F$ equals the saturation of the submonoid of $Q$ generated by $F\cap \vtheta(P)$ and $F\cap S$.
    
    In particular, if $P \to Q$ is sft, then  the map of faces $\vtheta^{-1}(F) \to F$ is  sft.  
\end{lem}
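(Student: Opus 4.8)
The plan is to prove the displayed equality by a direct manipulation of the defining property of a face, and then read off the ``in particular'' clause. I would first record two preliminary observations. A face $F$ of a saturated monoid $Q$ is itself saturated: if $x \in F^\gp \subseteq Q^\gp$ satisfies $nx \in F$ for some $n \geq 1$, then $nx \in Q$ forces $x \in Q$ by saturation of $Q$, and then $nx = x + (n-1)x$ with both summands in $Q$ forces $x \in F$ since $F$ is a face. And $\vtheta^{-1}(F)$ is a face of $P$, hence --- $P$ being saturated --- a saturated monoid; moreover $F \cap \vtheta(P)$ is exactly the image of $\vtheta^{-1}(F)$ under the restricted morphism $\vtheta^{-1}(F) \to F$. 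So the statement ``$\vtheta^{-1}(F) \to F$ is sft'' is at least meaningful.

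Next I would prove $F = M^\sat$, where $M \subseteq Q$ denotes the submonoid generated by $F \cap \vtheta(P)$ and $F \cap S$, and where $M^\sat$ --- the saturation of $M$ in $M^\gp$ --- may be computed equivalently as $\{q \in Q \ | \ nq \in M \text{ for some } n \geq 1\}$, all reasonable interpretations coinciding because $M \subseteq F \subseteq Q$ with $F$ and $Q$ saturated. The inclusion $M^\sat \subseteq F$ is clear from $M \subseteq F$ and $F$ being saturated. For the reverse, take $f \in F$; since $S$ sat-generates $Q$ over $P$, there is $n \geq 1$ with $nf = \vtheta(p) + \sum_{s \in S} a_s s$ for some $p \in P$ and $a_s \in \bN$ (almost all zero). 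As $nf$ lies in the face $F$, each summand does; hence $\vtheta(p) \in F \cap \vtheta(P)$, and for every $s$ with $a_s \geq 1$ we get $a_s s \in F$, so $s \in F \cap S$ by the face property applied once more. Thus $nf \in M$ and $f \in M^\sat$, proving $F \subseteq M^\sat$ and hence the identity.

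For the final assertion, observe that $M$ is also the submonoid of $F$ generated by the image of $\vtheta^{-1}(F) \to F$ together with $F \cap S$, so the identity $F = M^\sat$ says precisely that $F \cap S$ is a sat-generating set for $F$ over $\vtheta^{-1}(F)$. When $P \to Q$ is sft we may choose $S$ finite, whence $F \cap S$ is finite and $\vtheta^{-1}(F) \to F$ is sft.

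I do not expect a real obstacle in this argument. The points requiring a little care are purely bookkeeping: checking that a face of a saturated monoid is saturated (so that ``sft'' makes sense for $\vtheta^{-1}(F) \to F$ and so that $M^\sat \subseteq F$), confirming that the various candidate meanings of ``the saturation of $M$'' agree, and carrying out the repeated descent of membership in $F$ --- from a sum to its individual summands, and from $a_s s$ to $s$.
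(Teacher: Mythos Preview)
Your proof is correct and follows essentially the same approach as the paper's own proof: write $nf$ as a $P$-combination of elements of $S$ and use the face property to force each summand into $F$. Your version is more detailed in recording the preliminary facts (that faces of saturated monoids are saturated, and that the various notions of saturation of $M$ agree), which the paper leaves implicit.
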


\begin{proof}
By definition of a sat-generating set, for an element $q\in Q$, there exists an $n\geq 1$ and an equation of the form
\[ 
    nq = \vtheta(p) + \sum_{s\in S} m_s\cdot  s 
\]
for some $p\in P$ and a function $m\colon S\to \bN$ with finite support. Suppose that $q\in F$. By definition of a face, it follows that $\vtheta(p)\in F$ and $s\in F$ for every $s\in S$ such that $m_s>0$.  We thus have written $nq$ as a combination of elements in $F\cap \vtheta(P)$ and $F\cap S$.
\end{proof}

\begin{cor} 
\label{cor:finite-fibers-on-Spec}
    Let $\vtheta\colon P\to Q$ be an sft map. Then the fibres of the map $\Spec(Q)\to \Spec(P)$ are finite with cardinality bounded by $2^{\# S}$ with the size $\# S$ of any sat-generating set $S$ relative to~$P$. In particular, if $P$ has finitely many faces, then so does $Q$.
\end{cor}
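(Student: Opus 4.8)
The plan is to use \cref{lem:compare-faces} to show that a face of $Q$ lying over a fixed face of $P$ is completely pinned down by which elements of a sat-generating set it contains. Throughout, identify $\Spec(P)$ and $\Spec(Q)$ with the sets of faces, so that the map $\Spec(Q)\to\Spec(P)$ sends a face $F$ of $Q$ to $\vtheta^{-1}(F)$. Fix any sat-generating set $S$ for $Q$ over $P$; when we want actual finiteness (as opposed to just the bound $2^{\#S}$, which is vacuous if $S$ is infinite) we use that $\vtheta$ being sft guarantees the existence of a \emph{finite} such $S$.

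First I would fix a face $G$ of $P$ and look at the fibre over it, i.e.\ the set of faces $F$ of $Q$ with $\vtheta^{-1}(F)=G$. For any such $F$ one checks that $F\cap\vtheta(P)=\vtheta(G)$: the inclusion $\vtheta(G)\subseteq F\cap\vtheta(P)$ is immediate, and conversely any $y\in F\cap\vtheta(P)$ is of the form $y=\vtheta(x)$ with $x\in\vtheta^{-1}(F)=G$, so $y\in\vtheta(G)$. By \cref{lem:compare-faces}, $F$ equals the saturation in $Q$ of the submonoid generated by $F\cap\vtheta(P)$ and $F\cap S$, hence by the saturation of the submonoid generated by $\vtheta(G)$ and $F\cap S$. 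Since the first generating piece $\vtheta(G)$ depends only on $G$, the face $F$ is determined by the subset $F\cap S\subseteq S$. Therefore $F\mapsto F\cap S$ is an injection from the fibre over $G$ into the power set of $S$, so this fibre has cardinality at most $2^{\#S}$; in particular it is finite when $S$ is.

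For the final ``in particular'' clause: if $P$ has finitely many faces, then $\Spec(Q)$ is the disjoint union, over the finitely many faces $G$ of $P$, of the fibres over $G$, each of which is finite by the above (applied with a finite sat-generating set), so $\Spec(Q)$ is finite. I do not expect a genuine obstacle here; the only point that needs a moment of care is the identity $F\cap\vtheta(P)=\vtheta(G)$ and checking that the hypotheses of \cref{lem:compare-faces} are met, namely that $\vtheta$ is a map of saturated monoids with $S$ a sat-generating set relative to $P$ — both of which hold by assumption.
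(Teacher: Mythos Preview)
Your proposal is correct and follows exactly the same approach as the paper's proof, which is the one-liner ``\cref{lem:compare-faces} shows that faces $F\subseteq Q$ with given face $\vtheta^{-1}(F)$ are uniquely determined by the subset $F\cap S$.'' You have simply spelled out the implicit step $F\cap\vtheta(P)=\vtheta(G)$ that makes this work.
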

\begin{proof}
   \Cref{lem:compare-faces} shows that faces $F\subseteq Q$ with given face $\vtheta^{-1}(F)$ are uniquely determined by the subset $F \cap S$.
\end{proof}

% --------------------------------------------------
\subsection{Elementary studies of certain classes of morphisms}
\label{ss:smooth ket maps}
% ---------------------------------------

We first recall the definition of some classical notions for morphisms of monoids.

\begin{defi}
\label{defi:exactintsatvert monoids}
    A homomorphism $\vtheta\colon P\to Q$ of integral monoids is
    \begin{enumerate}[(a)]
        \item \label{defitem:exact monoid map}
        \textbf{exact} \cite[Definition I 2.1.15]{Ogus} if the square
        \[ 
            \begin{tikzcd}
                P\ar[d] \ar[r] & Q \ar[d] \\
                P^\gp\ar[r] & Q^\gp
            \end{tikzcd}
        \]
        is cartesian, or equivalently if $P=(\vtheta^\gp)^{-1}(Q)$,
        \item \label{defitem:integral monoid map}
        \textbf{integral} \cite[Definition~I~4.6.2]{Ogus} if for every homomorphism $P\to P'$ into an integral monoid $P'$, we have
        \[ 
            P'\oplus_{P} Q = (P'\oplus_P Q)^\inte,
        \]
        \item \label{defitem:saturated monoid map}
        \textbf{saturated} \cite[Definition~3.12 and Proposition~3.14]{Tsuji}
        if $P\to Q$ is integral, $P$ and $Q$ are saturated, and for every homomorphism $P\to P'$ into a saturated monoid $P'$, we have
        \[ 
            P'\oplus_{P} Q = (P'\oplus_P Q)^\sat,
        \]
        \item \label{defitem:vertical monoid map}
        \textbf{vertical} if the image of $P$ generates $Q$ as a face: for every $q\in Q$ there exist $q'\in Q$ and $p\in P$ such that $q+q' = \vtheta(p)$, or equivalently if $Q$ and $-\vtheta(P)$ generate $Q^\gp$.
    \end{enumerate}
\end{defi}

\begin{rmks}
\label{rmks:sorite exact int sat vert}
\begin{enumerate}[(1)]
    \item 
    An often useful fact about exact morphisms $\vtheta \colon P \to Q$ is that providing a section of $\vtheta$ is equivalent to providing a section of 
    $\vtheta^\gp \colon P^\gp \to Q^\gp$.  
    \item \label{rmkitem:sorite exact}
    The class of exact morphisms is closed under composition, integral pushout and, if $P$ and $Q$ are saturated, saturated pushout
    \cite[Proposition~I~4.2.1]{Ogus}.
    \item 
    The class of integral (resp.\ saturated) morphisms is closed under composition, integral (resp.\ saturated) pushout.
    \item \label{rmksitem:sorite vert}
    The class of vertical morphisms is closed under composition and integral pushout. Moreover, if $P$ and $Q$ are saturated, then being vertical is also stable under saturated pushout \cite[Proposition~I~4.3.3]{Ogus}. 
\end{enumerate}
\end{rmks}

\begin{ex} \label{ex:Ogus-sat-pushout-of-int}
    The class of integral morphisms is in general not preserved by saturated pushout. We learned this from Arthur Ogus with essentially the following example. Consider the morphism $\vtheta_0 \colon \bN^2  \to \frac{1}{2}\bN^2$ which is integral e.g.\ by \cite[Proposition~I~4.6.7]{Ogus}. Let $P \subseteq \frac{1}{2}\bN^2$ be the submonoid generated by $\bN^2$ and $(\frac 1 2, \frac 1 2)$.  Consider the saturated base change 
    \[
        \vtheta \colon P \la Q = \left(P \oplus_{\bN^2} {\textstyle \frac{1}{2}}\bN^2\right)^\sat
    \]
    of $\vtheta_0$ along the inclusion $\bN^2 \to P$. Then $Q \simeq \frac{1}{2}\bN^2 \times \bZ/2\bZ$ with $\vtheta(P)$ contained in the first factor. The morphism $\vtheta$ is not integral as can easily be verified by computing the base change in monoids with the ``sum of the coordinates'' map  $P \to \frac{1}{2}\bN$.    
\end{ex}

We shall now use the notion of an sfp map to define smooth, \'etale, and Kummer \'etale maps between saturated monoids. These definitions are motivated by the classical notions in log geometry.  

\begin{defi} 
\label{def:smooth-etale-Ket-monoids}
    Henceforth, we fix a possibly empty set of primes $\Sigma$ and set 
    \[
        p = \prod_{\ell\in \Sigma} \ell
    \]
    as a supernatural number (formal product). In the applications the set $\Sigma$ will be clear out of context, so we drop it in the notation.

    A morphism of saturated monoids $P\to Q$ is 
    \begin{enumerate}
        \item {\bf smooth} if it is sfp and the kernel and the torsion part of the cokernel of $P^\gp\to Q^\gp$ are finite groups of order prime to $p$,
        \item {\bf \'etale} if it is smooth and the cokernel of  $P^\gp\to Q^\gp$ is torsion,
        \item {\bf Kummer \'etale} if it is injective, \'etale, and exact. 
    \end{enumerate}
\end{defi}

\begin{ex}[``Semistable reduction''] \label{ex:semistable-monoid}
    Let $P$ be a saturated monoid, let $\pi\in P$, and let $n\geq 0$. Define 
    \[
        P_n(\pi) = P[e_1, \ldots, e_n]/(e_1+\cdots+e_n = \pi)
    \]
    as the quotient of $P\oplus \bN^n$ by the congruence relation generated by $(\pi, 0)\sim (0, (1, \ldots, 1))$. Then $P_n(\pi)$ is saturated and $P\to P_n(\pi)$ is smooth, vertical, and saturated. In the case $P=\bN$ and $\pi=1$, these assertions are classical: the first three follow directly from the definitions. That the morphism is saturated follows from the characterization of \cite[Theorem~I.6.3(8)]{Tsuji} (use \cite[Theorem~I.2.1]{Tsuji} first to check that the morphism is integral): for any field $k$, the morphism
    \[
        \bA_{P_n[\pi]} = \Spec(k[t][x_1, \ldots, x_n]/(x_1\cdot\ldots x_n - t)) \longrightarrow \Spec(k[t]) = \bA_{\bN}
    \]
    is flat and has reduced special fibre. The general case follows by base change along the map $\bN\to P$ sending $1$ to $\pi$, see \cite[Example IV 3.1.17]{Ogus}.
\end{ex}

\begin{lem}
\label{lem:bc of smooth etale Ket monoid maps}
    Let $\cP$ be one of the properties:
    smooth, \'etale, injective, Kummer \'etale. 
    Then the saturated base change of a morphism with property $\cP$ has property $\cP$. 
\end{lem}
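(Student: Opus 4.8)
The strategy is to reduce each property to a statement about the group-level map and the monoid-level map separately, and then to check stability under saturated base change for each. First I would dispose of the sfp part: \cref{lem:sfp stable under bc} already shows that the saturated base change of an sfp morphism is sfp, so for ``smooth'' and ``\'etale'' only the conditions on $\ker(\vtheta^\gp)$ and $\coker(\vtheta^\gp)$ remain. Let $\vtheta\colon P\to Q$ be given with the relevant property, let $P\to P'$ be a map of saturated monoids, and let $\vtheta'\colon P'\to Q' = (P'\oplus_P Q)^\sat$ be the saturated base change. The key input is \cref{lem:ker and coker under pushout}: applied to the saturated pushout, it gives a canonical isomorphism $Q^\gp/P^\gp \isomto (Q')^\gp/(P')^\gp$ and a surjection $\ker(\vtheta^\gp)\surj \ker((\vtheta')^\gp)$.

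\textbf{Smooth and \'etale.} For smoothness, I need $\ker((\vtheta')^\gp)$ and the torsion of $\coker((\vtheta')^\gp)$ to be finite of order prime to $p$. The surjection $\ker(\vtheta^\gp)\surj\ker((\vtheta')^\gp)$ immediately yields that $\ker((\vtheta')^\gp)$ is a quotient of a finite group of order prime to $p$, hence is itself finite of order prime to $p$. For the cokernel, the isomorphism $Q^\gp/P^\gp \isomto (Q')^\gp/(P')^\gp$ identifies $\coker((\vtheta')^\gp)$ with $\coker(\vtheta^\gp)$ (note $Q^\gp/P^\gp$ is exactly $\coker(\vtheta^\gp)$ since the image of $P^\gp$ in $Q^\gp$ need not be all of the subgroup it generates — but cokernel of a group homomorphism is by definition the quotient by the image, so this is literally $\coker$), so its torsion subgroup is unchanged and the hypothesis transfers verbatim. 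This proves stability of ``smooth.'' For ``\'etale'' one additionally needs $\coker((\vtheta')^\gp)$ torsion, which again follows from the isomorphism with $\coker(\vtheta^\gp)$.

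\textbf{Injective, exact, Kummer \'etale.} Stability of injectivity is the one place where sfp-ness (or at least integrality) is genuinely used and is the main obstacle: a pushout of an injective map of monoids need not be injective in general, so I cannot argue purely formally. The point is that for a Kummer \'etale map $\vtheta$ is exact, hence integral (an exact injective... actually exactness does not give integrality — I should instead invoke that Kummer \'etale maps are in particular \emph{exact}, and recall the standard fact, e.g.\ from \cite{Tsuji} or \cite[Ch.~I]{Ogus}, that an injective exact morphism which is Kummer \'etale is integral and even saturated; more precisely Kummer \'etale maps of fs monoids are saturated, and this passes through the sfp approximation). Concretely: by \crefpart{prop:sfp-conditions}{propitem:sfp-bc} write $Q = (P\oplus_{P_0}Q_0)^\sat$ with $P_0\to Q_0$ a Kummer \'etale map of fs monoids; such a map is saturated (Kummer \'etale $\Rightarrow$ exact and, for fs monoids, saturated in the sense of Tsuji), hence the saturated base change along $P_0\to P$ is again injective and saturated, and then the same holds for the further base change along $P\to P'$ since saturated morphisms are stable under saturated pushout (\cref{rmks:sorite exact int sat vert}). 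This gives injectivity of $(\vtheta')^\gp$ as well via \cref{lem:injective-implies-gp-injective}. Exactness is stable under saturated pushout by \crefpart{rmks:sorite exact int sat vert}{rmkitem:sorite exact}. Combining: the saturated base change of a Kummer \'etale map is injective, \'etale (by the smooth/\'etale case above), and exact, hence Kummer \'etale. The main thing to get right is citing the correct preservation results (exactness: \cite[Prop.~I~4.2.1]{Ogus}; saturatedness: \cite[Def.~3.12]{Tsuji}) and confirming that a Kummer \'etale morphism of saturated monoids is saturated, so that injectivity survives the pushout.
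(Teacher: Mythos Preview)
Your treatment of ``smooth'' and ``\'etale'' is correct and matches the paper: sfp is stable by \cref{lem:sfp stable under bc}, and the conditions on $\ker(\vtheta^\gp)$ and $\coker(\vtheta^\gp)$ transfer via \cref{lem:ker and coker under pushout}.

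The gap is in your handling of ``injective.'' You assert this is ``the main obstacle'' and then launch into an argument that only applies when the map is Kummer \'etale (invoking sfp approximation, saturatedness of Kummer \'etale maps of fs monoids, etc.). But the lemma asks for stability of injectivity for an \emph{arbitrary} injective map of saturated monoids, with no sfp hypothesis. Your argument does not cover that case. Moreover, the detour through saturated morphisms is unnecessary and its logic is muddled: you never actually explain why saturatedness of $P_0\to Q_0$ forces the saturated pushout to be injective, and your invocation of \cref{lem:injective-implies-gp-injective} at the end is in the wrong direction.

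The argument you are missing is elementary and is exactly what the paper does. Since all monoids involved are saturated (hence integral), a map $P\to Q$ of saturated monoids is injective if and only if $P^\gp\to Q^\gp$ is injective: one direction is \cref{lem:injective-implies-gp-injective}, the other is immediate from $P\hookrightarrow P^\gp$. Now \cref{lem:ker and coker under pushout} gives a surjection $\ker(\vtheta^\gp)\surj\ker((\vtheta')^\gp)$, so if $\vtheta^\gp$ is injective then so is $(\vtheta')^\gp$, and hence $\vtheta'$ is injective. No sfp, no saturated morphisms, no approximation needed. With injectivity thus settled directly, Kummer \'etale follows immediately as the conjunction of injective, \'etale, and exact (the last by \crefpart{rmks:sorite exact int sat vert}{rmkitem:sorite exact}), exactly as you say at the very end.
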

\begin{proof}
    Let $P_0 \to Q_0$ and $P_0 \to P$ be morphisms of saturated monoids. We set 
    \[
        Q = (P \oplus_{P_0} Q_0)^\sat
    \]
    for the saturated pushout. Recall from \cref{lem:ker and coker under pushout} that $Q_0^\gp/P_0^\gp \isomto Q^\gp/P^\gp$ is an isomorphism and the morphism 
    $\ker(P_0^\gp \to Q_0^\gp) \surj \ker(P^\gp \to Q^\gp)$ is surjective.

    As being sfp is stable under saturated base change by \cref{lem:sfp stable under bc}, clearly smooth and \'etale are also stable under saturated base change.  The claim for injective follows because by \cref{lem:injective-implies-gp-injective}, the map $P \to Q$ is injective if and only if $P^\gp \to Q^\gp$ is injective. Now Kummer \'etale follows as well being defined as the combination of base change stable properties (for ``exact'' see Remark~\labelcref{rmks:sorite exact int sat vert}\labelcref{rmkitem:sorite exact}). 
\end{proof}

\begin{prop}
\label{prop:properties of sfp that descend to fs}
    Let $P\to Q$ be an sfp morphism of saturated monoids and let $\cP$ be one of the following properties: smooth, \'etale, injective, exact, vertical, Kummer \'etale. Then $P\to Q$ has property $\cP$ if and only if there exists a morphism of fs monoids $P_0\to Q_0$ with property $\cP$ and a~map $P_0\to P$ such that $Q = (P\oplus_{P_0}Q_0)^\sat$. 
\end{prop}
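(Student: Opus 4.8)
The plan is to prove both directions of the equivalence, with the ``only if'' direction being the substantive one. For the ``if'' direction, suppose we are given $P_0 \to Q_0$ with property $\cP$ and a map $P_0 \to P$ such that $Q = (P\oplus_{P_0}Q_0)^\sat$. Then $P \to Q$ is sfp by \crefpart{prop:sfp-conditions}{propitem:sfp-bc}, and it has property $\cP$ by \cref{lem:bc of smooth etale Ket monoid maps} for $\cP \in \{$smooth, \'etale, injective, Kummer \'etale$\}$, and by the stability statements recalled in \cref{rmks:sorite exact int sat vert} for $\cP \in \{$exact, vertical$\}$ (note ``vertical'' is not covered by \cref{lem:bc of smooth etale Ket monoid maps} but is handled by \crefpart{rmks:sorite exact int sat vert}{rmksitem:sorite vert}, and ``exact'' by \crefpart{rmks:sorite exact int sat vert}{rmkitem:sorite exact}).

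For the ``only if'' direction, start from the hypothesis that $P \to Q$ is sfp, so by \crefpart{prop:sfp-conditions}{propitem:sfp-bc} we may already fix \emph{some} presentation $Q = (P\oplus_{P_0}Q_0)^\sat$ with $P_0 \to Q_0$ a morphism of fs monoids and a map $P_0 \to P$. The task is to modify this presentation so that $P_0 \to Q_0$ additionally has property $\cP$. The natural tool is the approximation result \cref{cor:sfp_comes_from_finite_level}: writing $P$ as the filtered colimit of its finitely generated saturated submonoids $P_i$ (each fs, by Gordan's Lemma), the map $P_0 \to P$ factors through some $P_i$ since $P_0$ is compact (\cref{cor:fs_compact}), and then $Q$ is the saturated base change to $P$ of the fs morphism $P_i \to Q_i := (P_i \oplus_{P_0} Q_0)^\sat$. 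So without loss of generality we have a cofiltered system and it suffices to detect property $\cP$ at a finite stage. The key point is that each of the properties in the list is detected by a condition on the induced map on groupifications (together with, for exact/vertical, a condition comparing the monoid to its groupification), and by \cref{lem:ker and coker under pushout} the relevant groups $Q_0^\gp/P_0^\gp$, $\ker(P_0^\gp \to Q_0^\gp)$ are \emph{already} computed by $P_0 \to Q_0$ and are invariant (or at least controlled) under the base change. For the smoothness condition --- finiteness and order-prime-to-$p$ of $\ker$ and the torsion of $\coker$ of $\vtheta^\gp$ --- \cref{lem:ker and coker under pushout} gives that $Q_0^\gp/P_0^\gp \isomto Q^\gp/P^\gp$, and that $\ker(P_0^\gp \to Q_0^\gp) \surj \ker(P^\gp \to Q^\gp)$; so if $P \to Q$ is smooth then the cokernel already has the right properties at level $0$, but the kernel only surjects, so one must further choose $P_0$ large enough (using that $P \to Q$ smooth forces, via compactness of finitely generated subgroups of $P^\gp$, the kernel at a sufficiently large finite stage to inject into $\ker(P^\gp\to Q^\gp)$ and hence to be finite of order prime to $p$ as well). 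Similarly, ``\'etale'' adds the condition that $\coker \vtheta^\gp$ is torsion, which transfers verbatim via the isomorphism $Q_0^\gp/P_0^\gp \cong Q^\gp/P^\gp$; ``injective'' transfers by \cref{lem:injective-implies-gp-injective} and the argument in \cref{lem:bc of smooth etale Ket monoid maps}; ``exact'' and ``vertical'' are handled by their characterisations in \cref{defi:exactintsatvert monoids} together with \cref{lem:compare-faces} and the stability statements, again after possibly enlarging $P_0$; and ``Kummer \'etale'' is the conjunction of injective, \'etale, and exact, so it follows from the other three.

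The main obstacle I expect is the kernel/cokernel bookkeeping in the smooth and \'etale cases: \cref{lem:ker and coker under pushout} only tells us that $\ker(P_0^\gp \to Q_0^\gp) \surj \ker(P^\gp \to Q^\gp)$ in general, not that it is an isomorphism, so a given finite-level presentation might have a kernel that is ``too large'' compared to the honest kernel after base change. The fix is the standard approximation manoeuvre: enlarge $P_0$ (equivalently, move to a later index $i$ in the colimit $P = \varinjlim P_i$) until the surjection $\ker(P_i^\gp \to Q_i^\gp) \to \ker(P^\gp \to Q^\gp)$ becomes an isomorphism --- this is possible because $\ker(P^\gp \to Q^\gp)$ is finitely generated (indeed finite, as $P\to Q$ is smooth) and $P^\gp = \varinjlim P_i^\gp$, so the descending chain of kernels stabilises, exactly the kind of argument used in the proof of \crefpart{prop:sfp-conditions}{propitem:sfp-cptobj}$\Rightarrow$\labelcref{propitem:sfp} via noetherianity of $\bZ$. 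Once the kernel and cokernel at the finite level agree with those after base change, all the numerical conditions (finiteness, order prime to $p$, torsion) hold at the finite level precisely because they hold after base change, and for the finitely-generated monoid $P_i$ these translate back into the property $\cP$ for the fs morphism $P_i \to Q_i$ by definition. I would carry out the exact and vertical cases in parallel, noting they are slightly easier since those properties are ``monoid-theoretic'' rather than ``group-theoretic'' and are directly stable under saturated pushout by \cref{rmks:sorite exact int sat vert}, so for those directions the forward implication is obtained just by observing that if the saturated base change is exact (resp.\ vertical) then, using \cref{lem:compare-faces} or a direct check on faces, one can find a finite stage where the fs morphism already is; alternatively, since exactness and verticality of $P_0\to Q_0$ are not automatically inherited downward, one simply starts from an arbitrary sfp presentation and replaces $Q_0$ by a suitable exactification/verticalisation that remains fs --- but the cleanest route is the uniform approximation argument above.
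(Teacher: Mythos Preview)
Your overall architecture matches the paper's, and the ``if'' direction is fine. For smooth, \'etale, and injective your approach is correct but misses a simplification: since you already write $P$ as a colimit of \emph{submonoids} $P_i \subseteq P$, the last clause of \cref{lem:ker and coker under pushout} applies and gives $\ker(P_i^\gp \to Q_i^\gp) \isomto \ker(P^\gp \to Q^\gp)$ immediately---no stabilisation argument is needed. The paper exploits exactly this: with injective $P_i \hookrightarrow P$, both kernel and cokernel of $P_i^\gp \to Q_i^\gp$ coincide with those of $P^\gp \to Q^\gp$ at every level, so smooth/\'etale/injective descend at once.

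For exact and vertical there is a genuine gap. Your invocation of \cref{lem:compare-faces} does not establish either property at a finite level, and the ``uniform approximation'' route you favour is not obviously valid for exactness: enlarging $P_i$ also enlarges $P_i^\gp$, potentially creating new witnesses to non-exactness. The paper handles these two cases by separate, explicit constructions. For \emph{exact}, it replaces $P_i$ (not $Q_0$) by the exactification $P_i' := P_i^\gp \times_{Q_i^\gp} Q_i$, which is fs; exactness of $P \to Q$ forces $P_i' \subseteq P$, the map $P_i' \to Q_i$ is exact by construction, and one checks $(P_i' \oplus_{P_i} Q_i)^\sat = Q_i$ so that $P \to Q$ is still the saturated base change of $P_i' \to Q_i$. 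For \emph{vertical}, the paper takes a finite sat-generating set $S$ of $Q$ over $P$, notes that verticality is equivalent to the finitely many equations $s + q_s = \vtheta(p_s)$ (one per $s \in S$), and descends these equations to some $Q_i$, $P_i$. Your alternative suggestion to ``replace $Q_0$ by an exactification/verticalisation'' is in the right spirit but targets the wrong monoid for exactness and is not fleshed out for verticality; the paper's constructions are what make these cases go through.
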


\begin{proof}
If such a map $P_0 \to Q_0$ with property $\cP$ as in the lemma exists, then $P \to Q$ has property $\cP$ by \cref{rmks:sorite exact int sat vert} and \cref{lem:bc of smooth etale Ket monoid maps}.

For the converse direction we assume that $P \to Q$ has property $\cP$. Let $P_0 \to Q_0$ be a morphism of fs monoids with $Q = (P \oplus_{P_0} Q_0)^\sat$  as in \crefpart{prop:sfp-conditions}{propitem:sfp-bc}. We write $P=\varinjlim_i P_i$ as a~filtered colimit of fs monoids over $P_0$, namely the saturations of submonoids that are finitely generated over $P_0$. We set $Q_i = (P_i \oplus_{P_0} Q_0)^\sat$ which yields $Q = \varinjlim_i Q_i$ as a filtered colimit of fs monoids. Moreover, by possibly replacing $P_0$ by one of the $P_i$ and the system by all $P_j$ with $j \geq i$, we may and do assume that all $P_i$ are submonoids of $P$: the transfer maps in $P = \varinjlim_iP_i$ are injective. We abbreviate
\[
K_i = \ker(P_i^\gp \to Q_i^\gp) \quad \text{ and } \quad C_i = \coker(P_i^\gp \to Q_i^\gp).
\]
For $j\geq i$, transitivity of pushouts yields $Q_j = (P_j \oplus_{P_i} Q_i)^\sat$. 
We apply \cref{lem:ker and coker under pushout} and obtain 
isomorphisms $C_i \isomto C_j$ and $K_i \isomto K_j$ because by construction $P_i^\gp \to P_j^\gp$ is injective as subgroups of $P^\gp$.

Since groupification commutes with colimits and filtered colimits are exact, we find an exact complex
\[
    \begin{tikzcd}
        0 \ar[r] & \varinjlim_i K_i \ar[r] & P^\gp \ar[r] & Q^\gp \ar[r] & \varinjlim_i C_i \ar[r] & 0.
    \end{tikzcd}
\]
This shows that all maps $P_i \to Q_i$ and $P \to Q$ have isomorphic kernels and cokernels of their groupification. Being sfp
as maps between fs monoids by \crefpart{prop:sfp-conditions}{propitem:sfp-bc}, we deduce that $P_i \to Q_i$ has property $\cP$ for $\cP$ equal to: smooth, \'etale, injective. It remains to argue for $\cP$ equal to exact and $\cP$ equal to vertical. 

Suppose that $\vtheta \colon P \to Q$ is exact. For all $i$ we set $P_i' = \smash{P_i^\gp \times_{Q_i^\gp} Q_i}$. This is an fs monoid, being the preimage of the fs monoid $Q_i$ under the homomorphism of finitely generated abelian groups $P_i^\gp \to Q_i^\gp$. Moreover, since $P \to Q$ is exact, we obtain the factorisation $P_i \to P_i' \to P$. In the diagram
\[
    \begin{tikzcd}[column sep=1cm, row sep=1cm]
        Q & Q_i \ar[l] & Q_i \ar[l,"\sim",swap] \\
        P \ar[u,"\vtheta"] & P_i' \ar[l] \ar[u,"\vtheta'_i"] & P_i \ar[l] \ar[u,"\vtheta_i"]
    \end{tikzcd}
\]
both the outer rectangle and the right square are pushout diagrams of saturated monoids. Indeed, the saturated pushout 
$P'_i \oplus^\sat_{P_i} Q_i$ is the saturation in  $Q_i^\gp = {P'_i}^\gp \oplus_{P^\gp_i} Q_i^\gp$ of the submonoid generated by $Q_i$ and $P_i'$, and this is still only $Q_i$. Hence also the left square is a~pushout diagram of saturated monoids. So $\vtheta$ is the saturated base change of the exact map $\vtheta_i' \colon P_i' \to Q_i$.

Finally, suppose that $\vtheta\colon P\to Q$ is vertical. Let $S$ be a finite sat-generating set for $Q$ over $P$. Then $\vtheta$ is vertical if and only if for every $s\in S$ there exist $q_s\in Q$ and $p_s\in P$ such that 
\[
    s + q_s = \vtheta(p_s).
\]
We may assume that $S$ is the image of a finite sat-generating set $S_i$ of $Q_i$ over $P_i$. Increasing $i$, we may ensure the existence of elements $q'_r\in Q_i$ and $p'_r\in P_i$ (for $r\in S_i$) such that $r + q'_r = \vtheta_i(p'_r)$. This implies that $P_i\to Q_i$ is vertical.
\end{proof}

Combining this with \cref{cor:sfp_comes_from_finite_level}, we obtain the following results.

\begin{prop} \label{prop:monoid-descend-prop-P}
    Let $P = \varinjlim_{i\in I} P_i$ be a filtered colimit of saturated monoids, let $0\in I$ be the smallest element, let $P_0\to Q_0$ be an sfp map, and let $P_i\to Q_i$ ($i\geq 0$) and $P\to Q$ be obtained by saturated base change. Then $P\to Q$ has property $\cP$ (as in \cref{prop:properties of sfp that descend to fs}) if and only if $P_i\to Q_i$ has property $\cP$ for $i\gg 0$.
\end{prop}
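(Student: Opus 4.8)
The plan is to combine the approximation result for $\Hom$-sets of sfp morphisms (\crefpart{cor:sfp_comes_from_finite_level}{coritem:sfp-a}) with the descent characterisation of property $\cP$ via saturated base change from fs monoids (\cref{prop:properties of sfp that descend to fs}), reducing everything to a single comparison between the fs level and the colimit level. The ``if'' direction is immediate: if $P_i \to Q_i$ has property $\cP$ for some $i$, then $P \to Q$ is its saturated base change along $P_i \to P$, so it has property $\cP$ by \cref{lem:bc of smooth etale Ket monoid maps} (together with \cref{rmks:sorite exact int sat vert} for ``exact'' and ``vertical''). The real content is the ``only if'' direction.

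For the ``only if'' direction, first I would use \crefpart{prop:sfp-conditions}{propitem:sfp-bc} to write $Q_0 = (P_0 \oplus_{R_0} S_0)^\sat$ for a morphism of fs monoids $R_0 \to S_0$ and a map $R_0 \to P_0$. Since $P \to Q$ has property $\cP$, \cref{prop:properties of sfp that descend to fs} yields \emph{some} morphism of fs monoids $R \to S$ with property $\cP$ and a map $R \to P$ such that $Q = (P \oplus_R S)^\sat$. Now I would like to say that the two ``finite-level presentations'' of $Q$ over $P$ — the one pulled back from level $0$ and the newly produced one — become isomorphic as objects of $\Sfp_P$, and then descend that isomorphism together with property $\cP$ to some finite level. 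Concretely: both $R \to S$ and $R_0 \to S_0$ are compact objects (fs monoids) in the relevant categories, so $R \to P$ factors through some $P_j$, giving a saturated base change $P_j \to \tilde Q_j := (P_j \oplus_R S)^\sat$ which \emph{already} has property $\cP$ (by \cref{lem:bc of smooth etale Ket monoid maps}, since it is the saturated base change of the property-$\cP$ map $R \to S$ of fs monoids) and whose further base change to $P$ is $Q$. On the other hand $P_j \to Q_j$ is the base change of $P_0 \to Q_0$. Both $Q_j$ and $\tilde Q_j$ are sfp over $P_j$ with the same saturated base change $Q$ to $P = \varinjlim_{i \geq j} P_i$, so by \crefpart{cor:sfp_comes_from_finite_level}{coritem:sfp-a} applied in both directions, the identity of $Q$ over $P$ is induced by a pair of mutually inverse isomorphisms $Q_i \isomto \tilde Q_i$ over $P_i$ for all sufficiently large $i$ (one checks the composites agree with the identities again by the injectivity/bijectivity in \crefpart{cor:sfp_comes_from_finite_level}{coritem:sfp-a}). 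Since $\tilde Q_i = (P_i \oplus_R S)^\sat$ has property $\cP$ for every $i \geq j$, the isomorphic object $Q_i$ has property $\cP$ for $i \gg 0$, which is the claim.

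The main obstacle I anticipate is the bookkeeping needed to genuinely \emph{identify} $Q_i$ with $\tilde Q_i$ at a finite level rather than merely over $P$: \crefpart{cor:sfp_comes_from_finite_level}{coritem:sfp-a} gives that the map $\varinjlim_{i \geq j}\Hom_{P_i}(Q_i, \tilde Q_i) \to \Hom_P(Q, Q)$ is bijective, so the identity lifts to \emph{some} map $Q_i \to \tilde Q_i$ for large $i$; running the same argument with the roles reversed produces $\tilde Q_i \to Q_i$; and then one must invoke the bijectivity once more (this time for $\Hom_{P_i}(Q_i, Q_i) \to \Hom_P(Q,Q)$ and $\Hom_{P_i}(\tilde Q_i, \tilde Q_i) \to \Hom_P(Q,Q)$) to conclude that after possibly increasing $i$ the two composites are the respective identities, hence both maps are isomorphisms. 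This is the routine-but-delicate ``spreading out an isomorphism'' step familiar from \cite[Théorème 8.5.2]{EGAIV3}; once it is in place, the transport of property $\cP$ along the isomorphism is trivial because $\cP$ is defined in terms of the kernel and cokernel of groupification (smooth, étale, injective) or is manifestly isomorphism-invariant (exact, vertical, Kummer étale).
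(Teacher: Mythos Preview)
Your proof is correct and follows essentially the same route as the paper: produce an fs model $R\to S$ with property $\cP$ via \cref{prop:properties of sfp that descend to fs}, factor $R\to P$ through some $P_j$ by compactness, and then use \cref{cor:sfp_comes_from_finite_level} to identify the auxiliary base change $\tilde Q_i$ with $Q_i$ for $i\gg 0$. The initial step writing $Q_0=(P_0\oplus_{R_0}S_0)^\sat$ is never used and can be dropped; your careful spreading-out of the isomorphism is more detailed than the paper's one-line invocation of \cref{cor:sfp_comes_from_finite_level}, but the content is the same.
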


\begin{proof}
The ``if'' part follows from \cref{rmks:sorite exact int sat vert} and \cref{lem:bc of smooth etale Ket monoid maps}. For the converse, suppose that $P\to Q$ has property $\cP$. By \cref{prop:properties of sfp that descend to fs}, we find a cocartesian square in the category of saturated monoids
\[ 
    \begin{tikzcd}
        Q & Q'\ar[l] \\
        P\ar[u] & P' \ar[l]\ar[u]
    \end{tikzcd}
\]
with $P'\to Q'$ a morphism of fs monoids which has property $\cP$. Since $P'$ is a compact object in the category of saturated monoids, the map $P'\to P$ factors through $P_{i_0}$ for $i_0\gg 0$. Fix such a~factorisation, and for $i\geq i_0$, let $Q'_i = (Q'\oplus_{P'} P_i)^\sat$. Then by \cref{rmks:sorite exact int sat vert} and \cref{lem:bc of smooth etale Ket monoid maps}, the maps $P_i\to Q'_i$ have property $\cP$. By \cref{cor:sfp_comes_from_finite_level}, for $i\gg i_0$ we have $Q_i\simeq Q'_i$ over $P_i$, and hence for such $i$, the map $P_i\to Q_i$ has property $\cP$ as well.
\end{proof}

\begin{cor} \label{cor:monoid-sm-et-Ket-approx}
    In the situation of \cref{prop:monoid-descend-prop-P}, the category of smooth, \'etale, or Kummer \'etale maps $P\to Q$ is equivalent to the filtered colimit of the categories of maps $P_i\to Q_i$ with the same property. 
\end{cor}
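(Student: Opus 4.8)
The goal is to promote \cref{prop:monoid-descend-prop-P} from a statement about objects (for a fixed sfp morphism $P_0\to Q_0$, property $\cP$ descends) to a statement about categories (the whole category of, say, smooth morphisms over $P$ is the filtered colimit of the categories of smooth morphisms over the $P_i$). The plan is to combine three inputs: the equivalence $\varinjlim_i \cat{Sfp}_{P_i} \isomto \cat{Sfp}_P$ from \cref{cor:sfp_comes_from_finite_level} (which handles objects via \labelcref{coritem:sfp-b} and morphisms via \labelcref{coritem:sfp-a}), the full faithfulness of the inclusion of smooth (resp.\ \'etale, Kummer \'etale) morphisms into all sfp morphisms (these are full subcategories by definition), and \cref{prop:monoid-descend-prop-P} itself to match up which objects lie in the subcategory.

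First I would fix $\cP \in \{\text{smooth}, \text{\'etale}, \text{Kummer \'etale}\}$ and write $\cC_P \subseteq \cat{Sfp}_P$ (resp.\ $\cC_{P_i} \subseteq \cat{Sfp}_{P_i}$) for the full subcategory of morphisms with property $\cP$; this is a full subcategory because \cref{def:smooth-etale-Ket-monoids} imposes no condition on morphisms. Saturated base change along $P_i \to P$ sends $\cC_{P_i}$ into $\cC_P$ by \cref{rmks:sorite exact int sat vert} and \cref{lem:bc of smooth etale Ket monoid maps} (the ``if'' direction already recorded in \cref{prop:monoid-descend-prop-P}), and it is compatible with the transition functors $\cC_{P_i} \to \cC_{P_j}$, so we get a functor $\Phi\colon \varinjlim_i \cC_{P_i} \to \cC_P$. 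I would then show $\Phi$ is an equivalence by checking fully faithful and essentially surjective. For fully faithful: a $\Hom$-set in the colimit category $\varinjlim_i \cC_{P_i}$ between (the images of) $P_i \to Q_i$ and $P_i \to R_i$ is $\varinjlim_{j\geq i} \Hom_{P_j}(Q_j, R_j)$, and since $\cC_{P_j}$ is a full subcategory of $\cat{Sfp}_{P_j}$ this equals the corresponding colimit of $\Hom$-sets in $\cat{Sfp}_{P_j}$, which by \crefpart{cor:sfp_comes_from_finite_level}{coritem:sfp-a} maps bijectively to $\Hom_P(Q,R)$; as $\cC_P$ is full in $\cat{Sfp}_P$ this is exactly the target $\Hom$-set, so $\Phi$ is fully faithful. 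For essential surjectivity: given $P \to Q$ in $\cC_P$, it is in particular sfp, so \crefpart{cor:sfp_comes_from_finite_level}{coritem:sfp-b} produces an index $i$ and an sfp morphism $P_i \to Q_i$ whose saturated base change is $P \to Q$; then \cref{prop:monoid-descend-prop-P} (applied to this $P_i \to Q_i$) shows that $P_j \to Q_j$ has property $\cP$ for $j \gg i$, i.e.\ lies in $\cC_{P_j}$, and its image in $\varinjlim_j \cC_{P_j}$ is sent by $\Phi$ to $P \to Q$. Hence $\Phi$ is an equivalence.

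I do not expect any serious obstacle here — the corollary is a packaging result, and every nontrivial ingredient (the hard object-level descent in \cref{prop:monoid-descend-prop-P}, which in turn rests on \cref{prop:properties of sfp that descend to fs} and the noetherianity argument in \cref{prop:sfp-conditions}, and the $2$-categorical bookkeeping of filtered colimits of categories in \cref{cor:sfp_comes_from_finite_level}) has already been established. The only mild care needed is the standard one when manipulating a filtered colimit of categories: an object of $\varinjlim_i \cC_{P_i}$ is represented at some finite stage and two objects represented at stages $i$, $i'$ are compared after passing to a common $j \geq i, i'$, so all the ``for $j \gg 0$'' clauses must be threaded consistently; this is routine. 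One should also note that the statement as phrased does not assert base change preserves the property $\cP$ on the nose at each stage — only that the colimit category is computed correctly — which is exactly what the combination of \crefpart{cor:sfp_comes_from_finite_level}{coritem:sfp-a}, \crefpart{cor:sfp_comes_from_finite_level}{coritem:sfp-b}, and \cref{prop:monoid-descend-prop-P} delivers.

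\begin{proof}
Fix $\cP\in\{\text{smooth},\text{\'etale},\text{Kummer \'etale}\}$ and, for each $i$, let $\cC_{P_i}\subseteq\cat{Sfp}_{P_i}$ (resp.\ $\cC_P\subseteq\cat{Sfp}_P$) be the full subcategory of morphisms with property~$\cP$; these are full subcategories since \cref{def:smooth-etale-Ket-monoids} imposes no condition on morphisms of monoids over the base. By \cref{rmks:sorite exact int sat vert} and \cref{lem:bc of smooth etale Ket monoid maps}, saturated base change along $P_i\to P$ carries $\cC_{P_i}$ into $\cC_P$, compatibly with the transition functors, so it induces a functor
\[
    \Phi\colon \varinjlim_{i\in I}\cC_{P_i}\la\cC_P.
\]
We claim $\Phi$ is an equivalence.

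\emph{Fully faithful.} Let $P_i\to Q_i$ and $P_i\to R_i$ be sfp morphisms with property $\cP$ (for some $i$), with saturated base changes $P_j\to Q_j$, $P_j\to R_j$ ($j\geq i$) and $P\to Q$, $P\to R$. Since each $\cC_{P_j}$ is full in $\cat{Sfp}_{P_j}$ and $\cC_P$ is full in $\cat{Sfp}_P$, the map on morphism sets induced by $\Phi$ is identified with
\[
    \varinjlim_{j\geq i}\Hom_{P_j}(Q_j,R_j)\la\Hom_P(Q,R),
\]
which is bijective by \crefpart{cor:sfp_comes_from_finite_level}{coritem:sfp-a}.

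\emph{Essentially surjective.} Let $P\to Q$ be an object of $\cC_P$; in particular it is sfp. By \crefpart{cor:sfp_comes_from_finite_level}{coritem:sfp-b} there is an index $i\in I$ and an sfp morphism $P_i\to Q_i$ whose saturated base change to $P$ is $P\to Q$. Applying \cref{prop:monoid-descend-prop-P} to $P_i\to Q_i$, we conclude that the saturated base change $P_j\to Q_j$ has property $\cP$ for $j\gg i$, hence lies in $\cC_{P_j}$; its image in $\varinjlim_j\cC_{P_j}$ is sent by $\Phi$ to $P\to Q$. This proves the claim.
\end{proof}
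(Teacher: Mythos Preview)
Your proof is correct and is exactly the argument the paper has in mind: the corollary is stated without proof, immediately after \cref{prop:monoid-descend-prop-P}, as a consequence of combining that proposition with \cref{cor:sfp_comes_from_finite_level}. You have simply spelled out the fully faithful part via \labelcref{coritem:sfp-a} and the essentially surjective part via \labelcref{coritem:sfp-b} together with \cref{prop:monoid-descend-prop-P}, which is the intended (and only natural) route.
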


%--------------------------------------------------
\subsection{Monoids of type \texorpdfstring{$\typeV$}{(V)} and of type \texorpdfstring{$\typeVd$}{(Vdiv)}}
\label{ss:typeV-typeVd}
%---------------------------------------

Using the notion of an sfp map, we shall now define monoids of type $\typeV$ (resp.\ type $\typeVd$) as those which are sfp over a valuative monoid (resp.\ divisible valuative monoid). The two assumptions valuative and divisible for the base monoid enable us to apply the results of F.~Kato (\cref{thm:F-Kato}) and T.~Tsuji (\cref{thm:Tsuji}). 

%---------------------------------------
\subsubsection*{Valuative monoids}
\label{sss:valuative-monoids}

Recall that a commutative monoid $V$ is {\bf valuative} if it is integral and for every $x\in V^\gp$, either $x\in V$ or $-x\in V$, see \cite[I \S1.3]{Ogus}. For example, if $K^+$ is a valuation ring with fraction field $K$, then the monoid $K^+\setminus \{0\}$ as well as the monoid $\Gamma_K^+ = (K^+\setminus \{0\})/(K^+)^\times$ of non-negative elements in its value group $\Gamma_K = K^\times / (K^+)^\times$ are valuative monoids. If $\Gamma$ is a totally ordered abelian group, then its submonoid $\Gamma^+$ of non-negative elements is sharp and valuative, and conversely the groupification $V^\gp$ of a sharp valuative monoid $V$ is totally ordered by $x\geq y$ if $x-y\in V$. A valuative monoid is automatically saturated, but it will typically not be finitely generated (the only sharp valuative fs monoids are $0$ and $\bN$). 

%---------------------------------------
\subsubsection*{Definition, basic properties, and examples}

We will call a saturated monoid $P$ {\bf divisible} if for every $n\geq 1$, the multiplication by $n$ map $n\colon P\to P$ is surjective, or equivalently if $P^\gp$ is a~divisible group.

\begin{defi} 
\label{def:monoid-typeV}
    A saturated monoid $Q$ is of {\bf type $\typeV$} (resp.\ {\bf of type $\typeVd$}) if there exists a~valuative monoid $V$ (resp.\ a divisible valuative monoid $V$) and an sfp map $V\to Q$. 
\end{defi}

Trivially, if $Q\to Q'$ is an sfp map and $Q$ is of type $\typeV$ or $\typeVd$ then so is $Q'$.

\begin{rmk}
    Every homomorphism from a valuative monoid $V$ to an integral monoid $Q$ is integral by  \cite[Proposition I 4.6.3(5)]{Ogus}. In particular, any $V \to Q$ as in \cref{def:monoid-typeV} is integral as well.
\end{rmk}

\begin{ex} 
\label{ex:typeV-not-fg}
    We illustrate the notion of a monoid of type $\typeV$, especially the flexibility arising since the monoid $V$ itself is not fixed as part of the structure.
    \begin{enumerate}[(1),parsep=0cm,itemsep=0.2cm,topsep=0.2cm]
        \item 
        \label{exitem:typeV-not-fg1}
        Every fs monoid is of type $\typeVd$, simply over the trivial valuative monoid $V=0$.
        \item 
        \label{exitem:typeV-not-fg2}
        It is not true in general that if $V\to Q$ is sfp and $V$ is valuative then $Q$ is finitely generated over $V$. For example, let $V = (\bZ \oplus \bZ)^+$ be the non-negative elements in $\bZ \oplus \bZ$ with lexicographical ordering, i.e.
        \[
            V = \left(\bN_{>0} \times \bZ\right) + \left(0 \times \bN\right) \quad\subseteq\quad \bN \times \bZ \quad\subseteq\quad V^\gp = \bZ^2
        \]
        and let 
        \[
        Q = {\textstyle \frac{1}{2}} V\subseteq \bQ^2
        \]
        (see \cref{fig:monoid1}, in which the smaller dots are in $Q$ and the larger ones are in $V$). Then $Q$ is sat-generated over $V$ by $\{(\frac 1 2, 0), (0, \frac 1 2)\}$, 
        and hence sft and even sfp by \cref{prop:sfp-conditions}. Even more, $V\to Q$ is Kummer \'etale (for $\Sigma=\{2\}$). But any set of generators of $Q$ over $V$ has to contain an infinite subset of the elements $\frac 1 2 \times (-\bN)$. 
        However, in this example $Q$ is finitely generated over a valuative monoid (namely, over itself), so in particular $Q$ is of type $\typeV$. See \cref{ex:type5} for a related example in log schemes. 

        \item
        \label{exitem:typeV-not-fg2b}
        For a similar example with a valuative monoid of rank one, let $\lambda$ be an irrational real number and let
        \[ 
            V = \{ (a, b) \in \bZ^2 \, :\, a+b\lambda\geq 0\},
        \]
        which is identified with a dense submonoid of $\bR^+$ via $(a, b)\mapsto a+b\lambda$. Again, set $Q = \frac{1}{2} V\subseteq \bQ^2$ (see \cref{fig:monoid2}, in which the smaller dots are in $Q$ and the larger ones are in $V$). Then $Q$ is not finitely generated over $V$. Indeed, let $S\subseteq Q$ be a finite subset, and let $\varepsilon = \inf\{a+b\lambda\,:\, (a,b)\in S\setminus\{0\}\} > 0$. Since $\lambda$ is irrational, there exists an element $(a, b)\in Q$ such that $0< a+b\lambda< \varepsilon$. Dividing $a$ and $b$ by a power of two, we may assume that $(a, b) \notin V$. But then $(a,b)$ does not belong to the submonoid of $Q$ generated by $V$ and $S$. We thank Jakub Byszewski for this argument. See \cref{ex:type3} for a related example in log schemes. 

        % https://tex.stackexchange.com/questions/37581/latex-figures-side-by-side        
        \begin{figure}
        \centering
        \begin{minipage}{.5\textwidth}
          \centering
          \includegraphics[width=.6\linewidth]{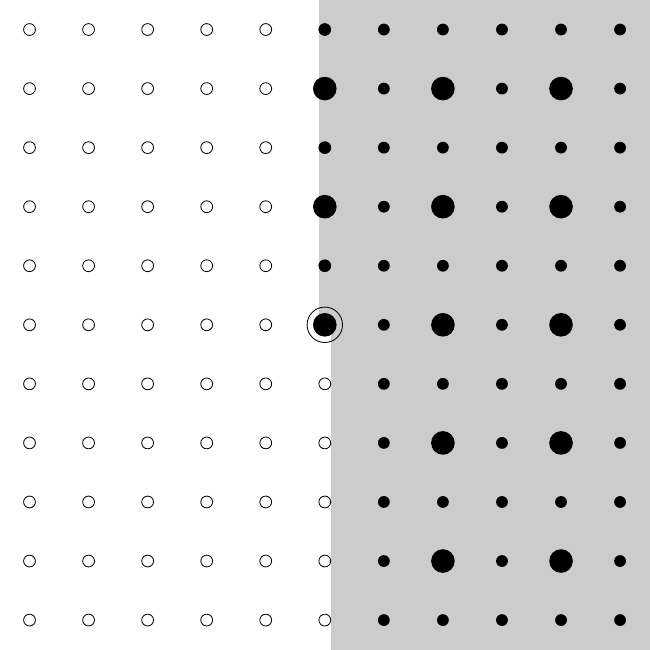}
          \captionof{figure}{The inclusion $V\subseteq Q$ in \crefpart{ex:typeV-not-fg}{exitem:typeV-not-fg2}.}
          \label{fig:monoid1}
        \end{minipage}%
        \begin{minipage}{.5\textwidth}
          \centering
          \includegraphics[width=.6\linewidth]{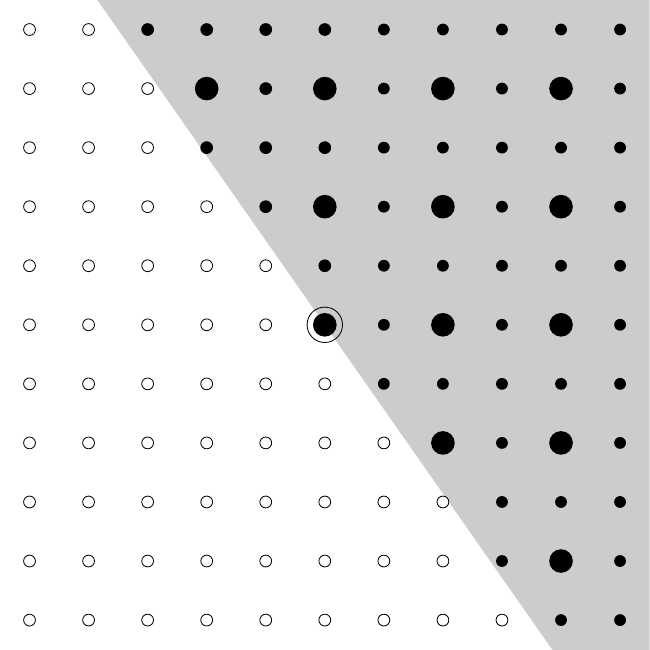}
          \captionof{figure}{The inclusion $V\subseteq Q$ in \crefpart{ex:typeV-not-fg}{exitem:typeV-not-fg2b}.}
          \label{fig:monoid2}
        \end{minipage}
        \end{figure}
                        
        \item 
        \label{exitem:typeV-not-fg3}
        In the next example we show that in general a monoid $Q$ of type $\typeV$ need not be finitely generated over a valuative monoid.
        Consider again $V = (\bZ \oplus \bZ)^+$ as in \labelcref{exitem:typeV-not-fg2} and let $Q$ be the saturation of the submonoid of $\frac{1}{2}V\oplus \frac{1}{2}\bN$ generated by $V\oplus \bN$ and the element $(1/2, 0, 1/2)$:
        \begin{align*}
            Q  & =  \left((V \oplus \bN) + \bN \cdot ({\textstyle \frac{1}{2}},0,{\textstyle \frac{1}{2}}) \right)^\sat  \\
            & = \left\{(x,y,z) \in {\textstyle \frac{1}{2}}\bN \times \bZ \times {\textstyle \frac{1}{2}}\bN \ : \  x-z \in \bZ \text{ and, if $y < 0$, then $x>0$}\right\}. 
        \end{align*}
        The underlying geometric inspiration is as follows: the monoid $Q$ occurs in the integral closure of  the algebra $R= K^+[T,S]/(S^2 - \pi T)$ in $R[1/\pi]$ for $K^+$ a valuation ring with $\Gamma_K^+ = V$ and $\pi\in K^+$ an element (pseudouniformizer) with valuation $(1,0)$. 
        
        Clearly $V \to Q$, the inclusion of the $xy$-plane, is sft by construction and  therefore sfp by \cref{prop:sfp-conditions}. Therefore $Q$ is a monoid of type $\typeV$. 

        Suppose that there is a valuative monoid $W$ and a 
        morphism $\vtheta \colon W \to Q$, such that $Q$ is finitely generated over $W$. We may replace $W$ by the image of $W$ in $Q$, since this image is still a valuative monoid over which $Q$ is finitely generated.
        Suppose that $W$ is not contained in $V$, so that there exists an element $(\alpha, n)\in W\subseteq \frac{1}{2}V\oplus \frac{1}{2}\bN$ with $n>0$. One of the elements $(-\alpha, n)$ and $(\alpha, -n)$ belongs to $W$. Since $n>0$, this must be $(-\alpha, n) \in W$ and thus $\alpha = 0$. This shows that $W \subseteq 0\times \bN$. Since $V$ is a face of $Q$, any generating set of $Q$ over $W$ contains a generating set of $V$ over $V\cap W = 0$. But $V$ is not a finitely generated monoid, a contradiction. Thus $W\subseteq V$.

        Now it is enough to show that $Q$ is not finitely generated over $V$. For this, we note that every generating set of $Q$ over $V$ must contain infinitely many of the elements 
        \[
            q_n = (1/2,-n,1/2), \quad n \in \bN,
        \]
        because every decomposition 
        $q_n = a + b$ in $Q$ must be of the form $q_n = q_{n+k} + (0,k,0)$ with $k \geq 0$.

        \item 
        Not all saturated submonoids of finitely generated groups are of type $\typeV$. Take any saturated submonoid $Q\subseteq\bZ^2$ which is not finitely generated and not valuative. Basic examples are given by  
        \[
            Q  = (\bN_{>0}\times \bN) \cup \{(0,0)\} 
            = \{(x,y) \in \bN^2 \ | \  \text{$x > 0$ if $y> 0$}\},
        \]
        or a cone with irrational slope such as
        \[ 
            Q = \{(x,y) \in \bN^2 \,:\, y\leq \sqrt{2} x\}. 
        \]
        It is easy to check that $Q$ is saturated but not fs. If $V$ is a valuative monoid and $V\to Q$ is an sfp homomorphism, then the image of $V$ in $Q$ is a valuative monoid over which $Q$ is sft (see the proof of \cref{prop:typeV-criterion}). However, the only valuative submonoids of $Q$ are of the form $\bN\cdot q$ for some $q\in Q$, and in particular are fs. Thus, if $Q$ was sft over $V$, it would be fs by \cref{prop:fs version of sorite for sfp}.
    \end{enumerate}
\end{ex}

In fact, it is very rare that a Kummer \'etale homomorphism between valuative monoids is of finite type. Let us call a totally ordered abelian group $\Gamma$ {\bf discrete} if $\Gamma=0$ or if $\Gamma^+ \setminus \{0\}$ has a~smallest element $\gamma$, or equivalently if its smallest non-zero convex subgroup is isomorphic to $\bZ$. Now, Examples~\labelcref{ex:typeV-not-fg}\labelcref{exitem:typeV-not-fg2} and \labelcref{ex:typeV-not-fg}\labelcref{exitem:typeV-not-fg2b} can be generalized as follows.  

\begin{lem} \label{lem:ext-of-val-mon-fg}
    Let $\Gamma\to \Sigma$ be an injective map of totally ordered abelian groups whose cokernel is finite (so that $\Gamma^+\to\Sigma^+$ is Kummer \'etale with $p=1$). The following are equivalent:
    \begin{enumerate}[(a)]
        \item \label{lemitem:valmon-finite} $\Sigma^+$ is finite as a $\Gamma^+$-set,
        \item \label{lemitem:valmon-fg} $\Sigma^+$ is finitely generated over $\Gamma^+$,
        \item \label{lemitem:valmon-rare} either $\Gamma = \Sigma$, or $\Gamma$ is discrete and there exists an integer $n\geq 1$ such that $\Sigma = \Gamma + \frac{1}{n}\bZ \gamma$ where $\gamma$ is the smallest positive element of $\Gamma$.
    \end{enumerate}
\end{lem}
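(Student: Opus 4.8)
The plan is as follows. The implication (a)\(\Rightarrow\)(b) is immediate, and (c)\(\Rightarrow\)(a) is a direct computation: if \(\Gamma=\Sigma\) there is nothing to do, while if \(\Sigma=\Gamma+\tfrac1n\bZ\gamma=\Gamma+\bZ\delta\) with \(n\delta=\gamma\) (so \(\delta>0\)), one checks that \(\Sigma^{+}=\Gamma^{+}+\{0,\delta,\dots,(n-1)\delta\}\); the one nontrivial point is that for \(\sigma\in\Sigma^{+}\) one writes \(\sigma=g+j\delta\) with \(g\in\Gamma\) and \(0\le j<n\), and then \(g=\sigma-j\delta>-n\delta=-\gamma\) forces \(g\ge 0\) because \(\gamma\) is the least positive element of \(\Gamma\).

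The real work is (b)\(\Rightarrow\)(c). Identify \(\Gamma\) with an order subgroup of \(\Sigma\), so \(\Gamma^{+}=\Gamma\cap\Sigma^{+}\) and \(\Sigma/\Gamma\) is finite, and suppose \(\Sigma^{+}=\Gamma^{+}+N\) with \(N=\langle s_{1},\dots,s_{r}\rangle\subseteq\Sigma^{+}\) finitely generated. Assuming \(\Gamma\neq\Sigma\) we have \(\Sigma\neq 0\), hence \(\Gamma\neq 0\) (a finite totally ordered group is trivial); let \(C\subseteq\Sigma\) be the smallest nonzero convex subgroup, which is archimedean. The first and crucial step is to prove \(\Sigma=\Gamma+C\). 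I would argue by contradiction: working in \(\bar\Sigma=\Sigma/C\) with \(\bar\Gamma\) the image of \(\Gamma\), surjectivity on positive cones gives \(\bar\Sigma^{+}=\bar\Gamma^{+}+\langle\bar s_{1},\dots,\bar s_{r}\rangle\), so if \(\Sigma\neq\Gamma+C\) then some \(\bar s_{j}\notin\bar\Gamma\); picking \(j_{0}\) with \(\sigma_{0}:=s_{j_{0}}\) minimal among such generators and any \(c\in C\) with \(c>0\), the element \(\sigma_{0}-c\) still has positive, non-\(\bar\Gamma\) class, so it lies in \(\Sigma^{+}\setminus\Gamma\); writing \(\sigma_{0}-c=g+n\) with \(g\in\Gamma^{+}\), \(n\in N\), the class of \(n\) is not in \(\bar\Gamma\), forcing \(n\) to involve some \(s_{l}\) with \(\bar s_{l}\notin\bar\Gamma\), whence \(n\ge s_{l}\ge\sigma_{0}\); but \(n\le\sigma_{0}-c<\sigma_{0}\), a contradiction. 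Thus \(\Sigma=\Gamma+C\) and \(\Sigma/\Gamma\cong C/(C\cap\Gamma)\) is finite.

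Next I would reduce to the archimedean case. Put \(C_{\Gamma}=C\cap\Gamma\); it has finite index in \(C\) and is the smallest nonzero convex subgroup of \(\Gamma\) (a strictly smaller one would be a finite-index subgroup of \(C_{\Gamma}\) with finite, hence trivial, totally ordered quotient). Intersecting \(\Sigma^{+}=\Gamma^{+}+N\) with the convex subgroup \(C\) yields \(C^{+}=C_{\Gamma}^{+}+(N\cap C)\) with \(N\cap C=\langle s_{j}:s_{j}\in C\rangle\) finitely generated, so \(C^{+}\) is finitely generated over \(C_{\Gamma}^{+}\). For a pair \(\Gamma'\subsetneq\Sigma'\) of finite index \(m\) inside an archimedean group with \(\Sigma'^{+}=\Gamma'^{+}+N'\), \(N'\) finitely generated, one has \(mN'\subseteq m\Sigma'\subseteq\Gamma'\) and \(mN'\subseteq\Sigma'^{+}\), hence \(mN'\subseteq\Gamma'^{+}\); reducing the exponents of generators modulo \(m\) gives \(N'=F'+mN'\) with \(F'=\{\sum_{i}b_{i}s_{i}':0\le b_{i}<m\}\) finite, so \(\Sigma'^{+}\setminus\Gamma'=\Gamma'^{+}+(N'\setminus\Gamma')\subseteq\Gamma'^{+}+(F'\setminus\Gamma')\); therefore \(\tau_{0}:=\min(F'\setminus\Gamma')\) is the least element of \(\Sigma'^{+}\setminus\Gamma'\). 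A short case analysis then shows \((0,\tau_{0})\cap\Sigma'=\emptyset\): if \(0<x<\tau_{0}\) with \(x\in\Gamma'\) then \(\tau_{0}-x\in\Sigma'^{+}\setminus\Gamma'\) is \(<\tau_{0}\), and if \(x\notin\Gamma'\) then \(x\in\Sigma'^{+}\setminus\Gamma'\) is \(<\tau_{0}\); both absurd. Hence \(\Sigma'\) is discrete and, being archimedean, \(\Sigma'=\bZ\tau_{0}\), whence \(\Gamma'=m\bZ\tau_{0}\) is discrete with least positive element \(\gamma'=m\tau_{0}\) and \(\Sigma'=\Gamma'+\tfrac1m\bZ\gamma'\).

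Applying this to \((\Gamma',\Sigma')=(C_{\Gamma},C)\) (noting \(C_{\Gamma}\neq C\), as otherwise \(\Sigma=\Gamma\)): \(C_{\Gamma}\) is discrete with least positive element \(\gamma\), and \(C=\tfrac1n\bZ\gamma\) with \(n=[\Sigma:\Gamma]\). Since \(C_{\Gamma}\) is the smallest nonzero convex subgroup of \(\Gamma\) and is infinite cyclic, \(\Gamma\) is discrete with least positive element \(\gamma\); combined with \(\Sigma=\Gamma+C=\Gamma+\tfrac1n\bZ\gamma\) this gives (c). I expect the main obstacle to be the identity \(\Sigma=\Gamma+C\): the argument must be set up so that the possibly many convex subgroups of \(\Sigma\) above \(C\) play no role, and the minimality of \(\sigma_{0}\) among the ``bad'' generators is precisely what makes this work; after that point everything collapses to the rank-one archimedean case, where the decomposition \(N'=F'+mN'\) delivers the least element \(\tau_{0}\) and discreteness follows at once.
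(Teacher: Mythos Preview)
Your argument has a genuine gap: you assert that $\Sigma$ possesses a smallest nonzero convex subgroup $C$, but this fails for general totally ordered abelian groups (for instance $\bigoplus_{n\ge 0}\bZ$ with the lexicographic order in which index $0$ is most significant has convex subgroups $C_k=\{(a_i):a_i=0\text{ for }i<k\}$ with $\bigcap_k C_k=0$). You must use hypothesis (b) to guarantee that $C$ exists, and you do not. The gap is fillable with your own tools: the part of your Step~4 that produces $\tau_0=\min(F'\setminus\Gamma')$ and shows $(0,\tau_0)\cap\Sigma'=\varnothing$ nowhere uses the archimedean hypothesis, so applying it directly to the pair $(\Gamma,\Sigma)$ shows that $\Sigma$ has a least positive element and hence a smallest nonzero convex subgroup. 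But once you notice this, your Steps~2--3 become redundant: you have shown $\Sigma$ is discrete outright, and the rest follows quickly. This is exactly the paper's route: first (b)$\Rightarrow$(a) by your ``reduce exponents mod $m$'' trick ($N\subseteq F+\Gamma^+$ with $F$ finite), then from (a) each coset $\Sigma^+_t=(t+\Gamma)\cap\Sigma^+$ has a least element, the minimum $\sigma$ over nontrivial $t$ satisfies $(0,\sigma)\cap\Sigma=\varnothing$, and a short bootstrap gives $\Sigma=\Gamma+\bZ\sigma$. So your detour through convex subgroups, while interesting, buys nothing and introduces the gap.

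A second, smaller issue: your parenthetical reason that $C_\Gamma=C\cap\Gamma$ is the smallest nonzero convex subgroup of $\Gamma$ (``a strictly smaller one would be a finite-index subgroup of $C_\Gamma$ with finite, hence trivial, totally ordered quotient'') is incorrect as stated --- there is no reason a smaller convex subgroup of $\Gamma$ should have finite index in $C_\Gamma$. The claim is nonetheless true: the convex hull in $\Sigma$ of any nonzero convex $D\subseteq\Gamma$ is a nonzero convex subgroup of $\Sigma$, hence contains $C$, and intersecting back with $\Gamma$ gives $D\supseteq C_\Gamma$.
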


\begin{proof}
The implication \labelcref{lemitem:valmon-finite}$\Rightarrow$\labelcref{lemitem:valmon-fg} is clear. For \labelcref{lemitem:valmon-fg}$\Rightarrow$\labelcref{lemitem:valmon-finite}, if $S\subseteq \Sigma^+$ is a finite generating set over $\Gamma+$, with $0 \in S$, and if $n\geq 1$ is an integer annihilating $\Sigma/\Gamma$, then the finite set of sums
\[ 
    \sum_{s\in S} m(s)\cdot s, \quad m\colon S\la \{0, \ldots, n-1\}
\]
generates $\Sigma^+$ as a $\Gamma^+$-set. For \labelcref{lemitem:valmon-rare}$\Rightarrow$\labelcref{lemitem:valmon-finite} we note that $\Sigma^+$ is generated as a $\Gamma^+$-set by the elements $\frac{i}{n}\gamma$ for $0\leq i<n$. It remains to show \labelcref{lemitem:valmon-finite}$\Rightarrow$\labelcref{lemitem:valmon-rare}.

We write $\pi\colon \Sigma \to \Sigma/\Gamma$ for the quotient map. For every $t\in \Sigma/\Gamma$, let $\smash{\Sigma^+_t} = \pi^{-1}(t)\cap \Sigma^+$ be the monoid of non-negative elements of $\Sigma$ congruent to $t$ modulo $\Gamma$. Then $\smash{\Sigma^+}$ decomposes as a~$\Gamma^+$-set
\[ 
    \Sigma^+ = \coprod_{t\in \Sigma/\Gamma} \Sigma^+_t,
\]
and hence \labelcref{lemitem:valmon-finite} is equivalent to $\Sigma^+_t$ being finite as a $\Gamma^+$-set for all $t$. 

We claim that $\Sigma^+_t$ is finite as a $\Gamma^+$-set if and only if it has a smallest element $\sigma_t$. The argument for this is similar to the one for the claim that an ideal in a valuation ring is finitely generated if and only if it is principal and will be omitted.  

Assume \labelcref{lemitem:valmon-finite}, and let $\sigma_t \in \Sigma^+_t$ be smallest elements as above. Suppose that $\Sigma\neq\Gamma$. Let $\sigma = \sigma_{t_0}$ be the smallest non-zero element among the $\sigma_t$, or equivalently the smallest element of $\Sigma^+\setminus \Gamma^+\neq\varnothing$. Then $[0, \sigma)\subseteq \Sigma^+_0 = \Gamma^+$. Suppose that there exists a $g\in \Gamma$ such that $0<g<\sigma$. Then $s = \sigma - g$ satisfies $0<s<\sigma$, and moreover $s \in \Sigma\setminus \Gamma$ since it is congruent to $\sigma$ modulo $\Gamma$. This contradicts the minimality of $\sigma$. We conclude that the interval $(0, \sigma)$ is empty, which means that $\sigma$ is the smallest non-zero element of $\Sigma^+$, and hence $\Sigma$ (and therefore  $\Gamma$) is discrete. 

Let $n\geq 1$ be the smallest integer such that $n\cdot \sigma \in \Gamma$. Then $\gamma = n\cdot \sigma$ is the smallest element of $\Gamma$. We claim that $\Sigma = \Gamma + \bZ \sigma$, which will imply the final assertion of \labelcref{lemitem:valmon-rare}. The injective map $\Gamma' = \Gamma + \bZ \sigma \to \Sigma$ satisfies the assumptions of the lemma and \labelcref{lemitem:valmon-finite}. Since $\Gamma'$ and $\Sigma$ have the same smallest positive element, namely $\sigma$, the argument in the previous paragraph shows that $\Gamma' = \Sigma$.
\end{proof}

%---------------------------------------
\subsubsection*{F.\ Kato's theorem and its corollaries}
%---------------------------------------

For the results below, we recall the definition of the (saturated) affine blowup (or dilatation) of a monoid, see \cite[\S 3]{FKato}. Let $P$ be a saturated monoid, $K\subseteq P$ an ideal, and $a\in K$ an element. We denote by $P[K-a]$ the saturation of the submonoid of $P^\gp$ generated by $P$ and $K-a$. It is fs if $P$ is. The map 
\[
    P \la P[K-a]
\]
is called an \textbf{affine blow-up} map. Note that the log schemes $\bA_{P[K-a]}$, for $a\in K$, form the standard affine cover of the (saturated) log blow-up $\Bl_K(\bA_P)$.  This explains the name of the following result. 

\begin{lem}[Valuative property of log blowups] \label{lem:monoid-VCP}
    Let $\vtheta\colon P\to V$ be a homomorphism of saturated monoids, with $V$ valuative, and let $K\subseteq P$ be a non-empty finitely generated ideal. Then there exists an $a\in K$ and a factorisation
    \[ 
        P \la P[K-a] \la V.
    \]
\end{lem}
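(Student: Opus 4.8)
The statement says that for a homomorphism $\vtheta\colon P\to V$ of saturated monoids with $V$ valuative, and $K\subseteq P$ a non-empty finitely generated ideal, we can find $a\in K$ such that the affine blowup $P\to P[K-a]$ factors through $V$. I would argue as follows. Write $K$ as generated as an ideal by finitely many elements $a_1,\dots,a_n\in K$; that is, every element of $K$ is of the form $p+a_i$ for some $p\in P$ and some $i$. The images $\vtheta(a_1),\dots,\vtheta(a_n)$ lie in $V$, and since $V^\gp$ is totally ordered (as $V$ is valuative, hence $V^\gp=V\cup(-V)$), there is a minimal one among them, say $\vtheta(a)$ with $a=a_j$. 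I claim this $a$ works.

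\textbf{Key step.} To see that $\vtheta$ extends to $P[K-a]$, recall that $P[K-a]$ is the saturation in $P^\gp$ of the submonoid generated by $P$ and the set $K-a=\{k-a : k\in K\}$. Since $V$ is saturated, it suffices to show that $\vtheta^\gp$ carries the submonoid generated by $P$ and $K-a$ into $V$ (a homomorphism into a saturated monoid automatically extends over saturations of submonoids of the groupification). The image of $P$ lies in $V$ by hypothesis, so the only thing to check is that $\vtheta^\gp(k-a)=\vtheta(k)-\vtheta(a)\in V$ for every $k\in K$. Writing $k=p+a_i$ with $p\in P$, we get $\vtheta(k)-\vtheta(a)=\vtheta(p)+\big(\vtheta(a_i)-\vtheta(a)\big)$. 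By the minimality of $\vtheta(a)$ in the total order on $V^\gp$, we have $\vtheta(a_i)-\vtheta(a)\geq 0$, i.e.\ $\vtheta(a_i)-\vtheta(a)\in V$; adding $\vtheta(p)\in V$ stays in $V$. Hence $\vtheta^\gp$ maps the generating set into $V$, so it maps the whole submonoid generated by $P$ and $K-a$ into $V$, and therefore by saturatedness of $V$ it maps the saturation $P[K-a]$ into $V$, giving the desired factorisation $P\to P[K-a]\to V$.

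\textbf{Remarks on the obstacle.} The only mild subtlety is the passage "homomorphism into a saturated monoid extends over saturations": if $M\to V$ is a monoid map with $V$ saturated, $V^\gp$ torsion-free in the relevant cases (or at least $V$ saturated in $V^\gp$), and $M'\subseteq M^\gp$ is the saturation of the image submonoid, one needs that any $x\in M^\gp$ with $nx$ in the image of $M$ has $\vtheta^\gp(x)\in V$ — but $n\vtheta^\gp(x)=\vtheta^\gp(nx)\in V$ forces $\vtheta^\gp(x)\in V$ precisely because $V$ is saturated. So this is immediate. The real content is entirely the choice of $a$ as a minimizer of $\vtheta(a_i)$ in the total order on the valuative group $V^\gp$; everything else is formal. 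I do not foresee a genuine obstacle here — the argument is short and the valuative hypothesis is used exactly once, to guarantee the existence of a minimum among finitely many elements of $V^\gp$.
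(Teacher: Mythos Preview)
Your proof is correct and follows essentially the same approach as the paper: pick a finite generating set for the ideal, choose $a$ to be a generator whose image in $V$ is minimal, and verify that $\vtheta^\gp$ sends $K-a$ into $V$. One tiny terminological point: $V^\gp$ carries only a total \emph{preorder} in general (a total order on $V^\gp/V^\times$), since $V$ need not be sharp; the paper phrases this as ``$\vtheta$ induces a linear preorder on $S$'', but this changes nothing in the argument.
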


\begin{proof}
Let $S\subseteq K$ be a finite set generating $K$ as an ideal. The map $\vtheta\colon P\to V$ induces a linear preorder on $S$: $p\geq p'$ if $\vtheta(p)-\vtheta(p')\in V$. Let $a\in S$ be an element which is minimal with respect to this preorder. The map $P^\gp\to V^\gp$ sends an element of the form $p-a$, for  $p\in K = P + S$, to $\vtheta(p)-\vtheta(a) \in V$. This implies that it sends $P[K-a]$ into $V$.
\end{proof}

\begin{thm}[{F.~Kato \cite[Theorem~1.1]{FKato}}] \label{thm:F-Kato}
    Let $P\to Q$ be a homomorphism of fs monoids. Then there exists a non-empty ideal $K\subseteq P$ such that for every $a\in K$, the induced map 
    \[ 
        P[K-a] \la \left(Q\oplus_{P} P[K-a]\right)^\sat
    \]
    is integral\footnote{Warning: the theorem involves saturated pushouts and integral morphisms. However, as we have already remarked in \cref{ex:Ogus-sat-pushout-of-int}, integral morphisms are in general not stable under saturated pushout. Therefore taking a further saturated blowup may destroy integrality of the morphism.}
    (\crefpart{defi:exactintsatvert monoids}{defitem:integral monoid map}).
\end{thm}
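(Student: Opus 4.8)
The plan is to reformulate everything in terms of dual cones and reduce to a statement about subdividing a rational polyhedral cone. Write $M_P = P^\gp$, $M_Q = Q^\gp$, let $N_P$, $N_Q$ be the lattices dual to their torsion-free quotients, and set $\sigma_P = \{v \in N_{P,\bR} : \langle v, p\rangle \ge 0 \text{ for all } p \in P\}$ and likewise $\sigma_Q \subseteq N_{Q,\bR}$, so that $P = \sigma_P^\vee \cap M_P$ and $Q = \sigma_Q^\vee \cap M_Q$ by Gordan. Then $\theta^\gp$ dualises to a linear map $\psi\colon N_Q \to N_P$ with $\psi(\sigma_Q) \subseteq \sigma_P$. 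First I would record the dictionary between affine blow-ups and subdivisions: a non-empty finitely generated ideal $K = (b_1,\dots,b_m) \subseteq P$ determines the subdivision $\Delta_K$ of $\sigma_P$ whose cones are $\delta_a = \{v \in \sigma_P : \langle v, b_j - a\rangle \ge 0 \text{ for all } j\}$ for $a\in K$ (a coherent subdivision, namely the domains of linearity of $v \mapsto \min_j \langle v, b_j\rangle$, with maximal cones the $\delta_{b_i}$), and conversely every coherent rational subdivision of $\sigma_P$ arises this way. Since $P[K-a]^\gp = M_P$, the groupification of the pushout $Q \oplus_P P[K-a]$ is $M_Q$, and unwinding the definitions gives $P[K-a] = \delta_a^\vee \cap M_P$ and $(Q \oplus_P P[K-a])^\sat = \big(\psi^{-1}(\delta_a) \cap \sigma_Q\big)^\vee \cap M_Q$. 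So the theorem becomes: find $K$ such that for every $\delta \in \Delta_K$ the monoid map $\delta^\vee \cap M_P \to \big(\psi^{-1}(\delta)\cap\sigma_Q\big)^\vee \cap M_Q$ is integral.

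Next I would invoke the combinatorial characterisation of integral morphisms of fine monoids from \cite[\S I.4.6]{Ogus} (beware that integrality is genuinely weaker than flatness of the associated monoid algebras for non-injective maps, e.g.\ $\bN^2 \to \bN$, $(a,b)\mapsto a$, is integral but not flat on algebras). Its upshot for us is a ``no-contraction'' condition: a morphism with dual-cone picture $\psi \colon \tau \to \delta$ is integral essentially when, for every face $\tau_0 \preceq \tau$, the image $\psi(\tau_0)$ is a union of faces of $\delta$ (plus a saturated-lattice condition). Granting this, the construction of $K$ goes as follows: the images $\psi(\tau)$ of the finitely many faces $\tau$ of $\sigma_Q$ are rational polyhedral subcones of $\sigma_P$; cutting $\sigma_P$ along all the finitely many rational hyperplanes supporting facets of the various $\psi(\tau)$ yields a rational fan $\Delta$ refining $\sigma_P$ in which every $\psi(\tau)$ is a union of cones, and one may further refine $\Delta$ to be coherent, hence equal to $\Delta_K$ for some non-empty ideal $K$. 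For such $K$ and any $\delta\in\Delta_K$, the cone $\psi^{-1}(\delta)\cap\sigma_Q$ is cut out of $\sigma_Q$ by inequalities defining faces of $\sigma_Q$ — so its faces interact transversally with $\psi$ — and from this one deduces the no-contraction condition and concludes.

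I expect the decisive difficulties to be the following. First, matching and verifying the precise integrality criterion on each chart: it has to be checked for all faces (not only the maximal cones), it involves the lattices $M_P, M_Q$ (which may even carry torsion, as fs monoids need not be sharp), and the bare ``images of faces are subcomplexes'' likely needs to be supplemented by a lattice/saturation hypothesis — so it may be cleaner to follow F.\ Kato's own inductive argument or to first reduce to the case where $\theta$ is injective and vertical. Second, the quantifier ``for every $a \in K$'', not merely the generators $b_i$: integrality can be destroyed by a further saturated pushout (the footnote above; \cref{ex:Ogus-sat-pushout-of-int}), and for a non-generator $a = p_0 + b_{j_0}$ the element $-p_0$ becomes invertible in $P[K-a]$, so that chart records the blow-up \emph{restricted to the face} $\sigma_P \cap \{p_0 = 0\}$; one therefore needs the subdivision to behave compatibly on all faces of $\sigma_P$, which the hyperplane-arrangement construction should guarantee but which requires care. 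A cleaner route altogether, worth trying, is to bypass cones and algebras and combine the valuative characterisation of integrality (a morphism is integral once all of its base changes to valuative monoids are integral) with the valuative property of log blow-ups already proved in \cref{lem:monoid-VCP}.
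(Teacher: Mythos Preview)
The paper does not argue combinatorially. It observes that F.~Kato's original result \cite[Theorem~1.1]{FKato} is phrased for log schemes: applied to $\bA_Q\to\bA_P$, its proof produces an ideal $K\subseteq P$ for which the morphism of schemes
\[
\bA_Q\times^\sat_{\bA_P} \Bl_K(\bA_P)\ \la\ \Bl_K(\bA_P)
\]
is flat. Restricting to the open chart $\bA_{P[K-a]}\subseteq \Bl_K(\bA_P)$ gives flatness of $\bZ[P[K-a]]\to\bZ[(Q\oplus_P P[K-a])^\sat]$, and \cite[Proposition~I~4.6.7 (2)$\Rightarrow$(1)]{Ogus} converts flatness of the monoid-algebra map into integrality of the monoid map. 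That is the entire argument.

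Your plan is instead a direct combinatorial proof via dual cones and coherent subdivisions --- in effect a reworking of Kato's own argument rather than a citation of it. The overall strategy (subdivide $\sigma_P$ so that on each chart the map becomes integral) is sound, but your ``no-contraction'' criterion is not the characterisation of integrality in \cite[\S I.4.6]{Ogus}; the precise condition is subtler (as you yourself flag), and for injective maps of fine monoids it is exactly flatness of $\bZ[P]\to\bZ[Q]$ --- so once you get that far, the scheme-theoretic shortcut the paper takes is already available and cheaper. Your concern about the quantifier ``for every $a\in K$'' is legitimate for a cone-by-cone combinatorial proof, but it dissolves entirely in the paper's approach: for any $a\in K$ (generator or not), $\bA_{P[K-a]}$ is an open subscheme of $\Bl_K(\bA_P)$ (a localisation of a standard chart), and flatness of a morphism of schemes restricts to opens.
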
 

In fact, \cite[Theorem~1.1]{FKato} is the log scheme version of the above result. We will deduce \cref{thm:F-Kato} from the proof of \cite[Theorem~1.1]{FKato}. Given the relationship between integrality and flatness (see the proof below), we can regard \cref{thm:F-Kato} as a monoid of analog of the Raynaud--Gruson ``flattening by blow-up'' theorem \cite{RaynaudGruson}.

\begin{proof}
The proof of \cite[Theorem~1.1]{FKato} applied to the map of log schemes $\bA_{Q}\to\bA_{P}$ constructs a non-empty ideal $K\subseteq P$ such that the morphism of schemes
\[ 
    \bA_Q\times_{\bA_P}^\sat \Bl_K \bA_P \la \Bl_K \bA_P
\]
is flat. This implies that for every $a\in K$, setting $P'=P[K-a]$ to be the affine blow-up monoid and $Q' = (P'\oplus_P Q)^\sat$, the ring homomorphism
\[ 
    \bZ[P'] \la \bZ[Q']
\]
is flat. This implies that $P'\to Q'$ is integral by \cite[Proposition~I~4.6.7 (2)$\Rightarrow$(1)]{Ogus}.
\end{proof}

\begin{cor} \label{cor:FKato}
    Let $P_0\to Q_0$ be a homomorphism of fs monoids and let $P_0\to V$ be a map into a valuative monoid $V$. Then there exists a factorisation $P_0\to P_1\to V$ where $P_0\to P_1$ is an affine blow-up $P_0[K-a]$ for some non-empty ideal $K\subseteq P_0$ and element $a\in K$ such that the saturated base change $P_1\to Q_1 = (P_1\oplus_{P_0}Q_0)^\sat$ is an integral homomorphism. 
\end{cor}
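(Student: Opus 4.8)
The plan is to combine the two preceding results: F.~Kato's theorem (\cref{thm:F-Kato}) and the valuative property of log blowups (\cref{lem:monoid-VCP}). The key observation is that \cref{thm:F-Kato} produces a non-empty ideal $K\subseteq P_0$ whose associated blow-ups work for \emph{every} choice of centre $a\in K$, whereas \cref{lem:monoid-VCP} shows that, for this fixed $K$ and the given map $P_0\to V$, one particular centre $a\in K$ can be chosen so that $V$ maps to $P_0[K-a]$ compatibly over $P_0$. Picking that common $a$ will give the corollary.

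Concretely, I would first apply \cref{thm:F-Kato} to the homomorphism of fs monoids $P_0\to Q_0$, obtaining a non-empty ideal $K\subseteq P_0$ such that for \emph{every} $a\in K$ the map $P_0[K-a]\to (Q_0\oplus_{P_0}P_0[K-a])^\sat$ is integral. Since $P_0$ is fs, it is noetherian, so $K$ is finitely generated and \cref{lem:monoid-VCP} applies to $P_0\to V$ with this ideal $K$; it yields an element $a\in K$ together with a factorisation $P_0\to P_0[K-a]\to V$. Setting $P_1 = P_0[K-a]$ for this $a$ and $Q_1 = (P_1\oplus_{P_0}Q_0)^\sat$, the factorisation through $V$ is the one just produced, and the integrality of $P_1\to Q_1$ is exactly the output of \cref{thm:F-Kato} for this centre. (Recall also that $P_1$, being an affine blow-up of the fs monoid $P_0$, is again fs.)

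I do not expect any genuine obstacle here: the entire content is the compatibility of the two inputs, namely that the ideal $K$ furnished by \cref{thm:F-Kato} is independent of the centre and can therefore be fed into \cref{lem:monoid-VCP}. The only point worth spelling out is the finite generation of $K$, which is automatic since fs monoids are noetherian.
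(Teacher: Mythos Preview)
Your proposal is correct and matches the paper's approach exactly: the paper's proof is simply ``Combine \cref{thm:F-Kato} with \cref{lem:monoid-VCP}.'' You have even made explicit the one detail the paper leaves implicit, namely that the ideal $K$ produced by \cref{thm:F-Kato} is finitely generated (because $P_0$ is fs, hence noetherian), so that \cref{lem:monoid-VCP} applies.
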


\begin{proof}
Combine \cref{thm:F-Kato} with \cref{lem:monoid-VCP}.
\end{proof}

\begin{prop} 
\label{prop:typeV-criterion}
    Let $P$ be a saturated monoid. The following are equivalent:
    \begin{enumerate}[(a)]
        \item 
        \label{propitem:typeV-sft}  $P$ is sft over a valuative submonoid $V\subseteq P$,
        \item 
        \label{propitem:typeV-sfp}        
        $P$ is sfp over a valuative submonoid $V\subseteq P$,
        \item
        \label{propitem:typeV}
        $P$ is of type $\typeV$,
        \item
        \label{propitem:typeV-sft version}
        there is a valuative monoid $V'$ and an sft morphism $V' \to P$.
    \end{enumerate}
In fact, the monoid $V$ can be taken to be the image of the morphism $V' \to P$ that is explicit in \labelcref{propitem:typeV-sft version} and implicit in \labelcref{propitem:typeV}.
\end{prop}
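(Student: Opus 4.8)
The plan is to close the loop of implications $\labelcref{propitem:typeV-sfp}\Rightarrow\labelcref{propitem:typeV-sft}\Rightarrow\labelcref{propitem:typeV-sfp}$, $\labelcref{propitem:typeV-sfp}\Rightarrow\labelcref{propitem:typeV}\Rightarrow\labelcref{propitem:typeV-sft version}\Rightarrow\labelcref{propitem:typeV-sft}$, of which only the last is not essentially formal. The implication $\labelcref{propitem:typeV-sfp}\Rightarrow\labelcref{propitem:typeV-sft}$ holds because sfp morphisms are sft by definition, and $\labelcref{propitem:typeV}\Rightarrow\labelcref{propitem:typeV-sft version}$ likewise. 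For $\labelcref{propitem:typeV-sft}\Rightarrow\labelcref{propitem:typeV-sfp}$ one observes that the inclusion $V\hookrightarrow P$ of a submonoid is injective, so the last clause of \cref{prop:sfp-conditions} (sft $\Leftrightarrow$ sfp for injective maps) promotes the sft inclusion to an sfp one. Finally $\labelcref{propitem:typeV-sfp}\Rightarrow\labelcref{propitem:typeV}$ is immediate from \cref{def:monoid-typeV}, taking the valuative monoid to be $V$ and the sfp map to be the inclusion $V\hookrightarrow P$.

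The substance is the implication $\labelcref{propitem:typeV-sft version}\Rightarrow\labelcref{propitem:typeV-sft}$, which will also yield the final assertion about $V$. Suppose $\vtheta\colon V'\to P$ is sft with $V'$ valuative, and set $V = \im(\vtheta)\subseteq P$. First I would check that $V$ is valuative. It is integral, being a submonoid of the integral monoid $P$, so by \cref{lem:injective-implies-gp-injective} the group $V^\gp$ embeds into $P^\gp$, and the surjection $V'\twoheadrightarrow V$ groupifies to a surjection $V'^\gp\twoheadrightarrow V^\gp$; hence every $x\in V^\gp$ is of the form $\vtheta^\gp(y)$ for some $y\in V'^\gp$. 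Since $V'$ is valuative, $y\in V'$ or $-y\in V'$, and applying $\vtheta$ shows $x\in V$ or $-x\in V$. Next, any finite sat-generating set $S$ for $\vtheta$ relative to $V'$ is a finite sat-generating set for the inclusion $V\hookrightarrow P$: by definition $P$ is the saturation in $P^\gp$ of the submonoid generated by $S$ and $\vtheta(V')=V$, which is exactly the condition that $S$ sat-generates $P$ over $V$. Therefore $V\hookrightarrow P$ is sft, proving $\labelcref{propitem:typeV-sft}$; and since in $\labelcref{propitem:typeV}$ the map $V'\to P$ provided by \cref{def:monoid-typeV} is in particular sft, the very same $V=\im(V'\to P)$ works there too, which is the final claim of the proposition.

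I do not anticipate a serious obstacle. The only points that need a little care are the two verifications in the previous paragraph: that the image of a valuative monoid under a monoid homomorphism is again valuative (handled via \cref{lem:injective-implies-gp-injective} and surjectivity of groupification on surjections), and the bookkeeping that a sat-generating set relative to $V'$ descends to a sat-generating set relative to $\im(\vtheta)$ (which is immediate once one unwinds the definition, since $\vtheta(V')$ is literally the submonoid $V$). Everything else is a direct application of \cref{prop:sfp-conditions} and the definitions.
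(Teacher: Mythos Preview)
Your proposal is correct and follows essentially the same approach as the paper: the equivalence $\labelcref{propitem:typeV-sft}\Leftrightarrow\labelcref{propitem:typeV-sfp}$ via the injective case of \cref{prop:sfp-conditions}, the trivial implications $\labelcref{propitem:typeV-sfp}\Rightarrow\labelcref{propitem:typeV}\Rightarrow\labelcref{propitem:typeV-sft version}$, and the substantive step $\labelcref{propitem:typeV-sft version}\Rightarrow\labelcref{propitem:typeV-sft}$ by passing to the image $V=\im(\vtheta)$ and observing that valuativity and the sat-generating set both descend. Your write-up simply supplies more detail (notably the verification that $V$ is valuative) than the paper's terse ``it is easy to check.''
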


\begin{proof}
    By \cref{prop:sfp-conditions}
    \labelcref{propitem:sfp-sft} $\Leftrightarrow$
    \labelcref{propitem:sfp}
    we have that \labelcref{propitem:typeV-sft}$\Leftrightarrow$\labelcref{propitem:typeV-sfp}, and trivially \labelcref{propitem:typeV-sfp}   $\Rightarrow$ \labelcref{propitem:typeV}  $\Rightarrow$ \labelcref{propitem:typeV-sft version}. To show that \labelcref{propitem:typeV-sft version} implies \labelcref{propitem:typeV-sft} let $V$ be the image of $V'$ in $P$. 
    It is easy to check that $V$ is valuative. Let $S \subseteq P$ be a finite sat-generating set of $P$ over $V'$. Then $S$ is also a~finite sat-generating set for $P$ over $V$. This shows that $V \to P$ is sft.
\end{proof}

\begin{cor}[Analogue of Nagata's theorem] 
\label{cor:monoid-raynaud-gruson}
    Let $V$ be a valuative monoid and let $V\to P$ be an injective homomorphism into a saturated monoid $P$ which is finitely generated over $V$. Then $P$ is finitely presented over $V$.
\end{cor}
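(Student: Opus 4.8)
The plan is to reduce the statement — via F.~Kato's theorem — to a claim about fs monoids, and then to exhibit an explicit finite presentation.

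First, replacing $V$ by its image in $P$ (again valuative, and $P$ is still finitely generated over it), we may assume $V\subseteq P$. The map $V\to P$ is of finite type, hence sft, and it is injective, so it is sfp by \cref{prop:sfp-conditions} (the implication for injective sft maps). Thus $P=(V\oplus_{P_0}Q_0)^\sat$ for some morphism of fs monoids $P_0\to Q_0$ and a map $P_0\to V$. Apply \cref{cor:FKato} to $P_0\to Q_0$ and $P_0\to V$: we obtain an affine blow-up $P_0\to P_1$ (so $P_1$ is fs) and a factorisation $P_0\to P_1\to V$ such that $P_1\to Q_1:=(P_1\oplus_{P_0}Q_0)^\sat$ is integral. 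The monoid $Q_1$ is fs (the saturation of a monoid finitely generated over $P_1$, by Gordan's lemma), so $P_1\to Q_1$ is of finite presentation by \cref{prop:fs version of sorite for sfp}. Since saturation commutes with pushouts, $P=(V\oplus_{P_1}Q_1)^\sat$; and since $P_1\to Q_1$ is integral while $V$ is integral, the pushout $Q^*:=V\oplus_{P_1}Q_1$ is already integral, is finitely presented over $V$ (base change of $P_1\to Q_1$, by \cref{lem:base change fp ft}), and satisfies $P=(Q^*)^\sat$. Note also that $V\to Q^*$ is injective (as $V\inj P$ factors through $V\to Q^*\inj(Q^*)^\sat=P$), and $P$ is finitely generated over $Q^*$ (being so over $V$).

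We are thus reduced to the following claim: \emph{if $A$ is an integral monoid and $B=A^\sat$ is finitely generated over $A$, then $A\to B$ is of finite presentation.} Indeed, applying this to $A=Q^*$ and $B=P$, a finite presentation of $P$ over $Q^*$ concatenated with one of $Q^*$ over $V$ exhibits $P$ as finitely presented over $V$.

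To prove the reduced claim, pick a finite set $T$ generating $B$ over $A$; since $B=A^\sat$ we may arrange that each $t\in T$ has $m_t t\in A$ for some $m_t\geq1$, and set $a_t:=m_t t$. Let $F=A\oplus\bN^T\surj B$ via $e_t\mapsto t$, and $N=\ker(F^\gp\to B^\gp)$. Saturation does not change groupification, so $A^\gp\isomto B^\gp$ and $F^\gp\to B^\gp$ is $(\alpha,u)\mapsto\alpha+\psi(u)$ with $\psi\colon\bZ^T\to A^\gp$, $e_t\mapsto t$; hence $N$ is the graph of $-\psi$, free with basis $\{(-t,e_t)\}_{t\in T}$. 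Writing $t=q_t-q_t'$ with $q_t,q_t'\in A$, let $R$ be the finite set of pairs $(e_t+q_t',q_t)$ and $(m_t e_t,a_t)$ for $t\in T$. These relations hold in $B$, so $F/\langle R\rangle\surj B$; the differences of the first family of pairs are the basis of $N$ (and those of the second lie in $N$), so $(F/\langle R\rangle)^\gp=F^\gp/N=B^\gp$; and therefore $(F/\langle R\rangle)^\inte=\im(F\to B^\gp)=B$. Consequently $F/\langle R\rangle=B$ — i.e.\ $A\to B$ is of finite presentation — as soon as $F/\langle R\rangle$ is integral.

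The remaining step, that $F/\langle R\rangle$ is integral, is the technical heart and the main obstacle. It amounts to showing that the chosen finite set of relations already forces cancellativity: any two elements of $A\oplus\bN^T$ with the same image in $B$ must be connected by a chain of elementary moves $w\mapsto w\pm((e_t+q_t')-q_t)$ and $w\mapsto w\pm(m_t e_t-a_t)$ that stays inside $A\oplus\bN^T$; the relations $m_t e_t=a_t$ (which exist because $B=A^\sat$) are precisely what make such a chain schedulable. I expect this scheduling argument to be the only non-formal part of the proof. (One can also bypass \cref{cor:FKato} and run the analogous argument directly on $F=V\oplus\bN^S\surj P$: choosing a finite generating set of $N=\ker(F^\gp\to P^\gp)$ and using that $V$ is valuative to flip signs so that the $V^\gp$-component of each chosen generator lies in $V$, one obtains relations inside $V\oplus\bN^S$ presenting $P$; here the valuativity of $V$ plays the role that the relations $m_t e_t = a_t$ play above.)
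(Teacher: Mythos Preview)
Your opening moves match the paper's: $V\to P$ is sfp by \cref{prop:sfp-conditions}, and via \cref{cor:FKato} one obtains an integral map $P_1\to Q_1$ of fs monoids whose ordinary pushout to $V$ gives an integral $Q^*$, finitely presented over $V$, with $P=(Q^*)^{\sat}$. From here you diverge: you reduce to showing $Q^*\to(Q^*)^{\sat}$ is finitely presented, equivalently that $A\to A^{\sat}$ is fp whenever $A$ is integral and $A^{\sat}$ is finitely generated over $A$. You write down candidate relations but leave the integrality of $F/\langle R\rangle$ --- the ``scheduling'' step --- unproved, and you correctly flag this as the crux. Your parenthetical alternative has the \emph{same} gap: using valuativity to place relations inside $V\oplus\bN^S$ yields only $(Q')^{\inte}=P$ (this is precisely the injective sft$\Rightarrow$sfp argument already carried out in \cref{prop:sfp-conditions}), not $Q'=P$. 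So as written, the proposal is incomplete at exactly the point you identify.

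The paper's strategy is not to analyse the map $Q^*\to P$ at all, but to show $Q^*=P$ outright via a limit argument. Writing $V=\varinjlim V_i$ over fs submonoids and descending (\cref{cor:sfp_comes_from_finite_level}) to a compatible system $V_i\to P_i$ with $P=\varinjlim_j P_j$, the paper arranges via \cref{cor:FKato} that $V_i\to P_i$ is integral for some $i$, so that $P':=V\oplus_{V_i}P_i$ is integral, finitely presented over $V$, and $P=(P')^{\sat}$. The finishing move is that the finite generating set $S\subseteq P=\varinjlim_j P_j$ lies in some $P_j$, and the paper argues it therefore lies in $P'$; since $P'$ then contains $V\cup S$, which generates $P$ as a monoid, one gets $P'=P$. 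In other words, the hypothesis that $P$ is \emph{genuinely finitely generated} (not merely sat-generated) over $V$ is spent on collapsing the saturation step via the colimit, rather than on bounding relations in a fixed presentation as you attempt. This is the idea you are missing: exploit the flexibility of enlarging $V_i$ inside $V$ so that no explicit saturation-vs-presentation comparison is ever needed.
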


\begin{proof}
By \cref{prop:sfp-conditions} the morphism $V \to P$ is sfp. We may write $V = \varinjlim V_i$ as a filtered colimit of fs monoids.
Thanks to \cref{cor:sfp_comes_from_finite_level} we may assume to have a compatible colimit $P = \varinjlim P_i$ such that $V_i \to P_i$ is sfp and $P_j\simeq (P_i\oplus_{V_i} V_j)^\sat$ for $j\geq i$. This means $P_i$ is fs and the map $V_i \to P_i$ is finitely presented.  By \cref{cor:FKato}, for large enough $i$, the morphism $V_i \to P_i$ is integral. But then $P' = V \oplus_{V_i} P_i$ is integral, so that $P'\to P$ is injective. Moreover, $P' = V\oplus_{V_j} P_j$ for all $j\geq i$.

Let $S \subseteq P$ be a finite set generating $P$ as a monoid over $V$. As $P = \varinjlim P_j$, the set $S$ is contained in the image of $P_j \to P'$ for large enough $j$. In this case the map $P' \to P$ is an isomorphism, showing that $P$ is finitely presented over $V$. 
\end{proof}

\begin{rmk}
    \Cref{cor:monoid-raynaud-gruson} is a monoid version of the result of Nagata (see \cite{Nagata}, \stacks[Lemma]{053E}): a finitely generated and flat (equivalently, torsion free) algebra over a valuation ring is automatically finitely presented.  

    It is plausible that using \cite[Lemmas~2.1.7 and 2.1.9]{ALPT} one could show an integral version of \cref{cor:monoid-raynaud-gruson}, i.e.\ that if $V$ is valuative and $V\to P$ is an injective homomorphism into an integral monoid $P$ which is finitely generated over $V$, then $P$ is finitely presented over $V$. However, it seems that this would require a version of \cref{thm:F-Kato} employing un-saturated log blow-ups.
\end{rmk}

A proof of the (largely folklore) result below, a monoid analogue of N\'eron desingularisation, can be found in \cite[Theorem 6.1.31]{GabberRamero}. It is not used later on but recorded here to shed some light on valuative monoids. We present a shorter proof than the one in \cite{GabberRamero}, based on toric resolution of singularities\footnote{We thank Jarek Buczyński for help with the proof of this proposition.}. Let us call a finitely generated monoid $P$ {\bf quasi-free} if it is isomorphic to $\bN^r\times G$ where $r\geq 0$ and where $G$ is an abelian group. This is equivalent to $\Spec(\bC[P])$ being smooth over $\bC$ (see e.g.\ \cite[\S 2.1, Proposition on p.\ 29]{FultonToric}).

\begin{prop} \label{prop:valuative-ind-free}
    Let $V$ be a valuative monoid. Then $V$ is the increasing union of its quasi-free submonoids. In particular, if $V$ is sharp, then $V$ is the increasing union of its free submonoids.
\end{prop}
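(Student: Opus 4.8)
\emph{Approach.} The plan is to reduce the statement to an assertion about \emph{sharp} valuative monoids with finitely generated free groupification, and to prove that assertion by induction on the rank, the engine being toric resolution of singularities together with the recalled equivalence between quasi-freeness of $P$ and smoothness of $\Spec(\bC[P])$.

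\emph{Reduction.} It suffices to show that every finitely generated submonoid $P_0\subseteq V$ lies in a quasi-free submonoid $W\subseteq V$ with $W^\gp$ finitely generated: such $W$ are themselves finitely generated monoids, so two of them always have their sum contained in a third of the same kind, while each $v\in V$ lies in $\bN v$; hence these $W$ form a directed family with union $V$, which is the asserted increasing union. Replacing $P_0$ by its saturation (fs, and still in the saturated monoid $V$) we may assume $P_0$ is fs. Now split off units: the units of any submonoid of $V$ are units of $V$, and for an fs monoid $P$ the short exact sequence $0\to P^\times\to P\to\overline P\to 0$ splits because $\overline P^\gp$ is free, so $P\cong P^\times\oplus\overline P$ with $\overline P$ sharp fs. Doing the same to the valuative monoid $V\cap\overline P^\gp$ (whose unit group is a direct summand of $\overline P^\gp$, since valuative monoids are saturated, so their units are pure), we are reduced to: \emph{given a sharp valuative monoid $\overline U$ with $\overline M:=\overline U^\gp$ finitely generated free and a sharp fs submonoid $P^\ast\subseteq\overline U$ with $(P^\ast)^\gp=\overline M$, find a free submonoid $W^\ast\subseteq\overline U$ containing $P^\ast$}. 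Reassembling $W$ from $W^\ast$ and the two (finitely generated) unit groups gives the claim; in the sharp case of the Proposition all these unit groups vanish and $W=W^\ast$ is free.

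\emph{The reduced statement.} Induct on $d=\operatorname{rank}\overline M$. Regard $\overline U$ as the positive cone of a total order on $\overline M$, set $\sigma=\operatorname{Cone}(P^\ast)\subseteq\overline M_\bR$ — a full-dimensional strongly convex rational cone — and $N=\Hom(\overline M,\bZ)$. If the order is archimedean, fix an order-embedding $\phi\colon\overline M\hookrightarrow\bR$; then $\overline U=\{\phi\ge0\}\cap\overline M$ and $\sigma\setminus\{0\}\subseteq\{\phi>0\}$, whence $\phi$ lies in the interior of $\sigma^\vee\subseteq N_\bR$. Subdivide $\sigma^\vee$ into smooth cones; $\phi$ lies in one of them, a full-dimensional smooth cone $\tau'=\operatorname{Cone}(w_1,\dots,w_d)$ for a $\bZ$-basis $\{w_i\}$ of $N$, with $\tau'\subseteq\sigma^\vee$. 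Put $W^\ast=(\tau')^\vee\cap\overline M=\bN w_1^\ast+\dots+\bN w_d^\ast$ (dual basis), a free monoid: it contains $P^\ast$ because $\tau'\subseteq\sigma^\vee$ forces $(\tau')^\vee\supseteq\sigma$, and it lies in $\overline U$ because $\phi\in\tau'=((\tau')^\vee)^\vee$ means $\phi\ge0$ on $(\tau')^\vee$. If the order is not archimedean, let $C\subsetneq\overline M$ be the largest proper convex subgroup — pure, hence free, of positive corank — and $\phi\colon\overline M\to\bR$ a map with $\ker\phi=C$ inducing the archimedean order on $\overline M/C$. The nonzero generators of $P^\ast$ split into the face $P_0:=P^\ast\cap C\subseteq C^+$ and elements $q_1,\dots,q_m$ with $\phi(q_j)>0$. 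Applying the inductive hypothesis to $\overline M/C$ gives a free $\overline W=\langle\overline e_1,\dots,\overline e_r\rangle\subseteq(\overline M/C)^+$ with images $\overline q_j=\sum_i a_{ji}\overline e_i$; lift the $\overline e_i$ to $e_i\in\overline M$, note that each $N_j:=\sum_i a_{ji}\ge1$, and pick $h\in C^+$ with $h\ge-(q_j-\sum_i a_{ji}e_i)$ for all $j$, so that the corrections $q_j-\sum_i a_{ji}(e_i-h)=(q_j-\sum_i a_{ji}e_i)+N_j h$ all lie in $C^+$. Applying the inductive hypothesis to $C$ gives a free $W_0\subseteq C^+$ containing $P_0$ and all these corrections. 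Then $W^\ast:=\langle e_1-h,\dots,e_r-h\rangle+W_0$ is free (its generators are $\bQ$-independent, as one sees by projecting to $\overline M/C$, and positive), lies in $\overline U$ (a nonnegative combination of the generators has $\phi>0$ unless it lies in $C^+$), and contains every generator of $P^\ast$, hence $P^\ast$.

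\emph{Main obstacle.} The non-archimedean step is essentially bookkeeping, its one subtle point being the translation by $-h$, which is forced because the correction terms $q_j-\sum_i a_{ji}e_i\in C$ need not be positive. The real difficulty is the archimedean base case, where the order is controlled by a single, possibly irrational, functional $\phi$: there is no convex subgroup to descend along, and one must use $\phi$ to single out one smooth cone — which is exactly what toric resolution provides. Equivalently one may argue via the valuative criterion of properness for a toric resolution $X\to\Spec\bC[P^\ast]$: the valuation attached to $\overline U$ has its center in a smooth affine chart $\Spec\bC[W^\ast]$, and this forces $W^\ast\subseteq\overline U$.
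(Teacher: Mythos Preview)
Your proof is correct but takes a genuinely different route from the paper's. After essentially the same reduction to the sharp case (yours is more elaborate, splitting off units in two stages; the paper simply replaces $V$ by $V\cap P^\gp$ and then passes to $\overline V$), the paper gives a one-shot argument valid in any rank: resolve $\Spec\bC[P]$ by a single toric blowup along an ideal $K$, and use the valuative property of log blowups (\cref{lem:monoid-VCP}) to pick out an affine chart $P[K-a]$ whose monoid factors through $V$. You instead induct on rank and split into cases. Your archimedean case is the paper's argument phrased dually (subdividing $\sigma^\vee$ rather than blowing up), with the single functional $\phi$ playing the role of \cref{lem:monoid-VCP}---which works precisely because an archimedean order is encoded by one linear form. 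Your non-archimedean step is new bookkeeping: descent along the largest convex subgroup $C$, with the translation by $-h$ to force the correction terms into $C^+$. The paper's approach is shorter and uniform in rank, leaning on a lemma it already has; yours is more self-contained and isolates exactly where resolution is indispensable (you even note the valuative-criterion alternative at the end). One minor imprecision: when you invoke the inductive hypothesis on $(\overline M/C)^+$ and on $C^+$, the relevant fs submonoids need not have groupification equal to the full ambient group, so you should first restrict the valuative monoid to the subgroup they generate---a routine fix.
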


\begin{proof}
Let $P\subseteq V$ be a finitely generated submonoid. We shall find a quasi-free submonoid $Q\subseteq V$ containing $P$. 

To this end, we may replace $P$ with its saturation and $V$ with $V\cap P^\gp$ and hence assume that $P$ is fs and $V^\gp = P^\gp$. Now, if $Q'$ is a free submonoid of $\overline{V}$ containing the image $\overline{P}$ of $P$, then its preimage $Q\subseteq V$ is a quasi-free submonoid containing $P$. We may therefore assume that $V$ (and hence also $P$) is sharp. 

By toric resolution of singularities\footnote{The original reference \cite[Theorem~11]{KKFM1973toroidalembeddings1} would work too, except that it does not assert the constructed resolution is a blow-up, so that we cannot apply \cref{lem:monoid-VCP} but would need a version of it for a ``proper'' morphism of fans.} \cite{Niziol} there exists a non-empty ideal 
\[
    K=(a_1, \ldots, a_n)\subseteq P
\]
such that the blow-up of $\Spec(\bC[P])$ along $V(K)$ is smooth, and hence the union of $\Spec(\bC[Q_i])$ for a finite number of quasi-free monoids $Q_i = P[K-a_i]$ with $P\subseteq Q_i\subseteq P^\gp$. By \cref{lem:monoid-VCP}, this implies that there exists an element $a\in K$ such that $Q = P[K-a]$ is quasi-free and a~factorisation
\[ 
    P\la Q \la V,
\]
and since $Q^\gp = P^\gp = V^\gp$, the map $Q\to V$ is injective.
\end{proof}

%---------------------------------------
\subsubsection*{Tsuji's theorem and its corollaries}
\label{ss:tsuji}
%---------------------------------------

To state Tsuji's result we need some terminology. Let $P$ be an fs monoid and $\fp\subseteq P$ a prime ideal of height one, with $F=P\setminus \fp$ the corresponding face. Then $P/F = \bN$ (this is a canonical isomorphism as $\bN$ has no automorphisms), and the induced function $v_\fp\colon P\to \bN$ is the {\bf valuation along $\fp$}. Given a map of fs monoids $\vtheta \colon P\to Q$ and a~prime $\fq\subseteq Q$ of height one whose preimage $\fp\subseteq P$ is also of height one, there exists a~unique integer $e_\fq \geq 1$ such that $v_\fq\circ \vtheta = e_\fq \cdot v_\fp$, called the {\bf ramification index} of $\vtheta$ at $\fq$.

We first show that \'etale maps are ``tamely ramified'' in prime ideals of height $1$, i.e.\ the respective ramification index is prime to $p$.

\begin{lem}
\label{lem:etale is divisor tame}
    The ramification indices of an \'etale map $\vtheta \colon P \to Q$ are prime to $p$.
\end{lem}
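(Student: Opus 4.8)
The plan is to localize at the relevant height-one prime and then read off the claim from the condition on the cokernel of $\vtheta^\gp$ built into the definition of an \'etale map.

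It suffices to fix a height-one prime $\fq\subseteq Q$ whose preimage $\fp=\vtheta^{-1}(\fq)$ in $P$ is also of height one and to prove that the ramification index $e_\fq$ is prime to $p$. Write $G=Q\setminus\fq$ and $F=P\setminus\fp$ for the corresponding faces. By construction $F=\vtheta^{-1}(G)$, so $\vtheta$ descends to a homomorphism $\bar\vtheta\colon P/F\to Q/G$ satisfying $v_\fq\circ\vtheta=\bar\vtheta\circ v_\fp$. Under the canonical identifications $P/F\cong\bN\cong Q/G$ recalled before the lemma, and using that $v_\fp\colon P\to P/F$ is surjective, the defining relation $v_\fq\circ\vtheta=e_\fq\cdot v_\fp$ forces $\bar\vtheta$ to be multiplication by $e_\fq$.

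Next I would groupify. The quotient maps $P^\gp\to P^\gp/F^\gp$ and $Q^\gp\to Q^\gp/G^\gp$ are surjective, and since $(P/F)^\gp\cong\bZ\cong(Q/G)^\gp$ they assemble, together with $\vtheta^\gp$ and the multiplication-by-$e_\fq$ map $\bZ\to\bZ$, into a commutative square. As the right-hand vertical arrow of this square is surjective (equivalently, by the snake lemma, using that $\bZ\xrightarrow{e_\fq}\bZ$ is injective), the induced map on the cokernels of the horizontal arrows is surjective; hence $\coker(\vtheta^\gp)$ surjects onto $\coker(\bZ\xrightarrow{e_\fq}\bZ)=\bZ/e_\fq\bZ$.

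Finally, since $\vtheta$ is \'etale, $\coker(\vtheta^\gp)$ is torsion and its torsion part is a finite group of order prime to $p$, so $\coker(\vtheta^\gp)$ is itself finite of order prime to $p$; its quotient $\bZ/e_\fq\bZ$ therefore has order prime to $p$, i.e.\ $e_\fq$ is prime to $p$. I do not expect a real obstacle: the argument is a short diagram chase, and the only points needing care are the equality $F=\vtheta^{-1}(G)$, the identifications of $P/F$ and $Q/G$ with $\bN$ (hence of their groupifications with $\bZ$), and the verification that $\bar\vtheta$ is multiplication by $e_\fq$ — all immediate from the definitions recalled just above the statement.
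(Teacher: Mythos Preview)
Your proof is correct and is essentially the same argument as the paper's: both set up the commutative square with $\vtheta^\gp$ on top and multiplication by $e_\fq$ on $\bZ$ on the bottom (the vertical maps being the surjections $v_\fp$ and $v_\fq$), then use surjectivity of $v_\fq\colon Q^\gp\twoheadrightarrow\bZ$ to get a surjection $\coker(\vtheta^\gp)\twoheadrightarrow\bZ/e_\fq\bZ$ and conclude from the \'etale hypothesis. The paper just writes down the diagram directly without your preliminary identification of $P/F$ and $Q/G$ with $\bN$ and of $\bar\vtheta$ with multiplication by $e_\fq$, but this is only a difference in exposition.
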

\begin{proof}
Let $\fq \subseteq Q$ be a prime ideal of height one such that $\fp = \vtheta^{-1}(\fq)$ is also of height one. From the commutative diagram
\[ 
    \begin{tikzcd}
        P^\gp \ar[d,"v_{\fp}",swap,two heads] \ar[r,"\vtheta^\gp"] & Q^\gp \ar[d,"v_{\fq}",two heads] \ar[r] & Q^\gp/P^\gp \ar[d] \ar[r] & 0 \\
        \bZ \ar[r,"e_{\fq}",swap] & \bZ\ar[r] & \bZ/e_{\fq}\bZ \ar[r] & 0 
    \end{tikzcd}
\]
we deduce that the finite group $Q^\gp/P^\gp$ of order prime to $p$ surjects onto $\bZ/e_\fq\bZ$ and hence $e_\fq$ is prime to $p$ as well.
\end{proof}

\begin{rmk}
The assertion of \cref{lem:etale is divisor tame} is false for smooth maps. For a simple example, let $P=\bN\cdot e_1\subseteq Q=\langle e_1, e_2, -e_1+2e_2\rangle \subseteq \bZ^2$. Then $P\to Q$ is smooth for any $p$, but its ramification index along $\fp = (e_1, e_2)$ is $2$. 
\end{rmk}

\begin{thm}[{T.\ Tsuji \cite[I 5.4]{Tsuji}}] \label{thm:Tsuji}
    Let $P_1\to Q_1$ be an integral homomorphism of fs monoids and let $n\geq 1$ be an integer divisible by all of the ramification indices of $P_1\to Q_1$. Let $P_2=P_1$ and let $P_1\to P_2$ be the multiplication by $n$ map. Then, the saturated base change $P_2\to Q_2 = (P_2\oplus_{P_1}Q_1)^\sat$ is a saturated homomorphism.
\end{thm}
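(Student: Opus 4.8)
The plan is to deduce this from Tsuji's work. Recall the criterion behind \cite[Theorem~I.6.3(8)]{Tsuji}, already used in \cref{ex:semistable-monoid}: an integral homomorphism $\vtheta\colon P\to Q$ of fs monoids is saturated provided all of its ramification indices --- at the height-one primes of $Q$ whose preimage in $P$ has height one --- equal $1$; concretely this forces the fibres of $\Spec(k[Q])\to\Spec(k[P])$ to be geometrically reduced for every field $k$. So it suffices to show that $\vtheta_2\colon P_2\to Q_2$ is integral and that all its ramification indices are $1$. (The statement is essentially \cite[I~5.4]{Tsuji}; one may also simply cite that, the argument below being then the translation between our ramification-index normalisation and the invariant used there.)

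First I would settle integrality. The map $P_1\to P_2$ is multiplication by $n$ on the fs monoid $P_1$, hence integral --- equivalently, $\bZ[P_1]\to\bZ[P_2]$ is finite flat --- so base-changing the flat map $\bZ[P_1]\to\bZ[Q_1]$ shows that $P_2\to Q_1\oplus_{P_1}P_2$ is integral. The delicate point, which I expect to be the main obstacle, is that passing to the saturation $Q_2=(Q_1\oplus_{P_1}P_2)^\sat$ could a~priori destroy integrality over $P_2$, exactly as in \cref{ex:Ogus-sat-pushout-of-int}; that this does not occur here is precisely where the hypothesis that $n$ be divisible by the ramification indices must enter. Controlling this --- showing the saturation introduces no new ``unsaturated'' behaviour because $n$ absorbs the ramification --- is part of the combinatorial core of \cite[I~5]{Tsuji}.

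The ramification-index computation is then local at height-one primes. As $P_1\to P_2$ is multiplication by $n$ it induces the identity on spectra (the preimage of any prime ideal under multiplication by $n$ is that ideal itself), and since the spectrum is insensitive to integralisation and saturation (\cref{rmk:spec of saturation map}), the height-one primes $\fq_2$ of $Q_2$ over height-one primes of $P_2$ correspond bijectively to the height-one primes $\fq_1$ of $Q_1$ over height-one primes of $P_1$. After localising, the situation at such a prime is modelled by $\bN\xrightarrow{\,e\,}\bN$ with $e=e_{\fq_1}$, base-changed along $\bN\xrightarrow{\,n\,}\bN$: one computes $Q_2\simeq\bN\times\bZ/\gcd(e,n)$ locally, and that $\vtheta_2$ has ramification index $e/\gcd(e,n)$ there, which equals $1$ precisely because $e\mid n$. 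Thus all ramification indices of $\vtheta_2$ are trivial, and one concludes by the criterion recalled above. The second technical ingredient, besides the integrality point, is the rigorous reduction of saturatedness (equivalently, of the reduced-fibre condition) to this codimension-one data --- again a contribution of \cite[I~5]{Tsuji}.
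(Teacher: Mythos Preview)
The paper does not supply a proof of this statement: \cref{thm:Tsuji} is stated as a citation of Tsuji's result \cite[I~5.4]{Tsuji} and is used as a black box. Your proposal is, in substance, the same --- you correctly identify the two non-trivial ingredients (that integrality of $P_2\to Q_2$ survives the saturation step, and that an integral fs morphism with all ramification indices equal to $1$ is saturated) and then defer both to Tsuji; the surrounding local computation of the ramification indices is fine but, as you acknowledge, not the hard part. So your write-up is a useful gloss on why the theorem should hold, but neither you nor the paper gives an independent proof.
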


\begin{cor} 
\label{cor:Tsuji-cor}
    Let $P_1\to Q_1$ be an integral homomorphism of fs monoids whose ramification indices are prime to $p$. Then there exists a Kummer \'etale map $P_1\to P_2$ such that the saturated base change $P_2\to Q_2 = (P_2\oplus_{P_1}Q_1)^\sat$ is a saturated homomorphism.
\end{cor}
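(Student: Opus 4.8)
The plan is to obtain the statement as an immediate consequence of Tsuji's theorem \cref{thm:Tsuji}: essentially all one has to do is choose the integer $n$ occurring there to be prime to $p$ and then check that multiplication by $n$ is a Kummer \'etale morphism of monoids. First I would fix $n$. Since $Q_1$ is fs it has only finitely many prime ideals, hence only finitely many of height one, so $P_1\to Q_1$ has only finitely many ramification indices $e_{\fq}$; let $n=\operatorname{lcm}_{\fq} e_{\fq}$ (with $n=1$ if there are none). Each $e_{\fq}$ is prime to $p$ by hypothesis, so $n$ is prime to $p$ as well, and $n$ is divisible by every ramification index of $P_1\to Q_1$. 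Taking $P_2=P_1$ and letting $P_1\to P_2$ be the multiplication-by-$n$ map $\mu_n$, \cref{thm:Tsuji} applies directly and shows that the saturated base change $P_2\to Q_2=(P_2\oplus_{P_1}Q_1)^{\sat}$ is a saturated homomorphism.

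It then remains to check that $\mu_n\colon P_1\to P_1$ is Kummer \'etale, that is, sfp, exact, injective, and \'etale. It is sfp because it is a morphism between fs monoids (\cref{prop:fs version of sorite for sfp}). It is exact because $P_1$ is saturated: if $x\in P_1^{\gp}$ satisfies $nx=\mu_n^{\gp}(x)\in P_1$, then $x\in P_1$, so $P_1=(\mu_n^{\gp})^{-1}(P_1)$ and the square in \cref{defi:exactintsatvert monoids} is cartesian. On groupifications $\mu_n$ induces multiplication by $n$ on the finitely generated abelian group $P_1^{\gp}$; since $n$ is prime to $p$, its kernel $P_1^{\gp}[n]$ and its cokernel $P_1^{\gp}/nP_1^{\gp}$ are finite groups of order prime to $p$, so $\mu_n$ is smooth, and \'etale because this cokernel is torsion.

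The point I would treat most carefully --- and the main obstacle once \cref{thm:Tsuji} is granted --- is the injectivity of $\mu_n$: since $P_1\hookrightarrow P_1^{\gp}$, it is the assertion that $P_1^{\gp}$ has no nontrivial $n$-torsion. This is automatic when $P_1$ is sharp, for then every torsion element of $P_1^{\gp}$ lies in $P_1$ by saturatedness together with its inverse, hence in $P_1^{\times}=0$; this is the situation in the intended applications. In the presence of $n$-torsion in $P_1^{\gp}$ the map $\mu_n$ itself is not injective, and one must instead exhibit a Kummer \'etale morphism $P_1\to P_1$ that agrees with $\mu_n$ modulo $P_1^{\times}$ and induces the same saturated base change $Q_2$, so that \cref{thm:Tsuji} still yields the conclusion.
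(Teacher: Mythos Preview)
Your approach is exactly the one the paper has in mind: the corollary is stated there without proof, so it is meant to be read off directly from \cref{thm:Tsuji} by choosing $n$ to be the lcm of the (finitely many) ramification indices and observing that multiplication by $n$ is Kummer \'etale. Your verification of the sfp, exact, and \'etale conditions is clean and correct.

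You are also right to flag the injectivity issue: if $P_1^{\gp}$ has nontrivial $n$-torsion then $\mu_n$ is \'etale but not Kummer \'etale, and the paper glosses over this point. Your suggested fix --- replace $\mu_n$ by a map that agrees with it modulo $P_1^\times$ --- is the correct idea, but you stop short of carrying it out. To close the gap cleanly: since $P_1$ is fs and sharp fs monoids have torsion-free groupification, choose a splitting $P_1 \cong P_1^\times \oplus \overline{P_1}$ (possible because $\overline{P_1}^{\gp}$ is free) and take $P_1\to P_2=P_1$ to be $\mathrm{id}_{P_1^\times}\oplus \mu_n|_{\overline{P_1}}$. This is now injective, hence Kummer \'etale by your argument. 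To see that the saturated base change along this map is still saturated, use the fact (cf.\ \cite[Proposition~I.3.14]{Tsuji}) that a morphism of fs monoids is saturated if and only if the induced map on sharpenings is saturated; since your map and $\mu_n$ induce the same morphism $\overline{P_1}\to\overline{P_1}$, the conclusion of \cref{thm:Tsuji} transfers. Alternatively, simply apply \cref{thm:Tsuji} directly to the torsion-free map $\overline{P_1}\to\overline{Q_1}$ (which has the same ramification indices) and lift via the splitting.
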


\begin{rmk}
For $p=1$ (i.e.\ $\Sigma=\varnothing$), \cref{thm:Tsuji} and \cref{cor:FKato} together imply that one can make a map of monoids $P_0\to Q_0$ saturated by an ``alteration'' (composition of a~Kummer \'etale map and an affine blow-up), performed ``locally'' with respect to a given map $P_0\to V$ into a~valuative monoid $V$. The following diagram illustrates both theorems.
\begin{equation} \label{eqn:Kato-Tsuji-diagram}
    \begin{tikzcd}[column sep=2.5cm]
          & Q_2 & Q_1\arrow[l] & Q_0 \arrow[l] \\
          & P_2 \arrow[ld,"\text{$V$ divisible}",dotted,swap]  \arrow[u,"\text{saturated}"] & P_1 \arrow[lld,"\text{$V$ valuative}" description,dotted] \arrow[l,"\text{Kummer \'etale}",swap] \arrow[u,"\text{integral}"] & P_0\arrow[l,"\text{affine blowup}",swap] \arrow[u] \arrow[llld] \\
          V
    \end{tikzcd}
\end{equation}
Here, the squares are pushouts in the category of fs monoids. The map $P_1 \to V$ exists (for a~suitable choice of $P_1$) by \cref{lem:monoid-VCP} since $V$ is valuative. The map $P_2\to V$ exists if $V$ is furthermore divisible, as then the Kummer \'etale map $V\to (V\oplus_{P_1}P_2)^\sat$ admits a section.

Indeed, if $P$ is a saturated monoid with $P^\gp$ divisible, and $P\to Q$ is a Kummer \'etale map, then we can write $Q^\gp = P^\gp\oplus M$ with $M$ a finite group, and then $Q = P\oplus M$ (see \cref{lem:ket-monoid-conditions}), so $P\to Q$ admits a section.
\end{rmk}

\begin{cor}[``Reduced fibre theorem''] 
\label{cor:RFT}
    Let $V$ be a valuative monoid and let $\vtheta\colon V\to P$ be an sfp map. Suppose that either $p=1$ or $V\to P$ is \'etale. Then there exists a Kummer \'etale map $V\to W$ such that the saturated base change $V\to Q = (W\oplus_V P)^\sat$ is a saturated morphism of finite presentation.
\end{cor}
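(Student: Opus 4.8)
The plan is to descend to fs monoids, run F.~Kato's theorem followed by Tsuji's theorem at the fs level, and then transport the resulting Kummer \'etale extension back down to $V$ by saturated base change; the identity $Q = W\oplus_{P_2}Q_2$ that falls out at the end will make the finite presentation and the saturatedness transparent.

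First, using \crefpart{prop:sfp-conditions}{propitem:sfp-bc} (and \cref{prop:properties of sfp that descend to fs} in the \'etale case) I would choose a morphism of fs monoids $P_0\to Q_0$ together with a map $P_0\to V$ such that $P=(V\oplus_{P_0}Q_0)^\sat$, arranging $P_0\to Q_0$ to be \'etale whenever $\vtheta$ is. Then I would apply \cref{cor:FKato} to $P_0\to Q_0$ and $P_0\to V$: this produces a factorisation $P_0\to P_1\to V$ with $P_0\to P_1$ an affine blow-up, so $P_1$ is fs, for which the saturated base change $P_1\to Q_1:=(P_1\oplus_{P_0}Q_0)^\sat$ is integral. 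Here $Q_1$ is again fs (it is sfp over the fs monoid $P_1$ by \cref{lem:sfp stable under bc}, hence fs by \cref{prop:fs version of sorite for sfp}), and transitivity of saturated pushouts together with the fact that saturation commutes with pushouts gives $P=(V\oplus_{P_1}Q_1)^\sat$. Crucially, $Q_1$ is the saturated base change of $Q_0$ along $P_0\to P_1$, so if $\vtheta$ is \'etale then $P_1\to Q_1$ is \'etale by \cref{lem:bc of smooth etale Ket monoid maps}.

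Next I would feed $P_1\to Q_1$ into \cref{cor:Tsuji-cor}: its ramification indices are prime to $p$ (automatically if $p=1$, and by \cref{lem:etale is divisor tame} applied to the \'etale map $P_1\to Q_1$ in the \'etale case), so there is a Kummer \'etale map $P_1\to P_2$ — Tsuji's multiplication-by-$n$ map, with $n$ the least common multiple of the ramification indices, which is prime to $p$ — such that $P_2\to Q_2:=(P_2\oplus_{P_1}Q_1)^\sat$ is a saturated homomorphism; once more $Q_2$ is fs, so $P_2\to Q_2$ is of finite presentation by \cite[Theorem~I~2.1.7]{Ogus} and \cref{lem:maps between fp is fp}. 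Now set $W:=(V\oplus_{P_1}P_2)^\sat$, the saturated base change of the Kummer \'etale map $P_1\to P_2$ along $P_1\to V$; then $V\to W$ is Kummer \'etale by \cref{lem:bc of smooth etale Ket monoid maps}. Unwinding $Q$ using $P=(V\oplus_{P_1}Q_1)^\sat$, associativity of the saturated pushout, and $W=(V\oplus_{P_1}P_2)^\sat$, I obtain
\[
    Q = (W\oplus_V P)^\sat = (W\oplus_{P_1}Q_1)^\sat = \bigl(W\oplus_{P_2}(P_2\oplus_{P_1}Q_1)\bigr)^\sat = (W\oplus_{P_2}Q_2)^\sat .
\]
Since $P_2\to Q_2$ is saturated and $W$ is a saturated monoid over $P_2$, this last pushout is already saturated, so $Q = W\oplus_{P_2}Q_2$ and the morphism $W\to Q$ is the base change of the saturated, finitely presented morphism $P_2\to Q_2$ along $P_2\to W$. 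It is therefore saturated (saturated morphisms being stable under pushout, \cref{rmks:sorite exact int sat vert}) and of finite presentation (\cref{lem:base change fp ft}), which gives the claim for the base change along the Kummer \'etale map $V\to W$.

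The part I expect to require the most care is the bookkeeping in the middle: one must notice that in the \'etale case the base-changed map $P_1\to Q_1$ remains \'etale, not merely integral, since that is precisely what licenses \cref{lem:etale is divisor tame} and therefore the choice of an $n$ prime to $p$ in Tsuji's theorem, so that $P_1\to P_2$ is genuinely Kummer \'etale and not just Kummer. Everything else is a formal juggle of (saturated) pushouts, resting on the facts that saturation commutes with pushouts and that F.~Kato's and Tsuji's theorems have been recorded above in exactly the form needed.
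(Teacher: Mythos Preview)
Your proof is correct and follows essentially the same route as the paper: descend to an fs model via \cref{prop:sfp-conditions}/\cref{prop:properties of sfp that descend to fs}, apply F.~Kato's \cref{cor:FKato} to make the map integral, apply Tsuji's \cref{cor:Tsuji-cor} (using \cref{lem:etale is divisor tame} in the \'etale case) to make it saturated, then transport the Kummer \'etale extension back to $V$ by saturated base change and conclude via the identity $Q = W\oplus_{P_2}Q_2$. The only difference is in the choice of variable names (the paper writes $V_0,P_0,V_1,P_1,W_1,Q_1$ for your $P_0,Q_0,P_1,Q_1,P_2,Q_2$); the logic and the chain of pushout identities are the same.
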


\begin{proof}
There exists a map of fs monoids $V_0\to P_0$ and a map $V_0\to V$ such that $P \simeq (V\oplus_{V_0} P_0)^\sat$. If $V \to P$ is \'etale, then we may further assume by \cref{prop:monoid-descend-prop-P} that $V_0 \to P_0$ is also \'etale. 

Next we use \cref{cor:FKato} as in 
\labelcref{eqn:Kato-Tsuji-diagram}
to find an affine blow up $V_0 \to V_1$ and a factorization $V_0 \to V_1 \to V$ such that the saturated base change $V_1 \to P_1 = (V_1\oplus_{V_0} P_0)^\sat$ is integral and $P \simeq (V \oplus_{V_1} P_1)^\sat$. By assumption either $p=1$ or $V_1 \to P_1$ is \'etale. Thus all ramification indices of $V_1 \to P_1$ are prime to $p$, either trivially or by \cref{lem:etale is divisor tame}.
Then \cref{cor:Tsuji-cor} applies to $V_1 \to P_1$ providing a Kummer \'etale $V_1 \to W_1$ 
such that $W_1\to Q_1 = (W_1\oplus_{V_1} P_1)^\sat$ is a~saturated morphism (still of fs monoids and thus of finite presentation by \cref{prop:fs version of sorite for sfp}). Let $W = (V\oplus_{V_1} W_1)^\sat$, so that $V \to W$ is Kummer \'etale. Then 
\[
    Q = (W\oplus_V P)^\sat 
    = (W \oplus_{V_1} P_1)^\sat= (W \oplus_{W_1} Q_1)^\sat = W \oplus_{W_1} Q_1. \qedhere  
\]
\end{proof}

The result below is an analogue of the following theorem of Grauert and Remmert \cite[Corollary~6.4.1/5]{BGR}: if $A$ is a reduced affinoid algebra over an algebraically closed non-archimedean field $K$, then the subring of powerbounded elements $A^\circ\subseteq A$ is topologically finitely presented over the valuation ring $K^+ = K^\circ$.

\begin{cor}[``Grauert--Remmert finiteness theorem''] \label{cor:GR-finiteness}
    Let $V$ be a divisible and valuative monoid and let $V\to P$ be an sfp map. Then $V\to P$ is saturated and $P$ is finitely presented over $V$.
\end{cor}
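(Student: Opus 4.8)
The plan is to derive this as a corollary of the Reduced Fibre Theorem (\cref{cor:RFT}), using that a Kummer \'etale extension of a divisible valuative monoid splits. The key point to exploit is that the hypothesis ``$V\to P$ is sfp'' and both conclusions (``$V\to P$ is saturated'' and ``$P$ is finitely presented over $V$'') are insensitive to the choice of the set of primes $\Sigma$ fixed in \cref{def:smooth-etale-Ket-monoids}. Thus I would first take $\Sigma=\varnothing$, so that $p=1$, and then \cref{cor:RFT} applies without any extra hypothesis: it produces a Kummer \'etale map $j\colon V\to W$ such that, writing $Q=(W\oplus_V P)^\sat$, the map $W\to Q$ is a saturated morphism (\crefpart{defi:exactintsatvert monoids}{defitem:saturated monoid map}) of finite presentation.

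Next I would use divisibility of $V$ to split $j$. Since $j$ is Kummer \'etale, the map $j^\gp$ is injective with finite cokernel $M=\coker(j^\gp)$, so there is a short exact sequence $0\to V^\gp\to W^\gp\to M\to 0$; as $V^\gp$ is divisible, hence an injective abelian group, this sequence splits, giving $W^\gp\cong V^\gp\oplus M$, and therefore $W\cong V\oplus M$ as monoids over $V$ (this is exactly the discussion following \cref{cor:Tsuji-cor}, via \cref{lem:ket-monoid-conditions}). In particular the projection $r\colon W\to V$ satisfies $r\circ j=\id_V$.

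Finally I would base change $Q$ back along $r$. Since saturated base change is functorial and transitive and $r\circ j=\id_V$, the saturated base change of the $W$-monoid $Q$ along $r$ recovers the saturated base change of $P$ along $\id_V$, that is $(V\oplus_W Q)^\sat\cong P$ over $V$ (concretely: $(V\oplus_W Q)^\sat=(V\oplus_W(W\oplus_V P))^\sat\cong P^\sat=P$, using that saturation commutes with pushouts and that $V\oplus_W(W\oplus_V P)\cong P$ because $r\circ j=\id_V$). Because $W\to Q$ is saturated and $V$ is saturated, the saturated pushout coincides with the ordinary pushout, so $P\cong V\oplus_W Q$. Then \cref{lem:base change fp ft} gives that $V\oplus_W Q$ is finitely presented over $V$ (as $W\to Q$ is), and \cref{rmks:sorite exact int sat vert} gives that $V\to V\oplus_W Q$ is saturated (saturated morphisms being stable under saturated pushout); transporting along the isomorphism $P\cong V\oplus_W Q$ yields both assertions.

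I do not expect a genuine obstacle here, since the statement is essentially an assembly of \cref{cor:RFT}, the splitting criterion for Kummer \'etale maps out of divisible monoids, and standard stability properties. The only place needing a little care is the identification $(V\oplus_W Q)^\sat\cong P$ and, in tandem, the passage from the saturated pushout to the ordinary pushout (which is where saturatedness of $W\to Q$ is used, and hence where the finite presentation statement really comes from) — but this is the same bookkeeping already performed in the proof of \cref{cor:RFT}.
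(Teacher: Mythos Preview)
Your proof is correct and follows essentially the same approach as the paper's: set $p=1$, apply \cref{cor:RFT}, and use divisibility of $V^\gp$ to split the Kummer \'etale map $V\to W$; you simply spell out in more detail how the retraction $r\colon W\to V$ is used to identify $P$ with the (saturated) pushout $V\oplus_W Q$ and then transport the properties ``saturated'' and ``finitely presented'' from $W\to Q$ to $V\to P$.
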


\begin{proof}
Temporarily take $p=1$ and apply \cref{cor:RFT}, then use the fact that since $V^\gp$ is divisible, every Kummer \'etale map $V\to W$ admits a section. Indeed, since $V^\gp$ is an injective object and $V^\gp\to W^\gp$ is injective (\cref{lem:injective-implies-gp-injective}), we may write $W^\gp = V^\gp \oplus (W^\gp/V^\gp)$, and then we have $W = V\oplus (W^\gp/V^\gp)$.
\end{proof}

With an argument similar to \cref{prop:typeV-criterion}, we deduce:

\begin{prop} 
\label{prop:typeVd-criterion}
    Let $P$ be a saturated monoid. The following are equivalent:
    \begin{enumerate}[(a)]
        \item $P$ is of type $\typeVd$,
        \item $P$ is finitely presented over a divisible valuative submonoid, 
        \item $P$ is finitely generated over a divisible valuative submonoid.  
    \end{enumerate}
\end{prop}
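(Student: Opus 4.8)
The plan is to imitate the proof of \cref{prop:typeV-criterion}, replacing the sole use of \cref{prop:sfp-conditions} there by a combination of \cref{prop:sfp-conditions} with the Grauert--Remmert finiteness theorem \cref{cor:GR-finiteness}; the latter is exactly what upgrades ``sfp over a divisible valuative monoid'' to ``finitely presented over a divisible valuative monoid.''

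The implication (b) $\Rightarrow$ (c) is immediate, as a finitely presented morphism is of finite type. For (c) $\Rightarrow$ (a): if $P$ is finitely generated over a divisible valuative submonoid $V\subseteq P$, then the inclusion-type morphism $V\to P$ is of finite type, hence sft, and being injective it is sfp by \cref{prop:sfp-conditions}; since $V$ is divisible valuative, $P$ is of type $\typeVd$ by definition.

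For (a) $\Rightarrow$ (b): given an sfp map $\vtheta\colon V'\to P$ with $V'$ divisible valuative, I would set $V=\im(\vtheta)\subseteq P$ and first check that $V$ is again divisible and valuative. Divisibility is clear, since $nv'=x$ in $V'$ forces $n\vtheta(v')=\vtheta(x)$. For the valuative property one uses $V^\gp=\im(\vtheta^\gp)$: for $z=\vtheta^\gp(x)\in V^\gp$ with $x\in V'^\gp$, one of $x,-x$ lies in $V'$, whence one of $z,-z$ lies in $V$; and valuative monoids are automatically saturated, so $V$ is a divisible valuative monoid. Next, $V\to P$ is injective, and any finite sat-generating set for $P$ over $V'$ is visibly also a finite sat-generating set for $P$ over $V$, so $V\to P$ is sft and hence sfp by \cref{prop:sfp-conditions}. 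Finally, \cref{cor:GR-finiteness} applied to the divisible valuative monoid $V$ and the sfp map $V\to P$ gives that $P$ is finitely presented over $V$, which is (b).

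I expect no genuine obstacle: all the substantive content is already packaged in \cref{cor:GR-finiteness} and \cref{prop:sfp-conditions}, and what remains is bookkeeping. The two points that require (minor) care are that the image of a divisible valuative monoid inside a saturated monoid is again divisible and valuative, and that a sat-generating set relative to $V'$ stays a sat-generating set relative to $V=\im(\vtheta)$.
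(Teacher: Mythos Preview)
Your proof is correct and follows exactly the approach the paper indicates (``an argument similar to \cref{prop:typeV-criterion}''): you replace the image $V$ of $V'$ in $P$, check it remains divisible valuative, use \cref{prop:sfp-conditions} to get $V\to P$ sfp from sft+injective, and then invoke \cref{cor:GR-finiteness} to upgrade sfp to finitely presented. You have simply unpacked in full what the paper leaves as a one-line reference.
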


\begin{lem}
\label{lem:localizeVVdiv}
    Let $P$ be a saturated monoid and let $F\subseteq P$ be a face. If $P$ is of type $\typeV$ (resp.\ of type $\typeVd$), then so are $F$, $P_F$, and $P/F$.
\end{lem}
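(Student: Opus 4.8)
The plan is to reduce everything to the submonoid characterisations of the two types (\cref{prop:typeV-criterion} and \cref{prop:typeVd-criterion}) together with the fact from \cref{prop:sfp-conditions} that an \emph{injective} sft morphism is automatically sfp. So it suffices, for each of $M\in\{F,\ P_F,\ P/F\}$, to produce a valuative (resp.\ divisible valuative) submonoid $W\subseteq M$ and a \emph{finite} sat-generating set of $M$ over $W$. Fix once and for all a valuative (resp.\ divisible valuative) submonoid $V\subseteq P$ and a finite sat-generating set $S\subseteq P$ of $P$ over $V$, and put $V_F:=V\cap F$, which is a face of $V$.

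\emph{The face $F$ and the quotient $P/F$.} For $F$, take $W=V_F$: it is valuative, being a face of a valuative monoid, and divisible when $V$ is (see below). Applying \cref{lem:compare-faces} to the inclusion $V\hookrightarrow P$ and the face $F\subseteq P$ shows that $F$ is the saturation of the submonoid generated by $F\cap V=V_F$ and the \emph{finite} set $F\cap S$; hence $F\cap S$ sat-generates $F$ over $V_F$, the map $V_F\hookrightarrow F$ is injective, and $F$ is sfp over $V_F$, i.e.\ of the required type. For $P/F$, note that the projection $q\colon P\to P/F$ is surjective, since every class in $P/F=P_F/F^\gp$ is represented by an element of $P$. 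Take $W=q(V)$: it is a submonoid of the integral monoid $P/F\subseteq P^\gp/F^\gp$, it is valuative as the image of a valuative monoid, and its groupification is a quotient of $V^\gp$, hence divisible when $V$ is. Pushing the relations $nq_0=v+\sum_{s}m_s s$ (with $v\in V$) witnessing that $S$ sat-generates $P$ over $V$ forward along $q$ shows that $q(S)$ sat-generates $P/F$ over $q(V)$, which is again injected into $P/F$.

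\emph{The localisation $P_F$.} Take $W=V+V_F^\gp\subseteq V^\gp\subseteq P^\gp$, the localisation of $V$ at the face $V_F$. This is valuative (it contains $V$, hence meets every element of $V^\gp=V\cup(-V)$), and $W^\gp=V^\gp$ is divisible when $V$ is. Since $V_F\to F$ is sfp by the previous paragraph, the pushout presentation $F=(V_F\oplus_{A_0}B_0)^\sat$ of \crefpart{prop:sfp-conditions}{propitem:sfp-bc}, together with \cref{lem:ker and coker under pushout}, shows that $F^\gp/V_F^\gp\cong B_0^\gp/A_0^\gp$ is a finitely generated abelian group; pick lifts $g_1,\dots,g_k\in F^\gp$ of a generating set. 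Writing an arbitrary element of $P_F=P+F^\gp$ as $p+f$ with $p\in P$ and $f\in V_F^\gp+\sum_i\bZ g_i$, and combining a sat-generating relation for $p$ over $V$ with the expression of $f$, one checks that $S\cup\{g_1,-g_1,\dots,g_k,-g_k\}$ is a finite sat-generating set of $P_F$ over $W$, and $W\hookrightarrow P_F$ is injective.

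The step I expect to cost the most work is the claim that divisibility of $V$ passes to $V_F$ (the two other divisibility claims being immediate: a quotient of $V^\gp$ for $q(V)$, and $V^\gp$ itself for $V+V_F^\gp$). For this one first checks that if $V$ is divisible then so are $V^\times$ and $\overline V=V/V^\times$: the group $\overline V^\gp=V^\gp/V^\times$ is torsion-free (being totally ordered) and divisible, hence injective as an abelian group, so $V^\gp\cong V^\times\oplus\overline V^\gp$ and both summands are divisible, and then $\overline V$ is divisible because its groupification is a divisible totally ordered group. One is thus reduced to the sharp case, where faces of $V$ correspond to convex subgroups of the totally ordered divisible group $V^\gp$, and a convex subgroup $H$ of a divisible totally ordered abelian group is itself divisible: given $h\in H$ with $h\ge 0$, any $g$ with $ng=h$ satisfies $0\le g\le h$, so $g\in H$ by convexity.
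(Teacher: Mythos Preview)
Your proof is correct and follows essentially the same route as the paper: reduce via \cref{prop:typeV-criterion}/\cref{prop:typeVd-criterion} and the injective sft $\Rightarrow$ sfp criterion, use \cref{lem:compare-faces} for $F$, and take the localisation $V_G=V+V_F^\gp$ and the image $V/G$ (your $q(V)$) as the valuative submonoids for $P_F$ and $P/F$. The one notable difference is in the sat-generating set for $P_F$: the paper uses the simpler choice $T=S\cup(-(S\cap F))$, arguing directly from the sat-generating relations for $P$ and for $F$ over $V$, which avoids your detour through finite generation of $F^\gp/V_F^\gp$; conversely, your explicit verification that faces of divisible valuative monoids are divisible fills in a point the paper leaves implicit.
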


\begin{proof}
We will make use of the criterion of \cref{prop:typeV-criterion} (resp.\ \cref{prop:typeVd-criterion}). Fix an sfp injective map $\vtheta\colon V\to P$ with $V$ valuative (resp.\ valuative and divisible) and let $S\subseteq P$ be a~finite sat-generating set. 

We treat $F$ first. By \cref{lem:compare-faces} we see that $F$ is sft over the valuative (resp.\ valuative and divisible) submonoid $W = F\cap \vtheta(V)$. By \cref{prop:typeV-criterion} we deduce that $W\to F$ is sfp. 

For $P_F$, consider the commutative square 
\[ 
    \begin{tikzcd}
        V\arrow[r] \arrow[d] & P \arrow[d] \\
        V_G \arrow[r] & P_F
    \end{tikzcd}
\]
where $G\subseteq V$ is the preimage of $F$. Then $V_G$ is valuative (resp.\ valuative and divisible) and $V_G\to P_F$ is injective by \cref{lem:injective-implies-gp-injective}.  We claim that this map is sft, with a sat-generating set $T = S \cup -(S\cap F)$. Indeed, an element of $P_F$ is of the form $p - f$ ($p\in P$, $f\in F$), and we can write $np = v + \sum m(s)s$ and $n'f = v' + \sum m'(s)s$ for $v,v'\in V$ and $m,m'\colon S\to\bN$ (cf.\ the proof of \cref{lem:compare-faces}). The second equation shows that $v'\in G$ and $m'(s) = 0$ for $s\notin F$. This implies that $nn'(p-f)$ belongs to the submonoid of $P_F$ generated by $V_G$ and $T$, and hence $T$ is a sat-generating set.

For $P/F$, we consider the further square
\[ 
    \begin{tikzcd}
        V_G \ar[r] \ar[d] & P_F \ar[d] \\
        V/G \ar[r] & P/F,
    \end{tikzcd}
\]
Again, $V/G$ is valuative (resp.\ valuative and divisible) and $V/G\to P/F$ is injective. Moreover, the image of $S$ (or $T$) in $P/F$ is easily seen to be a sat-generating set. This finishes the proof. 
\end{proof}

% ---------------------------------------
\subsection{More on Kummer \'etale maps}
\label{ss:ket-monoids}
% ---------------------------------------

We gather additional facts about Kummer \'etale maps of saturated monoids, to be used in \cref{s:kummer-pi1}. Recall that for the definition of a Kummer \'etale map (\cref{def:smooth-etale-Ket-monoids}) we have fixed an implicit finite set of primes $\Sigma$ and set $p = \prod_{\ell\in\Sigma}\ell$.

\begin{lem} \label{lem:ket-monoid-conditions}
    Let $u\colon P\to Q$ be a map of saturated monoids. The following are equivalent: 
    \begin{enumerate}[(a)]
        \item $u$ is Kummer \'etale (\cref{def:smooth-etale-Ket-monoids}),
        \item $u$ is injective, the quotient $Q^\gp/P^\gp$ is finite of order prime to $p$ and $P = Q\cap P^\gp$, i.e. $u$ is exact (equivalently, if $Q$ equals the saturation of $P$ in $Q^\gp$).
    \end{enumerate}
\end{lem}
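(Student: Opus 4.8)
The plan is to unwind \cref{def:smooth-etale-Ket-monoids} and notice that the only gap between (a) and (b) is the hypothesis that $u$ be sfp. Indeed, if $u$ is injective then $\vtheta^\gp\colon P^\gp\to Q^\gp$ is injective by \cref{lem:injective-implies-gp-injective}, so $\ker(\vtheta^\gp)=0$ and $\coker(\vtheta^\gp)=Q^\gp/P^\gp$. Granting injectivity, ``$u$ smooth'' then reads ``$u$ sfp and the torsion part of $Q^\gp/P^\gp$ is finite of order prime to $p$'', ``$u$ \'etale'' adds ``$Q^\gp/P^\gp$ torsion'', and together these say exactly ``$u$ sfp and $Q^\gp/P^\gp$ finite of order prime to $p$''; finally ``$u$ Kummer \'etale'' adds ``$u$ exact'', i.e.\ $P=Q\cap P^\gp$. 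Comparing with (b), the implication (a)~$\Rightarrow$~(b) is then immediate, and to prove (b)~$\Rightarrow$~(a) it suffices to show that an injective exact map $u\colon P\to Q$ of saturated monoids with $Q^\gp/P^\gp$ finite (torsion would already do) is automatically sfp.

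The key observation is that, under these hypotheses, $Q$ is nothing but the saturation of the image of $P$ inside $Q^\gp$ --- equivalently, the empty set is a (finite) sat-generating set for $Q$ over $P$ in the sense of \cref{def:sfp}. For one inclusion, the saturation of $P$ in $Q^\gp$ is contained in $Q$ because $Q$ is saturated in $Q^\gp$. For the other, pick $x\in Q$; since $Q^\gp/P^\gp$ is torsion there is an $n\geq 1$ with $nx\in P^\gp$, and as $nx\in Q$ as well, exactness gives $nx\in Q\cap P^\gp=P$, so $x$ lies in the saturation of $P$ in $Q^\gp$. Hence $u$ is sft, and being injective it is therefore sfp by \cref{prop:sfp-conditions} (for injective morphisms sft and sfp coincide). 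Combined with the first paragraph this yields (b)~$\Rightarrow$~(a), hence the equivalence of (a) and (b).

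It remains to justify the parenthetical reformulation of exactness in (b). The argument just given already shows that, when $Q^\gp/P^\gp$ is torsion, exactness of $u$ implies that $Q$ equals the saturation of $P$ in $Q^\gp$. Conversely, if $Q$ equals the saturation of $P$ in $Q^\gp$, then for $x\in Q\cap P^\gp$ we get $nx\in P$ for some $n\geq 1$, and since $P$ is saturated in $P^\gp$ and $x\in P^\gp$ this forces $x\in P$; thus $Q\cap P^\gp=P$ and $u$ is exact. I do not anticipate a genuine obstacle in this proof: the one thing to be careful about is the finiteness/tameness bookkeeping in \cref{def:smooth-etale-Ket-monoids} --- in particular that injectivity makes $\ker(\vtheta^\gp)$ vanish and identifies $\coker(\vtheta^\gp)$ with $Q^\gp/P^\gp$ --- and the (only apparently circular) point that ``sfp is automatic'' here is really an invocation of \cref{prop:sfp-conditions}, whose proof that sft implies sfp for injective maps is the step that used the noetherianity of $\bZ$.
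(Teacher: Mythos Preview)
Your argument is correct and follows the same line as the paper's: both reduce (b)$\Rightarrow$(a) to showing $u$ is sfp, pass from sfp to sft via the injective case of \cref{prop:sfp-conditions}, and then exhibit a finite sat-generating set. The only difference is the set chosen --- you take $S=\varnothing$ (relying on the observation, which you also use to justify the parenthetical in (b), that $Q$ is the saturation of $P$ in $Q^\gp$), while the paper takes any finite $S\subseteq Q$ whose image generates $Q^\gp/P^\gp$.

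One caveat worth flagging: the proof of \labelcref{propitem:sfp-sft}$\Rightarrow$\labelcref{propitem:sfp} in \cref{prop:sfp-conditions} opens by asserting that $P^\gp\oplus\bZ^S\to Q^\gp$ is surjective for a finite sat-generating set $S$, and this fails for $S=\varnothing$ whenever $P^\gp\neq Q^\gp$. (Indeed, under the literal wording of \cref{def:sfp}(1), the map $0\to\bigoplus_{n\geq 1}\bZ/n\bZ$ would have $\varnothing$ as a sat-generating set yet is certainly not sfp, so the ``equivalently'' in \cref{def:sfp}(2) and the sft$\Leftrightarrow$sfp clause of \cref{prop:sfp-conditions} really require the intended reading that $\langle P,S\rangle^\gp=Q^\gp$.) In your situation this is harmless, since $Q^\gp/P^\gp$ is finite and you can enlarge $\varnothing$ to any finite set of representatives --- which is exactly the paper's choice --- but you should be aware that invoking \cref{prop:sfp-conditions} with $S=\varnothing$ is leaning on a reading of ``sat-generating'' that the paper's own proof does not support.
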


\begin{proof}
The only non-trivial statement is that a map satisfying (b) is sfp. Since $u$ is injective, by \cref{prop:sfp-conditions}, this is equivalent to sft. But $Q$ is sat-generated over $P$ by any subset $S$ of $Q$ whose image generates $Q^\gp/P^\gp$: indeed, if $Q'$ is the submonoid of $Q$ generated by $P$ and $S$, then $(Q')^\gp = Q^\gp$, and if $n$ is the order of $Q^\gp/P^\gp$, then for every $q\in Q$ we have $nq \in P^\gp\cap Q = P \subseteq Q'$.
\end{proof}

If $P\to Q$ is a homomorphism of monoids, we shall say that $Q$ is {\bf finite as a $P$-set} if there exists a finite set $S_0 \subseteq S$ such that $S = P + S_0$ (cf.\ \cite[\S I 1.2]{Ogus}).  As we have seen in \cref{ex:typeV-not-fg}, if $P\to Q$ is a Kummer \'etale map between monoids of type~$\typeV$, it is not in general true that $Q$ is finite as a $P$-set. In particular, in such cases the map $\Spec(\bZ[Q])\to \Spec(\bZ[P])$ will be an integral morphism which is not of finite type. However, this issue disappears for monoids of type $\typeVd$:

\begin{lem} \label{lem:ket-finite}
    Let  $P\to Q$ be a Kummer \'etale map between monoids of type $\typeVd$. Then $Q$ is finite as a $P$-set. 
\end{lem}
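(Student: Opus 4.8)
The plan is to reduce the statement to the Grauert--Remmert finiteness theorem (\cref{cor:GR-finiteness}) and then run a division-with-remainder argument modulo the index $n := [Q^\gp : P^\gp]$, in the spirit of reducing exponents in a Kummer extension.

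First I would produce a single divisible valuative monoid, \emph{contained in $P$}, over which $Q$ is finitely generated. Since $P$ is of type $\typeVd$, by \cref{prop:typeVd-criterion} there is a divisible valuative submonoid $V\subseteq P$ with $V\to P$ sfp. The map $P\to Q$ is Kummer \'etale, hence sfp, so the composite $V\to Q$ is sfp by \cref{cor:sfp stable composition}. As $V$ is divisible and valuative, \cref{cor:GR-finiteness} shows that $Q$ is finitely presented, in particular finitely generated, over $V$. Fixing a finite generating set $t_1,\ldots,t_m\in Q$, every $q\in Q$ can thus be written as $q = v + \sum_j m_j t_j$ with $v\in V$ and $m_j\in\bN$, where we identify $V\subseteq P\subseteq Q$ (the map $P\to Q$ being injective, as Kummer \'etale maps are).

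Next I would extract from \cref{lem:ket-monoid-conditions} the two facts about $P\to Q$ that drive the argument: the group $Q^\gp/P^\gp$ is finite of order $n$, and $P = Q\cap P^\gp$ inside $Q^\gp$ (exactness). Together these give, for each $t\in Q$, that $n t\in Q\cap P^\gp = P$: it lies in $Q$ because $Q$ is a monoid, and in $P^\gp$ because $n$ annihilates $Q^\gp/P^\gp$. Then, given $q\in Q$ written as above, write each $m_j = n a_j + r_j$ with $0\le r_j < n$, so that
\[
    q = \Bigl(v + \sum_j a_j\,(n t_j)\Bigr) + \sum_j r_j t_j ,
\]
where the first bracket lies in $P$ (a sum of $v\in V\subseteq P$ and elements $n t_j\in P$). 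Hence $q\in P + s$ with $s = \sum_j r_j t_j$ in the finite set $S_0 := \{\sum_j r_j t_j : 0\le r_j\le n-1\}\subseteq Q$, so $Q = P + S_0$ and $Q$ is finite as a $P$-set.

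The only substantial ingredient is \cref{cor:GR-finiteness} (which rests on the theorems of F.~Kato and Tsuji); granting that, I do not expect a genuine obstacle. The one point requiring care is purely bookkeeping: one must pick the divisible valuative base \emph{inside $P$}, so that $v$, the elements $n t_j$, and therefore the whole corrected term land in $P$ at once --- which is precisely what \cref{prop:typeVd-criterion} makes available. (Note that only the hypothesis on $P$ is used; that $Q$ is of type $\typeVd$ follows from it since $P\to Q$ is sfp.)
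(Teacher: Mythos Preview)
Your proof is correct. Both your argument and the paper's start from \cref{prop:typeVd-criterion} to produce a divisible valuative submonoid $V\subseteq P$, and both finish with the same division-with-remainder trick modulo $n=[Q^\gp:P^\gp]$. The difference is in what you show to be finitely generated over $V$: you apply \cref{cor:GR-finiteness} to the composite $V\to Q$ to get generators $t_1,\ldots,t_m$ of $Q$ over $V$, and then use exactness to see $nt_j\in P$. The paper instead uses only finite generation of $P$ over $V$ (with generators $p_1,\ldots,p_r$), observes that $Q\subseteq \frac{1}{n}P$ inside $P^\gp\otimes\bQ$, and notes that $\frac{1}{n}P$ is finitely generated over $\frac{1}{n}V=V$ (divisibility and saturation of $V$) by the $\frac{1}{n}p_i$; this makes $\frac{1}{n}P$ finite as a $P$-set, and exactness is invoked only at the end to cut the generating set down to $Q$. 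Your route is slightly more direct and avoids the sandwich through $\frac{1}{n}P$; the paper's route avoids the explicit second appeal to \cref{cor:GR-finiteness} on $Q$.
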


\begin{proof}
By \cref{prop:typeVd-criterion}, $P$ is finitely generated over a divisible valuative submonoid $V$. If $n$ denotes the exponent of $(Q^\gp/P^\gp)$, we have inclusions $P\subseteq Q\subseteq \frac{1}{n}P$ inside $P^\gp\otimes \bQ$. If $p_1, \ldots, p_r$ generate $P$ over $V$, then $p'_i = \frac{1}{n}p_i$ generate $\frac{1}{n}P$ over $\frac{1}{n}V$, which equals $V$ because $V^\gp$ is divisible and $V$ is saturated. In particular, $p'_1, \ldots, p'_r$ generate $\frac{1}{n}P$ over $P$. Let $S\subseteq \frac{1}{n}P$ be the finite set of all sums $\sum_{i=1}^r n_i p'_i$ with $n_i\in \{0, \ldots, n-1\}$. Then $S$ generates $\frac{1}{n}P$ as a $P$-set. We check that $S\cap Q$ generates $Q$ as a $P$-set: for $q\in Q$, we can write $q = p + s$ for some $s\in S$ and $p\in P$, and then $s = q - p \in S\cap Q^\gp = S\cap Q$. 
\end{proof}

It is well-known that Kummer \'etale maps are not flat in general (consider e.g.\ the quotient map $\bA^2_\bQ\to \bA^2_\bQ/\mu_2$). For Kummer \'etale maps between valuative monoids, this does not occur.

\begin{lem} \label{lem:kummer-of-valuative}
    Let $V$ be a valuative monoid and let $V\to W$ be a Kummer \'etale map. Then $W$ is valuative and is ind-free as a $V$-set. In particular, the homomorphism
    \[
        \bZ[V] \la \bZ[W]
    \]
    is flat.
\end{lem}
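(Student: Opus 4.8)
The plan is to analyze $W$ and its $V$-set structure directly, using that $V\to W$ is Kummer \'etale and $V$ is valuative. First I would show that $W$ is valuative. Since $V\to W$ is injective and exact, we have $V = W\cap V^\gp$ inside $W^\gp$, and by \cref{lem:ket-monoid-conditions} the quotient $W^\gp/V^\gp$ is finite of order prime to $p$. To check $W$ is valuative, take $x\in W^\gp$; we must show $x\in W$ or $-x\in W$. Let $n = \#(W^\gp/V^\gp)$, so $nx\in V^\gp$. As $V$ is valuative, either $nx\in V$ or $-nx\in V$; say $nx\in V\subseteq W$. Since $W$ is saturated and $n(x)=nx\in W$ with $x\in W^\gp$, we conclude $x\in W$. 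The other case is symmetric. Hence $W$ is valuative.

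Next I would establish that $W$ is ind-free as a $V$-set. The idea is to reduce to the case where $V$ is sharp by splitting off the unit group, then use \cref{prop:valuative-ind-free}: a sharp valuative monoid is the increasing union of its free submonoids $V = \varinjlim_i V_i$ with each $V_i \cong \bN^{r_i}$. For each $i$, set $W_i = (W\oplus_V V_i)^\sat$... but wait — this goes the wrong way. Instead, I would write $V$ as an increasing union of finitely generated (hence fs, after saturating) submonoids $V_i$, and observe that by \cref{cor:sfp_comes_from_finite_level} (or \cref{cor:monoid-sm-et-Ket-approx}) the Kummer \'etale map $V\to W$ descends: there is an index $0$ and a Kummer \'etale map $V_0\to W_0$ of fs monoids with $W = (V\oplus_{V_0}W_0)^\sat$, and $W = \varinjlim_{i\geq 0}W_i$ with $W_i = (V_i\oplus_{V_0}W_0)^\sat$. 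Now each $V_i$ is a finitely generated valuative monoid, hence quasi-free: $V_i \cong \bN^{r_i}\times G_i$ with $G_i$ finite (as noted after \cref{prop:valuative-ind-free}, the only sharp valuative fs monoids are $0$ and $\bN$, so a finitely generated valuative monoid is $\bN^r\times G$ or $0\times G$). For a Kummer \'etale map out of a quasi-free monoid $\bN^r\times G$, one checks directly that $W_i$ is a free $V_i$-set: writing $W_i^\gp = V_i^\gp\oplus M_i$ with $M_i$ finite (prime-to-$p$ order), and $W_i$ the saturation of $V_i$ in $W_i^\gp$, one finds $W_i = \bigcup_{m\in M_i}(V_i + s_m)$ for suitable coset representatives $s_m\in W_i$, and these pieces are disjoint and free over $V_i$ because $V_i$ is sharp-up-to-its-finite-unit-group and the cosets of $M_i$ are distinct. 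Thus $W_i$ is free (equivalently: a disjoint union of translates of $V_i$) as a $V_i$-set, so in particular free as a $V$-set after base change; taking the colimit, $W$ is ind-free as a $V$-set.

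For the flatness conclusion, I would invoke that $\bZ[-]$ turns a free $V$-set into a free $\bZ[V]$-module, hence an ind-free $V$-set into a filtered colimit of free $\bZ[V]$-modules, and a filtered colimit of free (or even just flat) modules is flat. Concretely, $\bZ[W] = \varinjlim_i \bZ[W_i]$ and each $\bZ[W_i]$ is a free $\bZ[V_i]$-module; combined with $\bZ[V] = \varinjlim_i\bZ[V_i]$ and base-change compatibility, $\bZ[W]$ is a filtered colimit of modules of the form $\bZ[V]\otimes_{\bZ[V_i]}\bZ[W_i]$, each free over $\bZ[V]$, so $\bZ[W]$ is flat over $\bZ[V]$.

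The main obstacle I anticipate is the descent step and the explicit computation of the $V_i$-set structure of $W_i$ over a quasi-free $V_i$: one must be careful that the finite quotient $M_i = W_i^\gp/V_i^\gp$ really does yield disjoint translates and that freeness (not merely ``each orbit is free'') holds — this uses sharpness of $V_i/V_i^\times$ crucially, together with the exactness $V_i = W_i\cap V_i^\gp$, to rule out two distinct $M_i$-cosets contributing overlapping $V_i$-translates. An alternative, possibly cleaner route is to bypass descent and argue directly: $W$ valuative means $\ov W$ is totally ordered, $V\to W$ exact and injective with finite prime-to-$p$ cokernel on groupifications, and then decompose $W = \coprod_{t\in W^\gp/V^\gp}W_t$ as in the proof of \cref{lem:ext-of-val-mon-fg}, showing each nonempty $W_t$ is a principal (hence free) $V$-set generated by its minimal element — which exists because $W$ is valuative and $\ov{W}$, being totally ordered and a ``Kummer extension'' of $\ov{V}$, has the relevant minima; this directly gives that $W$ is a \emph{finite} disjoint union of free $V$-sets when $W^\gp/V^\gp$ is finite, which is even stronger than ind-free, and immediately yields flatness of $\bZ[V]\to\bZ[W]$.
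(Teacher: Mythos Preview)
Your argument that $W$ is valuative is correct and identical to the paper's. Your ``cleaner route'' at the end is also the paper's approach: decompose $W=\coprod_{t\in W^\gp/V^\gp} W_t$ and analyze each piece as a $V$-set. However, your claim that each nonempty $W_t$ is \emph{principal}, generated by a minimal element, is false. Take $V=(\bZ\oplus\bZ)^+$ with the lexicographic order and $W=\tfrac12 V$ as in \crefpart{ex:typeV-not-fg}{exitem:typeV-not-fg2}. For $t$ the coset of $(\tfrac12,0)$, one has $W_t=\{(\tfrac12+n,m):n\in\bN,\,m\in\bZ\}$, which has no minimal element and is not free as a $V$-set (any candidate generator $(\tfrac12,m_0)$ fails to reach $(\tfrac12,m_0-1)$, while two generators in the same coset would overlap). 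So you do \emph{not} get $W$ as a finite disjoint union of free $V$-sets; the conclusion really is only ind-free.

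The paper fixes exactly this: after identifying $W_t$ with a fractional ideal $J_t\subseteq V^\gp$ (via translation by any chosen $w\in W_t$), one writes $J_t$ as the filtered union of its finitely generated sub-$V$-sets; since $V$ is valuative, each finitely generated fractional ideal is principal, hence free of rank one. Thus each $W_t$, and therefore $W$, is ind-free, and the flatness of $\bZ[V]\to\bZ[W]$ follows. Your first approach via descent to quasi-free $V_i$ is also problematic as written: the fs submonoids $V_i\subseteq V$ produced by \cref{prop:valuative-ind-free} are quasi-free but \emph{not} valuative (the only sharp valuative fs monoids are $0$ and $\bN$, so $r_i>1$ already contradicts your claim), and even granting $W_i$ free over $V_i$, the passage to $W$ over $V$ involves a saturated pushout, so $\bZ[W]$ is not simply $\bZ[V]\otimes_{\bZ[V_i]}\bZ[W_i]$.
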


\begin{proof}
Let $x\in W^\gp$, and let $n\geq 1$ be such that $nx\in V^\gp$. Then either $nx$ or $-nx$ belongs to $V$ and hence to $W$. Since $W$ is saturated, one of $x$ or $-x$ belongs to $W$, showing that $W$ is valuative.

For the second assertion, for $t\in W^\gp/V^\gp$, let $W_t\subseteq W$ be the intersection of the corresponding coset with $W$. Then $W = \coprod_t W_t$ as $V$-sets. Moreover, each $W_t$ can be identified with a fractional ideal $J_t$ of $V$ (i.e., a sub-$V$-set of $V^\gp)$: pick $w\in W_t$ and consider the map $v\mapsto w+v\colon V^\gp\to W^\gp$. The preimage $J_t$ of $W_t$ maps bijectively onto $W_t$. We can write $J_t$ as the inductive limit of its finitely generated sub-$V$-sets. Since $V$ is valuative, every finitely generated fractional ideal is principal and free. Thus each $W_t$ is and hence $W$ is ind-free.
\end{proof}

\begin{lem}
    Let $P\to Q$ be a Kummer \'etale map between saturated monoids. Then, the induced map $\Spec(Q)\to \Spec(P)$ is a bijection.
\end{lem}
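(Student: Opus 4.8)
The plan is to produce an explicit inverse to $\Spec(\vtheta)\colon \Spec(Q)\to \Spec(P)$, $F\mapsto \vtheta^{-1}(F)$, working throughout with faces rather than prime ideals. Write $u=\vtheta$. By \cref{lem:ket-monoid-conditions} the map $u$ is injective, exact, and $Q^\gp/P^\gp$ is a finite group; let $n\ge 1$ be its exponent. The one observation I would record up front is a ``uniform divisibility'' fact: for every $x\in Q$ one has $nx\in u(P)$. Indeed $nx\in P^\gp$ since $n$ annihilates $Q^\gp/P^\gp$, and $nx\in Q$ since $Q$ is a monoid, so $nx\in Q\cap P^\gp=u(P)$ by exactness. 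I will also use repeatedly that a face is divisorially saturated: if $M$ is a monoid, $F\subseteq M$ a face, $x\in M$ and $nx\in F$, then writing $nx=x+(n-1)x$ with both summands in $M$ forces $x\in F$ (trivial for $n=1$).

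For surjectivity, given a face $G\subseteq P$ I would set
\[
    \varphi(G)\ :=\ \{\, x\in Q\ :\ nx\in u(G)\,\},
\]
which is visibly a submonoid (in fact it is the face of $Q$ generated by $u(G)$, by divisorial saturation). That $\varphi(G)$ is a face is the one point using uniform divisibility: if $x+y\in\varphi(G)$ with $x,y\in Q$, then $nx+ny=n(x+y)\in u(G)$ while $nx,ny\in u(P)$, and since $u(G)$ is a face of $u(P)$ (as $u$ is injective and $G$ is a face of $P$) we get $nx,ny\in u(G)$, i.e. $x,y\in\varphi(G)$. Finally $u^{-1}(\varphi(G))=G$ is a chain of equivalences: for $p\in P$, using that $u(p)\in Q$ automatically, $u(p)\in\varphi(G)\iff u(np)=n\,u(p)\in u(G)\iff np\in G\iff p\in G$, the middle equivalence by injectivity of $u$ and the last by divisorial saturation of $G$. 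Hence $\Spec(u)$ is surjective.

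For injectivity — equivalently, to see that $\varphi$ is a two-sided inverse — I would check $F=\varphi(u^{-1}(F))$ for every face $F\subseteq Q$. The inclusion $\supseteq$ holds because $u(u^{-1}(F))\subseteq F$, so $nx\in u(u^{-1}(F))\subseteq F$ forces $x\in F$ by divisorial saturation. For $\subseteq$, if $x\in F$ then $nx=u(p)\in u(P)$ by uniform divisibility, and $u(p)=nx\in F$ gives $p\in u^{-1}(F)$, so $nx\in u(u^{-1}(F))$ and $x\in\varphi(u^{-1}(F))$. Thus $\Spec(u)$ is a bijection with inverse $\varphi$. (If one wants a homeomorphism as well: for $y\in Q$ pick $w\in P$ with $ny=u(w)$; then $y\in F\iff ny\in F\iff w\in u^{-1}(F)$, so the basic open $\{F:y\in F\}$ equals $\Spec(u)^{-1}(\{F':w\in F'\})$, which shows $\Spec(u)$ is open.)

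The only step here that is not purely formal is the verification that $\varphi(G)$ is a face, and this is exactly where exactness of $u$ is needed — indeed the statement fails without exactness, e.g. for $\bN\hookrightarrow\bZ$. An alternative, heavier route would be to reduce, via \cref{prop:properties of sfp that descend to fs}, to the case where $\vtheta$ is a Kummer \'etale map of fs monoids (noting that $\Spec$ sends the relevant filtered colimits of monoids to cofiltered limits of spectra) and then invoke the standard identification of $\Spec$ of an fs monoid with the set of faces of its associated rational polyhedral cone, which is unchanged by $\vtheta$ because $Q^\gp_\bR=P^\gp_\bR$ and the cones spanned by $P$ and $Q$ coincide; but the direct argument above seems cleaner and avoids any appeal to approximation.
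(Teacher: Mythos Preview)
Your proof is correct and follows exactly the same strategy as the paper's: define the inverse map by sending a face $G$ of $P$ to its saturation in $Q$ (your $\varphi(G)$), and verify this is a two-sided inverse to $F\mapsto u^{-1}(F)$. The paper's proof is a one-line sketch leaving the verifications as ``easy to check''; your version supplies those details completely, and in particular correctly isolates exactness as the ingredient needed to show $\varphi(G)$ is a face.
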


\begin{proof}
Let $F$ be a face of $P$ and let $G$ be its saturation in $Q$. It is easy to check that $G$ is a face of $Q$, and that this produces an inverse map $\Spec(P)\to \Spec(Q)$.
\end{proof}

The following lemma is well known for fs monoids. The argument works identically also for saturated monoids.

\begin{lem}[{Vidal, \cite[Lemme 3.3]{IllusieFKN}}]
\label{lem:Vidal lemma on fs diagonal}
    Let $u \colon P \to Q$ be a morphism of saturated monoids such that the cokernel of $u^\gp \colon P^\gp \to Q^\gp$ is torsion.
    Then the map
    \[
    \psi \colon (Q \oplus_P Q)^{\sat} \longrightarrow Q \oplus (Q^{\gp}/P^{\gp}), \qquad (a,b) \mapsto (a+b,b)
    \]
    is an isomorphism.
\end{lem}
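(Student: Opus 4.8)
The plan is to deduce everything from a statement about abelian groups, by working inside the common groupification and exploiting that $Q$ and $(Q\oplus_P Q)^{\sat}$ are integral, hence embed into their associated groups, and that $\psi$ is the restriction of a group homomorphism.

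\emph{Step 1: groupify.} Since groupification is a left adjoint, it commutes with pushouts, so $(Q\oplus_P Q)^{\gp}=Q^{\gp}\oplus_{P^{\gp}}Q^{\gp}$, and $(Q\oplus_P Q)^{\sat}$ is the saturation, inside this group, of the image $(Q\oplus_P Q)^{\inte}$ of $Q\oplus_P Q$. Likewise $Q\oplus(Q^{\gp}/P^{\gp})$ is a submonoid of $Q^{\gp}\oplus(Q^{\gp}/P^{\gp})$. I would first check by hand that the induced homomorphism of abelian groups
\[
    \psi^{\gp}\colon Q^{\gp}\oplus_{P^{\gp}}Q^{\gp}\longrightarrow Q^{\gp}\oplus(Q^{\gp}/P^{\gp}),\qquad [(a,b)]\mapsto(a+b,\bar b)
\]
is an isomorphism: the inverse sends $(c,t)$ to the class of $(c-\tilde t,\tilde t)$ for any lift $\tilde t\in Q^{\gp}$ of $t$, and one verifies this is well defined (changing the lift by an element of $P^{\gp}$ alters the pair by an element of the subgroup one quotients out) and a two-sided inverse. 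Note the hypothesis has not yet been used; it enters only in the last step.

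\emph{Step 2: reduce to a set-theoretic claim.} As $\psi=\psi^{\gp}|_{(Q\oplus_P Q)^{\sat}}$, it suffices to show that $\psi^{\gp}$ carries the submonoid $(Q\oplus_P Q)^{\sat}$ bijectively onto $N\colonequals Q\oplus(Q^{\gp}/P^{\gp})$. Since $\psi^{\gp}$ is a group isomorphism, it carries the saturation of a submonoid to the saturation of its image; hence $\psi^{\gp}\big((Q\oplus_P Q)^{\sat}\big)$ is the saturation in $Q^{\gp}\oplus(Q^{\gp}/P^{\gp})$ of $N_0\colonequals\psi^{\gp}\big((Q\oplus_P Q)^{\inte}\big)$, the submonoid generated by the elements $(a,0)$, $a\in Q$ (images of the first copy of $Q$) and $(b,\bar b)$, $b\in Q$ (images of the second copy). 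For the inclusion $\psi^{\gp}((Q\oplus_P Q)^{\sat})\subseteq N$: plainly $N_0\subseteq N$, and $N$ is saturated in $Q^{\gp}\oplus(Q^{\gp}/P^{\gp})$ — if $n\cdot(c,t)\in N$ then $nc\in Q$, so $c\in Q$ because $Q$ is saturated, while $t$ automatically lies in the group summand — so $N_0^{\sat}\subseteq N$.

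\emph{Step 3: the remaining inclusion, using the hypothesis.} Here I use that $\coker(u^{\gp})=Q^{\gp}/P^{\gp}$ is torsion. Given $(c,t)\in N$ with $c\in Q$, choose $m\geq 1$ with $mt=0$. Then $m\cdot(c,t)=(mc,0)$, and $(mc,0)$ lies in $N_0$: it is $\psi^{\gp}$ applied to $mc$ viewed in the first copy of $Q$ inside $Q\oplus_P Q$, with $mc\in Q$. Hence $(c,t)\in N_0^{\sat}=\psi^{\gp}((Q\oplus_P Q)^{\sat})$. Combining with Step 2 gives $\psi^{\gp}((Q\oplus_P Q)^{\sat})=N$, so $\psi$ is an isomorphism.

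I do not expect a genuine obstacle — the argument is essentially bookkeeping. The only point requiring care is tracking which saturation is formed inside which ambient group, and remembering that $Q\oplus_P Q$ need not be integral, so one always works with its image $(Q\oplus_P Q)^{\inte}$ in the groupification; once $\psi^{\gp}$ is identified as an isomorphism, the torsion hypothesis makes the surjectivity step a one-line computation.
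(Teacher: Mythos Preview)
Your proof is correct. The paper does not give its own proof of this lemma: it simply cites the fs case from \cite[Lemme~3.3]{IllusieFKN} and asserts that ``the argument works identically also for saturated monoids.'' Your argument --- pass to groupifications where $\psi^{\gp}$ is visibly an isomorphism, then identify the image of the saturated pushout as the saturation of $N_0$ inside $Q^{\gp}\oplus(Q^{\gp}/P^{\gp})$, and use the torsion hypothesis to see that this saturation is exactly $Q\oplus(Q^{\gp}/P^{\gp})$ --- is precisely the standard proof in the fs setting, and as you observe it uses only that $Q$ is saturated, not that it is finitely generated. So you have correctly supplied the details the paper omits.
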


Next, we discuss the ``Kummer \'etale fundamental group'' of a saturated monoid $P$. Recall that in \cref{def:smooth-etale-Ket-monoids} we fixed a set of primes $\Sigma$ and denoted their formal product by $p$. Let 
\[
    \bZ_{\Sigma} =  \bZ[{\textstyle \frac{1}{n}}\,:\, (n, p)=1] \subseteq \bQ.
\]

By \cref{lem:ket-monoid-conditions}, there is an equivalence between the category of Kummer \'etale maps $P\to Q$ and the category of group extensions 
\[ 
    \begin{tikzcd}
        0\ar[r] & P^\gp \ar[r] &  M\ar[r] &  N\ar[r] &  0    
    \end{tikzcd}
\]
where $N$ is finite of order prime to $p$. Let us call a monoid connected if its associated group is torsion free. If $P$ is connected, then the category of connected Kummer \'etale $Q$ over $P$ is identified with the poset of subgroups $M\subseteq P^\gp\otimes_\bZ \bZ_\Sigma$ containing $P^\gp$ as a subgroup of finite index.

For the definition below, we pick an abelian group $\mu$ isomorphic to $\bZ_\Sigma/\bZ$ and denote by $\mu_n$ or $\bZ/n\bZ(1)$ its $n$-torsion subgroup. Later, we will mainly use $\Sigma =\{p\}$, and $\mu$ will signify $\mu(k) = \varinjlim_n \mu_n(k)$ for an algebraically closed field $k$ of characteristic exponent $p$. We denote by 
\[
    \widehat{\bZ}'(1) = \Hom(\bZ_\Sigma/\bZ, \mu) = \varprojlim_{(n,p) = 1} \bZ/n\bZ(1) 
\]
its Tate module, which is a rank one free module over the prime-to-$p$ (or prime-to-$\Sigma$) completion of $\bZ$.  

\begin{defi} \label{def:pi1-of-monoid}
    Let $P$ be a saturated monoid. 
    We define the {\bf fundamental group} of $P$ as
    \[ 
        \pi_1(P) = \Hom(P^\gp, \widehat{\bZ}'(1)).
    \]
\end{defi}

We note that there is a natural identification 
\[
    \pi_1(P) = \Hom(P^\gp, \Hom(\bZ_{\Sigma}/\bZ, \mu)) =  \Hom(P^\gp \otimes (\bZ_{\Sigma}/\bZ), \mu) = \Hom((P_\infty^\gp\otimes \bZ_{\Sigma})/P^\gp, \mu)  
\]
In particular, $\pi_1(P)$ has the structure of a profinite group.

\begin{lem}
    Let $P$ be a monoid of type $\typeVd$. Then, the following hold. 
    \begin{enumerate}[(a)]
        \item The fundamental group $\pi_1(P)$ is finitely generated (for arbitrary $p$).
        \item For every $n\geq 1$, the group $P^\gp\otimes\bZ/n\bZ$ is finite. 
    \end{enumerate}
\end{lem}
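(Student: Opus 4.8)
The plan is to reduce both parts to elementary properties of a single finitely generated abelian group. By \cref{prop:typeVd-criterion}, we may choose a divisible valuative submonoid $V\subseteq P$ over which $P$ is finitely generated. Groupifying a finite monoid generating set of $P$ over $V$ shows that $V^\gp\oplus\bZ^r\to P^\gp$ is surjective for some $r\geq 0$, so that $A := \coker(V^\gp\to P^\gp) = P^\gp/V^\gp$ is a finitely generated abelian group. On the other hand $V^\gp$ is divisible, this being the definition of a divisible monoid. These two observations are all the input we will use.

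For part (b), I would tensor the short exact sequence $0\to V^\gp\to P^\gp\to A\to 0$ with $\bZ/n\bZ$. Since $V^\gp$ is divisible, $V^\gp\otimes\bZ/n\bZ = 0$, so right exactness of $-\otimes\bZ/n\bZ$ gives an isomorphism $P^\gp\otimes\bZ/n\bZ\isomto A\otimes\bZ/n\bZ$, and the latter is finite because $A$ is finitely generated.

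For part (a), I would first record that $\Hom(V^\gp,\widehat{\bZ}'(1)) = \varprojlim_{(n,p)=1}\Hom(V^\gp,\mu_n) = 0$, since any homomorphism from a divisible group to a finite group is trivial. Applying $\Hom(-,\widehat{\bZ}'(1))$ to $0\to V^\gp\to P^\gp\to A\to 0$, left exactness together with this vanishing yields a canonical isomorphism $\pi_1(P) = \Hom(P^\gp,\widehat{\bZ}'(1))\isomto\Hom(A,\widehat{\bZ}'(1))$ (using \cref{def:pi1-of-monoid}). Writing $A\cong\bZ^k\oplus T$ with $T$ finite, this group is $\widehat{\bZ}'(1)^k\oplus\Hom(T,\widehat{\bZ}'(1))$; as $\widehat{\bZ}'(1)$ is procyclic and $\Hom(T,\widehat{\bZ}'(1))$ is finite, $\pi_1(P)$ is a finitely generated profinite group, independently of the choice of $\Sigma$.

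I do not anticipate a real obstacle; the single point requiring care is the vanishing $\Hom(V^\gp,\widehat{\bZ}'(1)) = 0$, which is precisely where the divisibility of $V$ — and hence of $V^\gp$ — enters. Both statements fail for general monoids of type $\typeV$, for instance for a valuative monoid whose value group has infinite rank, which is why the hypothesis of type $\typeVd$ is needed.
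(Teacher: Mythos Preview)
Your proof is correct and follows essentially the same approach as the paper: both reduce to the finitely generated abelian group $A = P^\gp/V^\gp$ using divisibility of $V^\gp$. The only cosmetic difference is that the paper invokes the fact that divisible abelian groups are injective to write $P^\gp \simeq V^\gp \oplus A$ non-canonically, after which both claims are immediate; you instead push the short exact sequence through $-\otimes\bZ/n\bZ$ and $\Hom(-,\widehat{\bZ}'(1))$ directly, which amounts to the same thing.
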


\begin{proof}
By \cref{prop:typeVd-criterion} we may choose an injection $V \inj P$ with $V$ divisible valuative and $P$ finitely generated over $V$. As $V^\gp$ is divisible there is a non-canonical isomorphism $P^\gp \simeq V^\gp \oplus P^\gp/V^\gp$. The claims follow from $V^\gp/nV^\gp = 0$ for all $n$.
\end{proof}

%==========================================
\section{Log schemes beyond fs}
\label{s:log-schemes}
%==========================================

In this section we develop logarithmic geometry beyond the case of fs log schemes. The most important notion is that of an sfp morphism (\cref{defi:sfp log map}) which is modelled on the notion of an sfp map of monoids from \cref{ss:sfp-monoids}.

%---------------------------------------
\subsection{Preliminaries on saturated quasi-coherent log schemes}
\label{ss:log-sch-prelims}
%---------------------------------------

A log scheme is {\bf quasi-coherent} if it admits a chart \'etale locally \cite[\S 2]{Kato1989:LogarithmicStructures}. For a monoid $P$ we denote by $\bA_P$ the scheme $\Spec(\bZ[P])$ endowed with the log structure associated to the natural map $P \to \bZ[P]$. By \cite[Proposition~III~1.2.4]{Ogus}, a chart $P \to \cM_X(X)$ on the log scheme $X$ is the same as a~strict map of log schemes $X \to \bA_P$. 

If $f\colon X \to Y$ is a strict map and $Y$ is quasi-coherent, then $X$ is quasi-coherent as well, simply by pulling back charts. Alternatively, this endows $\underline{X}$ with the pullback log structure, which remains quasi-coherent.

A log scheme is \textbf{integral} (resp. \textbf{saturated}) if \'etale locally it admits a chart by an integral (resp. saturated) monoid. In particular integral (resp.\ saturated) log schemes are quasi-coherent by definition. The pullback log structure of an integral (resp.\ saturated) log structure is again integral (resp.\ saturated). 

\begin{rmk} \label{rmk:warning-sat-int-nonstandard}
We warn the reader that our convention is slightly non-standard. In the literature (e.g.\ in \cite{Ogus}), integral (resp.\ saturated) log schemes are defined as log schemes $X$ such that for every \'etale $U\to X$, the monoid $\cM_X(U)$ is integral (resp.\ saturated). It follows from \cref{lem:integral and saturated for qcoh log schemes} below that a log scheme $X$ is integral (resp.\ saturated) in our sense if and only if it is integral (resp.\ saturated) in the more standard sense and quasi-coherent. This discrepancy will not cause issues since all of the log schemes considered in this paper are quasi-coherent.
\end{rmk}

We spell out equivalent conditions for integral (resp.\ saturated) log schemes (well known for fs log schemes). 

\begin{lem} \label{lem:integral and saturated for qcoh log schemes}
    Let $X$ be a quasi-coherent log scheme. 
    \begin{enumerate}[(1)]
    \item 
    The following are equivalent:
    \begin{enumerate}[(a)]
        \item 
        \label{lemitem:integral log scheme 1}
        \'Etale locally on $X$, there exists a chart $P\to \cM_X$ with an integral (resp.\ saturated) monoid $P$, i.e.\ $X$ is integral (resp.\ saturated).
        \item 
        \label{lemitem:integral log scheme 2}
        For every \'etale $U\to X$, the monoid $\cM_X(U)$ is integral (resp.\ saturated).
        \item 
        \label{lemitem:integral log scheme 3}
        For every geometric point $\overline{x}$, the monoid $\cM_{X,\overline{x}}$ is integral (resp.\ saturated). 
        \item 
        \label{lemitem:integral log scheme 4}
        For every geometric point $\overline{x}$, the monoid $\overline{\cM}_{X,\overline{x}}$ is integral (resp.\ saturated) and $\cM_{X,\overline{x}}$ is $u$-integral (i.e. $\cO_{X,\overline{x}}^\times$ acts freely on $\cM_{X,\overline{x}}$).
    \end{enumerate}
    \item \label{lemitem:charts of the natural sort}
    Moreover, if $X$ is integral (resp.\ saturated) and $P\to \cM(X)$ is a chart with $P$ arbitrary, then the induced map $P^\inte\to \cM(X)$ (resp.\ $P^\sat\to \cM(X)$) is a chart as well.
    \end{enumerate}
\end{lem}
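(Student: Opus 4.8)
The plan is to prove the cycle $\labelcref{lemitem:integral log scheme 1}\Rightarrow\labelcref{lemitem:integral log scheme 3}\Rightarrow\labelcref{lemitem:integral log scheme 2}\Rightarrow\labelcref{lemitem:integral log scheme 1}$ together with the equivalence $\labelcref{lemitem:integral log scheme 3}\Leftrightarrow\labelcref{lemitem:integral log scheme 4}$ and part~\labelcref{lemitem:charts of the natural sort}; it will turn out that part~\labelcref{lemitem:charts of the natural sort} is precisely what powers $\labelcref{lemitem:integral log scheme 2}\Rightarrow\labelcref{lemitem:integral log scheme 1}$. I run the integral and saturated cases in parallel, using the elementary monoid facts that a submonoid of a group is integral, that a filtered colimit of integral (resp.\ saturated) monoids is integral (resp.\ saturated) by \cite[Proposition~I~1.3.6]{Ogus}, and that both properties pass to a quotient $M/G$ by a subgroup $G\subseteq M$ and to the localisation $M_F$ and the quotient $M/F=M_F/F^\gp$ at a face $F$. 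I also use that for a log structure $\cM$ one has $\cM^\times=\alpha_\cM^{-1}(\cO^\times)$, hence a short exact sequence $0\to\cO^\times\to\cM\to\overline{\cM}\to 0$ of sheaves of monoids, and that the stalk at a geometric point $\overline{x}$ of the log structure associated to a prelog structure $\alpha\colon P\to\cO_X$ is the pushout $P\oplus_{F_{\overline{x}}}\cO^\times_{X,\overline{x}}$, where $F_{\overline{x}}=\alpha^{-1}(\cO^\times_{X,\overline{x}})\subseteq P$ is a face (the preimage being a face because $\cO_{X,\overline{x}}$ is local).

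For $\labelcref{lemitem:integral log scheme 3}\Leftrightarrow\labelcref{lemitem:integral log scheme 4}$ everything happens at a single point $\overline{x}$ and is pure monoid theory. If $\cM_{X,\overline{x}}$ is integral it is cancellative, so $\cO^\times_{X,\overline{x}}=\cM^\times_{X,\overline{x}}$ acts freely (that is $u$-integrality) and $\overline{\cM}_{X,\overline{x}}$ is a quotient of an integral (resp.\ saturated) monoid by a subgroup, hence integral (resp.\ saturated). Conversely, a short chase with $0\to\cO^\times_{X,\overline{x}}\to\cM_{X,\overline{x}}\to\overline{\cM}_{X,\overline{x}}\to 0$ shows that $u$-integrality together with integrality of $\overline{\cM}_{X,\overline{x}}$ forces $\cM_{X,\overline{x}}$ to be integral, and then if $\overline{\cM}_{X,\overline{x}}$ is saturated so is $\cM_{X,\overline{x}}$ (for $a\in\cM^\gp_{X,\overline{x}}$ with $na\in\cM_{X,\overline{x}}$ one gets $\overline{a}\in\overline{\cM}_{X,\overline{x}}$, i.e.\ $a$ differs from an element of $\cM_{X,\overline{x}}$ by a unit). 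Granting this, $\labelcref{lemitem:integral log scheme 1}\Rightarrow\labelcref{lemitem:integral log scheme 3}$ is immediate from the stalk formula: if the chart $P$ is integral (resp.\ saturated), then $P\oplus_{F_{\overline{x}}}\cO^\times_{X,\overline{x}}$ is $u$-integral with sharp quotient $P/F_{\overline{x}}$ integral (resp.\ saturated), so $\labelcref{lemitem:integral log scheme 4}$, hence $\labelcref{lemitem:integral log scheme 3}$, holds. For $\labelcref{lemitem:integral log scheme 3}\Leftrightarrow\labelcref{lemitem:integral log scheme 2}$: the direction $\labelcref{lemitem:integral log scheme 2}\Rightarrow\labelcref{lemitem:integral log scheme 3}$ is the colimit statement $\cM_{X,\overline{x}}=\varinjlim_{(U,\overline{x})}\cM_X(U)$, a filtered colimit of integral (resp.\ saturated) monoids; and $\labelcref{lemitem:integral log scheme 3}\Rightarrow\labelcref{lemitem:integral log scheme 2}$ follows because two sections of the \'etale sheaf $\cM_X$ with the same stalks everywhere coincide, which gives integrality of $\cM_X(U)$, whereupon $\cM_X\inj\cM_X^\gp$ as \'etale sheaves and saturatedness of $\cM_X(U)$ follows because a section of $\cM_X^\gp(U)$ all of whose stalks lie in the subsheaf $\cM_X$ is a section of $\cM_X$.

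Finally, $\labelcref{lemitem:integral log scheme 2}\Rightarrow\labelcref{lemitem:integral log scheme 1}$ and part~\labelcref{lemitem:charts of the natural sort}, which is the crux. Assume $X$ is integral (resp.\ saturated) in the sense of $\labelcref{lemitem:integral log scheme 2}$ and pick any chart $\beta\colon P\to\cM_X(X)$, which exists \'etale locally since $X$ is quasi-coherent. As $\cM_X(X)$ is integral (resp.\ saturated), $\beta$ factors through the universal map $P\to P^\inte$ (resp.\ $P\to P^\sat$) to an integral (resp.\ saturated) monoid; the induced prelog structure $\alpha'\colon P^\inte\to\cO_X$ (resp.\ $P^\sat\to\cO_X$) is one through which $\alpha\colon P\to\cO_X$ factors, and functoriality of the associated log structure yields maps $\cM_\alpha\xrightarrow{\,a\,}\cM_{\alpha'}\to\cM_X$ whose composite is the canonical isomorphism $\cM_\alpha\isomto\cM_X$ expressing that $\beta$ is a chart. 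Hence $a$ is a split monomorphism, and it suffices to show $a$ is an isomorphism: that is part~\labelcref{lemitem:charts of the natural sort}, and it exhibits $P^\inte$ (resp.\ $P^\sat$) as an integral (resp.\ saturated) chart, giving $\labelcref{lemitem:integral log scheme 1}$. In the integral case $a$ is also surjective on stalks, being induced by the surjection $P\surj P^\inte$ (a preimage of $[(q,u)]$ with $q\in P^\inte$ is found by lifting $q$ to $P$), so $a$, being a split monomorphism and a stalkwise surjection, is an isomorphism. The saturated case is the step I expect to be the main obstacle, because $P\to P^\sat$ need not be surjective: here I would first replace $P$ by $P^\inte$ using the integral case just proved (so that $P$ is integral and $P\inj P^\sat$), and then check that $a$ is an isomorphism on sharp stalks --- equivalently, via the short exact sequence $0\to\cO^\times_X\to\cM\to\overline{\cM}\to 0$, on all of $\cM$ --- by exploiting that $\overline{\cM}_{\alpha,\overline{x}}=\overline{\cM}_{X,\overline{x}}$ is \emph{already} saturated by hypothesis, so that the induced map of sharp stalks $\overline{\cM}_{\alpha,\overline{x}}\to\overline{\cM}_{\alpha',\overline{x}}$, which is nothing but the saturation map of $P/F_{\overline{x}}$, is an isomorphism. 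Combining all the implications finishes the proof.
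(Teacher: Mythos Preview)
Your overall architecture matches the paper's: the same cycle of implications, the same reduction of \labelcref{lemitem:integral log scheme 2}$\Rightarrow$\labelcref{lemitem:integral log scheme 1} to part~\labelcref{lemitem:charts of the natural sort}, and the same split-monomorphism setup $\cM_\alpha\to\cM_{\alpha'}\to\cM_X$. The integral case is handled just as in the paper (surjectivity of $P\twoheadrightarrow P^{\inte}$ on stalks). The divergence is only in how the saturated case of~\labelcref{lemitem:charts of the natural sort} is finished.

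The paper argues categorically: saturation, being a left adjoint, commutes with pushouts, so
\[
\cM_{X,\overline{x}}=(\cM_{X,\overline{x}})^{\sat}\simeq (P^{\inte}\oplus_F\cO_{X,\overline{x}}^\times)^{\sat}\simeq P^{\sat}\oplus_F\cO_{X,\overline{x}}^\times=\cM_{\alpha',\overline{x}},
\]
the last identification using that $F\to F^{\sat}$ is an epimorphism of integral monoids. This gives the isomorphism of full stalks in one stroke.

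Your route via sharp stalks is also valid, but the sentence ``the induced map of sharp stalks \ldots\ is nothing but the saturation map of $P/F_{\overline{x}}$'' is where the work hides and, as written, is a gap. In general $P^{\sat}/F'_{\overline{x}}$ is \emph{not} the saturation of $P/F_{\overline{x}}$, because $(F'_{\overline{x}})^{\gp}$ can be strictly larger than $F_{\overline{x}}^{\gp}$, so the two quotients live in different groups. What saves you is the hypothesis itself: $P/F_{\overline{x}}=\overline{\cM}_{X,\overline{x}}$ is sharp and saturated, hence $P^{\gp}/F_{\overline{x}}^{\gp}$ is torsion-free; this forces $(F'_{\overline{x}})^{\gp}=F_{\overline{x}}^{\gp}$ (if $q\in F'_{\overline{x}}$ then $nq\in F_{\overline{x}}$, so the class of $q$ in $P^{\gp}/F_{\overline{x}}^{\gp}$ is torsion, hence zero). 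Once both quotients sit in the same group $P^{\gp}/F_{\overline{x}}^{\gp}$, the inclusion $P/F_{\overline{x}}\subseteq P^{\sat}/F'_{\overline{x}}$ is obvious, and the reverse holds because for $q\in P^{\sat}$ with $nq\in P$ one has $n[q]\in P/F_{\overline{x}}$, whence $[q]\in P/F_{\overline{x}}$ by saturatedness. With this paragraph inserted, your argument is complete; alternatively, replacing this step by the paper's pushout computation makes the proof shorter.
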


\begin{proof}
\labelcref{lemitem:integral log scheme 1} $\Rightarrow$ \labelcref{lemitem:integral log scheme 4}: The stalk $\cM_{X,\overline{x}}$ is the pushout of $P\leftarrow F_{\overline{x}}\to \cO_{X,\overline{x}}^\times$ where the face $F_{\overline{x}}$ is the preimage of $\cO_{X,\overline{x}}^\times$ under $P\to \cM_{X,\overline{x}}$. Since $P$ is integral and $F_{\overline{x}} \to P$ injective, the pushout $\cM_{X,\overline{x}}$ is $u$-integral. Moreover, $\overline{\cM}_{X,\overline{x}}$ equals the quotient $P/F_{\overline{x}}$  and thus is integral. 

\labelcref{lemitem:integral log scheme 4} $\Leftrightarrow$ \labelcref{lemitem:integral log scheme 3}: 
By \cite[Proposition~I~1.3.3]{Ogus} a monoid $Q$ is integral if and only if $Q$ is $u$-integral and $\overline{Q} = Q/Q^\times$ is integral. The assertion about ``saturated'' then follows easily.

\labelcref{lemitem:integral log scheme 3}  $\Leftrightarrow$ \labelcref{lemitem:integral log scheme 2}: This is part of \cite[Proposition~II~1.1.3]{Ogus}.

\labelcref{lemitem:integral log scheme 2} and 
\labelcref{lemitem:integral log scheme 3}  imply \labelcref{lemitem:integral log scheme 1}: 
Let $P\to \cM_X(U)$ be a local chart, with $P$ arbitrary. Since $\cM_X(U)$ is integral (resp.\ saturated) by \labelcref{lemitem:integral log scheme 2}, this map factors through $P^\inte$ (resp.\  $P^\sat$). We claim that the resulting map
\[ 
    P^\inte\to \cM_X(U)
    \qquad
    \text{(resp.\ $P^\sat\to \cM_X(U)$)}
\]
is another chart for $\cM_X|_U$ (which will also show (2)). Let $\cM_U^i$ (resp.\ $\cM_U^s$) denote the log structure induced by $P^\inte$ (resp.\ $P^\sat$) on $U$. Then there are natural maps $\cM_X|_U \to \cM_U^i \to \cM_U^s$, 
which we show are isomorphisms by computing the stalks at a geometric point $\bar u$ of $U$.  Integralisation is a~left adjoint, so commutes with pushouts. Therefore, and since $\cM_{X,\overline{u}}$ is integral by assumption~\labelcref{lemitem:integral log scheme 3}, we have
\[
    \cM_{X,\overline{u}}  
    = \cM_{X,\overline{u}}^\inte 
    = \big( P \oplus_{\alpha^{-1}(\cO_{X,\bar u}^\times)} \cO_{X,\bar u}^\times\big)^\inte
    \isomto 
    P^\inte \oplus_{(\alpha^\inte)^{-1}(\cO_{X,\bar u}^\times)} \cO_{X,\bar u}^\times 
    = \cM_{U,\overline{u}}^i 
\]
where $\alpha\colon P\to \cM_{X,\overline{u}}$ is the induced map.
Here we use that for monoids $A$, $B$ and $C$ we have $A^\inte \oplus_{B^\inte} C^\inte  = A^\inte  \oplus_B C^\inte $, which is true since $B \to B^\inte$ is surjective. This shows the claim in the integral case. 

Furthermore, saturation is a left adjoint, so commutes with pushouts. Therefore, since now $\cM_{X,\overline{u}}$ is saturated by assumption \labelcref{lemitem:integral log scheme 3}, we have
\[
    \cM_{X,\overline{u}} 
    = \cM_{U,\overline{u}}^{\sat}
    \isomto (\cM_{U,\overline{u}}^i)^{\sat} 
    = \big( P^\inte \oplus_{\alpha^{-1}(\cO_{X,\bar u}^\times)} \cO_{X,\bar u}^\times\big)^{\sat} 
    \isomto P^\sat \oplus_{\alpha^{-1}(\cO_{X,\bar u}^\times)} \cO_{X,\bar u}^\times 
    = \cM_{U,\overline{u}}^s .
\]
Here we use that for integral monoids $A$, $B$ and $C$ we have $A^\sat \oplus_{B^\sat} C^\sat  = A^\sat \oplus_B C^\sat$, which is true since $B \to B^\sat$ is an epimorphism in the category of integral monoids. This shows the claim in the saturated case. 

These arguments also show assertion (2). 
\end{proof}

\begin{rmk}[Chart lifting property] \label{rmk:chart-lifting} 
    It is not clear whether every map between quasi-coherent log schemes locally admits a chart. Moreover, we do not expect the class of maps which admit local charts to be closed under composition, and we do not know if for maps $Y_0\to X$ and $Y_1\to X$ admitting local charts the fibre product $Y_0\times_X Y_1$ is quasi-coherent. We will avoid such issues by considering sfp morphisms, see \cref{sec:sfp log maps}  below, because such morphisms are examples for the following notion.

    We say that a morphism $f\colon X \to S$ of quasi-coherent log schemes has the \textbf{chart lifting property} with respect to a chart $\alpha\colon P \to \cM_S(U)$ on an \'etale open $U \to S$ 
    if the following holds. There is an \'etale covering $\{V_i\to U\times_S X\}$, monoid homomorphisms $\vtheta_i\colon P\to Q_i$, and commutative squares
    \[ 
        \begin{tikzcd}
            V_i \ar[r] \ar[d] & \bA_{Q_i} \ar[d,"\vtheta_i"] \\
            U \ar[r,"\alpha",swap] & \bA_P
        \end{tikzcd}
    \]
    where the horizontal arrows are strict. So $U \times_S X$ locally has a chart for $f$ compatible with the given chart on $U$.     
    We say that a morphism $f\colon X \to S$ of quasi-coherent log schemes has the \textbf{chart lifting property} if it has the chart lifting property with respect to every \'etale local chart of $S$.

    By the final assertion \labelcref{lemitem:charts of the natural sort} of \cref{lem:integral and saturated for qcoh log schemes}, if $X$ and $S$ are integral (resp.\ saturated), then $X\to S$ has the chart lifting property if and only if the above condition holds with $P$ and $Q_i$ integral (resp.\ saturated).

    Moreover, one easily shows that the composition of two maps with the chart lifting property has the chart lifting property, and that the fibre product of two maps $Y\to S$ and $X\to S$ with the chart lifting property is quasi-coherent, with the two projections $X\times_S Y\to X$, $X\times_S Y\to Y$ also with the chart lifting property. Moreover, since the saturation (resp.\ integralisation) map has the chart lifting property  (essentially by construction, see \cref{rmk:saturation-integralization} below), the last assertion holds for fibre products in integral (resp.\ saturated) log schemes. Furthermore, strict maps have the chart lifting property, of course.
\end{rmk}

\begin{rmk}
\label{rmk:refer to Tsuji exmaple in appendix}
    It would be easy to construct charts for morphisms of log schemes if for affine log schemes $X$ admitting local charts, the global sections $P = \cM(X)$ would yield a chart $\id \colon P \to \cM(X)$. 
    \Cref{ex:tsuji} due to Takeshi Tsuji gives a counterexample against this optimistic constructions of charts.
\end{rmk}

The following lemma provides local charts for maps under some conditions. 
Recall that an integral quasi-coherent log scheme $X$ is \textbf{fine} if \'etale locally it admits a chart by a fine (finitely generated integral) monoid.

\begin{lem}[{Beilinson \cite[\S1.1]{Beilinson2013:CrystallinePeriodMap}}]
\label{lem:Beilinson chart lemma}
    Let $f\colon X \to S$ be a morphism from a quasi-coherent log scheme $X$ to a log scheme $S$. Then $f$ has the chart lifting property with respect to charts $P\to \cM(S)$ on $S$ by finitely generated monoids $P$.
    
    More precisely, if $X$ is quasi-coherent (resp.\ integral, resp.\ saturated) and $\mu\colon P \to \cM(S)$ is a chart by a finitely generated (resp.\ fine, resp.\ fs) monoid $P$, then \'etale locally on $X$ we can find a morphism $\vtheta \colon P \to Q$ of monoids (resp.\ integral monoids, resp.\ saturated monoids) and a~commutative diagram
        \[
         \begin{tikzcd}
            X  \ar[r]  \ar[d,"f",swap]  & \bA_Q   \ar[d,"\vtheta"]  \\
            S  \ar[r,"\mu",swap]              & \bA_P
        \end{tikzcd}
         \]
    where the horizontal maps are strict.
\end{lem}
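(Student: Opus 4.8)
The first assertion is the special case ``$X$ quasi-coherent, $P$ finitely generated'' of the second one, read against the definition of the chart lifting property in \cref{rmk:chart-lifting}, so it suffices to prove the more precise statement. The plan is: take a local chart for $\cM_X$ supplied by quasi-coherence, pull the chart $\mu$ of $S$ back along $f$, and amalgamate the two while imposing just enough relations that the amalgam is again a chart.

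First I would fix a geometric point $\bar x$ of $X$. As $X$ is quasi-coherent, after replacing $X$ by an \'etale neighbourhood of $\bar x$ there is a chart $\beta\colon P'\to\cM_X(X)$, and by \crefpart{lem:integral and saturated for qcoh log schemes}{lemitem:charts of the natural sort} we may take $P'$ integral (resp.\ saturated) when $X$ is integral (resp.\ saturated); here $P'$ need not be finitely generated. Let $\mu_X\colon P\to\cM_X(X)$ be the composite of $\mu$ with the map $\cM_S(S)\to\cM_X(X)$ induced by $f$, and choose a finite generating set $p_1,\dots,p_n$ of $P$ — this is where finite generation of $P$ enters. Since $\beta$ is a chart, the stalk $\cM_{X,\bar x}$ is identified with $P'\oplus_{F_{\bar x}}\cO_{X,\bar x}^\times$, where $F_{\bar x}=\beta^{-1}(\cO_{X,\bar x}^\times)$, so the germ of each $\mu_X(p_i)$ at $\bar x$ has the form $\beta(q_i)+u_i$ with $q_i\in P'$ and $u_i\in\cO_{X,\bar x}^\times$; after shrinking $X$ further (only finitely many conditions) we may assume that $\mu_X(p_i)=\beta(q_i)+u_i$ in $\cM_X(X)$ for suitable $q_i\in P'$ and $u_i\in\Gamma(X,\cO_X^\times)$.

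Next, set $Q_0:=(P\oplus P'\oplus\bZ^n)/R$, the quotient by the congruence generated by the relations $p_i\sim q_i+e_i$, where $e_1,\dots,e_n$ is the standard basis of $\bZ^n$; let $\vtheta\colon P\to Q_0$ be the natural homomorphism and $\gamma\colon Q_0\to\cM_X(X)$ the homomorphism equal to $\mu_X$ on $P$, to $\beta$ on $P'$, and sending $e_i\mapsto u_i$, which is well defined thanks to the relations just arranged. Then $\gamma\circ\vtheta=\mu_X$, and by the universal property of $\bA_P$ this equality is exactly the commutativity of the square with bottom arrow the chart $\mu\colon S\to\bA_P$ and top arrow the map $X\to\bA_{Q_0}$ determined by $\gamma$ — \emph{once we know $\gamma$ is a chart}, equivalently that this top arrow is strict. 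This last point is the crux. One must show that the log structure on $\underline X$ associated to $\gamma$ maps isomorphically to $\cM_X$; surjectivity is automatic, because the inclusion $P'\hookrightarrow Q_0$ induces a map to it from the log structure associated to $\beta$ (which is $\cM_X$, as $\beta$ is a chart) whose composite with the structure map to $\cM_X$ is an isomorphism, so that structure map splits. To see the resulting section is an inverse one computes on stalks, writing the stalk as an amalgamated sum over the relevant face and using the relations $p_i=q_i+e_i$ together with $\mu_X(p_i)=\beta(q_i)+u_i$ to verify that the section is also a retraction. The relations are essential: already for $X=\bA^1$, $P=P'=\bN$ and $f$ given by $t\mapsto 2t$ — so that $\mu_X(1)$ is the class of $2t$, which equals $\beta(1)$ plus a unit — the naive choice $Q_0=P\oplus P'$ is not a chart, which is why one adjoins the free coordinates $e_i$ and absorbs the units $u_i$ into the monoid.

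Finally, for the integral (resp.\ saturated) refinement, when $X$ is integral (resp.\ saturated) and $P$ is fine (resp.\ fs) the monoid $\cM_X(X)$ is integral (resp.\ saturated) by \crefpart{lem:integral and saturated for qcoh log schemes}{lemitem:integral log scheme 2}, so $\gamma$ factors through $Q:=Q_0^\inte$ (resp.\ $Q:=Q_0^\sat$), which is again a chart for $\cM_X$ by \crefpart{lem:integral and saturated for qcoh log schemes}{lemitem:charts of the natural sort}. Composing $\vtheta$ with $Q_0\to Q$ yields the required homomorphism $\vtheta\colon P\to Q$ with $Q$ integral (resp.\ saturated), fitting into the stated commutative diagram with strict horizontal maps.
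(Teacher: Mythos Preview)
Your argument is correct and shares the central idea with the paper's: once one has $\mu_X(p_i)=\beta(q_i)+u_i$ locally, adjoin a free factor $\bZ^n$ carrying the units $u_i$ to the existing local chart. The executions differ. You form the quotient $Q_0=(P\oplus P'\oplus\bZ^n)/(p_i\sim q_i+e_i)$ and then verify on stalks that $\gamma\colon Q_0\to\cM_X(X)$ is a chart. The paper instead sets $Q'=P'\oplus\bZ^n$ with $\nu'(p',a)=\beta(p')+\sum a_iu_i$, which is already a chart because $\bZ^n$ lands in $\cO_X^\times$, and then takes $Q=\im(\nu')\subseteq\cM_X(X)$; \cref{lem:image-also-a-chart} shows the inclusion $Q\hookrightarrow\cM_X(X)$ is a chart, and $\vtheta\colon P\to Q$ is simply $\mu_X$, whose image lies in $Q$ since each $\mu_X(p_i)=\nu'(q_i,e_i)$. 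The paper's route thus replaces your quotient-and-stalk-computation by a single appeal to that abstract lemma. In fact your stalk verification can be shortened by the same trick: since $P'\oplus\bZ^n\to Q_0$ is surjective (each $p_i$ rewrites as $q_i+e_i$), the induced map of associated log structures is an epimorphism, and you already observed it is a split monomorphism via $P'\to Q_0$, hence an isomorphism. One minor wording issue: the map $P'\to Q_0$ need not be injective, so calling it an ``inclusion'' is imprecise, though this does not affect the argument.
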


\begin{proof}
\cite[\S1.1]{Beilinson2013:CrystallinePeriodMap} asserts chart lifting for $f \colon X \to S$ for fine charts on $S$ and with $X$ integral. The proof essentially carries over; we revisit Beilinson's proof, which in turn is based on a proof by Kato.

Let $\bar x \to X$ be a geometric point and replace $X$ with a neighbourhood of $\bar x$ such that $X$ has a~global chart $\nu_0\colon Q_0 \to \cM(X)$. Let $p_1,\ldots,p_n$ be a finite set of generators of $P$. After replacing $X$ with a neighbourhood of $\bar x$, we may assume that there exist $q_i \in Q_0$ and units $u_i \in \cO^\times(X)$ such that $f^\ast(\mu(p_i)) = \nu_0(q_i) \cdot u_i$ in $\cM(X)$. The map
\[
    \nu' \colon Q' = Q_0 \oplus \bZ^n \la \cM(X), \quad \nu'(q,a_1,\ldots,a_n) = \nu_0(q)\cdot \prod_{i=1}^n u_i^{a_i}
\]
is also a chart for $X$. We write $Q$ for the image of $\nu'$.  The map $f^\ast \circ \mu$ factors through a map $\vtheta \colon P \to Q$ because this can be checked on a generating set for $P$. 
The assertion now follows because by \cref{lem:image-also-a-chart} below, the inclusion $\nu \colon Q \to \Gamma(X,\cM_X)$ is also a chart for $X$. 

If $X$ and $S$ are integral (resp.\ saturated) and $P$ is fine (resp.\ fs),  then we may replace $\vtheta\colon P \to Q$ as constructed above by the integralisation $P \to Q^\inte$ (resp.\ saturation $P = P^\sat \to Q^\sat$). The chart $Q \to \cM(X)$ factors through a map $Q^\sat \to \cM(X)$ by \cref{lem:integral and saturated for qcoh log schemes}.
This is still a chart by the final assertion \labelcref{lemitem:charts of the natural sort} of \cref{lem:integral and saturated for qcoh log schemes}. 
\end{proof}

The proof used the following simple lemma.

\begin{lem} \label{lem:image-also-a-chart}
    Let $X$ be a log scheme, let $Q'$ be a monoid, and let $\nu'\colon Q'\to \cM(X)$ be a chart. Let $Q\subseteq \cM(X)$ be the image of this map. Then, the inclusion $\nu\colon Q\to \cM(X)$ is a chart as well.
\end{lem}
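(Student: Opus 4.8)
The plan is to identify ``being a chart'' with a condition on associated log structures, and then to exploit that $Q' \to Q$ is surjective by construction. Recall that for a pre-log structure $\alpha\colon \cP\to \cO_X$ the associated log structure is $\cP^{\mathrm{a}} = \cP\oplus_{\alpha^{-1}(\cO_X^\times)}\cO_X^\times$, and that $(-)^{\mathrm{a}}$ is left adjoint to the inclusion of log structures into pre-log structures, so that $(\cM_X)^{\mathrm{a}} = \cM_X$ canonically. Viewing $Q'$ and $Q$ as constant sheaves on $X$ equipped with their maps to $\cO_X$, the hypothesis that $\nu'$ is a chart means precisely that the morphism of pre-log structures $Q'\to \cM_X$ induces an isomorphism $(Q')^{\mathrm{a}}\isomto \cM_X$. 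Since $\nu'$ factors as $Q'\surj Q\inj \cM(X)$, this morphism of pre-log structures factors as $Q'\to Q\to \cM_X$; applying $(-)^{\mathrm{a}}$ gives maps $f\colon (Q')^{\mathrm{a}}\to Q^{\mathrm{a}}$ and $h\colon Q^{\mathrm{a}}\to (\cM_X)^{\mathrm{a}} = \cM_X$ whose composite $h\circ f$ is the chart isomorphism of $\nu'$. It therefore suffices to prove that $f$ is an isomorphism, for then $h = (h\circ f)\circ f^{-1}$ is one as well, which is exactly the assertion that $\nu\colon Q\to \cM(X)$ is a chart.

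The key point is that $(-)^{\mathrm{a}}$ sends surjections of pre-log structures to surjections. Indeed, let $\beta\colon \cP'\to \cP$ be a morphism of pre-log structures over $\cO_X$ which is surjective as a map of sheaves of monoids. Then $\beta$ restricts to a surjection $(\alpha')^{-1}(\cO_X^\times)\to \alpha^{-1}(\cO_X^\times)$, since a local section of $\cP'$ lifting one that maps into $\cO_X^\times$ again maps into $\cO_X^\times$. Moreover, a pushout of a surjective homomorphism of monoids along an arbitrary homomorphism is again surjective, as one checks immediately on stalks. Hence $(\cP')^{\mathrm{a}}\to \cP^{\mathrm{a}}$ is surjective. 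Applying this to the surjection of constant sheaves induced by $Q'\surj Q$ (surjectivity being checked on stalks), we conclude that $f$ is surjective.

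It remains to note that $f$ is injective. But $h\circ f$ is an isomorphism, hence injective, so $f$ is injective; being also surjective, $f$ is an isomorphism of sheaves of monoids, and we are done. There is essentially no obstacle here: the only step that is not entirely formal is the claim that forming associated log structures preserves surjectivity, which follows at once from the fact that pushouts of monoids preserve surjections together with the observation that $\beta$ carries preimages of units to preimages of units.
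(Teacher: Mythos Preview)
Your proof is correct and follows essentially the same approach as the paper: factor the chart map through its image, apply the associated-log-structure functor, and show the first map $f$ is an isomorphism so that the second one is too. The only difference is cosmetic: the paper observes that $f$ is a split monomorphism (since $h\circ f$ is an isomorphism) and an epimorphism (since the associated-log-structure functor, being a left adjoint, preserves epimorphisms), and then invokes the general fact that a split mono which is an epi is an isomorphism; you instead check surjectivity of $f$ directly via the pushout description on stalks and injectivity from $h\circ f$ being injective. Both arguments are valid and amount to the same idea.
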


\begin{proof}
The maps of prelog structures
$\underline{Q}{}'_X \surj \underline{Q}{}_X \to \cM_X$
yield maps of associated log structures
\[
    (\underline{Q}'{}_X)^{\log} \la (\underline{Q}{}_X)^{\log} \la \cM_X
\]
whose composition is an isomorphism. Thus the first map is injective. Moreover, associated log structure is a left adjoint functor, so it preserves epimorphisms, and hence the first map is an epimorphism as well. But, in any category, a split monomorphism which is an epimorphism is an isomorphism. Therefore $(\underline{Q}{}_X)^{\log} \to \cM_X$ is an isomorphism, and  $\nu \colon Q \to \cM(X)$ is a chart. 
\end{proof}

On the existence of fibre products in quasi-coherent log schemes we have the following lemma.

\begin{lem}
\label{lem:existence of qcoh fibre products part 1}
Let $f\colon X \to S$ and $g\colon Y \to S$ be maps of quasi-coherent log schemes. The fibre product $X \times_S Y$ in the category of log schemes is quasi-coherent if one of the following holds.
\begin{enumerate}[(1)]
    \item 
    \label{lemitem:qcoh fibre product:strict}
    One of the maps $f$ or $g$ is strict. 
    \item   
    \label{lemitem:qcoh fibre product:charts}
    \'Etale locally on $X$, $Y$, and $S$ there exists a chart 
    $P \to \cM(S)$ and charts $P \to Q$ and $P \to R$ for $f$ and $g$, 
    i.e., $Q \to \cM(X)$ and $R \to \cM(Y)$ are charts and the obvious maps commute.
    \item 
    \label{lemitem:qcoh fibre product:over fine}
    The log schemes $X$ and $Y$ are integral and $S$ is a fine log scheme. 
\end{enumerate}
\end{lem}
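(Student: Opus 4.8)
The plan is to treat the three cases in turn, reducing~\labelcref{lemitem:qcoh fibre product:over fine} to~\labelcref{lemitem:qcoh fibre product:charts} via Beilinson's chart lifting lemma (\cref{lem:Beilinson chart lemma}).

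For case~\labelcref{lemitem:qcoh fibre product:strict}, say $g\colon Y\to S$ is strict, so $\cM_Y=g^\ast\cM_S$. Then a map of log schemes $Z\to Y$ over $S$ is uniquely determined by the underlying map of schemes $\underline Z\to\underline Y$ over $\underline S$, so feeding this into the universal property of the fibre product identifies $X\times_S Y$ with the scheme $\underline X\times_{\underline S}\underline Y$ equipped with the pullback $p^\ast\cM_X$ of $\cM_X$ along the first projection $p$. Since $X$ is quasi-coherent and a strict map onto $X$ has a quasi-coherent source (pull back charts, cf.\ \cref{ss:log-sch-prelims}), $X\times_S Y$ is quasi-coherent.

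For case~\labelcref{lemitem:qcoh fibre product:charts}, work étale-locally on $S$, $X$, and $Y$ and fix a chart $P\to\cM(S)$ together with compatible charts $\vtheta\colon P\to Q$ for $f$ and $\rho\colon P\to R$ for $g$; equivalently, strict maps $X\to\bA_Q$, $Y\to\bA_R$, $S\to\bA_P$ fitting into the evident commuting squares. Put $Q'=Q\oplus_P R$, so that $\bZ[Q']=\bZ[Q]\otimes_{\bZ[P]}\bZ[R]$ (as $\bZ[-]$ preserves colimits) and hence $\underline{\bA_{Q'}}=\underline{\bA_Q}\times_{\underline{\bA_P}}\underline{\bA_R}$. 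The commuting squares give a map $Z:=\underline X\times_{\underline S}\underline Y\to\underline{\bA_{Q'}}$, and I would equip $Z$ with the log structure pulled back from $\bA_{Q'}$ --- this is quasi-coherent, with chart $Q'$ --- and then verify $Z\cong X\times_S Y$ in log schemes. Concretely: a map $W\to X\times_S Y$ is a compatible pair $W\to X$, $W\to Y$; it yields $\underline W\to Z$ together with maps $Q\to\cM(W)$ and $R\to\cM(W)$ (from $W\to X\to\bA_Q$, resp.\ $W\to Y\to\bA_R$) agreeing on $P$, hence a map $Q'\to\cM(W)$, i.e.\ a map $W\to\bA_{Q'}$ lying over $\underline W\to\underline Z\to\underline{\bA_{Q'}}$, which by strictness of $X\to\bA_Q$ and $Y\to\bA_R$ is the same datum as a map $W\to Z$. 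This is the familiar description of fibre products of charted log schemes (see \cite[III, \S2]{Ogus}); it shows $X\times_S Y$ is étale-locally the scheme $\underline X\times_{\underline S}\underline Y$ with chart $Q'$, hence quasi-coherent.

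For case~\labelcref{lemitem:qcoh fibre product:over fine}: $S$ being fine, étale-locally it has a chart $P\to\cM(S)$ by a fine (in particular finitely generated) monoid, and $X$, $Y$ being integral are quasi-coherent, so \cref{lem:Beilinson chart lemma} provides, étale-locally on $X$ and on $Y$, compatible charts $P\to Q$ for $f$ and $P\to R$ for $g$. We are back in the situation of~\labelcref{lemitem:qcoh fibre product:charts}, so $X\times_S Y$ is quasi-coherent. The only real content is the identification in~\labelcref{lemitem:qcoh fibre product:charts} of the log fibre product with the stated charted scheme; this is standard bookkeeping with universal properties, and I expect no genuine obstacle --- cases~\labelcref{lemitem:qcoh fibre product:strict} and~\labelcref{lemitem:qcoh fibre product:over fine} are then, respectively, a trivial and a Beilinson-enabled instance of it.
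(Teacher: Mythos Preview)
Your proposal is correct and follows essentially the same approach as the paper: case~\labelcref{lemitem:qcoh fibre product:strict} via strictness of a projection, case~\labelcref{lemitem:qcoh fibre product:charts} via the strict map $X\times_S Y\to\bA_{Q\oplus_P R}$, and case~\labelcref{lemitem:qcoh fibre product:over fine} by reducing to~\labelcref{lemitem:qcoh fibre product:charts} through \cref{lem:Beilinson chart lemma}. The paper is terser in~\labelcref{lemitem:qcoh fibre product:charts}, simply asserting that $X\times_S Y\to\bA_Q\times_{\bA_P}\bA_R=\bA_{Q\oplus_P R}$ is strict (since the fibre product in log schemes exists in general and strictness is stable under fibre products), whereas you spell out the universal property verification; but the content is the same.
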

\begin{proof}
    \labelcref{lemitem:qcoh fibre product:strict}:
    Let us assume $f$ is strict. Then the projection $X \times_S Y \to Y$ is also strict and charts for $Y$ pull back to charts for $X \times_S Y$.

    \labelcref{lemitem:qcoh fibre product:charts}: 
    The fibre product can be constructed \'etale locally and the property of being quasi-coherent is \'etale local as well. So we may assume that both maps have charts with respect to a~common chart of $S$ as in the statement. Then the map 
    \[
    X \times_S Y \la \bA_Q  \times_{\bA_P} \bA_R = \bA_{Q \oplus_P R}
    \]
    is strict showing that $X \times_S Y$ is quasi-coherent.

    \labelcref{lemitem:qcoh fibre product:over fine}:
    The assumptions on charts in \labelcref{lemitem:qcoh fibre product:charts} are met thanks to Beilinson's \cref{lem:Beilinson chart lemma}.  
\end{proof}

\begin{rmk} \label{rmk:saturation-integralization}
The inclusions of saturated log schemes into integral log schemes and further to quasi-coherent log schemes admit right adjoints. These are the \textbf{integralisation} of $X$ denoted by $X^\inte$ and the \textbf{saturation} denoted by $X^\sat$. 

The construction of saturation uses three steps (and works verbatim for integralisation). First, if $X \to Y$ is strict and $Y^\sat$ exists, then $X^\sat  = X \times_Y Y^\sat$ with the fibre product as log schemes and the projection $X^\sat \to Y^\sat$ is strict. It therefore suffices to construct the saturation \'etale locally on $X$, because the local saturations will then automatically form a descent datum and glue to a~saturation globally (gluing on the level of the underlying schemes is permissible since the morphisms to $X$ are affine). We may thus assume that we have a global chart, and in fact that $X = \bA_P$ with the usual log structure induced by $P$. In this case $X^\sat$ is simply $\bA_{P^\sat}$. 

We note here that the maps $X^\sat\to X^\inte$ and $X^\inte \to X$ as well as their composition have the chart lifting property, essentially by construction.
\end{rmk}

We can now state three cases where saturated fibre products exist for saturated log schemes.

\begin{lem}
\label{lem:saturated fibre product}
    In the category of integral (resp.\ saturated) log schemes the fibre product, denoted by  
    $X \times_S^\inte Y$ (resp.\ $X \times_S^{\sat} Y$), exists if one of the following holds:
    \begin{enumerate}[(1)]
    \item 
    One of the maps $f\colon X \to S$ or $g\colon Y \to S$ is strict: then the fibre product $X \times_S Y$ as log schemes is already integral (resp.\ saturated).
    \item 
    \label{lemitem:saturated fibre product:charts}
    \'Etale locally on $X$, $Y$, and $S$ there exist charts as in 
    \crefpart{lem:existence of qcoh fibre products part 1}{lemitem:qcoh fibre product:charts}.
    \item 
    The log scheme $S$ is a fine (resp.~fs) log scheme. 
\end{enumerate}

In all these cases the integral (resp.\ saturated) fibre product has the form
\begin{align*}
    X \times_S^\inte Y 
    & = (X \times_S Y)^\inte, \\
    X \times_S^{\sat} Y 
    & =     (X \times_S Y)^{\sat}.
\end{align*}
\end{lem}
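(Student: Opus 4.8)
\textit{Proof plan.} The plan is to reduce everything to two earlier results — the quasi-coherence statements of \cref{lem:existence of qcoh fibre products part 1} and the existence of the integralisation/saturation right adjoints from \cref{rmk:saturation-integralization} — after which the argument is purely formal. Recall that the category of all log schemes has fibre products, obtained by endowing $\underline X\times_{\underline S}\underline Y$ with the pushout of the pulled-back pre-log structures (see \cite{Ogus}); write $X\times_S Y$ for the resulting log scheme. Since $X,Y,S$ lie in the category of integral (resp.\ saturated) log schemes, in particular they are integral (resp.\ saturated) log schemes.

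First I would check that in each of the three cases the log scheme $X\times_S Y$ is quasi-coherent: case (1) is \crefpart{lem:existence of qcoh fibre products part 1}{lemitem:qcoh fibre product:strict}; case (2) is \crefpart{lem:existence of qcoh fibre products part 1}{lemitem:qcoh fibre product:charts}; and case (3) is \crefpart{lem:existence of qcoh fibre products part 1}{lemitem:qcoh fibre product:over fine}, applied after observing that an integral (resp.\ saturated) log scheme is in particular integral and that a fine (resp.\ fs) log scheme is in particular fine — so for the saturated fibre product under (3) it is in fact already enough that $S$ be fine. Since $X\times_S Y$ is quasi-coherent, \cref{rmk:saturation-integralization} provides its integralisation $(X\times_S Y)^\inte$ and its saturation $(X\times_S Y)^\sat$, each equipped with a natural morphism to $X\times_S Y$.

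Next I would verify that $(X\times_S Y)^\sat$ is the fibre product in the category of saturated log schemes, the integral case being identical (replace ``saturated'' by ``integral'' and saturation by integralisation throughout). Let $T$ be a saturated log scheme. Using that saturation is right adjoint to the inclusion of saturated log schemes into quasi-coherent log schemes, that quasi-coherent log schemes form a full subcategory of all log schemes, and the universal property of $X\times_S Y$, one gets natural bijections
\[
    \Hom\big(T,(X\times_S Y)^\sat\big)\;=\;\Hom\big(T,X\times_S Y\big)\;=\;\Hom(T,X)\times_{\Hom(T,S)}\Hom(T,Y),
\]
where the first $\Hom$ is taken in the category of saturated log schemes, the remaining ones in the category of all log schemes, and the last term is the set of pairs $(T\to X,\ T\to Y)$ agreeing over $S$. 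Hence $(X\times_S Y)^\sat$ represents on saturated log schemes the functor whose representing object is by definition the saturated fibre product, so $X\times_S^\sat Y$ exists and equals $(X\times_S Y)^\sat$; likewise $X\times_S^\inte Y=(X\times_S Y)^\inte$.

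Finally, in case (1) I would record the sharper assertion. Assume $f\colon X\to S$ is strict, the case of $g$ strict being symmetric. Then the projection $X\times_S Y\to Y$, being a base change of $f$, is strict, so $X\times_S Y$ carries the log structure pulled back from $Y$ along a strict map; as $Y$ is integral (resp.\ saturated) and strict pullback preserves integrality (resp.\ saturatedness, as recalled in \cref{ss:log-sch-prelims}), the log scheme $X\times_S Y$ is already integral (resp.\ saturated), hence coincides with its own integralisation (resp.\ saturation). As for obstacles: there is essentially none in the present statement, since the substantive content — quasi-coherence of the log fibre product and the construction of the adjoints — has already been settled; the only point requiring care is bookkeeping, namely matching the three hypotheses to the three clauses of \cref{lem:existence of qcoh fibre products part 1} and getting the direction of the saturation adjunction right, so that a saturated test object $T$ maps into $(X\times_S Y)^\sat$ exactly as it maps into $X\times_S Y$.
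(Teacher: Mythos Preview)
Your proposal is correct and follows exactly the route the paper takes: in each case \cref{lem:existence of qcoh fibre products part 1} gives quasi-coherence of $X\times_S Y$, after which the claim is immediate from the adjunction in \cref{rmk:saturation-integralization}. The paper's own proof is a single sentence invoking \cref{lem:existence of qcoh fibre products part 1}; you have simply spelled out the formal details (the adjunction argument and the strict case~(1)) that the paper leaves implicit.
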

\begin{proof}
\Cref{lem:existence of qcoh fibre products part 1} shows that in each of the cases $X \times_S Y$ is quasi-coherent. 
\end{proof}
    
We next prove some scheme-theoretic properties of integralisation and saturation. 

\begin{prop}
\label{prop:geometry sat map}
    Let $X$ be a quasi-coherent log scheme. 
    \begin{enumerate}[(1)]
        \item 
        \label{propitem:int map closed imm}
        The morphism of schemes underlying the integralisation map $X^\inte\to X$ is a closed immersion.
        \item 
        \label{propitem:sat map integral}
        The morphism of schemes underlying the saturation map $X^\sat\to X$ is integral.\footnote{It is a bit unfortunate that the term ``integral'' appears in this article in several entirely different meanings: integral scheme, integral morphism of schemes, integral monoid/log scheme, integral morphism of monoids/log schemes. Although the intended meaning should always be clear from context, we nevertheless advise the reader to pay close attention.}
        \item 
        \label{propitem:sat map surj}
        If moreover $X$ is an integral log scheme, then $\nu$ is surjective.
    \end{enumerate}
\end{prop}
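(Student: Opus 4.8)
The plan is to reduce to the universal affine case $X=\bA_P$ using the étale-local construction of integralisation and saturation recalled in \cref{rmk:saturation-integralization}, and then to check each assertion by a direct computation with monoid algebras.

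First I would recall that, étale-locally on $X$, a strict chart $X\to\bA_P$ exists (and one may take $P$ integral if $X$ is an integral log scheme), and then $X^\inte=X\times_{\bA_P}\bA_{P^\inte}$ and $X^\sat=X\times_{\bA_P}\bA_{P^\sat}$; consequently the underlying maps of schemes $\underline{X^\inte}\to\underline X$ and $\underline{X^\sat}\to\underline X$ are the base changes along $\underline X\to\underline{\bA_P}$ of $\underline{\bA_{P^\inte}}\to\underline{\bA_P}$ and $\underline{\bA_{P^\sat}}\to\underline{\bA_P}$ respectively. Since the three properties in play --- being a closed immersion, being an integral morphism of schemes, and being surjective --- are all stable under base change and can be checked étale-locally on the target, it suffices to prove that $\bA_{P^\inte}\to\bA_P$ is a closed immersion, that $\bA_{P^\sat}\to\bA_P$ is integral, and that $\bA_{P^\sat}\to\bA_P$ is surjective when $P$ is integral.

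In this affine situation, for \labelcref{propitem:int map closed imm} I would observe that the map $P\to P^\inte=\im(P\to P^\gp)$ is surjective by definition, hence so is $\bZ[P]\to\bZ[P^\inte]$, and a surjection of rings induces a closed immersion on spectra. For \labelcref{propitem:sat map integral}, since integral morphisms are stable under composition and $\bA_{P^\inte}\to\bA_P$ is a closed immersion (hence integral) by \labelcref{propitem:int map closed imm}, it is enough to show that $\bA_{P^\sat}\to\bA_{P^\inte}$ is integral, i.e.\ that $\bZ[P^\sat]$ is integral over $\bZ[P^\inte]$; but $\bZ[P^\sat]$ is generated over $\bZ[P^\inte]$ by the monomials $e^x$ with $x\in P^\sat$, and each such $x$ satisfies $nx\in P^\inte$ for some $n\geq1$ by the definition of saturation, so $e^x$ is a root of the monic polynomial $T^n-e^{nx}$ over $\bZ[P^\inte]$, and the extension is integral. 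Finally, for \labelcref{propitem:sat map surj}, if $P$ is integral then $P^\inte=P$, so $\bZ[P]\to\bZ[P^\sat]$ is an integral ring extension by \labelcref{propitem:sat map integral}, and it is injective because the inclusion of monoids $P\hookrightarrow P^\sat$ (both sitting inside $P^\gp$) sends the monomial $\bZ$-basis of $\bZ[P]$ injectively into that of $\bZ[P^\sat]$; an injective integral extension of rings induces a surjection on spectra by the lying-over theorem, and surjectivity of a morphism of schemes is étale-local on the target, which concludes the argument.

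None of this is hard; the only point worth watching is keeping straight which monoid operation does what --- $(-)^\inte$ produces a \emph{surjection} of monoids, which is what gives the closed immersion in \labelcref{propitem:int map closed imm}, whereas $P\hookrightarrow P^\sat$ is merely an \emph{injection}, which is what gives surjectivity of spectra in \labelcref{propitem:sat map surj} --- together with the routine verification that the reduction to $X=\bA_P$ is legitimate, i.e.\ that each scheme-theoretic property involved is étale-local on the target and stable under base change.
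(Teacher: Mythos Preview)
Your reduction to $X=\bA_P$ and your arguments for \labelcref{propitem:int map closed imm} and \labelcref{propitem:sat map integral} are essentially identical to the paper's (the paper argues integrality of $\bZ[P]\to\bZ[P^\sat]$ directly rather than factoring through $\bZ[P^\inte]$, but this is cosmetic).

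For \labelcref{propitem:sat map surj} you take a genuinely different and shorter route: you observe that when $P$ is integral the map $\bZ[P]\to\bZ[P^\sat]$ is an \emph{injective} integral ring homomorphism and then invoke lying-over. The paper instead proves surjectivity by hand, showing that every monoid homomorphism $\vtheta\colon P\to k$ into an algebraically closed field extends to $P^\sat$: it takes the face $F=\vtheta^{-1}(k^\times)$, its unique saturated extension $F'\subseteq P^\sat$, extends $F\to k^\times$ to $(F')^\gp\to k^\times$ using that $k^\times$ is divisible, and sends $P^\sat\setminus F'$ to zero. Your argument is cleaner and needs no computation; the paper's has the mild advantage of being self-contained at the level of monoids and of making the role of the face structure (and of \cref{rmk:spec of saturation map}) visible, but both are correct.
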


\begin{proof}
The scheme-theoretic properties in these assertions are all stable under base change and \'etale local on the target. By construction of integralisation and saturation, it is therefore enough to consider $X = \bA_P$ for a monoid $P$, i.e.\ the morphisms
\[ 
    \bA_{P^\sat} \la \bA_{P^\inte} \la \bA_P
\]
Since for every monoid $P$, the map $P\to P^\inte$ is surjective, the map $\bA_{P^\inte}\to \bA_P$ is a closed immersion, showing \labelcref{propitem:int map closed imm}. For \labelcref{propitem:sat map integral}, we note that for every $q\in P^\sat$ we have $nq = \iota(p)$ for some $p\in P$ and some $n \in \bN$, where $\iota\colon P\to P^\sat$ is the canonical map. This means that the corresponding element $q$ of $\bZ[P^\sat]$ satisfies the integral equation $T^n = \iota(p)$. 

To show \labelcref{propitem:sat map surj}, it is enough to prove that every homomorphism $\vtheta\colon P\to k$ to an algebraically closed field extends to $P^\sat$. The face $F = \vtheta^{-1}(k^\times)$ of $P$ has a unique extension to a face $F' \subseteq P^\sat$, see \cref{rmk:spec of saturation map}. Moreover, since $P$ is integral, the map $F\hookrightarrow F'$ is injecitve, and hence by \cref{lem:injective-implies-gp-injective} also $F^\gp\hookrightarrow (F')^\gp$ is injective. The map $F\to k^\times$ extends uniquely to $F^\gp$. Since $k^\times$ is divisible and hence an injective object in the category of abelian groups, there exists a $\vtheta'\colon (F')^\gp\to k^\times$ with $\vtheta'|_F=\vtheta|_F$. We then extend $\vtheta'|_{F'}$ to $\vtheta'\colon P^\sat\to k$ by sending $P^\sat\setminus F'$ to zero.
\end{proof}

%---------------------------------------
\subsection{Prelog rings}
%---------------------------------------

In this subsection we discuss prelog rings and the associated log schemes. 

A {\bf prelog ring} is a monoid homomorphism $P\to A$ from a commutative monoid $P$ (written additively) to the underlying multiplicative monoid of a commutative ring $A$ (equivalently a ring homomorphism $\bZ[P]\to A$). 
Prelog rings form a category in the obvious way. This category has filtered colimits and is generated under filtered colimits by its compact objects, which are precisely the prelog rings $P\to A$ where $P$ is finitely presented and $A$ is of finite type over $\bZ$ (use \cref{prop:fp-conditions} which implies that the compact monoids are the finitely presented ones).  

A prelog ring $P\to A$ is {\bf saturated} if $P$ is saturated. The inclusion of saturated prelog rings into prelog rings admits a left adjoint, which is given by
\[ 
    (P\to A)^\sat = \big(P^\sat \to A\otimes_{\bZ[P]} \bZ[P^\sat]\big).
\]
Indeed, given a map $(P\to A)\to (Q\to B)$ with $Q$ saturated, we get a unique factorisation $P\to P^\sat\to Q$, and then a unique $A\otimes_{\bZ[P]} \bZ[P^\sat]\to B$. The category of saturated prelog rings has small colimits, with pushouts given by
\[ 
    (P\to A)\otimes_{(P_0\to A_0)} (Q_0\to B_0) = ((P\oplus_{P_0} Q_0)^\sat \to A\otimes_{A_0} B_0,
\]
and is generated under filtered colimits by its compact objects which are precisely the $(P\to A)$ with $P$ fs and $A$ of finite type over $\bZ$ (see \cref{cor:prelog-comp-obj} below).

\begin{prop} 
\label{prop:compact-saturated-prelog-ring}
    Let $(P\to A)\to (Q\to B)$ be a morphism of saturated prelog rings. Then $(Q\to B)$ is a compact object in the category of saturated prelog rings over $(P\to A)$ if and only if
    \begin{enumerate}[(i)]
        \item 
        \label{lemitem:cspl1}
        the homomorphism of monoids $P\to Q$ is sfp, and
        \item 
        \label{lemitem:cspl2}
        the induced homomorphism of rings $A\otimes_{\bZ[P]}\bZ[Q]\to B$ is of finite presentation.
    \end{enumerate}
\end{prop}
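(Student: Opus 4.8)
The plan is as follows. Write $A' = A\otimes_{\bZ[P]}\bZ[Q]$, with its tautological prelog structure $Q\to\bZ[Q]\to A'$; the given morphism then factors canonically as $(P\to A)\to(Q\to A')\to(Q\to B)$, where $A'\to B$ is induced by $A\to B$ and by $\bZ[Q]\to B$. Throughout, let $\mathbf{PL}$ and $\mathbf{SPL}$ denote the categories of prelog rings and of saturated prelog rings, and $\mathbf{PL}_{R_0/}$, $\mathbf{SPL}_{R_0/}$ the comma categories under an object $R_0$; all of these have filtered colimits, computed coordinate by coordinate, and the evident inclusions among them preserve filtered colimits. A direct check shows that the functor $\iota$ from the category of $A'$-algebras to $\mathbf{SPL}_{(Q\to A')/}$ sending $C$ to $(Q\to C)$ (with structure map the identity on $Q$ and $A'\to C$ on rings) is fully faithful and is left adjoint to the functor $(R\to C)\mapsto C$ (the underlying $A'$-algebra), which preserves filtered colimits; consequently $\iota$ preserves compact objects and reflects them.

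The heart of the matter is the following claim: \emph{if $P\to Q$ is sfp, then $(Q\to A')$ is a compact object of $\mathbf{SPL}_{(P\to A)/}$}. To prove it, I would first invoke \crefpart{prop:sfp-conditions}{propitem:sfp-bc} to write $Q=(P\oplus_{P_0}Q_0)^\sat$ for a morphism $P_0\to Q_0$ of fs monoids and a map $P_0\to P$. Since fs monoids are finitely presented \cite[Theorem~I~2.1.7]{Ogus} and $\bZ[-]$ of a finitely generated monoid is of finite type over $\bZ$, the prelog rings $(P_0\to\bZ[P_0])$ and $(Q_0\to\bZ[Q_0])$ are compact in $\mathbf{PL}$, so $(Q_0\to\bZ[Q_0])$ is compact in $\mathbf{PL}_{(P_0\to\bZ[P_0])/}$ (argue as in \cref{lem:maps between fp is fp}). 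Base change along $(P_0\to\bZ[P_0])\to(P\to A)$ preserves compact objects and yields $\bigl(P\oplus_{P_0}Q_0\to A\otimes_{\bZ[P]}\bZ[P\oplus_{P_0}Q_0]\bigr)$; applying the relative saturation functor $\mathbf{PL}_{(P\to A)/}\to\mathbf{SPL}_{(P\to A)/}$ — a left adjoint whose right adjoint, the inclusion, preserves filtered colimits, so that it too preserves compact objects — and simplifying via $(P\oplus_{P_0}Q_0)^\sat=Q$ produces exactly $(Q\to A')$, which is therefore compact. It is essential to pass through the \emph{prelog-ring} saturation here, rather than through the pushout in $\mathbf{SPL}$ (whose ring coordinate $A\otimes_{\bZ[P]}\bZ[P\oplus_{P_0}Q_0]$ is in general not finitely presented over $A'$), so as not to presuppose the description of compact objects of $\mathbf{SPL}$ — that description is \cref{cor:prelog-comp-obj}, a corollary of the present proposition, and avoiding this circularity is the main thing to be careful about.

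Given the claim, the ``if'' direction goes as follows. Assuming \labelcref{lemitem:cspl1} and \labelcref{lemitem:cspl2}: the claim gives that $(Q\to A')$ is compact in $\mathbf{SPL}_{(P\to A)/}$, while \labelcref{lemitem:cspl2} together with the ring-theoretic analogue of \cref{lem:fp-AdamekRosicky} gives that $B$ is a compact $A'$-algebra, hence $(Q\to B)=\iota(B)$ is compact in $\mathbf{SPL}_{(Q\to A')/}$. Now $\mathbf{SPL}_{(Q\to A')/}=\bigl(\mathbf{SPL}_{(P\to A)/}\bigr)_{(Q\to A')/}$, and an elementary argument (fibres commute with filtered colimits of sets) shows that a compact object of $\mathcal{C}_{X/}$, for $X\in\mathcal{C}$ compact, is compact in $\mathcal{C}$; applying this with $\mathcal{C}=\mathbf{SPL}_{(P\to A)/}$ and $X=(Q\to A')$ gives that $(Q\to B)$ is compact in $\mathbf{SPL}_{(P\to A)/}$.

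For the ``only if'' direction, assume $(Q\to B)$ is compact in $\mathbf{SPL}_{(P\to A)/}$. For \labelcref{lemitem:cspl1}: the forgetful functor $\mathbf{SPL}_{(P\to A)/}\to\Sat_{P/}$, $(R\to C)\mapsto R$, is left adjoint to the functor $R\mapsto(R\to 0)$, with $0$ the zero ring, and this right adjoint preserves filtered colimits, so $Q$ is compact in $\Sat_{P/}$, i.e.\ $P\to Q$ is sfp by \crefpart{prop:sfp-conditions}{propitem:sfp-cptobj}. For \labelcref{lemitem:cspl2}: now the claim applies, so $(Q\to A')$ is compact in $\mathbf{SPL}_{(P\to A)/}$; since $(Q\to A')$ and $(Q\to B)$ are both compact and related by the canonical morphism, $(Q\to B)$ is compact in $\bigl(\mathbf{SPL}_{(P\to A)/}\bigr)_{(Q\to A')/}=\mathbf{SPL}_{(Q\to A')/}$ (the same set-theoretic bookkeeping in the other direction: a map of compact objects of $\mathcal{C}$ exhibits its target as a compact object of the comma category under its source). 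Finally, since $(Q\to B)=\iota(B)$ with $\iota$ fully faithful and preserving filtered colimits, $B$ is a compact $A'$-algebra, which is \labelcref{lemitem:cspl2}. Apart from the appeal to \cref{prop:sfp-conditions} inside the claim, every step is routine adjunction and compact-object bookkeeping.
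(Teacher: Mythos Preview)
Your proof is correct. The paper takes a more direct and elementary route: for the forward direction it simply unpacks the definition of compactness and factors a given map $(Q\to B)\to\varinjlim(M_i\to R_i)$ in two steps---first the monoid component through some $M_{i_0}$ using \labelcref{lemitem:cspl1}, then the ring component through some $R_i$ (for $i\geq i_0$) using \labelcref{lemitem:cspl2}---without introducing the intermediate object $(Q\to A')$, the functor $\iota$, or any comma-category facts. For the backward direction the paper does essentially what you do, but phrased concretely: it tests compactness against the specific filtered systems $(M_i\to 0)$ and $(Q\to R_i)$ rather than naming the adjunctions that make these tests work.

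Your approach has the virtue of isolating the clean auxiliary statement that $(Q\to A')$ is compact in $\mathbf{SPL}_{(P\to A)/}$ whenever $P\to Q$ is sfp, and your care to route its proof through $\mathbf{PL}$ and the saturation left adjoint---rather than through a pushout in $\mathbf{SPL}$, which would tacitly presuppose \cref{cor:prelog-comp-obj}---is well placed. The paper's argument, on the other hand, is shorter and needs none of the bookkeeping about adjoints preserving or reflecting compact objects; the two-step factorization is a couple of lines.
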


\begin{defi} \label{def:sfp-prelog-ring-map}
    A morphism of saturated prelog rings $(P\to A)\to (Q\to B)$ is {\bf sfp} if the equivalent conditions of \cref{prop:compact-saturated-prelog-ring} are satisfied.
\end{defi}

\begin{proof}[Proof of \cref{prop:compact-saturated-prelog-ring}]
The proof is entirely straightforward; we include it for completeness. Suppose \labelcref{lemitem:cspl1} and \labelcref{lemitem:cspl2} hold, let $(M\to R) = \varinjlim (M_i\to R_i)$ be a filtered colimit of saturated prelog rings over $(P\to A)$ and let $(Q\to B)\to (M\to R)$ be a morphism over $(P\to A)$. 

Since $P\to Q$ is sfp by \labelcref{lemitem:cspl1}, $Q$ is a compact object in the category of saturated monoids over $P$ (\cref{prop:sfp-conditions}), and hence the morphism $Q\to M$ factors as $Q\to M_{i_0}\to M$ for some index $i_0$. 
Then $(R_i)_{i\geq i_0}$ becomes a system of $A\otimes_{\bZ[P]}\bZ[Q]$-algebras. 

For any prelog ring $(M'\to R')$ over $(P\to A)$, extending a given homomorphism $Q\to M'$ over $P$ to a morphism of prelog rings $(Q\to B)\to (M'\to R')$ over $(P\to A)$ is the equivalent to finding a homomorphism $B\to R'$ of $A\otimes_{\bZ[P]}\bZ[Q]$-algebras. By condition \labelcref{lemitem:cspl2}, $B$ is a compact object in the category of $A\otimes_{\bZ[P]}\bZ[Q]$-algebras, and hence the $A\otimes_{\bZ[P]}\bZ[Q]$-algebra homomorphism $B\to R$ above factors through $B\to R_i$ for some index $i\geq i_0$, providing a factorisation 
\[
    (Q\to B)\la (M_i\to R_i)\la (M\to R).
\]
Thus $(Q\to B)$ is compact.

\smallskip

For the converse direction we suppose that $(Q\to B)$ is compact. To show \labelcref{lemitem:cspl1}, let $Q \to M$ be a morphism over $P$ with $M = \varinjlim M_i$ a filtered colimit of saturated monoids over $P$. Then $(M\to 0)=\varinjlim (M_i\to 0)$ as saturated prelog rings over $(P \to A)$. Therefore the map 
\[
    (Q\to B) \la (M\to 0)
\]
factors through $(M_i\to 0)$ for some $i$, and then $Q\to M$ factors as $Q\to M_i\to M$ over $P$, so that $Q$ is compact. By 
\cref{prop:sfp-conditions} property \labelcref{lemitem:cspl1} follows.

To show \labelcref{lemitem:cspl2}, let $R = \varinjlim R_i$ be a filtered colimit of $A\otimes_{\bZ[P]}\bZ[Q]$-algebras, and let $B\to R$ be an $A\otimes_{\bZ[P]}\bZ[Q]$-algebra homomorphism. Then $(Q\to R)=\varinjlim (Q\to R_i)$ in saturated prelog rings over $(P\to A)$, and thus the map $(Q\to B)\to (Q\to R)$ factors through $(Q\to R_i)$ for some $i$, so that $B\to R$ factors through $R_i$. 
\end{proof}

The following corollary is immediate.

\begin{cor} \label{cor:prelog-comp-obj}
    A saturated prelog ring $(P \to A)$ is a compact object in the category of saturated prelog rings if and only if  $P$ is an fs monoid and $A$ is of finite type over $\bZ$.
\end{cor}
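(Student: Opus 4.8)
The plan is to deduce this immediately from \cref{prop:compact-saturated-prelog-ring} by taking the base prelog ring to be the initial object. First I would observe that $(0 \to \bZ)$ is initial in the category of saturated prelog rings (a map out of it to $(Q\to B)$ is just the structure map $\bZ[Q]\to B$, which is determined), so the forgetful functor from saturated prelog rings over $(0\to\bZ)$ to all saturated prelog rings is an equivalence and in particular preserves and reflects compactness. Hence $(P\to A)$ is compact in the category of saturated prelog rings if and only if it is compact as an object over $(0\to\bZ)$.

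Next I would apply \cref{prop:compact-saturated-prelog-ring} to the canonical morphism $(0\to\bZ)\to(P\to A)$. It says that $(P\to A)$ is compact over $(0\to\bZ)$ precisely when (i) the monoid map $0\to P$ is sfp and (ii) the induced ring map $\bZ\otimes_{\bZ[0]}\bZ[P]=\bZ[P]\to A$ is of finite presentation. Now I would identify each of these conditions with the asserted one. For (i): by \cref{cor:fs_compact} (equivalently by \cref{prop:sfp-conditions}, since $0\to P$ is trivially injective) the map $0\to P$ is sfp if and only if $P$ is fs. For (ii): assuming $P$ is fs, $P$ is a finitely presented monoid, so $\bZ[P]$ is a finitely presented $\bZ$-algebra; by transitivity of finite presentation for ring maps, $\bZ[P]\to A$ is of finite presentation if and only if $A$ is of finite presentation over $\bZ$, and since $\bZ$ is noetherian this is equivalent to $A$ being of finite type over $\bZ$. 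Combining the two equivalences yields the corollary.

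There is essentially no obstacle; the only point requiring a moment's care is the last step, namely going from ``of finite presentation over $\bZ[P]$'' to ``of finite type over $\bZ$'', which relies on the fact that $\bZ[P]$ is itself of finite presentation over $\bZ$ (because the fs monoid $P$ is finitely presented) together with the noetherianity of $\bZ$.
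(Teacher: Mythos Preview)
Your proof is correct and follows exactly the approach the paper has in mind: the paper simply records the corollary as ``immediate'' from \cref{prop:compact-saturated-prelog-ring}, and you have spelled out that immediate deduction (specialize to the initial object $(0\to\bZ)$, then identify the two conditions using \cref{cor:fs_compact} and noetherianity of $\bZ$). There is nothing to add.
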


\begin{cor}
    \label{cor:compose sfp monoid mor}
    The composition of sfp morphisms of prelog rings is again sfp.
\end{cor}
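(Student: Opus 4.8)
The plan is to check the two explicit conditions of \cref{prop:compact-saturated-prelog-ring} for the composite $(P\to A)\to(Q\to B)\to(R\to C)$ of two sfp morphisms of saturated prelog rings, thereby reducing the assertion to stability properties already at hand: stability of sfp maps of monoids under composition (\cref{cor:sfp stable composition}) and stability of finite presentation of ring homomorphisms under composition and base change.

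First I would unwind the hypotheses. By \cref{prop:compact-saturated-prelog-ring}, sfp-ness of the first morphism means that $P\to Q$ is an sfp map of monoids and that $A\otimes_{\bZ[P]}\bZ[Q]\to B$ is a finitely presented ring homomorphism, and likewise sfp-ness of the second means that $Q\to R$ is sfp and $B\otimes_{\bZ[Q]}\bZ[R]\to C$ is finitely presented. The monoid half of the condition for the composite, namely that $P\to R$ is sfp, then follows immediately from \cref{cor:sfp stable composition}.

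For the ring half I would use transitivity of base change to identify $A\otimes_{\bZ[P]}\bZ[R]$ with $\big(A\otimes_{\bZ[P]}\bZ[Q]\big)\otimes_{\bZ[Q]}\bZ[R]$ along $\bZ[P]\to\bZ[Q]\to\bZ[R]$. Base changing the finitely presented map $A\otimes_{\bZ[P]}\bZ[Q]\to B$ along $\bZ[Q]\to\bZ[R]$ shows that $A\otimes_{\bZ[P]}\bZ[R]\to B\otimes_{\bZ[Q]}\bZ[R]$ is finitely presented, and composing with the finitely presented map $B\otimes_{\bZ[Q]}\bZ[R]\to C$ yields that $A\otimes_{\bZ[P]}\bZ[R]\to C$ is finitely presented, which is condition \labelcref{lemitem:cspl2} for the composite. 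Alternatively, one can argue purely formally, exactly as in the proof of \cref{cor:sfp stable composition}: $(R\to C)$ is a compact object of the category of saturated prelog rings under $(Q\to B)$, which is in turn a compact object of that under $(P\to A)$, and compactness is transitive along compact morphisms in any category admitting filtered colimits. There is no genuine obstacle here; the only point requiring a little care is the bookkeeping of the iterated base change $\bZ[P]\to\bZ[Q]\to\bZ[R]$ and the compatibility of the various tensor products, which is routine.
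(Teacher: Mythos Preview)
Your proposal is correct. The paper's own proof is the one-line formal argument you mention as an alternative at the end: it simply invokes the characterisation of sfp morphisms as compact objects in the slice category of saturated prelog rings (\cref{prop:compact-saturated-prelog-ring}) and uses transitivity of compactness, exactly as in \cref{cor:sfp stable composition}. Your primary route---checking conditions \labelcref{lemitem:cspl1} and \labelcref{lemitem:cspl2} directly via \cref{cor:sfp stable composition} and the standard base-change/composition properties of finitely presented ring maps---is equally valid and only marginally longer; it has the minor advantage of being self-contained at the level of the explicit criteria, while the paper's argument is shorter but relies on the abstract equivalence already established.
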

\begin{proof}
    This is formal using the characterisation as compact objects. 
\end{proof}

\begin{prop}
\label{prop:colimit descent for sfp prelog maps}
    Let $P \to A = \varinjlim_i (P_i \to A_i)$ for a filtered direct system of prelog rings, and let 
    $(P \to A) \to (Q \to B)$ be an sfp morphism of prelog rings. 
    
    Then, for large enough $i$, there is an sfp morphism $(P_i \to A_i) \to (Q_i \to B_i)$  and a cocartesian diagram 
    \[
    \begin{tikzcd}
        (Q \to B)  
        & \ar[l]  (Q_i \to B_i)   \\
        (P \to A) \ar[u] 
        & \ar[l] (P_i \to A_i) \ar[u]
        \end{tikzcd}    
    \]
    in saturated prelog rings.
\end{prop}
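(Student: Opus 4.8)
The plan is to split an sfp morphism of saturated prelog rings into its two constituents according to \cref{prop:compact-saturated-prelog-ring}: an sfp morphism $P\to Q$ of underlying monoids, and a finitely presented ring map $A\otimes_{\bZ[P]}\bZ[Q]\to B$. I would descend these separately, the monoid part first and the ring part \emph{relative to} it, and then check that the resulting square is cocartesian. (We may and do assume each $(P_i\to A_i)$ saturated — as is implicit, since otherwise the displayed square would not lie in saturated prelog rings; alternatively, replace the system by that of its saturations, which has the same colimit because saturation is a left adjoint and $(P\to A)$ is already saturated.)

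First I would descend the monoid. As $P=\varinjlim_i P_i$ and $P\to Q$ is sfp by \crefpart{prop:compact-saturated-prelog-ring}{lemitem:cspl1}, \crefpart{cor:sfp_comes_from_finite_level}{coritem:sfp-b} yields an index $i_1$ and an sfp morphism of saturated monoids $P_{i_1}\to Q_{i_1}$ with $Q=(P\oplus_{P_{i_1}}Q_{i_1})^\sat$. For $i\geq i_1$ set $Q_i=(P_i\oplus_{P_{i_1}}Q_{i_1})^\sat$; each $P_i\to Q_i$ is then sfp by \cref{lem:sfp stable under bc}, and transitivity of saturated pushouts gives $Q=\varinjlim_{i\geq i_1}Q_i$ as well as $Q=(P\oplus_{P_i}Q_i)^\sat$ for every $i\geq i_1$.

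Next I would descend the ring. Since $\bZ[-]$ is left adjoint to the forgetful functor from rings to monoids and tensor products commute with filtered colimits, one has $A\otimes_{\bZ[P]}\bZ[Q]=\varinjlim_{i\geq i_1}(A_i\otimes_{\bZ[P_i]}\bZ[Q_i])$. The structure map $A\otimes_{\bZ[P]}\bZ[Q]\to B$ is finitely presented by \crefpart{prop:compact-saturated-prelog-ring}{lemitem:cspl2}, so the usual approximation for finitely presented algebras over a filtered colimit of rings (see e.g.\ \cite[\S~8.8]{EGAIV3}) provides an index $i_2\geq i_1$ and a finitely presented $(A_{i_2}\otimes_{\bZ[P_{i_2}]}\bZ[Q_{i_2}])$-algebra $B_{i_2}$ with
\[
    B\;\cong\;(A\otimes_{\bZ[P]}\bZ[Q])\otimes_{A_{i_2}\otimes_{\bZ[P_{i_2}]}\bZ[Q_{i_2}]}B_{i_2},
\]
compatibly with transition maps; for $i\geq i_2$ I would put $B_i=(A_i\otimes_{\bZ[P_i]}\bZ[Q_i])\otimes_{A_{i_2}\otimes_{\bZ[P_{i_2}]}\bZ[Q_{i_2}]}B_{i_2}$, so $B_i$ is finitely presented over $A_i\otimes_{\bZ[P_i]}\bZ[Q_i]$ and $B=\varinjlim_{i\geq i_2}B_i$. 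The composite $Q_i\to\bZ[Q_i]\to A_i\otimes_{\bZ[P_i]}\bZ[Q_i]\to B_i$ makes $(Q_i\to B_i)$ a saturated prelog ring and $(P_i\to A_i)\to(Q_i\to B_i)$ an sfp morphism, by \cref{prop:compact-saturated-prelog-ring} again (monoid part sfp by the first step, ring part finitely presented by the second).

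Finally, to see the square with vertices $(Q\to B)$, $(Q_i\to B_i)$, $(P\to A)$, $(P_i\to A_i)$ is cocartesian — i.e.\ $(P\to A)\otimes_{(P_i\to A_i)}(Q_i\to B_i)\cong(Q\to B)$ — it suffices, since $B_i$ is for $i\geq i_2$ a base change of $B_{i_2}$ and pushouts are transitive, to treat $i=i_2$. The saturated pushout of prelog rings is the pushout in prelog rings (underlying monoid $P\oplus_{P_{i_2}}Q_{i_2}$, underlying ring $A\otimes_{A_{i_2}}B_{i_2}$) followed by saturation, so its monoid component is $(P\oplus_{P_{i_2}}Q_{i_2})^\sat=Q$ (first step) and its ring component is $(A\otimes_{A_{i_2}}B_{i_2})\otimes_{\bZ[P\oplus_{P_{i_2}}Q_{i_2}]}\bZ[Q]$. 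Writing $A\otimes_{A_{i_2}}B_{i_2}=(A\otimes_{\bZ[P_{i_2}]}\bZ[Q_{i_2}])\otimes_{A_{i_2}\otimes_{\bZ[P_{i_2}]}\bZ[Q_{i_2}]}B_{i_2}$ and using the identity $(A\otimes_{\bZ[P_{i_2}]}\bZ[Q_{i_2}])\otimes_{\bZ[P\oplus_{P_{i_2}}Q_{i_2}]}\bZ[Q]=A\otimes_{\bZ[P]}\bZ[Q]$, one reassociates the iterated tensor product to $(A\otimes_{\bZ[P]}\bZ[Q])\otimes_{A_{i_2}\otimes_{\bZ[P_{i_2}]}\bZ[Q_{i_2}]}B_{i_2}=B$, compatibly with the structure maps, so the square is cocartesian; enlarging $i_2$ gives the statement for all large $i$. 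I do not expect a deep obstacle: the only fiddly point is this last reassociation of base-changed tensor products — making sure the module structures match so that it is legitimate — and the one structural subtlety, which the whole argument is arranged to accommodate, is that the ring must be descended over the correct base $A_i\otimes_{\bZ[P_i]}\bZ[Q_i]$ (not merely over $A_i$), so that \cref{prop:compact-saturated-prelog-ring} applies to the descended datum and the pushout square closes up.
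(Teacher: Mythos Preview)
Your argument is correct and follows essentially the same strategy as the paper: descend the sfp monoid map first (the paper does this via \cref{prop:sfp-conditions} and compactness of the fs monoid $P'$, you via the packaged \cref{cor:sfp_comes_from_finite_level}, which amounts to the same thing), then descend the finitely presented ring map over the resulting filtered colimit $\varinjlim A_i\otimes_{\bZ[P_i]}\bZ[Q_i]$. Your final paragraph verifying cocartesianness is more explicit than the paper's, which simply asserts it; the reassociation you worry about is indeed routine once the module structures are tracked, as you do.
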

\begin{proof}
    By \cref{prop:sfp-conditions} there exists a pushout square 
    \[
     \begin{tikzcd}
         Q          & Q'   \ar[l]  \\
         P  \ar[u]  & P'   \ar[l]  \ar[u]
     \end{tikzcd}
    \]
    in the category of saturated monoids where $P'\to Q'$ is a morphism of fs monoids. Since $P'$ is a compact object in saturated monoids by \cref{cor:fs_compact}, the map $P' \to P = \varinjlim P_i$ factors as $P' \to P_i$ for $i$ large enough. We fix an initial choice $i_0$ and restrict the filtered system to all indices $i \geq i_0$. Thus the system $(P_i)$ restricts to a system of monoids over $P'$. For all such $i$ we set $Q_i = (P_i \oplus_{P'} Q')^\sat$ so that $Q = \varinjlim_{i \geq i_0} Q_i$. The maps $P_i \to Q_i$ are sfp by \cref{lem:sfp stable under bc} \cref{prop:fs version of sorite for sfp}.

    By assumption, the map 
    \[
        \varinjlim_{i \geq i_0} A_i \otimes_{\bZ[P_i]} \bZ[Q_i] = A \otimes_{\bZ[P]} \bZ[Q] \to B
    \]
    is of finite presentation. Hence, for again large enough $i$, the $A \otimes_{\bZ[P]} \bZ[Q]$-algebra $B$ descends to an $A_i \otimes_{\bZ[P_i]} \bZ[Q_i]$-algebra $B_i$ of finite presentation. This exhibits the desired sfp morphism $(P_i \to A_i) \to (Q_i \to B_i)$ of prelog rings.
\end{proof}

For a prelog ring $(P\to A)$, we define 
\[
    \Spec(P\to A)
\]
to be the scheme $X= \Spec(A)$ endowed with the log structure induced by $P\to A = \Gamma(X, \cO_X)$. For example, we have $\bA_P = \Spec(P \to \bZ[P])$.
If $Y$ is a log scheme, then $(\cM_Y(Y)\to \cO_Y(Y))$ is a prelog ring. These constructions define functors between the category of prelog rings and the opposite of the category of log schemes. 

\begin{lem}
\label{lem:maps to affine charted}
    We have an adjunction 
    \begin{equation}
        \label{eq:Spec prelog adjunction}
        \Hom(Y, \Spec(P\to A)) \simeq \Hom((P\to A), (\cM_Y(Y)\to \cO_Y(Y))).
    \end{equation}
\end{lem}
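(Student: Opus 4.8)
The plan is to verify the adjunction in \labelcref{eq:Spec prelog adjunction} directly by chasing through the definitions, splitting a morphism of log schemes $Y \to \Spec(P \to A)$ into its underlying morphism of schemes and the morphism of log structures, and matching each piece against the data of a morphism of prelog rings $(P \to A) \to (\cM_Y(Y) \to \cO_Y(Y))$.

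First I would recall the two basic adjunctions in play. For schemes, $\Hom(Y, \Spec A) \simeq \Hom(A, \cO_Y(Y))$ (the global-sections/$\Spec$ adjunction). For log structures on a fixed scheme $X = \underline{Y}$, the functor sending a prelog structure $Q \to \cO_X$ to its associated log structure is left adjoint to the forgetful functor from log structures to prelog structures; concretely, a morphism of log structures $(P \to A)^{\mathrm{a}} \to \cM_Y$ covering a ring map $A \to \cO_Y(Y)$ is the same as a monoid homomorphism $P \to \cM_Y(Y)$ compatible with the structure maps to $\cO_Y(Y)$. Here $(P \to A)^{\mathrm{a}}$ denotes the log structure on $\Spec A$ associated to $P \to A$, which is exactly the log structure of $\Spec(P \to A)$.

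The main step is then to assemble these. A morphism $f \colon Y \to \Spec(P \to A)$ consists of a morphism of schemes $\underline{f} \colon \underline{Y} \to \Spec A$ together with a morphism of log structures $f^{\flat} \colon \underline{f}^{*}\bigl((P\to A)^{\mathrm{a}}\bigr) \to \cM_Y$. By the scheme-level adjunction, $\underline{f}$ corresponds to a ring homomorphism $A \to \cO_Y(Y)$. Given this, pulling back the prelog structure $P \to A$ along $A \to \cO_Y(Y)$ gives a prelog structure $P \to \cO_Y(Y)$ on $\underline{Y}$ whose associated log structure is $\underline{f}^{*}\bigl((P\to A)^{\mathrm{a}}\bigr)$ (pullback of log structures commutes with ``associated log structure''). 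Hence $f^{\flat}$ is, by the prelog/log adjunction on $\underline{Y}$, the same datum as a monoid homomorphism $P \to \cM_Y(Y)$ making the square with $A \to \cO_Y(Y)$ and the structure maps $P \to A$, $\cM_Y(Y) \to \cO_Y(Y)$ commute. But a commuting such square is precisely a morphism of prelog rings $(P \to A) \to (\cM_Y(Y) \to \cO_Y(Y))$. Tracking through, these bijections are natural in $Y$ and in $(P \to A)$, which gives the claimed adjunction.

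The only mildly delicate point — and the step I would be most careful about — is the compatibility of pullback of log structures with the ``associated log structure'' functor, i.e.\ that $\underline{f}^{*}\bigl((P\to A)^{\mathrm{a}}\bigr)$ is canonically the log structure associated to the pulled-back prelog structure $(P \to \cO_Y(Y))$; this is standard (see \cite[\S III.1]{Ogus}) but it is what makes the two halves of the argument fit together. Everything else is a formal manipulation of universal properties, so I would present the proof as a short chain of natural bijections rather than a computation.
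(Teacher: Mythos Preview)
Your argument is correct. The one point worth tightening is the passage from the sheaf-level prelog/log adjunction to the statement about global sections: a morphism from the log structure associated to the constant prelog structure $\underline{P}_Y \to \cO_Y$ into $\cM_Y$ really is determined by a monoid map $P \to \cM_Y(Y)$, but this is precisely the content of \cite[Proposition~III~1.2.9]{Ogus} (i.e.\ $\Hom(Y,\bA_P)=\Hom(P,\cM_Y(Y))$), so you are effectively reproving that result along the way. This is fine, but worth flagging.

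The paper takes a slightly different route: rather than decomposing a morphism of log schemes into its underlying scheme map and its log-structure map, it exploits that $\Spec(P\to A)\to\bA_P$ is strict, hence $\Spec(P\to A)\simeq \bA_P\times_{\Spec(\bZ[P])}\Spec(A)$, and then applies $\Hom(Y,-)$ to this fibre product. This reduces immediately to the already-known bijection $\Hom(Y,\bA_P)=\Hom(P,\cM_Y(Y))$ together with the scheme-level adjunction for $\Spec(A)$. The paper's argument is thus shorter and more modular, treating the $\bA_P$ case as a black box; your argument is more elementary and self-contained, going back to the definition of a morphism of log schemes and the prelog/log adjunction directly. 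Both are natural and both work.
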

\begin{proof}
    By \cite[Proposition~III~1.2.9]{Ogus}, we have
    \[ 
        \Hom(Y, \bA_P) = \Hom(P, \cM_Y(Y)).
    \]
    Moreover, for every strict map of log schemes $X'\to X$ we have 
    \[
        X' \simeq X\times_{\underline{X}} \underline{X}'.
    \]
    Applying this to the map $\Spec(P\to A)\to \bA_P$ we obtain
    \begin{align*} 
        \Hom(Y, \Spec(P\to A)) &= \Hom(Y, \bA_P)\times_{\Hom(Y,\Spec(\bZ[P]))} \Hom(Y, \Spec(A)) \\
        &= \Hom(P, \cM_Y(Y))\times_{\Hom(P, \cO_Y(Y))} \Hom(A, \cO_Y(Y)) \\
        &=  \Hom((P\to A), (\cM_Y(Y)\to \cO_Y(Y))).\qedhere
    \end{align*}
\end{proof}
    
In particular, if $(P\to A)=\varinjlim\, (P_i\to A_i)$ where $(P_i\to A_i)$ is a filtered colimit of prelog rings, then we have 
\[
    \Spec(P\to A) = \varprojlim \Spec(P_i\to A_i)
\]
is a cofiltered limit. If we choose $P_i \to A_i$, as we always can, to be compact in prelog rings, then we get the claim of the following lemma.

\begin{lem}  
\label{lem:prelogring_colimit_fs}
    Let $X$ be an affine saturated log scheme with a global chart. Then $X$ is the cofiltered inverse limit $\varprojlim \Spec(P_i\to A_i)$ of affine fs log schemes of finite type over $\bZ$.
\end{lem}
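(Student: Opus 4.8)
The plan is to present $X$ as $\Spec$ of a saturated prelog ring and then invoke local finite presentability of the category of saturated prelog rings together with the description of its compact objects in \cref{cor:prelog-comp-obj}. First I would fix a global chart $\nu\colon P\to\cM_X(X)$ for $X$. Since $X$ is saturated, \crefpart{lem:integral and saturated for qcoh log schemes}{lemitem:charts of the natural sort} allows me to replace $P$ by its saturation $P^\sat$, so I may and do assume that $P$ is saturated. Setting $A=\cO_X(X)$, the composite $P\to\cM_X(X)\to A$ is a saturated prelog ring, and, $X$ being affine, the chart $\nu$ exhibits $X$ as the log scheme $\Spec(P\to A)$ by the very definition of a chart (its underlying scheme is $\Spec(A)$, and its log structure is the one associated to $P\to A$).

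Next, since the category of saturated prelog rings admits filtered colimits and is generated under filtered colimits by its compact objects, I would write $(P\to A)=\varinjlim_{i}(P_i\to A_i)$ as a filtered colimit of compact saturated prelog rings. By \cref{cor:prelog-comp-obj}, each $(P_i\to A_i)$ has $P_i$ an fs monoid and $A_i$ a ring of finite type over $\bZ$. Applying $\Spec(-)$ and using that, via the adjunction of \cref{lem:maps to affine charted}, this functor turns filtered colimits of prelog rings into cofiltered limits of log schemes, I obtain
\[
    X \;=\; \Spec(P\to A)\;=\;\varprojlim_{i}\,\Spec(P_i\to A_i).
\]
It then remains to observe that each $\Spec(P_i\to A_i)$ is an affine fs log scheme of finite type over $\bZ$: its underlying scheme $\Spec(A_i)$ is affine and of finite type over $\bZ$ since $A_i$ is, and it carries the global chart $P_i\to A_i=\Gamma(\Spec(A_i),\cO)$ with $P_i$ an fs monoid, which by definition makes it an fs log scheme. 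This gives the claim.

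I do not anticipate a genuine obstacle here: all the real work sits in the cited results. The only points needing a word of care are that the global chart of $X$ may be taken with saturated source (immediate from \cref{lem:integral and saturated for qcoh log schemes}) and that a log scheme admitting a chart by an fs monoid is fs (immediate from the definition of an fs log scheme). If one wants to name a more structural input, it is the fact --- recorded in the discussion introducing \cref{cor:prelog-comp-obj} --- that the category of saturated prelog rings is locally finitely presentable, which is precisely what guarantees the existence of the presentation $(P\to A)=\varinjlim_i(P_i\to A_i)$ by compact objects.
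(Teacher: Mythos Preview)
Your proof is correct and follows essentially the same approach as the paper: write $X=\Spec(P\to A)$ for a saturated prelog ring, express $(P\to A)$ as a filtered colimit of compact saturated prelog rings (which by \cref{cor:prelog-comp-obj} have $P_i$ fs and $A_i$ of finite type over $\bZ$), and apply the adjunction of \cref{lem:maps to affine charted} to obtain the inverse limit presentation. The paper's argument is terser but substantively identical; your only addition is the explicit reduction to a saturated chart via \cref{lem:integral and saturated for qcoh log schemes}, which is harmless.
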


%---------------------------------------
\subsection{Maps of saturated finite presentation}
\label{sec:sfp log maps}
%---------------------------------------

We work in the category of saturated log schemes. 

\begin{defi}
\label{defi:sfp log map}
    Let $f\colon Y \to X$ be a morphism of saturated log schemes.
    \begin{enumerate}[(i)]
        \item 
        \label{defitem:sfp1}
        The map $f$ is called \textbf{locally of finite presentation up to saturation} or \textbf{locally sfp} for short if \'etale locally on~$X$ and~$Y$ it is isomorphic to the spectrum of an sfp map $\vtheta \colon (P \to A) \to (Q \to B)$ of prelog rings.
         \[
            \begin{tikzcd}
                Y    \ar[d,swap,"f"]  
                & \ar[l,swap,"\sim"] \Spec(Q \to B)  \ar[d,"\vtheta"]  \\
                X               
                & \ar[l,swap,"\sim"]  \Spec(P \to A)
            \end{tikzcd}
        \]
       \item 
        The map $f$ is called \textbf{of finite presentation up to saturation} or \textbf{sfp} for short if it is locally sfp and qcqs.
        \item We denote the category of sfp morphisms with target $X$ by $\cat{Sfp}_X$. 
    \end{enumerate}
\end{defi}

\begin{rmks}
\label{rmk:maps between fs are sfp}
\begin{enumerate}
    \item 
    In terms of charts,
    \crefpart{defi:sfp log map}{defitem:sfp1} concretely means that $f$ is locally sfp if
    \'etale locally on~$X$ and~$Y$  it admits a chart (the horizontal maps are strict)
         \[
            \begin{tikzcd}
                Y  \ar[r]  \ar[d,swap,"f"]  & \bA_Q   \ar[d,"\vtheta"]  \\
                X  \ar[r]              & \bA_P
            \end{tikzcd}
        \]
        such that $\vtheta: P \to Q$ is an sfp monoid homomorphism and the natural map
        \[
         Y \longrightarrow X \times_{\bA_P} \bA_Q
        \]        
        is (strict and) of finite presentation. Note that the saturated fibre product $X \times^\sat_{\bA_P} \bA_Q$ exists by \cref{lem:saturated fibre product} and agrees with $X \times_{\bA_P} \bA_Q$.    
    \item 
    For an sfp morphism $(P \to A) \to (Q \to B)$ of prelog rings the induced map 
    \[
        \Spec(Q \to B) \la \Spec(P \to A)
    \]
    is an sfp morphism.
    \item 
    A strict morphism $Y\to X$ is locally sfp if and only if it is locally of finite presentation as a morphism of schemes (this is not immediate; see \cref{lem:strict-sfp-is-fp}). 
    \item 
    If~$X$ is an fs log scheme, the sfp morphisms $Y \to X$ coincide with the morphisms of finite presentation of fs log schemes. This follows at once from 
    \cref{prop:fs version of sorite for sfp}.
    \item 
    \label{rmkitem:sfp etale local}
    The notion of a locally sfp map is \'etale local on source and target. More precisely, if $f\colon Y \to X$ is locally sfp and 
        \[
           \begin{tikzcd}
                Y     \ar[d,"f",swap]   
                & V    \ar[d,"f_V"] \ar[l] \\
                X     
                & U \ar[l]  
           \end{tikzcd}
       \] 
    a commutative diagram (not necessarily cartesian) with horizontal maps strict and \'etale, then $f_V$ is also locally sfp. And if 
    \[ 
           \begin{tikzcd}
                Y     \ar[d,"f",swap]   
                & V_i    \ar[d,"f_i"] \ar[l] \\
                X     
                & U_i \ar[l]  
           \end{tikzcd}
    \]
    are commutative squares as above where $\{U_i\to X\}$ and $\{V_i\to Y\}$ form strict \'etale covers, and the maps $f_i$ are locally sfp, then so is $f$.
    \item 
    \label{rmkitem:sfp under strict bc}
    The base change of a (locally) sfp morphism by a strict morphism is again (locally) sfp.
\end{enumerate}
\end{rmks}

\begin{prop} \label{prop:sfp morphisms}
    Let $f\colon Y \to X$ be a morphism of saturated quasi-coherent log schemes.
    Then the following are equivalent.
    \begin{enumerate}[(i)]
        \item 
        \label{propitem:sfp-log-schemes} 
        The morphism~$f$ is locally sfp.
        \item 
        \label{propitem:sfp-chart-lifting}
        For every strict \'etale map $U= \Spec(P\to A)\to X$, the map $V= Y\times_X U \to U$ locally on  $V$ is of the form $\Spec(Q\to B) \to \Spec(P\to A)$ for an sfp map of prelog rings 
        \[
            (P\to A)\la (Q\to B).
        \]
        
        \item 
        \label{propitem:sfp-local-approx} 
        For every strict \'etale map $U= \Spec(P\to A)\to X$ and every filtered colimit of saturated prelog rings $(P\to A)=\varinjlim (P_i\to A_i)$ the following holds. 

        The map $V= Y\times_X U \to U$ locally on $V$ is, for large enough $i$, the saturated base change of $\Spec(Q_i \to B_i) \to \Spec(P_i \to A_i)$ for an sfp map $(P_i \to A_i) \to (Q_i \to B_i)$ of saturated prelog rings. 
        
        Diagram for visual aid:
        \[  
        \begin{tikzcd}
        Y \arrow[d,"f",swap] & V \arrow[l] \arrow[d] \arrow[r] & \Spec(Q_i \to B_i) \arrow[d] \\
        X & U = \Spec(P\to A) \arrow[l] \arrow[r] & \Spec(P_i \to A_i)
        \end{tikzcd}
        \]
        
        \item 
        \label{propitem:sfp-bc-over-Z} 
        \'Etale locally on~$X$ and~$Y$ the morphism~$f$ fits into a cartesian diagram
        \[
           \begin{tikzcd}
                        Y     \ar[d,"f",swap]  \ar[r] & Y_0    \ar[d,"f_0"]  \\
                        X     \ar[r]             & X_0
           \end{tikzcd}
       \]        
       in the category of saturated log schemes where the right hand vertical map is a morphism of fs log schemes of finite type over $\bZ$.
       \item 
       \label{propitem:sfpbc} 
       \'Etale locally on~$X$ and~$Y$ the morphism~$f$ fits into a cartesian diagram as above where the right hand vertical map is a morphism of finite presentation of fs log schemes.
    \end{enumerate}
\end{prop}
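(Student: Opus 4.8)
All five conditions are local on source and target, so throughout we may replace $X$ and $Y$ by strict \'etale neighbourhoods; since $X$ is saturated, after such a replacement it is affine and carries a global chart, i.e.\ $X=\Spec(P\to A)$. The plan is to run the cycle
\[
\text{(i)}\Rightarrow\text{(iv)}\Rightarrow\text{(v)}\Rightarrow\text{(iii)}\Rightarrow\text{(ii)}\Rightarrow\text{(i)}.
\]
The external inputs are the approximation result \cref{prop:colimit descent for sfp prelog maps} for sfp maps of prelog rings, Beilinson's chart lemma \cref{lem:Beilinson chart lemma}, and \cref{lem:prelogring_colimit_fs}, which presents any affine charted saturated log scheme as a cofiltered limit $\varprojlim\Spec(P_i\to A_i)$ of affine fs log schemes of finite type over~$\bZ$. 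Two formal facts will be used repeatedly: (a) the functor $\Spec(-)$ from saturated prelog rings to saturated log schemes carries pushouts to saturated fibre products (immediate from the adjunction \cref{lem:maps to affine charted}, the saturated fibre product existing by \cref{lem:saturated fibre product} whenever the base carries an fs chart); and (b) sfp maps of prelog rings are stable under base change (they are the compact objects of the slice category by \cref{def:sfp-prelog-ring-map} and \cref{prop:compact-saturated-prelog-ring}, and base change preserves compact objects, exactly as in \cref{cor:compose sfp monoid mor}).

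\textbf{The four routine implications.} For (ii)$\Rightarrow$(i): cover $X$ by strict \'etale $U=\Spec(P\to A)\to X$ (possible since $X$ is saturated); by~(ii) each $Y\times_XU\to U$ is, locally on its source, the spectrum of an sfp map of prelog rings, and these base changes cover~$Y$, so $f$ is locally sfp. For (i)$\Rightarrow$(iv): \'etale locally $f=\Spec(\vtheta)$ for an sfp map $\vtheta\colon(P\to A)\to(Q\to B)$; writing $(P\to A)=\varinjlim(P_i\to A_i)$ with $P_i$ fs and $A_i$ finitely generated over~$\bZ$ (i.e.\ a colimit of compact saturated prelog rings), \cref{prop:colimit descent for sfp prelog maps} gives for large~$i$ an sfp map $(P_i\to A_i)\to(Q_i\to B_i)$ with $(Q\to B)$ its pushout; here $Q_i$ is fs by \cref{prop:fs version of sorite for sfp} and $B_i$ finitely generated over~$\bZ$, so by fact~(a) the morphism $f$ is the saturated base change of the morphism $\Spec(Q_i\to B_i)\to\Spec(P_i\to A_i)$ of fs log schemes of finite type over~$\bZ$. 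For (iv)$\Rightarrow$(v): a morphism of fs log schemes of finite type over~$\bZ$ is of finite presentation as a map of fs log schemes, since \'etale locally it admits an fs chart $P_0\to Q_0$ (Beilinson's lemma), this monoid map is of finite presentation by \cref{prop:fs version of sorite for sfp}, and the induced strict scheme map lands between schemes of finite type over the Noetherian ring~$\bZ$, hence is of finite presentation. For (iii)$\Rightarrow$(ii): given $U=\Spec(P\to A)\to X$, apply~(iii) to the presentation $(P\to A)=\varinjlim(P_i\to A_i)$ of \cref{lem:prelogring_colimit_fs}; locally on $Y\times_XU$ the morphism to~$U$ is the saturated base change along $U\to\Spec(P_i\to A_i)$ of an sfp map $\Spec(Q_i\to B_i)\to\Spec(P_i\to A_i)$, which by~(a) is $\Spec$ of the base-changed prelog map $(P\to A)\to(P\to A)\otimes_{(P_i\to A_i)}(Q_i\to B_i)$, and this is sfp by~(b).

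\textbf{The implication (v)$\Rightarrow$(iii), and the main obstacle.} Fix $U=\Spec(P\to A)\to X$ strict \'etale and a presentation $(P\to A)=\varinjlim(P_i\to A_i)$. By~(v), after \'etale localization on~$X$ and~$Y$ — hence on~$U$, whose pieces have the form $\Spec(P\to A')$ with $A\to A'$ \'etale, inheriting the chart~$P$ and a presentation $(P\to A')=\varinjlim(P_i\to A_i')$ — the morphism $Y\times_XU\to U$ becomes the saturated base change along some $h\colon U\to X_0$ of a finite-presentation morphism $g_0\colon Y_0\to X_0$ of fs log schemes; applying the argument of (i)$\Rightarrow$(iv) to~$g_0$ (using \cref{lem:prelogring_colimit_fs} on~$X_0$) we may further assume $X_0=\Spec(P_0\to A_0)$ with $P_0$ fs and $A_0$ finitely generated over~$\bZ$, $Y_0=\Spec(Q_0\to B_0)$, and $(P_0\to A_0)\to(Q_0\to B_0)$ sfp. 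The subtlety is that $h$ corresponds, by \cref{lem:maps to affine charted}, to a prelog map $(P_0\to A_0)\to(\Gamma(U,\cM)\to\cO(U))$ into the \emph{global-sections} prelog ring, whose monoid need not factor through the prescribed chart~$P$ — this is exactly the chart mismatch that also blocks a direct proof of (i)$\Rightarrow$(ii). We resolve it by \textbf{enlarging the chart}: after a Zariski shrinking of~$U$ the finitely many generators of~$P_0$ map into the submonoid of $\cM_U$ generated by the image of~$P$ and~$\cO_U^\times$, and adjoining the relevant units yields a chart $P^+=P\oplus\bZ^n$ on~$U$ through which $P_0\to\Gamma(U,\cM)$ now factors; crucially $\Spec(P^+\to A')=\Spec(P\to A')=U$ (adjoining units does not change the associated log structure) and the prelog map $(P\to A')\to(P^+\to A')$ is sfp with $\Spec(-)$ the identity, since $P\to P^+$ is the base change of $0\to\bZ^n$ with $\bZ^n$ fs (\cref{cor:fs_compact}) and the ring component $A'[\bZ^n]\to A'$, $e_k\mapsto u_k$, is finitely presented. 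As $A_0$ is finitely generated over~$\bZ$ and $P_0$ finitely generated, the factored $h$ descends for $i\gg0$ to a map $(P_i\oplus\bZ^n\to A_i')$ with, for $i$ large, $\bZ^n$ landing in $(A_i')^\times$, so that $\Spec(P_i\oplus\bZ^n\to A_i')=\Spec(P_i\to A_i')$. Setting $Y_0^{(i)}=\Spec(P_i\to A_i')\times_{X_0}^{\sat}Y_0$, this is a finite-presentation morphism of fs log schemes of finite type over~$\bZ$, so Beilinson's lemma and Noetherianity give, \'etale locally on its source (hence on $Y\times_XU$), a presentation $Y_0^{(i)}=\Spec(Q_i\to B_i)$ with $(P_i\to A_i')\to(Q_i\to B_i)$ sfp; by transitivity of saturated base change, $Y\times_XU\to U$ is locally the saturated base change of $\Spec(Q_i\to B_i)\to\Spec(P_i\to A_i')$, which is~(iii). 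I expect the chart-reconciliation step just above — passing from the intrinsic charts supplied by ``locally sfp'' and by~(v) to a presentation compatible with the prescribed chart on the base — to be the only genuinely delicate point; everything else is bookkeeping with \cref{prop:colimit descent for sfp prelog maps}, Beilinson's lemma, and the formal facts~(a) and~(b).
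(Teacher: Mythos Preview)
Your overall strategy is sound and your identification of the chart-reconciliation as the crux is correct, but there is a genuine error in your $(v)\Rightarrow(iii)$ step. You fix an \emph{arbitrary} presentation $(P\to A)=\varinjlim(P_i\to A_i)$ with $P_i$ merely saturated and $A_i$ arbitrary, yet later assert that $Y_0^{(i)}=\Spec(P_i\to A_i')\times^{\sat}_{X_0}Y_0\to\Spec(P_i\to A_i')$ is ``a finite-presentation morphism of fs log schemes of finite type over~$\bZ$''. This is false unless $P_i$ happens to be fs and $A_i'$ of finite type over~$\bZ$; condition~(iii) demands that the argument work for every presentation. Consequently your appeal to Beilinson's lemma and Noetherianity at the level of $Y_0^{(i)}$ is unjustified. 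The fix is easy and uses what you have already set up: you reduced $Y_0\to X_0$ to $\Spec$ of an sfp prelog map $(P_0\to A_0)\to(Q_0\to B_0)$ with $P_0,Q_0$ fs, so instead of invoking Beilinson on $Y_0^{(i)}$, simply define $(Q_i\to B_i)$ as the saturated pushout $(P_i\oplus\bZ^n\to A_i')\otimes^{\sat}_{(P_0\to A_0)}(Q_0\to B_0)$ and use your fact~(a) together with stability of sfp under base change and composition; no fs hypothesis on $P_i$ is then needed.

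It is worth comparing with the paper's route, which runs the cycle differently (the hard step is $(v)\Rightarrow(ii)$, with $(ii)\Rightarrow(iii)$ for arbitrary presentations then following immediately from \cref{prop:colimit descent for sfp prelog maps}). The paper's chart-reconciliation trick is a bit slicker than your explicit unit-adjunction: it keeps the ring $A$ fixed and writes only $P=\varinjlim P_i$ as a colimit of fs submonoids, setting $X_i=\Spec(P_i\to A)$; since the underlying scheme never changes, $\cM(X)=\varinjlim\cM(X_i)$ on the affine $\Spec(A)$, and compactness of $P_0$ in saturated monoids gives the factorisation $P_0\to\cM(X_i)$ (hence $X_i\to X_0$) for free. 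This bypasses the explicit $P\oplus\bZ^n$ enlargement and the bookkeeping with $A_i'$, at the cost of a slightly less elementary colimit argument. Your approach, once repaired as above, is a valid alternative that makes the Beilinson mechanism more visible.
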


\begin{proof}
We shall prove 
\[  \text{
\labelcref{propitem:sfp-log-schemes}
$\Rightarrow$
\labelcref{propitem:sfpbc}
$\Rightarrow$
\labelcref{propitem:sfp-chart-lifting}
$\Rightarrow$
\labelcref{propitem:sfp-log-schemes}
\quad and \quad 
\labelcref{propitem:sfp-chart-lifting}
$\Rightarrow$
\labelcref{propitem:sfp-local-approx}
$\Rightarrow$
\labelcref{propitem:sfp-bc-over-Z}
$\Rightarrow$
\labelcref{propitem:sfpbc}.}
\]

\labelcref{propitem:sfp-log-schemes} $\Rightarrow$ \labelcref{propitem:sfpbc}: 
We may work \'etale locally on $X$ and $Y$ and assume that $Y \to X$ is given by an sfp morphism $(P \to A) \to (Q \to B)$ of prelog rings. Choose a presentation 
\[
    (P \to A) = \varinjlim_i\, (P_i \to A_i)
\]
as a filtered colimit where the $P_i$ are fs monoids. The proof now follows by applying \cref{prop:colimit descent for sfp prelog maps} because the monoid $Q_i$ is fs for a prelog ring $(Q_i \to B_i)$ that is sfp over a prelog ring $(P_i \to A_i)$ with $P_i$ an fs monoid, see 
\cref{prop:fs version of sorite for sfp}.

\labelcref{propitem:sfpbc} $\Rightarrow$ \labelcref{propitem:sfp-chart-lifting}:
If assertion \labelcref{propitem:sfp-chart-lifting} holds for all $U'_\alpha = \Spec(P' \to A') \to U \to X$ with an \'etale covering $\{U'_\alpha \to U\}$ induced by maps of prelog rings $(P \to A) \to (P'_\alpha \to A'_\alpha)$, then it also holds for $U \to X$. We may therefore work \'etale locally on $X$ and $Y$, assume that $X = U = \Spec(P \to A)$ and that there is a cartesian diagram as in \labelcref{propitem:sfpbc}. Moreover, by further localising, because maps between fs log schemes have the chart lifting property for fs charts, we may assume that $Y_0 \to X_0$ is the map associated to a map $(P_0 \to A_0) \to (Q_0 \to B_0)$ of prelog rings with $P_0$ and $Q_0$ being fs monoids. 

We write $P = \varinjlim_i P_i$ as a filtered colimit of fs monoids $P_i$, and we set $X_i = \Spec(P_i \to A)$. Then $X = \varprojlim_i X_i$ and $\cM(X) = \varinjlim_i \cM_{X_i}(X_i)$ since we evaluate on an affine scheme $\Spec(A)$. Since $P_0$ is a compact object in saturated monoids, the map $X \to X_0$ given on log structures by $P_0 \to \cM(X)$ factors, for $i$ large enough, via a morphism $P_0 \to \cM_{X_i}(X_i)$ and leading to a~factorisation $X_i \to X_0$. Here we use that $X \to X_i$ is an isomorphism on underlying schemes.

We denote by $Y_i \to X_i$ the saturated base change of $Y_0 \to X_0$. Note that these are fs log schemes and the map $Y_i \to X_i$ is of finite presentation.
We therefore have chart lifting for $Y_i \to X_i$ and fs charts. Working locally on $Y_i$ we may thus assume that $Y_i = \Spec(Q_i \to B)$ for a~map $(P_i \to A) \to (Q_i \to B)$ of prelog rings. Then $Y$ is the saturated base change of a~diagram of affine charted log schemes with charted morphisms. Hence $Y$ itself is of the form $\Spec(Q \to B)$ and $Y \to X$ comes from a map of prelog rings $(P \to A) \to (Q \to B)$, where $P \to Q$ is the saturated base change of $P_i \to Q_i$, hence is sfp, because $P_i$ and $Q_i$ are fs. 

\labelcref{propitem:sfp-chart-lifting} $\Rightarrow$ \labelcref{propitem:sfp-log-schemes}: This is obvious from the definition of locally sfp morphisms of saturated log schemes.

\labelcref{propitem:sfp-chart-lifting} $\Rightarrow$ \labelcref{propitem:sfp-local-approx}:
By \labelcref{propitem:sfp-chart-lifting} we may assume that the map $Y \to X$ is induced by an sfp morphism $(P\to A)\to (Q\to B)$ of prelog rings. The claim in this affine charted case then was proven already in \cref{prop:colimit descent for sfp prelog maps}.

\labelcref{propitem:sfp-local-approx} $\Rightarrow$ \labelcref{propitem:sfp-bc-over-Z}: 
This follows since \'etale locally $X$ is of the form $\Spec(P\to A)$, and $(P\to A)$ can be written as the direct limit of prelog rings $(P_i\to A_i)$ with $P_i$ fs and $A_i$ of finite type over $\bZ$. In this case, the maps $\Spec(Q_i \to B_i)\to \Spec(P_i\to A_i)$ are morphisms of fs log schemes of finite type over $\bZ$.

\labelcref{propitem:sfp-bc-over-Z} $\Rightarrow$ \labelcref{propitem:sfpbc}:
This is obvious since every morphism between schemes of finite type over $\bZ$ is of finite presentation.
\end{proof}

In particular, characterization \labelcref{propitem:sfp-chart-lifting} of locally sfp maps implies:

\begin{cor} \label{cor:sfp-chart-lifting}
    Every locally sfp map of saturated log schemes has the chart lifting property (\cref{rmk:chart-lifting}).
\end{cor}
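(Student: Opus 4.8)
The plan is to read this off directly from the equivalence \crefpart{prop:sfp morphisms}{propitem:sfp-chart-lifting}, which is tailored for exactly this purpose. Let $f\colon Y\to X$ be a locally sfp morphism of saturated log schemes. By the definition of the chart lifting property in \cref{rmk:chart-lifting}, I must show that $f$ satisfies it with respect to an arbitrary chart $\alpha\colon P\to\cM_X(U)$ on a strict étale open $U\to X$; and since $P$ and the $Q_i$ may be taken saturated by \crefpart{lem:integral and saturated for qcoh log schemes}{lemitem:charts of the natural sort}, I may work with saturated monoids throughout.

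First I would reduce to the case that $U$ is affine. Covering $U$ by affine opens $U_j$ and composing $\alpha$ with $\cM_X(U)\to\cM_X(U_j)$ gives charts by the same monoid $P$ on each $U_j$; since $\{U_j\times_X Y\}$ is an open (hence étale) cover of $U\times_X Y$, étale-local chart-lifting squares over each $U_j\times_X Y$ assemble into such data over $U\times_X Y$, so it suffices to treat each $U_j$. With $U=\Spec(A)$ affine, the chart $\alpha$ identifies $U$, as a log scheme, with $\Spec(P\to A)$, where $P\to A$ is the composite of $\alpha\colon P\to\cM_X(U)$ with the structure map $\cM_X(U)\to\cO_X(U)=A$, and $\alpha$ is the canonical chart of this prelog ring; in particular $U=\Spec(P\to A)\to\bA_P$ is strict. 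Now I invoke \crefpart{prop:sfp morphisms}{propitem:sfp-chart-lifting}: the base change $V=Y\times_X U\to U$ — which exists and is again a saturated log scheme because $U\to X$ is strict — is, étale locally on $V$, isomorphic over $U$ to $\Spec(Q\to B)\to\Spec(P\to A)$ for an sfp morphism of saturated prelog rings $(P\to A)\to(Q\to B)$. The underlying monoid homomorphism $\vtheta\colon P\to Q$, together with the strict maps $\Spec(Q\to B)\to\bA_Q$ and $U\to\bA_P$, fits into a commutative square of the shape demanded in \cref{rmk:chart-lifting}; as the various $\Spec(Q\to B)$ form an étale cover of $V=U\times_X Y$, these squares constitute the required chart-lifting data $\{V_i\to U\times_X Y\}$, $\vtheta_i\colon P\to Q_i$. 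Since $\alpha$ was arbitrary, $f$ has the chart lifting property.

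I do not expect a genuine obstacle: all of the content is already packaged into \cref{prop:sfp morphisms}, and what remains is bookkeeping. The only points needing a moment's attention are the harmless reduction to affine $U$ (needed to match the ``étale cover of $U\times_S X$'' formulation of the chart lifting property against the ``locally on $V$'' formulation in \cref{prop:sfp morphisms}) and the identification of the log scheme $U$ carrying the chart $\alpha$ with $\Spec(P\to A)$, so that \crefpart{prop:sfp morphisms}{propitem:sfp-chart-lifting} applies verbatim.
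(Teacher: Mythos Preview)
Your proposal is correct and follows exactly the paper's route: the paper does not give a separate proof but simply states that the corollary follows from characterization \labelcref{propitem:sfp-chart-lifting} of \cref{prop:sfp morphisms}, and your argument spells out precisely the bookkeeping (reduction to affine $U$, identification $U\simeq\Spec(P\to A)$) needed to make that implication explicit.
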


\begin{prop}
\label{prop: sorite for sfp}
    Let $f\colon Y\to X$ be a locally sfp morphism of saturated log schemes.
    \begin{enumerate}[(a)]
        \item 
        \label{propitem:sfp base change}
        Let $h\colon X'\to X$ be a morphism from a saturated log scheme $X'$ ($h$ is not necessarily locally charted). Then the saturated pullback $Y' = X'\times^\sat_X Y$ exists and the morphism $Y'\to X'$ is locally sfp. 
        \item 
        \label{propitem:sfp composition}
        Let $g\colon Z \to Y$ be another locally sfp morphism of saturated log schemes. Then the composition $Z \to Y \to X$ is locally sfp. 
        \item 
        \label{propitem:sfp 2 out of 3}
        Let $g\colon Z\to Y$ be a morphism from a saturated log scheme $Z$ such that the composition $Z\to Y\to X$ is locally sfp. Then $Z\to Y$ is locally sfp. 
    \end{enumerate}
\end{prop}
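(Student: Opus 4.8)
The plan is to leverage the chart lifting property from \cref{cor:sfp-chart-lifting} together with the characterisation of (locally) sfp maps from \cref{prop:sfp morphisms}, reducing each assertion to the affine charted case and ultimately to the analogous statements for sfp maps of prelog rings (\cref{cor:compose sfp monoid mor}) and for sfp maps of monoids (\cref{cor:sfp stable composition}). Since all three properties are \'etale local on source and target by \crefpart{rmk:maps between fs are sfp}{rmkitem:sfp etale local}, we may always work after a strict \'etale localisation.

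For \labelcref{propitem:sfp base change}, I would first observe that the saturated pullback exists: working \'etale locally on $X'$ we may assume $X'$ has a global chart, and since $f$ has the chart lifting property (\cref{cor:sfp-chart-lifting}) we may arrange that $f$ is locally given by a chart $\vtheta\colon P\to Q$ with the induced strict map $Y\to X\times_{\bA_P}\bA_Q$ of finite presentation; then $X'\times_X Y$ is obtained by strict base change followed by (strict) saturated base change along $X\to \bA_P$, so it is quasi-coherent and in fact saturated by \cref{lem:saturated fibre product}. The resulting map $Y'\to X'$ then admits the chart obtained by pushing $\vtheta$ along $P\to \cM(X')$ (locally), and the strictness and finite presentation of $Y'\to X'\times_{\bA_P}\bA_{Q}$ are preserved by base change of schemes; the monoid map $P\to Q$ base-changes to an sfp map by \cref{lem:sfp stable under bc}. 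Hence $Y'\to X'$ is locally sfp. Alternatively, and perhaps more cleanly, one invokes \crefpart{prop:sfp morphisms}{propitem:sfpbc}: \'etale locally $f$ is the saturated base change of a finitely presented morphism $f_0\colon Y_0\to X_0$ of fs log schemes; then $Y'\to X'$ is the saturated base change of the same $f_0$ along $X'\to X\to X_0$, which is again locally sfp by \crefpart{prop:sfp morphisms}{propitem:sfpbc}.

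For \labelcref{propitem:sfp composition}, I would work \'etale locally on $X$, $Y$, $Z$. By \crefpart{prop:sfp morphisms}{propitem:sfpbc} applied to $f$, after localising we have a cartesian square exhibiting $Y\to X$ as the saturated base change of a finitely presented map $Y_0\to X_0$ of fs log schemes; since $X_0$ is fs, we may further localise so that $Y_0=\Spec(P_0\to A_0)$, $X_0 = \Spec(P_0'\to A_0')$ come from an sfp map of prelog rings, and correspondingly $Y\to X$ is $\Spec$ of an sfp map $(P\to A)\to (Q\to B)$ of saturated prelog rings. Now apply \crefpart{prop:sfp morphisms}{propitem:sfp-chart-lifting} to $g$ relative to the strict \'etale chart $\Spec(Q\to B)$ of $Y$: locally on $Z$, the map $g$ is $\Spec$ of an sfp map $(Q\to B)\to (R\to C)$ of prelog rings. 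The composite $(P\to A)\to (R\to C)$ is then sfp by \cref{cor:compose sfp monoid mor}, so $Z\to X$ is locally sfp by \crefpart{prop:sfp morphisms}{propitem:sfp-log-schemes}.

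For \labelcref{propitem:sfp 2 out of 3}, the strategy is again to produce compatible charts. Working \'etale locally, choose by \crefpart{prop:sfp morphisms}{propitem:sfp-chart-lifting} a chart $X=\Spec(P\to A)$ and, applying it to the composite $Z\to X$, a local presentation of $Z\to X$ as $\Spec$ of an sfp map $(P\to A)\to (R\to C)$ of prelog rings; by \cref{cor:sfp-chart-lifting} applied to $f$ we may, after further localising on $Y$, also present $Y\to X$ via a chart $\vtheta\colon P\to Q$ with $Y\to X\times_{\bA_P}\bA_Q$ strict of finite presentation. The remaining task is to factor the chart for $Z$ through a chart for $Y$: concretely, to produce locally on $Z$ a monoid map $Q\to R$ compatible with $P\to Q$ and $P\to R$ and lifting $\cM(Y)\to \cM(Z)$. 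This is exactly the content of the chart lifting property of $f$ (\cref{cor:sfp-chart-lifting}) applied with respect to the chart $\vtheta\colon P\to Q$ on $Y$ — it gives \'etale locally on $Z\cong Y\times_Y Z$ a chart $Q\to R'$ for $g$ over $\vtheta$; comparing $R'$ with $R$ on stalks (both induce $\cM_Z$, both are sat-quotients) we may take $R'=R$. Then the induced map of prelog rings $(Q\to B')\to (R\to C)$ has underlying monoid map $Q\to R$ which is sfp: indeed $P\to R$ is sfp and $P\to Q$ is sfp, and one checks $Q\to R$ is sfp either directly from the compact-object characterisation \crefpart{prop:sfp-conditions}{propitem:sfp-cptobj} (a compact object of $\cat{Sat}_{P/}$ lying over $Q$ is compact in $\cat{Sat}_{Q/}$, by the same formal argument as in \cref{cor:sfp stable composition}), and the ring-level map is of finite presentation by the scheme-theoretic $2$-out-of-$3$ for finitely presented morphisms applied to the strict maps. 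Hence $Z\to Y$ is locally sfp. The main obstacle here is the bookkeeping in this last step: matching the chart $R'$ produced by chart lifting for $f$ with the chart $R$ coming from $Z\to X$, and making sure the residual ring map is genuinely of finite presentation rather than merely of finite type; this is handled by the scheme-theoretic $2$-out-of-$3$ property for morphisms locally of finite presentation, after noting all the comparison maps in question are strict.
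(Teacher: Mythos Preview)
Your arguments for \labelcref{propitem:sfp base change} and \labelcref{propitem:sfp composition} are essentially the paper's: for (a) the paper uses exactly your ``alternative'' via \crefpart{prop:sfp morphisms}{propitem:sfpbc}, and for (b) it reduces to an sfp map of prelog rings and applies \cref{cor:compose sfp monoid mor} just as you do.

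For \labelcref{propitem:sfp 2 out of 3}, however, there is a genuine gap. You invoke ``the chart lifting property of $f$ (\cref{cor:sfp-chart-lifting}) applied with respect to the chart $\vtheta\colon P\to Q$ on $Y$'' to produce a chart $Q\to R'$ for $g$. But \cref{cor:sfp-chart-lifting} says that the \emph{sfp} map $f\colon Y\to X$ lifts charts from $X$ to $Y$; it says nothing about lifting charts from $Y$ to $Z$ along $g$. To lift along $g$ you would need $g$ itself to have the chart lifting property, which is precisely what you are trying to prove; and you cannot fall back on \cref{lem:Beilinson chart lemma} because $Q$ need not be finitely generated. The subsequent step ``comparing $R'$ with $R$ on stalks \ldots\ we may take $R'=R$'' is therefore moot, and in any case unjustified: two charts for the same log structure need not give isomorphic monoids. (Your aside that $Q\to R$ is sfp whenever $P\to Q$ and $P\to R$ are, via compactness, is in fact correct --- finite limits commute with filtered colimits in sets --- but you never legitimately produce such a map.)

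The paper avoids all of this by a different reduction: factor $g$ as
\[
Z \xrightarrow{\ \Gamma_g\ } Z\times^\sat_X Y \xrightarrow{\ \pr_2\ } Y,
\]
the graph followed by the projection. The projection is locally sfp by \labelcref{propitem:sfp base change} applied to $Z\to X$, and the graph is a saturated base change of the diagonal $\Delta_{Y/X}\colon Y\to Y\times^\sat_X Y$. One then only needs $\Delta_{Y/X}$ to be locally sfp; but \'etale locally $Y\to X$ is the saturated base change of some $Y_0\to X_0$ between fs log schemes, and the diagonal $\Delta_{Y/X}$ is then the saturated base change of $\Delta_{Y_0/X_0}$, which is a finitely presented map of fs log schemes and hence sfp. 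Parts \labelcref{propitem:sfp base change} and \labelcref{propitem:sfp composition} finish the argument.
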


\begin{proof}
\labelcref{propitem:sfp base change}:
Working \'etale locally on $X$, $Y$ and $X'$, we may assume that there exists a commutative square as in 
\crefpart{prop:sfp morphisms}{propitem:sfpbc}: $Y \to X$ is the saturated base change of a map of finite presentation $Y_0 \to X_0$ of fs log schemes along $X \to X_0$. So $Y' \to X'$ is the saturated base change of $Y_0 \to X_0$ along the composition $X' \to X_0$ which exists by 
\crefpart{lem:saturated fibre product}{lemitem:saturated fibre product:charts}.
Being a~saturated base change of $Y_0 \to X_0$ shows by \cref{prop:sfp morphisms} that $Y' \to X'$ is locally sfp.

\labelcref{propitem:sfp composition}: 
The assertion is local on $X$, $Y$ and $Z$. 
We may therefore assume that $Y \to X$ is isomorphic to the spectrum of an sfp map $(P \to A) \to (Q \to B)$ of saturated prelog rings. By \crefpart{prop:sfp morphisms}{propitem:sfp-chart-lifting}, locally on $Z$ the map $Z \to Y = \Spec(Q \to B)$ is induced by an sfp morphism $(Q \to B) \to (R \to C)$. Thus the composition $Z \to X$ is induced by the composition 
\[
(P \to A) \la (Q \to B)\la (R \to C)
\]
which is again an sfp morphism of prelog rings, see \cref{cor:compose sfp monoid mor}. 
This shows that $Z \to X$ is locally sfp.

\labelcref{propitem:sfp 2 out of 3}:
The map $g$ factors as $Z \to Z \times^\sat_X Y \to Y$ which is the composition of the graph map and the base change of $Z \to X$. The graph map is the base change of the relative diagonal $\Delta_{Y/X} \colon Y \to Y \times^\sat_X Y$. So by \labelcref{propitem:sfp base change} and \labelcref{propitem:sfp composition} the claim follows from the special case of $\Delta_{Y/X}$ as a~map between saturated log schemes that are locally sfp over $Y$.

To study the diagonal we may work \'etale locally on $Y$ and $X$. By \cref{prop:sfp morphisms} we may assume that $Y \to X$ is the saturated base change of a map $Y_0 \to X_0$ of finite presentation between fs log schemes. Here the diagonal $\Delta_0 \colon Y_0 \to Y_0 \times^\sat_{X_0} Y_0$ is of finite presentation between fs log schemes and thus sfp. The diagonal $\Delta_{Y/X}$ is the saturated base change of $\Delta_0$ and thus by \labelcref{propitem:sfp base change} also sfp.
\end{proof}

%---------------------------------------
\subsection{Approximation of sfp maps}
\label{ss:approx}
%--------------------------------------- 

In this subsection, we provide approximation results for sfp morphisms, in the spirit of \cite[\S 8]{EGAIV3}. This is a delicate task, since sfp maps are in general not of finite presentation as maps of schemes, and because we use saturated base change, which does not always agree with the scheme-theoretic base change. 

We first deal with the case of an affine charted base. Let $(P_i\to A_i)_{i\in I}$ be a directed system of prelog rings with $P_i$ fs and $A_i$ of finite type over $\bZ$ and let $(P\to A)=\varinjlim (P_i\to A_i)$. Set $X = \Spec(P\to A)$ and $X_i = \Spec(P_i\to A_i)$. 

Recall that $\cat{Sfp}_{X}$ denotes the category of sfp morphisms $Y\to X$. Note that, since each $X_i$ is fs and of finite type over $\bZ$, the category $\cat{Sfp}_{X_i}$ consists of fs log schemes $Y_i\to X_i$ whose underlying morphism of schemes is of finite type. For $j\geq i$, saturated base change along $X_{j}\to X_i$ induces a functor $\cat{Sfp}_{X_i}\to \cat{Sfp}_{X_{j}}$, turning $\{\cat{Sfp}_{X_i}\}_{i\in I}$ into a directed system of categories.

\begin{prop} 
\label{prop:fs-approximation}
     Saturated base change along $X\to X_i$ induces an equivalence of categories 
     \[
        \varinjlim \cat{Sfp}_{X_i}\isomto \cat{Sfp}_{X}.
    \]
\end{prop}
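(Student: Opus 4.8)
\emph{Plan.} The functor is well defined because saturated base change of an sfp morphism is again sfp by \crefpart{prop: sorite for sfp}{propitem:sfp base change}, and it is clearly compatible with the transition maps; so it remains to prove essential surjectivity and full faithfulness. In both, the load-bearing input is the affine charted case. If $Y_i = \Spec(Q_i \to B_i)$ and $Z_i = \Spec(R_i \to C_i)$ are affine and charted over $X_i = \Spec(P_i \to A_i)$, then via \cref{lem:maps to affine charted} the statement $\varinjlim_{j \geq i} \Hom_{X_j}(Y_j, Z_j) \isomto \Hom_X(Y,Z)$ becomes the assertion that the sfp prelog ring $(R_i \to C_i)$ is a compact object of saturated prelog rings over $(P_i \to A_i)$, which is \cref{prop:compact-saturated-prelog-ring} (combined with $\varinjlim_j (\text{global prelog ring of } Y_j) = (Q \to B)$); and essential surjectivity in the affine charted case is precisely \cref{prop:colimit descent for sfp prelog maps}.

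To globalize I would follow \cite[\S 8]{EGAIV3}. First I would record the key preliminary: since $X \to X_i$ and all transition maps $X_j \to X_i$ have affine underlying schemes, the saturated base changes $Y_j$ of any sfp $Y_i \to X_i$ have affine transition maps on underlying schemes, so $\underline{Y} = \varprojlim_j \underline{Y_j}$ as schemes; hence quasi-compact opens of $\underline{Y}$, together with their inclusion relations and the morphisms between them, descend to finite level by the standard limit arguments of \cite[\S 8]{EGAIV3}. Second, because $X$ is affine and charted, \crefpart{prop:sfp morphisms}{propitem:sfp-chart-lifting} lets me cover any sfp $Y \to X$ by finitely many affine opens that are charted, of the form $\Spec(Q^{(k)} \to B^{(k)}) \to \Spec(P \to A)$ with $(P \to A) \to (Q^{(k)} \to B^{(k)})$ sfp; pairwise and triple overlaps of such a cover are quasi-compact (opens in the qcqs scheme $\underline{Y}$), and restriction to a quasi-compact open of an affine charted sfp piece is still sfp over $X$ by \crefpart{rmk:maps between fs are sfp}{rmkitem:sfp under strict bc}.

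For full faithfulness I would fix finite affine charted covers of $Y_i$ and of $Z_i$. Injectivity of $\varinjlim \Hom \to \Hom$: given $f, g \colon Y_j \rightrightarrows Z_j$ with equal base change to $X$, pass to an index where the relevant preimages of the charted opens of $Z_j$ in the charted opens of $Y_j$ agree, cover those preimages by basic affine opens, and apply the affine case on each to conclude $f = g$ at a common finite level. Surjectivity of $\varinjlim\Hom \to \Hom$: descend a morphism $h \colon Y \to Z$ over $X$ piecewise, over an affine charted cover of $Z$ and an induced affine charted cover of its preimage in $Y$, using the affine case, and glue the local descents by injectivity. Essential surjectivity is then a descent: take a finite affine charted cover $Y = \bigcup_{k} Y^{(k)}$, descend each piece to a common finite level by \cref{prop:colimit descent for sfp prelog maps}, descend the overlaps as quasi-compact opens and the gluing isomorphisms (with their cocycle relations on triple overlaps) by the full faithfulness just proved, and glue to an sfp $Y_i \to X_i$; it is sfp because Zariski-locally sfp and qcqs, and its saturated base change along the strict map $X \to X_i$ is $Y$ since saturated base change commutes with Zariski gluing.

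The hard part is the globalization bookkeeping rather than any single new idea: one must be sure that $\underline{Y} = \varprojlim_j \underline{Y_j}$ so that the descent of quasi-compact opens and of morphisms between them from \cite[\S 8]{EGAIV3} is legitimately available, and one must carry along the (strict) open immersions carefully, since saturated base change does not commute with scheme-theoretic base change in general but does commute with the Zariski gluing used to reconstruct $Y$ from its affine charted pieces. Once the affine charted case is isolated --- where the statement is just that sfp prelog rings are compact objects (\cref{prop:compact-saturated-prelog-ring}, \cref{prop:colimit descent for sfp prelog maps}) --- everything else is a standard limit manipulation.
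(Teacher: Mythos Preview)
Your affine charted core is exactly right and matches the paper: full faithfulness reduces to compactness of sfp prelog rings (\cref{prop:compact-saturated-prelog-ring}) via \cref{lem:maps to affine charted}, and essential surjectivity is \cref{prop:colimit descent for sfp prelog maps}. The gap is in the globalization.

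You assert that \crefpart{prop:sfp morphisms}{propitem:sfp-chart-lifting} covers an sfp $Y\to X$ by finitely many affine \emph{Zariski opens} that are charted. It does not: by the paper's standing convention, ``locally'' means \emph{strict \'etale locally}, so the chart lifting property only produces a strict \'etale cover $U\to Y$ by affine charted pieces. Your Zariski gluing scheme for essential surjectivity therefore does not get off the ground. If you switch to \'etale covers, the pieces and the gluing data do descend by your full-faithfulness argument, but the quotient of the resulting \'etale equivalence relation $R_i\rightrightarrows U_i$ is a~priori only an algebraic space over $X_i$, and you must still argue it is a scheme. This is precisely the step the paper isolates and handles by forming the \'etale groupoid $(U_i,R_i,\ldots)$, taking the quotient stack $Y_i$, and invoking Rydh's approximation theorem \cite[Theorem~C(iii)]{RydhApproximation} (since $Y=\varprojlim Y_i$ is a scheme, $Y_i$ is a scheme for $i\gg 0$). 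Without such an input your essential surjectivity argument is incomplete.

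For full faithfulness your outline is closer to workable, but note that the paper's intermediate Step~2 (finite disjoint unions of special objects) is not a convenience: it is needed because \'etale covers of the target $Y_0$ are disjoint unions of special pieces, and one must control $\Hom(Z,-)$ into such a disjoint union before running the coequalizer argument of Step~3. Your sketch (``fix finite affine charted covers of $Y_i$ and $Z_i$, work piecewise'') would need to be rewritten along these lines once you replace Zariski opens by \'etale covers.
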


We start with some easy category theory. Consider a category $\mathcal{C}$ admitting small colimits and a map of inductive systems $\{P_i\to Q_i\}$ indexed by a directed set $I$. We say that such a map is {\bf cocartesian} if for every $j>i$ the natural square 
\[ 
    \begin{tikzcd}
        Q_j & Q_i \ar[l] \\
        P_j \ar[u] & P_i \ar[l] \ar[u]
    \end{tikzcd}
\]
is cocartesian. Suppose  $\{P_i\to Q_i\}$ is cocartesian. If $P = \varinjlim P_i$ and $Q = \varinjlim Q_i$ then for every $i\in I$ the square
\[
    \begin{tikzcd}
        Q & Q_i \ar[l] \\
        P \ar[u] & P_i \ar[l] \ar[u]
    \end{tikzcd}
\]
is cocartesian as well. This means that for every morphism $P \to R$ and every $i$ we have
\[ 
    \Hom_P(Q, R) = \Hom_{P_i}(Q_i, R).
\]

\begin{lem} \label{lem:abstract-colim-bullshit}
    Let $I$ be a directed set, $\mathcal{C}$ a category with small colimits, and let $\{P_i\to Q_i\}_{i\in I}$ be a cocartesian map of inductive systems such that $Q_0$ is a compact object of $\mathcal{C}_{/P_0}$ for some $0\in I$.  Let $\{P_i\to R_i\}_{i\in I}$ be an arbitrary map of inductive systems. Set $P=\varinjlim P_i$, $Q = \varinjlim Q_i$, and $R = \varinjlim R_i$. Then 
    \[
        \varinjlim \Hom_{P_i}(Q_i, R_i) \isomto \Hom_P(Q, R).
    \]
\end{lem}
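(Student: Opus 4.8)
The plan is to reduce the statement to the single index $0$ by a cofinality argument and then feed in the two pushout squares provided by the cocartesian hypothesis together with the compactness of $Q_0$. There is nothing deep here; the argument is formal.

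First I would replace $I$ by the cofinal directed subset $\{i \in I : i \geq 0\}$, which changes neither the colimit $\varinjlim_i \Hom_{P_i}(Q_i, R_i)$ nor the colimits $P = \varinjlim_i P_i$, $Q = \varinjlim_i Q_i$ and $R = \varinjlim_i R_i$; so we may assume $0$ is a least element of $I$. For each $i$ the cocartesian hypothesis says that the square with corners $Q_i, Q_0, P_i, P_0$ is a pushout, so that for any object $R_i$ under $P_i$ — regarded as an object under $P_0$ via $P_0 \to P_i$ — restriction along $Q_0 \to Q_i$ is a bijection $\Hom_{P_i}(Q_i, R_i) \isomto \Hom_{P_0}(Q_0, R_i)$, natural in $i$. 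Passing to the colimit gives
\[
    \varinjlim_i \Hom_{P_i}(Q_i, R_i) = \varinjlim_i \Hom_{P_0}(Q_0, R_i).
\]

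Next, $\{R_i\}_i$ is a filtered diagram of objects under $P_0$, and the colimit of such a diagram is computed in $\mathcal{C}$ (the forgetful functor from the coslice creates connected, in particular filtered, colimits); thus it is $R$, equipped with the structure map $P_0 \to P \to R$. As $Q_0$ is compact over $P_0$, the functor $\Hom_{P_0}(Q_0, -)$ commutes with this colimit, whence $\varinjlim_i \Hom_{P_0}(Q_0, R_i) = \Hom_{P_0}(Q_0, R)$. Finally, as recalled in the paragraph preceding the lemma, a cocartesian map of inductive systems induces a cocartesian square at the level of colimits, so the square with corners $Q, Q_0, P, P_0$ is a pushout and $\Hom_P(Q, R) = \Hom_{P_0}(Q_0, R)$ by restriction along $Q_0 \to Q$. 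Composing the three identifications yields the asserted bijection, and tracing through the universal properties shows it is the natural comparison map, sending the class of an $f_i \colon Q_i \to R_i$ over $P_i$ to the induced map $Q \to R$ over $P$ obtained by passing to colimits.

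If there is an obstacle at all it is pure bookkeeping: one must check that the colimit of the $R_i$ in the coslice category carries the structure map $P_0 \to P \to R$ rather than some twist of it, and that the composite of the three natural bijections above is genuinely the canonical comparison map — both are immediate diagram chases.
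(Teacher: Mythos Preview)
Your proof is correct and is essentially identical to the paper's; you run the same chain of identifications
\[
\varinjlim \Hom_{P_i}(Q_i, R_i) = \varinjlim \Hom_{P_0}(Q_0, R_i) = \Hom_{P_0}(Q_0, R) = \Hom_P(Q, R)
\]
that the paper writes in the opposite order, only with the cofinality reduction and the bookkeeping about coslice colimits spelled out explicitly.
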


\begin{proof}
We compute
\[
\Hom_P(Q, R) = \Hom_{P_0}(Q_0, R) = \Hom_{P_0}(Q_0, \varinjlim R_i)
\]
which, because $Q_0$ is compact and $\{P_i\to Q_i\}_{i\in I}$ is cocartesian,  equals
\[
    \varinjlim \Hom_{P_0}(Q_0, R_i) = \varinjlim \Hom_{P_i}(Q_i, R_i). \qedhere
\]
\end{proof}

\Cref{lem:abstract-colim-bullshit} will be used to obtain fully faithfulness in the proof below.

\begin{proof}[Proof of \cref{prop:fs-approximation}]
We first prove establish fully faithfulness, and then essential surjectivity.

\medskip

\noindent {\bf Proof of fully faithfulness.} Let $0 \in I$ be an index and let $Y_0$ and $Z_0$ be objects of $\cat{Sfp}_{X_0}$. We must show that
\begin{equation} \label{eqn:approx-ff} 
    \varinjlim_{i\geq 0} \Hom_{X_i}(Z_i, Y_i) \isomto \Hom_X(Z, Y),
\end{equation}
where $Z_i$, $Y_i$ (resp.\ $Z$, $Y$) are defined for all $i \geq 0$ by saturated base change of $Y_0$ and $Z_0$ to $X_i$ (resp.\ to $X$).

Let us call $Y_0$ {\bf special} if it is of the form  $Y_0 = \Spec(Q_0\to B_0)$ where $(Q_0\to B_0)$ is a compact object in the category of saturated prelog rings over $(P_0\to A_0)$. Recall from \cref{prop:compact-saturated-prelog-ring} that this means that $P_0\to Q_0$ is sfp and $A_0\otimes_{\bZ[P_0]}\bZ[Q_0]\to B_0$ is of finite presentation, thus every sfp $Y_0\to X_0$ is locally of this form. 

\medskip

\noindent \emph{Step 1: \labelcref{eqn:approx-ff} holds assuming $Y_0$ is special.}

\smallskip

If $Y_0 = \Spec(Q_0\to B_0)$ is special, then compactness of $(Q_0\to B_0)$ combined with \cref{lem:abstract-colim-bullshit} and \cref{lem:maps to affine charted} together imply that (in the cofinal subsystem $i \geq 0$)
\begin{align*} 
    \Hom_X(Z, Y) &= \Hom_{(P\to A)}\big((Q\to B), (\mathcal{M}_Z(Z)\to \mathcal{O}_Z(Z))\big) \\
    &= \Hom_{\varinjlim(P_i\to A_i)}\big(\varinjlim(Q_i\to B_i), \varinjlim(\mathcal{M}_{Z_i}(Z_i)\to \mathcal{O}_{Z_i}(Z_i))\big) \\
    &= \varinjlim \Hom_{(P_i\to A_i)}\big((Q_i\to B_i), (\mathcal{M}_{Z_i}(Z_i)\to \mathcal{O}_{Z_i}(Z_i))\big) \\
    &= \varinjlim \Hom_{X_i}(Z_i, Y_i). 
\end{align*}
Here, we used the fact that, since $Z$ is qcqs, we have
\[
    (\mathcal{M}_Z(Z)\to \mathcal{O}_Z(Z)) = \varinjlim\, (\mathcal{M}_{Z_i}(Z_i)\to \mathcal{O}_{Z_i}(Z_i)),
\]
(see \cref{lem:M-of-lim} below). 

\medskip

\noindent \emph{Step 2: \labelcref{eqn:approx-ff} holds assuming $Y_0$ is a disjoint union of special objects.}

\smallskip

Next, suppose that $Y_0$ is the disjoint union of a (finite) number of special objects $\{Y_0^\alpha\}_{\alpha\in A}$. We treat the index set $A$ as a discrete space. Then
\[ 
    \Hom_{X_i}(Z_i, Y_i) = \coprod_{a\colon |Z_i|\to A} \prod_{\alpha} \Hom_{X_i}(a^{-1}(\alpha), Y^\alpha_i),
\]
and the same with $\Hom_X(Z, Y)$. To pass to the limit observe that since $|Z|=\varprojlim |Z_i|$ (inverse limit of spectral spaces with quasi-compact transition maps), every locally constant function $|Z|\to A$ factors through one of the projections $\pi_j\colon |Z|\to |Z_j|$ (see~\stacks[Lemma]{0A2Y}).  Therefore
\begin{align*}
    \Hom_X(Z, Y) &= \coprod_{a\colon |Z|\to A} \prod_{\alpha} \Hom_{X}(a^{-1}(\alpha), Y^\alpha)  \\
    &= \varinjlim_j \coprod_{a\colon |Z_j|\to A} \prod_{\alpha} \Hom_X(\pi_j^{-1} a^{-1}(\alpha), Y^\alpha) \\
    &= \varinjlim_j \coprod_{a\colon |Z_j|\to A} \prod_{\alpha} \varinjlim_{i\geq j} \Hom_{X_i}(\pi_{ij}^{-1} a^{-1}(\alpha), Y_i^\alpha) \\
    &= \varinjlim_i \coprod_{a\colon |Z_i|\to A} \prod_{\alpha} \Hom_{X_i}(a^{-1}(\alpha), Y^\alpha_i) \\
    &= \varinjlim_i \Hom_{X_i}(Z_i, Y_i).
\end{align*}
Here, for $i\geq j$ we denoted by $\pi_{ij}$ the map $X_i\to X_j$. In the fourth equality, we commute the filtered colimit $\varinjlim_{i\geq j}$ past the finite limit $\prod_\alpha$, then past the colimit $\coprod_a$, and finally $\varinjlim_j \varinjlim_{i\geq j}$ is the same as $\varinjlim_i$.

\medskip

\noindent \emph{Step 3: \labelcref{eqn:approx-ff} holds in general.}

\smallskip

Now, consider the general case. We pick a strict \'etale surjection $U_0\to Y_0$ with $U_0$ a finite disjoint union of special objects, and a strict \'etale surjection $U'_0\to U_0\times_{Y_0} U_0$ with $U'_0$ a finite disjoint union of special objects. Define $U_i = U_0\times_{Y_0} Y_i$ and $U = U_0\times_{Y_0} Y$, and analogously with $U'_i$ and $U'$. Then $Y_i$ is the coequalizer of $U'_i\rightrightarrows U_i$ (in sheaves on the big etale site of $\underline{X}$) where the two maps are strict \'etale. Let $f\colon Z\to Y$ be a morphism over $X$. We will show that there exists a morphism $f_i\colon Z_i\to Y_i$ inducing $f$, and that any two such $f_i$ become equal after base change to $X_j$ for $j\gg i$. Let $V = Z\times_Y U$ and $V' = Z\times_Y U'$. Then $Z$ is the coequalizer of the two strict \'etale maps $V'\rightrightarrows V$. Since $Z = \varprojlim Z_i$ as schemes, we may assume that there exists a coequalizer diagram $V'_0\rightrightarrows V_0\to Z_0$ of strict \'etale maps whose base change is $V'\rightrightarrows V\to Z$. Since $U_0$ is the disjoint union of special objects, the result of the previous paragraph applied to $V\to U$ produces an (essentially unique) map $g_i\colon V_i\to U_i$ inducing $V\to U$. Similarly, we obtain a map $g'_i\colon V'_i\to U'_i$. Diagrams for visual aid:
\[ 
    \begin{tikzcd}
        V'_i \ar[r,shift right=0.5ex,"t",swap] \ar[r,shift left=0.5ex,"s"] \ar[d,"g'_i",swap] & V_i\ar[d,"g_i"] \ar[r] & Z_i \ar[d,dotted,"f_i?"] \\
        U'_i \ar[r,shift right=0.5ex,"t",swap] \ar[r,shift left=0.5ex,"s"] & U_i  \ar[r] & Y_i
    \end{tikzcd}
    \qquad
    \begin{tikzcd}
        V' \ar[r,shift right=0.5ex,"t",swap] \ar[r,shift left=0.5ex,"s"] \ar[d,"g'",swap] & V\ar[d,"g"] \ar[r] & Z\ar[d,"f"] \\
        U' \ar[r,shift right=0.5ex,"t",swap] \ar[r,shift left=0.5ex,"s"] & U  \ar[r] & Y.
    \end{tikzcd}
\]
Looking at the four compositions in the left square
\[ 
    g_i s, \quad s g'_i, \quad g_i t, \quad t g'_i \quad \colon  \quad V'_i\longrightarrow U_i,
\]
since $gs = sg'$ and $gt = tg'$, we must have $g_i s = sg'_i$ and $g_it = tg'_i$ for $i\gg 0$. We have thus obtained a natural transformation between the two $(\bullet\rightrightarrows\bullet)$-shaped diagrams, yielding a map $f_i\colon Z_i\to Y_i$ between their coequalizers. It is clear from the construction that $f$ is the base change of $f_i$ to $X$. Moreover, retracing the argument, we see that any two solutions to this problem become equal for $i\gg 0$. This finishes the proof of fully faithfulness.

\medskip

\noindent {\bf Proof of essential surjectivity.} Let $Y\to X$ be an sfp map. If $Y$ is special, i.e.\  of the form $\Spec(Q\to B)$ for an sfp map $(P\to A)\to (Q\to B)$, then we have already established in 
\cref{prop:colimit descent for sfp prelog maps} that $Y$ is in the essential image. Therefore finite disjoint unions of special objects are in the image as well. Moreover, if $U$ is in the essential image and $V\to U$ is a strict morphism of finite presentation, then $V$ is in the essential image as well, simply by the corresponding fact for schemes \cite[Th\'eor\`eme 8.5.2]{EGAIV3} (and the fact that strict finitely presented maps are sfp). 

With these preliminary observations, let $U\to Y$ be a strict \'etale surjection with $U$ a finite disjoint union of special objects, and let $R = U\times_Y U$. Then $U$ is in the essential image and as the first projection $R\to U$ is strict \'etale, so is $R$. Consider the corresponding \'etale groupoid $(U, R, s, t, c)$ or rather $(U, R, s, t, c, e, i)$ with \'etale maps $s,t\colon R\to U$ (source and target), $c\colon R\times_{t,U,s} R\to R$ (composition), $e\colon U\to R$ (identity), and $i\colon R\to R$ (inverse). All of these are \'etale morphisms over $U$, satisfying certain identities (see \stacks[Section]{0230}) taking place in maps between some fibre products of $R$ over $U$. Let $U_i$ and $R_i$ be sfp over $X_i$ with saturated base change $U$ and $R$. By fully faithfulness, possibly after increasing $i$ there exist morphisms $s_i, t_i\colon R_i\to U_i$ etc. (Note that since $R_i\to U_i$ is strict \'etale, $R_i\times_{U_i} R_i$ is sfp over $X_i$, with saturated base change to $X$ isomorphic to $R\times_U R$.) Moreover, the axioms of a groupoid will be satisfied for $i\gg 0$. 

Let $Y_i$ be the algebraic stack over $X_i$ defined by the \'etale groupoid $(U_i, R_i, \ldots)$. These form an inverse system with affine transition maps and inverse limit $Y$. Moreover, $Y$ and all $Y_i$ are qcqs. Since $Y$ is a scheme, by \cite[Theorem~C(iii)]{RydhApproximation} the $Y_i$ are schemes for $i\gg 0$.

Coming back from our brief detour through groupoids and stacks, we establish that for $i\gg 0$ the coequalizer $Y_i$ of $R_i\rightrightarrows U_i$ exists as a scheme. Moreover, $U_i\to Y_i$ is an \'etale surjection and $R_i = U_i\times_{Y_i} U_i$. Since log structures form an \'etale stack, this enables us to construct the log structure on $Y_i$ for which $U_i\to Y_i$ is strict. Then, since the formation of \'etale coequalizers commutes with base change \stacks[Lemma]{03I4}, we deduce that $Y$ is the saturated base change of $Y_i$. 
\end{proof}

\begin{lem} \label{lem:M-of-lim}
    Let $(X_i)$ be an inverse system of log schemes with affine transition maps and limit $X$. Suppose that $X_i$ are qcqs. Then
    \[ 
        \varinjlim \cM(X_i) \isomto \cM(X).
    \]
\end{lem}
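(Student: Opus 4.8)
The plan is to identify the underlying scheme and the log structure of the inverse limit $X$, and then to invoke the standard fact that, over a cofiltered limit of qcqs schemes with quasi-compact transition morphisms, global sections on the small \'etale site commute with the formation of colimit sheaves. Since the transition maps are affine, the inverse limit log scheme $X$ has underlying scheme $\underline{X} = \varprojlim_i \underline{X_i}$ and log structure $\cM_X$ equal to the log structure associated to the colimit prelog structure $\varinjlim_i \pi_i^{-1}\cM_{X_i} \to \cO_X$ (this is how cofiltered limits of log schemes are constructed, compatibly with the affine charted case recalled before \cref{lem:prelogring_colimit_fs}; here $\pi_i\colon X \to X_i$ denotes the projection, and the structure map is the colimit of the $\pi_i^{-1}\cM_{X_i}\to\pi_i^{-1}\cO_{X_i}\to\cO_X$). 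In particular $\underline{X}$ is qcqs, $|X| = \varprojlim_i |X_i|$, and by the standard limit formalism for schemes (\cite[\S 8]{EGAIV3}, \stacks[Section]{01YT}) one has $\cO_X = \varinjlim_i \pi_i^{-1}\cO_{X_i}$ as sheaves of rings on $X_{\et}$; passing to units and checking on strict henselian local rings, where $\cO_{X,\overline x} = \varinjlim_i \cO_{X_i,\overline x}$, this gives $\cO_X^\times = \varinjlim_i \pi_i^{-1}\cO_{X_i}^\times$, since units are preserved under filtered colimits of rings.

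The key point is then that the colimit prelog structure $\varinjlim_i \pi_i^{-1}\cM_{X_i}\to\cO_X$ is already a log structure, so that $\cM_X = \varinjlim_i \pi_i^{-1}\cM_{X_i}$ as sheaves of monoids on $X_{\et}$. Surjectivity of $\alpha^{-1}(\cO_X^\times)\to\cO_X^\times$ is immediate; conversely, working at a geometric point $\overline x$, if an element $m \in \varinjlim_i \cM_{X_i,\overline x}$ is represented by $m_i\in\cM_{X_i,\overline x}$ with $\alpha(m_i)$ a unit in $\cO_{X,\overline x} = \varinjlim_i \cO_{X_i,\overline x}$, then $\alpha(m_i)$ is already a unit in $\cO_{X_j,\overline x}$ for $j\gg i$, hence $m_i$, and therefore $m$, lies in $\cO^\times$. (Equivalently: using the formula $\pi_i^\ast\cM_{X_i} = \pi_i^{-1}\cM_{X_i}\oplus_{\pi_i^{-1}\cO_{X_i}^\times}\cO_X^\times$ for the associated log structure, the fact that filtered colimits commute with pushouts of sheaves of monoids, and the first paragraph, the colimit of the $\pi_i^\ast\cM_{X_i}$ equals $\bigl(\varinjlim_i\pi_i^{-1}\cM_{X_i}\bigr)\oplus_{\cO_X^\times}\cO_X^\times = \varinjlim_i\pi_i^{-1}\cM_{X_i}$.)

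Granting this, the lemma follows: one computes
\[
    \cM(X) = \Gamma\bigl(X_{\et},\cM_X\bigr) = \Gamma\bigl(X_{\et},\varinjlim_i \pi_i^{-1}\cM_{X_i}\bigr) = \varinjlim_i \Gamma\bigl(X_{i,\et},\cM_{X_i}\bigr) = \varinjlim_i \cM(X_i),
\]
where the third equality is the standard statement that, for a limit $X = \varprojlim_i X_i$ of qcqs schemes along quasi-compact transition morphisms and a filtered system $\cF_i$ of sheaves on $X_{i,\et}$, one has $\Gamma\bigl(X_{\et},\varinjlim_i\pi_i^{-1}\cF_i\bigr) = \varinjlim_i\Gamma\bigl(X_{i,\et},\cF_i\bigr)$ (see \stacks[Lemma]{09YQ}, cf.\ also \stacks[Lemma]{0A2Y}).

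The only genuinely delicate step is the identification of the log structure of the inverse limit in the second paragraph --- describing $\cM_X$ and commuting the ``associated log structure'' operation past the filtered colimit --- but this rests only on the elementary fact that units are preserved under filtered colimits of rings, verified on strict henselian local rings; everything else is the standard limit formalism for qcqs schemes. I would favour this sheaf-theoretic route over reducing to an affine charted situation, since a general log scheme (and in particular $X$ here) need not admit charts \'etale locally.
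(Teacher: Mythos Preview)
Your proof is correct and follows essentially the same route as the paper's: both arguments reduce to showing that $\cM_X = \varinjlim_i \pi_i^{-1}\cM_{X_i}$ as \'etale sheaves on $X$, and then invoke the standard commutation of global sections with filtered colimits over a cofiltered limit of qcqs schemes (the paper cites \stacks[Lemma]{09YN}, you cite the nearby \stacks[Lemma]{09YQ}). For the sheaf identification, the paper checks that the colimit is a log structure and then verifies the universal property of the inverse limit, whereas you take the construction of the limit as given and verify directly (via stalks, or the pushout formula for $\pi_i^\ast\cM_{X_i}$) that the colimit prelog structure is already a log structure; these are two phrasings of the same computation.
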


\begin{proof}
We have $\underline{X} = \varprojlim \underline{X}_i$, and the same with underlying topological spaces. In particular, $\underline{X}$ is qcqs. We claim that the map of \'etale sheaves on $\underline{X}$
\[ 
    \varinjlim \pi_i^{-1}(\cM_{X_i}) \la \cM_X
\]
is an isomorphism. To this end, first note that since  $\cO_X^\times =\varprojlim \pi_i^{-1}(\cO_{X_i}^\times)$, the source (call it $\cM$) of the map in question is a log structure. Moreover, the log scheme $(\underline{X}, \cM)$ has the universal property of the inverse limit $\varprojlim X_i$, and hence $\cM\simeq\cM_X$.

Since $X$ is qcqs, in view of \stacks[Lemma]{09YN} (see also \cite[Exp.\ VI, \S 5]{SGA4_2}) for any system of sheaves of sets $\cF_i$ on $X_i$ together with maps $\pi_{ij}^{-1}(\cF_i)\to \cF_j$ for $j\geq i$, satisfying the natural transitivity property, we have
\[ 
    \varinjlim \Gamma(X_i, \cF_i) = \Gamma(X, \varinjlim \pi^{-1}_i \cF_i).
\]
The result follows by taking $\cF_i = \cM_{X_i}$.  
\end{proof}

We can strengthen \cref{prop:fs-approximation} as follows. 

\begin{defi} \label{def:affine-charted-system}
We call an inverse system of saturated log schemes $\{X_i\}_{i\in I}$ {\bf affine charted} if there exists a set $A$, direct systems of saturated prelog rings 
\[
    \{(P_{\alpha,i}\to A_{\alpha,i})\}_{i\in I^{\rm op}}
    \quad \text{for $\alpha\in A$}
\]
and strict \'etale maps forming a morphism of inverse systems
\[ 
    f_{\alpha,i}\colon U_{\alpha,i} = \Spec(P_{\alpha,i}\to A_{\alpha,i}) \la X_i
\]
and such that for every index $i$, the family $\{f_{\alpha,i}\colon U_{\alpha,i} \to X_i\}_{\alpha\in A}$ is a covering family for the \'etale topology on $\underline{X}_i$.
\end{defi}

Note that in particular the transition maps $X_j\to X_i$ in an affine charted inverse system are affine. Since affine morphisms are in particular qcqs, if $X_i$ is qcqs for some index $i$, then $X_j$ ($j\geq i$) and $X$ are qcqs as well. The inverse limit $\varprojlim X_i$ exists and is a saturated log scheme, and 
\[ 
    U_\alpha = \Spec(P_\alpha\to A_\alpha), \quad (P_\alpha\to A_\alpha) = \varinjlim_{i\in I^{\rm op}}\, (P_{\alpha,i}\to A_{\alpha,i})
\]
forms a strict \'etale cover of $X$. Moreover, we have
\[ 
    \cM_X = \varinjlim_{i\in I^{\rm op}} \pi^{-1}_i(\cM_{X_i})
\]
where $\pi_i\colon X\to X_i$ is the projection. 

Our main approximation result is the following. 

\begin{thm} \label{thm:sfp-approx}
    Let $\{X_i\}_{i\in I}$ be an affine charted system of qcqs saturated log schemes with inverse limit $X$. Then, saturated base change induces an equivalence of categories
    \[ 
        \varinjlim \cat{Sfp}_{X_i} \isomto \cat{Sfp}_{X}
    \]
\end{thm}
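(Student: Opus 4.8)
The plan is to reduce \cref{thm:sfp-approx} to the affine charted case already established in \cref{prop:fs-approximation} by a gluing argument along the strict étale covers $\{U_{\alpha,i}\to X_i\}$ that come with the definition of an affine charted system. Write $U_i = \coprod_\alpha U_{\alpha,i}$ (assuming, as we may after passing to a finite subcover using quasi-compactness, that $A$ is finite) and $R_i = U_i\times_{X_i} U_i$; since the maps $U_{\alpha,i}\to X_i$ are strict étale and $X$ is qcqs, each $U_i$ is again affine charted (a disjoint union of spectra of prelog rings, each of which is a filtered colimit of compact ones by \cref{lem:prelogring_colimit_fs}), and $R_i$ is strict étale over $U_i$, hence also of this form after refining the cover. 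Thus $X_i$ is the coequalizer of the strict étale groupoid $(U_i, R_i, s_i, t_i, \ldots)$ and $X = \varprojlim X_i$ is the coequalizer of $(U, R, \ldots)$ where $U = \varprojlim U_i$, $R = \varprojlim R_i$.

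First I would prove \textbf{essential surjectivity}. Given an sfp map $Y\to X$, pull it back along $U\to X$ to get an sfp map $Y_U\to U$ and along $R\to X$ to get $Y_R = Y_U\times_U R = R\times_U Y_U\to R$ (these agree by the cocycle/descent datum for $Y$ relative to $U\to X$). Since $U$ and $R$ are affine charted, \cref{prop:fs-approximation} applied over each $U_{\alpha,i}$ produces, for $i\gg 0$, sfp maps $Y_{U,i}\to U_i$ and $Y_{R,i}\to R_i$ whose saturated base changes recover $Y_U$ and $Y_R$. By the fully faithfulness part of \cref{prop:fs-approximation} (applied over $R$, noting $R_i$ is strict étale over $U_i$ so that the two pullbacks of $Y_{U,i}$ to $R_i$ are sfp over $R_i$), the canonical descent isomorphism $s^*Y_U\cong t^*Y_U$ over $R$ descends, for $i$ large, to an isomorphism $\varphi_i\colon s_i^*Y_{U,i}\isomto t_i^*Y_{U,i}$ over $R_i$; again by fully faithfulness, the cocycle condition for $\varphi_i$ on $R_i\times_{U_i} R_i$ holds for $i\gg 0$. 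Now $(Y_{U,i},\varphi_i)$ is a descent datum along the strict étale map $U_i\to X_i$; since log structures form an étale stack and descent for the underlying schemes is effective (the relevant maps are affine over $U_i$, cf.\ the end of the proof of \cref{prop:fs-approximation}), it is effective, yielding a saturated log scheme $Y_i\to X_i$ with $U_i\times_{X_i} Y_i\cong Y_{U,i}$. One checks $Y_i\to X_i$ is sfp (it is so after the strict étale base change $U_i\to X_i$, and this suffices by \cref{rmk:maps between fs are sfp}\,\labelcref{rmkitem:sfp etale local}), and that its saturated base change to $X$ is $Y$ (étale coequalizers commute with base change, \stacks[Lemma]{03I4}).

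Second, \textbf{fully faithfulness}: for $Y_0, Z_0$ over $X_0$ with base changes $Y_i, Z_i, Y, Z$, a map $Z\to Y$ over $X$ is the same as a map $Z_U\to Y_U$ over $U$ compatible with the two pullbacks over $R$; the fully faithful part of \cref{prop:fs-approximation} over $U$ and over $R$ then shows that such a map descends to $Z_i\to Y_i$ over $X_i$ for $i\gg 0$, uniquely up to enlarging $i$, exactly as in \emph{Step 3} of the proof of \cref{prop:fs-approximation} (the bookkeeping there — commuting a filtered colimit past the finite limits and colimits describing a coequalizer — is precisely what is needed here, now with $U, R$ in place of the special-object cover). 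The main obstacle, and where care is required, is making the groupoid/descent step uniform in $i$: one must know that $R_i$ and $R_i\times_{U_i}R_i$ are themselves objects to which \cref{prop:fs-approximation} and fully faithfulness apply, which relies on their being strict étale (hence sfp) over $U_i$ and on the effectivity of étale descent for the resulting sfp log schemes — essentially the same argument via \cite[Theorem~C(iii)]{RydhApproximation} and \stacks[Lemma]{03I4} used at the end of the proof of \cref{prop:fs-approximation}.
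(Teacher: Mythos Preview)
Your overall strategy—reduce to the affine charted situation handled by \cref{prop:fs-approximation} via strict \'etale descent along the covers built into the definition of an affine charted system—is the right idea, and is close in spirit to the paper's proof. However, there is a genuine gap.

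\medskip

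\textbf{The gap.} You invoke \cref{prop:fs-approximation} ``applied over each $U_{\alpha,i}$''. But \cref{prop:fs-approximation} has a strong hypothesis: the system $(P_i\to A_i)$ must have $P_i$ \emph{fs} and $A_i$ of \emph{finite type over $\bZ$}. The definition of an affine charted system (\cref{def:affine-charted-system}) only gives you direct systems $(P_{\alpha,i}\to A_{\alpha,i})$ of \emph{saturated} prelog rings, with no finiteness imposed on either $P_{\alpha,i}$ or $A_{\alpha,i}$. So \cref{prop:fs-approximation} does not apply to the system $\{U_{\alpha,i}\}_{i\in I}$ as it stands, and your sentence ``each of which is a filtered colimit of compact ones by \cref{lem:prelogring_colimit_fs}'' does not fix this: that lemma lets you write each individual $U_{\alpha,i}$ as a limit over a \emph{different} directed set, not the original $I$.

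The paper confronts exactly this point. After localizing to a single $U_\alpha = \Spec(P\to A) = \varprojlim_I \Spec(P_i\to A_i)$, it introduces a refined index set $J$ consisting of pairs $j=(i(j),\,P'_j\to A'_j)$ with $P'_j\subseteq P_{i(j)}$ a finitely generated (hence fs) submonoid and $A'_j\subseteq A_{i(j)}$ a finitely generated $\bZ$-subalgebra. Then \cref{prop:fs-approximation} applies both to the $J$-indexed system (giving $\cat{Sfp}_X \simeq \varinjlim_J \cat{Sfp}_{X'_j}$) and to each fibre $J_i=\pi^{-1}(i)$ (giving $\cat{Sfp}_{X_i}\simeq \varinjlim_{J_i}\cat{Sfp}_{X'_j}$), and a double-colimit rearrangement yields the result. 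Without this refinement step your argument does not go through.

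\medskip

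\textbf{A secondary point.} Your hands-on groupoid descent (build $Y_{U,i}$, descend $\varphi_i$, invoke Rydh for effectivity, etc.) is correct in outline but heavier than necessary. The paper observes once and for all that $U\mapsto \cat{Sfp}_U$ and $U\mapsto \varinjlim_i \cat{Sfp}_{U_i}$ are both stacks for the strict \'etale topology on $X$, and that the comparison functor is a morphism of stacks; this immediately reduces the theorem to the local (affine charted) case, where the whole explicit $(U,R)$-bookkeeping you outline is absorbed into the word ``stack''. If you adopt your approach, you would still need the refined-index argument above on $U$ and on $R$ separately, making the proof noticeably longer than the paper's.
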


\begin{proof}
Since the underlying scheme of $X$ is the inverse limit of the underlying schemes of $X_i$, if $U\to X$ is an \'etale map with $U$ qcqs, then it descends to $U_i\to X_i$. Then the association $U\mapsto \varinjlim \cat{Sfp}_{/U_i}$ defines a stack on the category of qcqs (strict) \'etale maps $U\to X$. Similarly, $U\mapsto \cat{Sfp}_{/U}$ is a stack, and our functor is the global sections of a morphism of stacks. This implies that we can work locally, and hence we may assume that $X = \Spec(P\to A)$ is the inverse limit of $X_i = \Spec(P_i\to A_i)$. 

Now, \cref{prop:fs-approximation} cannot be applied yet since $P_i$ might not be fs and $A_i$ may not be of finite type over $\bZ$. However, let $J$ be the poset of pairs $j = (i(j), P'_j\to A'_j)$ where $i=i(j)\in I$, where $P'_j\subseteq P_{i(j)}$ is a finitely generated (and hence fs) submonoid, and where $A'_j\subseteq A_{i(j)}$ is a~finitely generated $\bZ$-algebra. We have $j'=(i(j'), P'_{j'}\to A'_{j'})\geq (i(j), P'_j\to A'_j) = j$ if $i(j')\geq i(j)$, $P_{i(j)}\to P_{i(j')}$ sends $P'_{j}$ inside $P'_{j'}$, and $A_{i(j)}\to A_{i(j')}$ sends $A'_j$ inside $A'_{j'}$. Then the projection $\pi\colon J\to I$, $\pi(j) = i(j)$ is monotone and cofinal, thus $X = \varprojlim_J X'_j$ where $X'_j = \Spec(P'_j\to A'_j)$. Moreover, for every $i\in I$ we have $X_i=\varprojlim_{J_i} X'_j$ where $J_i = \pi^{-1}(i) \subseteq J$. We then have, by \cref{prop:fs-approximation} applied both to $X=\varinjlim_J X_i$ and $X_i = \varinjlim_{J_i} X_j$
\[ 
    \cat{Sfp}_{X} = \varinjlim_J \cat{Sfp}_{X'_j} = \varinjlim_I \left(\varinjlim_{J_i} \cat{Sfp}_{X'_j} \right)= \varinjlim_I \cat{Sfp}_{X_i}. \qedhere
\]
\end{proof}

The following innocent-looking result is surprisingly not so easy to prove.

\begin{lem} \label{lem:strict-sfp-is-fp}
    Let $f\colon Y\to X$ be a strict map of saturated log schemes. Then $f$ is (locally) sfp if and only if the underlying morphism of schemes $\underline{f}\colon \underline{Y}\to\underline{X}$ is (locally) of finite presentation.
\end{lem}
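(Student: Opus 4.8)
The plan is to prove the two implications of the equivalence separately, after noting that the ``qcqs'' clauses in the two statements refer to $f$ itself (equivalently to $\underline f$), so it suffices to establish the biconditional ``locally sfp $\iff$ $\underline f$ locally of finite presentation''. Both properties are \'etale-local on source and target — for locally sfp this is \cref{rmk:maps between fs are sfp} — so I will work \'etale-locally throughout.

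For the easy direction, ``$\underline f$ locally of finite presentation $\Rightarrow$ $f$ locally sfp'': since $X$ is saturated it carries, \'etale-locally, a chart $P\to\cM_X$, and because $f$ is strict this pulls back to a chart $P\to\cM_Y$. One then gets a commutative square of strict maps with $Y\to\bA_P$ and $X\to\bA_P$, and the induced strict map $Y\to X\times_{\bA_P}\bA_P=X$ is simply $f$. As $\id_P\colon P\to P$ is trivially sfp and $\underline f$ is of finite presentation by hypothesis, \cref{defi:sfp log map} shows $f$ is locally sfp.

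For the substantive direction, ``$f$ locally sfp and strict $\Rightarrow$ $\underline f$ locally of finite presentation'', I will work \'etale-locally and assume $X=\Spec(P\to A)$, $Y=\Spec(Q\to B)$ with $(P\to A)\to(Q\to B)$ an sfp map of saturated prelog rings; the goal is that $A\to B$ is of finite presentation. The point of strictness is that $\cM_Y=f^\ast\cM_X=(P\to B)^{a}$, so the prelog structures $P\to B$ and $Q\to B$ on $\Spec B$ induce the same log structure — intuitively the monoid $Q$ is ``invisible'' on $\underline Y$. To exploit this I will write $(P\to A)=\varinjlim_i(P_i\to A_i)$ as a filtered colimit of \emph{compact} saturated prelog rings, i.e.\ with $P_i$ fs and $A_i$ of finite type over $\bZ$ (\cref{lem:prelogring_colimit_fs}), and use \cref{prop:colimit descent for sfp prelog maps} to descend, for $i$ large, $f$ to an sfp morphism $f_i\colon Y_i=\Spec(Q_i\to B_i)\to X_i=\Spec(P_i\to A_i)$ of fs log schemes of finite type over $\bZ$, with $Y=Y_i\times^{\sat}_{X_i}X$; in particular $\underline{Y_i}\to\underline{X_i}$ is of finite presentation. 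I will then argue in two steps: (a) after increasing $i$ and replacing $Y_i$ by an open subscheme, $f_i$ becomes \emph{strict}; (b) conclude. For (b): if $f_i$ is strict, so is its base change $W_i:=Y_i\times_{X_i}X\to X$, hence $\cM_{W_i}$ is the pullback of the saturated log structure $\cM_X$ and is already saturated; thus $W_i$ needs no saturation, $Y=W_i^{\sat}=W_i$, and $\underline Y=\underline{Y_i}\times_{\underline{X_i}}\underline X$ is of finite presentation over $\underline X$ as a base change of the finitely presented map $\underline{Y_i}\to\underline{X_i}$. For (a): for a morphism of fs log schemes the strict locus is open (whether $f_i^{-1}\overline{\cM}_{X_i}\to\overline{\cM}_{Y_i}$ is an isomorphism is a locally constructible condition stable under generization, hence open); let $Z_i\subseteq\underline{Y_i}$ be its closed complement. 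Since strictness is stable under saturated base change, the sets $Z_j$ pull back into each other, so $\{\pi_i^{-1}(Z_i)\}_i$ is a decreasing family of closed subsets of the quasi-compact space $\underline Y=\varprojlim_i\underline{Y_i}$; because $\overline{\cM}$ is compatible with the relevant filtered colimits and saturated pushouts, $f$ strict forces $\bigcap_i\pi_i^{-1}(Z_i)=\varnothing$, so $\pi_i^{-1}(Z_i)=\varnothing$ for $i\gg 0$. Replacing $Y_i$ by $\underline{Y_i}\setminus Z_i$ (still containing the image of $\underline Y$, and strict over $X_i$) leaves $Y=Y_i\times^{\sat}_{X_i}X$ unchanged, completing (a).

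I expect step (a) to be the main obstacle: controlling how strictness interacts with saturated base change in the approximation. Unlike the purely scheme-theoretic situation one cannot simply invoke \cite[Th\'eor\`eme~8.5.2]{EGAIV3}, since saturated base change does not commute with passing to underlying schemes; the argument must genuinely use the extra hypothesis of strictness to make the saturation step invisible on underlying schemes at finite level. The two facts that need to be set up carefully are the openness of the strict locus for morphisms of fs log schemes and the implication ``$f$ strict $\Rightarrow f_i$ strict for $i\gg 0$'', both of which ultimately rest on the constructibility of $\overline{\cM}$ on fs log schemes together with its good behaviour under filtered colimits and saturated pushouts.
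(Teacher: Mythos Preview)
Your ``if'' direction is fine and essentially what the paper does.

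For the converse, your approach differs from the paper's, and the crucial step~(a) --- that $\bigcap_i \pi_i^{-1}(Z_i)=\varnothing$, equivalently that for each geometric point $\bar y$ of $Y$ the approximation $f_i$ is strict at $\bar y_i$ for $i\gg 0$ --- is left unjustified. What compatibility of $\overline{\cM}$ with filtered colimits buys you is only that the map of colimits $\varinjlim_i \overline{\cM}_{X_i,\bar x_i}\to\varinjlim_i \overline{\cM}_{Y_i,\bar y_i}$ is an isomorphism; it does not follow that the individual maps of sharp fs monoids are isomorphisms for $i\gg 0$ (the transition maps in these towers need not be injective, and the ranks need not stabilize). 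More tellingly, the paper establishes exactly this descent-of-strictness statement as \cref{cor:descent-of-strictness}, but \emph{derives it from} \cref{lem:strict-sfp-is-fp}; so a proof of the lemma along your lines must supply an independent argument for the pointwise claim, which you do not give.

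The paper bypasses this entirely with a compactness/retract trick. Strictness gives an isomorphism of saturated log schemes $Y=\Spec(Q\to B)\simeq\Spec(P\to B)$ over $X$, and one writes the right-hand side as $\varprojlim Y'_i$ with $Y'_i=\Spec(P\to A\otimes_{A_i}B_i)$; by construction each $Y'_i$ is \emph{strict} over $X$ and $\underline{Y}'_i=\underline{Y}_i\times_{\underline{X}_i}\underline{X}$ is of finite presentation over $\underline{X}$. Since $(Q\to B)$ is a compact object in saturated prelog rings over $(P\to A)$ (this is the sfp hypothesis, via \cref{prop:compact-saturated-prelog-ring}), the inverse isomorphism $\varprojlim Y'_i\isomto Y$ factors through some $Y'_i$, exhibiting $Y$ as a retract of $Y'_i$ over $X$; on underlying schemes this makes $B$ a retract of the finitely presented $A$-algebra $A\otimes_{A_i}B_i$, hence itself finitely presented. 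The key contrast with your plan is that the paper never tries to make any $f_i$ strict: it manufactures a \emph{new} system $\{Y'_i\}$ that is strict over $X$ by design, and is content with $Y$ being only a retract (rather than a base change) of one of its members --- which suffices.
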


\begin{proof}
The ``if'' direction is clear. For the converse, suppose that $f$ is sfp. Working strict \'etale locally, see \stacks[Lemma]{05B0}, we may assume by \cref{prop:sfp morphisms} that we are in the following situation: there is a directed system of prelog rings $(P_i\to A_i)$ with $P_i$ fs and $A_i$ of finite type over $\bZ$ and direct limit $(P\to A)$, and sfp maps $(P_i\to A_i)\to (Q_i\to B_i)$ such that 
\[ 
    (Q_j\to B_j) = (Q_i\to B_i) \otimes^\sat_{(P_i\to A_i)} (P_j\to B_j)\qquad \text{for $j\geq i$}
\]
(where $\otimes^\sat$ denotes pushout in the category of saturated prelog rings), with direct limit $(Q\to B)$ such that setting $X_i=\Spec(P_i\to A_i)$ and $Y_i=\Spec(Q_i\to B_i)$ we have
\[ 
    (Y\to X) \quad\simeq\quad (\Spec(Q\to B) 
    \to 
    \Spec(P\to A)) \quad\simeq\quad \varprojlim (Y_i\to X_i).
\]

Since $Y \to X$ is strict, the map of prelog rings $(P \to B) \to (Q \to B)$ induces an isomorphism 
\[
    Y = \Spec(Q \to B) \isomto \Spec(P \to B) = \varprojlim \Spec(P \to A \otimes_{A_i} B_i) = \varprojlim Y'_i.
\]
of saturated  log schemes over $X$, where $Y'_i = \Spec(P\to A\otimes_{A_i} B_i)$.  
Now, since $(Q \to B)$ is a~compact object in the category of saturated prelog rings over $(P\to A)$, we have
\begin{align*} 
    \Hom_X(\varprojlim Y'_i, Y) 
    &= \Hom_{(P\to A)}((Q\to B), \varinjlim (\cM(Y'_i)\to \cO(Y'_i))) \\
    &= \varinjlim \Hom_{(P\to A)}((Q\to B), (\cM(Y'_i)\to \cO(Y'_i))) \\
    &= \varinjlim \Hom_X(Y'_i, Y).
\end{align*}
Therefore the inverse isomorphism $\varprojlim Y'_i\isomto Y$ factors through some $Y'_i$. This means that $Y$ is a retract of $Y'_i$ for some $i$. Thus $\underline{Y}=\Spec(B)$ is a retract of $\underline{Y}'_i=\Spec(A\otimes_{A_i}B_i)$, and since $A\to A\otimes_{A_i}B_i$ is of finite presentation, so is $A\to B$.
\end{proof}

\begin{cor} \label{cor:descent-of-strictness}
    Let $\{X_i\}_{i\in I}$ be an affine charted system of qcqs saturated log schemes with inverse limit $X$. Let $Z_0\to Y_0$ be a morphism between sfp log schemes over $X_0$. Then, the saturated base change $Z\to Y$ to $X$ is strict if and only if the saturated base change $Z_i\to Y_i$ to $X_i$ is strict for $i\gg 0$. 
\end{cor}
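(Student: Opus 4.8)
The plan is to treat the two implications separately: the ``if'' direction is essentially formal, while the ``only if'' direction requires a detour through the limit~$X$.

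First I would record some preliminary points. Since $Y_0\to X_0$ and $Z_0\to X_0$ are sfp, so is $Z_0\to Y_0$ by \crefpart{prop: sorite for sfp}{propitem:sfp 2 out of 3}, and likewise every $Z_i\to Y_i$ and $Z\to Y$ is sfp. Working \'etale-locally on~$X$ and using that a saturated base change of an sfp prelog map is computed by a pushout of saturated prelog rings, which commutes with filtered colimits, one gets $\underline{Y}=\varprojlim_i\underline{Y}_i$ and $\underline{Z}=\varprojlim_i\underline{Z}_i$ with affine transition maps. The ``if'' direction is then immediate: if $Z_i\to Y_i$ is strict for some~$i$, then since a strict pullback of a saturated log scheme is saturated (\cref{lem:saturated fibre product}), the saturated base change agrees with the scheme-theoretic one, so $Z=Z_i\times^{\sat}_{Y_i}Y\to Y$ is again strict. (Note that if this holds for one $i$ it holds for all larger indices, consistently with the claim ``for $i\gg 0$''.)

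For the ``only if'' direction, assume $Z\to Y$ is strict. Being also sfp, \cref{lem:strict-sfp-is-fp} shows that $\underline{Z}\to\underline{Y}$ is of finite presentation. By scheme-theoretic approximation \cite[Th\'eor\`eme~8.8.2]{EGAIV3}, there is an index~$i$ and a finitely presented $\underline{Y}_i$-scheme $V_i$ together with an isomorphism $V_i\times_{\underline{Y}_i}\underline{Y}\isomto\underline{Z}$ over~$\underline{Y}$. Equip $V_i$ with the log structure pulled back strictly from~$Y_i$, obtaining a strict morphism $\widetilde{Z}_i\to Y_i$; its source is saturated (strict pullback of a saturated log scheme) and it is sfp by \cref{lem:strict-sfp-is-fp} (strict, with finitely presented underlying morphism), hence $\widetilde{Z}_i\in\cat{Sfp}_{X_i}$. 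Let $\widetilde{Z}$ be its saturated base change to~$X$; then $\widetilde{Z}\to Y$ is strict with $\underline{\widetilde{Z}}=V_i\times_{\underline{Y}_i}\underline{Y}=\underline{Z}$, and since $Z\to Y$ is strict the canonical $Y$-morphism $j\colon Z\to\widetilde{Z}$, which is the identity on underlying schemes, is an isomorphism. Now $Z$, $\widetilde{Z}$, $Y$ all lie in the essential image of $\varinjlim_k\cat{Sfp}_{X_k}$ (coming from $Z_0$, $\widetilde{Z}_i$, $Y_0$), so by the full faithfulness in \cref{thm:sfp-approx} the morphisms $j$ and $j^{-1}$ descend to $j_k\colon Z_k\to\widetilde{Z}_k$ and $j'_k\colon\widetilde{Z}_k\to Z_k$ for $k\gg 0$; enlarging~$k$, the identities $j'_kj_k=\id$ and $j_kj'_k=\id$ hold, and comparing the two maps $Z_k\to Y_k$ given by $g_k$ and by $j_k$ followed by the base change to $X_k$ of the strict map $\widetilde{Z}_i\to Y_i$, these agree for $k\gg 0$. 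Thus $Z_k\to Y_k$ is an isomorphism followed by a strict morphism, i.e.\ strict, for all $k\gg 0$.

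The main obstacle is exactly the bad interaction between saturation and base change: the squares $\underline{Z}_j\to\underline{Z}_i$, $\underline{Y}_j\to\underline{Y}_i$ are \emph{not} cartesian, so strictness of $Z_i\to Y_i$ cannot be detected by a naive spreading-out or by a pointwise argument on the characteristic sheaves $\overline{\cM}$ (whose stalks are not finitely generated for non-fs log schemes). The point of the argument above is to first use strictness \emph{at the limit} to produce, via \cref{lem:strict-sfp-is-fp}, a genuinely finitely presented morphism of underlying schemes, then spread \emph{that} out and re-attach the strict log structure, and only afterwards transport the resulting isomorphism $Z\cong\widetilde{Z}$ back down using the categorical approximation of \cref{thm:sfp-approx}.
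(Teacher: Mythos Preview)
Your argument is correct and follows essentially the same route as the paper: use \cref{lem:strict-sfp-is-fp} to make $\underline{Z}\to\underline{Y}$ finitely presented, descend it by EGA~IV approximation to a finitely presented $\underline{Y}_i$-scheme, equip the result with the strict log structure, and then invoke the full faithfulness in \cref{thm:sfp-approx} to identify it with $Z_i$ over $Y_i$ for $i\gg 0$. The only cosmetic differences are that you spell out more carefully why $\underline{Y}=\varprojlim\underline{Y}_i$, why the canonical isomorphism $Z\simeq\widetilde{Z}$ exists, and how exactly the isomorphism and the compatibility over $Y_k$ descend.
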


\begin{proof}
If $Z\to Y$ is strict, then by \cref{lem:strict-sfp-is-fp} the underlying map of schemes $\underline{Z}\to \underline{Y}$ is of finite presentation. Since $\underline{Y} = \varinjlim \underline{Y}{}_i$ (with affine transition maps $Y_j\to Y_i$), by \cite[Th\'eor\`eme~8.5.2]{EGAIV3} there exists a morphism of finite presentation $\underline{Z}{}'_i\to \underline{Y}{}_i$ for some index $i$ whose base change to $\underline{Y}$ is $\underline{Z}\to\underline{Y}$. Let $Z'_i$ be $\underline{Z}{}'_i$ with log structure making $Z{}'_i\to \underline{Y}{}_i$ strict. Then $Z'_i\to Y_i$ is sfp and its (saturated or not) base change to $X_i$ is $Z\to Y$. Therefore (by the fully faithfulness part of \cref{thm:sfp-approx}), after increasing the index $i$, there exists an isomorphism $Z'_i\simeq Z_i$ over $Y_i$. Since $Z'_i\to Y_i$ is strict, so is $Z_i\to Y_i$. The other direction is clear.
\end{proof}

\subsubsection*{No absolute fs approximation}

A nice complement to the noetherian approximation results of \cite[\S 8]{EGAIV3} is the absolute approximation theorem of Thomason--Trobaugh \cite[Theorem~C.9]{ThomasonTrobaugh}: every qcqs scheme can be written as the inverse limit of schemes of finite type over $\bZ$ with affine transition maps. Our relative approximation result for sfp maps (\cref{thm:sfp-approx}) as well as the fact that every affine charted saturated log scheme $\Spec(P\to A)$ can be written as the inverse limit $\Spec(P_i\to A_i)$ where $P_i$ are fs and $A_i$ are of finite type over $\bZ$ may suggest that a similar result to the Thomason--Trobaugh approximation theorem could hold for log schemes. For example, one could hope that every saturated qcqs log scheme can be written as the limit of an affine charted inverse system of fs log schemes of finite type over $\bZ$. Unfortunately, this is not true, as the following example shows.

\begin{ex} \label{ex:no-absolute-approx}
    We start with any noetherian scheme $X$ covered by two connected opens $U_+, U_-$ whose intersection $U = U_+\cap U_-$ has two connected components $U_0, U_1$. For the simplest example, let $k$ be a connected noetherian ring in which $2$ is invertible and let 
    \[
        X = \Spec(A), \qquad A=k[X,Y]/((Y-X^2+1)(Y+X^2-1))
    \]
    be the union of two parabolas intersecting in two points $x_\pm = (\pm 1, 0)$. Let 
    \[
        U_\pm = X\setminus \{x_\pm\} = \Spec(A[1/(X \mp 1)]
    \]
    and let $U = U_+\cap U_-$, which is isomorphic to the disjoint union of two copies $U_0$, $U_1$ of $\bA^1_k\setminus \{\pm 1\}$. 
    
    Consider the saturated log structure on $X$ constructed in the following way. Let 
    \[
        P = \bigoplus_{n\in \bZ}\bN T^n
    \]
    and let $T\colon P\to P$ be the automorphism induced by $T^n\mapsto T^{n+1}$. We give $U_\pm$ the log structure charted by the map $P\to \cO(U_\pm)$ sending $P\setminus\{0\}$ to zero. We glue these log structures on $U_\pm$ to a log structure on $X$ using the identity $P\to P$ on $U_0$ and the automorphism $T$ on $U_1$. 

    Let us describe the monodromy of the locally constant sheaf $\cF = \ov{\cM}^\gp_X$. The nerve of the cover $X = U_+\cup U_-$ by two connected opens with intersection $U=U_0\sqcup U_1$ with two connected components is homotopy equivalent to $S^1$ and hence induces a surjection 
    \[
        \pi_1^{\rm SGA3}(X,\ov{x})\surj \pi_1(S^1)
    \]
    where $\pi_1^{\rm SGA3}(X,\ov{x})$ is \emph{groupe fondamental \'elargi} introduced in \cite[Exp.\ X, \S 6]{SGA3:2} (see also \cite[Lemma~7.4.3]{BhattScholze}). It has the property that the category of locally constant \'etale sheaves of sets on $X$ is equivalent to the category of sets endowed with a continuous action of $\pi_1^{\rm SGA3}(X,\ov{x})$.  Since $\cF$ is constant on $U_0$ and $U_1$, the monodromy action of $\pi_1^{\rm SGA3}(X,\ov{x})$ on $\cF_{\bar{x}}$ factors through $\pi_1(S^1) \simeq \bZ$ with a generator $T \in \pi_1(S^1)$ acting by the automorphism $T$ of $P^\gp = \bigoplus_{n\in\bZ} \bZ T^n = \bZ[T, T^{-1}]$ according to the glueing instruction for $\cF$. This endows the stalk $\cF_{\bar{x}}$ with the structure of a free module of rank one under the group algebra $\bZ[\pi_1(S^1)] = \bZ[T,T^{-1}]$.
\end{ex}

\begin{prop}
    There does not exist an (not necessarily charted!) inverse system $\{X_i\}_{i\in I}$ with affine transition maps of fs log schemes with inverse limit $X$.
\end{prop}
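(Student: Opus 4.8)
The plan is to reduce, using the approximation of characteristic sheaves in \cref{lem:M-of-lim}, to a sheaf-theoretic statement, and then to contradict it using the monodromy of $\overline{\cM}_X$ computed in \cref{ex:no-absolute-approx}.

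Suppose for contradiction that $\{X_i\}_{i\in I}$ is such an inverse system, with $X=\varprojlim_i X_i$. Since $X$ is qcqs and the transition maps are affine, after passing to a cofinal subsystem we may assume all $X_i$ are qcqs; then $\underline X=\varprojlim_i\underline X_i$, and \cref{lem:M-of-lim} gives, as étale sheaves of monoids on $\underline X$,
\[
    \overline{\cM}_X=\varinjlim_{i}\,\pi_i^{-1}\overline{\cM}_{X_i},
\]
where $\pi_i\colon X\to X_i$ is the projection. As each $X_i$ is fs, $\overline{\cM}_{X_i}$ is a constructible sheaf of fs monoids on $\underline X_i$ (étale-locally it is assembled from the faces of an fs chart), so $\pi_i^{-1}\overline{\cM}_{X_i}$ is a constructible sheaf on $\underline X$, and hence a compact object in the category of sheaves of monoids (equivalently, of sets) on $\underline X$. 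Applying $(-)^\gp$, we likewise obtain $\cF:=\overline{\cM}_X^\gp=\varinjlim_i\pi_i^{-1}\overline{\cM}_{X_i}^\gp$ as a filtered colimit of pullbacks of constructible sheaves of finitely generated abelian groups.

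Next we invoke \cref{ex:no-absolute-approx}: $\underline X$ is connected, $\cF$ is locally constant with stalk $M=\bZ[T,T^{-1}]$, and the monodromy action of $\pi_1^{\rm SGA3}(\underline X,\ov{x})$ on $M$ factors through $\pi_1^{\rm SGA3}(\underline X,\ov{x})\twoheadrightarrow\pi_1(S^1)=\bZ$, with $n\in\bZ$ acting by multiplication by $T^n$. The decisive point is that the monodromy orbit of the element $1\in M$ is the infinite set $\{T^n\}_{n\in\bZ}$. Consequently $\cF$, regarded merely as a locally constant sheaf of sets, is \emph{not} a filtered colimit of locally constant constructible sheaves of sets: if it were, say $\cF=\varinjlim_j\cL_j$ with $\cL_j$ locally constant with finite stalks, then $1\in M=\cF_{\ov{x}}$ would come from a finite $\pi_1^{\rm SGA3}(\underline X,\ov{x})$-set $(\cL_j)_{\ov{x}}$, whose (finite) orbit would surject onto the infinite orbit $\{T^n\}$ of $1$ — impossible.

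It remains to show that the fs approximation of the first paragraph forces $\cF$ to be exactly such a filtered colimit of locally constant constructible sheaves; this is the heart of the matter and, I expect, the main obstacle. The mechanism I would pursue: the quasi-compact opens $U_\pm,U_0,U_1\subseteq\underline X$ of \cref{ex:no-absolute-approx} and their incidences (in particular the connectedness of $U_\pm,U_0,U_1$) descend, for $i\gg0$, to opens $U_{\pm,i},U_{0,i},U_{1,i}\subseteq\underline X_i$; and over $U_+$ (resp.\ $U_-$) the sheaf $\cF$ is \emph{constant}, the two trivialisations over $U_0$ agreeing while those over $U_1$ differ by the infinite-order shift. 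Using the continuity $\cat{Sh}^{\mathrm c}(\underline X)=\varinjlim_i\cat{Sh}^{\mathrm c}(\underline X_i)$ of constructible sheaves, together with the finiteness of $\Aut(\overline Q)$ for an fs monoid $\overline Q$ (which bounds the monodromy of $\overline{\cM}_{X_i}$, hence of $\overline{\cM}_{X_i}^\gp$, on any stratum of $\underline X_i$), one argues that for $i\gg0$ the constructible sheaf $\overline{\cM}_{X_i}^\gp$ is locally constant on dense opens of $U_{\pm,i}$; carrying this back to $\underline X$ exhibits $\cF$ as a filtered colimit of locally constant constructible sheaves, contradicting the previous paragraph. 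The delicate step is precisely this descent of local constancy to finite level: one cannot merely ``descend the monodromy'', since an automorphism of a filtered colimit (here ``multiplication by $T$'', which is $\pi_1$-equivariant and thus defines a sheaf automorphism of $\cF$) need not descend to an automorphism of any term — so the geometry of the descended cover $U_{\pm,i}$ must be exploited.
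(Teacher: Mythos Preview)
Your overall strategy matches the paper's: exhibit $\cF=\overline{\cM}_X^\gp$ as a filtered colimit coming from the fs approximants, show this forces $\cF$ to be a colimit of locally constant constructible subsheaves, and contradict the infinite monodromy. Your obstruction argument (the infinite $\pi_1$-orbit of $1\in M$) is correct and essentially equivalent to the paper's.

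The gap is exactly where you flag it: you have not shown that $\cF$ is a colimit of \emph{locally constant} constructible sheaves. The pullbacks $\cG_i:=\pi_i^{-1}\overline{\cM}_{X_i}^\gp$ are constructible but typically not locally constant, since the rank of $\overline{\cM}^\gp$ jumps along the log stratification of $X_i$. Your proposed fix---descending the opens $U_\pm$ and invoking finiteness of $\Aut(\overline Q)$---does not close this: even if $\cG_i$ were locally constant on a dense open, the monodromy argument needs local constancy on all of $X$, and bounding automorphisms of a stalk says nothing about how stalks vary.

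The paper's move is short and decisive: replace $\cG_i$ by its \emph{image} $\cF'_i=\im(\cG_i\to\cF)$. Then $\cF'_i$ is constructible (quotient of a constructible sheaf on the noetherian scheme $\underline X$), and local constancy is checked via cospecialization. For $\ov x\leadsto\ov y$ one has
\[
\begin{tikzcd}
(\cG_i)_{\ov y}\ar[r,two heads]\ar[d,two heads] & (\cF'_i)_{\ov y}\ar[r,hook]\ar[d] & \cF_{\ov y}\ar[d,"\sim"] \\
(\cG_i)_{\ov x}\ar[r,two heads] & (\cF'_i)_{\ov x}\ar[r,hook] & \cF_{\ov x}
\end{tikzcd}
\]
where the left vertical arrow is surjective because cospecialization for $\overline{\cM}^\gp$ of an fs log structure is always surjective (it is localization at a face followed by groupification), and the right vertical arrow is an isomorphism since $\cF$ is locally constant. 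A diagram chase forces the middle arrow to be an isomorphism, so $\cF'_i$ is locally constant. Now $\cF'_i\subseteq\cF$ corresponds to a $\bZ[T,T^{-1}]$-submodule of $\bZ[T,T^{-1}]$ of finite $\bZ$-rank, hence zero---contradicting $\cF=\varinjlim\cF'_i$. This image-plus-cospecialization trick is precisely the ``descent of local constancy to finite level'' you were seeking, and it sidesteps your proposed geometric descent entirely.
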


\begin{proof}
Suppose such a system exists, and let $\cM_i$ be the pull-back log structure along the projection $X\to X_i$, which is again fs. Then $\cM_X = \varinjlim \cM_i$ and so $X = \varinjlim (X, \cM_i)$. We may therefore assume that the underlying scheme of $X_i$ is $X$. Let $\cF= \overline{\cM}^\gp_X$, which is a (Zariski) locally constant sheaf with fibre $\bZ[T,T^{-1}]$, and let $\cF_i = \overline{\cM}^\gp_i$. Then $\cF = \varinjlim \cF_i$. We will make use of the following facts:

\begin{lem} \label{lem:constr-sheaves}
    Let $X$ be a noetherian scheme. We work with \'etale sheaves of $\bZ$-modules on~$X$. 
    \begin{enumerate}[(a)]
        \item \label{lemitem:constr-surj} If $\cF\to \cF'$ is a surjection and $\cF$ is constructible, then so is $\cF'$.
        \item \label{lemitem:constr-lc} A constructible sheaf $\cF$ on $X$ is locally constant if and only if for every specialization between geometric points $\ov{x}\leadsto \ov{y}$ of $X$, the cospecialization map $\cF_{\ov{y}}\to \cF_{\ov{x}}$ is an isomorphism. Every locally constant sheaf (not necessarily constructible) has this property.
        \item \label{lemitem:constr-mbargp} If $\cM$ is an fs log structure on $X$, then the sheaf $\cF = \overline{\cM}^\gp$ is constructible.
        \item \label{lemitem:constr-mbar-surj-cosp} Moreover, for $\cF$ as in \labelcref{lemitem:constr-mbargp}, for every specialization $\ov{x}\leadsto \ov{y}$ between geometric points of $X$, the cospecialization map $\cF_{\bar{y}}\to \cF_{\bar{x}}$ is surjective.
    \end{enumerate}
\end{lem}

\begin{proof}
Assertion \labelcref{lemitem:constr-surj} is \stacks[Proposition]{09BH}. \labelcref{lemitem:constr-lc} is \cite[Exp.\ IX, Proposition~2.11]{SGA4_3},
and 
\labelcref{lemitem:constr-mbargp} and \labelcref{lemitem:constr-mbar-surj-cosp} follow from \cite[Theorem II 2.5.4]{Ogus}. 
\end{proof}

Let $\cF'_i$ be the image of $\cF_i\to \cF$. Then $\varinjlim \cF'_i\to \cF$ is an isomorphism (being both injective and surjective). We claim that each $\cF'_i$ is a locally constant constructible sheaf of $\bZ$-modules. Indeed, it is constructible by \crefpart{lem:constr-sheaves}{lemitem:constr-mbargp} and \labelcref{lemitem:constr-surj}. Moreover, for every specialization $\ov{x}\leadsto \ov{y}$ we have a commutative diagram of cospecialization maps
\[ 
    \begin{tikzcd}
        (\cF_i)_{\bar{y}} \ar[r,two heads] \ar[d,two heads] & (\cF'_i)_{\bar{y}} \ar[r,hookrightarrow] \ar[d] & \cF_{\bar{y}} \ar[d,"\rotatebox{90}{$\sim$}"] \\
        (\cF_i)_{\bar{x}} \ar[r,two heads] & (\cF'_i)_{\bar{x}} \ar[r,hookrightarrow] & \cF_{\bar{x}} 
    \end{tikzcd}
\]
Here, the left arrow is surjective by \crefpart{lem:constr-sheaves}{lemitem:constr-mbar-surj-cosp} and the right map is an isomorphism by \crefpart{lem:constr-sheaves}{lemitem:constr-lc}. It follows that the middle vertical arrow is an isomorphism, so $\cF'_i$ is locally constant by \crefpart{lem:constr-sheaves}{lemitem:constr-lc}. 

We claim that there does not exist an inverse system of \'etale local systems of finite free \mbox{$\bZ$-modules} $\{\cF'_i\}$ on $X$ with $\cF = \varinjlim \cF'_i$, obtaining a contradiction. As in the construction above, we may assume that $\cF'_i$ is a subsheaf of $\cF$. Therefore the monodromy representation of $\pi_1^{\rm SGA3}(X,\ov{x})$ on the fibre $\cF'_{i,\bar{x}}$ factors through $\pi_1(S^1)$ and thus yields a $\bZ[T,T^{-1}]$-submodule of $\cF_{\bar x} \simeq \bZ[T,T^{-1}]$. As $\cF'_{i,\bar{x}}$ is of finite rank as a $\bZ$-module, this can only be $0$, a contradiction. 
\end{proof}

On the positive side, let us call a saturated log scheme \emph{approximable} if it can be written as the limit of an affine charted inverse system of fs log schemes of finite type over $\bZ$. Our \cref{thm:sfp-approx} implies that if $X$ is approximable and $Y\to X$ is sfp, then $Y$ is approximable as well.

%---------------------------------------
\subsection{Smooth, \'etale, and Kummer \'etale maps of log schemes}
\label{sec:log maps:sm et ket}
%---------------------------------------

We shall now extend the familiar notions of smooth, \'etale, and Kummer \'etale maps to arbitrary saturated log schemes. 

\begin{defi}[{\cite[1.5, 1.6]{IllusieFKN} in the fs case}]
\label{defi:smooth etale ket}
    A morphism of saturated log schemes $Y\to X$ is {\bf smooth} (resp.\ {\bf \'etale}, resp.\ {\bf Kummer \'etale}) if \'etale locally on source and target it admits a~chart by a smooth (resp.\ \'etale, resp.\ Kummer \'etale) morphism of monoids $P\to Q$ (with $\Sigma$ the set of primes non-invertible on $X$) such that the induced strict map 
    \[
        Y\la X\times_{\bA_P}\bA_Q
    \]
    is smooth (resp.\ \'etale, resp. \'etale). In particular, such a morphism is locally sfp.
\end{defi}

\begin{rmk}
    The above definitions are very similar to Kato's criteria for smooth and \'etale morphisms \cite[Theorem~3.5]{Kato1989:LogarithmicStructures}. \cref{prop:fs approximation for smooth etale and ket morphisms} below justifies that these notions are natural. We have refrained from discussing log differentials in this paper. However, we expect that smooth and \'etale morphisms can be characterized (among locally sfp morphisms) by a suitable infinitesimal lifting property. 
\end{rmk}

\begin{prop}
\label{prop:fs approximation for smooth etale and ket morphisms}
    Let $f\colon Y\to X$ be a morphism between saturated log schemes. The following conditions are equivalent.
    \begin{enumerate}[(a)]
        \item \label{propitem:sm-et-ket} 
            The morphism $f$ is smooth (resp.\ \'etale, resp.\ Kummer \'etale).
        \item \label{propitem:sm-et-ket-fs-bc}
            Locally on $X$ and $Y$ there exists a smooth (resp.\ \'etale, resp.\ Kummer \'etale) morphism of fs log schemes $Y_0\to X_0$ and a map $X\to X_0$ such that $Y$ is isomorphic to the saturated pullback of $Y_0\to X_0$ along $X\to X_0$.
    \end{enumerate}
\end{prop}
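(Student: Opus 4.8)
The implication \labelcref{propitem:sm-et-ket-fs-bc} $\Rightarrow$ \labelcref{propitem:sm-et-ket} is the easy direction: it is a consequence of the fact that the properties in question are stable under strict \'etale base change together with \cref{lem:bc of smooth etale Ket monoid maps}. More precisely, if $Y_0\to X_0$ is, say, smooth, then it admits \'etale locally a chart by a smooth map of fs monoids $P_0\to Q_0$ such that $Y_0\to X_0\times_{\bA_{P_0}}\bA_{Q_0}$ is smooth; pulling back along $X\to X_0$ and using that saturated base change of the chart $P_0\to Q_0$ is again smooth by \cref{lem:bc of smooth etale Ket monoid maps} (and that saturated base change of the strict smooth map $Y_0\to X_0\times_{\bA_{P_0}}\bA_{Q_0}$ remains strict and smooth) exhibits the required chart for $f$. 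The Kummer \'etale case works the same way, using additionally that exactness is stable under saturated pushout (\cref{rmks:sorite exact int sat vert}\labelcref{rmkitem:sorite exact}).

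For the converse \labelcref{propitem:sm-et-ket} $\Rightarrow$ \labelcref{propitem:sm-et-ket-fs-bc}, I would work \'etale locally on $X$ and $Y$ so that $f$ is given by a chart $P\to Q$ with $\vtheta\colon P\to Q$ a smooth (resp.\ \'etale, resp.\ Kummer \'etale) morphism of saturated monoids, and a strict smooth (resp.\ \'etale, resp.\ \'etale) map $g\colon Y\to Z:=X\times_{\bA_P}\bA_Q$. By \cref{prop:properties of sfp that descend to fs}, the map $\vtheta$ descends: there is a morphism of fs monoids $P_0\to Q_0$ with the same property $\cP$ and a map $P_0\to P$ with $Q=(P\oplus_{P_0}Q_0)^\sat$. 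Now $X$ is, \'etale locally, of the form $\Spec(P\to A)$, and writing $P=\varinjlim_i P_i$ as a filtered colimit of fs monoids over $P_0$ gives $X=\varprojlim_i X_i$ with $X_i=\Spec(P_i\to A)$ (an affine charted inverse system). Set $Q_i=(P_i\oplus_{P_0}Q_0)^\sat$ and $Z_i=X_i\times_{\bA_{P_i}}\bA_{Q_i}$, so that $Z=\varprojlim_i Z_i$ with strict affine transition maps and each $P_i\to Q_i$ has property $\cP$ for $i\gg0$ by \cref{prop:monoid-descend-prop-P} (here we also need $A$ to descend to an $A_i$ of finite type, which is harmless by a further approximation $A=\varinjlim A_i$ as in the proof of \cref{thm:sfp-approx}). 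It remains to descend the strict smooth (resp.\ \'etale) map $g\colon Y\to Z$ to a strict smooth (resp.\ \'etale) map $Y_i\to Z_i$ for large $i$: since $\underline{Z}=\varprojlim\underline{Z}_i$ with affine transition maps, this follows from the scheme-theoretic approximation of smooth and \'etale morphisms, \cite[Th\'eor\`eme~8.8.2, Proposition~17.7.8]{EGAIV3}, and we endow $Y_i$ with the strict log structure pulled back from $Z_i$. Then $Y\to X$ is the saturated base change of the fs log scheme map $Y_i\to X_i$ along $X\to X_i$, and $Y_i\to X_i$ is smooth (resp.\ \'etale, resp.\ Kummer \'etale) by construction, since it is a composition of the strict smooth/\'etale map $Y_i\to Z_i$ with the chart $P_i\to Q_i$ having property $\cP$.

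The main obstacle is the bookkeeping in the converse direction: one must simultaneously descend the monoid map (via \cref{prop:monoid-descend-prop-P}), the ring of definition of the base, and the strict smooth/\'etale map of schemes, and then check that these descents are compatible so that the resulting $Y_i\to X_i$ is genuinely a chart-compatible smooth/\'etale/Kummer \'etale morphism of fs log schemes whose saturated base change recovers $f$. The Kummer \'etale case requires a little extra care: one must ensure that the descended $P_i\to Q_i$ remains injective and exact (not just \'etale), which is exactly what \cref{prop:properties of sfp that descend to fs} and \cref{prop:monoid-descend-prop-P} provide for the property $\cP=$ ``Kummer \'etale'', and that the strict map of schemes underlying a Kummer \'etale morphism is \'etale, so its descent is handled by the same scheme-theoretic input. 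No new ideas beyond the results already established are needed; the proof is essentially an application of \cref{prop:monoid-descend-prop-P}, the affine approximation underlying \cref{thm:sfp-approx}, and standard scheme-theoretic approximation of smooth and \'etale morphisms.
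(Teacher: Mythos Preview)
Your proposal is correct and follows essentially the same approach as the paper's proof: descend the chart $P\to Q$ to a map of fs monoids via \cref{prop:properties of sfp that descend to fs}, write $P$ as a filtered colimit of fs monoids $P_i$ over $P_0$, and then descend the strict smooth (resp.\ \'etale) map $Y\to X\times_{\bA_P}\bA_Q$ along the resulting inverse system. The only minor differences are cosmetic: the paper invokes \cref{lem:bc of smooth etale Ket monoid maps} directly (since $P_i\to Q_i$ is already the saturated base change of $P_0\to Q_0$, which has property $\cP$) rather than \cref{prop:monoid-descend-prop-P}, and it does not additionally approximate the ring $A$, since all that is needed is that the $X_i$ be fs, not of finite type over $\bZ$.
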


\begin{proof} 
%    \labelcref{propitem:sm-et-ket}$\Leftrightarrow$\labelcref{propitem:sm-et-ket-fs-bc}:
    This is an analogue of \cref{prop:sfp morphisms} for smooth (resp.\ \'etale, resp.\ Kummer \'etale) morphisms. The proof follows along the same lines. Suppose that $Y\to X$ is smooth (resp.\ \'etale, resp.\ Kummer \'etale).
    \'Etale locally  $Y \to X$ has a chart with monoid morphism $P \to Q$ which by \cref{prop:properties of sfp that descend to fs} is the saturated pushout of a smooth (resp.\ \'etale, resp.\ Kummer \'etale) morphism $P_0 \to Q_0$ between fs monoids. 
    We put $P_0$ as an initial monoid of a filtered colimit $P = \varinjlim_i P_i$ with fs monoids $P_i$  and with corresponding saturated pushout $Q_i = (P_i \oplus_{P_0} Q_0)^\sat$. This allows us to write $X\times_{\bA_P} \bA_Q$ as a filtered colimit of $X_i = X_0 \times_{\bA_{P_i}} \bA_{Q_i}$ so that $Y$ descends, for $i$ large enough,  to $Y_i \to X_i \times_{\bA_{P_i}} \bA_{Q_i}$ with log structure induced by $Q_i$. 
    \Cref{lem:bc of smooth etale Ket monoid maps} shows that $P_i \to Q_i$ is again a smooth (resp.\ \'etale, resp.\ Kummer \'etale) morphism between fs monoids. 
    This shows that $Y \to X$ is the saturated base change of a map between fs log schemes with the same property. 

    The converse direction is easier than in the proof of \cref{prop:sfp morphisms} because we already know that the base change of sfp morphisms is sfp. The claim follows from that and \cref{lem:bc of smooth etale Ket monoid maps} for stability of the monoid notions under saturated pushout. 
\end{proof}

\begin{lem}
\label{lem:Delta of ket is strict open}
    Let $f\colon Y \to X$ be an \'etale morphism between saturated log schemes. Then the relative diagonal 
    \[
        \Delta_{Y/X} \colon Y \la Y \times^\sat_X Y
    \]
    is a strict open immersion.
\end{lem}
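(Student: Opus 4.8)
The plan is to work strict-\'etale locally in order to reduce to an explicit chart, to write the diagonal as a composite of two maps, and to identify the ``monoid part'' of the diagonal with the unit section of a finite \'etale group scheme via Vidal's computation (\cref{lem:Vidal lemma on fs diagonal}).

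Being a strict open immersion is \'etale-local on source and target (this is the log analogue of ``unramified''), so we may assume that $f$ admits a chart: there is an \'etale morphism of saturated monoids $\vtheta\colon P\to Q$ and a commutative square with strict horizontal arrows as in \cref{defi:smooth etale ket}, such that $g\colon Y\to X':=X\times_{\bA_P}\bA_Q$ is strict and \'etale as a morphism of schemes. All the saturated fibre products occurring below exist: $Y\times^\sat_{X'}Y$ because $g$ is strict, $X'\times^\sat_X X'$ because $X'\to X$ has a chart, and $Y\times^\sat_X Y$ because $f$ is locally sfp (\cref{lem:saturated fibre product}, \cref{prop: sorite for sfp}). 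Factor $\Delta_{Y/X}$ as
\[
    Y\xrightarrow{\ \Delta_{Y/X'}\ }Y\times^\sat_{X'}Y\xrightarrow{\ j\ }Y\times^\sat_X Y,
\]
where $j$ is the base change of the relative diagonal $\Delta_{X'/X}\colon X'\to X'\times^\sat_X X'$ along $Y\times^\sat_X Y\to X'\times^\sat_X X'$. Since strict open immersions are stable under composition and base change, it suffices to show that $\Delta_{Y/X'}$ and $\Delta_{X'/X}$ are strict open immersions. For $\Delta_{Y/X'}$ this is immediate: as $g$ is strict, $Y\times^\sat_{X'}Y=Y\times_{X'}Y$ as log schemes, $\Delta_{Y/X'}$ is strict, and its underlying morphism is the diagonal of the \'etale scheme morphism $\underline g$, hence an open immersion.

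The heart of the matter is $\Delta_{X'/X}$. Since $X\to\bA_P$ is strict (being a chart), one reads off from the universal properties that $X'\times^\sat_X X'=X\times_{\bA_P}\bA_{(Q\oplus_P Q)^\sat}$ and that $\Delta_{X'/X}$ is the base change along $X\to\bA_P$ of the diagonal $\Delta_{\bA_Q/\bA_P}\colon\bA_Q\to\bA_{(Q\oplus_P Q)^\sat}$, which on monoids is the (saturated) co-diagonal $(Q\oplus_P Q)^\sat\to Q$. Because $\vtheta$ is \'etale, $N:=Q^\gp/P^\gp=\coker(\vtheta^\gp)$ is a finite group of order prime to $p$, so \cref{lem:Vidal lemma on fs diagonal} applies and gives an isomorphism $(Q\oplus_P Q)^\sat\isomto Q\oplus N$ under which the co-diagonal becomes the projection $Q\oplus N\to Q$. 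As $N$ maps to units of $\bZ[N]$, the log structure of $\bA_{Q\oplus N}=\bA_Q\times_{\Spec\bZ}\Spec\bZ[N]$ is pulled back from $\bA_Q$, and under this identification $\Delta_{\bA_Q/\bA_P}$ is the map $\id_{\bA_Q}\times e$, where $e\colon\Spec\bZ\to\Spec\bZ[N]$ is the unit section. Base changing to $X$ shows that $\Delta_{X'/X}$ is $\id_{X'}\times e\colon X'\to X'\times_{\Spec\bZ}\Spec\bZ[N]$, which is strict; and since $\lvert N\rvert$ is invertible on $X$ (here $p$ is the product of the primes non-invertible on $X$), the morphism $\Spec(\cO_{X'}[N])\to X'$ is finite \'etale, hence unramified, so its section $\id_{X'}\times e$ is an open immersion. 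Thus $\Delta_{X'/X}$ is a strict open immersion, so $j$ is one by base change, and $\Delta_{Y/X}=j\circ\Delta_{Y/X'}$ is a composition of strict open immersions.

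The main obstacle, conceptually, is the bookkeeping in the third paragraph: correctly transporting Vidal's isomorphism through the co-diagonal in order to see that the monoid-level diagonal is \emph{exactly} the unit section of $\Spec\bZ[N]$, and then observing that this section is an open immersion precisely because $\lvert N\rvert$ — being prime to $p$ — is invertible on $X$ (this is where the convention ``$\Sigma=$ primes non-invertible on $X$'' in \cref{defi:smooth etale ket} is used). Making the opening \'etale-locality reduction fully rigorous also takes a little care, but it is routine: the maps $V\times^\sat_X V'\to Y\times^\sat_X Y$ form a strict \'etale cover as $V,V'$ range over a strict \'etale cover of $Y$, and pulling back $\Delta_{Y/X}$ along them reduces the claim to the charted case treated above.
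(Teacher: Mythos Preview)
Your proof is correct and follows essentially the same route as the paper's: factor $Y\to X$ locally as a strict \'etale map followed by a chart $\bA_Q\to\bA_P$, use the diagonal-of-a-composition formula to reduce to these two cases, handle the strict \'etale case trivially, and treat the monoid case via Vidal's isomorphism $(Q\oplus_P Q)^\sat\simeq Q\oplus (Q^\gp/P^\gp)$ to identify the diagonal with the unit section of a finite \'etale group scheme. The only difference is presentational: the paper works directly over $\Spec R$ rather than over $\bZ$ and then base-changing, and does not spell out the \'etale-locality bookkeeping as explicitly as your final paragraph.
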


\begin{proof}
The relative diagonal of a composition $Z \to Y \to X$ is the composition of the relative diagonal and a base change of the relative diagonal: 
\[
\begin{tikzcd}
Z \ar[r,"\Delta_{Z/Y}"] \ar[dr,"\Delta_{Z/X}",swap] & Z \times^\sat_Y Z  \ar[d] \ar[r] & Y \ar[d,"\Delta_{Y/X}"] \\
& Z \times^\sat_X Z \ar[r] & Y \times^\sat_X Y.
\end{tikzcd}
\]
Since strict open immersions are stable under base change and composition and, moreover, the assertion is strict \'etale local on $X$ and $Y$, we may assume that $Y \to X$ is either (1) strict and \'etale or (2) of the form $\Spec R[Q] \to \Spec R[P]$ for an \'etale map of saturated monoids $P \to Q$ (with respect to the set of primes invertible in $R$). In case (1) we note that the diagonal of a~strict \'etale map is strict and an open immersion. In case (2) the diagonal is obtained from the map of monoids 
\[
    (Q \oplus_P Q)^\sat \isomto Q^\gp/P^\gp \oplus Q  \xrightarrow{\pr_2} Q
\]
combining the isomorphism of \cref{lem:Vidal lemma on fs diagonal} with the second projection. The claim follows as the group scheme $\Spec R[Q^\gp/P^\gp]$ is finite \'etale over $\Spec R$. 
\end{proof}
    
\begin{prop}
\label{prop:bc and composition and 2 out of 3}
    Let $Y\to X$ be a smooth (resp.\ \'etale, resp.\ Kummer \'etale)  morphism between saturated log schemes.
    \begin{enumerate}[(a)]
        \item 
        \label{propitem:sm ket base change}
        Let $h\colon X'\to X$ be a morphism from a saturated log scheme $X'$ ($h$ is not necessarily locally charted). Then the saturated pullback $Y' = X'\times^\sat_X Y$ exists and the morphism $Y'\to X'$ is smooth (resp.\ \'etale, resp.\ Kummer \'etale). 
        \item
         \label{propitem:sm ket composition}
        Let $g\colon Z\to Y$ be a morphism from a saturated log scheme $Z$. If $Z\to Y$ is smooth (resp.\ \'etale, resp.\ Kummer \'etale) then the composition $Z\to Y\to X$ is smooth (resp.\ \'etale, resp.\ Kummer \'etale).  
        \item 
        \label{propitem:ket 2 out of 3}
        Let $g\colon Z\to Y$ be a morphism from a saturated log scheme $Z$ such that the composition $Z\to Y\to X$ is 
        \'etale (resp.\ Kummer \'etale).  Then $g\colon Z\to Y$ is \'etale (resp.\ Kummer \'etale).         
    \end{enumerate}
\end{prop}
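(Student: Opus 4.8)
The plan is to mirror the structure of \cref{prop: sorite for sfp} and \cref{prop:bc and composition and 2 out of 3}\labelcref{propitem:sm ket base change}--\labelcref{propitem:sm ket composition} by reducing everything to statements about fs log schemes via \cref{prop:fs approximation for smooth etale and ket morphisms}, and then invoking the corresponding well-known facts in the fs case together with the stability results for maps of monoids from \cref{lem:bc of smooth etale Ket monoid maps} and \cref{prop:monoid-descend-prop-P}. For \labelcref{propitem:sm ket base change}, working \'etale locally on $X$, $Y$, and $X'$ we may assume $Y\to X$ is the saturated base change of a smooth (resp.\ \'etale, resp.\ Kummer \'etale) map of fs log schemes $Y_0\to X_0$ along $X\to X_0$; then $Y'\to X'$ is the saturated base change of $Y_0\to X_0$ along $X'\to X_0$ (which exists by \crefpart{lem:saturated fibre product}{lemitem:saturated fibre product:charts}), and since $P_0\to Q_0$ has the respective property among maps of fs monoids, so does its saturated pushout by \cref{lem:bc of smooth etale Ket monoid maps}; the underlying strict map is a base change of a smooth (resp.\ \'etale) strict map, hence smooth (resp.\ \'etale). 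For \labelcref{propitem:sm ket composition}, the assertion is \'etale local, so we may choose charts for $Z\to Y$ and $Y\to X$ over a common chart of $Y$; composing the monoid maps and using that smooth/\'etale/Kummer \'etale homomorphisms of monoids are stable under composition (\cref{cor:sfp stable composition} for sfp, combined with the conditions on kernels and cokernels of groupifications, and \cref{rmks:sorite exact int sat vert} for exactness), together with the fact that composites of smooth (resp.\ \'etale) strict maps of schemes are smooth (resp.\ \'etale), gives the claim.

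\textbf{The $2$-out-of-$3$ statement \labelcref{propitem:ket 2 out of 3}.} This is the main point and follows the pattern of \crefpart{prop: sorite for sfp}{propitem:sfp 2 out of 3}. First, by \labelcref{propitem:ket 2 out of 3} of \cref{prop: sorite for sfp} we already know $g\colon Z\to Y$ is locally sfp, so it admits charts, and we may work \'etale locally on $Z$, $Y$, $X$. The map $g$ factors as the graph $\gamma_g\colon Z\to Z\times^\sat_X Y$ followed by the projection $\pr_2\colon Z\times^\sat_X Y\to Y$. The projection is the saturated base change of $Z\to X$ along $Y\to X$, hence \'etale (resp.\ Kummer \'etale) by part \labelcref{propitem:sm ket base change} already proven. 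The graph map $\gamma_g$ is the base change of the relative diagonal $\Delta_{Y/X}\colon Y\to Y\times^\sat_X Y$ along $Z\times^\sat_X Y\to Y\times^\sat_X Y$. By \cref{lem:Delta of ket is strict open}, since $Y\to X$ is \'etale, $\Delta_{Y/X}$ is a strict open immersion, in particular strict and \'etale (and, being injective, exact, and with $\mu^\gp$ an isomorphism, Kummer \'etale with trivial $\Sigma$-part, hence Kummer \'etale for any $\Sigma$). Thus $\gamma_g$ is strict \'etale (resp.\ Kummer \'etale), being a base change of such. Since we have already established that \'etale (resp.\ Kummer \'etale) maps are stable under composition by part \labelcref{propitem:sm ket composition}, the composite $g=\pr_2\circ\gamma_g$ is \'etale (resp.\ Kummer \'etale).

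\textbf{Expected obstacle.} The delicate point is verifying that the relative diagonal $\Delta_{Y/X}$ of an \'etale (not merely smooth) morphism really is a strict open immersion, and in the Kummer \'etale case checking that strict open immersions count as Kummer \'etale; but \cref{lem:Delta of ket is strict open} handles the former (via \cref{lem:Vidal lemma on fs diagonal} in the charted case), and the latter is immediate since a strict open immersion is injective, exact, \'etale as a strict scheme map, and induces an isomorphism on groupifications of (stalk) charts, so it is Kummer \'etale for $\Sigma=\varnothing$ and a fortiori for any $\Sigma$. One should be slightly careful that \cref{lem:Delta of ket is strict open} is stated for \'etale $f$, which is exactly the hypothesis available in \labelcref{propitem:ket 2 out of 3} (the statement is not asserted for smooth $f$, consistent with the fact that \labelcref{propitem:ket 2 out of 3} has no ``smooth'' variant). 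No further genuinely new input beyond the cited results is needed.
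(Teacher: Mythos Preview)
Your proposal is correct and follows essentially the same approach as the paper: parts \labelcref{propitem:sm ket base change} and \labelcref{propitem:sm ket composition} are handled by mimicking \cref{prop: sorite for sfp} with \cref{prop:fs approximation for smooth etale and ket morphisms} in place of \cref{prop:sfp morphisms}, and part \labelcref{propitem:ket 2 out of 3} is deduced from the graph--projection factorisation together with \cref{lem:Delta of ket is strict open}. Your write-up simply spells out in more detail what the paper compresses into a two-sentence reference back to \cref{prop: sorite for sfp}.
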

\begin{proof}
    The proof of \labelcref{propitem:sm ket base change} and \labelcref{propitem:sm ket composition} are parallel to the proof of \cref{prop: sorite for sfp} with \cref{prop:sfp morphisms} replaced by \cref{prop:fs approximation for smooth etale and ket morphisms}.

    As in \cref{prop: sorite for sfp}, assertion \labelcref{propitem:ket 2 out of 3} follows from the fact that the relative diagonal $\Delta_{Y/X}$ is an open immersion, which was proven in \cref{lem:Delta of ket is strict open}.
\end{proof}

Finally, we extend the approximation results of \cref{ss:approx} to smooth, \'etale, and Kummer \'etale morphisms.

\begin{prop} \label{prop:descend-smoothness-etc}
    Let $\{X_i\}_{i\in I}$ be an affine charted inverse system with limit $X$ and let $Y_0\to X_0$ be an sfp morphism for $0\in I$ the smallest element, and define $Y_i\to X_i$ and $Y\to X$ by saturated base change. Suppose that $X_0$ is quasi-compact. Then, the map $Y\to X$ is smooth/\'etale/Kummer \'etale if and only if $Y_i\to X_i$ is smooth/\'etale/Kummer \'etale for $i\gg 0$. 
\end{prop}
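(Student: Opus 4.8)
The plan is to reduce the statement to the already-established approximation result \cref{thm:sfp-approx} combined with the descent result \cref{cor:descent-of-strictness} and the monoid-level statement \cref{prop:monoid-descend-prop-P}. First I would note that, working strict \'etale locally on $X$ (which is permitted since smoothness, \'etaleness and Kummer \'etaleness are \'etale local on source and target, and since $X_0$ is quasi-compact we only need finitely many charts), we may assume that the affine charted system is given by a single chart, i.e.\ $X_i = \Spec(P_i \to A_i)$ with $P_i$ fs and $A_i$ of finite type over $\bZ$, and $X = \varprojlim X_i$. The ``if'' direction is immediate from \cref{prop:bc and composition and 2 out of 3}\labelcref{propitem:sm ket base change}, so only the ``only if'' direction requires work.

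The key step is to produce, for $i \gg 0$, a \emph{chart} for the sfp morphism $Y_i \to X_i$ compatible with the given chart on $X_i$, whose monoid part $P_i \to Q_i$ is smooth (resp.\ \'etale, resp.\ Kummer \'etale) and whose induced strict map is smooth (resp.\ \'etale, resp.\ \'etale). Since $Y \to X$ is assumed smooth (resp.\ \'etale, resp.\ Kummer \'etale), it admits \'etale locally on $Y$ such a chart by a morphism of monoids $P \to Q$ with the required property on $P^\gp \to Q^\gp$ and a smooth (resp.\ \'etale, resp.\ \'etale) induced strict map $Y \to X \times_{\bA_P} \bA_Q$. Here I would invoke \cref{prop:properties of sfp that descend to fs} to write $P \to Q$ as the saturated base change of a smooth (resp.\ \'etale, resp.\ Kummer \'etale) morphism of \emph{fs} monoids $P_0' \to Q_0'$; then $P_0'$ is a compact object in saturated monoids, so the structure map $P_0' \to P = \varinjlim_i \cM_{X_i}(X_i)$ factors through some $P_i$ for $i \gg 0$, yielding a chart $X_i \to \bA_{P_i} \to \bA_{P_0'}$ and a smooth (resp.\ \'etale, resp.\ Kummer \'etale) monoid map $P_i \to Q_i = (P_i \oplus_{P_0'} Q_0')^\sat$ by \cref{lem:bc of smooth etale Ket monoid maps}.

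It then remains to descend the smoothness (resp.\ \'etaleness) of the strict map of schemes $Y \to X \times_{\bA_P} \bA_Q$. Since $X \times_{\bA_P} \bA_Q = \varprojlim_i (X_i \times_{\bA_{P_i}} \bA_{Q_i})$ with affine transition maps and $Y \to X$ is sfp, \cref{thm:sfp-approx} (applied to the affine charted system $\{X_i \times_{\bA_{P_i}} \bA_{Q_i}\}$, which is still affine charted and of qcqs — in fact finite type — log schemes) gives that $Y$ descends to an sfp, hence strict and finitely presented by \cref{lem:strict-sfp-is-fp}, morphism $Y_i \to X_i \times_{\bA_{P_i}} \bA_{Q_i}$ for $i \gg 0$, compatible with the already-constructed one; and by the standard noetherian approximation for schemes \cite[Th\'eor\`eme~8.10.5]{EGAIV3} (smoothness resp.\ \'etaleness of a finitely presented morphism descends), the map $\underline{Y_i} \to \underline{X_i \times_{\bA_{P_i}} \bA_{Q_i}}$ is smooth (resp.\ \'etale) for $i \gg 0$. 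Combining this with the smoothness (resp.\ \'etaleness, resp.\ Kummer \'etaleness) of $P_i \to Q_i$ yields that $Y_i \to X_i$ is smooth (resp.\ \'etale, resp.\ Kummer \'etale), which is what we want; a gluing argument over the finitely many charts needed (using the fully faithfulness part of \cref{thm:sfp-approx}) completes the proof. The main obstacle I anticipate is bookkeeping: ensuring that the chart for $Y_i \to X_i$ produced from the monoid-level descent (\cref{prop:properties of sfp that descend to fs}) is compatible with the scheme-level descent (\cref{thm:sfp-approx}), i.e.\ that the log scheme $Y_i$ obtained from the two constructions agree for $i \gg 0$ — this is exactly where one must cite the fully faithfulness in \cref{thm:sfp-approx} (equivalently, that $Y_i$ is unique up to unique isomorphism at finite level), and patch the local charts together over the quasi-compact base.
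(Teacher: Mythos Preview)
Your proposal is correct and follows essentially the same strategy as the paper: reduce to the affine charted case, handle the ``if'' direction by base change stability, and for ``only if'' descend the monoid part of the chart (you via \cref{prop:properties of sfp that descend to fs}, the paper via \cref{cor:monoid-sm-et-Ket-approx}), descend the strict part, then identify the result with the given $Y_i$ via fully faithfulness.

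Two small points where the paper is more economical. First, your reduction to $P_i$ fs and $A_i$ of finite type over~$\bZ$ is neither justified nor needed: the affine charted system only gives saturated $P_i$, and all the machinery you invoke works in that generality. Second, for descending the strict smooth/\'etale map $Y \to X\times_{\bA_P}\bA_Q$, you route through \cref{thm:sfp-approx}, then \cref{cor:descent-of-strictness}, then \cref{lem:strict-sfp-is-fp}, then EGA. The paper observes that since this map is strict, it is simply a map of schemes with pulled-back log structure, and descends it directly using classical scheme-theoretic approximation of \'etale morphisms \cite[Exp.~VII, \S5]{SGA4_2}; this bypasses the log-scheme approximation theorem entirely. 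Your route works, but is heavier than necessary.
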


\begin{proof}
The ``if'' part holds since smooth, \'etale, and Kummer \'etale maps are stable under base change (\cref{prop:bc and composition and 2 out of 3}). To show the ``only if'' part, suppose that $Y\to X$ is smooth, \'etale, or Kummer \'etale. Since $X_0$ is quasi-compact and the properties are strict \'etale local, we may assume that $X_i = \Spec(P_i\to A_i)$ and $X = \Spec(P\to A)$. Since the assertion is also local on $Y$, we may assume that there exists a smooth/Kummer \'etale morphism of monoids $P\to Q$ (with respect to primes invertible on $X$) such that $Y\to X$ factors through a strict \'etale morphism $Y\to Y' = \Spec(Q\to A\otimes_{\bZ[P]}\bZ[Q])$. By \cref{cor:monoid-sm-et-Ket-approx}, there exists a smooth/\'etale/Kummer \'etale homomorphism of monoids $P_i\to Q_i$ inducing $P\to Q$ via saturated base change. Thus $Y'$ is the saturated base change of $Y'_i = \Spec(Q_i\to A_i\otimes_{\bZ[P_i]}\bZ[Q_i])$. Increasing $i$, by the corresponding fact about \'etale maps of schemes \cite[Exp VII \S 5]{SGA4_2} we may assume that $Y\to Y'$ is the base change of an \'etale morphism $Y''_i\to Y'_i$. Then $Y'_i\to X_i$ is smooth/\'etale/Kummer \'etale and hence so is $Y''_i\to X_i$. Since $Y''_i$ and $Y_i$ have the same saturated base change to $X$, namely $Y$, increasing $i$ we may assume that $Y''_i = Y_i$. So $Y_i\to X_i$ is smooth/\'etale/Kummer \'etale. 
\end{proof}

\begin{cor}\label{cor:SmEtKet-approx}
    Let $\{X_i\}_{i\in I}$ be an affine charted inverse system of qcqs saturated log schemes with limit $X$. Then saturated base change induces equivalences of categories
    \begin{align*}
        \varinjlim \cat{Sm}^\qcqs_{X_i} &\isomto \cat{\bf Sm}^\qcqs_X \\  
        \varinjlim \cat{Et}^\qcqs_{X_i} &\isomto \cat{\bf Et}^\qcqs_X \\  
        \varinjlim \cat{KEt}^\qcqs_{X_i} &\isomto \cat{\bf KEt}^\qcqs_X,
    \end{align*}
    where we denote the category of smooth/\'etale/Kummer \'etale maps with target $X$ by $\cat{Sm}_X$, $\cat{Et}_X$, and $\cat{KEt}_X$ and the superscript signifies the full subcategories of qcqs objects.
\end{cor}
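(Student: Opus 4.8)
The plan is to deduce this from \cref{thm:sfp-approx} together with \cref{prop:descend-smoothness-etc}, in exact parallel to how one passes from an approximation equivalence for finitely presented maps to the analogous statements for smooth/\'etale maps in the classical theory (cf.\ \cite[\S 8]{EGAIV3}). The key point is that each of the categories $\cat{Sm}^\qcqs_X$, $\cat{Et}^\qcqs_X$, $\cat{KEt}^\qcqs_X$ is a \emph{full} subcategory of $\cat{Sfp}_X$: a smooth/\'etale/Kummer \'etale map is in particular locally sfp by \cref{defi:smooth etale ket}, and a morphism between two such objects over $X$ is just a morphism in $\cat{Sfp}_X$ (no extra condition). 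Likewise $\varinjlim \cat{Sm}^\qcqs_{X_i}$ is a full subcategory of $\varinjlim \cat{Sfp}_{X_i}$, since filtered colimits of categories preserve full embeddings. So the functor in question is the restriction of the equivalence $\varinjlim \cat{Sfp}_{X_i}\isomto \cat{Sfp}_X$ of \cref{thm:sfp-approx} to these full subcategories, and it therefore suffices to check that this restriction (i) lands in the subcategory $\cat{Sm}^\qcqs_X$ (resp.\ $\cat{Et}^\qcqs_X$, $\cat{KEt}^\qcqs_X$), and (ii) is essentially surjective onto it.

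For (i): an object of $\varinjlim \cat{Sm}^\qcqs_{X_i}$ is represented by a smooth map $Y_i\to X_i$ for some $i$, and its image under the equivalence is the saturated base change $Y\to X$. By \cref{prop:bc and composition and 2 out of 3}\labelcref{propitem:sm ket base change}, saturated base change preserves smoothness (resp.\ \'etaleness, resp.\ Kummer \'etaleness), so $Y\to X$ is smooth (resp.\ \'etale, resp.\ Kummer \'etale), and it is qcqs since it is sfp. For (ii): given a smooth $Y\to X$, by essential surjectivity in \cref{thm:sfp-approx} there is an index $i$ and an sfp morphism $Y_i\to X_i$ whose saturated base change to $X$ is $Y$. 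We must upgrade $Y_i$ to a \emph{smooth} map for $i$ large; but the affine charted inverse system $\{X_i\}$ satisfies the hypotheses of \cref{prop:descend-smoothness-etc} (replacing $I$ by the cofinal subset of indices $\geq i$, so that $Y_i\to X_i$ plays the role of $Y_0\to X_0$ and $X_i$ is quasi-compact since all $X_j$ are qcqs), and that proposition says precisely that since $Y\to X$ is smooth (resp.\ \'etale, resp.\ Kummer \'etale), so is $Y_j\to X_j$ for $j\gg i$. Hence the class of $Y_j\to X_j$ in $\varinjlim \cat{Sm}^\qcqs_{X_i}$ maps to (an object isomorphic to) $Y$, giving essential surjectivity.

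Finally one has to be slightly careful that the transition functors of the colimit system $\{\cat{Sm}^\qcqs_{X_i}\}$ are indeed given by saturated base change and land in the right place — but this is immediate from \cref{prop:bc and composition and 2 out of 3}\labelcref{propitem:sm ket base change} again, which guarantees that saturated base change along $X_j\to X_i$ sends smooth/\'etale/Kummer \'etale maps to maps of the same kind, so the system is well-defined as a subsystem of $\{\cat{Sfp}_{X_i}\}$. Fully faithfulness is then inherited verbatim from \cref{thm:sfp-approx}, since Hom-sets in the subcategories agree with Hom-sets in $\cat{Sfp}$. I do not expect a genuine obstacle here: the only mildly delicate point is the bookkeeping needed to invoke \cref{prop:descend-smoothness-etc} after having only produced a single descended object from \cref{thm:sfp-approx}, i.e.\ making sure the quasi-compactness hypothesis on the base and the ``smallest element'' convention are met by passing to a cofinal truncation of $I$; this is routine.
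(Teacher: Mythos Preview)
Your proposal is correct and follows exactly the approach the paper takes: the paper's proof consists of the single sentence ``Combine \cref{prop:descend-smoothness-etc} with \cref{thm:sfp-approx},'' and what you have written is a careful unpacking of precisely that combination. Your additional bookkeeping (full subcategories, cofinal truncation, invoking \cref{prop:bc and composition and 2 out of 3}\labelcref{propitem:sm ket base change} for the transition functors) is all accurate and fills in details the paper leaves implicit.
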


\begin{proof}
Combine \cref{prop:descend-smoothness-etc} with \cref{thm:sfp-approx}.
\end{proof}

This approximation result combined with Kato's criterion \cite[Theorem~3.5]{Kato1989:LogarithmicStructures} and its variant \cref{lem:Kato criterion for Ket} below allows us to obtain the following ``chart lifting property'' for smooth, \'etale, and Kummer \'etale morphisms.

\begin{prop}
\label{prop:chart lifting for smooth etale and ket morphisms}
    Let $f\colon Y\to X$ be a morphism between saturated log schemes. The following conditions are equivalent.
    \begin{enumerate}[(a)]
        \item \label{propitem:map is sm-et-ket} 
            The morphism $f$ is smooth (resp.\ \'etale, resp.\ Kummer \'etale).

        \item \label{propitem:sm-et-ket-chart-lifting}
            For every strict \'etale $U\to X$ and every chart $P\to \cM(U)$ with a saturated monoid $P$, the base change map $Y_U\to U$ locally on source and target admits a chart by a smooth (resp.\ \'etale, resp.\ Kummer \'etale) morphism of monoids $P\to Q$ (with $\Sigma$ the set of primes non-invertible on the respective target) such that the induced strict map $Y_U\la U\times_{\bA_P}\bA_Q$ is \'etale. 
    \end{enumerate}
\end{prop}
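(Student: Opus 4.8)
The plan is to first dispatch the trivial direction and then reduce the substantial direction to the fs case, where it is Kato's chart lemma. For \labelcref{propitem:sm-et-ket-chart-lifting} $\Rightarrow$ \labelcref{propitem:map is sm-et-ket}: since $X$ is saturated it admits, strict étale locally, charts by saturated monoids, so choosing a strict étale cover $\{U_\alpha\to X\}$ together with such charts $P_\alpha\to\cM(U_\alpha)$ and applying \labelcref{propitem:sm-et-ket-chart-lifting} to each produces, locally on source and target, a chart for $f$ by a smooth (resp.\ étale, resp.\ Kummer étale) morphism of monoids whose associated strict map is étale. This is precisely the condition of \cref{defi:smooth etale ket}, so $f$ is smooth (resp.\ étale, resp.\ Kummer étale). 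All the work is in the converse.

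So assume $f$ is smooth (resp.\ étale, resp.\ Kummer étale), fix $U\to X$ strict étale and a chart $P\to\cM(U)$ by a saturated monoid. Since $f_U\colon Y_U\to U$ is again smooth (resp.\ \dots) by \cref{prop:bc and composition and 2 out of 3}, and since the desired conclusion is local on $U$ and on $Y_U$, I would replace $f$ by $f_U$ and $X$ by $U$, then shrink $X$ to an affine, to reduce to: $X=\Spec(P\to A)$ affine with the prescribed chart $P\to\cM(X)$ (an affine saturated log scheme with a global chart is the same datum as a saturated prelog ring), $Y$ affine, and $f$ smooth (resp.\ étale, resp.\ Kummer étale). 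By \cref{lem:prelogring_colimit_fs} I would write $X=\varprojlim_i X_i$ with $X_i=\Spec(P_i\to A_i)$ fs log schemes of finite type over $\bZ$, forming an affine charted inverse system; by construction of that limit (and \cref{lem:M-of-lim}) the chart $P\to\cM(X)$ is the filtered colimit of the fs charts $P_i\to\cM(X_i)$. As $f$ is in particular sfp and $Y$ is qcqs, \cref{cor:SmEtKet-approx} (that is, \cref{thm:sfp-approx} combined with \cref{prop:descend-smoothness-etc}) provides, for $i\gg 0$, a smooth (resp.\ étale, resp.\ Kummer étale) morphism $f_i\colon Y_i\to X_i$ of saturated log schemes over the fs base $X_i$ such that $Y$ is the saturated base change of $Y_i$ along $X\to X_i$. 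Over an fs base, $f_i$ is of finite presentation and $Y_i$ is fs by \cref{rmk:maps between fs are sfp}, and $f_i$ is log smooth (resp.\ log étale, resp.\ Kummer étale) in the classical sense, the monoid conditions of \cref{def:smooth-etale-Ket-monoids} reducing over fs monoids exactly to Kato's.

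Now I would invoke the chart lemma for fs log schemes: by \cite[Theorem~3.5]{Kato1989:LogarithmicStructures} in the smooth and étale cases and by \cref{lem:Kato criterion for Ket} in the Kummer étale case, strict étale locally on $Y_i$ the morphism $f_i$ admits a chart \emph{extending} the given chart $P_i\to\cM(X_i)$, given by a smooth (resp.\ étale, resp.\ Kummer étale) morphism of fs monoids $\vtheta_i\colon P_i\to Q_i$ for which the induced strict map $Y_i\to X_i\times_{\bA_{P_i}}\bA_{Q_i}$ is étale. Base changing back along $X\to X_i$, set $Q=(P\oplus_{P_i}Q_i)^\sat$. Then $\vtheta\colon P\to Q$ is smooth (resp.\ étale, resp.\ Kummer étale) by \cref{lem:bc of smooth etale Ket monoid maps}, and passing to the colimit of the compatible charts exhibits it as a chart for $Y\to X$ extending $P\to\cM(X)$. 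Since $X\to\bA_{P_i}$ factors through the strict map $X\to\bA_P$, \cref{lem:saturated fibre product} (applicable since $\bA_{P_i}$ is fs) yields \(X\times^\sat_{\bA_{P_i}}\bA_{Q_i}=X\times^\sat_{\bA_P}\bigl(\bA_P\times^\sat_{\bA_{P_i}}\bA_{Q_i}\bigr)=X\times^\sat_{\bA_P}\bA_Q=X\times_{\bA_P}\bA_Q\), the last equality because $X\to\bA_P$ is strict. Hence $Y\to X\times_{\bA_P}\bA_Q$ is the saturated base change of the strict étale map $Y_i\to X_i\times_{\bA_{P_i}}\bA_{Q_i}$, so it is strict étale, as required; and the set $\Sigma$ of primes attached to the target is preserved under the strict étale localisations and the passage to $X$, so the monoid maps have the right property with respect to the correct $\Sigma$ throughout.

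I do not expect a single deep obstacle here: the argument is a reduction to Kato's chart lemma over fs log schemes via the approximation machinery already in place. The steps that need care are all bookkeeping around saturated base change: checking that the prescribed chart on $U$ is genuinely the colimit of fs charts, so that the fs chart lemma applies to it; checking that $X\times^\sat_{\bA_{P_i}}\bA_{Q_i}$ is computed by the saturated pushout $Q=(P\oplus_{P_i}Q_i)^\sat$, so that étaleness of the strict map survives the base change; and checking that the descended morphism $f_i$ over the fs base $X_i$ is log smooth/étale/Kummer étale in the classical sense, so that the cited chart lemma is legitimately applicable.
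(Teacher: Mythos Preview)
Your proposal is correct and follows essentially the same route as the paper: reduce to the affine charted case, write $(P\to A)=\varinjlim(P_i\to A_i)$ with $P_i$ fs, descend $f$ to a smooth/\'etale/Kummer \'etale $f_i\colon Y_i\to X_i$ via \cref{cor:SmEtKet-approx}, apply Kato's chart lemma (resp.\ \cref{lem:Kato criterion for Ket}) to get $P_i\to Q_i$, and base change back. Your proof is more explicit about the identification $X\times^\sat_{\bA_{P_i}}\bA_{Q_i}\simeq X\times_{\bA_P}\bA_Q$ and the bookkeeping with $\Sigma$, but the argument is the same.
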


\begin{proof}
    The implication \labelcref{propitem:sm-et-ket-chart-lifting}$\Rightarrow$\labelcref{propitem:map is sm-et-ket} is  clear. For the converse \labelcref{propitem:map is sm-et-ket}$\Rightarrow$\labelcref{propitem:sm-et-ket-chart-lifting}
    we may assume that $U = X$ is of the form $\Spec(P\to A)$ and write $(P\to A)=\varinjlim\,(P_i\to A_i)$ with $P_i$ fs and $A_i$ of finite type over $\bZ$. By \cref{cor:SmEtKet-approx}, there exists, for $i$ large enough, a smooth (resp.\ \'etale, resp.\ Kummer \'etale) map $Y_i\to X_i = \Spec(P_i\to A_i)$ whose saturated base change is $Y\to X$. By Kato's criterion \cite[Theorem~3.5]{Kato1989:LogarithmicStructures} and the analogue for Kummer \'etale in \cref{lem:Kato criterion for Ket} below, locally on $Y_i$ there exists a chart for $Y_i\to X_i$ given by a smooth (resp.\ \'etale, resp.\ Kummer \'etale) map of monoids $P_i\to Q_i$ (with respect to primes non-invertible on $X_i$) such that the induced strict map $Y_i\to X_i\times_{\bA_{P_i}}\bA_{Q_i}$ is \'etale. Taking saturated base change to $X$ and $P$ we obtain the required local chart for $Y \to X$.
\end{proof}

Above we used the following fact about Kummer \'etale maps between fs log schemes.

\begin{lem}
\label{lem:Kato criterion for Ket}
    Let $f\colon Y\to X$ be a Kummer \'etale map of fs log schemes and let $\alpha\colon P\to \cM(X)$ be an fs chart. Then locally on $Y$ the chart $P$ lifts to a chart for $f$ given by a Kummer \'etale map of monoids $P\to Q$ such that $Y\to X\times_{\bA_P}\bA_Q$ is (strict) \'etale. 
\end{lem}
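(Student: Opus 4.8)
The plan is to reduce the statement to Kato's criterion for log étale morphisms \cite[Theorem~3.5]{Kato1989:LogarithmicStructures} by exhibiting, locally on $Y$, a factorisation of $f$ through a strict étale morphism onto a log scheme charted by a Kummer étale monoid map. First I would work étale locally on $Y$ and apply Beilinson's chart lemma (\cref{lem:Beilinson chart lemma}) to the fs chart $\alpha\colon P\to\cM(X)$: since $P$ is fs (in particular finitely generated) and $f$ is a morphism into a log scheme, after shrinking $Y$ we obtain a commutative square with strict horizontal maps and a morphism of fs monoids $\vtheta\colon P\to Q'$ charting $f$. This is not yet the desired chart because $\vtheta$ need not be Kummer étale; the map $Y\to X\times_{\bA_P}\bA_{Q'}$ is strict but only log étale, not necessarily étale as schemes.

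The key step is to correct the monoid chart. Since $f$ is Kummer étale by hypothesis, the induced map on characteristic monoids is Kummer étale, so at each geometric point $\bar y$ the stalk $\ov\cM_{X,\overline{f(\bar y)}}\to\ov\cM_{Y,\bar y}$ is an injective, exact map of fs monoids whose groupification has torsion cokernel of order prime to the residue characteristics. Using Kato's criterion \cite[Theorem~3.5]{Kato1989:LogarithmicStructures} applied to the log étale morphism $f$ and the chart $\vtheta$, we can, after a further étale localisation on $Y$, replace $Q'$ by a monoid $Q$ and $\vtheta$ by a map $P\to Q$ which charts $f$ and such that $Y\to X\times_{\bA_P}\bA_Q$ is strict étale. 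It then remains to check that this $P\to Q$ is automatically Kummer étale. For this I would argue on stalks: the map $\ov\cM_{X,\bar x}\to\ov\cM_{Y,\bar y}$ is, up to the sharpening of $P$ and $Q$, identified with the characteristic map of $f$, which is Kummer étale; since $P\to Q$ differs from its sharpening only by invertible elements, and since being injective, exact, and having prime-to-$p$ torsion cokernel on groupifications can all be checked after sharpening (using \cref{lem:ket-monoid-conditions} and \cref{lem:injective-implies-gp-injective}), it follows that $P\to Q$ is Kummer étale.

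The main obstacle I anticipate is the bookkeeping in the last paragraph: Kato's criterion as stated produces a chart lifting for log étale morphisms but does not directly guarantee that the lifted monoid chart has the sharp characteristic map one wants — one must carefully pass between $Q$, its integralisation/saturation if necessary (which are harmless here since we start with fs data), and its sharpening $\ov Q = Q/Q^\times$, and verify that the three defining conditions of Kummer étale (injectivity, exactness, prime-to-$p$ torsion cokernel) descend from the known statement about characteristic monoids to the chosen chart. A subtlety is ensuring that the torsion (rather than just finiteness) and the primality of the order to the residue characteristics are preserved; this uses that $f$ is Kummer étale with $\Sigma$ the set of primes non-invertible on $X$, so the relevant orders are controlled at every point. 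Once these identifications are in place, the conclusion that $P\to Q$ is Kummer étale and $Y\to X\times_{\bA_P}\bA_Q$ is strict étale is immediate, and the lemma follows.
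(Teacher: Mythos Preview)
Your proposal has a genuine gap at the step you yourself flag as the main obstacle. You write that the characteristic map $\ov\cM_{X,\bar x}\to\ov\cM_{Y,\bar y}$ is ``up to the sharpening of $P$ and $Q$, identified with the characteristic map of $f$'', and then try to deduce that $P\to Q$ is Kummer \'etale from this. But the sharpening $\bar P=P/P^\times$ is \emph{not} the characteristic monoid $\ov\cM_{X,\bar x}$: the latter is $P/F_{\bar x}$ where $F_{\bar x}=\alpha^{-1}(\cO_{X,\bar x}^\times)$ is the face of elements mapping to units at $\bar x$, and this face can be strictly larger than $P^\times$. So knowing that $P/F_{\bar x}\to Q/G_{\bar y}$ is Kummer \'etale does not let you conclude that $P\to Q$ is injective or exact. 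Kato's criterion \cite[Theorem~3.5]{Kato1989:LogarithmicStructures} only guarantees that the kernel and torsion of cokernel of $P^\gp\to Q^\gp$ have order prime to $p$; it gives neither injectivity nor exactness, and your proposed descent-from-sharpening argument cannot supply them.

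The paper circumvents this precisely by reducing to a \emph{neat} chart. At a geometric point $\bar x$ one passes first to the localisation $P_F$ (for $F=F_{\bar x}$), splits $P_F=P'\oplus F^\gp$ using that $P'=P/F$ is sharp, and observes that $P'\to\cM(X)$ is a neat chart, so $P'=\ov\cM_{X,\bar x}$ on the nose. In that situation one can invoke \cite[Proposition~3.4.1]{Stix2002:Thesis}, which is exactly the Kummer-\'etale refinement of Kato's criterion for neat charts, to get a Kummer \'etale $P'\to Q'$. One then sets $Q=Q'\oplus F^\gp$, checks $P\to Q$ is Kummer \'etale, and verifies $X\times_{\bA_P}\bA_Q\simeq X\times_{\bA_{P'}}\bA_{Q'}$ directly. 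The passage through a neat chart is the key idea your outline is missing.
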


\begin{proof}
We will work locally around a geometric point $\ov{x}$ of $X$. Let $F\subseteq P$ be the preimage of $\smash{\cO_{X,\bar{x}}^\times}$ and let $P' = P/F \simeq \ov{\cM}_{X,\bar{x}}$. Then $P_F\to \cM(X)$ is also a chart in a neighbourhood of $\ov{x}$. Moreover, if we solve the problem for this chart, then we solve it also for $P$. Indeed, if $P_F \to Q''$ is a Kummer \'etale map, we set $Q$ to be the saturation of $P$ in $(Q'')^\gp$. Then $P\to Q$ is Kummer \'etale. Moreover, the maps $\bA_{Q''}\to \bA_Q$ and $\bA_{P_F}\to \bA_P$ are strict open immersions. We may thus compose $Y\to X\times_{\bA_{P_F}}\bA_{Q''}$ with the base change of $\bA_{Q''}\to \bA_Q$. 

We may now suppose that $P = P_F$, i.e.\ that $F = F^\gp$. In this case, since $P'$ is sharp and hence $(P')^\gp$ is free, we may write $P = P' \oplus F^\gp$. Then the composition $P'\to P \to \cM(X)$ is also a chart, which is moreover neat at the point $\ov{x}$. 

By \cite[Proposition~3.4.1]{Stix2002:Thesis}, the assertion holds for the neat chart $P'\to \cM(X)$. More precisely, locally around $\ov{x}$ and locally on $Y$ there exists a Kummer \'etale map of monoids $P'\to Q'$ such that $f$ factors through a strict \'etale map to $X\times_{\bA_{P'}}\bA_{Q'}$. We may therefore assume without loss of generality that $Y = X\times_{\bA_{P'}}\bA_{Q'}$.

Let $Q = Q' \oplus F^\gp$. Then $P\to Q$ is Kummer \'etale and it remains to prove that $X\times_{\bA_P}\bA_{Q}$ and $X\times_{\bA_{P'}}\bA_{Q'}$ are isomorphic over $X$. The chart $X\to \bA_P$ factors through the chart $X\to \bA_{P'}$ and the strict map $\bA_{P'}\to\bA_P$ induced by the projection $P = P'\oplus F^\gp \to P'$. Since $Q' = P'\oplus_{P} Q$, we have
\[ 
    Y \,\simeq\, X\times_{\bA_{P'}}\bA_{Q'} 
      \,\simeq\, X\times_{\bA_{P'}}(\bA_{P'}\times_{\bA_P} \bA_Q)
      \,\simeq\, X\times_{\bA_P} \bA_Q. \qedhere
\]
\end{proof}

% in a neighbourhood of $\ov{x}$. Assuming $X=\Spec(A)$ is affine, this can be checked directly using the maps of prelog rings
% \[ 
%     (Q\to A\otimes_{\bZ[P]}\bZ[Q]) \la (Q'\oplus A^\times \to A\otimes_{\bZ[P']}\bZ[Q'])
% \]
% sending $q=(q',f)\in Q = Q'\oplus F^\gp$ to $(q', \alpha(f))\in Q'\times A^\times$ and $\alpha(f)\otimes q' \in A\otimes_{\bZ[P']}\bZ[Q']$, and 
% \[ 
%     (Q'\to A\otimes_{\bZ[P']}\bZ[Q']) \la (Q\to A\otimes_{\bZ[P]}\bZ[Q]) 
% \]
% sending $q'$ to $q=(q',0)\in Q$ and $1\otimes q \in A\otimes_{\bZ[P]}\bZ[Q]$.
% \end{proof}

%---------------------------------------
\subsection{Appendix: Tsuji's example}

In \cref{rmk:refer to Tsuji exmaple in appendix}
we promised a counterexample against the naive approach to constructing charts by global sections of $\cM$, a potential source of charts in verifying the chart lifting property of a morphism, see \cref{rmk:chart-lifting}. However, this does not work in general.

The following construction, due to Takeshi Tsuji, gives an example of an affine fs log scheme $C$ (a punctured elliptic curve) for which the identity map $\cM(C)\to \cM(C)$ is not a chart. 

\begin{ex}[Tsuji]
\label{ex:tsuji}
    Let $k$ be an algebraically closed field, let $E$ be an elliptic curve defined over $k$, and let $x_0\in E(k)$ be a point of infinite order. Let $x_1, x_2, x_3$ be the three distinct points $[-3](x_0)$, $[2](x_0)$, and $x_0$, respectively, where $[n]$ ($n\in\bZ$) denotes the multiplication by $n$ on $E$. Put $C = E\setminus\{0\}$, $U = C\setminus\{x_1, x_2, x_3\}$, and let $j$ denote the open immersion $U\hookrightarrow C$. The scheme $C$ is affine. We endow it with the compactifying log structure $\cM = \cO_C \cap j_*\cO_U^\times$ induced by the open subset $U$. We set $M = \cM(C)$ to be its global sections and let $\cM'$ be the log structure induced by the map $M\isomto \cM(C) \to \cO(C)$. We will show that the stalk $\overline{\cM}{}'_{x_1}$ has rank two while $\overline{\cM}_{x_1} \simeq \bN$. This implies that $\cM$ and $\cM'$ are not isomorphic as sheaves of monoids on $C$. 

    We first compute $M = \cM(C)$. Letting $v_i$ ($i=1,2,3$) denote the discrete valuation of the function field $k(C)$ of $C$ defined by the point $x_i\in C$, we have 
    \[ 
        \cM(C) = \{ g\in \cO(U)^\times \,:\, v_i(x)\geq 0 \text{ for } i=1,2,3\}. 
    \]
    Consider the map $(v_1, v_2, v_3)\colon \cM(C)\to \bN^3$. For $h_1, h_2 \in \cO(U)^\times$ such that $v_i(h_1) = v_i(h_2)$ for all $i=1,2,3$, we have $h_1h_2^{-1} \in \cO(E)^\times = k^\times$. Moreover, for $(n_1, n_2, n_3)\in \bZ^3$, there exists an $f\in \cO(U)^\times$ such that ${\rm div}(f) = n_1 x_1 + n_2 x_2 + n_3 x_3$ if and only if
    \begin{equation} \label{eqn:Tsuji1}
        -3n_1 + 2n_2 + n_3 = 0.
    \end{equation}
    (In particular, $x_1+x_2+x_3$ is a principal divisor, equal to ${\rm div}(g)$ for some function $g\in \cO(C)$, and $n\mapsto g^n \colon \bN\to \cM$ is a chart for $\cM$.) Therefore, by \labelcref{eqn:Tsuji1}, the map $(v_1, v_2, v_3)\colon \cM(C)\to \bN^3$ induces an isomorphism
    \begin{equation} \label{eqn:Tsuji2}
        M/k^\times = \cM(C)/k^\times \isomto \{ (n_1, n_2, n_3)\in\bN^3 \,:\, -3n_1+2n_2+n_3=0\}.
    \end{equation}
    
    To compute $\overline{\cM}{}'_{x_1}$, we note that we have
    \[
        M / \alpha_{\ov{x}_1}^{-1}(\cO^\times_{C,x_1}) \isomto \cM'_{x_1} / \cO^\times_{C,x_1} = \overline{\cM}{}'_{x_1}.
    \]
    For $h\in M$, we have $\alpha_{x_1}(h) \in \cO^\times_{C, x_1}$ if and only if $v_1(h) = 0$. By \labelcref{eqn:Tsuji2}, the latter implies $2v_2(h) + v_3(h) = 0$, $v_2(h) \geq 0$, $v_3(h)\geq 0$, and therefore $v_2(h) = v_3(h) = 0$. It follows that $h\in k^\times$, and we deduce that
    \[ 
        \overline{\cM}{}'_{x_1} \simeq M / \alpha_{x_1}^{-1}(\cO^\times_{C,x_1})  = M/k^\times,
    \]
    which is the monoid \labelcref{eqn:Tsuji2} with groupification $\bZ^2$. On the other hand, we have $\overline{\cM}_{x_1}\simeq \bN$. Thus $\cM$ and $\cM'$ are not isomorphic.
\end{ex}

%==========================================
\section{The Kummer \'etale site and fundamental group}
\label{s:kummer-pi1}
%==========================================

In this section, we use the theory of sfp morphisms developed in \cref{s:log-schemes} to define the Kummer \'etale site (\cref{sec:ket site}) and the Kummer \'etale fundamental group (\cref{ss:finite-ket,ss:ket-pi1}) of a saturated log scheme $X$. In the final \cref{ss:kummer-val-rings}, we exemplify the latter in the case of spectra of valuation rings.  

%---------------------------------------
\subsection{The Kummer \'etale site}
\label{sec:ket site}
%---------------------------------------

On our way to the Kummer \'etale fundamental group in \cref{ss:ket-pi1}, we first define the Kummer \'etale site. See \cite{IllusieFKN} for an overview in the fs case.

\begin{defi}
\label{defi:ket site}
    The \textbf{Kummer \'etale site} $\KEtsite{X}$ of a saturated log scheme $X$ consists of the underlying category of Kummer \'etale morphisms $U \to X$. The topology on $\KEtsite{X}$ is defined by saying that a family of maps $\{f_i\colon U_i\to U\}_{i\in I}$ is a covering family if $|U|=\bigcup_{i\in I} f_i(|U_i|)$. 
\end{defi}

We note that for every object $U \to X$ of $\KEtsite{X}$, the Kummer \'etale site $\KEtsite{U}$ is identified with the slice category $(\KEtsite{X})_{/U}$. This follows from \crefpart{prop:bc and composition and 2 out of 3}{propitem:ket 2 out of 3}.

The axioms of a site for $\KEtsite{X}$ follow from \crefpart{prop:bc and composition and 2 out of 3}{propitem:sm ket base change} and \labelcref{propitem:sm ket composition} once we show that the saturated base change of a jointly surjective family is still jointly surjective. For fs log schemes this follows from Nakayama's four point lemma \cite[Proposition 2.2.2]{NakayamaLogEtCoh}. Here we follow Ogus' version for the base change of an exact morphism \cite[III \S2.2]{Ogus} of fine integral log schemes \cite[Proposition III~2.2.3]{Ogus}. 

For the statement below, recall \cite[III \S 2.2]{Ogus} that a morphism $f\colon Y\to X$ of integral log schemes is {\bf exact} if for every geometric point $\ov{y}\to Y$, the homomorphism of monoids 
\[
    f^*\colon \cM_{X,f(\ov y)}\la \cM_{Y,\ov y}
    \quad 
    \text{(or, equivalently, $\ov{\cM}_{X,f(\ov y)}\la \ov{\cM}_{Y,\ov y}$)}
\]
is exact (\crefpart{defi:exactintsatvert monoids}{defitem:exact monoid map}). We note the following properties of exact morphisms:
\begin{enumerate}[(1)]
    \item \label{item:exact-pullback-comp} 
        It follows from Remark~\labelcref{rmks:sorite exact int sat vert}\labelcref{rmkitem:sorite exact} that exact morphisms of saturated log schemes are stable under composition and saturated pull-back (whenever it exists, cf.\ \cref{lem:saturated fibre product}), and in particular stable under strict pull-back.
    \item \label{item:exact-AQ-AP}
        If $P\to Q$ is an exact homomorphism of integral monoids, then the morphism $\bA_Q\to\bA_P$ is exact. 
    \item \label{item:exact-chart-exact}
        It follows from \labelcref{item:exact-pullback-comp} and \labelcref{item:exact-AQ-AP} that if $f\colon Y\to X$ is a morphism of integral log schemes which locally admits a chart by an exact homomorphism of integral monoids, then $f$ is exact.
    \item 
        Most importantly, observation \labelcref{item:exact-chart-exact} implies that Kummer \'etale maps of saturated log schemes are exact.
\end{enumerate}  

\begin{lem}[Four point lemma for exact morphisms]
\label{lem:4ptLemma}
    Consider a cartesian diagram in the category of saturated log schemes
    \[
        \begin{tikzcd}
            U' \ar[r,"h'"] \ar[d,"f'",swap] & U \ar[d,"f"] \\
            X' \ar[r,"h",swap] & X,
        \end{tikzcd}
    \]
    i.e.\ $U' = X'\times^\sat_X U$, where the morphism $f$ is exact and locally sfp.    
    Then, for every pair of points $u \in U$ and $x' \in X'$ with $f(u) = h(x')$ there exists a point $u' \in U'$ with $f'(u') = x'$ and $h'(u') = u$.
\end{lem}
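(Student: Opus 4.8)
The statement is a ``four point lemma'' in the style of Nakayama and Ogus: given a Cartesian square of saturated log schemes with $f$ exact and locally sfp, and points $u\in U$, $x'\in X'$ lying over a common point $x=f(u)=h(x')\in X$, we must produce a point $u'\in U'$ mapping to $u$ and $x'$. The first reduction is to observe that the assertion is local on $X$, $X'$, and $U$ around the relevant points, and is compatible with strict \'etale localization, so by \crefpart{prop:sfp morphisms}{propitem:sfp-chart-lifting} (chart lifting for sfp maps, \cref{cor:sfp-chart-lifting}) we may assume that $f$ admits a chart by an sfp monoid homomorphism $\vtheta\colon P\to Q$, necessarily exact since $f$ is exact and we may choose the chart exact (one can arrange this by replacing $P$ by the preimage of the relevant face and arguing as in \cref{prop:properties of sfp that descend to fs}), so that $U$ is strict over $X\times_{\bA_P}\bA_Q$, and $X=\Spec(P\to A)$ is affine charted. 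Then $U'=X'\times_X^\sat U$ is strict over $X'\times_{\bA_P}\bA_Q$.

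The plan is then to reduce to the case where $U\to X$ is strict \'etale (trivial, by the usual going-up/surjectivity of strict \'etale base change on topological spaces) plus the case where $U=X\times_{\bA_P}\bA_Q$ and then further to the ``universal'' case $U=\bA_Q$, $X=\bA_P$, $X'=\bA_{P'}$ where $P\to P'$ corresponds to the map on stalks $\ov{\cM}_{X,x}\to\ov{\cM}_{X',x'}$ at the given points. Since the formation of $U'$ commutes with strict base change, and the point-existence question only depends on the underlying topological spaces, we may pass to fibres: choose an algebraically closed field $k$ and points $\Spec(k)\to X$ hitting $x$ with residue field $k$, and similarly over $x'$. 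Topologically, $|\bA_Q|=|{\Spec}\,\bZ[Q]|$ maps to $|\bA_P|=|{\Spec}\,\bZ[P]|$, and a point of $\Spec\bZ[P]$ lying over a prime $\mathfrak{p}\subseteq P$ together with a field-valued point, so that a point of $\bA_Q$ over a fixed point of $\bA_P$ is the same as choosing a prime $\mathfrak{q}\subseteq Q$ with $\vtheta^{-1}(\mathfrak{q})=\mathfrak{p}$ and then a point of the fibre of the corresponding torus/affine space over the residue field. The existence of such a $\mathfrak{q}$, i.e. surjectivity of $\Spec(Q)\to\Spec(P)$ on the relevant faces, is where exactness of $\vtheta$ enters: for an exact homomorphism of saturated monoids the map $\Spec(Q)\to\Spec(P)$ is surjective (this follows from \cref{rmks:sorite exact int sat vert}(1) and the structure of exact morphisms; cf.\ Ogus \cite[III~2.2]{Ogus}). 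Concretely: given the face $F_u\subseteq Q$ of $u$ and the face $F_{x'}\subseteq P$ of $x'$ with $\vtheta^{-1}(F_u)=F_x\subseteq F_{x'}$, exactness lets us produce a face $F_{u'}\subseteq Q$ with $\vtheta^{-1}(F_{u'})=F_{x'}$ and $F_{u'}\supseteq F_u$, after which a residue-field computation (base change along $P'\to k'$ and $Q\to$ something, using that tori and monoid algebras over a field have surjective $\Spec$ onto the monoid spectrum with nonempty fibres) produces the actual point $u'$.

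The key input I would use is essentially Ogus' \cite[Proposition~III~2.2.3]{Ogus}, which is stated for \emph{fine} integral log schemes and exact morphisms; the content of our lemma is to extend it to the saturated, non-noetherian, locally sfp setting. So the cleanest route is: reduce (as above) to $f$ charted by an exact sfp $\vtheta\colon P\to Q$ with $X$ affine charted; write $X=\varprojlim X_i$ as an affine charted inverse limit of fs log schemes of finite type over $\bZ$ via \cref{lem:prelogring_colimit_fs}, use \cref{prop:fs-approximation}/\cref{thm:sfp-approx} to descend $U\to X$ (and hence the exact chart) to $U_i\to X_i$ with $U_i\to X_i$ exact (exactness descends for $i\gg0$, since it can be checked on the chart and charts descend), then descend $X'\to X$ and the points $u$, $x'$ to finite level — here one must be slightly careful since $X'\to X$ is arbitrary, so instead of descending $X'$ one descends only the point $x'$, i.e. an affine scheme $\Spec(k')\to X$ with image $x$, which factors through some $X_i$. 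At finite level everything is fine and noetherian and we may invoke Ogus' result \cite[Proposition~III~2.2.3]{Ogus} (or Nakayama's four point lemma \cite[Proposition~2.2.2]{NakayamaLogEtCoh}) to get the desired point $u'_i$ over $x'_i$ and $u_i$; then pull back to $X$.

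The main obstacle I expect is the passage from exactness at finite level back and forth through approximation — one needs that \emph{exactness} of an sfp charted morphism is detected at finite level and is stable under the relevant saturated base changes, and that the topological ``going-up'' property is preserved under the cofiltered limit $|X|=\varprojlim|X_i|$ with quasi-compact transition maps (so that a compatible system of points $u'_i$ assembles to a genuine point $u'$). The first is handled by choosing the charts compatibly with \cref{prop:properties of sfp that descend to fs}; the second by standard spectral-space limit arguments (as used already in the proof of \cref{prop:fs-approximation}, Step 2). A secondary subtlety is that $X'\to X$ need not be charted, so one must phrase the finite-level reduction purely in terms of the single point $x'$ (equivalently a map from a field), not the whole scheme $X'$ — but since the conclusion only asks for existence of one point $u'$, this costs nothing.
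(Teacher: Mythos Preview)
Your plan has the right instinct—reduce to log points—but then takes an unnecessary and problematic detour through approximation. The gap is in your treatment of the point $x'$: you propose to descend ``only the point $x'$, i.e.\ an affine scheme $\Spec(k')\to X$'', but this forgets the log structure. The point $x'$ lives on $X'$ and carries the stalk $P'=\cM_{X',\bar{x}'}$, a saturated but generally non-fs monoid. Even after you replace $X'$ by the log point $\Spec(P'\to k')$, the resulting map into each $X_i$ is a map from a \emph{non-fine} log scheme, so Ogus' \cite[Proposition~III~2.2.3]{Ogus} (stated for fine log schemes) does not apply to the fibre product $\Spec(P'\to k')\times^\sat_{X_i}U_i$. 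To salvage the approximation route you would also have to approximate $P'$ by fs monoids compatibly with the system $(P_i)$, run a double limit, and verify that the resulting inverse system of nonempty fibres has nonempty limit. This can be made to work, but it is considerably more involved than you indicate, and your sketch does not address it.

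The paper bypasses all of this. After the same reduction to log points over an algebraically closed field $k$ (so $X$, $X'$, $U$ all have underlying scheme $\Spec(k)$, with monoids $P=\cM_X(X)$, $P'=\cM_{X'}(X')$, $Q=\cM_U(U)$), the question becomes: is $\Spec(Q'\to k)^\sat$ nonempty, where $Q'=P'\oplus_P Q$? Since the saturation map of an integral log scheme is surjective (\cref{prop:geometry sat map}), it suffices to extend the monoid map $Q'\to k$ over $(Q')^\inte$. The key input is \cite[Proposition~I~4.2.5]{Ogus}: because $P\to Q$ is exact and $P\to P'$ is local (both have $k^\times$ as unit group), one has $((Q')^\inte)^\times=P'^\times\oplus_{P^\times}Q^\times=k^\times$, so sending all non-units to $0$ gives the required extension. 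This is a direct monoid computation in the saturated setting, with no reduction to fs needed—identifying \cite[I~4.2.5]{Ogus} as the crux is what makes the argument short.
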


\begin{proof}
    As saturated base change by a strict map commutes with forgetting the log structure, we may reduce to the case where the underlying schemes of $U$, $X$ and $X'$ all equal $\Spec(k)$ for an algebraically closed field $k$, and $f$ and $h$ are the identity on the underlying scheme. We need to show that the saturated fibre product is non-empty. 

    We write $P = \cM_X(X)$, $P'=\cM_{X'}(X')$ and $Q = \cM_U(U)$, and $Q' = P' \oplus_P Q$ for the pushout in monoids. 
    Since by \cref{prop:geometry sat map} the saturation map of an integral log scheme is surjective, we must show that the integral base change
    \[
    X' \times^\inte_X U = \Spec(Q' \to k)^\inte
    \]
    is non-empty. This amounts to showing that the monoid homomorphism $Q' \to k$ can be extended to a monoid homomorphism $Q'^\inte \to k$. 
    Since $P\to Q$ is exact and $P\to P'$ is local (since $P^\times = (P')^\times = k^\times$), by \cite[Proposition I 4.2.5]{Ogus} we have
    \[
    (Q'^\inte)^\times = P'^\times \oplus_{P^\times} Q^\times = k^\times,
    \]
    and the map $Q'^\inte \to k$ extends this by sending  $Q'^\inte \setminus (Q'^\inte)^\times$ to $0$. 
\end{proof}

Since Kummer \'etale morphisms are exact, we deduce the following result.

\begin{cor} \label{cor:4ptKummer}
    Let $f\colon Y\to X$ and $g\colon X'\to X$ be morphisms of saturated log schemes such that $f$ is Kummer \'etale. Let $f'\colon Y'\to X'$ be the saturated base change of $f\colon Y\to X$. Then 
    \[
        f'(Y') = g^{-1}(f(X)).
    \]
\end{cor}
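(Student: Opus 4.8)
\textbf{Proof of \cref{cor:4ptKummer}.}
The plan is to deduce this from the four point lemma (\cref{lem:4ptLemma}) once we check its hypotheses are met for the morphism $f$. First I would observe that the inclusion $f'(Y')\subseteq g^{-1}(f(Y))$ is automatic: the commutative square defining $Y' = X'\times^\sat_X Y$ sends any point of $Y'$ to a point of $X'$ lying over a point of $f(Y)\subseteq X$, so $g$ maps $f'(Y')$ into $f(Y)$, giving $f'(Y')\subseteq g^{-1}(f(Y))$. Note here that the saturated base change $Y'$ exists by \crefpart{prop:bc and composition and 2 out of 3}{propitem:sm ket base change}, since $f$ is Kummer \'etale and hence in particular locally sfp.

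The reverse inclusion $g^{-1}(f(Y))\subseteq f'(Y')$ is exactly the content of \cref{lem:4ptLemma}. To apply it I need that $f\colon Y\to X$ is exact and locally sfp. Being Kummer \'etale, $f$ is locally sfp by \cref{defi:smooth etale ket} (or \cref{prop:fs approximation for smooth etale and ket morphisms}). For exactness, I would invoke observation \labelcref{item:exact-chart-exact} together with \labelcref{item:exact-AQ-AP} preceding the lemma: a Kummer \'etale morphism of saturated log schemes \'etale locally admits a chart by a Kummer \'etale homomorphism of monoids $P\to Q$, which is exact by definition (\cref{def:smooth-etale-Ket-monoids}), so $\bA_Q\to\bA_P$ is exact, and hence $f$ is exact — this is precisely item \labelcref{item:exact-chart-exact} in the list before \cref{lem:4ptLemma}. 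With $f$ exact and locally sfp, \cref{lem:4ptLemma} says: for every $y\in Y$ and $x'\in X'$ with $f(y)=g(x')$ there exists $y'\in Y'$ with $h'(y')=y$ and $f'(y')=x'$, where $h'\colon Y'\to Y$ is the projection. In particular, if $x'\in g^{-1}(f(Y))$, choosing $y\in Y$ with $f(y)=g(x')$ produces such a $y'\in Y'$ with $f'(y')=x'$, so $x'\in f'(Y')$. This establishes $g^{-1}(f(Y))\subseteq f'(Y')$ and completes the proof. $\qed$

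The main point — and the only nontrivial input — is the four point lemma itself, which is already proved; here everything reduces to verifying its hypotheses, so there is no real obstacle beyond recording the reductions cleanly.
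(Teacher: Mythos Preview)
Your proof is correct and follows exactly the paper's approach: the paper simply remarks that Kummer \'etale morphisms are exact (the observation immediately preceding the corollary) and deduces the result from \cref{lem:4ptLemma}, which is precisely what you do with the added detail of spelling out both inclusions. Note also that you correctly read the obvious typo $f(X)$ in the statement as $f(Y)$.
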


The fact that a quasi-compact (qcqs) log scheme is also quasi-compact (qcqs) in the Kummer \'etale topology, used later in \cref{prop:limit-Ket-topos}\labelcref{propitem:limit-Ket-coherent}, requires the following lemma, the fs version of which is \cite[Corollary~3.7]{IllusieFKN}. 

\begin{lem}
\label{lem:ket is open}
    Let $f \colon Y\to X$ be a Kummer \'etale map between saturated log schemes. Then the underlying map of schemes $\underline{Y}\to\underline{X}$ is open.
\end{lem}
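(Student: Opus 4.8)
The statement is local on $X$, so we may work strict \'etale locally and assume, using the chart lifting property for Kummer \'etale maps (\cref{prop:chart lifting for smooth etale and ket morphisms}), that $Y\to X$ admits a chart by a Kummer \'etale morphism of saturated monoids $\vtheta\colon P\to Q$ such that the induced map $Y\to X\times_{\bA_P}\bA_Q$ is strict \'etale. Since strict \'etale maps of schemes are open, it suffices to prove that the underlying map of schemes $\bA_Q\to\bA_P$, i.e.\ $\Spec(\bZ[Q])\to\Spec(\bZ[P])$, is open. We are therefore reduced to a purely algebraic statement about the ring map $\bZ[P]\to\bZ[Q]$ associated to a Kummer \'etale homomorphism $P\to Q$ of saturated monoids.

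The plan for the algebraic statement is to reduce to the fs case by approximation. Writing $P=\varinjlim_i P_i$ as a filtered colimit of fs submonoids and using \cref{cor:monoid-sm-et-Ket-approx} (or \cref{prop:monoid-descend-prop-P}), the Kummer \'etale map $P\to Q$ descends to a Kummer \'etale map of fs monoids $P_i\to Q_i$ with $Q=(P\oplus_{P_i}Q_i)^\sat$, so that $\bZ[Q]=\bZ[P]\otimes_{\bZ[P_i]}\bZ[Q_i]$ and $\Spec(\bZ[Q])\to\Spec(\bZ[P])$ is the base change of $\Spec(\bZ[Q_i])\to\Spec(\bZ[P_i])$. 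Openness is stable under base change, so it is enough to treat the case of Kummer \'etale maps of \emph{fs} monoids; this is precisely the content of \cite[Corollary~3.7]{IllusieFKN} (the fs version cited in the statement), whose proof uses that Kummer \'etale maps of fs log schemes are, \'etale locally, of the shape "finite flat Kummer covering followed by a strict \'etale map'', together with the fact that finite flat surjective maps are open. Alternatively one invokes that over $\bZ[P]$ with $P$ fs the map $\bZ[P]\to\bZ[Q]$ is finite and flat after inverting the relevant primes and injective with the right going-up/going-down properties in general.

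The one genuine subtlety is that $\bZ[P]\to\bZ[Q]$ need not be of finite type when $P$ is only saturated and not fs (as emphasised throughout \cref{ss:ket-monoids}: $Q$ need not be finite as a $P$-set, cf.\ \cref{lem:ket-finite} and \cref{ex:typeV-not-fg}). This is exactly why the approximation step is essential: rather than arguing directly on the non-finite-type map, we express it as a filtered base change of finite-type maps at finite level and transport openness, which is compatible with base change. I expect this approximation reduction — making sure the chart, the strict \'etale map, and the monoid map all descend compatibly to a common finite index $i$ — to be the main point requiring care, but all the needed descent statements are already available (\cref{prop:sfp morphisms}, \cref{cor:monoid-sm-et-Ket-approx}, \cref{prop:descend-smoothness-etc}). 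Once reduced to the fs case the result is classical.

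A remark on an alternative, perhaps cleaner, route: since $P\to Q$ is Kummer \'etale, by \cref{lem:ket-monoid-conditions} it is injective and exact with $Q^\gp/P^\gp$ finite of order $n$ prime to the residue characteristics, and $Q\subseteq \tfrac1n P\subseteq P^\gp\otimes\bQ$. One may factor $\bZ[P]\to\bZ[Q]$ through $\bZ[P]\to\bZ[\tfrac1nP]$, where $\bZ[\tfrac1nP]$ is obtained from $\bZ[P]$ by adjoining $n$-th roots of the generators of $P$ — this is integral and, after inverting $n$, finite flat, hence open onto its image, and $\Spec(\bZ[Q])\hookrightarrow\Spec(\bZ[\tfrac1nP])$ is open because $Q$ is a union of faces-translates cut out by the exactness condition. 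Either way, the conclusion is that $\underline{Y}\to\underline{X}$, being strict \'etale locally the base change of such an open map, is open.
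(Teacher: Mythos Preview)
Your overall strategy --- approximate by the fs case and invoke \cite[Corollary~3.7]{IllusieFKN} --- is exactly what the paper does. But there is a genuine gap at the key step. From $Q=(P\oplus_{P_i}Q_i)^\sat$ you conclude $\bZ[Q]=\bZ[P]\otimes_{\bZ[P_i]}\bZ[Q_i]$, and hence that $\Spec(\bZ[Q])\to\Spec(\bZ[P])$ is the \emph{scheme-theoretic} base change of $\Spec(\bZ[Q_i])\to\Spec(\bZ[P_i])$. This is false: the monoid pushout $P\oplus_{P_i}Q_i$ need not be saturated even when $P_i\to Q_i$ is Kummer \'etale (e.g.\ $P_i=\bN\hookrightarrow P=\tfrac{1}{3}\bN$, $Q_i=\tfrac{1}{2}\bN$; the pushout is the submonoid of $\tfrac{1}{6}\bN$ generated by $\tfrac{1}{3}$ and $\tfrac{1}{2}$, which misses $\tfrac{1}{6}$). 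Thus $\bZ[Q]$ is a nontrivial integral extension of $\bZ[P]\otimes_{\bZ[P_i]}\bZ[Q_i]$, the saturated base change on log schemes does not agree with scheme base change on underlying schemes, and ``openness is stable under base change'' cannot be invoked.

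The paper bridges exactly this gap with the four point lemma for exact morphisms (\cref{lem:4ptLemma}, \cref{cor:4ptKummer}): for a Kummer \'etale $f_0\colon Y_0\to X_0$ and any $h\colon X\to X_0$, the image of the saturated base change $f$ satisfies $f(Y)=h^{-1}(f_0(Y_0))$, which is open once $f_0(Y_0)$ is open by the fs case. Combined with the trick that any open $U\subseteq Y$ is itself Kummer \'etale over $X$ (so it suffices to show $f(Y)$ is open), this completes the argument. Your ``alternative route'' via $\tfrac{1}{n}P$ is also confused: the inclusion $Q\subseteq\tfrac{1}{n}P$ induces a map $\Spec(\bZ[\tfrac{1}{n}P])\to\Spec(\bZ[Q])$, not an open immersion in the other direction, so there is nothing to exploit there.
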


\begin{proof}
Since the (strict) inclusion $U \subseteq Y$ of an open is Kummer \'etale, it suffices to show that the image $f(Y)$ is open in $X$. This assertion is local in $Y$ and $X$. We may therefore by \cref{prop:fs approximation for smooth etale and ket morphisms} assume that $Y \to X$ is the saturated base change along $h\colon X \to X_0$ of a~Kummer \'etale map $f_0 \colon Y_0 \to X_0$ between fs log schemes. 
By \cite[Corollary~3.7]{IllusieFKN} the image $f_0(Y_0)$ is open, and thus by \cref{cor:4ptKummer} also $f(Y) = h^{-1}(f_0(Y_0))$ is open.
\end{proof}

\Cref{lem:ket is open} combined with \cref{cor:4ptKummer} together imply the following analogue of \cite[Th\'eor\`eme~8.10.5(vi)]{EGAIV3} for Kummer \'etale maps. 

\begin{cor} 
\label{cor:descent-of-surjectivity}
    Let $\{X_i\}_{i\in I}$ be an affine charted inverse system of qcqs saturated log schemes with inverse limit $X$. Let $Y_0\to X_0$ be a qcqs Kummer \'etale map for some $0\in I$, and define $Y_i\to X_i$ ($i\geq 0$) and $Y\to X$ by saturated base change. Then $Y\to X$ is surjective if and only if $Y_i\to X_i$ is surjective for $i\gg 0$.
\end{cor}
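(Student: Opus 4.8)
The statement is the Kummer \'etale analogue of \cite[Th\'eor\`eme~8.10.5(vi)]{EGAIV3}, and the strategy is to reduce surjectivity to the topological spaces and then invoke quasi-compactness of the inverse limit of spectral spaces. First I would treat the ``only if'' direction, which is formal: if $Y_i\to X_i$ is surjective for some $i\geq 0$, then since $Y\to X$ is obtained by saturated base change along $X\to X_i$, \cref{cor:4ptKummer} (applied with $f = (Y_i\to X_i)$ and $g = (X\to X_i)$) gives $f'(Y) = g^{-1}(f(Y_i)) = g^{-1}(|X_i|) = |X|$, so $Y\to X$ is surjective. (Note $g$ is the projection of the affine charted inverse system, so $|X| = \varprojlim |X_i|$ and the preimage of all of $|X_i|$ is all of $|X|$.)

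For the ``if'' direction, suppose $Y\to X$ is surjective. For each $i\geq 0$ let $Z_i \subseteq |X_i|$ be the image of $Y_i\to X_i$; by \cref{lem:ket is open} each $Z_i$ is open, and since $Y_i$ is qcqs and $Y_j\to Y_i$ (for $j\geq i$) is affine, each $Y_i$ is qcqs, hence each $Z_i$ is a quasi-compact open of the spectral space $|X_i|$, i.e.\ $Z_i$ is a \emph{closed} point-complement... more precisely $|X_i|\setminus Z_i$ is a closed (pro-)constructible subset. The transition maps respect these: by \cref{cor:4ptKummer} applied to base change along $X_j\to X_i$, the image of $Y_j\to X_j$ equals the preimage of $Z_i$ under $|X_j|\to |X_i|$, so the $|X_i|\setminus Z_i$ form an inverse system of closed constructible subsets with surjective-on-preimage transition maps. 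Now $\varprojlim(|X_i|\setminus Z_i)$ is the closed constructible subset $|X|\setminus (\text{image of }Y\to X)$, which is empty by hypothesis since $Y\to X$ is surjective (again using \cref{cor:4ptKummer} to identify the image of $Y\to X$ with $\varprojlim Z_i$, or directly noting the complement is $\varprojlim$ of the complements).

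The key point is then: an inverse limit of nonempty spectral spaces along quasi-compact (affine, hence quasi-compact) transition maps is nonempty, equivalently (for the constructible subsets $|X_i|\setminus Z_i$ with their induced spectral structure, or viewing them as closed subspaces) the inverse limit being empty forces one of them to be empty; this is exactly \stacks[Lemma]{0A2Y} / \stacks[Lemma]{086S} on cofiltered limits of nonempty spectral spaces, or can be deduced from quasi-compactness of $\varprojlim|X_i|$ together with the fact that $\{|X_i|\setminus Z_i\}$ pulled back to $|X|$ form a filtered family of closed subsets with empty intersection. Hence $|X_i|\setminus Z_i = \varnothing$ for $i\gg 0$, i.e.\ $Y_i\to X_i$ is surjective for $i\gg 0$.

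The main obstacle is bookkeeping the spectral-space / constructibility input cleanly: one must know that the images $Z_i$ are quasi-compact open (hence their complements closed constructible, so that the ``emptiness propagates'' lemma for cofiltered limits of spectral spaces applies), which relies on \cref{lem:ket is open} to get openness and on qcqs-ness of the $Y_i$ — itself following from $Y_0$ being qcqs and the transition maps being affine in an affine charted inverse system — to get quasi-compactness of $Z_i$. Once these are in place, compatibility of the images under transition maps is immediate from \cref{cor:4ptKummer}, and the conclusion is the standard fact that a cofiltered limit of nonempty spectral spaces with quasi-compact transition maps is nonempty. I would write the proof as: (1) ``only if'' via \cref{cor:4ptKummer}; (2) set up $Z_i$, note open by \cref{lem:ket is open} and quasi-compact since $Y_i$ qcqs; (3) compatibility of $\{Z_i\}$ via \cref{cor:4ptKummer}; (4) $\varprojlim(|X_i|\setminus Z_i) = \varnothing$, apply the spectral-space lemma to conclude $|X_i|\setminus Z_i = \varnothing$ for $i\gg 0$.

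\begin{proof}
Throughout we freely use that in an affine charted inverse system the transition maps $X_j\to X_i$ are affine (hence qcqs) and that $|X| = \varprojlim |X_i|$ as spectral spaces with quasi-compact transition maps; in particular, since $Y_0$ is qcqs and the maps $Y_j\to Y_i$ are affine, every $Y_i$ is qcqs and $|Y| = \varprojlim |Y_i|$.

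For the ``only if'' direction, suppose $Y_{i_0}\to X_{i_0}$ is surjective for some $i_0\geq 0$. Since $Y\to X$ is the saturated base change of $Y_{i_0}\to X_{i_0}$ along the projection $g\colon X\to X_{i_0}$, \cref{cor:4ptKummer} gives that the image of $Y\to X$ equals $g^{-1}(f(Y_{i_0})) = g^{-1}(|X_{i_0}|) = |X|$. Hence $Y\to X$ is surjective.

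For the ``if'' direction, suppose $Y\to X$ is surjective. For each $i\geq 0$ let $Z_i\subseteq |X_i|$ be the image of $Y_i\to X_i$. By \cref{lem:ket is open} the map $\underline{Y}_i\to\underline{X}_i$ is open, so $Z_i$ is open; since $Y_i$ is quasi-compact, $Z_i$ is a quasi-compact open subset of the spectral space $|X_i|$, so its complement $C_i = |X_i|\setminus Z_i$ is a closed pro-constructible (indeed closed) subset. For $j\geq i$, applying \cref{cor:4ptKummer} to the saturated base change of $Y_i\to X_i$ along $X_j\to X_i$ (whose source is $Y_j$), we get that the image of $Y_j\to X_j$ is the preimage of $Z_i$; equivalently, the transition map $|X_j|\to|X_i|$ carries $C_j$ into $C_i$ and $C_j$ is the preimage of $C_i$. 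Thus $\{C_i\}_{i\geq 0}$ is an inverse system of closed subsets of the spectral spaces $|X_i|$ with $C_j = (|X_j|\to|X_i|)^{-1}(C_i)$, and $\varprojlim_i C_i \subseteq \varprojlim_i |X_i| = |X|$ is the complement of the image of $Y\to X$, hence empty by hypothesis.

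Now each $C_i$, with the topology induced from the spectral space $|X_i|$, is again a spectral space, and the transition maps $C_j\to C_i$ are restrictions of the quasi-compact maps $|X_j|\to|X_i|$, hence quasi-compact. By the fact that a cofiltered limit of nonempty spectral spaces along quasi-compact transition maps is nonempty (\stacks[Lemma]{086S}), $\varprojlim_i C_i = \varnothing$ forces $C_{i} = \varnothing$ for some $i\geq 0$, i.e.\ $Y_i\to X_i$ is surjective. Since surjectivity is preserved under the transition maps by the ``only if'' argument applied within the system, $Y_j\to X_j$ is surjective for all $j\geq i$, i.e.\ for $i\gg 0$.
\end{proof}
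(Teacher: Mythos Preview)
Your proof is correct and follows essentially the same approach as the paper: both use \cref{cor:4ptKummer} for the easy direction, \cref{lem:ket is open} plus qcqs-ness to see the images are quasi-compact open, \cref{cor:4ptKummer} again for compatibility under transition maps, and then a compactness argument. The only difference is in the final step: the paper invokes \cite[Th\'eor\`eme 8.3.11]{EGAIV3} (quasi-compact opens of $X$ are the colimit of those of the $X_i$) to conclude $U_i = X_i$ for $i\gg 0$, while you take complements and appeal to the spectral-space lemma that a cofiltered limit of nonempty spectral spaces along quasi-compact maps is nonempty; these are dual formulations of the same compactness principle.

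One cosmetic point: you have the ``if'' and ``only if'' labels swapped relative to the statement as written (``$Y\to X$ surjective iff $Y_i\to X_i$ surjective for $i\gg 0$''). The direction you call ``only if'' is the ``if'' direction, and vice versa. The mathematics is unaffected.
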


\begin{proof}
The ``if'' direction follows from \cref{cor:4ptKummer}. For the ``only if'' direction, suppose that $Y\to X$ is surjective. Let $U_i\subseteq X_i$ be the image of $Y_i\to X_i$. It is a quasi-compact open subset of $X_i$ by \cref{lem:ket is open}. Moreover, again by \cref{cor:4ptKummer}, we have $U_j = U_i\times_{X_i} X_j$ for $j\geq i$, and $X = U_i\times_{X_i} X$. Using \cite[Th\'eor\`eme 8.3.11]{EGAIV3} (which states that the set of quasi-compact opens of $X$ is the colimit of the sets of quasi-compact opens of the $X_i$) we conclude that $U_i = X_i$ for $i\gg 0$.
\end{proof}

\begin{prop} 
\label{prop:Ket-subcanonical}
    The Kummer \'etale site $\KEtsite{X}$ is subcanonical (representable presheaves are sheaves).
\end{prop}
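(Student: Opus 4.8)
The plan is to show that every representable presheaf on $\KEtsite{X}$ satisfies the sheaf axiom with respect to Kummer \'etale covering families. Recall that a covering family in $\KEtsite{X}$ is a jointly surjective family $\{U_i\to U\}$ of Kummer \'etale maps. The key point is that a Kummer \'etale surjection $V\to U$ of saturated log schemes is an \emph{effective epimorphism} in the category of saturated log schemes: if this holds, then given a Kummer \'etale map $T\to X$, a compatible family of maps $U_i\to T$ over $X$ glues uniquely to a map $U\to T$ over $X$, because the descent datum on $\coprod U_i$ descends along the effective epimorphism $\coprod U_i\to U$.

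The first step is to reduce to the affine charted fs case. Since being a sheaf is local on $U$ and $X$, and since Kummer \'etale maps have the chart lifting property (\cref{cor:sfp-chart-lifting} together with \cref{prop:fs approximation for smooth etale and ket morphisms}), we may assume $U\to X$ is the saturated base change of a Kummer \'etale map of fs log schemes $U_0\to X_0$ along some $X\to X_0$, and similarly refine a cover of $U$. By \cref{cor:SmEtKet-approx}, any such cover and the gluing data descend to a finite level. At the fs level the Kummer \'etale site is already known to be subcanonical --- this is classical (Kato, Nakayama; see \cite{IllusieFKN}, \cite{NakayamaLogEtCoh}). Thus the content is to transport this along saturated base change and the approximation machinery.

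The second step is to check that the sheaf property transports correctly under the inverse limit $X=\varprojlim X_i$. For a fixed sfp (in particular, Kummer \'etale) $T\to X$, and a covering family $\{U_i\to U\}$ in $\KEtsite{X}$ obtained by saturated base change from covers at a finite level $j$, the sets $\Hom_X(U,T)$, $\Hom_X(U_i,T)$, and $\Hom_X(U_i\times^\sat_U U_k, T)$ are all filtered colimits of the corresponding $\Hom$ sets over $X_j$ by the fully faithfulness part of \cref{thm:sfp-approx} (note that $U_i\times^\sat_U U_k$ is again sfp over $X$ since Kummer \'etale maps are stable under saturated base change and composition, \cref{prop:bc and composition and 2 out of 3}). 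Since filtered colimits are exact, the equalizer diagram expressing the sheaf condition at level $X$ is the filtered colimit of the equalizer diagrams at levels $X_j$, each of which is exact by the fs case. Here we also use \cref{cor:descent-of-surjectivity} to ensure that a jointly surjective family at level $X$ arises from a jointly surjective family at some finite level $X_j$, so that the finite-level diagrams genuinely are sheaf-condition diagrams.

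The main obstacle I expect is bookkeeping around the saturated fibre products: one must verify that the fibre products $U_i\times^\sat_U U_k$ appearing in the sheaf condition are themselves sfp (equivalently Kummer \'etale, using \crefpart{prop:bc and composition and 2 out of 3}{propitem:ket 2 out of 3} and \cref{lem:Delta of ket is strict open}) over $X$ and descend compatibly to the finite level, so that \cref{thm:sfp-approx} applies uniformly to all terms of the equalizer diagram. Once this is in place, the argument is a formal combination of: subcanonicality in the fs case, exactness of filtered colimits, and the approximation equivalence of \cref{thm:sfp-approx} supplemented by \cref{cor:descent-of-surjectivity}. No new geometric input beyond the fs case and the already-established approximation results should be needed.
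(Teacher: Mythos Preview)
Your core argument---approximate, use subcanonicality in the fs case, and pass back via filtered colimits commuting with equalizers, with \cref{cor:descent-of-surjectivity} ensuring covers descend---is exactly the paper's Step~1 and is correct under the hypotheses you implicitly impose. But two reductions you wave through are where the real work lies.

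First, the sentence ``being a sheaf is local on $U$ and $X$'' hides the fact that \cref{thm:sfp-approx} and \cref{cor:SmEtKet-approx} only apply once $X$ is the limit of an affine charted inverse system of fs log schemes, and \emph{not every qcqs saturated $X$ admits such a presentation} (this is precisely \cref{ex:no-absolute-approx}). The paper therefore inserts an intermediate Step~2: cover $X$ strict-\'etale by pieces $X_\alpha$ that do admit such presentations (\cref{lem:prelogring_colimit_fs}), use that $h_Y$ is already a sheaf for strict \'etale covers, and run a double-equalizer grid to bootstrap from the $X_\alpha$ to $X$. Your proposal skips this, so as written it only proves subcanonicality over log schemes of the special form $\Spec(P\to A)$.

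Second, your approximation argument applies only when the target $T$ (the paper's $Y$) is qcqs over $X$, since otherwise $\Hom_X(U,T)$ is not a filtered colimit of finite-level Hom sets. A Kummer \'etale map need not be qcqs, so a further Step~3 is needed: the paper handles arbitrary $Y$ by a Zariski-sheaf argument, factoring a descent datum through affine opens $Y_\lambda\subseteq Y$ (which are qcqs over $X$ once $X$ is) and using the already-established qcqs case for each $h_{Y_\lambda}$. Without this, your proof is incomplete for non-quasi-compact $Y$.
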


\begin{proof}
    Let $Y\to X$ be a Kummer \'etale map. We shall prove that the presheaf $h_Y = \Hom(-,Y)$ on $\KEtsite{X}$ is a sheaf. By subcanonicality of the usual big \'etale site of schemes, $h_Y$ is a sheaf for the topology on $\KEtsite{X}$ whose coverings are jointly surjective families of strict \'etale maps. In particular, for every family of objects $\{V_i\}_{i\in I}$ we have $h_Y(\coprod V_i) = \prod h_Y(V_i)$. Arguing as in the proof of \stacks[Lemma]{022H}, in order to prove that $h_Y$ is a sheaf it is enough to check the sheaf condition for singleton covering families $\{V\to U\}$ where $U$ and $V$ are affine. We set $W = V\times_U V$ (in this proof, all fibre products are taken in respective sites, i.e.\ signify saturated fibre products). We need to show that the diagram below is an equalizer
    \begin{equation} \label{eqn:subcanonical-equalizer}
        \begin{tikzcd}
            \Hom(U, Y) \ar[r] & \Hom(V, Y) \ar[r,shift left=0.5ex]  \ar[r,shift right=0.5ex] & \Hom(W, Y)
        \end{tikzcd}
    \end{equation}
    We do this in several steps.

    \medskip
    
    \noindent \emph{Step 1: \labelcref{eqn:subcanonical-equalizer} is an equalizer if $X$, $Y$, $U$, and $V$ are qcqs, and in addition if $X$ is the limit of an affine charted inverse system $\{X_i\}_{i\in I}$ with $X_i$ fs and qcqs.} 

    \smallskip
    
    By \cref{cor:SmEtKet-approx}, the schemes and maps in question are base-changes of analogous ones in some $X_{i_0,\ket}$ and we can write  the diagram above as
    \[
        \begin{tikzcd}
            {\displaystyle \varinjlim_i} \Hom(U_i, Y_i) \ar[r] & 
            {\displaystyle \varinjlim_i} \Hom(V_i, Y_i) \ar[r,shift left=0.5ex]  \ar[r,shift right=0.5ex] &
            {\displaystyle \varinjlim_i} \Hom(W_i, Y_i).
        \end{tikzcd}
    \]
    where $W_i = V_i\times_{U_i}V_i$.
    By \cref{cor:descent-of-surjectivity}, we can assume $V_i \to U_i$ is surjective (not just Kummer \'etale), so a cover. We know subcanonicality in the fs case (cf.~\cite[Theorem 3.1]{Kato2021:LogarithmicStructuresII}), so $\Hom(U_i, Y_i)$ is the equalizer of $\Hom(V_i, Y_i) \rightrightarrows  \Hom(W_i, Y_i)$. Now, as filtered colimits commute with equalizers, we finish the proof of this step.
    
    \medskip
    
    \noindent \emph{Step 2: \labelcref{eqn:subcanonical-equalizer} is an equalizer if $X$, $Y$, $U$, and $V$ are qcqs.}

    \smallskip
    
    We prove the same assertion as in Step~1, but without assuming that $X$ can be presented as a~limit. Fix a strict \'etale cover $\{X_\alpha\}$ of $X$ such that each $X_\alpha$ satisfies the assumptions of Step~1 (this is possible by \cref{lem:prelogring_colimit_fs}). 

    Denote the fibre products $X_\alpha\times_X X_\beta$ by $X_{\alpha\beta}$ and similarly for the base-changes of $U, V$ and $Y$. Further cover the fibre products $X_{\alpha\beta}$ by $X_\gamma$ that satisfy the assumptions of Step~1.
    As log schemes (Kummer \'etale) over $X$ form a prestack in the strict \'etale topology, we first get that  $\prod_{\alpha,\beta} \Hom(U_{\alpha\beta},Y_{\alpha,\beta}) \hookrightarrow \prod_\gamma \Hom(U_{\gamma},Y_{\gamma})$ and then that
    \[ 
        \begin{tikzcd}
            \Hom(U,Y) \ar[r] & 
            \Hom(U_\alpha,Y_\alpha)  \ar[r,shift left=0.5ex]  \ar[r,shift right=0.5ex] &
            \prod_\gamma \Hom(U_{\gamma},Y_{\gamma}).
        \end{tikzcd}
    \]
    is an equalizer. Similar equations hold for $\Hom(V,Y)$ and $\Hom(W,Y)$.
    
    Consider the diagram
    \[ 
        \begin{tikzcd}
            % \tikzstyle{every node}=[font=\scriptsize]
            \Hom(U, Y) \ar[r] \ar[d] & \Hom(V, Y)\ar[r,shift left=0.5ex] \ar[r,shift right=0.5ex] \ar[d] &  \Hom(V\times_U V, Y) \ar[d] \\
            \prod \Hom(U_\alpha, Y_\alpha) \ar[r] \ar[d,shift left=0.5ex] \ar[d,shift right=0.5ex] & \prod \Hom(V_\alpha, Y_\alpha)\ar[r,shift left=0.5ex] \ar[r,shift right=0.5ex] \ar[d,shift left=0.5ex] \ar[d,shift right=0.5ex] &  \prod \Hom(W_\alpha, Y_\alpha) \ar[d,shift left=0.5ex] \ar[d,shift right=0.5ex] \\
            \prod \Hom(U_\gamma, Y_\gamma) \ar[r] & \prod \Hom(V_\gamma, Y_\gamma) \ar[r,shift left=0.5ex] \ar[r,shift right=0.5ex] & \prod \Hom(W_\gamma, Y_\gamma)
        \end{tikzcd}
    \]
    where each object of the top row is the equalizer in its column and the leftmost elements of the bottom two rows are equalizers. Commuting limits with limits, we see that that the top left element is the equalizer of its row. This finishes the proof of this step.
    
    \medskip
    
    \noindent \emph{Step 3: \labelcref{eqn:subcanonical-equalizer} is an equalizer in general.}

    \smallskip
    
    We now tackle the general case of an arbitrary saturated log scheme $X$ and $Y \in \KEtsite{X}$. By the previous Step, we can assume that $X$ is as in Step 1. If $Y \to X \in \KEtsite{X}$ is qcqs, by previous steps, it follows that $h_Y$ is a sheaf on $\KEtsite{X}$. We now extend it to general $Y$.
    
    We prove injectivity of $h_Y(U)\to h_Y(V)$. Let $f, g\colon U\to Y$ be two maps equalized by the map $\pi\colon V\to U$. To show $f=g$ it is enough to do so locally around every point $u\in U$. Moreover, since $|V|\to |U|$ is surjective, we have $|f|=|g|\colon |U|\to |Y|$. Let $Y_0\subseteq Y$ be an affine neighbourhood of $f(u)=g(u)$. We may replace $U$ with $f^{-1}(Y_0) = g^{-1}(Y_0)$ and then the already established sheaf property of $h_{Y_0}$ implies the injectivity.
    
    To prove surjectivity, let $f'\colon V\to Y$ be a map equalizing $\pi_1, \pi_2\colon W\to V$. We first note that $|f'|\colon |V|\to |Y|$ factors uniquely through a continuous map $\varphi\colon |U|\to |Y|$. This follows from the fact that $|W|\to |V|\times_{|U|}|V|$ is surjective and that $|V|\to |U|$ is a topological quotient map (as it is a surjective open map). Let $\{Y_\lambda\}_{\lambda\in A}$ be an affine open cover of $Y$ and let 
    \[
        U_\lambda = \varphi^{-1}(Y_\lambda), \quad
        V_\lambda = \pi^{-1} \varphi^{-1}(Y_\lambda) = (f')^{-1}(Y_\lambda), \quad \text{and} \quad
        W_\lambda = \pi_1^{-1}(V_\lambda) = V_{\lambda}\times_{U_\lambda} V_\lambda
    \]
    be their preimages in $U$, $V$, and $W$, respectively.
    
    Consider the Zariski sheaf $\cF$ (resp.\ $\cF'$) on $U$ (resp.\ $V$) defined by $h_Y$. We have an injection of sheaves $\cF\to \pi_*(\cF')$ and a section $f'$ of the latter which we want to lift to a section of the former. By the Zariski sheaf property, it is enough to do so locally on every $U_\lambda$. 
    
    Note that $Y_\lambda \to X$ is qcqs (as both are qcqs), so $h_{Y_\lambda}$ is a sheaf on $\KEtsite{X}$ by previous steps. Since the restriction of $f'$ to $V_\lambda = \pi^{-1}(U_\lambda)$ lies in $h_{Y_\lambda}(V_\lambda)\subseteq h_Y(V_\lambda) = (\pi_* \cF')(U_\lambda)$ and $h_{Y_\lambda}$ is a~sheaf, we get that the diagram 
    \[
        \begin{tikzcd}
            h_{Y_\lambda}(U_\lambda) \ar[r] & h_{Y_\lambda}(V_\lambda) \ar[r,shift left=0.5ex] \ar[r,shift right=0.5ex] & \prod h_{Y_\lambda}(W_\lambda) 
        \end{tikzcd}
    \]
    is an equalizer. Thus, we obtain a unique $f_\lambda\colon U_\lambda\to Y_\lambda\subseteq Y$ lifting $f'|_{U_\lambda}$. This finishes the proof.
\end{proof}

We have the following version of topological invariance \cite{Vidal2001:MorphismesLogEtales}. Since the universal homeomorphism is supposed to be strict, our result is slightly weaker in the fs case, as Vidal allows certain non-strict universal homeomorphisms such as the Frobenius map.

\begin{prop}[Topological invariance]
\label{prop:top invariance ket site}
    Let $i\colon X_0\to X$ be a strict morphism of saturated log schemes such that $\underline{X}_0\to\underline{X}$ is a universal homeomorphism. Then the induced functor $\KEtsite{X}\to X_{0,\ket}$ is an equivalence.
\end{prop}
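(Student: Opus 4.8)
The plan is to reduce the statement to the known case of fs log schemes treated by Vidal \cite{Vidal2001:MorphismesLogEtales} (or to the scheme-theoretic topological invariance of the \'etale site) using the approximation machinery developed above. First I would set up the functor $i^*\colon \KEtsite{X}\to X_{0,\ket}$ given by strict base change $Y\mapsto Y_0 = X_0\times_X Y$; this makes sense and lands in Kummer \'etale maps over $X_0$ by \crefpart{prop:bc and composition and 2 out of 3}{propitem:sm ket base change}, and since $i$ is strict the saturated base change agrees with the ordinary one. The goal is to show this is an equivalence of categories (and that it respects coverings, which is immediate since $\underline{X}_0\to\underline{X}$ is a homeomorphism on underlying spaces, so a family is jointly surjective upstairs iff it is downstairs).

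For \textbf{essential surjectivity and full faithfulness}, I would argue \'etale locally on $X$, which is permitted because both sides are stacks for the strict \'etale topology (the Kummer \'etale site of an object $U\to X$ is the slice $(\KEtsite{X})_{/U}$, and similarly for $X_0$, by \crefpart{prop:bc and composition and 2 out of 3}{propitem:ket 2 out of 3}) and $i$ is strict, so strict \'etale covers of $X$ pull back to strict \'etale covers of $X_0$ compatibly. Hence we may assume $X = \Spec(P\to A)$ is affine charted, and by \cref{lem:prelogring_colimit_fs} write $X = \varprojlim X_i$ with $X_i = \Spec(P_i\to A_i)$ fs of finite type over $\bZ$. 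The universal homeomorphism $\underline{X}_0\to\underline{X}$ then descends: by \cite[Th\'eor\`eme~8.8.2 and 8.10.5]{EGAIV3} there is an index $i$ and a finitely presented morphism $\underline{X}_{0,i}\to \underline{X}_i$ which is a universal homeomorphism and whose base change to $\underline{X}$ is $\underline{X}_0\to\underline{X}$; endowing $\underline{X}_{0,i}$ with the strict (pullback) log structure makes $X_{0,i}\to X_i$ a strict universal homeomorphism of fs log schemes. Passing to a cofinal subsystem we get an affine charted inverse system $X_{0,i}$ with limit $X_0$.

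Now I would apply \cref{cor:SmEtKet-approx} twice: it gives equivalences $\varinjlim \cat{KEt}^{\qcqs}_{X_i}\isomto \cat{KEt}^{\qcqs}_X$ and $\varinjlim \cat{KEt}^{\qcqs}_{X_{0,i}}\isomto \cat{KEt}^{\qcqs}_{X_0}$, compatible with the strict base-change functors $i_i^*\colon \cat{KEt}_{X_i}\to \cat{KEt}_{X_{0,i}}$ and $i^*$. Since every Kummer \'etale map is, locally on the base, qcqs, and since both sides are stacks, it suffices to check that each $i_i^*$ is an equivalence — that is, the fs case of topological invariance, which is \cite[\S?]{Vidal2001:MorphismesLogEtales} (for strict universal homeomorphisms it also follows directly from Kato's criterion \cite[Theorem~3.5]{Kato1989:LogarithmicStructures} combined with \cref{lem:Kato criterion for Ket} and the topological invariance of the \'etale site of schemes \cite[Exp.~VIII, Th\'eor\`eme~1.1]{SGA4_2}, since any Kummer \'etale chart $P_i\to Q_i$ for $Y_i\to X_i$ pulls back and the residual strict \'etale part is handled by the scheme-theoretic statement). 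Taking the filtered colimit of equivalences yields that $i^*$ is an equivalence on qcqs objects, and then a standard gluing argument (as in the proof of \cref{prop:Ket-subcanonical}, Step~3) extends this to all Kummer \'etale $Y\to X$.

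The \textbf{main obstacle} I anticipate is bookkeeping around the descent of the universal homeomorphism to a strict universal homeomorphism of fs log schemes fitting into an \emph{affine charted} inverse system — one must check that the log structure on $\underline{X}_{0,i}$ can be taken to be the strict pullback compatibly along the tower, and that the resulting system is affine charted in the sense of \cref{def:affine-charted-system}. This is essentially formal given \crefpart{rmk:saturation-integralization}{} and the fact that strict maps have the chart lifting property, but it requires care to arrange the charts on source and target simultaneously. Everything else is a routine assembly of the approximation equivalences already established (\cref{cor:SmEtKet-approx}) together with the fs case.
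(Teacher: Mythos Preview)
There is a genuine gap in your descent step. You invoke \cite[Th\'eor\`eme~8.8.2 and 8.10.5]{EGAIV3} to descend $\underline{X}_0\to\underline{X}$ to a finitely presented universal homeomorphism $\underline{X}_{0,i}\to\underline{X}_i$, but EGA approximation only applies to morphisms that are already of finite presentation, and a universal homeomorphism need not be (think of $\Spec(K^{\rm perf})\to\Spec(K)$, or more relevantly the reduction map of a non-noetherian scheme). So as written, the reduction to the fs case via Vidal's theorem does not go through. Your ``main obstacle'' paragraph misidentifies the difficulty as bookkeeping with charts; the real issue is that the universal homeomorphism itself resists descent. One could repair this by first writing $\underline{X}_0$ as a filtered limit of finitely presented universal homeomorphisms over $\underline{X}$ (as in \stacks[Lemma]{0EUJ}), applying your argument to each of those, and then taking a second limit via \cref{cor:SmEtKet-approx}; but this adds a layer you have not accounted for.

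The paper's proof bypasses approximation of $i$ entirely and argues directly. Full faithfulness is obtained by identifying $\Hom_X(U,V)$ with the set of opens $Z\subseteq U\times_X^\sat V$ projecting isomorphically to $U$ (using that the diagonal of an \'etale map is a strict open immersion, \cref{lem:Delta of ket is strict open}); since $W_0\to W$ is a universal homeomorphism this set is in bijection with the corresponding set over $X_0$. For essential surjectivity, one works strict \'etale locally on $U_0$: the chart lifting property for Kummer \'etale maps (\cref{prop:chart lifting for smooth etale and ket morphisms}) gives a factorisation $U_0\to X_0\times_{\bA_P}\bA_Q$ with the first map strict \'etale, and then only the \emph{scheme-theoretic} topological invariance of the \'etale site is needed to lift that strict \'etale piece over the universal homeomorphism $X_0\times_{\bA_P}\bA_Q \to X\times_{\bA_P}\bA_Q$. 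The price is an algebraic-space gluing argument (\cref{lem:alg-sp-univ-homeo}) to justify working locally on $U_0$, but nothing about $i$ ever needs to be descended.
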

\begin{proof}
    We first show that the functor is fully faithful. The graph $Z(f)\subseteq U\times_X^\sat V$ of a map $f\colon U \to V$ in $\KEtsite{X}$ is the saturated base change of the relative diagonal $\Delta_{V/X}\colon V \to V \times^\sat_X V$. It follows from  \cref{lem:Delta of ket is strict open} that $Z(f)$ is a strict open subset of $W = U \times^\sat_X V$ such that the restriction of the first projection $\pr \colon Z(f) \to U$ is an isomorphism. Conversely, any such open in $W$ is a~graph. We denote the base change by $i$ with an index $0$. The map $\Hom_X(U,V) \to \Hom_{X_0}(U_0,V_0)$ has thus been identified with the map 
    \[
    \{Z \subseteq W \ : \ \text{open with } \pr \colon Z \isomto U\} 
    \la 
    \{Z_0 \subseteq W_0 \ : \ \text{open with } \pr_0 \colon Z_0 \isomto U_0\}
    \]
    induced by base change. This is clearly injective as the base change $i_W \colon W_0 \to W$ is a universal homeomorphism. It is also surjective, because any $Z_0$ determines an open $Z \subseteq W$ such that the induced map $\pr \colon Z \to U$ sits in a commutative square
    \[
    \begin{tikzcd}
    Z_0   \ar[r,"i_Z"] \ar[d,"\pr_0",swap]  
    & Z \ar[d,"\pr"]  \\
    U_0   \ar[r,"i_U",swap] 
    & U,
    \end{tikzcd}    
    \]
    where all maps except possibly for $\pr$ are universal homeomorphisms. Thus $\pr\colon Z \to U$ is Kummer \'etale and a universal homeomorphism. More precisely, all maps in the diagram induce isomorphisms of \'etale sites, and three of the maps are strict. Hence also the fourth map $\pr \colon Z \to U$ is strict, and hence strict \'etale. Being strict \'etale and a universal homeomorphism, we conclude that $Z \to U$ is an isomorphism. 

    To complete the proof we must show that every Kummer \'etale $U_0 \to X_0$ comes by base change from a Kummer \'etale $U \to X$. Since we already know the fully faithful part, this amounts to a~strict \'etale local assertion on $X$ and a Zariski local assertion on $U_0$. We may thus assume that $X$ is affine with a global chart, and $U_0$ is affine. Since universal homeomorphisms are affine \stacks[Lemma]{04DE}, then also $X_0$ is affine.

    We claim that the assertion is in fact strict \'etale local on $U_0$. Suppose that $V_0\to U_0$ is a strict \'etale surjection and that $V_0 = V\times_X X_0$ is in the essential image. Let $W_0 = V_0\times_{U_0} V_0$, then either projection $W_0\to V_0$ is strict \'etale, and $V_0\to V$ is a universal homeomorphism, therefore there exists a strict \'etale $W\to V$ with $W_0 = W\times_{V} V_0 = W\times_X X_0$. In particular, $W$ is in the essential image. By fully faithfulness, both morphisms $W_0 \to V_0$ extend uniquely to morphisms $W\to V$, and $W\to V\times_X V$ is an \'etale equivalence relation since so is $W_0\to V_0\times_{X_0} V_0$. Let $U$ be the algebraic space $V/W$ over $X$, then its base change to $X_0$ is $U_0$, which is a scheme. Therefore $U$ is a scheme by \cref{lem:alg-sp-univ-homeo}. Thus $U_0 = U\times_X X_0$ with the induced log structure is in the essential image.  

    The paragraph above allows us to work strict \'etale locally on $U_0$. We may therefore assume that there exists a Kummer \'etale homomorphism of monoids $P\to Q$ and a strict \'etale map 
    \[
        U_0 \la Y_0 = X_0\times_{\bA_P}\bA_Q.
    \]
    Since $Y_0 = X_0\times_{\bA_P}\bA_Q\to X\times_{\bA_P}\bA_Q =Y$ is a universal homeomorphism, there exists a strict \'etale map $U\to Y$ with $U_0 = U\times_Y Y_0 = U\times_X X_0$, and we are done. 
\end{proof}

The easy proof above relied on the following annoying lemma.

\begin{lem} 
\label{lem:alg-sp-univ-homeo}
    Let $X_0\to X$ be a universal homeomorphism of schemes and let $U\to X$ be an algebraic space. If the base change $U_0$ of $U$ to $X_0$ is a scheme, then $U$ is a scheme. 
\end{lem}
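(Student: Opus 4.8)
The statement to prove is \cref{lem:alg-sp-univ-homeo}: if $X_0\to X$ is a universal homeomorphism of schemes, $U\to X$ is an algebraic space, and the base change $U_0 = U\times_X X_0$ is a scheme, then $U$ is a scheme. The plan is to reduce to a known scheme-recognition criterion for algebraic spaces. The key point is that an algebraic space is a scheme if and only if it admits a locally finite open cover by affine schemes, and that for quasi-affine-type questions one can detect schematicity after a universal homeomorphism. I would first reduce to the case where $U$ is quasi-compact: since being a scheme is a property that can be checked on a Zariski-open cover, and $U$ has an open cover by quasi-compact (indeed quasi-separated, since any algebraic space étale-locally looks like a quotient) open subspaces $U^{(j)}$, whose base changes $U^{(j)}_0$ are open subschemes of the scheme $U_0$ and hence schemes, it suffices to treat each $U^{(j)}$. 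So assume $U$ qcqs.

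Next I would invoke the fact that a universal homeomorphism $X_0\to X$ is integral, hence affine (\stacks{04DC}, \stacks{04DE}), and in particular $U_0\to U$ is an integral, surjective, universal homeomorphism of algebraic spaces. Now the plan is to use the following criterion, which I would either cite from the Stacks Project or prove directly: \emph{an algebraic space $U$ is a scheme provided there is a surjective integral (or even just affine, universally closed, surjective) morphism $U_0\to U$ from a scheme $U_0$}. Indeed, one can check this locally: for a point $u\in |U|$, pick a preimage $u_0\in |U_0|$, and an affine open $V_0\subseteq U_0$ containing $u_0$. The image of the closed (hence, after removing it, the complement is open) set $U_0\setminus V_0$ under the closed map $U_0\to U$ is a closed subset of $|U|$ not containing $u$; its open complement $W\subseteq U$ is then a neighborhood of $u$ whose preimage $W_0$ in $U_0$ is contained in $V_0$, hence is affine. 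Now $W_0\to W$ is a surjective integral morphism with $W_0$ affine, and one concludes that $W$ is affine by Chevalley's theorem for algebraic spaces (\stacks{07V6} / \stacks{05Z0}): an algebraic space admitting a surjective finite — or here, surjective integral, after writing $W_0\to W$ as a cofiltered limit with affine transition maps of finite surjective ones, or directly via \stacks{0AP6}) — morphism from an affine scheme is itself affine. Thus $u$ has an affine open neighborhood in $U$, and $U$ is a scheme.

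The main obstacle, and the step requiring the most care, is justifying the affineness criterion for algebraic spaces under a surjective integral morphism from an affine scheme. The cleanest route is Chevalley's theorem in the form of \stacks{05Z0} (if $W$ is quasi-separated and there is a finite surjective $W_0\to W$ with $W_0$ affine, then $W$ is affine) or its integral variant \stacks{0AP6}; since a universal homeomorphism need not be finite, I would note that an integral morphism is a cofiltered limit of finite morphisms and that surjectivity and affineness of the source are preserved, or more simply appeal to the version of Chevalley's theorem that already allows integral morphisms. One subtlety to watch: one needs $W$ (equivalently $U$) to be quasi-separated to apply these results, but this is automatic — every algebraic space appearing here is, Zariski-locally, quasi-separated, and we have already localized. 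A second, minor subtlety is that $U_0\to U$ being a universal homeomorphism is used only to guarantee it is integral and surjective and \emph{closed}; I would make sure the "push a closed set forward to shrink $V_0$ to an affine preimage" argument only uses closedness and surjectivity, which it does. Assembling these pieces gives the lemma.
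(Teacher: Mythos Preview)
Your approach is sound and more streamlined than the paper's. Both proofs localize: one shows every point of $|U|$ has an open neighbourhood $W$ whose preimage $W_0\subseteq U_0$ is affine, and then argues that $W$ itself is an affine scheme. You produce $W$ by pushing forward the closed complement of an affine open (using crucially that the universal homeomorphism $U_0\to U$ is a bijection, so the chosen preimage $u_0$ is unique); the paper instead transports the affine opens of $U_0$ through the equivalence of \'etale sites induced by $U_0\to U$ and then checks separately, by a fibre-cardinality argument, that the resulting \'etale maps $V_\alpha\to U$ are open immersions. For the key step---deducing that $W$ is affine from an integral surjection $W_0\to W$ with $W_0$ affine---you invoke the integral Chevalley theorem for quasi-separated algebraic spaces, whereas the paper only cites the finite/Noetherian version \stacks[Lemma]{07VP} and therefore first carries out a two-stage Noetherian approximation: writing $X_0\to X$ as a limit of finitely presented universal homeomorphisms via \stacks[Lemma]{0EUJ} and Rydh's \cite[Theorem~C]{RydhApproximation}, and then writing $U$ as a limit of finite-type algebraic spaces via \cite[Theorem~D]{RydhApproximation}. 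Your route is shorter on the page, though the integral Chevalley theorem is itself proved by exactly this kind of approximation, so the saving is expository rather than mathematical.

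Two points in your write-up need tightening. First, quasi-separatedness of $W$ is not ``automatic'' for algebraic spaces in general; it holds here because base change along $X_0\to X$ identifies $\Delta_{W/X}$ with $\Delta_{W_0/X_0}$ up to universal homeomorphism on source and target, and quasi-compactness of a morphism of algebraic spaces is invariant under such an identification. Second, your fallback of ``writing the integral morphism as a cofiltered limit of finite surjective ones'' is circular as stated: the intermediate finite $W$-spaces are not known to be schemes, so Chevalley cannot be applied to them to bootstrap. You must cite the integral version directly.
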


\begin{proof}
Since this is local on $X$, we may assume that $X = \Spec(A)$ is affine. Since universal homeomorphisms are integral and in particular affine, we have that $X_0 = \Spec(B)$ is affine as well. 

Let $\{V_{0,\alpha}\}$ be an affine open cover of $U_0$. Since $U_0\to U$ is a universal homeomorphism of algebraic spaces (meaning that its pullback to any scheme over $U$ is a (universal) homeomorphism of schemes), it induces an equivalence between the \'etale site of $U$ and the \'etale site of $U_0$. Therefore there exists an \'etale cover $\{V_\alpha\to U\}$ such that $V_{0,\alpha} = V_\alpha\times_U U_0$ for all $\alpha$. 

We claim that 
\begin{enumerate}[(1)]
    \item \label{item:uhclaim1} the maps $V_\alpha\to U$ are open immersions of algebraic spaces,
    \item \label{item:uhclaim2} $V_\alpha$ are affine schemes.
\end{enumerate}
Since the maps $V_\alpha\to U$ are jointly surjective, this will imply that $U$ is a scheme.

Claim \labelcref{item:uhclaim1} can be checked upon pull-back to any scheme $Y$ over $U$. Since $Y_0 = Y\times_U U_0\to Y$ is a universal homeomorphism, the geometric fibres of $V_\alpha\times_U Y\to Y$ are the same as the geometric fibres of $V_{0,\alpha}\times_{U_0} Y_0\to Y_0$, which means that they are of cardinality $\leq 1$. By \cite[17.2.6]{EGAIV4} this implies that $V_\alpha\times_U Y\to Y$ is a monomorphism, and since it is also \'etale, it is an open immersion by \stacks[Theorem]{025G}.

Showing \labelcref{item:uhclaim2} means proving the assertion of the lemma with the additional assumption that $U_0$ is affine. To this end, we will reduce the proof to the situation where we can apply \stacks[Lemma]{07VP}, which states that if $Y'\to Y$ is a finite surjective morphism from an affine scheme $Y'$ to a Noetherian algebraic space $Y$, then $Y$ is an affine scheme as well. 

First, we reduce to the case $X_0\to X$ finitely presented. By \stacks[Lemma]{0EUJ} we may write $B = \varinjlim B_\lambda$ where $B_\lambda$ are finitely presented over $A$ and $\Spec(B_\lambda)\to \Spec(A)$ is a universal homeomorphism. Then $U_0 = \varprojlim\, (U\times_X \Spec(B_\lambda))$, and since $U_0$ is an affine scheme, by \cite[Theorem~C]{RydhApproximation} we have that $U\times_X \Spec(B_\lambda)$ is an affine scheme for $\lambda\gg 0$. Therefore we may replace $X_0\to X$ with $\Spec(B_\lambda)\to X$, and hence assume that $X_0\to X$ is finitely presented (and hence finite). 

Since $U$ is homeomorphic to $U_0$, it is qcqs, and hence by \cite[Theorem~D]{RydhApproximation} we may write $U = \varprojlim U^i$ where $U^i$ are algebraic spaces of finite type over $\bZ$ and the transition maps are affine. Since $U\to U^i$ is affine, it suffices to show that $U^i$ is an affine scheme for $i\gg 0$. 

Since $X_0\to X$ and hence $U_0\to U$ is of finite presentation, by \cite[Proposition B.2]{RydhApproximation} for $i\gg 0$ there exists a finitely presented morphism $U^i_0\to U^i$ whose base change to $U$ is $U_0\to U$. Since $U_0\to U$ is finite and surjective, by \cite[Proposition B.3]{RydhApproximation} by increasing $i$ we may ensure that $U^i_0\to U^i$ is finite and surjective for $i\gg 0$. Since $U_0 = \varprojlim U_0^i$ is an affine scheme, arguing as before we see that increasing $i$ further we may ensure that $U_0^i$ is an affine scheme. We can now apply \stacks[Lemma]{07VP} as promised.
\end{proof}

\begin{rmk}
    It seems we are not far off from answering the following question. Let $X_0\to X$ be a separated universal homeomorphism of algebraic spaces. If $X_0$ is a scheme, must $X$ be a scheme as well? What is missing is a global version of \stacks[Lemma]{0EUJ} over a qcqs algebraic space. Note that the result is well-known if $X_0\to X$ is a closed immersion (see \cite[Theorem~2.2.5]{ConradModuli} or \stacks[Lemma]{0BPW}). 
\end{rmk}

\begin{prop} 
\label{prop:limit-Ket-topos}
    Let $X$ be a qcqs saturated log scheme. Then, 
    \begin{enumerate}[(a)]
        \item \label{propitem:limit-Ket-coherent} $\cat{Sh}(\KEtsite{X})$ is a coherent topos  \cite[Exp.\ VI, \S 2]{SGA4_2},
        \item \label{propitem:limit-Ket-limit} if $X = \varprojlim X_i$ is the limit of an affine charted inverse system of qcqs saturated log schemes $\{X_i\}_{i\in I}$, then
        \[ 
            \cat{Sh}(\KEtsite{X}) = \varprojlim \cat{Sh}(X_{i,\ket})
        \]
        (see \cite[Exp.\ VI, \S 8]{SGA4_2} for inverse limits of topoi and \cite[Exp.\ VII, \S 5]{SGA4_2} for the analogous fact for \'etale sites of schemes).
    \end{enumerate}
\end{prop}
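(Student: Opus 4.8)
The plan is to establish \labelcref{propitem:limit-Ket-coherent} directly and then bootstrap \labelcref{propitem:limit-Ket-limit} from it, using the approximation results of \cref{ss:approx} and \cref{sec:log maps:sm et ket}.

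\textbf{Coherence.} For \labelcref{propitem:limit-Ket-coherent} I would pass to the full subcategory $\KEtsite{X}^{\qcqs}\subseteq\KEtsite{X}$ of Kummer \'etale maps $U\to X$ with $U$ qcqs, with the induced topology. This category is essentially small. It has finite limits: the terminal object is $X$ itself (qcqs by hypothesis), and for a diagram $U\to V\leftarrow W$ the saturated fibre product $U\times^\sat_V W$ exists by \crefpart{prop:bc and composition and 2 out of 3}{propitem:sm ket base change} and \crefpart{prop:bc and composition and 2 out of 3}{propitem:ket 2 out of 3} and is again a qcqs object, since Kummer \'etale maps have qcqs underlying schemes over their target (by \cref{prop:fs approximation for smooth etale and ket morphisms} combined with \crefpart{prop:geometry sat map}{propitem:sat map integral}). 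The inclusion $\KEtsite{X}^{\qcqs}\hookrightarrow\KEtsite{X}$ is continuous, fully faithful, and every object of $\KEtsite{X}$ is covered by qcqs (e.g.\ affine) strict opens; so the comparison lemma gives $\cat{Sh}(\KEtsite{X})\simeq\cat{Sh}(\KEtsite{X}^{\qcqs})$. It then remains to see that this topology is generated by a pretopology of \emph{finite} covering families. Given a covering $\{U_i\to U\}$ with $U$ qcqs, refine each $U_i$ by finitely many affine opens $U_{ij}$; by \cref{lem:ket is open} every $U_{ij}\to U$ is open with quasi-compact image, so since $|U|$ is quasi-compact finitely many of the $U_{ij}$ already cover $U$, giving a finite refinement with qcqs members. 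The characterisation of coherent topoi \cite[Exp.~VI, \S 2]{SGA4_2} --- sheaves on a small category with finite limits for a topology generated by a pretopology of finite families --- now yields the claim.

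\textbf{The limit formula.} For \labelcref{propitem:limit-Ket-limit} note first that the transition maps in an affine charted system are affine, hence each $X_i$ is qcqs, so by \labelcref{propitem:limit-Ket-coherent} each $\cat{Sh}(X_{i,\ket})$ is coherent, and the argument above identifies it with $\cat{Sh}(\cat{KEt}^{\qcqs}_{X_i})$ and $\cat{Sh}(\KEtsite{X})$ with $\cat{Sh}(\cat{KEt}^{\qcqs}_X)$. For $j\geq i$, saturated base change $\cat{KEt}^{\qcqs}_{X_i}\to\cat{KEt}^{\qcqs}_{X_j}$ preserves the terminal object and fibre products and preserves joint surjectivity by \cref{cor:4ptKummer}, so it is a morphism of coherent sites and the induced $\cat{Sh}(X_{j,\ket})\to\cat{Sh}(X_{i,\ket})$ is a coherent morphism of topoi. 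By \cref{cor:SmEtKet-approx} the functor $\varinjlim_i\cat{KEt}^{\qcqs}_{X_i}\to\cat{KEt}^{\qcqs}_X$ is an equivalence of underlying categories; under it a finite covering family over $X$ descends to one over some $X_i$ --- using \cref{cor:SmEtKet-approx} to descend the diagram and \cref{cor:descent-of-surjectivity} (applied to the finite disjoint union of its members) to descend joint surjectivity --- while covering families over $X_i$ pull back to covering families over $X$ by \cref{cor:4ptKummer}. Thus $\cat{KEt}^{\qcqs}_X$ with its topology is the filtered colimit of the sites $\cat{KEt}^{\qcqs}_{X_i}$, and the theory of inverse limits of coherent topoi \cite[Exp.~VI, \S 8]{SGA4_2} (modelled on \cite[Exp.~VII, \S 5]{SGA4_2}) gives
\[
    \cat{Sh}(\KEtsite{X})\simeq\cat{Sh}(\cat{KEt}^{\qcqs}_X)\simeq\varprojlim_i\cat{Sh}(\cat{KEt}^{\qcqs}_{X_i})\simeq\varprojlim_i\cat{Sh}(X_{i,\ket}).
\]

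\textbf{Main obstacle.} The geometric content is already in place --- openness of Kummer \'etale maps (\cref{lem:ket is open}), the four point lemma and its corollary on images (\cref{cor:4ptKummer}), descent of surjectivity (\cref{cor:descent-of-surjectivity}), and the approximation equivalence (\cref{cor:SmEtKet-approx}) --- so I expect the real work to be topos-theoretic bookkeeping with \cite[Exp.~VI]{SGA4_2}: checking that the comparison lemma applies to $\KEtsite{X}^{\qcqs}\hookrightarrow\KEtsite{X}$, that the filtered colimit of the fibred sites $\cat{KEt}^{\qcqs}_{X_i}$ carries precisely the colimit topology, and that the resulting inverse system of coherent topoi meets the hypotheses of the limit theorem. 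The accompanying set-theoretic point of essential smallness of the categories of Kummer \'etale objects is routine.
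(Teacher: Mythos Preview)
Your proposal is correct and follows essentially the same approach as the paper: restrict to the subsite $\KEtsite{X}^{\qcqs}$ of qcqs Kummer \'etale objects, use openness of Kummer \'etale maps (\cref{lem:ket is open}) to establish coherence, and then combine the approximation equivalence \cref{cor:SmEtKet-approx} with \cref{cor:descent-of-surjectivity} to identify $\KEtsite{X}^{\qcqs}$ with the inductive limit of the sites $X_{i,\ket}^{\qcqs}$, concluding via \cite[Exp.~VI, 8.2.3--8.2.5]{SGA4_2}. The paper's argument is terser but structurally identical; your version spells out the comparison-lemma step and the finite-refinement argument more explicitly, which is fine.
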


\begin{proof}
\labelcref{propitem:limit-Ket-coherent} 
This follows in the same way as for the \'etale site of a scheme (\cite[Exp.\ VII, \S 5]{SGA4_2}) using the fact that Kummer \'etale maps are open (\cref{lem:ket is open}), so that an object whose underlying scheme is qcqs is coherent. 

\labelcref{propitem:limit-Ket-limit} 
As in \cite[Exp.\ VII, \S 5]{SGA4_2}, we replace the Kummer \'etale site with the restricted Kummer \'etale site $\KEtsite{X}^\qcqs$ consisting of sfp Kummer \'etale maps $Y\to X$, with finite covering families. By \cref{cor:SmEtKet-approx}, we see that $\KEtsite{X}^\qcqs = \varinjlim X_{i,\ket}^\qcqs$ as categories. Moreover, by \cref{cor:descent-of-surjectivity}, a~family of morphisms $\{Y_\alpha\to Y\}$ in $\KEtsite{X}^\qcqs$ is a covering family if and only if it is the base change of a covering family $\{Y_{\alpha,i}\to Y_i\}$ in $X_{i,\ket}^\qcqs$. This implies that $X^\qcqs_\ket$ is equivalent to the inductive limit of the sites $X^\qcqs_{i,\ket}$ \cite[Exp.\ VI, 8.2.5]{SGA4_2}, which implies the assertion about topoi by \cite[Exp.\ VI, 8.2.3]{SGA4_2}.
\end{proof}

%---------------------------------------
\subsection{Finite Kummer \'etale maps}
\label{ss:finite-ket}
%---------------------------------------

Finite Kummer \'etale maps are the covering spaces used for defining the Kummer \'etale fundamental group. Despite their name, they are only finite up to saturation --- see \cref{rmk:fKet-not-finite} below.

\begin{defi}
\label{defi:fet}
    Let $X$ be a saturated log scheme.
    \begin{enumerate}[(1)]
        \item A morphism $Y\to X$ is \textbf{finite Kummer \'etale} if it is Kummer \'etale (\cref{defi:smooth etale ket}) and if the underlying morphism of schemes is integral.
        \item The category of finite Kummer \'etale log schemes over $X$ is denoted by $\FEt_X$.
    \end{enumerate}
\end{defi}

\begin{rmk} \label{rmk:fKet-not-finite}
The morphism of schemes underlying a finite Kummer \'etale map need not be finite. We chose to use ``finite Kummer \'etale'' for compatibility with established terminology for fs log schemes. Moreover, we are going to show that finite Kummer \'etale covers capture the monodromy of finite locally constant sheaves, hence the terminology \textit{finite} resonates with that.

We construct an example of a non-finite but finite Kummer \'etale map from  \crefpart{ex:typeV-not-fg}{exitem:typeV-not-fg2}. With $V = (\bZ \oplus \bZ)^+$ the non-negative elements in lexicographic ordering, the map $V \to \frac{1}{2}V$ is a~chart for a finite Kummer \'etale map $\Spec R[\frac{1}{2}V] \to \Spec R[V]$ as long as $2$ is invertible in $R$. But this map is not finite, since otherwise $\frac{1}{2}V$ would be finitely generated over $V$. Other examples will be described in Examples~\labelcref{ex:type3,ex:type5,ex:perfectoid} when we discuss tame extensions of henselian valued fields in relation to finite Kummer \'etale covers of their valuation rings. 
\end{rmk}

\begin{lem}
    The underlying scheme morphism of a finite Kummer \'etale map is both open and closed.
\end{lem}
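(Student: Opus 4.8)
The plan is to show that the underlying map of schemes of a finite Kummer \'etale morphism $f\colon Y\to X$ is both open and closed. Openness is immediate: by \cref{lem:ket is open}, the underlying scheme map of any Kummer \'etale morphism between saturated log schemes is open, and a finite Kummer \'etale map is in particular Kummer \'etale. So the content of the lemma is closedness.

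For closedness, the key point is that $\underline{f}\colon\underline{Y}\to\underline{X}$ is integral by the definition of finite Kummer \'etale (\cref{defi:fet}), and integral morphisms of schemes are universally closed (see e.g.\ \stacks[Lemma]{01WM}). Hence $\underline{f}$ is closed. Combined with openness, this proves the lemma. In fact this shows more: $\underline f$ is both open and universally closed.

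Since this is essentially a two-line argument citing standard facts (\cref{lem:ket is open} for openness, universal closedness of integral morphisms of schemes for closedness), I do not anticipate any genuine obstacle. The only point to be slightly careful about is that ``finite Kummer \'etale'' in this paper does \emph{not} mean the underlying map of schemes is finite (see \cref{rmk:fKet-not-finite}), so one must use that it is \emph{integral} rather than finite --- but integral morphisms are still universally closed, so the conclusion goes through unchanged.

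\begin{proof}
Let $f\colon Y\to X$ be a finite Kummer \'etale map. Since $f$ is Kummer \'etale, the underlying map of schemes $\underline{f}\colon\underline{Y}\to\underline{X}$ is open by \cref{lem:ket is open}. On the other hand, by \cref{defi:fet} the morphism $\underline{f}$ is integral, and integral morphisms of schemes are universally closed (\stacks[Lemma]{01WM}); in particular $\underline{f}$ is closed.
\end{proof}
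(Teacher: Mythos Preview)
Your proof is correct and matches the paper's own argument essentially verbatim: the paper likewise cites \cref{lem:ket is open} for openness and invokes the fact that integral morphisms are closed. Your added citation of \stacks[Lemma]{01WM} and the remark about universal closedness are fine elaborations but not needed.
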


\begin{proof}
It is closed, because it is integral, and it is open by \cref{lem:ket is open}.
\end{proof}

The following proposition gathers some basic properties of finite Kummer \'etale maps. 

\begin{prop} \label{prop:fKet-properties}
    Let $X$ be a saturated log scheme. 
    \begin{enumerate}[(a)]
        \item \label{propitem:fKet-properties-pullback} Finite Kummer \'etale maps are stable under saturated pullback and composition. 
        \item \label{propitem:fKet-properties-2of3} Every morphism between objects of $\FEt_X$ is finite Kummer \'etale.
        \item \label{propitem:fKet-properties-stack} $U\mapsto \FEt_U$ forms a stack for the strict \'etale topology on $X$.\footnote{We will see in \cref{cor:fKet-Ket-stack} that it is even a stack in the Kummer \'etale topology.}
        \item \label{propitem:fKet-properties-uh} If $X_0\to X$ is a strict universal homeomorphism, then the induced functor 
        \[
            \FEt_X\la \FEt_{X_0}
        \]
        is an equivalence. 
    \end{enumerate}
\end{prop}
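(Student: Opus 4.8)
The plan is to prove the four parts of \cref{prop:fKet-properties} by reducing, wherever possible, to properties of Kummer \'etale maps (already established in \cref{prop:bc and composition and 2 out of 3} and \cref{prop:top invariance ket site}) combined with the corresponding facts about integral morphisms of schemes.

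For part \labelcref{propitem:fKet-properties-pullback}, a finite Kummer \'etale map is Kummer \'etale with integral underlying scheme morphism. Kummer \'etale maps are stable under saturated pullback and composition by \crefpart{prop:bc and composition and 2 out of 3}{propitem:sm ket base change} and \labelcref{propitem:sm ket composition}. For the underlying schemes, I would invoke that the saturated pullback of a Kummer \'etale map sits over the scheme-theoretic pullback (by \cref{lem:saturated fibre product} and \cref{prop:geometry sat map}\labelcref{propitem:sat map integral}: the saturation map is integral, and integral maps are stable under base change and composition, so the composite of the scheme-theoretic base change with the saturation map is integral). Concretely: if $Y\to X$ is finite Kummer \'etale and $X'\to X$ is arbitrary, then $Y' = X'\times^\sat_X Y$, and \'etale locally we may assume $Y\to X$ comes via saturated base change from a map $Y_0\to X_0$ of fs log schemes; then $\underline{Y}'\to\underline{X}'$ factors through $\underline{X}'\times_{\underline{X}_0}\underline{Y}_0$ followed by a saturation map, both of which are integral. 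Composition is similar, using transitivity of integrality and of saturated pushouts.

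For part \labelcref{propitem:fKet-properties-2of3}, let $Z\to Y$ be a morphism between objects $Z\to X$ and $Y\to X$ of $\FEt_X$. By \crefpart{prop:bc and composition and 2 out of 3}{propitem:ket 2 out of 3}, since $Z\to X$ is Kummer \'etale (hence \'etale) and $Y\to X$ is Kummer \'etale, the map $Z\to Y$ is Kummer \'etale. For the underlying scheme morphism $\underline{Z}\to\underline{Y}$, I would use that $\underline{Z}\to\underline{X}$ is integral and $\underline{Y}\to\underline{X}$ is separated (being integral), so $\underline{Z}\to\underline{Y}$ is integral by the scheme-theoretic two-out-of-three for integral morphisms (an analogue of the standard fact for finite/affine morphisms, e.g.\ via \stacks{01WM}-type arguments: if $g\circ f$ is integral and $g$ is separated then $f$ is integral). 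Thus $Z\to Y$ is finite Kummer \'etale.

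For part \labelcref{propitem:fKet-properties-stack}, I would note that $U\mapsto \cat{KEt}_U$ is a stack for the strict \'etale topology --- this follows from \cref{prop:Ket-subcanonical} (subcanonicality gives the sheaf condition on morphisms) together with effectivity of descent for Kummer \'etale maps, which in turn reduces via local charts and \cref{prop:fs approximation for smooth etale and ket morphisms} to descent of \'etale maps of schemes and descent of log structures, both of which are classical. Given that, being finite Kummer \'etale is detected on the underlying scheme morphism being integral, and integrality of a morphism of schemes is strict-\'etale-local on the base (\stacks{02L4}), so the substack $\FEt$ of $\cat{KEt}$ is again a stack. The main obstacle here, and the step I would expect to require the most care, is effectivity of strict-\'etale descent for Kummer \'etale maps in our non-fs setting; but this can be handled exactly as in the proof of essential surjectivity in \cref{prop:fs-approximation} (building the quotient as an algebraic space via an \'etale groupoid and then checking it is a scheme), or more directly since the descent datum is affine over $\underline{X}$. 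Finally, part \labelcref{propitem:fKet-properties-uh} follows immediately: by \cref{prop:top invariance ket site} the functor $\cat{KEt}_X\to\cat{KEt}_{X_0}$ is an equivalence, and a strict universal homeomorphism $\underline{X}_0\to\underline{X}$ is in particular integral, so for $Y\to X$ Kummer \'etale the underlying map $\underline{Y}\to\underline{X}$ is integral if and only if $\underline{Y}_0\to\underline{X}_0$ is --- one direction because integral maps are stable under base change, the other because $\underline{Y}\to\underline{Y}_0$ and $\underline{X}\to\underline{X}_0$ are universal homeomorphisms hence integral, and one applies the two-out-of-three argument from part \labelcref{propitem:fKet-properties-2of3}. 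Hence the equivalence $\cat{KEt}_X\simeq\cat{KEt}_{X_0}$ restricts to an equivalence $\FEt_X\simeq\FEt_{X_0}$.
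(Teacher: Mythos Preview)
Your arguments for parts \labelcref{propitem:fKet-properties-pullback} and \labelcref{propitem:fKet-properties-2of3} are correct and essentially match the paper's proof: reduce to \cref{prop:bc and composition and 2 out of 3} for the Kummer \'etale part and handle integrality of the underlying morphism separately, using that saturation maps are integral (\cref{prop:geometry sat map}).

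For part \labelcref{propitem:fKet-properties-stack} your approach works but is more laborious than needed. The paper does not first prove that $U\mapsto \cat{KEt}_U$ is a stack. Instead it exploits that a finite Kummer \'etale map has \emph{affine} underlying morphism (being integral), so effective descent for the underlying scheme morphism is automatic; then log structures descend along strict \'etale covers by definition; and finally ``being Kummer \'etale'' is a strict-\'etale-local condition. This avoids the effectivity issue for general (non-affine) Kummer \'etale maps that you flag as the delicate step.

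Part \labelcref{propitem:fKet-properties-uh} has a genuine gap. First, you have the maps reversed: it is $X_0\to X$ and $Y_0\to Y$ that are universal homeomorphisms, not the other way around. More importantly, even after correcting this, the two-out-of-three you invoke from \labelcref{propitem:fKet-properties-2of3} concludes in the wrong direction: from $Y_0\to Y\to X$ with $Y_0\to X$ integral and $Y\to X$ separated you would only get $Y_0\to Y$ integral, which you already know, not $Y\to X$ integral. What is actually needed is that if $Y_0\to X_0$ is integral and $Y_0\to Y$, $X_0\to X$ are universal homeomorphisms, then $Y\to X$ is integral. The paper proves this by characterising integral as ``affine and universally closed'' \stacks{01WM}: universally closed transfers because $Y_0\to Y$ is a surjective homeomorphism stable under base change, and affineness follows (working locally on $X$) from the fact that $Y_0$ is affine and $Y_0\to Y$ is surjective integral, so $Y$ is affine by \stacks{01ZT}. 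You need this extra input; the cancellation lemma alone does not suffice.
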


\begin{proof}
\labelcref{propitem:fKet-properties-pullback} and \labelcref{propitem:fKet-properties-2of3}: 
We have established these for Kummer \'etale maps in \cref{prop:bc and composition and 2 out of 3}, and they hold for integral maps of schemes. To account for the discrepancy between pullback of schemes and saturated pullback, we use the fact that the saturation map is integral (\cref{prop:geometry sat map}). 

\labelcref{propitem:fKet-properties-stack}:
Integral morphisms of schemes $\underline{Y}\to \underline{X}$ satisfy \'etale descent on $\underline{X}$. Further, log structures on the scheme $\underline{Y}$ over $\underline{X}$ together with an upgrade of $\underline{Y}\to\underline{X}$ to a morphism of log schemes $Y\to X$ satisfy \'etale descent on $\underline{Y}$ (and hence also on $\underline{X}$) basically by definition. Finally, checking that a morphism $Y\to X$ is Kummer \'etale can be done \'etale locally on $\underline{X}$. 

\labelcref{propitem:fKet-properties-uh}:
Using \cref{prop:top invariance ket site} it is enough to show that the morphism of schemes $Y\to X$ is integral if and only if its restriction $Y_0\to X_0$ is integral. The ``only if'' is clear since integral morphisms of schemes are stable under base change. For the ``if'' part, we use the following characterisation \stacks[Lemma]{01WM}: a morphism of schemes is integral if and only if it is affine and universally closed. Suppose $Y_0\to X_0$ is integral. We may assume $X$ is affine, and therefore both $X_0$ and $Y_0$ are affine. Since $Y_0\to Y$ is a universal homeomorphism, also $Y$ is affine by \stacks[Lemma]{01ZT}, showing that $Y\to X$ is an affine morphism. The fact that $Y\to X$ is universally closed follows trivially from $X_0\to X$ being a universal homeomorphism. 
\end{proof}

In order to prove that finite Kummer \'etale maps can be approximated by \emph{finite} Kummer \'etale maps, we will need the following scheme-theoretic lemma.

\begin{lem} \label{lem:limit_quasifinite_integral}
    Let $X = \varprojlim_{i \in I} X_i$ and $Y = \varprojlim_{i \in I} Y_i$ be cofiltered limits of qcqs schemes with affine transition maps.
    Suppose that we have compatible quasi-finite, separated morphisms
    \[
     f_i \colon Y_i \longrightarrow X_i.
    \]
    with integral diagonal
    \[
     \Delta_{ij} \colon Y_j \longrightarrow Y_i \times_{X_i} X_j
    \]
    for $j \geq i$ in~$I$.
    If the induced morphism $f\colon Y \to X$ is integral, then there exists an $i \in I$ such that $f_i \colon Y_i \to X_i$ is finite.
\end{lem}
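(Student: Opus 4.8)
The plan is to use noetherian approximation together with the local structure of quasi-finite separated morphisms (Zariski's Main Theorem) and then descend the relevant properties through the limit. First I would observe that the problem is local on the base: since $X_i$ is qcqs and affine transition maps make $X$ qcqs, I can cover $X_i$ by finitely many affine opens, replace $X_i$ by each such open, and check finiteness there. So assume all $X_i = \Spec(A_i)$ are affine with $A = \varinjlim A_i$, and $X = \Spec(A)$. By Zariski's Main Theorem applied to the quasi-finite separated morphism $f_i \colon Y_i \to X_i$ (which we may take for a fixed initial index $0$), there is an open immersion $Y_0 \hookrightarrow \overline{Y}_0$ into a scheme finite over $X_0$. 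The hope is that, after pulling back to a large enough index, the open immersion becomes an equality, i.e. $Y_i$ itself is already finite over $X_i$.

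The key steps, in order, would be: (1) reduce to the affine-base situation as above; (2) apply Zariski's Main Theorem to factor $f_0$ as an open immersion $Y_0 \hookrightarrow \overline{Y}_0$ followed by a finite morphism $\overline{Y}_0 \to X_0$; (3) form $\overline{Y}_i = \overline{Y}_0 \times_{X_0} X_i$ and $\overline{Y} = \overline{Y}_0 \times_{X_0} X$, which are finite over $X_i$ and $X$ respectively, with $\overline{Y} = \varprojlim \overline{Y}_i$ (limits commute with these affine base changes); (4) use that $Y = \varprojlim Y_i$ is open in $\overline{Y}$, and that the morphism $Y \to X$ is \emph{integral}, hence universally closed, hence $Y$ is also closed in $\overline{Y}$ (its image is closed since $\overline{Y} \to X$ is separated and $Y \to X$ is proper onto its image — here I use that $Y \to X$ integral implies $Y \to \overline{Y}$ is a clopen immersion, as it is a monomorphism that is both open and universally closed); (5) the complement $Z = \overline{Y} \setminus Y$ is then a quasi-compact open of $\overline{Y}$, and since the lattice of quasi-compact opens of $\overline{Y} = \varprojlim \overline{Y}_i$ is the filtered colimit of those of the $\overline{Y}_i$ (by \cite[Théorème 8.3.11]{EGAIV3}), the clopen decomposition $\overline{Y} = Y \sqcup Z$ descends to a clopen decomposition $\overline{Y}_i = Y'_i \sqcup Z'_i$ for $i \gg 0$; (6) by the full-faithfulness/uniqueness part of approximation of morphisms \cite[Théorème 8.5.2]{EGAIV3}, after increasing $i$ further the open $Y'_i \subseteq \overline{Y}_i$ pulls back to $Y \subseteq \overline{Y}$ and the identification $Y'_i = Y_i$ over $X_i$ holds; since $Y'_i$ is clopen in the finite $X_i$-scheme $\overline{Y}_i$, it is finite over $X_i$.

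The hypothesis on the diagonals $\Delta_{ij} \colon Y_j \to Y_i \times_{X_i} X_j$ being integral is what I expect to need in step (4)–(5): it guarantees that the transition maps in the system $\{Y_i\}$ are integral (taking $j \geq i$ and composing with the projection, or rather this is precisely the statement that $Y_j \to Y_i \times_{X_i} X_j$ is affine and universally closed), so that $Y = \varprojlim Y_i$ genuinely has affine transition maps and the limit formalism of \cite{EGAIV3} applies; it also ensures the $Y_i \to X_i$ remain separated compatibly. I would double-check that integrality of the diagonal, combined with $f$ integral, forces the immersion $Y \hookrightarrow \overline{Y}$ to be closed and not merely open — this is the crux. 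Concretely: $Y \to X$ integral and $\overline{Y} \to X$ finite hence separated, so $Y \to \overline{Y}$ is separated; it is a monomorphism (being an open immersion) and universally closed (as a base change consideration: $Y \to X$ universally closed and $\overline{Y} \to X$ separated imply $Y \to \overline{Y}$ universally closed by the cancellation property \stacks{01W6}). A universally closed monomorphism is a closed immersion \stacks{04XV}, so $Y \hookrightarrow \overline{Y}$ is simultaneously open and closed, giving the desired clopen decomposition.

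The main obstacle I anticipate is \textbf{step (6)}, matching up $Y'_i$ with the given $Y_i$: a priori the descent in step (5) produces \emph{some} clopen $Y'_i \subseteq \overline{Y}_i$ whose limit is $Y$, but we must know it agrees with $Y_i$ as an $X_i$-scheme. This is where the diagonal hypothesis pays off again — the separated quasi-finite morphisms $Y_i \to X_i$ form a compatible system whose limit is $Y \to X$, and both $\{Y'_i\}$ and $\{Y_i\}$ have limit $Y$ over $X$ with affine transition maps; by the full-faithfulness in \cite[Théorème 8.5.2]{EGAIV3} applied to the system $\{X_i\}$, the identity $Y \xrightarrow{\sim} Y$ over $X$ descends, for $i \gg 0$, to an isomorphism $Y_i \xrightarrow{\sim} Y'_i$ over $X_i$. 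Then $Y_i$ is clopen in $\overline{Y}_i$, hence finite over $X_i$, which is exactly the conclusion. The remaining verifications — compatibility of all the base changes with the limits, quasi-compactness bookkeeping, the reduction to affine base — are routine and I would not spell them out in detail.
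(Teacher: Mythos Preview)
There is a genuine gap in steps (4) and (6). In step (4) you assert that $Y \to \overline{Y}$ is an open immersion (or at least a monomorphism), but since $\overline{Y} = \overline{Y}_0 \times_{X_0} X$ is built from level $0$ alone, the map factors through $Y \to Y_0 \times_{X_0} X$, which is the limit of the diagonals $\Delta_{0i}$; these are only integral, not monomorphisms. A clean counterexample: take $I = \bN$, $X_i = \Spec k$ for all $i$, and $Y_i = \Spec(k^{2^i})$ with fold maps as transitions. All hypotheses hold (each $f_i$ is even finite, and each $\Delta_{ij}$ is finite), yet $\overline{Y}_0 = \overline{Y} = \Spec k$ while $Y$ is a Cantor space, so $Y \to \overline{Y}$ is not a monomorphism. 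Step (6) then fails for the same reason: the full-faithfulness in \cite[Th\'eor\`eme~8.5.2]{EGAIV3} applies to systems obtained by base change from a fixed level, which $\{Y_i\}$ is not (the $\Delta_{ij}$ are not isomorphisms). In the counterexample, any clopen $Y'_i \subseteq \overline{Y}_i = \Spec k$ is never isomorphic to $Y_i = \Spec(k^{2^i})$ for $i \geq 1$. You also misidentify the role of the integral-diagonal hypothesis: affine transition maps on $\{Y_i\}$ are already assumed in the statement.

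The paper avoids this by applying Zariski's Main Theorem at \emph{every} level, taking $\overline{Y}_i$ to be the normalization of $X_i$ in $Y_i$, so that $Y_i \hookrightarrow \overline{Y}_i$ is open with dense image by construction. The integral-diagonal hypothesis is then used for something you did not anticipate: it forces the closed complements $Z_i = \overline{Y}_i \setminus Y_i$ to form an inverse subsystem (one checks $\bar{\Delta}_{ij}^{-1}(Y_i \times_{X_i} X_j) = Y_j$ by comparing two dense opens both integral over $Y_i \times_{X_i} X_j$). After showing $Y \cong \varprojlim \overline{Y}_i$ one obtains $\varprojlim Z_i = \varnothing$, and compactness in the constructible topology gives $Z_i = \varnothing$ for some $i$, whence $Y_i = \overline{Y}_i$ is integral and quasi-finite, hence finite, over $X_i$.
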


\begin{proof}
    For each $i \in I$ we denote by~$\bar{Y}_i$ the normalization of~$X_i$ in~$Y_i$.
    We obtain compatible commutative diagrams
    \[
     \begin{tikzcd}
         Y_i  \ar[rr,"\iota_i"] \ar[dr,"f_i"']   &  & \bar{Y}_i  \ar[dl,"\bar{f}_i"]  \\
                                                                            & X_i.
     \end{tikzcd}
    \]
    Now Zariski's main theorem in the version \stacks[Lemma]{02LR} states that~$\iota_i$ is a quasi-compact open immersion with dense image and~$\bar{f}_i$ is integral.
    Taking the base change to~$X_j$ for $j \geq i$ in~$I$ yields a diagram
    \[
     \begin{tikzcd}
         Y_i \times_{X_i} X_j \ar[rr] \ar[dr]      &        & \bar{Y}_i \times_{X_i} X_j   \ar[dl]    \\
                                          & X_j.
     \end{tikzcd}
    \]
    The horizontal map remains a quasi-compact open immersion and the right hand vertical map is integral.
    Using \stacks[Lemma]{035I} and \stacks[Lemma]{035J}, we obtain a factorisation 
    \begin{equation} \label{eqn:factorisation_ij}
     \begin{tikzcd}
         Y_j        \ar[d,"\Delta_{ij}"]  \ar[rr,open] \ar[ddr,bend right=90,looseness=1.5,"f_j"']              &        & \bar{Y}_j  \ar[d,"\bar{\Delta}_{ij}"']  \ar[ddl,bend left=90,looseness=1.5,"\bar{f}_j"]    \\
         Y_i \times_{X_i} X_j \ar[rr,open] \ar[dr]      &        & \bar{Y}_i \times_{X_i} X_j   \ar[dl]    \\
                                          & X_j.
     \end{tikzcd}
    \end{equation}
    and in particular a morphism $\bar{Y}_j \to \bar{Y}_i$.
    
    We claim that the resulting morphism
    \[
     Y  \longrightarrow \varprojlim_{i \in I} \bar{Y}_i
    \]
    is an isomorphism.
    On the one hand, $Y \to \overline{Y} = \varprojlim_{i \in I} \overline{Y}_i$ is a cofiltered limit of open immersions with dense image. 
    So~$Y$ is pro-open in $\overline{Y}$ with dense image. To see the latter assertion, let $W\subseteq \overline{Y}$ be a non-empty qc open. Such a $W$ is the preimage of a qc open $W_i\subseteq \bar Y_i$. We define $W_j = W_i\times_{\bar Y_i} \bar Y_j$ for $j\geq i$. Then $W_j\cap Y_j$ is constructible and hence compact in the constructible topology. It is moreover non-empty (as $Y_i$ is dense in $\bar Y_i$), and hence $W\cap Y = \varprojlim (W_j\cap Y_j)$ is non-empty. Thus $Y$ intersects every qc open of $\overline{Y}$ and therefore is dense. 
    
    On the other hand, $Y$ being integral over~$X$ implies that~$Y$ is also integral over $\varprojlim_{i \in I} \bar{Y}_i$.
    Combining both pieces of information we conclude that $Y \cong \varprojlim_{i \in I} \bar{Y}_i$. 
    
    Let $Z_i$ be the complement of~$Y_i$ in~$\bar{Y_i}$.
    It is closed and thus compact in the constructible topology of~$\bar{Y}_i$.
    We want to show that for $j \to i$ in~$I$ the transition map $\bar{Y}_j \to \bar{Y}_i$ maps~$Z_j$ to~$Z_i$.
    Let us go back to diagram~\labelcref{eqn:factorisation_ij} and look at the upper commutative square.
    The preimage
    \[
     \bar{\Delta}_{ij}^{-1}(Y_i \times_{X_i} X_j)
    \]
    is an open of~$\bar{Y}_j$ containing~$Y_j$ as a dense open subset which is moreover integral over $Y_i \times_{X_i} X_j$.
    But by assumption~$Y_j$ is also integral over $Y_i \times_{X_i} X_j$ and thus
    \[
     Y_j = \bar{\Delta}_{ij}^{-1}(Y_i \times_{X_i} X_j).
    \]
    This implies that~$Z_j$ is the preimage of the complement of $Y_i \times_{X_i} X_j$ in $\bar{Y}_i \times_{X_i} X_j$, which in turn is the preimage of~$Z_i$ under the projection to $\bar{Y}_j$.
    In particular,~$Z_j$ maps to~$Z_i$.
    
    By assumption the limit $\varprojlim_{i \in I} Z_i$ is empty.
    But $Z_i$ being compact (in the constructible topology) there has to be $i \in I$ with $Z_i = \varnothing$.
    In other words this means that $Y_i = \bar{Y}_i$ is integral over~$X_i$.
    But~$Y_i$ is also quasi-finite, hence of finite type, over~$X_i$.
    Therefore, $Y_i \to X_i$ is finite.
\end{proof}

Thanks to the above lemma, we have the following approximation result, which will allow us to easily extend well-known facts about finite Kummer \'etale maps between fs log schemes to our setting.

\begin{prop} 
\label{prop:fKet-approx}
    Let $(X_i)_{i \in I}$ be an affine charted inverse system of qcqs saturated log schemes with limit $X$. Then the saturated base change functors induce an equivalence of categories
    \[ 
        \varinjlim \FEt_{X_i} \isomto \FEt_X.
    \]
\end{prop}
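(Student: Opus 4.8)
The plan is to deduce \cref{prop:fKet-approx} from the Kummer \'etale approximation result \cref{cor:SmEtKet-approx} combined with the scheme-theoretic limit lemma \cref{lem:limit_quasifinite_integral}, working throughout with the qcqs variants of the relevant categories. Recall that a finite Kummer \'etale map is in particular qcqs (its underlying scheme map is integral, hence affine, hence qcqs over a qcqs base), so $\FEt_{X_i}$ is a full subcategory of $\cat{KEt}^\qcqs_{X_i}$, and likewise for $X$. By \cref{cor:SmEtKet-approx}, saturated base change induces an equivalence $\varinjlim \cat{KEt}^\qcqs_{X_i} \isomto \cat{KEt}^\qcqs_X$; since morphisms between objects of $\FEt$ are again finite Kummer \'etale (\cref{prop:fKet-properties}\labelcref{propitem:fKet-properties-2of3}), it therefore suffices to identify, inside this equivalence, the essential image of $\varinjlim\FEt_{X_i}$ with $\FEt_X$. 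Fully faithfulness is then automatic, so the entire content is the following essential surjectivity statement: if $Y_0\to X_0$ is a qcqs Kummer \'etale map whose saturated base change $Y\to X$ has integral underlying scheme map, then $Y_i\to X_i$ is finite Kummer \'etale for $i\gg 0$.

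First I would reduce to the affine-charted local situation. Since $\FEt_U \cong (\FEt_X)_{/U}$ (by \crefpart{prop:bc and composition and 2 out of 3}{propitem:ket 2 out of 3} and the stability of integral morphisms), and $U\mapsto\FEt_U$ is a strict \'etale stack (\cref{prop:fKet-properties}\labelcref{propitem:fKet-properties-stack}), the functor in question is the global sections of a morphism of strict \'etale stacks on $X$; we may thus assume $X_i = \Spec(P_i\to A_i)$ and $X = \Spec(P\to A) = \varprojlim X_i$. Given a qcqs Kummer \'etale $Y_0\to X_0$ (which after the reduction in \cref{cor:SmEtKet-approx} we may take to descend along an affine charted system), we obtain $Y_i\to X_i$ for $i\geq 0$ and $Y\to X$ by saturated base change, with $Y = \varprojlim Y_i$ on underlying schemes. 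Each $Y_i\to X_i$ is Kummer \'etale, hence in particular (by \cref{lem:Delta of ket is strict open}) has strict open --- in particular separated and quasi-finite --- underlying scheme map; and for $j\geq i$ the relative diagonal $Y_j\to Y_i\times_{X_i}X_j$ is a strict open immersion, so in particular integral. Thus the hypotheses of \cref{lem:limit_quasifinite_integral} are satisfied by the systems $\{\underline{Y}_i\}$, $\{\underline{X}_i\}$ and the scheme maps $\underline{f}_i\colon \underline{Y}_i\to\underline{X}_i$. Since by hypothesis $\underline{f}\colon\underline{Y}\to\underline{X}$ is integral, the lemma produces an index $i$ with $\underline{f}_i\colon\underline{Y}_i\to\underline{X}_i$ finite; as $Y_i\to X_i$ is already Kummer \'etale, it is finite Kummer \'etale. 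This exhibits $Y\to X$ as lying in the essential image of $\varinjlim\FEt_{X_i}$ and completes essential surjectivity.

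For fully faithfulness, there is nothing more to do: $\FEt_{X_i}$ and $\FEt_X$ are full subcategories of $\cat{KEt}^\qcqs_{X_i}$ and $\cat{KEt}^\qcqs_X$ respectively, and both $\varinjlim\FEt_{X_i}\to\varinjlim\cat{KEt}^\qcqs_{X_i}$ and the inclusion on the limit side are fully faithful, so fully faithfulness of $\varinjlim\FEt_{X_i}\to\FEt_X$ is inherited from that of the equivalence in \cref{cor:SmEtKet-approx}. One small point to check is that the two notions of base change in play agree on the nose for the relevant maps: the saturated base change of $Y_0\to X_0$ differs a priori from the scheme-theoretic one, but since Kummer \'etale maps are exact and the saturation map of an integral log scheme is integral (\cref{prop:geometry sat map}), the passage between them does not affect integrality of the underlying scheme map --- this is exactly the argument already used in \cref{prop:fKet-properties}\labelcref{propitem:fKet-properties-pullback}, and I would simply cite it.

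The main obstacle is \cref{lem:limit_quasifinite_integral} itself, but this is already proved in the excerpt, so in the present proof the only genuine work is verifying its hypotheses --- namely that each $\underline{Y}_i\to\underline{X}_i$ is separated quasi-finite and that the transition diagonals $\underline{Y}_j\to\underline{Y}_i\times_{\underline{X}_i}\underline{X}_j$ are integral. Both follow from \cref{lem:Delta of ket is strict open}: the first because a Kummer \'etale map is \'etale locally of the form $\Spec R[Q]\to\Spec R[P]$ for $P\to Q$ Kummer \'etale, which by \cref{lem:Vidal lemma on fs diagonal} makes the diagonal a strict open immersion and the map itself quasi-finite and separated on underlying schemes; the second because that relative diagonal is the saturated base change of the (strict open immersion) absolute diagonal of $Y_0\to X_0$, hence again a strict open immersion, hence in particular integral. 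Once these are in hand the argument is short and purely formal.
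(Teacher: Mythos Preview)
There is a genuine gap. You claim that each $\underline{Y}_i\to\underline{X}_i$ is quasi-finite because a Kummer \'etale map is \'etale-locally modeled on $\Spec R[Q]\to\Spec R[P]$; but for $P\to Q$ a Kummer \'etale map of merely saturated (not fs) monoids, this scheme map need not be of finite type at all --- see \crefpart{ex:typeV-not-fg}{exitem:typeV-not-fg2} and \cref{rmk:fKet-not-finite}. Since the $X_i$ in an affine charted system are only assumed saturated, the hypotheses of \cref{lem:limit_quasifinite_integral} cannot be verified as you propose. The paper's proof first reduces (arguing as at the end of the proof of \cref{thm:sfp-approx}) to the case where all $X_i$ are fs and of finite type over~$\bZ$; only then is the local model $\bA_{Q_i}\to\bA_{P_i}$ actually finite, giving quasi-finiteness of $\underline{Y}_i\to\underline{X}_i$.

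Two further errors in your hypothesis verification. A strict open immersion is \emph{not} integral (it is not even affine in general), so your stated reason for the transition maps $\underline{Y}_j\to\underline{Y}_i\times_{\underline{X}_i}\underline{X}_j$ being integral is wrong --- nor is this map a base change of $\Delta_{Y_0/X_0}$. The correct observation, which the paper uses, is that $Y_j = (Y_i\times_{X_i}X_j)^\sat$, so this map underlies a saturation (and integralisation) map and is integral by \cref{prop:geometry sat map}. Likewise, \cref{lem:Delta of ket is strict open} concerns the diagonal in the \emph{saturated} fibre product and does not directly yield separatedness of the underlying scheme map; the paper instead notes that $Y$ is affine (since $\underline{Y}\to\underline{X}$ is integral over the affine $\underline{X}$), whence $Y_i$ is affine for $i\gg 0$ by \cite[Proposition~C.6]{ThomasonTrobaugh}, and in particular $\underline{Y}_i\to\underline{X}_i$ is separated.
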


\begin{proof}
By \cref{cor:SmEtKet-approx}, the functor is fully faithful. To show essential surjectivity, let $Y\to X$ be a finite Kummer \'etale map. Again by \cref{cor:SmEtKet-approx}, there exists a Kummer \'etale map $Y_0\to X_0$ for some $0\in I$ whose saturated base change is $Y\to X$. It remains to show that for $i\gg 0$ the maps $Y_i\to X_i$ obtained by saturated pullback along $X_i\to X_0$ are integral and hence finite Kummer \'etale. This is local on $X$, and hence we may assume (by definition of an affine charted system) that $X=\Spec(P\to A)$ and $X_i = \Spec(P_i\to A_i)$ for a direct system of saturated prelog rings $(P_i\to A_i)$ with colimit $(P\to A)$. 

Suppose first that all $X_i$ are fs and of finite type over $\bZ$. Then the maps $Y_i\to X_i$ are quasi-finite, being finitely presented and \'etale locally isomorphic to the pull-back of a standard Kummer \'etale map $\bA_Q\to \bA_P$, which is finite. Moreover, since $Y$ is affine, \cite[Proposition~C.6]{ThomasonTrobaugh} (with $\Lambda=\bZ$) implies that $Y_i$ is affine $i\gg 0$. In particular, the map $Y_i\to X_i$ is separated. Moreover, this map factors into
\[
    Y_i \longrightarrow Y_0 \times_{X_0} X_i \longrightarrow X_i,
\]
where the first morphism is the saturation of the ordinary fibre product. As saturations are integral by \crefpart{prop:geometry sat map}{propitem:sat map integral}, we are in the position to apply \cref{lem:limit_quasifinite_integral}. This gives us precisely the statement we want to prove.

For the general case, we argue as in the end of the proof of \cref{thm:sfp-approx}. 
\end{proof}

The next lemma shows that finite Kummer \'etale covers are \'etale locally of a standard form.

\begin{lem} 
\label{lem:fKet-local-structure}
    Let $Y\to X$ be a finite Kummer \'etale map and let $P\to \cM(X)$ be a chart by a~saturated monoid $P$. Then \'etale locally on $X$ there exists a finite collection of Kummer \'etale maps $P\to Q_j$, for $j=1,\ldots, r$, and an isomorphism over $X$ 
    \[
        Y \simeq \coprod_{i=1}^r X\times_{\bA_P}\bA_{Q_j}.
    \]
\end{lem}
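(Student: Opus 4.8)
The strategy is to reduce the statement to the analogous (and well-known) fact about finite Kummer \'etale maps of fs log schemes via the approximation result \cref{prop:fKet-approx}, then pull the local structure back along saturated base change. First I would work \'etale locally on $X$, so that we may assume $X = \Spec(P\to A)$ is affine with the given chart $P\to\cM(X)$ global, and $Y\to X$ is finite Kummer \'etale. Write $(P\to A) = \varinjlim_i (P_i\to A_i)$ as a filtered colimit of saturated prelog rings with $P_i$ fs and $A_i$ of finite type over $\bZ$ (\cref{lem:prelogring_colimit_fs}), giving an affine charted inverse system $X_i = \Spec(P_i\to A_i)$ with limit $X$; moreover, since $P$ is saturated, the fs monoids $P_i$ may be taken so that the transition maps $P_i\to P_j$ and $P_i\to P$ are exact (using that the factorisation through $P_i^\gp\times_{\cM^\gp(X)}\ldots$ stays fs), but what we really need is just $X=\varprojlim X_i$ with each $X_i$ fs. By \cref{prop:fKet-approx}, the finite Kummer \'etale map $Y\to X$ descends to a finite Kummer \'etale map $Y_i\to X_i$ of fs log schemes for some $i$, with $Y \simeq X\times^\sat_{X_i} Y_i$.

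Next I would invoke the fs case: a finite Kummer \'etale map $Y_i\to X_i$ of fs log schemes is, \'etale locally on $X_i$, of the form $\coprod_{j=1}^r X_i\times_{\bA_{P_i}}\bA_{Q_j}$ for finitely many Kummer \'etale maps $P_i\to Q_j$ of fs monoids compatible with a fixed chart. This is the classical local structure theorem for Kummer \'etale covers (it follows from Kato's criterion, e.g.\ \cref{lem:Kato criterion for Ket} applied to each connected component together with the fact that over a strictly henselian fs log scheme a finite Kummer \'etale cover splits into standard pieces $\bA_{Q_j}$; see also \cite[Proposition~3.4.1]{Stix2002:Thesis} and \cite[\S2]{IllusieFKN}). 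Here I should be slightly careful that the chart used in the fs local structure theorem is the pullback of our chart $P_i\to\cM(X_i)$; since $\cM(X) = \varinjlim\cM(X_i)$ (\cref{lem:M-of-lim}) and $P$ is the fixed chart, after increasing $i$ we may assume $P = P_i$, or more precisely that the chart on $X$ is induced from a chart $P_i\to\cM(X_i)$ — but in fact the statement only requires \emph{some} chart by a saturated monoid $P$, so we are free to replace $P$ by $P_i$ at the cost of possibly shrinking $X$ further. Actually, to keep the prescribed chart $P$, I would instead apply \cref{lem:Kato criterion for Ket}-type reasoning directly after base change: given the standard form over $X_i$ with monoids $P_i\to Q_j^{(i)}$, set $Q_j = (P\oplus_{P_i} Q_j^{(i)})^\sat$, which is Kummer \'etale over $P$ by \cref{lem:bc of smooth etale Ket monoid maps}, and compatible with $P\to\cM(X)$ by construction.

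Finally I would take saturated base change of the fs-local decomposition along $X\to X_i$. Since saturated base change commutes with disjoint unions and with the fibre product $X_i\times_{\bA_{P_i}}\bA_{Q_j^{(i)}}$ — indeed $X\times^\sat_{X_i}\bigl(X_i\times_{\bA_{P_i}}\bA_{Q_j^{(i)}}\bigr) \simeq X\times_{\bA_{P_i}}\bA_{Q_j^{(i)}} \simeq X\times_{\bA_P}\bA_{Q_j}$, using transitivity of pushouts of monoids and that $\bA_{Q_j} = \bA_P\times_{\bA_{P_i}}\bA_{Q_j^{(i)}}$ on underlying schemes — we obtain
\[
    Y \;\simeq\; X\times^\sat_{X_i} Y_i \;\simeq\; X\times^\sat_{X_i}\Bigl(\coprod_{j=1}^r X_i\times_{\bA_{P_i}}\bA_{Q_j^{(i)}}\Bigr) \;\simeq\; \coprod_{j=1}^r X\times_{\bA_P}\bA_{Q_j},
\]
which is exactly the asserted form. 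The main obstacle I anticipate is bookkeeping around charts: making sure the fs-local standard form over $X_i$ is taken \emph{with respect to the chart $P_i$ pulled back from $P$} (rather than an unrelated chart produced by the proof of the fs local structure theorem), and checking that the two fibre-product operations — the strict one $X\times_{\bA_P}\bA_{Q_j}$ and the saturated base change $X\times^\sat_{X_i}(-)$ — genuinely agree here; both reduce to the identity $\bA_{(P\oplus_{P_i}Q^{(i)}_j)^\sat} = \bA_P\times^\sat_{\bA_{P_i}}\bA_{Q^{(i)}_j}$ and the fact that for the fs monoid $P_i$ this saturated pushout agrees with the ordinary one on schemes, which is \cref{lem:saturated fibre product}. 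None of these steps is deep, but they require care to state cleanly.
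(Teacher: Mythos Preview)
Your proposal is correct and follows essentially the same route as the paper: descend via \cref{prop:fKet-approx} to a finite Kummer \'etale map $Y_i\to X_i$ of fs log schemes, invoke the fs local structure theorem to write $Y_i \simeq \coprod_j X_i\times_{\bA_{P_i}}\bA_{Q_{ij}}$ \'etale locally on $X_i$, and then take saturated base change with $Q_j = (P\oplus_{P_i}Q_{ij})^\sat$. The chart-bookkeeping obstacle you anticipated is exactly where the paper invests its extra detail --- it proves the fs case with respect to the \emph{given} chart $P_i$ by first passing to a neat chart $P' = P/F$ at a geometric point, applying \cite[Proposition~3.1.10]{Stix2002:Thesis}, and then explicitly lifting back from $P'$ to $P$ via $Q = $ saturation of $P$ in $(Q')^\gp\oplus F^\gp$ --- but the overall architecture is the same as yours.
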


\begin{proof}
Suppose first that $X=\Spec(P\xrightarrow{\alpha} A)$ is Noetherian and strictly local and that $P$ fs. Let $x\in X$ be the closed point. Let $F = \alpha^{-1}(A^\times)$ and $P' = P/F = \overline{\cM}_{X,x}$. By \cite[Proposition II 2.3.7]{Ogus}, $X$ admits a neat chart $P'\to A$. By \cite[Proposition~3.1.10]{Stix2002:Thesis}, the assertion holds for $X$ equipped with this neat chart. More precisely, if $Y\to X$ is a finite Kummer \'etale map with $Y$ non-empty and connected, and $y\in Y$ is the (unique) point above $x$, then the homomorphism $P'\to Q' = \overline{\cM}_{Y,y}$ is Kummer \'etale and 
\[ 
    Y \simeq X\times_{\bA_{P'}} \bA_{Q'}.
\]
It is therefore enough to express $X\times_{\bA_{P'}} \bA_{Q'}$ as $X\times_{\bA_P}\bA_Q$ for a Kummer \'etale map $P\to Q$. Since $P'$ is fs and sharp, the group $(P')^\gp$ is free, and hence we may write $P^\gp = (P')^\gp\oplus F^\gp$. Let $Q$ be the saturation of $P$ in $(Q')^\gp\oplus F^\gp$. Let $Z = X\times_{\bA_P}\bA_Q$. We check easily that there exists a unique point $z\in Z$ above $x$ (in particular, since $A$ is henselian, $Z$ is connected), and that $\overline{\cM}_{Z,z} \simeq Q'$. Thus, again by \cite[Proposition~3.1.10]{Stix2002:Thesis}, we have $Z\simeq Y$ over $X$. 

Next, suppose that $X$ is Noetherian and that $P$ is fs. Let $\ov{x}\to X$ be a geometric point and let $A = \cO_{X,(\ov x)}$ be the corresponding strictly henselian local ring. By the above paragraph, we obtain a finite number of Kummer \'etale homomorphisms $P\to Q_j$ and an isomorphism $\varphi\colon Y_A \simeq \coprod_j \Spec(A)\times_{\bA_P}\bA_{Q_j}$. Expressing $\Spec(A)$ as the inverse limit of affine (strict) \'etale neighbourhoods of $\ov{x}\to X$, by \cref{prop:fKet-approx} we can spread out $\varphi$ to one of these neighbourhoods. This shows the assertion for $X$.

Finally, we treat the general case. Working strict \'etale locally, we may assume that $X$ has the form $\Spec(P\to A)$ and write $(P\to A) = \varinjlim (P_i\to A_i)$ where $P_i$ are fs and $A_i$ are of finite type over $\bZ$. Let $Y\to X$ be a finite Kummer \'etale map. By \cref{prop:fKet-approx}, there exists an index $i$ and a finite Kummer \'etale map $Y_i\to X_i = \Spec(P_i\to A_i)$ whose saturated base change is $Y\to X$. By the previous paragraph, working locally on $X_i$ we find a finite number of Kummer \'etale homomorphisms $P_i\to Q_{ij}$ ($j=1, \ldots, r$) and an isomorphism $Y_i\simeq \coprod X_i\times_{\bA_{P_i}}\bA_{Q_{ij}}$. Saturated base change to $X$ then yields the result with $Q_j = (Q_{ij}\oplus_{P_i} P)^\sat$.  
\end{proof}

%---------------------------------------
\subsection{The Kummer \'etale fundamental group}
\label{ss:ket-pi1}
%---------------------------------------

Before defining the Kummer \'etale fundamental group, we first need to discuss Kummer \'etale coverings of strictly local log points. 

\begin{prop} \label{prop:local log pt}
    Let $X = \Spec(P\xrightarrow{\alpha} A)$ where $P$ is a saturated monoid and $A$ is strictly local whose residue field $k$ has exponential characteristic $p$. Let $F = \alpha^{-1}(A^\times)\subseteq P$, and let
    \[
        P' = P/F = \overline{\cM}_{X,\bar{x}}
    \]
    where $\ov x$ is the closed point of $X$. Then, there is an equivalence of categories (constructed explicitly in \eqref{eqn:strlocpt-fibre})
    \[
        \FEt_X \isomto \Gsets{\pi_1(P')}
    \]
    (see \cref{def:pi1-of-monoid} for the meaning of $\pi_1(P')$).
\end{prop}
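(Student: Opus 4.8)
The strategy is to reduce to the case of fs monoids, where the analogous statement is essentially classical (compare \cite[Proposition~3.1.10]{Stix2002:Thesis} and the structure theory already used in the proof of \cref{lem:fKet-local-structure}), and then to pass to the limit. First I would write $P = \varinjlim_i P_i$ as a filtered colimit of fs submonoids, and correspondingly $A$ as a filtered colimit of strictly henselian local rings over $\bZ[P_i]$ (using that $A$ strictly local is a filtered colimit of its strictly henselian local subrings of finite type). This expresses $X = \varprojlim X_i$ as an affine charted inverse system with $X_i = \Spec(P_i \to A_i)$. By \cref{prop:fKet-approx} we have $\FEt_X = \varinjlim \FEt_{X_i}$, so it suffices to treat each $X_i$, i.e.\ we may assume $P$ is fs and $A$ strictly local. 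Note that $P' = P/F$ is then an fs sharp monoid, and $\pi_1(P') = \Hom((P')^\gp, \widehat{\bZ}'(1))$ is finite (being $\Hom$ of a finitely generated free abelian group into $\widehat{\bZ}'(1)$, but more precisely because Kummer \'etale covers correspond to finite-index overgroups of $(P')^\gp$).

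\textbf{Construction of the functor.} For the fs case, I would construct the fibre functor explicitly. Given a finite Kummer \'etale $Y \to X$, since $A$ is strictly local there is a unique point $\bar x$ of $X$ (the closed point), and I take the fibre $Y_{\bar x}$ over the log geometric point lying over $\bar x$. Concretely, by \cref{lem:fKet-local-structure} applied over the strictly local $X$, we have $Y \simeq \coprod_{j=1}^r X \times_{\bA_P} \bA_{Q_j}$ for Kummer \'etale maps $P \to Q_j$, and after passing to the sharp quotient, each $P \to Q_j$ is determined by a Kummer \'etale map $P' \to Q_j'$ of sharp fs monoids, i.e.\ by a subgroup $(P')^\gp \subseteq M_j \subseteq (P')^\gp \otimes \bZ_\Sigma$ of finite prime-to-$p$ index. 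The fibre of $\bA_{Q_j'} \to \bA_{P'}$ over the log geometric point is a torsor under $\Hom(Q_j'^\gp/(P')^\gp, \mu)$; dually, $\pi_1(P') = \Hom((P')^\gp, \widehat{\bZ}'(1))$ acts on it through its finite quotient $\Hom(M_j/(P')^\gp, \mu)^\vee$-\,type data. Assembling over $j$, this produces a finite set with continuous $\pi_1(P')$-action, giving the functor $\FEt_X \to \Gsets{\pi_1(P')}$, and I would record this construction as the promised equation \eqref{eqn:strlocpt-fibre}.

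\textbf{Equivalence.} To see the functor is an equivalence, by \cref{lem:ket-monoid-conditions} the category of Kummer \'etale maps $P' \to Q'$ of (sharp) fs monoids is equivalent to the poset of finite-index prime-to-$p$ overgroups of $(P')^\gp$ in $(P')^\gp \otimes \bZ_\Sigma$; equivalently, via $\Hom(-,\mu)$ and Pontryagin-type duality for finite modules, to the category of finite prime-to-$p$ quotients of $\pi_1(P')$, hence to transitive continuous $\pi_1(P')$-sets. Finite disjoint unions then give all of $\Gsets{\pi_1(P')}$. Combined with the structure result \cref{lem:fKet-local-structure} — which guarantees every object of $\FEt_X$ is such a disjoint union — this yields essential surjectivity, and fully faithfulness follows since morphisms between the $\bA_{Q_j'}$ over $\bA_{P'}$ correspond exactly to equivariant maps of the associated torsors (this uses that $A$ is strictly henselian so that there are no ``extra'' morphisms coming from the scheme direction, cf.\ \cref{prop:top invariance ket site} to reduce to the reduced closed point and connectedness of each $X \times_{\bA_P}\bA_{Q_j}$).

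\textbf{Main obstacle.} The delicate point is the compatibility of the fibre functor with the limit in the reduction step: one must check that the explicitly constructed functor for $X$ agrees with the colimit of the functors for the $X_i$, i.e.\ that $\pi_1(P') = \varinjlim \pi_1(P_i')$ in a way matching the transition functors on $\FEt$ — here $P'_i \to P'$ is a filtered system of fs sharp monoids and $\pi_1$ is contravariant, so the colimit of the $\pi_1(P_i')$-set categories must be computed carefully (it becomes a limit $\pi_1(P') = \varprojlim \pi_1(P_i')$ of profinite groups, matching $\Gsets{\pi_1(P')} = \varinjlim \Gsets{\pi_1(P_i')}$). I expect the bookkeeping of these (co)limits — rather than any single hard idea — to be the main work, with the actual equivalence in the fs case being a reasonably direct unwinding of \cref{lem:fKet-local-structure} and \cref{lem:ket-monoid-conditions}.
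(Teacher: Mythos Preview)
Your proposal is correct and follows the same overall strategy as the paper: reduce to the fs case via approximation (\cref{prop:fKet-approx}) and pass to the limit using $\pi_1(P') = \varprojlim \pi_1(P'_i)$. The compatibility of the fibre functors under the transition maps, which you correctly identify as the main bookkeeping, is indeed the crux.

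The one substantive difference is in how the fibre functor is constructed. You build it only in the fs case (via the fibre over a log geometric point, computed through the decomposition of \cref{lem:fKet-local-structure}) and then pass to the colimit. The paper instead constructs the functor \emph{directly for arbitrary $P$}: it forms the pro-cover $X_\infty = X\times_{\bA_P}\bA_{P_\infty}$ where $P_\infty$ is the saturation of $P$ in $P^\gp\otimes\bZ_{(p)}$, chooses a section $\sigma\colon\Spec(k)\to X_\infty$, takes the connected component $Y\subseteq X_\infty$ containing $\sigma$, and sets $F=\Hom_X(Y,-)$ as the promised \eqref{eqn:strlocpt-fibre}. The point the paper stresses is that when $F=\alpha^{-1}(A^\times)\neq 0$ there is no neat chart, so $X_\infty$ is typically disconnected and one must check that the subgroup $\pi_1(P')\subseteq\pi_1(P)$ is exactly the stabilizer of $Y$. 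This buys an intrinsic description of the fibre functor valid before any approximation, and the limit argument is then used only to show $F$ is an equivalence (by matching it with the already-known fs functors $F_i=\Hom_{X_i}(Y_i,-)$ via the maps $Y\to Y_i$). Your route trades this intrinsic description for a slightly simpler fs computation, at the cost of having to assemble the functor itself from the approximation; both work, but the paper's version is what the statement means by ``constructed explicitly.''
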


The proof will be slightly more involved than in the fs case \cite[\S 3]{Stix2002:Thesis} since we cannot assume the existence of a chart by the monoid $P'$ (a ``neat chart'') and therefore reduce to the case $P = \overline{\cM}_{X,\bar{x}}$. The issue is that $X_\infty$ defined in the proof might be disconnected if $F\neq 0$, in which case its automorphism group over $X$ will be too large. 

\begin{proof}
Let $P_\infty$ be the saturation of $P$ in $P_\infty^\gp = P^\gp\otimes \bZ_{(p)}$. We set 
\[ 
    X_\infty =  X \times_{\bA_P} \bA_{P_\infty} = \Spec(P_\infty \to A\otimes_{\bZ[P]}\bZ[P_\infty]).
\]
We pick a section $\sigma\colon \Spec(k)\to X_\infty$ and let $Y$ be the connected component of $X_\infty$ containing the image of $\sigma$. 

Concretely, $\sigma^*\colon A\otimes_{\bZ[P]}\bZ[P_\infty]\to k$ corresponds to a homomorphism $\beta\colon P_\infty\to k$ making the following diagram commute
\[ 
    \begin{tikzcd}
        P\ar[r] \ar[d,"\alpha",swap] & P_\infty \ar[d,"\beta"] \\
        A \ar[r,"\bar{x}^\ast",swap] & k.
    \end{tikzcd}
\]
By definition of $P$, this datum can be interpreted as a compatible choice of prime-to-$p$ roots of $\bar{x}^\ast \alpha(q)$ for $q\in P$. 
Let $F_\infty\subseteq P_\infty$ be the saturation of $F$ in $P_\infty$. Since $A$ is henselian, for every $q \in F_\infty$ there exists a unique $\chi(q)\in A^\times$ lifting $\beta(q)\in k^\times$ such that $\chi(q)^m = \alpha(mq)$ for every $m\geq 1$ such that $mq\in F$. In other words, there exists a unique $\chi\colon F_\infty\to A^\times$ making the diagram below commute
\[ 
    \begin{tikzcd}
        F\ar[r] \ar[d,"\alpha|_F",swap] & F_\infty \ar[d,"\beta|_{F_\infty}"] \ar[dl,"\chi",swap]\\
        A^\times \ar[r] & k^\times.
    \end{tikzcd}
\]

The group $\pi_1(P)$ acts on the log scheme $X_\infty$. Namely, $\gamma\in \pi_1(P) = \Hom(P_\infty^\gp/P^\gp, A^\times)$ sends $a\otimes q \in A\otimes_{\bZ[P]}\bZ[P_\infty]$ to 
\[
\gamma\cdot (a \otimes q) = \gamma(q)a\otimes q
\ \in \cO(X_\infty)
\]
and  $q \in P_\infty$ to 
\[
\gamma\cdot q = \gamma(q)q \ \in \cM(X_\infty).
\]
The projection $X_\infty\to X$ is $\pi_1(P)$-invariant.

We claim that the subgroup $\pi_1(P')\subseteq \pi_1(P)$ stabilizes the point $y = \sigma(x) \in Y$ (in fact, $\pi_1(P')$ is the kernel of $\pi_1(P)\to \Aut(\pi_0(X_\infty))$). This follows from the commutativity of 
\[
    \begin{tikzcd}
        A \otimes_{\bZ[P]} \bZ[P_\infty] \ar[dr,"\sigma^\ast",swap] \ar[rr,"\gamma\in\pi_1(P/F)"]
        & & 
        A \otimes_{\bZ[P]} \bZ[P_\infty] \ar[dl,"\sigma^\ast"] \\
        & k
    \end{tikzcd}
\]
Indeed, $1\otimes q\mapsto \gamma(q))1\otimes q$; if $q\notin F$, both map to zero in $k$, and if $q \in F$ then both map to one since $\gamma = 1$ on $F$. It follows that $\pi_1(P')$ acts on $Y$ and the map $Y\to X$ is $\pi_1(P')$-invariant.

The action of $\pi_1(P')$ on $Y$ allows us to define the desired functor
\begin{equation} \label{eqn:strlocpt-fibre}
    F = \Hom_X(Y, -) \colon \FEt_X \la \Gsets{\pi_1(P')}.
\end{equation}
If $P$ is fs and $X$ is Noetherian, then $F$ agrees with the fibre functor constructed in \cite[\S 3]{Stix2002:Thesis}. In particular, $F$ is an equivalence under these assumptions. We shall use approximation to reduce to this case.

To this end, let us write $(P\to A) = \varinjlim \,(P_i\to A_i)$ where $P_i$ are fs and where $A_i$ are strict henselisations of finite type $\bZ$-algebras, and such that $A_i\to A$ and $A_i\to A_j$ are local. For each index $i$, we follow the above recipe for constructing $P'_i$, $P_{i\infty}$, $X_{i\infty}$ etc. The maps $P_i\to P_j\to P$ ($j\geq i$) induce maps $P_{i\infty}\to P_{j\infty}\to P_\infty$ and $X_\infty\to X_{j\infty}\to X_{i\infty}$. We let $Y_i$ be the connected component of $X_{i\infty}$ to which $Y$ maps. We obtain the induced maps $Y\to Y_j\to Y_i$. Define the functors
\[
    F_i = \Hom_{X_i}(Y_i, -)\colon \FEt_{X_i}\la \Gsets{\pi_1(P'_i)}
\]
in the same way as $F$.

We claim that we have a commutative square of categories and functors
\[ 
    \begin{tikzcd}
        \FEt_X \ar[r,"F"] & \Gsets{\pi_1(P')} \\
        \varinjlim \, \FEt_{X_i} \ar[u] \ar[r,"(F_i)",swap] & \varinjlim \Gsets{\pi_1(P'_i)}. \ar[u]
    \end{tikzcd}
\]
Note that the left arrow is an equivalence by \cref{prop:fKet-approx}. Moreover, since $P' = \varinjlim P'_i$, we have $\pi_1(P') \simeq \varprojlim \pi_1(P'_i)$ and hence the right arrow is an equivalence. Finally, we have already established that the functors $F_i$ are equivalences. Therefore the existence of the above diagram implies the assertion. 

Consider the transition maps $X_j\to X_i$ ($j\geq i$). Since $Y_j$ maps to $Y_i$, we have a natural map $Y_j\to (Y_i\times^\sat_{X_i} X_j)$. Therefore, if $\{Z_i\}$ is an object of the category $\varinjlim \, \FEt_{X_i}$, then a morphism $Y_i\to Z_i$ over $X_i$ induces a morphism 
\[
    Y_j\la Y_i\times^\sat_{X_i} X_j\la Z_i\times^\sat_{X_i}X_j= Z_j.
\]
Moreover, $Y_j\to Y_i$ is equivariant with respect to the map $\pi_1(P'_j)\to \pi_1(P'_i)$. This shows that the functors $F_i$ are compatible with base change along $X_j\to X_i$ in the obvious way, and hence together induce a functor $(F_i)$ in the diagram. An analogous argument for the maps $X\to X_i$ shows that the square of categories naturally commutes.
\end{proof}

By a {\bf geometric log point} (or log geometric point) we shall mean a log scheme of the form $\overline{x}=\Spec(P\to k)$ where $k$ is a separably closed field and where $P^\gp$ is $n$-divisible for every $n$ invertible in $k$.  By \cref{prop:local log pt}, the category $\FEt_{\overline{x}}$ is canonically equivalent to the category of finite sets. Similarly, the category $\cat{Sh}(\overline{x}_\ket)$ of Kummer \'etale sheaves is equivalent to the category of sets. A geometric log point of a log scheme $X$ is a map $ \overline{x}\to X$ from a geometric log point. Pull-back along such a map produces a point of the Kummer \'etale topos of $X$. It is easy to see that every non-empty saturated log scheme admits a geometric log point, and in fact the Kummer \'etale topos has enough points. Restricting to the category of finite Kummer \'etale covers we obtain a fibre functor $F_{\overline{x}}\colon \FEt_X\to \sets$.

The main result of this section is the following.

\begin{thm} \label{thm:FEt-Galois}
    Let $X$ be a connected saturated log scheme. Then $\FEt_X$ is a Galois category, and for every geometric log point $\overline{x}\to X$, the functor $F_{\ov{x}}$ is a fibre functor. 
\end{thm}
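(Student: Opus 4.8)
The plan is to verify the Grothendieck--Galois axioms (G1)--(G6) for $\FEt_X$ together with the requisite exactness properties of $F_{\ov x}$, reducing each to the known fs case via the approximation result \cref{prop:fKet-approx} and the local structure result \cref{lem:fKet-local-structure}. First I would check that $\FEt_X$ has finite limits and finite colimits computed ``the obvious way'': the final object is $X$ itself; fibre products are saturated fibre products (which are again finite Kummer \'etale by \crefpart{prop:fKet-properties}{propitem:fKet-properties-pullback}); finite coproducts are disjoint unions of log schemes. The one genuinely nontrivial construction is the quotient of an object $Y\in\FEt_X$ by a finite group $G$ of automorphisms. Here I would argue that, working strict \'etale locally on $X$ where (by \cref{lem:prelogring_colimit_fs} and \cref{prop:fKet-approx}) the cover $Y\to X$ descends to a finite Kummer \'etale cover $Y_0\to X_0$ of fs log schemes of finite type over $\bZ$ with the $G$-action descended as well, the quotient $Y_0/G$ exists in $\FEt_{X_0}$ by the classical theory (it is constructed on underlying schemes, which are finite over $X_0$, and carries the evident log structure), and then $Y/G := (Y_0/G)\times^\sat_{X_0} X$ is finite Kummer \'etale over $X$; a descent argument using \crefpart{prop:fKet-properties}{propitem:fKet-properties-stack} glues these local quotients. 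One must also check that $Y\to Y/G$ is finite Kummer \'etale, which follows from \crefpart{prop:fKet-properties}{propitem:fKet-properties-2of3}, and that $Y/G$ has the correct universal property, which can be checked after the faithfully flat (indeed surjective, by \cref{lem:ket is open}) base change $Y\to Y/G$ or strict \'etale locally.

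\textbf{Effectiveness of epimorphisms and connected components.} Next I would verify that every morphism $f\colon Y\to Z$ in $\FEt_X$ factors as a strict epimorphism followed by a monomorphism onto a direct summand. Since by \crefpart{prop:fKet-properties}{propitem:fKet-properties-2of3} $f$ is itself finite Kummer \'etale, hence (by \cref{lem:ket is open} and integrality) open and closed, its image is an open-and-closed subobject $Z'\subseteq Z$, giving $Z = Z'\sqcup Z''$; and $Y\to Z'$ is a finite Kummer \'etale \emph{surjection}, so a covering in $X_\ket$, hence a strict epimorphism by subcanonicality (\cref{prop:Ket-subcanonical}). For axiom (G4)--(G5) I need that every $Y\in\FEt_X$ is a finite coproduct of connected objects: here I would use that $\underline Y$ is quasi-compact with finitely many connected components (again reducing to the fs finite-type case via \cref{prop:fKet-approx}, where underlying schemes are noetherian), that the connected components of the \emph{log scheme} $Y$ coincide with those of $\underline Y$, and that each such component, equipped with its restricted log structure, is again finite Kummer \'etale over $X$ since finite Kummer \'etaleness is checked \'etale locally on the base.

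\textbf{The fibre functor.} For $F_{\ov x} = \Hom_X(\ov x, -) = F_{\overline x}$: I would first reduce its analysis to \cref{prop:local log pt}. Pulling back along $\ov x\to X$ gives a functor $\FEt_X\to \FEt_{\ov x}\isomto\sets$ (the last equivalence by \cref{prop:local log pt}, since for a log geometric point $\ov x$ the monoid $P'$ is sharp $p$-divisible so $\pi_1(P')$ is the image of $\Hom((P')^\gp,\widehat\bZ'(1))$ on such a point... in fact $\FEt_{\ov x}\simeq\sets$ because any Kummer \'etale $Q$ over a $p$-divisible sharp $P$ splits as $P\oplus(\text{finite group})$ so the cover is a disjoint union of copies of $\ov x$). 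Exactness of $F_{\ov x}$ (preservation of finite limits and finite colimits, including quotients by group actions) follows because pullback along $\ov x\to X$ is exact on log schemes and $\FEt_{\ov x}\to\sets$ is an equivalence, together with the fact that underlying-scheme fibres of finite Kummer \'etale maps over the log point are finite and the saturated and ordinary fibre products agree after base change to a log geometric point (one can use \cref{cor:4ptKummer} and \cref{prop:geometry sat map} here). Finiteness of $F_{\ov x}(Y)$ is exactly the finiteness of the fibre, which holds since $\underline Y_{\ov x}\to\Spec k$ is integral and (locally) of the standard form $\bA_Q\times_{\bA_P}\Spec k$ with $Q$ finite up to saturation over $P$; \cref{lem:fKet-local-structure} gives this. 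Finally $F_{\ov x}$ is conservative: if $F_{\ov x}(f)$ is an isomorphism for $f\colon Y\to Z$ in $\FEt_X$ with $Z$ connected, then $f$ is surjective (its image is open-closed and nonempty, as it contains $\ov x$) and a monomorphism fibrewise, hence by the factorization above an isomorphism onto a summand of $Z$, which must be all of $Z$.

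\textbf{Main obstacle.} The step I expect to require the most care is the construction and universal property of the quotient $Y/G$, and more generally checking that $\FEt_X$ is closed under the finite colimits needed for a Galois category. Unlike the fs noetherian situation, $Y$ is not finite over $X$ as a scheme (\cref{rmk:fKet-not-finite}), so one cannot simply invoke the existence of quotients of finite schemes; the argument must genuinely pass through approximation (\cref{prop:fKet-approx}) to the fs finite-type setting, form the quotient there, pull back, and then glue via the strict-\'etale stack property \crefpart{prop:fKet-properties}{propitem:fKet-properties-stack} while checking independence of all the choices. Once closure under the relevant finite colimits is secured, the remaining Galois axioms and the properties of $F_{\ov x}$ are comparatively routine, being either formal or reducible to \cref{prop:local log pt} and the openness/integrality properties established in \cref{ss:finite-ket}.
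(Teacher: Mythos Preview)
Your approach is correct but takes a genuinely different route from the paper. You verify the Grothendieck--Galois axioms (G1)--(G6) for $\FEt_X$ directly, using approximation (\cref{prop:fKet-approx}) to reduce the delicate constructions---most notably quotients $Y/G$ by finite group actions---to the fs noetherian case where they are classical, and then gluing via the strict-\'etale stack property. The paper instead proves the equivalence $\FEt_X\simeq\cat{lcc}(X_\ket)$ (\cref{prop:FEt-equals-lcc}) and then invokes the general topos-theoretic fact (\cref{prop:lcc-is-Galois}, from Johnstone) that locally constant sheaves of finite sets on a connected Grothendieck topos form a Galois category, with any topos point inducing a fibre functor (\cref{cor:ket-lcc-is-Galois}). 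In the paper's route, all the finite-(co)limit and quotient constructions are absorbed into the ambient topos $\cat{Sh}(X_\ket)$, so one never has to build $Y/G$ by hand; conversely, that route requires the approximation machinery for the Kummer \'etale topos (\cref{prop:limit-Ket-topos}) and the fs result $\FEt_{X_i}\simeq\cat{lcc}(X_{i,\ket})$. Your approach is more self-contained and closer in spirit to SGA1, while the paper's is shorter and yields Kummer-\'etale descent for $\FEt$ (\cref{cor:fKet-Ket-stack}) as an immediate corollary of the sheaf-theoretic description. The step you flagged as the main obstacle---forming $Y/G$ when $Y\to X$ is not finite as a morphism of schemes---does go through via your descent-and-approximation argument, but note that the paper sidesteps it entirely.
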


In the fs noetherian case see \cite[p.\ 285]{IllusieFKN}. 

\begin{defi}
    Let $X$ be a saturated connected log scheme and let $\overline{x}\to X$ be a geometric log point. We denote by $\pi_1(X, \overline{x})$ the fundamental group of $\FEt_X$ with the  fibre functor given by $\overline{x}$ as base point.
\end{defi}

The proof of \cref{thm:FEt-Galois} will occupy the rest of this subsection. The strategy is to prove that finite Kummer \'etale covers correspond to locally constant sheaves of finite sets in the Kummer \'etale topology, extending the result \cite[Proposition~3.13]{IllusieFKN} from the fs case, and then to apply the following general result.

\begin{prop}[{\cite[Proposition 8.42 + Definition 8.43 + Theorem 8.47]{Johnstone1977:ToposTheory}}] \label{prop:lcc-is-Galois} 
    Let $\cX$ be a connected Grothendieck topos with a point $x$. Then, the category $\cat{lcc}(\cX)$ of locally constant sheaves of finite sets is a Galois category, with fibre functor induced by $x^*$.
\end{prop}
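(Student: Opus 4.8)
The plan is to verify directly that $\cat{lcc}(\cX)$ together with the functor $F = x^*|_{\cat{lcc}(\cX)}$ satisfies Grothendieck's axioms (G1)--(G6) for a Galois category. Throughout, an object $A$ of $\cX$ is \emph{locally constant of finite type} if there is an epimorphism $U \surj 1$ onto the terminal object such that $A\times U \to U$ is isomorphic over $U$ to a finite coproduct of copies of $U$, i.e.\ to a finite constant object $\underline{S}$ with $S$ finite; I will call such a $U$ a trivializing cover. The technical backbone of the whole argument is a package of \emph{local structure and descent} facts: the class $\cat{lcc}(\cX)$ is stable in $\cX$ under finite limits, finite coproducts, images, internal homs, and quotients by finite group actions, and moreover any monomorphism $A \inj B$ between objects of $\cat{lcc}(\cX)$ is the inclusion of a direct summand (a complemented subobject). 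Each of these reduces, after passing to a common trivializing cover, to the corresponding elementary statement for finite constant objects $\underline{S}$ --- where, for instance, a subset of a finite set is always complemented --- and then descends along the cover because the relevant constructions are stable under pullback.

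First I would dispose of the axioms concerning the category $\cat{lcc}(\cX)$ itself. Axiom (G1) (a terminal object and fibre products) and axiom (G2) (finite coproducts and quotients by finite groups) are immediate from the stability package above, computing all limits and colimits in the ambient topos $\cX$. Axiom (G3), the factorization of every $u\colon A \to B$ as a strict (effective) epimorphism followed by a monomorphism identifying the source with a summand of $B$, follows by taking $I = \im(u)$ in $\cX$: the object $I$ lies in $\cat{lcc}(\cX)$, the map $A \surj I$ is an effective epimorphism, and $I \inj B$ is a summand inclusion by the complementation fact. Turning to the fibre functor, since $x$ is a point, $x^*$ is the inverse image of a geometric morphism, hence preserves finite limits, and it is a left adjoint, hence preserves all colimits; this yields the exactness and cocontinuity required by (G4) and the first half of (G5), including preservation of finite coproducts, of effective epimorphisms, and of quotients by finite groups.

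To see that $F$ lands in finite sets, I would fix $A \in \cat{lcc}(\cX)$ with trivializing cover $U \surj 1$; since $x^*$ preserves epimorphisms and $x^*(1)=1$, the set $x^*(U)$ is non-empty, and a choice of element refines $x$ to a point of the slice topos $\cX/U$ through which $x^*$ factors, computing $x^*(A)$ as the fibre of $A|_U \cong \underline{S}$, namely the finite set $S$. This completes (G4) and (G5).

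The main obstacle, and the place where connectedness of $\cX$ is used, is axiom (G6): that $F$ reflects isomorphisms. Given $u\colon A \to B$ in $\cat{lcc}(\cX)$ with $x^*(u)$ an isomorphism, I would form the internal hom $\underline{\Hom}(A,B)$, which again lies in $\cat{lcc}(\cX)$, together with its complemented subobject $\mathrm{Isom}(A,B) \inj \underline{\Hom}(A,B)$. Pulling this subobject back along the global section $u\colon 1 \to \underline{\Hom}(A,B)$ produces a complemented subobject $V \inj 1$, i.e.\ a decomposition $1 = V \sqcup V'$, over which $u$ is an isomorphism precisely. Connectedness of $\cX$ means the terminal object $1$ is connected (it is not a coproduct of two non-initial subobjects), so $V$ is either $\varnothing$ or all of $1$; and the hypothesis that $x^*(u)$ is an isomorphism says exactly that $x$ factors through $V$, forcing $x^*(V)$ non-empty and hence $V = 1$. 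Therefore $u$ is an isomorphism. The delicate points I expect to cost the most effort are establishing the complementation of $\mathrm{Isom}(A,B)$ inside $\underline{\Hom}(A,B)$, which encodes the rigidity of maps of locally constant objects, and checking that quotients by finite group actions remain in $\cat{lcc}(\cX)$ and are preserved by $x^*$, since there the trivializing cover need not be compatible with the group action and must first be replaced by an equivariant refinement.
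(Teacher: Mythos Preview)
The paper does not prove this proposition; it is stated with a citation to Johnstone's \emph{Topos Theory} and used as a black box. Your direct verification of Grothendieck's axioms (G1)--(G6) is correct and is essentially the content of the cited passages in Johnstone. The stability package you invoke (closure of $\cat{lcc}(\cX)$ under finite limits, finite coproducts, images, internal homs, and quotients by finite groups, together with the fact that monomorphisms between such objects are complemented) is indeed the technical core, and your argument for (G6) via the complemented subobject $\mathrm{Isom}(A,B)\hookrightarrow\underline{\Hom}(A,B)$ pulled back along the section $u\colon 1\to\underline{\Hom}(A,B)$ is clean and correct. The two concerns you flag are genuine but routine: complementation of $\mathrm{Isom}(A,B)$ follows from your general complementation fact applied to this monomorphism between lcc objects; and for the $G$-quotient, after trivializing $A$ on some $U$ one further refines $U$ so that each $g\in G$, viewed as a section of the lcc object $\underline{\mathrm{Aut}}(A)$, becomes a fixed permutation of the fibre $S$, whence $(A/G)|_U\cong\underline{S/G}$.
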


\begin{cor} \label{cor:ket-lcc-is-Galois}
    Let $X$ be a non-empty connected saturated log scheme. Then the category $\cat{lcc}(\KEtsite{X})$ is a Galois category. Every geometric log point of $X$ induces a fibre functor on this category.
\end{cor}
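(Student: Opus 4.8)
The plan is to deduce \Cref{cor:ket-lcc-is-Galois} directly from \Cref{prop:lcc-is-Galois} by verifying its hypotheses for the Kummer \'etale topos $\cat{Sh}(\KEtsite{X})$. First I would check that the topos is a \emph{connected} Grothendieck topos. That it is a Grothendieck topos is immediate from the fact that $\KEtsite{X}$ is a (small) site. For connectedness, recall that a topos is connected precisely when the terminal object is not a coproduct of two nonzero subobjects, equivalently when $\Gamma(\KEtsite X, \underline{S})$ (global sections of the constant sheaf on a set $S$) equals $S$ for all $S$; this should follow from $X$ being connected as a log scheme together with subcanonicality of $\KEtsite{X}$ (\Cref{prop:Ket-subcanonical}): a locally constant sheaf of sets that is globally split would, via the associated finite Kummer \'etale cover over the pieces, contradict connectedness of the underlying scheme $\underline X$, since Kummer \'etale maps are open (\Cref{lem:ket is open}) and $X\neq\varnothing$.

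Second, I need a \emph{point} of the topos. Here I would invoke the discussion immediately preceding the corollary: a geometric log point $\ov x\to X$ (which exists since $X$ is non-empty saturated) induces, by pullback, a point of the Kummer \'etale topos, with stalk functor landing in sets — indeed \Cref{prop:local log pt} (applied to the strictly local log point $\ov x$, whose underlying ring is a separably closed field) shows $\cat{Sh}(\ov x_\ket)\simeq\sets$. With a connected Grothendieck topos equipped with a point in hand, \Cref{prop:lcc-is-Galois} applies verbatim and yields that $\cat{lcc}(\KEtsite X)$ is a Galois category with fibre functor $\ov x^\ast$. Since every geometric log point gives such a point, every geometric log point induces a fibre functor, which is the second assertion.

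Concretely I would write: \emph{Proof.} Since $X$ is a non-empty saturated log scheme, it admits a geometric log point $\ov x\to X$ (as noted above), and pullback along $\ov x$ defines a point of the topos $\cat{Sh}(\KEtsite X)$ whose inverse image functor takes values in $\sets$ by \Cref{prop:local log pt}. The category $\KEtsite X$ has finite products and the topology is generated by a calculus of covers, so $\cat{Sh}(\KEtsite X)$ is a Grothendieck topos; it is connected because $X$ is connected as a log scheme. Indeed, if the constant sheaf $\underline{S_1\sqcup S_2}$ on $\KEtsite X$ decomposed compatibly as a disjoint union of two nonzero subsheaves, evaluating on the covering sieve generated by the identity $\id_X$ and using subcanonicality (\Cref{prop:Ket-subcanonical}) together with the fact that Kummer \'etale maps are open (\Cref{lem:ket is open}) would produce a disconnection of $|X|$, which is impossible. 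Now \Cref{prop:lcc-is-Galois} applies to $\cat{Sh}(\KEtsite X)$ with the point $\ov x$, showing that $\cat{lcc}(\KEtsite X)$ is a Galois category with fibre functor induced by $\ov x^\ast$. As $\ov x$ was an arbitrary geometric log point, the last sentence follows. \qed

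The main obstacle I anticipate is making the connectedness argument airtight: one must be careful that ``connected as a log scheme'' (i.e.\ $X\neq\varnothing$ and $|X|$ connected, or $\cO_X$ has no nontrivial idempotents \'etale-locally — whichever convention the paper fixes) really translates into connectedness of the Kummer \'etale topos, i.e.\ that $\Gamma(\KEtsite X,-)$ preserves the terminal object being indecomposable. The cleanest route is probably to observe that for any $Y\to X$ Kummer \'etale, $|Y|\to|X|$ is open, hence that the locally constant sheaf $\underline{\{1,2\}}$ having a global section splitting it into two nonzero pieces would, restricting to an affine chart and using \Cref{lem:fKet-local-structure}, yield a genuine clopen decomposition of $|X|$; everything else is a formal invocation of \Cref{prop:lcc-is-Galois}. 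One should double-check that the paper's conventions make $\KEtsite X$ a small site (or replace it by the equivalent $\qcqs$-restricted site of \Cref{prop:limit-Ket-topos}), so that ``Grothendieck topos'' is literally applicable.
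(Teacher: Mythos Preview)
Your proposal is correct and follows essentially the same route as the paper: verify that $\cat{Sh}(\KEtsite{X})$ is a connected Grothendieck topos with a point coming from a geometric log point, then invoke \cref{prop:lcc-is-Galois}. The paper's only simplification is the connectedness step, which it dispatches in one line via $H^0(\cX,\bZ)=H^0(X,\bZ)=\bZ$ (using that $X$ is the terminal object and Kummer \'etale maps are open), rather than through subcanonicality or \cref{lem:fKet-local-structure}; your anticipated smallness issue is handled exactly as you suggest, by fixing a universe.
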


\begin{proof}
The topos $\cX = \cat{Sh}(\KEtsite{X})$ is connected since 
\[ 
    H^0(\cX, \bZ) = H^0(X, \bZ) = \bZ.
\]
Moreover, it is a Grothendieck topos, being the topos of sheaves on $\KEtsite{X}$. Here, strictly speaking, we should have first chosen a universe $U$ and consider the $U$-Kummer \'etale site, so that $\KEtsite{X}$ is small. \Cref{prop:lcc-is-Galois} implies the assertion.
\end{proof}

For the result below, recall from \cref{prop:Ket-subcanonical} that the Kummer \'etale site is subcanonical, so that $\FEt_X$ can be treated as a full subcategory of $\cat{Sh}(\KEtsite{X})$. 
 
\begin{prop} \label{prop:FEt-equals-lcc}
    The Kummer \'etale sheaf associated to a finite Kummer \'etale morphism $Y\to X$ is a locally constant sheaf of sets on $\KEtsite{X}$. This induces an equivalence 
    \[ 
        \FEt_X \simeq \cat{lcc}(\KEtsite{X}).
    \]    
\end{prop}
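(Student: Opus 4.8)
The plan is to verify the statement in two stages: first that a finite Kummer \'etale morphism $Y \to X$ gives rise to a locally constant sheaf of finite sets on $\KEtsite{X}$, and second that the resulting functor $\FEt_X \to \cat{lcc}(\KEtsite{X})$ is an equivalence. For the first stage, fix a finite Kummer \'etale map $Y\to X$. Since $\KEtsite{X}$ is subcanonical (\cref{prop:Ket-subcanonical}), the presheaf $h_Y$ is a sheaf, so it remains to show it is locally constant, i.e.\ that there is a Kummer \'etale covering family $\{X_\alpha \to X\}$ over which $h_Y$ becomes isomorphic to a constant sheaf $\underline{E_\alpha}$ for finite sets $E_\alpha$. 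This is exactly the content of the local structure result \cref{lem:fKet-local-structure}: \'etale locally on $X$ we may choose a chart $P\to\cM(X)$ by a saturated monoid, and then \'etale locally $Y \simeq \coprod_{j=1}^r X\times_{\bA_P}\bA_{Q_j}$ for Kummer \'etale maps $P\to Q_j$. The key observation is that each standard cover $X\times_{\bA_P}\bA_{Q_j}$ is trivialized — i.e.\ becomes a disjoint union of copies of the base — after a further Kummer \'etale base change: namely base-change along the Kummer \'etale map $X \to X'$ obtained from $P \to P_\infty$ where $P_\infty$ is the saturation of $P$ in $P^\gp \otimes \bZ_\Sigma$ (or, at finite level, a single $P \to P_n$ splitting $Q_j^\gp/P^\gp$). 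After such a base change, $Q_j^\gp$ splits off the finite group $Q_j^\gp/P^\gp$, so by \cref{lem:ket-monoid-conditions} the monoid becomes a product and $\bA_{Q_j}$ becomes a disjoint union of copies of $\bA_P$ over the base. Hence $h_Y$ is locally constant with finite stalks.

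For the second stage, I would construct an inverse functor and check it is quasi-inverse. Given a locally constant sheaf $\cF \in \cat{lcc}(\KEtsite{X})$, choose a Kummer \'etale covering $\{X_\alpha \to X\}$ over which $\cF$ is constant, say $\cF|_{X_\alpha} \simeq \underline{E_\alpha}$; the constant sheaf $\underline{E}$ on $X_\alpha$ is clearly represented by the strict \'etale (hence finite Kummer \'etale) map $\coprod_{E} X_\alpha \to X_\alpha$, which is an object of $\FEt_{X_\alpha}$. The gluing data for $\cF$ over the Kummer \'etale cover translates, by the Yoneda lemma and subcanonicality, into descent data for these objects of $\FEt_{X_\alpha}$ along $\{X_\alpha \to X\}$. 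To descend to an object of $\FEt_X$ we need $U\mapsto \FEt_U$ to be a \emph{stack} for the Kummer \'etale topology, not merely the strict \'etale topology of \crefpart{prop:fKet-properties}{propitem:fKet-properties-stack}. This Kummer \'etale descent for finite Kummer \'etale covers is the crucial ingredient; it is flagged in the footnote to \cref{prop:fKet-properties} as \cref{cor:fKet-Ket-stack}, and I would cite (or, depending on the paper's organization, prove just before this point) that result. Granting it, effective Kummer \'etale descent produces $Y_\cF \to X$ in $\FEt_X$ whose associated sheaf $h_{Y_\cF}$ is isomorphic to $\cF$. Full faithfulness of $Y\mapsto h_Y$ is immediate from subcanonicality: $\Hom_X(Y,Z) = \Hom(h_Y, h_Z)$ as sheaves, and this equals morphisms of the associated locally constant sheaves. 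Thus $\FEt_X \to \cat{lcc}(\KEtsite{X})$ is fully faithful and essentially surjective, hence an equivalence.

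The main obstacle I expect is the Kummer \'etale descent for finite Kummer \'etale covers (\cref{cor:fKet-Ket-stack}). Strict \'etale descent (already available) handles the ``scheme plus log structure'' part, but a general Kummer \'etale covering is not strict \'etale — it is only \emph{exact} and \'etale — and the underlying map of schemes of a Kummer \'etale cover need not even be flat (cf.\ \cref{lem:kummer-of-valuative} and the remarks around it). One standard route is to reduce descent along an arbitrary Kummer \'etale cover to descent along a composition of a strict \'etale cover and a standard Kummer cover $\bA_Q \to \bA_P$ (using \cref{prop:fs approximation for smooth etale and ket morphisms} and \cref{lem:fKet-local-structure} to get such a local form), then to verify descent along the latter explicitly: a standard Kummer cover becomes strict \'etale after the base change $P\to P_n$, and strict \'etale descent together with a faithfully-flat-descent-type argument (or approximation to the fs noetherian case, where the statement is known by \cite[Proposition~3.13]{IllusieFKN}, via \cref{prop:fKet-approx}) closes the gap. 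Once \cref{cor:fKet-Ket-stack} is in hand the rest is formal bookkeeping with the Yoneda embedding and \cref{prop:Ket-subcanonical}.
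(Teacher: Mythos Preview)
Your first stage is correct in spirit. The paper's argument is slightly cleaner: rather than trivializing over an auxiliary cover built from $P\to P_n$, it uses \cref{lem:Vidal lemma on fs diagonal} to show directly that $Y_j\times^\sat_X Y_j \simeq Y_j\times (Q_j^\gp/P^\gp)$, so $h_Y$ is trivialized already on the cover $\prod^\sat_j Y_j\to X$. Your route via $P_n$ would also work but requires an extra computation.

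Your second stage has a circularity problem. You want to invoke \cref{cor:fKet-Ket-stack} to descend, but in this paper that corollary is deduced \emph{from} \cref{prop:FEt-equals-lcc} (via the tautology that $U\mapsto\cat{lcc}(\KEtsite{U})$ is a stack), not proved beforehand. You anticipate this (``depending on the paper's organization''), and you are right to worry: proving Kummer \'etale descent for $\FEt$ directly is not straightforward, since Kummer \'etale maps are not flat and no Kummer-flat descent machinery is developed here. The paper avoids this entirely. Working strict \'etale locally so that $X=\varprojlim X_i$ with $X_i$ fs and noetherian, it uses \cref{prop:limit-Ket-topos}\labelcref{propitem:limit-Ket-limit} to get $\cat{Sh}(\KEtsite{X})\simeq\varprojlim\cat{Sh}(X_{i,\ket})$, then argues via coherent objects that $\cat{lcc}(\KEtsite{X})\simeq\varinjlim\cat{lcc}(X_{i,\ket})$. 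On the other side, \cref{prop:fKet-approx} gives $\FEt_X\simeq\varinjlim\FEt_{X_i}$. The known fs case \cite[Proposition~3.13]{IllusieFKN} then matches the two colimits. You mention approximation only as a possible tool for proving the descent statement; the paper uses it as the main engine for essential surjectivity, bypassing descent altogether.
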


\begin{proof}
We first prove that for every finite Kummer \'etale $Y\to X$ the associated representable sheaf is locally constant, with values in finite sets. This assertion is local, so by \cref{lem:fKet-local-structure} we may assume that $X=\Spec(P\to A)$ is charted affine and that $Y$ is standard Kummer \'etale, i.e.\ the disjoint union of $Y_i = \Spec(Q_i\to A\otimes_{\bZ[P]}\bZ[Q_i])$ for a finite collection of Kummer \'etale homomorphisms $P\to Q_i$. Then \cref{lem:Vidal lemma on fs diagonal} implies that $Y_i\times_X^\sat Y_i\to Y_i$ is isomorphic to the disjoint union of finitely many copies of $Y_i$. This shows that the Kummer \'etale sheaf represented by $Y$ is locally constant, trivialized on $\prod^\sat(Y_i\to X)$.

For the other direction, we must show that every locally constant sheaf of finite sets on $\KEtsite{X}$ is represented by a finite Kummer \'etale cover. This again is strict \'etale local and we may assume $X = \varprojlim X_i$ as above, and the statement is true for $X_i$ again by \cite[Proposition~3.13]{IllusieFKN}. \Cref{prop:limit-Ket-topos} implies then that 
\[ 
    \cat{Sh}(\KEtsite{X}) \simeq \varprojlim \cat{Sh}(X_{i,\ket}). 
\]
We claim that this implies that 
\[ 
    \cat{lcc}(\KEtsite{X}) \simeq \varinjlim \cat{lcc}(X_{i,\ket})
\]
(filtered colimit under pullback maps). Indeed, the category of coherent objects of $\cat{Sh}(X)$ is the filtered colimit of the categories of coherent objects of $\cat{Sh}(X_i)$ \cite[Exp.\ VI, 8.3.13]{SGA4_2}. It is easy to see that every object of $\cat{lcc}(\KEtsite{X})$ or $\cat{lcc}(X_{i,\ket})$ is coherent. It remains to show that if $\cF_0$ is a Kummer \'etale sheaf on $X_0$ which is coherent as an object of $\cat{Sh}(X_0)$ and whose pullback $\cF$ to $X$ belongs to $\cat{lcc}(\KEtsite{X})$, then the pullback $\cF_i$ of $\cF_0$ is an object of $\cat{lcc}(X_{i,\ket})$ for $i\gg 0$. Being locally constant means that there exist a finite covering family $\{Y_\alpha\to X\}$, finite sets $S_\alpha$, and isomorphisms $\cF\times Y_\alpha \simeq S_\alpha\times Y_\alpha$ over $Y_\alpha$ (here $\times$ denotes product in the topos $\cat{Sh}(\KEtsite{X})$). Since this is a finite diagram of morphisms and isomorphisms between coherent objects, again it can be descended to some $X_i$. 
\end{proof}

Since $U\mapsto \cat{lcc}(\KEtsite{U})$ is trivially a stack in the Kummer \'etale topology, we immediately deduce the following strengthening of \crefpart{prop:fKet-properties}{propitem:fKet-properties-stack}.

\begin{cor} \label{cor:fKet-Ket-stack}
    The association $U\mapsto \FEt_U$ is a stack for the Kummer \'etale topology on $X$.
\end{cor}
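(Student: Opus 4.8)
The plan is to transport the question along the equivalence of \cref{prop:FEt-equals-lcc}. First I would record that this equivalence is natural in the base. For a Kummer \'etale map $V\to U$, the identification $\KEtsite{V}\simeq (\KEtsite{X})_{/V}$ (noted after \cref{defi:ket site}, using \crefpart{prop:bc and composition and 2 out of 3}{propitem:ket 2 out of 3}) identifies restriction of Kummer \'etale sheaves along $V\to U$ with pullback; and, by construction of the equivalence via representable sheaves, the sheaf represented by a finite Kummer \'etale $Y\to U$ restricts to the sheaf represented by the saturated base change $Y\times^\sat_U V\to V$ (which is again finite Kummer \'etale by \crefpart{prop:fKet-properties}{propitem:fKet-properties-pullback}). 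Hence the square
\[
\begin{tikzcd}
\FEt_U \ar[r,"\sim"] \ar[d] & \cat{lcc}(\KEtsite{U}) \ar[d] \\
\FEt_V \ar[r,"\sim"] & \cat{lcc}(\KEtsite{V})
\end{tikzcd}
\]
commutes up to natural isomorphism, so $U\mapsto \FEt_U$ and $U\mapsto \cat{lcc}(\KEtsite{U})$ are equivalent as fibred categories over $X$ in the Kummer \'etale topology.

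Second, I would argue that $U\mapsto \cat{lcc}(\KEtsite{U})$ is a stack for the Kummer \'etale topology. Sheaves of sets on the Kummer \'etale site form a stack essentially by definition: for a Kummer \'etale cover $\{U_i\to U\}$ one has $\KEtsite{U_i}=(\KEtsite{X})_{/U_i}$, and since the sieve generated by the $U_i$ is covering, a sheaf on $\KEtsite{U}$ is the same datum as a compatible family of sheaves on the $\KEtsite{U_i}$ together with descent data on the saturated fibre products. It then remains to observe that ``locally constant'' and ``finite fibres'' are local conditions for the Kummer \'etale topology: a sheaf on $\KEtsite{U}$ whose restriction to each $U_i$ becomes constant with finite value after a further Kummer \'etale cover of $U_i$ is, by composing covers (\crefpart{prop:bc and composition and 2 out of 3}{propitem:sm ket composition}), locally constant with finite fibres over $U$. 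Combining the two steps yields the corollary.

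I do not expect any genuine obstacle. The only points requiring care are the naturality of the equivalence of \cref{prop:FEt-equals-lcc} in the base, which one reads off from its construction, and the bookkeeping that ``stack'' here encompasses both descent for morphisms — i.e.\ that the presheaves $U\mapsto\Hom_U(Y,Y')$ are Kummer \'etale sheaves, which also follows from subcanonicality (\cref{prop:Ket-subcanonical}) — and effective descent for objects; both are subsumed in the statement that $\cat{lcc}(\KEtsite{-})$ is a stack. One could alternatively prove the corollary directly using \crefpart{prop:fKet-properties}{propitem:fKet-properties-stack} for strict \'etale descent together with \cref{lem:fKet-local-structure} and \cref{lem:Vidal lemma on fs diagonal} to split a Kummer \'etale cover strict \'etale locally into standard Kummer \'etale pieces, but transporting along the equivalence is considerably cleaner.
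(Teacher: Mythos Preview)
Your proposal is correct and follows precisely the paper's approach: the paper simply observes that $U\mapsto \cat{lcc}(\KEtsite{U})$ is ``trivially a stack in the Kummer \'etale topology'' and deduces the corollary from the equivalence of \cref{prop:FEt-equals-lcc}. Your write-up merely unpacks the naturality of that equivalence and the locality of the lcc condition, which the paper leaves implicit.
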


We are now ready to finish the construction of the Kummer \'etale fundamental group.

\begin{proof}[Proof of \cref{thm:FEt-Galois}]
By \cref{prop:FEt-equals-lcc}, the category $\FEt_X$ is equivalent to $\cat{lcc}(\KEtsite{X})$. The latter is a Galois category by \cref{cor:ket-lcc-is-Galois}. This identification is compatible with pullback, and a log geometric point induces a point of $\KEtsite{X}$, and it follows that $F_{\ov x}$ is a fibre functor. 
\end{proof}

%---------------------------------------
\subsection{Kummer \'etale coverings and valuation theory}
%---------------------------------------
\label{ss:kummer-val-rings}

In this subsection, we show that finite Kummer \'etale coverings of the spectrum of a valuation ring correspond to tame extensions of the fraction field. 

We first discuss the natural log structure on the spectrum of a~valuation ring. Let $K^+$ be a~valuation ring with fraction field $K$. Endow $S=\Spec(K^+)$ with the log structure 
\[
    \cM_S = \cO_S \cap j_*\cO_{\Spec(K)}^\times
\]
where $j\colon \Spec(K)\to S$ is the inclusion. We call $\cM_S$ the {\bf standard log structure}. Recall that $K^+$ is {\bf microbial} if it admits an element $\pi\in K^+$ (called a {\bf pseudouniformizer}) such that $K = K^+[1/\pi]$. In this case, the map $j$ is an open immersion and $\cM_S$ is the compactifying log structure induced by the open subscheme $\Spec(K) = D(\pi)$. 

By the following \cref{lem:chart-on-val-ring}, the inclusion $M = K^+\cap K^\times \to K^+$ provides a chart for this log structure. Note that $M$ is a valuative monoid (with $M/M^\times \simeq \Gamma_K^+$, the non-negative part of the value group of $K^+$).

\begin{lem} \label{lem:chart-on-val-ring}
    Let $K^+$ be a valuation ring with fraction field $K$. Then, the map $K^+\cap K^\times\to K^+$ is a chart for the standard log structure on $\Spec(K^+)$.
\end{lem}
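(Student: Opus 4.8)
Recall that the log structure on a scheme $Y$ induced by a prelog structure $\alpha\colon P\to\cO_Y$ is the pushout $P^{\log}=(\alpha^{-1}(\cO_Y^\times)\to\cO_Y^\times)\oplus_{\alpha^{-1}(\cO_Y^\times)} P$ sheafified, or more precisely the sheafification of $U\mapsto (P\oplus \cO_Y^\times(U))/\!\!\sim$. To say that $P\to\cO_Y(Y)$ is a chart for a given log structure $\cM_Y$ means that the induced map $P^{\log}\to\cM_Y$ is an isomorphism. So the plan is to take $Y=S=\Spec(K^+)$, $\cM_S=\cO_S\cap j_*\cO_{\Spec(K)}^\times$ with $j$ the inclusion of the generic point, $P=M=K^+\cap K^\times$, and the obvious inclusion $M\to K^+=\cO_S(S)$, and to check that the associated log structure $M^{\log}$ agrees with $\cM_S$. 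Since associated-log-structure and the formation of $\cM_S$ are both compatible with taking stalks at (geometric) points, it suffices to check this on stalks, and $S=\Spec(K^+)$ has only two points: the generic point $\eta$ and the closed point $s$.

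\textbf{Step 1: the generic point.} At $\eta$ the local ring is $K$, every nonzero element is a unit, and the stalk $(\cM_S)_\eta$ is $\cO_{S,\eta}^\times=K^\times$ (nothing is being added by the $j_*$-condition over the generic point). On the other hand $M^{\log}_\eta$ is the pushout of $M\leftarrow M^\times\to K^\times$; since $M^\times=(K^+)^\times$ and $M$ has groupification $M^{\gp}=K^\times$, the image of $M$ in the stalk of the associated log structure generates the same group $K^\times$, and because $K$ is a field the associated log structure at $\eta$ is simply $\cO_{S,\eta}^\times=K^\times$ (the prelog map $M\to K$ lands in $K^\times$). So the two stalks agree, compatibly with the structure maps to $\cO_{S,\eta}=K$.

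\textbf{Step 2: the closed point.} Here the honest subtlety appears, and this is the main point of the proof. Write $V=K^+_{(s)}$ for the strict henselisation (or just work with $K^+$ itself, strict henselisation of a valuation ring being a valuation ring with the same value group). The stalk $(\cM_S)_s$ consists of germs of regular functions that become units over $\Spec(K)$, i.e.\ it is $\{f\in K^+ : f\neq 0\}=K^+\cap K^\times=M$ (using that $K^+$ is a domain, a regular function on $S$ is invertible on the generic fibre iff it is nonzero). The stalk of the associated log structure $M^{\log}_s$ is the pushout $M\oplus_{F}(\cO_{S,s}^\times)$ where $F=\{m\in M : \alpha(m)\in\cO_{S,s}^\times\}=M^\times$. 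Since $M\to K^+$ is injective and $M^\times=(K^+)^\times\hookrightarrow M$, and $M$ is integral (indeed valuative, hence saturated), this pushout is just $M$ again — $M$ already contains its own units $M^\times=(K^+)^\times=\cO_{S,s}^\times$, so amalgamating over $M^\times$ changes nothing. Thus the natural map $M^{\log}_s\to(\cM_S)_s$ is the identity on $M$. One should phrase this cleanly: for a log scheme with a prelog structure $\alpha\colon P\to\cO$ with $P$ integral and $\alpha$ \emph{injective with $\alpha^{-1}(\cO^\times)=P^\times$}, the associated log structure is canonically $P\sqcup_{P^\times}\cO^\times$, and when moreover $\cO^\times\subseteq P$ (as here) this is just $P$.

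\textbf{Step 3: globalize and conclude.} Having matched stalks at both points of $S$, and noting that $S$ is local with two points so that a morphism of sheaves of monoids on $S_{\et}$ is an isomorphism iff it is so on all stalks (equivalently: $S$ is covered by the single affine, and one can alternatively argue directly on global sections since $S$ has no nontrivial \'etale covers in the relevant sense — the strict henselisation of a valuation ring is again a valuation ring with the same value group, so Step 2 applies verbatim after \'etale localisation), we conclude that $M^{\log}\to\cM_S$ is an isomorphism, i.e.\ $M\to K^+$ is a chart. The main obstacle is simply pinning down the identification $(\cM_S)_s\cong M$ and checking that amalgamation over $M^\times$ with $\cO_{S,s}^\times$ is trivial; once one observes $M^\times=(K^+)^\times=\cO_{S,s}^\times$ and $M$ is integral, this is immediate. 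Everything else is bookkeeping. (Under the microbial hypothesis one could instead invoke that $\cM_S$ is the compactifying log structure of $D(\pi)$ and use the known description of compactifying log structures, but the direct stalk computation above works in general and is shorter.)
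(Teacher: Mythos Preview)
There is a genuine gap: you assume that $\Spec(K^+)$ has only two points, the generic point and the closed point. This is true only for valuation rings of rank $\leq 1$. In general, the primes of $K^+$ are in order-preserving bijection with the convex subgroups of $\Gamma_K$, so a rank-$r$ valuation ring has $r+1$ points (and $r$ can be any ordinal). Your stalk computation therefore omits all intermediate primes, and the argument as written does not prove the lemma for higher-rank $K^+$.

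The paper's proof handles this by working at an arbitrary prime $\fp\subset K^+$: the strict henselisation $(K^+_\fp)^{\sh}$ is again a valuation ring, now with value group $\Gamma_K/C_\fp$ where $C_\fp\subseteq\Gamma_K$ is the convex subgroup corresponding to $\fp$. One then checks that both $\overline{\cM}_{\ov x}$ and $\overline{\cM}^{\rm std}_{S,\ov x}$ are identified with $(\Gamma_K/C_\fp)^+$. Your Step~2 is essentially the special case $\fp=\mathfrak m$, $C_\fp=0$; the fix is to run the same computation at every $\fp$, not just the maximal ideal. A secondary imprecision: the \'etale stalk of $\cO_S$ at the closed point is $(K^+)^{\sh}$, not $K^+$, so $(\cM_S)_s$ is $(K^+)^{\sh}\setminus\{0\}$ rather than $M$ itself; your parenthetical remark about strict henselisation acknowledges this but the displayed identifications should be adjusted accordingly (this is why the paper compares $\overline{\cM}$ rather than $\cM$).
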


\begin{proof}
Let $S=\Spec(K^+)$, let $\cM^{\rm std}_S$ be the standard log structure, and let $\cM$ be the log structure induced by the chart $M = K^+\cap K^\times \to K^+$. We want to show that the natural map $\cM\to\cM^{\rm std}_S$ is an isomorphism. To this end, it is enough to show that $\overline{\cM}\to\overline{\cM}{}^{\rm std}_S$ is an isomorphism. Let $\fp\subset K^+$ be a prime ideal, let $(K^+_\fp)^\sh$ be a strict henselisation of $K^+$ at $\fp$, and let $\ov{x}\to S$ be the corresponding geometric point. Then $(K^+_\fp)^\sh$ is a valuation ring with fraction field $K^\sh_\fp$ and value group 
\[
    \Gamma_{(K_\fp)^\sh} = \Gamma_{K_\fp} = \Gamma_K/C_\fp 
\]
where $C_\fp = (K^+_\fp)^\times/(K^+)^\times \subseteq \Gamma_K$ is the convex subgroup corresponding to $\fp$. For the first equality above, see \stacks[Lemma]{0ASK}. 

Then $\overline{\cM}_{\ov{x}} = M/F_{\ov{x}}$ where $F_{\ov{x}}$ is the preimage of $\cO_{S, \ov{x}}^\times$ under $M\to K^+\to (K^+_\fp)^\sh$. But $M/F_{\ov{x}}$ is just $(\Gamma_K/C_\fp)^+$. Moreover,
\[ 
    \cM^{\rm std}_{S, \ov{x}} = (K_\fp^+)^\sh \setminus \{0\},
\]
and hence
\[
    \overline{\cM}^{\rm std}_{S, \ov{x}} = \Gamma_{(K_\fp^+)^\sh}^+.
\]
Thus  $\overline{\cM}_{\ov{x}}\to \overline{\cM}^{\rm std}_{S, \ov{x}}$ is an isomorphism.
\end{proof}

\begin{rmk}
In practice, it is often possible and useful to choose a splitting of the short exact sequence of monoids (or equivalently the short exact sequence of associated groups, see \cref{ss:monoid-prelims})
\begin{equation}  \label{eqn:Gammaplus-seq}
    \begin{tikzcd} 
        1\ar[r] & (K^+)^\times \ar[r] & M\ar[r] & \Gamma_K^+ \ar[r] & 1.
    \end{tikzcd}
\end{equation}
This can be done for example if $\Gamma_K$ is a free abelian group (notably, in the discretely valued case), or if $K$ is perfect and strictly henselian (in which case $(K^+)^\times$ is divisible). The resulting map $\Gamma_K^+\to K^+$ is then a chart for the log structure on $\Spec(K^+)$.    
\end{rmk}

Recall that a finite separable extension of valued fields $(L, L^+)/(K, K^+)$
is {\bf tamely ramified} if the extension of strict henselisations $L^\sh/K^\sh$ is of degree prime to the residue characteristic exponent of $K^+$.

\begin{prop}
\label{prop:ket extension of valuation rings}
    Let $(K, K^+)$ be a henselian valued field and let $(L, L^+)$ be a finite tamely ramified extension of $K$. Endow $S=\Spec(K^+)$ and $T=\Spec(L^+)$ with the standard log structures.
    Then, the map $T\to S$ is Kummer \'etale.
\end{prop}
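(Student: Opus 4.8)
The plan is to reduce the claim, by faithfully flat descent along the strict henselisation, to the case of a strictly henselian valued field, and there to produce an explicit Kummer \'etale chart. First I would note that the assertion that $T\to S$ is Kummer \'etale can be checked strict \'etale locally on $S$ by \cref{prop:fs approximation for smooth etale and ket morphisms} (or directly from \cref{defi:smooth etale ket}), so we may replace $K^+$ by its strict henselisation $K^{\sh,+}$, and accordingly $L^+$ by $L^+\otimes_{K^+}K^{\sh,+}$. Since $L/K$ is tamely ramified and $K^+$ is henselian, the latter ring is a finite product of valuation rings, each the valuation ring of a finite tamely ramified extension of $(K^{\sh}, K^{\sh,+})$ with the \emph{same} residue field (separably closed) and with value group $\Gamma'$ containing $\Gamma = \Gamma_{K^{\sh}}$ with index prime to the residue characteristic exponent $p$; this is the structure theory of tame extensions of strictly henselian valued fields (using that $L^{\sh}/K^{\sh}$ is totally ramified of degree prime to $p$, hence generated by $n$-th roots of elements of $K^{\sh,\times}$ for $n$ prime to $p$). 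As a finite disjoint union of Kummer \'etale maps is Kummer \'etale (\cref{prop:bc and composition and 2 out of 3}\labelcref{propitem:sm ket composition} applied to open-and-closed immersions, which are strict \'etale), it suffices to treat one such factor. So we are reduced to: $K^+$ strictly henselian, $L^+/K^+$ a finite tamely ramified extension with separably closed residue field and value group $\Gamma' \supseteq \Gamma$ of finite prime-to-$p$ index.

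Next I would build the chart. Writing $M_K = K^+\cap K^\times$ and $M_L = L^+\cap L^\times$, \cref{lem:chart-on-val-ring} gives that $M_K\to K^+$ and $M_L\to L^+$ are charts for the standard log structures. Since the residue field is separably closed and $K^+$ is strictly henselian, $(K^+)^\times$ is divisible, so the sequence $1\to (K^+)^\times\to M_K\to \Gamma^+\to 1$ splits (as in the Remark following \cref{lem:chart-on-val-ring}), and likewise $1\to (L^+)^\times\to M_L\to \Gamma'^+\to 1$ splits; moreover $(K^+)^\times \to (L^+)^\times$ is an isomorphism (same residue field, and units lift uniquely). Choosing compatible splittings — concretely, lifting a prime-to-$p$ basis of $\Gamma'/\Gamma$ to compatible roots of elements of $M_K$, which is possible because $(K^+)^\times$ is $p'$-divisible and $K^+$ henselian — I obtain a commutative square
\[
    \begin{tikzcd}
        T \ar[r] \ar[d] & \bA_{\Gamma'^+} \ar[d,"\iota"] \\
        S \ar[r] & \bA_{\Gamma^+}
    \end{tikzcd}
\]
with strict horizontal maps, where $\iota$ is induced by the inclusion $\Gamma^+\hookrightarrow \Gamma'^+$. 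This inclusion is injective, exact (as $\Gamma^+ = \Gamma'^+\cap \Gamma$ inside $\Gamma'$, both being valuative with $\Gamma$ a subgroup), and the cokernel $\Gamma'/\Gamma$ is finite of order prime to $p$; hence by \cref{lem:ket-monoid-conditions} the monoid map $\Gamma^+\to \Gamma'^+$ is Kummer \'etale. It then remains to check that the induced strict map $T\to S\times_{\bA_{\Gamma^+}}\bA_{\Gamma'^+}$ is \'etale as a morphism of schemes.

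The main obstacle, as I see it, is precisely this last strictness-and-\'etaleness verification: one must identify $L^+$ with the ring $K^+\otimes_{\bZ[\Gamma^+]}\bZ[\Gamma'^+]$ (or rather see that the natural map between them is an isomorphism), and then argue \'etaleness. I would compute: $\bZ[\Gamma'^+]\otimes_{\bZ[\Gamma^+]}K^+$ is obtained from $K^+$ by adjoining, for a chosen $p'$-basis $\gamma'_1,\dots,\gamma'_r$ of $\Gamma'/\Gamma$ with $n_j\gamma'_j = \gamma_j\in\Gamma^+$, the roots $T_j$ with $T_j^{n_j} = \pi_j$ where $\pi_j\in K^+$ is the image of $\gamma_j$; this is \'etale over $K^+[1/n_j]$ hence over $K^+$ (since $n_j$ is invertible in the separably closed residue field, hence in $K^+$), and one checks it is in fact a domain whose integral closure — which it already is, being normal over the normal ring $K^+$ — is exactly $L^+$, by comparing value groups and using the splitting chosen above. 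So the map $T\to S\times_{\bA_{\Gamma^+}}\bA_{\Gamma'^+}$ is an isomorphism, in particular strict \'etale. Together with the Kummer \'etale chart this shows $T\to S$ is Kummer \'etale. The one technical point to be careful about is keeping the splittings of $M_K$ and $M_L$ compatible across the $n_j$-th power maps; choosing the $T_j$ as the canonical Hensel lifts of the residue roots handles this cleanly.
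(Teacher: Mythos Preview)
Your overall strategy—reduce to the strictly henselian case and then exhibit an explicit Kummer \'etale chart—is the same as the paper's. However, your execution has two genuine gaps.

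First, the claim that $(K^+)^\times$ is divisible because the residue field is separably closed is incorrect: strict henselianity only gives that $(K^+)^\times$ is $n$-divisible for $n$ prime to the residue characteristic $p$. As the Remark following \cref{lem:chart-on-val-ring} notes, one needs $K$ \emph{perfect} (not merely strictly henselian) to conclude full divisibility. Consequently, for a non-free value group $\Gamma$ and imperfect $K$, the sequence $1\to (K^+)^\times\to K^\times\to \Gamma\to 1$ need not split, and your proposed chart by $\Gamma^+$ may simply fail to exist. Second, the assertion that $(K^+)^\times\to (L^+)^\times$ is an isomorphism is false: for instance $1+a^{1/n}\in (L^+)^\times\setminus K$ whenever $|a|<1$.

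The paper sidesteps both issues by never passing to $\Gamma^+$. It works throughout with the chart $P=K^+\cap K^\times$ (which exists unconditionally by \cref{lem:chart-on-val-ring}), reduces to a cyclic extension $L=K(a^{1/n})$ via \cite[Corollary~6.2.14]{GabberRamero}, and takes $Q$ to be the saturation of $P$ inside $K^\times\cdot (a^{1/n})^{\bZ}$. The identification $L^+\simeq K^+\otimes_{\bZ[P]}\bZ[Q]$ is then established by an explicit description of $L^+$ (\cref{lem:Lplusequals}) together with flatness of $\bZ[P]\to\bZ[Q]$ from \cref{lem:kummer-of-valuative}; the comparison of log structures is done by writing each nonzero element of $L^+$ as a unit times an element of $Q$. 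Your approach would go through if you replaced the $\Gamma^+$-chart by this $P$-chart, but as written it does not cover the general case.
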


\begin{proof}
We use \cref{cor:fKet-Ket-stack} and a limit argument to reduce to the case where $(K,K^+)$ is strictly henselian. 
By \cite[Corollary 6.2.14]{GabberRamero}, the extension $L/K$ is Galois with abelian Galois group of order invertible in $K^+$.  
We may decompose $L/K$ into a chain of cyclic extensions. When $L/K$ is cyclic of degree $n$, note that~$K^+$ being strictly henselian, all $n$-th roots of unity are already contained in~$K$, and so by Kummer theory we can then choose $a \in K$ such that 
\[
 L = K(a^{1/n}).
\]
Note that~$K^+$ being strictly henselian, all $n$-th roots of units of $K^+$ ($n$ being coprime to the residue characteristic of~$K^+$) are already contained in~$K^+$.
Therefore, $a$ has to satisfy $\lvert a \rvert \ne 1$.
Possibly replacing~$a$ with $1/a$ we may assume that $a \in K^+ $ with $\lvert a \rvert < 1$. 

Before we can complete the proof, we need the following lemma.

\begin{lem} \label{lem:Lplusequals}
In the above situation, we have
\[
 L^+ = 
 \left\{ y = \sum_{k=0}^{n-1} x_k a^{-k/n} \,:\, x_k\in K, \, \lvert x_k\rvert \leq \lvert a^{k/n}\rvert \right\}.
\]
\end{lem}

\begin{proof}
Since $L = K(a^{-1/n})$, we can write any element~$y$ of~$L^+$ in the form
\[
 y = \sum_{k=0}^{n-1} x_k a^{-k/n}
\]
for elements $x_k \in K$.
The values $\lvert a^{-k/n} \rvert$ for $k = 0,\ldots,n-1$ represent different classes in $\Gamma_L/\Gamma_K$.
Therefore, the values $\lvert x_k a^{-k/n} \rvert$ are pairwise different and we get
\[
 \lvert y \rvert = \max_k \ \lvert x_k a^{-k/n} \rvert.
\]
So $y \in L^+$ if and only if
\[
 \lvert x_k a^{-k/n} \rvert \le 1 \quad \text{for all } k=0,\ldots,n-1.
\]
This proves the claim. 
\end{proof}

Let~$Q$ be the saturation of $P = K^+ \cap K^\times$ in  $K^\times\cdot (a^{1/n})^\bZ$. Thus $Q^\gp/P^\gp \simeq \bZ/n\bZ$, and by \cref{lem:ket-monoid-conditions}, the map $P\to Q$ is Kummer \'etale of index prime to the residue characteristic exponent $p$ of~$K^+$. Therefore the induced map of log schemes $\bA_Q\to \bA_P$ is Kummer \'etale over $\bZ_{(p)}$.
We want to show that the square
\[
 \begin{tikzcd}
     T=\Spec((L^+\cap L^\times)\to L^+)   \ar[d]  \ar[r]  & \bA_Q   \ar[d]  \\
     S=\Spec(P\to K^+)           \ar[r]  & \bA_P
 \end{tikzcd}
\]
is a pull-back diagram in the category of saturated log schemes, which will imply that the map $T\to S$ is Kummer \'etale. That is, we need to verify that 
\[ 
    T\isomto \Spec(Q\to K^+\otimes_{\bZ[P]}\bZ[Q]).
\]

First, we check that $K^+\otimes_{\bZ[P]}\bZ[Q]\to L^+$ is an isomorphism. This map is surjective by \cref{lem:Lplusequals}. To show injectivity, we observe first that the map becomes an isomorphism after tensoring with $K$ over $K^+$. It follows that it is enough to show that its source $K^+\otimes_{\bZ[P]}\bZ[Q]$ is torsion-free as a $K^+$-module. This follows from the fact that, since $P$ is valuative, the map $\bZ[P]\to\bZ[Q]$ is flat by \cref{lem:kummer-of-valuative}. 

Finally, to compare the log structures, we need to write every element $y \in L^+ \setminus \{0\}$ as a~product of an element of~$Q$ and a unit of $L^+$.
Above we have represented~$y$ in the form
\[
 y = \sum_{k=0}^{n-1} x_k a^{-k/n}
\]
such that $\lvert x_k \rvert \le \lvert a^{k/n} \rvert$ for $k=0,\ldots,n-1$.
Let $k_0$ be the unique index where $\lvert x_k a^{-k/n} \rvert$ is maximal.
For $u = y \cdot \big((x_{k_0} a^{-k_0/n})\big)^{-1}$, the proof of \cref{lem:Lplusequals} established 
\[
    \lvert u \rvert = \lvert y \rvert \cdot \lvert x_{k_0} a^{-k_0/n} \rvert^{-1} = 1. 
\]
So $u$ is a unit in $L^+$ and $y = u \cdot x_{k_0} a^{-k_0/n}$ is of the desired form.
\end{proof}

Once we leave the realm of discrete valuation rings, it is very rare that $L^+$ is finite over $K^+$ for a tame extension of henselian valued fields $L/K$. In fact, we can characterize precisely when that happens. See \cref{lem:ext-of-val-mon-fg} for an analogous result for extensions of valuative monoids, which we use in the proof below.

\begin{lem} \label{lem:tame-ext-finiteness} 
    Let $(K, K^+)$ be a henselian valued field and let $(L, L^+)$ be a finite tamely ramified extension of $K$. The following are equivalent:
    \begin{enumerate}[(a)]
        \item \label{lemitem:tame-ext-finiteness-finite} $L^+$ is finite over $K^+$,
        \item \label{lemitem:tame-ext-finiteness-ft} $L^+$ is of finite type over $K^+$,
        \item \label{lemitem:tame-ext-finiteness-rare} one of the following conditions holds.
        \begin{enumerate}[(i)]
            \item The extension $L/K$ is unramified.
            \item \label{lemitem:root-of-gen} The maximal ideal of $K^+$ is principal, and there exists an integer $n\geq 1$ such that for every generator $a\in K^+$ of the maximal ideal there exists a finite unramified extension $K'/K$ such that $LK' = K'(a^{1/n})$.
        \end{enumerate}
    \end{enumerate}
    \begin{enumerate}[label=(\alph*{${}'$})]
        \item \label{lemitem:tame-ext-finiteness-monoid-finite} the monoid $\Gamma^+_L$ is finite as a $\Gamma_K^+$-set,
        \item \label{lemitem:tame-ext-finiteness-monoid-ft} the monoid $\Gamma^+_L$ is finitely generated over $\Gamma^+_K$,
        \item \label{lemitem:tame-ext-finiteness-monoid-rare}
        one of the following conditions holds.
        \begin{enumerate}[(i)]
            \item $\Gamma_K = \Gamma_L$, 
            \item $\Gamma_K$ is discrete (has a smallest positive element) and there exists an integer $n\geq 1$ such that $\Gamma_L = \Gamma_K + \frac{1}{n}\bZ \gamma$ where $\gamma$ is the smallest positive element of $\Gamma_K$.
        \end{enumerate}
    \end{enumerate}
\end{lem}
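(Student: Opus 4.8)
The plan is to treat the six conditions in two blocks --- the ring-theoretic ones \labelcref{lemitem:tame-ext-finiteness-finite}, \labelcref{lemitem:tame-ext-finiteness-ft}, \labelcref{lemitem:tame-ext-finiteness-rare} and the combinatorial ones \labelcref{lemitem:tame-ext-finiteness-monoid-finite}, \labelcref{lemitem:tame-ext-finiteness-monoid-ft}, \labelcref{lemitem:tame-ext-finiteness-monoid-rare} --- to establish the equivalences \emph{inside} the combinatorial block directly from \cref{lem:ext-of-val-mon-fg}, and then to bridge the two blocks using the henselianity of $K^+$ together with the explicit structure of $L^+$ as a $K^+$-module in the tame case.

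The combinatorial block is essentially immediate from \cref{lem:ext-of-val-mon-fg}. Since $K^+ = K\cap L^+$, the induced map $\Gamma_K\to\Gamma_L$ is injective, and its cokernel is finite because $\#(\Gamma_L/\Gamma_K)\le[L:K]$. Moreover $\Gamma_K$ is discrete in the sense of the paragraph preceding \cref{lem:ext-of-val-mon-fg} if and only if the maximal ideal $\mathfrak m_{K^+}$ is principal: a generator of $\mathfrak m_{K^+}$ carries the smallest positive value, and conversely, if $\gamma$ is the smallest positive value then $\mathfrak m_{K^+} = aK^+$ for any $a$ with $v(a)=\gamma$. Hence \cref{lem:ext-of-val-mon-fg} applied to $\Gamma_K\to\Gamma_L$ yields exactly \labelcref{lemitem:tame-ext-finiteness-monoid-finite} $\Leftrightarrow$ \labelcref{lemitem:tame-ext-finiteness-monoid-ft} $\Leftrightarrow$ \labelcref{lemitem:tame-ext-finiteness-monoid-rare}; this step uses neither tameness nor henselianity.

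For the bridge I would first dispose of \labelcref{lemitem:tame-ext-finiteness-finite} $\Leftrightarrow$ \labelcref{lemitem:tame-ext-finiteness-ft}: since $K^+$ is henselian it is the unique valuation ring of $K$ dominated by $L^+$, so $L^+$ is the integral closure of $K^+$ in $L$; an integral algebra of finite type is module-finite, and module-finite trivially implies finite type. Next I would reduce the remaining equivalences to the case where $(K,K^+)$ --- hence also $(L,L^+)$, the residue extension being then trivial --- is strictly henselian. The value groups are unchanged under strict henselisation, so \labelcref{lemitem:tame-ext-finiteness-monoid-finite}--\labelcref{lemitem:tame-ext-finiteness-monoid-rare} are unaffected; that \labelcref{lemitem:tame-ext-finiteness-finite} and \labelcref{lemitem:tame-ext-finiteness-rare} pass to and from the strict henselisation is a faithfully flat descent plus limit argument along $K^+\to(K^+)^{\sh} = \varinjlim K'^+$ over finite unramified $K'/K$, and the condition \labelcref{lemitem:root-of-gen} is phrased precisely so that the passage through such a $K'$ (needed to acquire roots of unity) is reversible. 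In the strictly henselian tame setting one has $[L:K] = e(L/K) = \#(\Gamma_L/\Gamma_K)$, and choosing a $K$-basis $x_1,\dots,x_m$ of $L$ with pairwise distinct images in $\Gamma_L/\Gamma_K$ (as in the proof of \cref{prop:ket extension of valuation rings} and \cref{lem:Lplusequals}) gives a decomposition of $K^+$-modules
\[
    L^+ \;=\; \bigoplus_{j=1}^{m} x_j\,\mathfrak a_j,
    \qquad
    \mathfrak a_j = \{\,c\in K : v(c)\ge -v(x_j)\,\}
\]
with each $\mathfrak a_j$ a fractional ideal of $K^+$ and $x_j\mathfrak a_j$ order-isomorphic, via $v$, to the coset part $\Gamma^+_{L,\bar t_j}$ of $\Gamma_L^+$. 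Since there are only finitely many cosets, $L^+$ is module-finite over $K^+$ iff every $\mathfrak a_j$ is a finitely generated $K^+$-module iff every $\mathfrak a_j$ is principal (a finitely generated ideal of a valuation ring is principal) iff every $\Gamma^+_{L,\bar t_j}$ has a smallest element, which by the argument recalled in the proof of \cref{lem:ext-of-val-mon-fg} is exactly \labelcref{lemitem:tame-ext-finiteness-monoid-finite}; this gives \labelcref{lemitem:tame-ext-finiteness-finite} $\Leftrightarrow$ \labelcref{lemitem:tame-ext-finiteness-monoid-finite}. Finally \labelcref{lemitem:tame-ext-finiteness-rare} $\Leftrightarrow$ \labelcref{lemitem:tame-ext-finiteness-monoid-rare} is obtained by unwinding the definitions after the strictly henselian reduction: ``$L/K$ unramified'' becomes $\Gamma_K = \Gamma_L$, and \labelcref{lemitem:root-of-gen} becomes ``$\mathfrak m_{K^+}=aK^+$ and $\Gamma_L = \Gamma_K + \frac{1}{n}\bZ\,v(a)$ with $v(a)$ the smallest positive element of $\Gamma_K$''. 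These equivalences close the loop.

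The main obstacle I expect is carrying out the reduction to the strictly henselian case honestly --- in particular descending ``$L^+$ module-finite over $K^+$'' from $(K^+)^{\sh}$, since $L^+$ need not be of finite presentation over $K^+$, so one cannot just invoke standard spreading-out of finite presentation and must instead argue through the value-group picture or through the pro-(finite unramified) tower defining $(K^+)^{\sh}$ --- and, in the same vein, matching the quantifier ``for every generator $a$'' together with the auxiliary unramified extension $K'$ in \labelcref{lemitem:root-of-gen} against its strictly henselian shadow. Everything else is bookkeeping on top of \cref{lem:ext-of-val-mon-fg}, \cref{lem:Lplusequals}, and the proof of \cref{prop:ket extension of valuation rings}.
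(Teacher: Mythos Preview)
Your approach is correct and close in spirit to the paper's, but the organization differs in one instructive way. The paper, like you, gets the primed block from \cref{lem:ext-of-val-mon-fg} and \labelcref{lemitem:tame-ext-finiteness-finite}$\Leftrightarrow$\labelcref{lemitem:tame-ext-finiteness-ft} from integrality. For the bridge, however, it does not establish the two-sided equivalences \labelcref{lemitem:tame-ext-finiteness-finite}$\Leftrightarrow$\labelcref{lemitem:tame-ext-finiteness-monoid-finite} and \labelcref{lemitem:tame-ext-finiteness-rare}$\Leftrightarrow$\labelcref{lemitem:tame-ext-finiteness-monoid-rare} after a uniform strictly henselian reduction; instead it proves only the one-way chain \labelcref{lemitem:tame-ext-finiteness-rare}$\Rightarrow$\labelcref{lemitem:tame-ext-finiteness-finite} (direct, via \cref{lem:Lplusequals}), \labelcref{lemitem:tame-ext-finiteness-finite}$\Rightarrow$\labelcref{lemitem:tame-ext-finiteness-monoid-finite}, and \labelcref{lemitem:tame-ext-finiteness-monoid-rare}$\Rightarrow$\labelcref{lemitem:tame-ext-finiteness-rare}. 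This sidesteps exactly the obstacle you flag: for \labelcref{lemitem:tame-ext-finiteness-finite}$\Rightarrow$\labelcref{lemitem:tame-ext-finiteness-monoid-finite} one only needs that \labelcref{lemitem:tame-ext-finiteness-finite} \emph{ascends} to the strict henselisation while \labelcref{lemitem:tame-ext-finiteness-monoid-finite} is invariant under it, so no descent is required; and for \labelcref{lemitem:tame-ext-finiteness-monoid-rare}$\Rightarrow$\labelcref{lemitem:tame-ext-finiteness-rare} the finite unramified $K'$ in \labelcref{lemitem:root-of-gen} comes from a limit argument along the pro-(finite unramified) tower, which is built into the phrasing of \labelcref{lemitem:tame-ext-finiteness-rare}. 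The price the paper pays is a short d\'evissage in \labelcref{lemitem:tame-ext-finiteness-finite}$\Rightarrow$\labelcref{lemitem:tame-ext-finiteness-monoid-finite}: it reduces to the cyclic case by showing that for an intermediate $E$ the quotient $L^+/E^+$ is finitely generated and torsion-free over the valuation ring $K^+$, hence free, so $E^+$ is a direct summand and thus itself finite. Your direct decomposition $L^+=\bigoplus_j x_j\,\mathfrak a_j$ for the full (not just cyclic) extension is a clean shortcut around this d\'evissage; the trade-off is that you then owe the faithfully flat descent of module-finiteness along $K^+\to (K^+)^{\sh}$, which is true but needs to be said.
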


Note that, in particular, a composition of extensions as in \labelcref{lemitem:tame-ext-finiteness-rare}\labelcref{lemitem:root-of-gen} is again of this form.

\begin{proof}
The equivalence of conditions \labelcref{lemitem:tame-ext-finiteness-monoid-finite}, \labelcref{lemitem:tame-ext-finiteness-monoid-ft}, and \labelcref{lemitem:tame-ext-finiteness-monoid-rare} is the content of \cref{lem:ext-of-val-mon-fg}, and the equivalence \labelcref{lemitem:tame-ext-finiteness-finite}$\Leftrightarrow$\labelcref{lemitem:tame-ext-finiteness-ft} follows from the fact that $K^+\to L^+$ is integral. 

For \labelcref{lemitem:tame-ext-finiteness-rare}$\Rightarrow$\labelcref{lemitem:tame-ext-finiteness-finite} we note that $L^+$ is finite over $K^+$ if $L/K$ is unramified. In the other case, we remark that we may replace $K^+$ by a finite \'etale cover and thus assume that $L = K(a^{1/n})$ for a~generator $a$ of the maximal ideal of $K^+$. Then $L^+ = K^+[a^{1/n}]$ is generated as a $K^+$-module by the elements $a^{i/n}$ for $0\leq i <n$ by \cref{lem:Lplusequals}.

It remains to show \labelcref{lemitem:tame-ext-finiteness-finite}$\Rightarrow$\labelcref{lemitem:tame-ext-finiteness-monoid-finite} and \labelcref{lemitem:tame-ext-finiteness-monoid-rare}$\Rightarrow$\labelcref{lemitem:tame-ext-finiteness-rare}.

\medskip

\labelcref{lemitem:tame-ext-finiteness-finite}$\Rightarrow$\labelcref{lemitem:tame-ext-finiteness-monoid-finite}:
We first reduce to the case of a cyclic totally ramified extension. To this end, we claim that
\begin{enumerate}[(1)]
    \item \label{item:claim-tame-1} The assertion holds if $L/K$ is unramified.
    \item \label{item:claim-tame-2} If $K\subseteq E\subseteq L$ is an intermediate extension, and the assertion holds for $E/K$ and $L/E$, then it holds for $K/L$.
\end{enumerate}
Claim \labelcref{item:claim-tame-1} follows from the fact that if $L/K$ is unramified then $\Gamma^+_L=\Gamma^+_K$.  
For \labelcref{item:claim-tame-2}, suppose that $L^+$ is finite over $K^+$. In particular it is finite over $E^+$. We check that also $E^+$ is finite over $K^+$. Consider the quotient $L^+/E^+$, which is a finitely generated $K^+$-module. Moreover, it is torsion-free since $E^+ = E\cap L^+$. It is therefore free (combine \stacks[Lemma]{0539}, \stacks[Lemma]{0GSE}, and \stacks[Lemma]{00NX}), thus $E^+$ is a direct summand of $L^+$ and hence is finite over $K^+$. By the assertion \labelcref{lemitem:tame-ext-finiteness-finite}$\Rightarrow$\labelcref{lemitem:tame-ext-finiteness-monoid-finite} for $L/E$ and $E/K$ we get that $\Gamma^+_L$ is finite as a $\Gamma^+_E$-set which in turn is finite as a $\Gamma^+_K$-set. Thus $\Gamma^+_L$ is finite as a $\Gamma^+_K$-set. This finishes the proof of \labelcref{item:claim-tame-2}.

With these reductions, we may assume that $K$ is strictly henselian and $L = K(a^{1/n})$ for $a\in K$ with $|a|<1$ and $n=[L:K]$ prime to the residue characteristic of $K^+$ (cf.\ the proof of \cref{prop:ket extension of valuation rings}). 
\Cref{lem:Lplusequals} implies that $L^+$, as a $K^+$-module, is the following sum of fractional ideals
\[ 
    L^+  \simeq \bigoplus_{k=0}^{n-1} I_k,  
    \qquad 
    I_k = \{ x_k \in K \ : \   |x_k| \leq |a|^{k/n} \}.
\]
Thus, if $L^+$ is finite over $K^+$, then each $I_k$ is finitely generated and hence principal, generated by an element $y_k \in K$. 
This means that $L^+$ is a free $K^+$-module with basis $y_k a^{-k/n}$, for $0 \leq k \leq n-1$. Since we assumed $|a| < 1$, we have $y_k\in K^+$. We check that the images $\gamma_0, \ldots, \gamma_{n-1}$ in $\Gamma_L^+$ of these basis elements 
$y_k a^{-k/n}$ generate $\Gamma_L^+$ as a $\Gamma_K^+$-set: let $\gamma \in \Gamma_L^+$ be the image of $y\in L^+$. By the above discussion (see the proof of \cref{lem:Lplusequals}) we may take $y$ of the form $x a^{-k/n}$ where $0\leq k<n$ and $x\in K^+$, i.e.\ $y\in I_k$. Thus $y = z y_k$, and  $\gamma = \gamma_k + \zeta$ where $\zeta\in\Gamma_K^+$ is the image of $z$. 

\medskip

\labelcref{lemitem:tame-ext-finiteness-monoid-rare}$\Rightarrow$\labelcref{lemitem:tame-ext-finiteness-rare}: 
We may pass to strict henselisations as in the proof of \cref{prop:ket extension of valuation rings}. 
If $\Gamma_K = \Gamma_L$, then $L=K$ because by \cite[Corollary 6.2.14]{GabberRamero} the extension $L/K$ is Galois with trivial Galois group. 

Suppose now $\Gamma_K\neq\Gamma_L$ and let $\gamma\in \Gamma_K^+$ be the smallest positive element. Then $\frac{1}{n}\gamma$ is the smallest positive element of $\Gamma_L^+$. Let $b\in L^+$ be an element with image $\frac{1}{n}\gamma$ in $\Gamma_L^+$. It generates the maximal ideal of $L^+$, and $a = b^n$ generates the maximal ideal of $K^+$. We have $L = K(a^{1/n})$, showing \labelcref{lemitem:tame-ext-finiteness-rare}.
\end{proof}

The examples below show that finite Kummer \'etale maps are not finite in general as maps of schemes. 

\begin{ex}[Type 5 point] \label{ex:type5}
    Let $K$ be a field endowed with a henselian discrete valuation $\nu\colon K\to \bZ\cup\{\infty\}$ and residue field $F$, and let $\mu\colon F\to \bZ\cup\{\infty\}$ be another discrete valuation. Let $K^+ \subseteq K$ be their composition, i.e.\ the subring consisting of elements $x\in K$ such that $\nu(x)\geq 0$ and $\mu(\overline{x})\geq 0$ where $\overline{x}$ is the image of $x$ in $F$. Then $K^+$ is a valuation subring of $K$ corresponding to the valuation of rank two $\sigma\colon K\to (\bZ\times\bZ)_{\rm lex}\cup\{\infty\}$. Such rings arise in practice in the geometry of adic spaces; for example, the residue field of a ``type 5'' point of the adic unit disc over a discretely valued non-archimedean field is of this type. 

    Let $x\in K$ be an element with $\sigma(x) = (1,0)$, let $m>1$ be invertible in $K^+$, and let $L = K(x^{1/m})$. Then $L$ is a tame extension of $K$, and hence the morphism $\Spec(L^+)\to \Spec(K^+)$ is finite Kummer \'etale. The homomorphism $\Gamma_K^+\to \Gamma_L^+$ is Kummer \'etale but not of finite type (see the related \crefpart{ex:typeV-not-fg}{exitem:typeV-not-fg2}), and $\Spec(L^+)\to \Spec(K^+)$ is not of finite type.
\end{ex}

\begin{ex}[Type 3 point] \label{ex:type3}
    Let $K$ be a field with a real valuation $\nu\colon K\to \bR\cup\{\infty\}$ whose image is isomorphic to $\bZ^2$ (and hence dense). Such fields arise as the residue fields of ``type 3'' points on the rigid-analytic unit disc. Picking a basis $1, \lambda \bR$ of the value group $\Gamma_K = \nu(K^\times)$, the monoid $\Gamma_+$ is identified with $\{(a,b)\in \bZ\,:\,a+b\lambda\geq 0\}$. Let $x,y\in K$ be such that $\nu(x)=1$ and $\nu(y)=\lambda$ , and let $L = K(x^{1/m}, y^{1/m})$ where $m>1$ is prime to the residue characteristic. Again, $L$ is a tame extension of $K$, the monoid $\Gamma_L^+$ is not finitely generated over $\Gamma_K^+$ (see  \crefpart{ex:typeV-not-fg}{exitem:typeV-not-fg2b}), and $\Spec(L^+)\to \Spec(K^+)$ is finite Kummer \'etale but not of finite type. 
\end{ex}

\begin{ex}[{See \cite[\S 6.4.1, bottom of p.\ 250]{BGR}}] \label{ex:perfectoid}
    Let $p$ be a prime and let $K$ be the completion of $\bQ_p(p^{1/p^\infty})$ (which is a perfectoid field). Let $L = K(p^{1/m})$ for some $m>1$ prime to $p$. Then $L$ is a tame extension of $K$, but $K^+$ is not finite over $L^+$. The homomorphism of valuative monoids $\Gamma_K^+\to \Gamma_L^+$ is the inclusion $\bZ[1/p]\cap \bR_+ \to \frac{1}{m}\bZ[1/p]\cap \bR^+$, which is not of finite type.  
\end{ex}

The following corollary is a restatement of \cref{prop:ket extension of valuation rings} in terms of fundamental groups.

\begin{cor} \label{cor:pi1-vs-Galois}
    Let $(K,K^+)$ be a henselian valued field, and let $\ov{K}$ be an algebraic closure of $K$. Let $\ast$ denote the geometric point $\Spec \ov{K} \to \Spec K$, and let $K^\sep$ denote the separable closure of $K$ contained in $\ov{K}$. The isomorphism $\pi_1(K,\ast) \simeq \Gal(K^\sep/K)$ induces an isomorphism
    \[
    \pi_1(\Spec(K^+\cap K^\times\to K^+),\ast) \simeq \Gal^{\rt}(K^\sep/K)
    \]
    with the maximal tame quotient $\Gal^{\rt}(K^\sep/K)$ of $\Gal(K^\sep/K)$.
\end{cor}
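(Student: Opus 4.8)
\textbf{Plan for the proof of \cref{cor:pi1-vs-Galois}.} The strategy is to reduce the statement to \cref{prop:ket extension of valuation rings} and its converse direction, using the fact (\cref{thm:FEt-Galois}) that $\FEt_{S_K}$ is a Galois category with fibre functor given by a log geometric point, so that $\pi_1(S_K,\star)$ is determined by the poset of connected finite Kummer \'etale covers of $S_K = \Spec(K^+\cap K^\times\to K^+)$. The target group $\Gal^\rt(K^\sep/K)$ is by definition the automorphism group of the fibre functor on the Galois category of finite \'etale $K$-algebras that are tamely ramified (i.e.\ split over $K^\sh$ into factors that are tame over $K^\sh$). So it suffices to produce an equivalence of Galois categories $\FEt_{S_K} \simeq \FEt^{\rt}_K$ (tame finite \'etale $K$-algebras), compatible with the fibre functors, where on the $K$-side the base point is the one induced by $\star = \Spec(\ov K)$.

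First I would construct the functor $\FEt_{S_K}\to \FEt^\rt_K$ by restriction to the open subscheme $\Spec(K)\subseteq \Spec(K^+)$ (the log structure being trivial there): a finite Kummer \'etale $Y\to S_K$ restricts to a finite \'etale $\underline Y\times_{\Spec(K^+)}\Spec(K)\to \Spec(K)$; one checks this is tamely ramified, e.g.\ \'etale locally $Y$ is a disjoint union of $\Spec(K^+\to K^+)\times_{\bA_P}\bA_Q$ for Kummer \'etale $P\to Q$, whose generic fibre is $\Spec(K[Q^\gp/P^\gp])$, split of degree prime to $p$ after passing to $K^\sh$. For the inverse, given a tame finite \'etale $K$-algebra, \cref{prop:ket extension of valuation rings} shows that each tame finite separable extension $L/K$ gives a Kummer \'etale $\Spec(L^+)\to \Spec(K^+)$; taking the valuation-ring closure and assembling these produces a finite Kummer \'etale cover, and one must check functoriality and that the two constructions are mutually inverse. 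The key input for full faithfulness is that a finite Kummer \'etale $Y\to S_K$ is determined by its generic fibre — this follows because $\cM_{S_K} = \cO_{S_K}\cap j_*\cO_{\Spec K}^\times$ (\cref{lem:chart-on-val-ring}) and the underlying scheme of $Y$ is integral over $K^+$, torsion-free (hence flat, as $K^+$ is a valuation ring), so $\underline Y = $ the integral closure of $K^+$ in $\underline Y\times_{K^+}K$.

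For the compatibility with fibre functors and base points: the log geometric point $\star$ over $S_K$ should be taken lying over the closed point, with underlying geometric point $\Spec(\ov k)$ (residue field) — but to match $\Gal(K^\sep/K)$ one rather uses that for a \emph{strictly henselian} valued field the category $\FEt_{S_K}$ is trivial (all covers split), compatibly with $\Gal^\rt(K^\sh{}^\sep/K^\sh)=1$. The comparison isomorphism then comes from descent along $K\to K^\sh$: both $\pi_1(S_K,\star)$ and $\Gal^\rt(K^\sep/K)$ sit in extensions by $\Gal(k^\sep/k)$ (the unramified/residual part) with kernel the tame inertia $\Hom(\Gamma_{K^\sep}/\Gamma_K,\widehat\bZ'(1))$, matching the description of $\pi_1$ of the valuative monoid $\overline{\cM}_{\star}\simeq \Gamma^+$ in \cref{prop:local log pt} and \cref{def:pi1-of-monoid} — indeed $\pi_1(\overline{\cM}_{S_K,\ov x}) = \Hom(\Gamma_K, \widehat\bZ'(1))$, which is precisely the tame inertia group. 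So I would set up the commutative diagram of Galois categories over $\Spec(K^+)$ and $\Spec(K^{+,\sh})$, invoke \cref{prop:fKet-properties}\labelcref{propitem:fKet-properties-uh} and a limit argument (\cref{prop:fKet-approx}, \cref{cor:fKet-Ket-stack}) to handle the passage to the strict henselisation, and conclude by comparing the two extensions.

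The main obstacle I expect is \emph{essential surjectivity of the inverse functor without assuming $K^+$ strictly henselian}: one must show every tame finite \'etale $K$-algebra extends to a finite Kummer \'etale cover of $S_K = \Spec(K^+)$, and the naive candidate — the log scheme structure on the integral closure $\widetilde{K^+}$ of $K^+$ in $L$ — needs to be verified to be Kummer \'etale at \emph{all} points of $\Spec(K^+)$, not just the closed one. The local models $P\to Q$ producing the Kummer \'etale charts come from \cref{prop:ket extension of valuation rings}, but globally over a general (non-microbial, higher-rank) valuation ring one must check these charts glue; \cref{cor:fKet-Ket-stack} (the stack property of $\FEt$ for the Kummer \'etale topology) together with a reduction to strictly henselian local rings via \cref{prop:local log pt} should make this manageable, but it is the step requiring the most care. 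Everything else is either formal Galois-category manipulation or a routine unwinding of the valuation-theoretic computations in \cref{prop:ket extension of valuation rings} and \cref{lem:Lplusequals}.
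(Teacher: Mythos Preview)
Your approach is correct and is precisely what is needed to turn the corollary into an honest proof; the paper itself supplies no argument at all, merely stating that the corollary is ``a restatement of \cref{prop:ket extension of valuation rings} in terms of fundamental groups.'' So there is nothing to compare against on the paper's side beyond that one sentence.

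A few comments on your outline. The main ``obstacle'' you flag --- essential surjectivity of the inverse functor when $K^+$ is only henselian, not strictly henselian --- is less serious than you suggest: \cref{prop:ket extension of valuation rings} is already stated and proved for henselian $(K,K^+)$, so for any tame finite separable $L/K$ the map $\Spec(L^+)\to\Spec(K^+)$ is Kummer \'etale globally, with no gluing of charts required (the reduction to the strictly henselian case happens \emph{inside} the proof of that proposition, via \cref{cor:fKet-Ket-stack}). What genuinely remains is the converse direction, namely that every connected object of $\FEt_{S_K}$ is isomorphic to some $\Spec(L^+)$ with its standard log structure and $L/K$ tame. Here your argument via the generic fibre is the right one: flatness over $K^+$ (torsion-freeness) plus integrality identifies the underlying scheme with the integral closure of $K^+$ in the generic fibre, and then one checks (e.g.\ after base change to $K^{+,\sh}$, where \cref{lem:fKet-local-structure} applies directly and the cover decomposes into standard pieces $\bA_Q\times_{\bA_P}S_{K^\sh}$) that the extension is tame and that the log structure is the standard one. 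The comparison of fibre functors via the short exact sequence with tame inertia $\Hom(\Gamma_K,\widehat\bZ'(1))$ is a nice way to organise the endgame, though once the equivalence of Galois categories is in hand it is not strictly necessary.
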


%==========================================
\section{Semistable reduction over valuation rings}
\label{s:semistable}
%==========================================

In this last, short section, we showcase our theory by elucidating the log structures on semistable schemes over arbitrary valuation rings, especially those with algebraically closed fraction fields. This relies on the results on sfp monoids over valuative monoids in \cref{ss:typeV-typeVd}.  

%-----------------------------
\subsection{\texorpdfstring{Log schemes of type $\typeV$ and of type $\typeVd$}{Log schemes of type V and of type Vdiv}}

The following is a direct analogue of \cref{def:monoid-typeV} for log schemes. 

\begin{defi} \label{defi:log-sch-type-V-Vd}
    A log scheme $X$ is {\bf of type $\typeV$} (resp.\ {\bf of type $\typeVd$} if locally it admits a~chart $P\to \cM_X(X)$ with $P$ of type $\typeV$ (resp.\ of type $\typeVd$). 
\end{defi}

If $Y\to X$ is a locally sfp morphism and $X$ is of type $\typeV$ or $\typeVd$, then so is $Y$. \Cref{lem:localizeVVdiv} implies that for a log scheme $X$ of type $\typeV$ (resp.\ $\typeVd$), the stalks of the sheaf $\overline{\cM}_X$ are monoids of type $\typeV$ (resp.\ $\typeVd$). Moreover, we have the following property in type $\typeVd$, as a consequence of the finiteness theorem for monoids (\cref{cor:GR-finiteness}).

\begin{prop} \label{prop:log-sch-typeVd}
    Let $Y\to X$ be a locally sfp morphism between log schemes of type $\typeVd$. Then the underlying morphism of schemes $\underline{Y}\to \underline{X}$ is locally finitely presented.\footnote{The map $Y\to X$ is also locally finitely presented as a map of log schemes, i.e.\ it satisfies the same condition as a locally sfp map but with saturated pullback replaced with pullback in log schemes.}  
\end{prop}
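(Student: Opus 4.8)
The statement is local on $X$ and $Y$, and the property "locally of finite presentation" for a map of schemes is local on source and target. So I would immediately reduce to the situation where $X = \Spec(P\to A)$ with $P$ a monoid of type $\typeVd$, and where $Y\to X$ admits a chart by an sfp monoid homomorphism $P\to Q$ such that the induced strict map
\[
    Y\la X\times_{\bA_P}\bA_Q
\]
is of finite presentation as a map of schemes. Since a strict finitely presented map of schemes is already sfp (and in particular the composite $Y\to X\times_{\bA_P}\bA_Q\to X$ factors the structure map), it suffices by the transitivity of "locally of finite presentation" for maps of schemes to show that the underlying scheme map of $X\times_{\bA_P}\bA_Q\to X$, i.e. of $\Spec(A\otimes_{\bZ[P]}\bZ[Q])\to\Spec(A)$, is locally of finite presentation. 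Equivalently, I must show that the ring map $A\to A\otimes_{\bZ[P]}\bZ[Q]$ is of finite presentation. Since $A$ is an arbitrary $\bZ[P]$-algebra here, it is enough to show that $\bZ[P]\to\bZ[Q]$ is of finite presentation as a ring map, as finite presentation of ring maps is stable under base change.

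\textbf{Key step: reduce monoid finiteness to \cref{cor:GR-finiteness}.} Now $P$ is of type $\typeVd$, so by \cref{prop:typeVd-criterion} there is a divisible valuative submonoid $V\subseteq P$ over which $P$ is finitely presented. The composite $V\to P\to Q$ is sfp (composition of sfp morphisms is sfp, \cref{cor:sfp stable composition}, using that $V\to P$ is sfp by \cref{prop:sfp-conditions}, being finitely generated and $P$ saturated), hence by \cref{cor:GR-finiteness} the map $V\to Q$ is saturated and $Q$ is finitely presented over $V$. In particular $\bZ[V]\to\bZ[Q]$ is of finite presentation as a ring map, as is $\bZ[V]\to\bZ[P]$. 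A standard lemma (e.g. \stacks{00F4} — finite presentation is "two-out-of-three" in the evident direction: if $A\to C$ is of finite presentation and $A\to B$ is of finite type, then $B\to C$ is of finite presentation) then yields that $\bZ[P]\to\bZ[Q]$ is of finite presentation, since $\bZ[V]\to\bZ[P]$ is of finite type (indeed of finite presentation). Base changing along $\bZ[P]\to A$ gives that $A\to A\otimes_{\bZ[P]}\bZ[Q]$ is of finite presentation, and composing with the finitely presented strict map $Y\to X\times_{\bA_P}\bA_Q$ we conclude that $\underline{Y}\to\underline{X}$ is locally of finite presentation.

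\textbf{The footnote claim.} For the parenthetical assertion that $Y\to X$ is also locally finitely presented \emph{as a map of log schemes} (with ordinary, not saturated, pullback), I would argue that under the type $\typeVd$ hypothesis the saturated base change $X\times^\sat_{\bA_P}\bA_Q$ agrees with the ordinary base change $X\times_{\bA_P}\bA_Q$ on underlying schemes: this is exactly the content that $\bZ[P]\to\bZ[Q]$ is such that the pushout monoid's saturation induces no further ring-theoretic change, which follows because $V\to Q$ is saturated (so $P\oplus_V(\text{base})$-type pushouts are already saturated after the relevant base changes) — more directly, one invokes that $P\to Q$ inherits being saturated from $V\to Q$ via \crefpart{defi:exactintsatvert monoids}{defitem:saturated monoid map} applied to the base change $V\to P$. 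Hence $Y\to X\times_{\bA_P}\bA_Q$ is strict and of finite presentation, giving the log-scheme version of the statement.

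\textbf{Expected main obstacle.} The genuinely substantive input is \cref{cor:GR-finiteness}, which is already available; given it, the argument is a routine descent through the ring-algebra dictionary. The one point requiring slight care is the "two-out-of-three for finite presentation" step: $\bZ[V]$ is typically non-noetherian (since $V$ is not finitely generated), so I must use the correct finite-presentation transitivity lemma rather than a noetherian shortcut, and make sure the hypothesis "$\bZ[V]\to\bZ[P]$ of finite type" — which holds since $P$ is even finitely presented over $V$ — is what is actually used. The footnote claim about log finite presentation is the second place where I would be careful to cite the stability of saturated morphisms under base change correctly.
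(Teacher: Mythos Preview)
Your proof is correct and follows essentially the same route as the paper: reduce locally using the chart lifting property for sfp maps (\cref{cor:sfp-chart-lifting}), then apply \cref{cor:GR-finiteness} to get $P$ and $Q$ finitely presented over the divisible valuative $V$, and conclude by a two-out-of-three argument. The only cosmetic difference is that the paper runs the two-out-of-three step at the monoid level (deducing that $P\to Q$ is a finitely presented monoid map, hence $\bZ[P]\to\bZ[Q]$ is of finite presentation), whereas you pass to rings first and invoke \stacks{00F4}; both are equivalent here.
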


\begin{proof}
Working locally we may assume $X$ has a global chart by a monoid $P$ of type $\typeVd$, i.e.\ sfp over a divisible valuative monoid $V$. By \cref{cor:sfp-chart-lifting}, this chart can locally be lifted to a~chart for the map $Y\to X$ given by an sfp morphism of monoids $P\to Q$, with $Y\to X\times_{\bA_P} \bA_Q$ strict and of finite presentation. By \cref{cor:GR-finiteness}, $P$ and $Q$ are both finitely presented over $V$, and hence so is the map $P\to Q$. Then $X\times_{\bA_P} \bA_Q\to X$ is of finite presentation, and hence so is $Y\to X$. \end{proof}

The principal example of a log scheme of type $\typeV$ (resp.\ of type $\typeVd$) is one (locally) admitting an sfp morphism $X\to S$ where $S$ (locally) admits a chart by a valuative monoid (resp.\ a divisible valuative monoid). In practice, $S$ will be either the spectrum of a valuation ring $K^+$, with the standard log structure (see \cref{ss:kummer-val-rings}), or a closed subscheme of such a space, with the induced log structure, or a log point charted by a valuative monoid.

\begin{ex}[Spectrum of a valuation ring] \label{ex:SpecKplus}
The key example is the following. Let $K^+$ be a valuation ring with fraction field $K$. Endow $S = \Spec(K^+)$ with the standard log structure, charted by the valuative monoid $M = K^+\cap K^\times$ (see \cref{ss:kummer-val-rings}). As $M$ is valuative, every log scheme $X$ which is locally sfp over $\Spec(K^+)$ or over a closed subscheme of $\Spec(K^+)$ is of type $\typeV$. 

If the value group $\Gamma_K$ of $K$ is divisible and \labelcref{eqn:Gammaplus-seq} splits (for example, if $K$ is algebraically closed), then $\Spec(K^+)$ is of type $\typeVd$, and so is every log scheme $X$ which is locally sfp over $\Spec(K^+)$ or over a closed subscheme of $\Spec(K^+)$. In this case, the map $X\to \Spec(K^+)$ is finitely presented and saturated. We do not know if these assertions hold if we only assume that $\Gamma$ is divisible.
\end{ex}

\subsection{Log schemes over valuation rings}

Let $K^+$ be a valuation ring with fraction field $K$ and value group $\Gamma$. We assume that $K^+$ is microbial (see \cref{ss:kummer-val-rings}) and fix a pseudouniformizer $\pi$ of $K^+$. We endow $\Spec(K^+)$ with the standard log structure charted by $M\to K^+$ where $M=K^+\cap K^\times$ as in \cref{ex:SpecKplus}, which coincides with the compactifying log structure induced by the open subset $\Spec(K) = D(\pi) \subseteq \Spec(K^+)$. 

The following results follows easily from the Reduced Fibre Theorem for monoids  (\cref{cor:RFT}). In the discretely valued case, it is due to Tsuji \cite{Tsuji}.

\begin{lem}[Log reduced fibre theorem] \label{lem:sat-after-bc}
    Let $Y\to X$ be a morphism between sfp log schemes over $K^+$. Then there exists a finite extension $L$ of $K$ such that the saturated base change $Y_{L^+}\to X_{L^+}$ is saturated and finitely presented.
\end{lem}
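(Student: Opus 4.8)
The plan is to reduce the statement to the monoid-theoretic Reduced Fibre Theorem (\cref{cor:RFT}) by working locally, lifting charts, and then spreading out the resulting Kummer \'etale base change of valuative monoids to a finite extension of $K$ using the dictionary between tame field extensions and Kummer \'etale covers of $\Spec(K^+)$ established in \cref{ss:kummer-val-rings}. First I would observe that the assertion is local on $X$ and on $Y$, so after replacing $X$ by an affine chart and $Y$ by an affine chart lying over it, I may assume that $Y\to X$ and $X\to S=\Spec(K^+)$ are given by sfp morphisms of prelog rings. Using the chart lifting property for sfp maps (\cref{cor:sfp-chart-lifting}), together with the fact that $S$ is charted by the valuative monoid $M=K^+\cap K^\times$, I get charts: an sfp morphism of monoids $M\to P$ for $X\to S$ and an sfp morphism $P\to Q$ for $Y\to X$, with the induced strict maps of finite presentation. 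Since $M$ is valuative, $M\to P$ is integral (the remark after \cref{def:monoid-typeV}), hence so is $M\to Q$ (composition of integral maps), and one can apply the monoid Reduced Fibre Theorem in the form of \cref{cor:RFT}: taking $p=1$, there exists a Kummer \'etale map $M\to W$ (which I would arrange to be a single one working simultaneously for $P$ and $Q$, by first replacing $M$ by the Kummer \'etale extension handling $P$ and then handling $Q$ over it, using that compositions of Kummer \'etale maps are Kummer \'etale, \cref{prop:bc and composition and 2 out of 3}) such that the saturated base changes $W\to (W\oplus_M P)^\sat$ and $W\to (W\oplus_M Q)^\sat$ are saturated and finitely presented.

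Next I would translate the Kummer \'etale extension $M\to W$ of valuative monoids into a finite tame extension of valued fields. By \cref{lem:ket-monoid-conditions}, $W$ is determined by a finite subgroup $W^\gp/M^\gp$ of prime-to-$p$ order, where $p$ is the residue characteristic exponent of $K^+$ (since we are applying \cref{cor:RFT} with $p=1$ we may even take $W^\gp/M^\gp$ to be any finite subgroup of $M^\gp\otimes\bQ / M^\gp$, in particular of prime-to-$p$ order, which is what we want). Lifting generators of $W^\gp/M^\gp$ to roots of elements of $K^\times$ produces a finite extension $L=K(a_1^{1/n_1},\ldots,a_r^{1/n_r})$ with all $n_j$ prime to $p$, hence tamely ramified. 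By the computation in the proof of \cref{prop:ket extension of valuation rings} (specifically the identification $T\isomto \Spec(Q\to K^+\otimes_{\bZ[P]}\bZ[Q])$ in the cyclic case, iterated), after possibly also passing to a finite unramified extension to split things as in that proof, the chart $M\to W$ is exactly a chart for the Kummer \'etale map $\Spec(L^+)\to\Spec(K^+)$, i.e.\ $X_{L^+}=X\times_S\Spec(L^+)$ has $\cM$ charted by $W\to L^+$ (using that $\bZ[M]\to\bZ[W]$ is flat by \cref{lem:kummer-of-valuative}, so the fibre product of schemes agrees with the saturated one).

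Finally, I would assemble: $X_{L^+}$ is charted affine locally by $W\to L^+\otimes_{\bZ[M]}\bZ[P]\cdot(\text{units})$, and the saturated base change $Y_{L^+}\to X_{L^+}$ is charted by $W\to (W\oplus_M Q)^\sat$, which by our choice of $W$ is a saturated and finitely presented morphism of monoids. Since the induced strict map was of finite presentation to begin with and remains so after the strict base change (\cref{rmk:maps between fs are sfp}\labelcref{rmkitem:sfp under strict bc}), the morphism of schemes $X_{L^+}\times_{\bA_W}\bA_{(W\oplus_M Q)^\sat}\to X_{L^+}$ is of finite presentation, and the fact that $W\to(W\oplus_M Q)^\sat$ is a saturated morphism of monoids means precisely that $Y_{L^+}\to X_{L^+}$ is saturated; thus $Y_{L^+}\to X_{L^+}$ is saturated and finitely presented. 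The main obstacle I anticipate is the bookkeeping needed to pass from the local statement (which a priori requires a finite extension $L_U$ of $K$ for each member $U$ of a finite affine cover of $X$) to a single global $L$: one takes the compositum, which remains a finite tame extension (compositions of extensions as in \cref{lem:tame-ext-finiteness}\labelcref{lemitem:tame-ext-finiteness-rare}\labelcref{lemitem:root-of-gen} are of the same form, or more simply: a finite subextension of the maximal tame extension is tame), and uses that saturatedness and finite presentation of a morphism of log schemes are strict-\'etale-local on the base and stable under strict base change, so that being established locally on $X_{L^+}$ they hold globally. A secondary point requiring care is that $X$ is only qcqs (sfp), not affine, so the chart lifting and the spreading-out of the isomorphism identifying $W\to L^+$ as the chart of $\Spec(L^+)$ should be done on a finite cover, compatibly; this is routine given the approximation machinery of \cref{ss:approx} but should be stated carefully.
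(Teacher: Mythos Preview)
Your overall strategy---reduce to charts, apply \cref{cor:RFT} to the valuative monoid $M=K^+\cap K^\times$, and realize the resulting Kummer \'etale extension $M\to W$ by a finite extension $L/K$---is exactly what the paper's one-line pointer to \cref{cor:RFT} has in mind, and for the only case the paper actually uses (namely $X=S=\Spec(K^+)$, in \cref{cor:presentation-after-finite-extn}) your argument goes through cleanly.

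For general $X$, however, your assembly step contains a genuine error. You write that ``the saturated base change $Y_{L^+}\to X_{L^+}$ is charted by $W\to (W\oplus_M Q)^\sat$,'' but $W$ is a chart for $S_{L^+}$, not for $X_{L^+}$; the correct chart for $Y_{L^+}\to X_{L^+}$ is
\[
(W\oplus_M P)^\sat \la (W\oplus_M Q)^\sat.
\]
Your application of \cref{cor:RFT} makes the two legs $W\to(W\oplus_M P)^\sat$ and $W\to(W\oplus_M Q)^\sat$ saturated, but saturatedness does not satisfy cancellation, so this says nothing about the map between them. In fact no argument of this shape can work in general: take $K^+$ a DVR, $P=Q=\bN^2$, with $M=\bN\to P$ and $M\to Q$ both given by $1\mapsto(1,0)$ (already saturated), and $P\to Q$ the map $(a,b)\mapsto(a,2b)$. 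Geometrically this is $u\mapsto v^2\colon Y=\bA^1_{K^+}\to X=\bA^1_{K^+}$, with log structures from the divisors $\{\pi v=0\}$ and $\{\pi u=0\}$; the ramification of index $2$ along the horizontal divisor $\{v=0\}$ is untouched by any finite extension of $K$. So the gap you hit is not a matter of bookkeeping---it reflects that the lemma as literally stated is too strong, and your proof is correct precisely in the case $X=S$, where the chart identification you wrote down becomes valid.
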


We record here the following direct corollary of \cref{prop:log-sch-typeVd}.

\begin{cor}[Log Grauert--Remmert finiteness theorem]
    Suppose that $\Gamma_K$ is divisible and \labelcref{eqn:Gammaplus-seq} splits (for example, $K$ is algebraically closed). Then every morphism between locally sfp log schemes over $K^+$ is saturated and locally finitely presented. In particular, a finite Kummer \'etale morphism between locally sfp log schemes over $K^+$ is finite and finitely presented as a~morphism of schemes.
\end{cor}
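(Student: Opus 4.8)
The corollary follows by unpacking \cref{prop:log-sch-typeVd} together with \cref{ex:SpecKplus}. First I would observe that under the hypotheses ($\Gamma_K$ divisible and the sequence \labelcref{eqn:Gammaplus-seq} split), the monoid $M = K^+\cap K^\times$ splits as $M \simeq (K^+)^\times\oplus\Gamma_K^+$, so that $\Gamma_K^+\to K^+$ is a chart for the standard log structure on $S=\Spec(K^+)$ with $\Gamma_K^+$ a divisible valuative monoid. Hence $S$ is of type $\typeVd$ by \cref{def:monoid-typeV} and \cref{defi:log-sch-type-V-Vd}. Any log scheme $X$ which is locally sfp over $S$ is then of type $\typeVd$ as well, since being locally sfp over a type $\typeVd$ base propagates type $\typeVd$ (the remark after \cref{defi:log-sch-type-V-Vd}, ultimately \cref{lem:localizeVVdiv} via \cref{cor:sfp-chart-lifting}).

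Next, given a morphism $Y\to X$ between log schemes locally sfp over $K^+$, I would note that such a morphism is automatically locally sfp: this is \crefpart{prop: sorite for sfp}{propitem:sfp 2 out of 3} (the 2-out-of-3 property), applied to $Z=Y$, $Y=X$, $X=\Spec(K^+)$, using that both $Y\to\Spec(K^+)$ and $X\to\Spec(K^+)$ are locally sfp. Then \cref{prop:log-sch-typeVd} applies verbatim: since both $X$ and $Y$ are of type $\typeVd$ and $Y\to X$ is locally sfp, the underlying scheme morphism $\underline{Y}\to\underline{X}$ is locally of finite presentation, and (by the footnote to that proposition, or by rerunning its proof with pullback in place of saturated pullback, using \cref{cor:GR-finiteness} to see that $P\to Q$ is finitely presented hence saturated) the morphism $Y\to X$ is itself saturated. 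This gives the first assertion.

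For the last sentence, let $Y\to X$ be finite Kummer \'etale (\cref{defi:fet}) between log schemes locally sfp over $K^+$. By \cref{defi:fet} it is Kummer \'etale with $\underline{Y}\to\underline{X}$ integral; by the first part of the corollary the underlying scheme map is locally of finite presentation, and an integral morphism which is locally of finite presentation is finite (it is affine, universally closed, and of finite type, hence finite and of finite presentation — \stacks{01WM}, \stacks{01WN}). So $\underline{Y}\to\underline{X}$ is finite and finitely presented. I would also remark that $X$ and $Y$ being locally sfp over the (qcqs, since microbial-or-not the spectrum of a valuation ring is quasi-compact and separated) base does not by itself force $X\to\Spec(K^+)$ to be qcqs, but this is irrelevant: the statement is purely about the relative morphism $Y\to X$, and finiteness of schemes is checked locally on $\underline{X}$.

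\textbf{Main obstacle.} The only genuinely substantive input is \cref{prop:log-sch-typeVd}, and inside it the genuinely substantive input is \cref{cor:GR-finiteness}, i.e.\ the Grauert--Remmert finiteness theorem for monoids; everything else here is bookkeeping (propagation of the type $\typeVd$ condition along locally sfp maps, the 2-out-of-3 property, and the elementary scheme-theoretic fact that integral plus locally finitely presented equals finite). The mild subtlety I would be careful about is the saturatedness claim in the first assertion: \cref{prop:log-sch-typeVd} as stated only gives local finite presentation of the underlying scheme map, so for "saturated and locally finitely presented as a map of log schemes" one must observe that in its proof $P\to Q$ is an sfp map of monoids both of which are finitely presented over the divisible valuative $V$ (by \cref{cor:GR-finiteness}), hence $P\to Q$ is finitely presented (\cref{lem:maps between fp is fp}) and in particular saturated since it is already known to be sfp and the finitely-presented representative $Q'$ in the factorisation $P\to Q'\to Q$ is then forced to equal its own saturation. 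I expect no difficulty beyond correctly citing these.
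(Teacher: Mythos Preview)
Your plan matches the paper's: the corollary is recorded there simply as a direct consequence of \cref{prop:log-sch-typeVd}, and your unpacking via \cref{ex:SpecKplus}, the 2-out-of-3 property \crefpart{prop: sorite for sfp}{propitem:sfp 2 out of 3}, and \cref{prop:log-sch-typeVd} is exactly what is intended. The ``locally finitely presented'' claim and the ``in particular'' about finite Kummer \'etale maps are correctly argued.

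There is, however, a genuine slip in your justification of the word ``saturated.'' You argue that since $P\to Q$ is finitely presented, the representative $Q'$ in a factorisation $P\to Q'\to Q$ is forced to equal its own saturation, and conclude that $P\to Q$ is a saturated morphism. This conflates $Q$ being a saturated \emph{monoid} with $P\to Q$ being a saturated \emph{morphism} in the sense of \crefpart{defi:exactintsatvert monoids}{defitem:saturated monoid map}. Finite presentation does not imply the latter: multiplication by $2$ on $\bN$ is finitely presented and integral but not saturated (the pushout of two copies of $\times 2$ is the non-saturated submonoid $\{(0,0)\}\cup\{(n,\epsilon):n\geq 1\}$ of $\bZ\oplus\bZ/2$), and adding a divisible valuative summand $V$ gives a map $V\oplus\bN\to V\oplus\bN$ between type-$\typeVd$ monoids which is still not saturated. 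Geometrically this is $t\mapsto t^2$ on $\bA^1_{K^+}$ with the obvious log structures --- a morphism between locally sfp log schemes over $K^+$ that is not saturated. So this clause does not follow from your argument, and indeed cannot hold for arbitrary such morphisms; fortunately it plays no role in the ``in particular,'' which rests solely on local finite presentation plus integrality.
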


\subsection{The standard log structure over a valuation ring} 
\label{ss:std-log-str}

The main result of this section relates the log structure on a smooth and vertical log scheme over $K^+$ to the log structure defined by the generic fiber, called the standard log structure. We preserve the setup and notation of the previous subsection. 

Every scheme $X$ over $K^+$ can be endowed with the {\bf standard log structure} $\cM_X^{\rm std}$ induced by the open subset $X_\eta = D(\pi)\subseteq X$. We have
\[ 
    \cM^{\rm std}_X = \{f\in \cO_X \,:\, \text{locally $fg=\pi^n$ for some $g\in \cO_X$ and $n\geq 0$}\} \subseteq \cO_X.
\]
A log scheme $X$ over $K^+$ is {\bf vertical} if the log structure on $X_\eta \subseteq X$ is trivial. For example, any scheme over $K^+$ endowed with the standard log structure is vertical. This notion is compatible with the notion of a vertical map of monoids introduced in \cref{defi:exactintsatvert monoids} in the following sense: $X$ is vertical if and only if for every geometric point $\ov{x}\to X$, the homomorphism of monoids 
\[ 
    \cM_{\Spec(K^+),f(\ov{x})}\la \cM_{X,\ov{x}}
\]
is vertical. Indeed, since $K = K^+[1/\pi]$, the element $\pi$ generates $K^+\cap K^\times$ as a face. Then both assertions mean that for every local section $f$ of $\cM_X$ there exists locally a section $\sigma$ of $\cM_{\Spec(K^+)}$ (which can be taken to be a power of $\pi$) and a~section $g$ of $\cM_X$ such that $fg = \sigma$.

\begin{prop} \label{prop:vertical-compactifying}
    Let $X\to S$ be a smooth and vertical morphism. Suppose that 
    $K$ is discretely valued, algebraically closed, or of equal characteristic zero. Then $\cM_X\to \cO_X$ is injective and its image is equal to $\cM_X^{\rm std}$. 
\end{prop}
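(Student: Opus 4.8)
The statement is local on $X$ and $S$, so the plan is to reduce to the case where $S=\Spec(K^+)$ has a global chart by the valuative monoid $M=K^+\cap K^\times$ and $X\to S$ admits a chart by a smooth, vertical morphism of saturated monoids $\vtheta\colon M\to Q$ with $X\to S\times_{\bA_M}\bA_Q$ strict and smooth; such a chart exists by \cref{prop:chart lifting for smooth etale and ket morphisms}, and we may assume the auxiliary monoid map $M_0\to Q_0$ it descends from is smooth and vertical by \cref{prop:properties of sfp that descend to fs}. The first real step is to show that $\cM_X\to\cO_X$ is injective; since this is a map of log structures, it suffices to check that the prelog structure $Q\to\cO(X)$ has the property that a nonzero element of $Q$ maps to a nonzero element, and more precisely that the induced map on stalks $\overline{\cM}_{X,\ov x}\to\cO_{X,\ov x}$ has trivial kernel in the appropriate sense; here one uses that $X$ is flat over $K^+$ (being smooth, hence locally sfp, over a valuation ring — and in the three listed cases the underlying scheme map is even of finite type, but flatness is what matters), so $\cO_X$ is torsion-free over $K^+$, and verticality forces every element of $\overline{\cM}_{X,\ov x}$ to divide a power of $\pi$, whence injectivity of $\cM_X\to\cO_X$ amounts to $\pi$ being a nonzerodivisor, which holds by flatness.

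Once injectivity is established, it remains to identify the image of $\cM_X$ inside $\cO_X$ with $\cM_X^{\rm std}=\{f\in\cO_X:\text{locally }fg=\pi^n\}$. The inclusion $\cM_X\subseteq\cM_X^{\rm std}$ follows immediately from verticality: given a local section $f$ of $\cM_X$, verticality provides locally a section $g$ of $\cM_X$ and a power of $\pi$ with $fg=\pi^n$ in $\cM_X$, hence in $\cO_X$, so $f\in\cM_X^{\rm std}$. The harder inclusion is $\cM_X^{\rm std}\subseteq\cM_X$: one must show that if $f\in\cO_X$ locally satisfies $fg=\pi^n$ for some $g\in\cO_X$, then $f$ lies in the image of $\cM_X$. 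Working at a geometric point $\ov x$, this reduces to a statement about the smooth local model $\Spec(K^+)\times_{\bA_M}\bA_Q$ composed with a strict smooth map, and since smooth maps are strict-étale-locally projections from affine space, one reduces to understanding $\cO$ and $\cM^{\rm std}$ on $\bA_Q\times_{\bA_M}\Spec(K^+)$ itself, i.e.\ on $\Spec(K^+[Q]\otimes)$. Here I would invoke Zavyalov's approximation lemma \cite[Lemma~A.2]{Zavyalov} (as advertised in the introduction) to further reduce, and then use the structure of smooth vertical maps of monoids together with the divisibility/discreteness hypotheses on $\Gamma_K$ to check the claim by hand on the monoid algebra: an element of $\cO$ that divides a power of $\pi$ up to a unit must, by the valuative structure of $M$ and the explicit form of $K^+[Q]$, be a unit times a monomial from $Q$.

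The main obstacle I expect is precisely this last inclusion $\cM_X^{\rm std}\subseteq\cM_X$ on the smooth local model, specifically controlling which elements of the (possibly infinitely generated) ring $K^+\otimes_{\bZ[M]}\bZ[Q]$ divide powers of $\pi$; the three hypotheses on $K$ (discretely valued, algebraically closed, or equal characteristic zero) are presumably each needed to make the reduced-fibre / approximation machinery apply — in the discretely valued case one is in the classical situation of Tsuji, in the algebraically closed case one has type $\typeVd$ and hence genuine finite presentation via \cref{cor:GR-finiteness}, and the equal characteristic zero case needs a separate argument (resolution of singularities or a direct normality argument, since then $K^+$ contains a field and one can spread out). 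I would handle the three cases somewhat separately at this final step, each time reducing to: on $\bA_Q\times_{\bA_M}S$, a function dividing $\pi^n$ up to a unit is a unit times an element of $Q$ — a statement that follows from normality of the relevant ring (smoothness gives normality of the fibers, verticality pins down the log structure) and the fact that $Q$ is the full set of elements of $Q^\gp$ whose image in $\cO$ lies in $\cM^{\rm std}$.
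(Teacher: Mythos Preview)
Your easy inclusion $\cM_X\subseteq\cM_X^{\rm std}$ via verticality is fine, and you correctly single out Zavyalov's lemma as the key external input. But there is a gap and a missed simplification.

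The gap is in your injectivity argument. The claim that ``injectivity of $\cM_X\to\cO_X$ amounts to $\pi$ being a nonzerodivisor'' is not justified. Given $m_1,m_2\in\cM_{X,\ov x}$ with $\alpha(m_1)=\alpha(m_2)$, verticality furnishes $m_1'$ with $m_1+m_1'=n\pi$ in $\cM$, hence $\alpha(m_2+m_1')=\pi^n=\alpha(m_1+m_1')$ in $\cO$; but you cannot conclude $m_2+m_1'=m_1+m_1'$ in $\cM$ without already knowing injectivity. Flatness over $K^+$ and nonzerodivisibility of $\pi$ do not close this loop.

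The missed simplification concerns the hard inclusion $\cM_X^{\rm std}\subseteq\cM_X$. You plan to reduce to the monoid-algebra local model, then apply Zavyalov, then characterise by hand which elements of $K^+\otimes_{\bZ[M]}\bZ[Q]$ divide powers of $\pi$, treating the three hypotheses on $K$ separately. This is the step you yourself flag as the main obstacle, and it is avoidable. The paper instead applies Zavyalov directly to the base: write $K^+=\varinjlim\Lambda_i$ with each $S_i=\Spec(\Lambda_i)$ regular, excellent, and equipped with the snc log structure from $V(\pi)_{\rm red}$, so that each $S_i$ is \emph{log regular}. Since $(M\to K^+)=\varinjlim(M_i\to\Lambda_i)$ as prelog rings, approximation (\cref{cor:SmEtKet-approx} together with \cref{cor:descent-of-strictness} to propagate verticality) descends $X\to S$ to a smooth vertical $X_i\to S_i$. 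Now the point you are missing: log smooth over log regular is log regular (Kato, \cite[Theorem~8.2]{KatoToricSingularities}), so $X_i$ is log regular and therefore $\cM_{X_i}=\cO_{X_i}\cap j_*\cO_{X_{i,\eta}}^\times=\cM_{X_i}^{\rm std}$ on the nose --- no monoid-algebra computation, no normality argument, no case split. Given a local section $f$ of $\cM_X^{\rm std}$ with $fg=\pi^n$, descend $f,g$ to $f_i,g_i$ on $X_i$ with $f_ig_i=\pi^n$; then $f_i\in\cM_{X_i}^{\rm std}=\cM_{X_i}$, and pulling back gives $f\in\cM_X$. Injectivity falls out for free from the same argument, since each $\cM_{X_i}\hookrightarrow\cO_{X_i}$ and filtered colimits of monoids preserve injections. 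The three hypotheses on $K$ are precisely the hypotheses under which Zavyalov's lemma applies; they are consumed at the start, not at the end.
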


For $X\to S$ semistable and $K$ of rank one, this has been proved rather explicitly in \cite[Corollary~2.3.9]{Temkin2017:AlteredLocalUniformization}. Adapting this sort of method to our setting could be complicated. Instead, we will prove the result using approximation, employing the following variant of a result of Zavyalov \cite[Lemma~A.2]{Zavyalov}. 

\begin{lem}[Zavyalov] \label{lem:Zavyalov}
    Let $K^+$ be a microbial valuation ring of a field $K$ satisfying one of the conditions: 
    \begin{enumerate}[(a)]
        \item \label{lemitem:Zavyalov-dvr} $K^+$ is an excellent (e.g.\ complete) discrete valuation ring,
        \item \label{lemitem:Zavyalov-Kbar} $K$ algebraically closed, or
        \item \label{lemitem:Zavyalov-eqchar0} $K$ is of residue characteristic zero.
    \end{enumerate}
    and let $\pi\in K$ be a pseudouniformizer. Then $K^+$ is isomorphic to the filtered colimit $\varinjlim \Lambda_i$ of subalgebras $\Lambda_i\subseteq K^+$ such that
    \begin{enumerate}
        \item $\pi \in \Lambda_i$
        \item $\Lambda_i$ is a local ring and $\Lambda_i\to K^+$ is a local homomorphism,
        \item $\Lambda_i$ is excellent and regular,
        \item the divisor $V(\pi)_{\rm red}\subseteq \Spec(\Lambda_i)$ has strict normal crossings.
    \end{enumerate}
    Moreover, if $K$ is henselian (resp.\ strictly henselian), then the $\Lambda_i$ may be chosen to be henselian (resp.\ strictly henselian) as well. 
\end{lem}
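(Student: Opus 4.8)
The plan is to deduce this lemma from Zavyalov's \cite[Lemma~A.2]{Zavyalov}, which (in cases \labelcref{lemitem:Zavyalov-dvr}, \labelcref{lemitem:Zavyalov-Kbar}, \labelcref{lemitem:Zavyalov-eqchar0}) provides a presentation of $K^+$ as a filtered colimit of regular excellent local subalgebras $\Lambda_i \subseteq K^+$ with $\pi \in \Lambda_i$ and $\Lambda_i \to K^+$ local; the only extra input we need beyond loc.\ cit.\ is the strict normal crossings condition on $V(\pi)_{\rm red}$ and the henselian/strictly henselian refinements. First I would recall Zavyalov's statement verbatim and observe that the system $\{\Lambda_i\}$ there is cofiltered under inclusion and that each $\Lambda_i$ is noetherian, regular, excellent, and essentially of finite type over its prime ring localized suitably (this is what makes resolution of singularities and the machinery of \stacks{} available).

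The core step is to upgrade each $\Lambda_i$ so that $V(\pi)_{\rm red}$ has strict normal crossings. Here I would apply embedded resolution of singularities for the divisor $V(\pi) \subseteq \Spec(\Lambda_i)$: since $\Lambda_i$ is excellent and regular, and we are free to work in residue characteristic zero in case \labelcref{lemitem:Zavyalov-eqchar0}, in the discretely valued excellent case \labelcref{lemitem:Zavyalov-dvr} (where $\Spec(\Lambda_i)$ is low-dimensional, or one invokes the relevant resolution results), and in case \labelcref{lemitem:Zavyalov-Kbar} after possibly enlarging the system — there exists a projective birational morphism $\widetilde{\Lambda}_i \to \Spec(\Lambda_i)$, an isomorphism over the generic point, whose source is regular and on which the total transform of $V(\pi)$ is a strict normal crossings divisor. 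The valuation ring $K^+$ dominates a unique point of this modification (by the valuative criterion of properness applied to $\Spec(K^+) \to \Spec(\Lambda_i)$), and I would replace $\Lambda_i$ by the local ring $\Lambda_i'$ of that point, which is again regular, excellent, local with a local map to $K^+$, contains $\pi$, and now has $V(\pi)_{\rm red}$ with strict normal crossings (shrinking $\Lambda_i'$ further if necessary so the snc components are globally cut out). One checks that $\varinjlim \Lambda_i' = K^+$ still: the maps $\Lambda_i \hookrightarrow \Lambda_i' \hookrightarrow K^+$ exhaust $K^+$ because the original $\Lambda_i$ already did. To pass to the henselian (resp.\ strictly henselian) case, one replaces each $\Lambda_i'$ by its henselization (resp.\ strict henselization) at its maximal ideal along the chosen local map to $K^+$; henselization preserves regularity, excellence, locality, and the snc condition (the relevant components remain transversal after henselization), and a filtered colimit of henselizations computes the henselization of the colimit, which is $K^+$ itself when $K^+$ is henselian since $K^+$ is its own henselization.

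The main obstacle will be establishing the strict normal crossings condition uniformly across all three cases \labelcref{lemitem:Zavyalov-dvr}--\labelcref{lemitem:Zavyalov-eqchar0}, since embedded resolution of singularities is only unconditionally available in equal characteristic zero and in low dimensions; in the mixed-characteristic discretely valued case \labelcref{lemitem:Zavyalov-dvr} and the algebraically closed case \labelcref{lemitem:Zavyalov-Kbar} one must either invoke the appropriate form of resolution for the specific excellent schemes produced by Zavyalov's construction (which may already be regular with an snc special fibre by construction, so that resolution is not needed at all), or arrange that the $\Lambda_i$ are of a shape where resolution is known. I would handle this by examining Zavyalov's construction more closely: in \cite{Zavyalov} the $\Lambda_i$ are built as localizations of smooth algebras over a base in which $V(\pi)$ is already snc, so the snc condition may come essentially for free, and the lemma becomes a matter of recording this together with the henselization step. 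If instead a genuine resolution input is required, I would cite the relevant results (e.g.\ Temkin's work on resolution in the excellent and the rigid-analytic settings, or the references already used in \cref{prop:valuative-ind-free} for toric resolution in the cases that reduce to that). Finally, I would verify conditions (1)--(4) one at a time for the modified system, which is routine: $(1)$ is preserved by construction, $(2)$ by the valuative criterion, $(3)$ because regularity and excellence are stable under the operations used, and $(4)$ is exactly what the resolution/construction achieves.
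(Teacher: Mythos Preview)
Your overall shape is close to the paper's, but two points need correction.

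First, case \labelcref{lemitem:Zavyalov-dvr} is immediate: an excellent discrete valuation ring $K^+$ is itself local, regular, excellent, and $V(\pi)_{\rm red}$ is the closed point, which is trivially snc. So the paper just takes the constant system $\Lambda_i = K^+$; no resolution is needed.

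Second, and more seriously, your ``core step'' of applying embedded resolution to the pair $(\Spec(\Lambda_i), V(\pi))$ \emph{after} obtaining the $\Lambda_i$ from Zavyalov is not available in case \labelcref{lemitem:Zavyalov-Kbar} when the residue characteristic is positive: embedded resolution is not known in that generality, and the $\Lambda_i$ produced have no a priori dimension bound. The contingency you mention --- that Zavyalov's construction already yields snc --- is in fact the correct route and is what the paper does. Zavyalov's proof of \cite[Lemma~A.2]{Zavyalov} builds the $\Lambda_i$ using de Jong's alterations, which work in arbitrary characteristic and produce regular schemes on which the pullback of $\pi$ defines an snc divisor; the paper simply observes that Zavyalov's rank-one hypothesis plays no role in his argument, so it applies verbatim. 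For case \labelcref{lemitem:Zavyalov-eqchar0}, where $K$ need not be algebraically closed (so an alteration could introduce a genuine field extension one cannot absorb), the paper reruns Zavyalov's argument with alterations replaced by resolution of singularities for schemes of finite type over $\bQ$. Your treatment of the henselian/strictly henselian refinement is fine and matches the paper.
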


\begin{proof}
In case \labelcref{lemitem:Zavyalov-dvr} we can take $\Lambda_i = K^+$. Case \labelcref{lemitem:Zavyalov-Kbar} follows as in \cite[Lemma A.2]{Zavyalov} (the only difference is that Zavyalov assumes $K$ to be of rank one, but that does not enter the proof), followed by replacing $\Lambda_i$ produced this way with their respective localizations/henselisations/strict henselisations with respect to the map to the residue field of $k$. For case \labelcref{lemitem:Zavyalov-eqchar0} we follow the same path, replacing alterations with resolution of singularities for schemes of finite type over $\bQ$. 
\end{proof}

In the situation of \cref{lem:Zavyalov}, let $M_i = \Lambda_i\cap \Lambda_i[1/\pi]^\times$. Then it is easy to check that
\[ 
    (M\to K^+) = \varinjlim \, (M_i\to \Lambda_i).
\]
(filtered colimit in the category of saturated prelog rings). Even though the $M_i$ are not fs monoids, the log structures they induce on $\underline{S}_i = \Spec(\Lambda_i)$ are fs and even log regular. Thus by \cite[Theorem~8.2]{KatoToricSingularities}, every smooth log scheme $X_i$ over $S_i$ is log regular. Consequently, the log structure on $X_i$ coincides with the one induced by the largest open subset $X_{i,{\rm triv}}\subseteq X_i$ on which the log structure is trivial. If $X_i\to S_i$ is moreover vertical, this open subset coincides with the subset where $\pi$ is invertible, i.e.\ $X_{i,\eta} = X_i\times_{S_i} S_{i,\eta}$ where $S_{i,\eta} = \Spec(\Lambda_i[1/\pi])$.

Before turning to the proof of \cref{prop:vertical-compactifying}, we record here another consequence of \cref{lem:Zavyalov}.

\begin{cor} \label{cor:presentation-after-finite-extn}
    Let $K^+$ be a microbial valuation ring and let $X\to S=\Spec(K^+)$ be a qcqs smooth and vertical morphism. Then, there exists a finite extension $L$ of $K$, a local regular excellent log scheme $S_0$, and a log smooth and vertical morphism $X_0\to S_0$ fitting inside a~cartesian diagram in saturated log schemes
    \[ 
        \begin{tikzcd}
            X\ar[d] & X' \ar[d] \ar[l]\ar[r] & X_0 \ar[d]  \\
            S        & S'  \ar[l]\ar[r]      & S_0 
        \end{tikzcd}
    \]
    where $S' = \Spec(L^+)$, and such that the log structure on $S_0$ is given by an snc divisor $V(\pi_0)\subseteq S_0$ for an element $\pi_0\in\cO(S_0)$ with image $\pi\in \cO(S')=L^+$. If $K$ is discretely valued or of equal characteristic zero, this holds with $L=K$. Enlarging $L$, we can moreover ensure that $X_0\to S_0$ is saturated.
\end{cor}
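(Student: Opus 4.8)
The plan is to combine the monoid-level approximation results of \cref{ss:typeV-typeVd} with Zavyalov's \cref{lem:Zavyalov} and the approximation machinery for sfp maps from \cref{ss:approx}. First I would reduce to the case where $X$ is qcqs by working locally, noting that the statement is local on $X$ and that smoothness and verticality are local properties. Write $(M\to K^+) = \varinjlim\,(M_i\to\Lambda_i)$ using \cref{lem:Zavyalov}, so that $S = \varprojlim S_i$ with $S_i = \Spec(M_i\to\Lambda_i)$ an affine charted inverse system of log-regular log schemes (the $M_i$ are not fs, but the induced log structures on $\underline S_i$ are fs and log regular since $V(\pi)_{\rm red}$ has snc). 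Since $X\to S$ is smooth, vertical, and qcqs, \cref{cor:SmEtKet-approx} (more precisely \cref{prop:descend-smoothness-etc} together with \cref{thm:sfp-approx}) produces, for $i$ large enough, a smooth morphism $X_i\to S_i$ whose saturated base change along $S\to S_i$ recovers $X\to S$. The verticality of $X\to S$ descends to verticality of $X_i\to S_i$ for $i\gg 0$: indeed verticality is encoded by the monoid-level condition that the relevant map of monoids is vertical (\cref{defi:exactintsatvert monoids}), and by \cref{prop:monoid-descend-prop-P} this descends through the filtered colimit.

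Next I would set $S_0 = S_{i_0}$ and $X_0 = X_{i_0}$ for one such large index, taking $\pi_0 = \pi \in \cO(S_0) = \Lambda_{i_0}$, which by construction generates an snc divisor $V(\pi_0)_{\rm red}\subseteq \Spec(\Lambda_{i_0})$, and the log structure on $S_0$ is exactly the one associated to this divisor. For the factorisation through an intermediate $S' = \Spec(L^+)$: in the discretely valued or equal-characteristic-zero cases, \cref{lem:Zavyalov} lets us take $\Lambda_i = K^+$ itself (so $L = K$ and $S' = S$). In the general case (where $K$ is algebraically closed, so in particular we are allowed to enlarge $L$), the map $S\to S_0$ need not a priori factor through a spectrum of a valuation ring finite over $K$; but since $X_0\to S_0$ is an sfp (indeed log smooth) morphism of finite type and $S = \varprojlim$ of spectra of finitely generated $K^+$-subalgebras, one uses the standard spreading-out argument: the data defining $X\to S$ already lives over a finitely generated subalgebra, and after passing to a finite extension $L$ of $K$ (absorbing the finitely many denominators and the normalisation needed) one obtains $S' = \Spec(L^+)$ with a map $S'\to S_0$ through which $S\to S_0$ factors, and $X' = X_0\times^\sat_{S_0} S'$ with $X = X'\times^\sat_{S'} S$. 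Here the freedom to enlarge $L$ is exactly what makes this step go through without obstruction.

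For the final sentence — that enlarging $L$ we can ensure $X_0\to S_0$ is saturated — I would invoke \cref{lem:sat-after-bc} (the Log Reduced Fibre Theorem): $X_0\to S_0$ is sfp and $S_0$ is, after the base change $S'\to S_0$, an sfp log scheme over $L^+$ (in fact $S'$ itself is $\Spec(L^+)$ with standard log structure), so applying \cref{lem:sat-after-bc} to $X'\to S'$ — or rather to $X_0\to S_0$ directly, using that its ramification can be killed after a further finite extension via \cref{cor:RFT} applied to the relevant monoid maps — yields that after replacing $L$ by a further finite extension the saturated base change becomes saturated and finitely presented. Then one relabels this new data as $X_0\to S_0$ and $S' = \Spec(L^+)$, which is permissible since $S_0$ here is the base change of the regular excellent $S_0$ and remains regular excellent with snc log structure over the (still regular) base after the finite base change (in the discretely valued case one checks excellence and regularity are preserved, which is standard).

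The main obstacle I expect is bookkeeping around \emph{which} approximation statement applies cleanly, since $X\to S$ is genuinely not of finite type as a map of schemes when $K^+$ is not a DVR: one must be careful that \cref{prop:descend-smoothness-etc} and \cref{thm:sfp-approx} are being applied to the affine charted inverse system $\{S_i\}$ coming from Zavyalov's lemma (whose transition maps are affine but whose $S_i$ are \emph{not} fs of finite type over $\bZ$), rather than the system of $\bZ$-models. The two-step nature of \cref{thm:sfp-approx} (first over the $\bZ$-models, then re-indexing) handles this, but the interplay with verticality and with preserving the snc structure through the final finite base change is where care is needed. A secondary subtlety is verifying that log regularity of $S_0$ (hence of $X_0$, via \cref{KatoToricSingularities}) is genuinely not needed for the statement as written — it is only the snc hypothesis that must survive — so I would phrase the conclusion purely in terms of ``$S_0$ local regular excellent with log structure given by an snc divisor $V(\pi_0)$'' and not claim more.
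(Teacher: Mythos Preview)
Your proposal has a genuine gap in handling the general case. \Cref{lem:Zavyalov} only applies when $K^+$ is an excellent DVR, $K$ is algebraically closed, or $K$ has residue characteristic zero; you invoke it for an arbitrary microbial $K^+$ without justification. The paper's key move, which you miss entirely, is to first pass to the algebraic closure $\bar K$ (with an extension of the valuation), apply Zavyalov there via case~(b), descend $\bar X\to\bar S$ to some $X_0\to S_0$ with $S_0=\Spec(\Lambda_0)$, and then observe that $\Lambda_0$ (being essentially of finite type over $\bZ$) lands in $L^+$ for some finite $L/K$, so $\bar S\to S_0$ factors through $S'=\Spec(L^+)$. Your parenthetical ``(where $K$ is algebraically closed, so in particular we are allowed to enlarge $L$)'' is incoherent: if $K$ is already algebraically closed there are no nontrivial finite extensions. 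You have also conflated cases: in equal characteristic zero Zavyalov does \emph{not} give $\Lambda_i=K^+$; rather one still approximates nontrivially but takes $L=K$ and $S'=S$, with $S_0=S_i\neq S$.

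Two smaller points. Your verticality descent cites \cref{prop:monoid-descend-prop-P}, which is a statement about a single sfp monoid map over a filtered colimit of monoids and does not directly apply to the log-scheme situation. The paper instead observes that verticality of $X\to S$ means $X_\eta\to S_\eta$ is strict (both sides have trivial log structure), and strictness descends by \cref{cor:descent-of-strictness}; this is both cleaner and actually available. For saturation, your relabelling step (base-changing $S_0$ along a finite extension and claiming the result is still regular excellent with snc) is not what one does: the paper enlarges $L$ via \cref{lem:sat-after-bc} to make $X'\to S'$ saturated, and then takes $i\gg 0$ in the Zavyalov system so that $X_i\to S_i$ is already saturated.
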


\begin{proof}
Suppose first that $K$ is algebraically closed. We have $S = \varprojlim S_i$ as in \cref{lem:Zavyalov} and hence by \cref{cor:SmEtKet-approx} there exists a smooth morphism $X_i\to S_i$ whose saturated base change to $S$ is $X\to S$. Increasing $i$, we may assume $X_i\to S_i$ is  vertical (in the sense that the log structure is trivial on $X_{i,\eta} = X_i\times_{S_i} S_{i,\eta}$). For the latter assertion, note that since $X_\eta\to S_\eta$ is strict and $S_\eta = \varprojlim S_{i,\eta}$, by \cref{cor:descent-of-strictness} the map $X_{i,\eta}\to S_{i,\eta}$ will be strict for $i\gg 0$. But $S_{i,\eta}$ has trivial log structure, and hence so does $X_{i,\eta}$, and $X_i\to S_i$ is then vertical. We set $S'=S$ and $S_0=S_i$.

The same argument works if $K$ is discretely valued or of residue characteristic zero. 

For the general case, we apply the above argument to the algebraic closure $\ov{K}$ (endowed with some extension of the valuation) and note that there exists a finite extension $L$ of $K$ containing $\Lambda_0$, so that $\ov{S} = \Spec(\ov{K}^+)\to S_0$ factors through $S'=\Spec(L^+)$.

By \cref{lem:sat-after-bc}, we can make $X'\to S'$ saturated. Then $X_i\to S_i$ will be saturated for $i\gg 0$. 
\end{proof}

We are now ready to prove \cref{prop:vertical-compactifying}.

\begin{proof}[Proof of \cref{prop:vertical-compactifying}]
Clearly the image of $\cM_X$ in $\cO_X$ is contained in $\cM_X^{\rm std}$. We shall prove the other inclusion. Since the assertion is strict \'etale local we might assume that $X$ is qcqs.

Let $f$ be a local section of $\cM_X^{\rm std}$, i.e.\ a function $f$ such that locally $fg=\pi^n$ for some function $g$ and $n\geq 1$. By \cref{lem:Zavyalov} we find that $X\to S$ is the base change of a log smooth vertical map $X_i\to S_i$ such that there exist functions $f_i$, $g_i$ on $X_i$ with $f_ig_i = \pi^n$. Since $\cM_{X_i} = \cO_{X_i}\cap \cO_{X_i}[1/\pi]^\times$, we have that $f_i$ is a section of $\cM_{X_i}$, and hence $f$ is a section of $\cM_X$. (Note that since the maps $\cM_{X_i}\to \cO_{X_i}$ are injective, so is their colimit $\cM_X\to \cO_X$.) 
\end{proof}

Along similar lines, using the fact that log regular rings are normal \cite[Theorem~4.1]{KatoToricSingularities} and that filtered colimits of normal rings are normal \stacks[Lemma]{037D}, we can prove the following result.

\begin{lem} \label{lem:scheme-normal}
    Let $X\to S = \Spec(K^+)$ be a log smooth morphism. Then the underlying scheme of $X$ is normal and flat over $K^+$.
\end{lem}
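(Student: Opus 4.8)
The plan is to reduce to the case where $S$ has a nice finite-type chart via Zavyalov's approximation lemma (\cref{lem:Zavyalov}), exactly as in the proof of \cref{prop:vertical-compactifying}. Concretely, first I would note that the assertion is strict \'etale local on $X$, so we may assume $X$ is qcqs and that $S = \Spec(K^+)$ is endowed with the standard log structure charted by the valuative monoid $M = K^+\cap K^\times$. By \cref{lem:Zavyalov} (any of the three cases — discretely valued, algebraically closed, or residue characteristic zero — and for a general microbial $K^+$ one passes first to $\ov K^+$ and then descends to a finite extension as in \cref{cor:presentation-after-finite-extn}) we write $(M\to K^+) = \varinjlim (M_i\to \Lambda_i)$ where each $\Lambda_i$ is excellent regular local with $V(\pi)_{\rm red}$ an snc divisor, so that the induced log structure on $\underline{S}_i = \Spec(\Lambda_i)$ is fs and log regular.

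Next, by \cref{cor:SmEtKet-approx} the log smooth morphism $X\to S$ descends, for $i$ large enough, to a log smooth morphism $X_i\to S_i$ whose saturated base change is $X\to S$. By Kato's theorem \cite[Theorem~8.2]{KatoToricSingularities}, since $S_i$ is log regular and $X_i\to S_i$ is log smooth, the log scheme $X_i$ is log regular; in particular, by \cite[Theorem~4.1]{KatoToricSingularities} the underlying ring $\cO_{X_i}$ is normal. Since $S_i$ is excellent and $X_i$ is locally of finite type over $S_i$ (this uses \cref{prop:log-sch-typeVd} applied over the fs base $S_i$, or just Kato's structure theory for log smooth morphisms over fs bases), $X_i$ is an excellent scheme, and it is flat over $\Lambda_i$ because log smooth morphisms over log regular (hence normal) bases are flat on underlying schemes — this flatness I would extract from the local structure of log smooth morphisms \cite[Theorem~3.5]{Kato1989:LogarithmicStructures}, which \'etale locally presents $X_i\to S_i$ as strict \'etale over $S_i\times_{\bA_P}\bA_Q$ for an injective map $P\to Q$ of fs monoids with torsion-free cokernel, and such a toric morphism $\bZ[P]\to\bZ[Q]$ is flat.

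Finally I would pass to the limit: $\underline{X} = \varprojlim \underline{X}_i$ with affine transition maps, so $\cO_{X,\ov x} = \varinjlim (\cO_{X_i})_{\ov x}$ is a filtered colimit of normal rings, hence normal by \stacks[Lemma]{037D}. Likewise flatness is preserved under filtered colimits of rings, and $K^+ = \varinjlim \Lambda_i$, so $\cO_X$ is flat over $K^+$. The main obstacle I anticipate is being careful about exactly which flavor of ``finite type / finite presentation'' is available for $X_i\to S_i$ and $X\to S$ as maps of underlying schemes, since over a general valuation ring the underlying map of a log smooth morphism need not be of finite type (indeed this is emphasized in the introduction); but over the fs, excellent base $S_i$ this is fine, and the limit argument only needs normality and flatness at the level of local rings, both of which behave well under filtered colimits. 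A secondary point to verify carefully is that the general (not discretely valued, not algebraically closed, not equal characteristic zero) microbial case really does reduce to the good cases via a finite extension $L/K$ as in \cref{cor:presentation-after-finite-extn}, using that $\cO(S')=L^+$ is faithfully flat over $K^+$ so normality and flatness descend — though since \cref{lem:scheme-normal} as stated does not restrict $K^+$, one should either include this descent step or observe that \cref{lem:scheme-normal} is only invoked in contexts where \cref{lem:Zavyalov} applies directly.
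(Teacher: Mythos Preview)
Your approach to normality is exactly the paper's: approximate via \cref{lem:Zavyalov}, descend to a log smooth $X_i\to S_i$ over an fs log regular base, invoke \cite[Theorem~8.2]{KatoToricSingularities} to get $X_i$ log regular and \cite[Theorem~4.1]{KatoToricSingularities} to get $X_i$ normal, and pass to the filtered colimit using \stacks[Lemma]{037D}. This part is fine.

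Your flatness argument, however, has a genuine gap. Log smooth morphisms of fs log schemes are \emph{not} flat on underlying schemes in general: affine log blow-ups $\bA_{P[K-a]}\to\bA_P$ are log \'etale but not flat (concretely, $\bN^2\hookrightarrow\bN^2$ via $e_1\mapsto e_1$, $e_2\mapsto e_1+e_2$ gives $\bZ[x,y]\to\bZ[x,z]$, $y\mapsto xz$, the blow-up chart, which is not flat). Your recollection of \cite[Theorem~3.5]{Kato1989:LogarithmicStructures} is also inaccurate: the chart $P\to Q$ it produces need not be injective with torsion-free cokernel, and even when it is, $\bZ[P]\to\bZ[Q]$ need not be flat --- flatness is equivalent to $P\to Q$ being \emph{integral} \cite[Proposition~I~4.6.7]{Ogus}, which is an extra condition not implied by smoothness.

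The correct route to flatness is simpler and does not go through flatness of $X_i\to S_i$. Over a valuation ring, flat is the same as torsion-free, and since every nonzero $a\in K^+$ divides a power of $\pi$ (as $K=K^+[1/\pi]$), it suffices that $\pi$ be a non-zero-divisor on $\cO_X$. Now each $X_i$ is log regular, so its trivial locus is open and dense; this locus maps into the trivial locus of $S_i$, which is $D(\pi)$. Hence $X_{i,\eta}=X_i\cap D(\pi)$ is dense in the reduced scheme $X_i$, so $\pi$ is a non-zero-divisor on $\cO_{X_i}$, and this persists in the filtered colimit $\cO_X=\varinjlim\cO_{X_i}$.

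Your caution about the scope of \cref{lem:Zavyalov} is well placed: as stated, \cref{lem:scheme-normal} has no hypothesis on $K$, so to cover the general microbial case one should indeed pass to $\ov K^+$ (or a suitable finite $L^+$) and descend. Flatness descends along the faithfully flat $K^+\to L^+$; for normality one must take care that the saturated base change may differ from the scheme-theoretic one, so it is cleanest to first use \cref{lem:sat-after-bc} to make $X_{L^+}\to\Spec(L^+)$ saturated, after which saturated and ordinary base change to $\ov K^+$ agree.
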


\begin{rmk}[Cohen--Macaulayness]
    Kato \cite[Theorem~4.1]{KatoToricSingularities} shows that log regular local rings are not only normal but also Cohen--Macaulay. It is possible to extend the notion of Cohen--Macaulayness to non-noetherian local rings in several non-equivalent ways (see \cite{KimWalker} and references therein as well as the discussion of Cohen--Macaulayness of semigroup rings). We do not know if log smooth schemes over $K^+$ are Cohen--Macaulay in either of these senses. Note that Cohen--Macaulayness of filtered colimits with particular interest in semigroup rings has been studied in the recent paper \cite{AsgharzadehDorrehTousi}. 
\end{rmk}

Finally, we link these results to the more classical notion of semistability used in \cite{Temkin2017:AlteredLocalUniformization,Zavyalov}.

\begin{defi}
    A scheme $X$ over $K^+$ is {\bf semistable} if \'etale locally on $X$ there exists a~pseudouniformizer $\pi$ of $K$, an integer $n\geq 0$, and an \'etale map of schemes over $K^+$
    \[ 
        X\la \Spec K^+[x_1, \ldots, x_n]/(x_1\cdots x_n - \pi)
    \]
    We call $X$ {\bf strictly semistable} if such a structure exists Zariski locally on $X$.
\end{defi} 

Note that the pseudouniformizer $\pi$ exists only locally on $X$, which causes some complications. See the discussion in \cite{Temkin2017:AlteredLocalUniformization}.

Then \cref{prop:vertical-compactifying} implies the following result.

\begin{cor}
    Let $X$ be a semistable scheme over $K^+$. Then $X$ is log smooth, saturated, and vertical over $K^+$ when endowed with the standard log structure $\cM_X^{\rm std}$.  
\end{cor}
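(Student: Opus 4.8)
The plan is to reduce the statement to the previously established \cref{prop:vertical-compactifying}, so the real content is to produce, \'etale locally, a chart exhibiting the standard log structure $\cM_X^{\rm std}$ on a semistable scheme $X$ as a smooth, saturated, and vertical log structure over $S = \Spec(K^+)$. First I would work \'etale locally on $X$, so that there is a pseudouniformizer $\pi$ of $K$, an integer $n \geq 0$, and an \'etale map of schemes over $K^+$
\[
    X \la \Spec K^+[x_1,\ldots,x_n]/(x_1\cdots x_n - \pi).
\]
The key observation is that the target of this map, call it $Z = \Spec K^+[x_1,\ldots,x_n]/(x_1\cdots x_n - \pi)$, carries an obvious chart: endowing $S$ with the chart $\bN \to K^+$, $1 \mapsto \pi$ (a submonoid of the valuative monoid $M = K^+\cap K^\times$; note this is where microbiality and the choice of $\pi$ enters), the ring map $\bZ[\bN] \to K^+[x_1,\ldots,x_n]/(x_1\cdots x_n - \pi)$ extends to $\bZ[\bN_n(1)] \to K^+[x_1,\ldots,x_n]/(x_1\cdots x_n - \pi)$, $e_i \mapsto x_i$, where $\bN_n(1) = \bN[e_1,\ldots,e_n]/(e_1+\cdots+e_n = 1)$ is the ``semistable reduction'' monoid of \cref{ex:semistable-monoid}. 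In fact this presents $Z$ as the strict base change
\[
    Z \simeq S \times_{\bA_{\bN}} \bA_{\bN_n(1)},
\]
so $Z \to S$ is, by definition, a base change along $S \to \bA_\bN$ of the map $\bA_{\bN_n(1)} \to \bA_{\bN}$; and by \cref{ex:semistable-monoid} the monoid map $\bN \to \bN_n(1)$ is smooth, vertical, and saturated. Hence by \cref{lem:bc of smooth etale Ket monoid maps} and the stability results (\cref{rmks:sorite exact int sat vert}, \cref{prop:bc and composition and 2 out of 3}) the induced log structure on $Z$ makes $Z \to S$ smooth, saturated, and vertical.

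Next I would check that this chart-induced log structure on $Z$ agrees with the standard log structure $\cM_Z^{\rm std}$ induced by the open subscheme $Z_\eta = D(\pi)$: this is a direct computation of stalks, or alternatively follows from \cref{prop:vertical-compactifying} applied to $Z \to S$ itself (the log structure from the chart is visibly vertical, being induced by a vertical monoid map, so \cref{prop:vertical-compactifying} identifies it with $\cM_Z^{\rm std}$ once we know it is smooth; and in any case $Z$ with either log structure is log regular over $S_i$ after the approximation of \cref{lem:Zavyalov}, so the two agree with the log structure defined by the trivial-locus open, which here is $D(\pi)$). Then, since $X \to Z$ is strict \'etale as a map of schemes, pulling back $\cM_Z^{\rm std}$ along $X \to Z$ gives a log structure under which $X \to S$ is smooth (strict \'etale composed with smooth, by \crefpart{prop:bc and composition and 2 out of 3}{propitem:sm ket composition}), saturated, and vertical; and strict pullback of $\cM_Z^{\rm std}$ along the open immersion-compatible map $X\to Z$ is exactly $\cM_X^{\rm std}$, because both are the compactifying log structures induced by the preimage of $D(\pi)$, i.e.\ $X_\eta$. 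Finally, smoothness, saturatedness, and verticality of $X \to S$ are all \'etale-local on $X$ (for smooth and saturated this is part of \cref{prop:fs approximation for smooth etale and ket morphisms} and its proof; for vertical it follows since the defining condition on stalks of $\cM_{\Spec(K^+)} \to \cM_X$ is \'etale-local), so the local conclusions glue to the global statement.

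The main obstacle I anticipate is the bookkeeping around the pseudouniformizer $\pi$ being only \'etale-locally defined on $X$: different \'etale charts may use different pseudouniformizers, and I must be careful that the standard log structure $\cM_X^{\rm std}$ (defined globally via $\pi$ after fixing one $\pi$ at the start, since $K^+$ is microbial and any two pseudouniformizers differ by a unit up to powers) matches the chart-induced log structure on each piece regardless of which local pseudouniformizer is used. This is harmless because any two pseudouniformizers $\pi, \pi'$ satisfy $\pi^a = \pi' u$ and $\pi'^b = \pi v$ for units $u, v$ and positive integers $a, b$, so they generate the same face of $M$ and define the same $\cM_X^{\rm std}$; but it must be stated cleanly. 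A secondary point worth spelling out is that the map $X \to Z$ being strict \'etale of schemes and $Z$ having a log structure, the pullback is taken as log schemes and the resulting $X$ is the one we want — this is exactly \crefpart{lem:existence of qcoh fibre products part 1}{lemitem:qcoh fibre product:strict} and \cref{rmks:sorite exact int sat vert}, so no new input is needed beyond what is in the excerpt.
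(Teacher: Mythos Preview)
Your proof is correct and follows essentially the same route as the paper: the paper charts $X \to S$ directly by $M \to P_n(\pi) = M[e_1,\ldots,e_n]/(e_1+\cdots+e_n=\pi)$ (which is the saturated pushout of your $\bN \to \bN_n(1)$ along $1\mapsto\pi$), invokes \cref{ex:semistable-monoid} for smooth/saturated/vertical, observes that $X \to S\times_{\bA_M}\bA_{P_n(\pi)}$ is strict \'etale, and relies on \cref{prop:vertical-compactifying} (as signalled in the sentence preceding the corollary) to identify the chart log structure with $\cM_X^{\rm std}$ --- exactly the step you make explicit. One wording slip to fix: $\bN \to K^+$, $1\mapsto\pi$, is not a \emph{chart} for the standard log structure on $S$ (and $S\to\bA_\bN$ is not strict) unless $K^+$ is a DVR; but since you only use it as a map along which to take a saturated base change, nothing in the argument is affected --- the paper simply uses $M$ in place of $\bN$ so as to have an honest chart for the morphism throughout.
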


\begin{proof}
Indeed, $X\to \Spec(K^+)$ is locally charted by the homomorphism of monoids
\[ 
    M \la P_n(\pi) = M[e_1, \ldots, e_n]/(e_1+\cdots+e_n = \pi)
\]
for some $n\geq 0$ and a pseudouniformizer $\pi$. This homomorphism is log smooth, saturated, and vertical (see \cref{ex:semistable-monoid}). Moreover, the induced map $X\to \Spec(K^+\otimes_{\bZ[M]}\bZ[P_n(\pi)])$ is strict \'etale, and hence the result.
\end{proof}

%%%%%%%%%%%%%%%%%%%%%%%%%%%%%%%%%%%%%%%%%%%%%%%%
%%%%%% Bibliography    %%%%%%%%%%%%%%%%%%%%%%%%%
%%%%%%%%%%%%%%%%%%%%%%%%%%%%%%%%%%%%%%%%%%%%%%%%
\bibliographystyle{alphaSGA}
\bibliography{beyondfs}
%%%%%%%%%%%%%%%%%%%%%%%%%%%%%%%%%%%%%%%%%%%%%%%%
%%%%%%%%%%%%%%%%%%%%%%%%%%%%%%%%%%%%%%%%%%%%%%%%

\end{document}